\newtheorem{thm}{Theorem}[section]
\newtheorem{prop}[thm]{Proposition}
\newtheorem{lemma}[thm]{Lemma}
\newtheorem{cor}[thm]{Corollary}
\newtheorem{claim}[thm]{Claim}
\numberwithin{equation}{subsection}
\numberwithin{thm}{subsection}
\theoremstyle{definition}
\newtheorem{defn}[thm]{Definition}
\theoremstyle{remark}
\newtheorem{rmk}[thm]{Remark}
\newtheorem{convention}[thm]{Convention}
\newtheorem{notation}[thm]{Notation}
\DeclareMathAlphabet{\mathpzc}{OT1}{pzc}{m}{it}
\newcommand{\hh}{\mathpzc{h}}
\newcommand{\D}{\mathbb{D}}
\renewcommand{\H}{\mathbb{H}}
\newcommand{\C}{\mathbb{C}}
\newcommand{\R}{\mathbb{R}}
\newcommand{\Z}{\mathbb{Z}}
\newcommand{\Q}{\mathbb{Q}}
\newcommand{\N}{\mathbb{N}}
\renewcommand{\P}{\mathbb{P}}
\newcommand{\bdry}{\partial}
\newcommand{\s}{\vskip.1in}
\newcommand{\n}{\noindent}
\newcommand{\F}{\mathbb{F}}
\newcommand{\ar}{\overrightarrow}
\newcommand{\bs}{\boldsymbol}
\newcommand{\be}{\begin{enumerate}}
\newcommand{\ee}{\end{enumerate}}
\newcommand{\op}{\operatorname}
\newcommand{\nom}{\nomenclature}
\begin{document}

\title[HF=ECH via open book decompositions II]
{The equivalence of Heegaard Floer homology and embedded contact homology via open book decompositions II}

\author{Vincent Colin}
\address{Universit\'e de Nantes, 44322 Nantes, France}
\email{Vincent.Colin@univ-nantes.fr}

\author{Paolo Ghiggini}
\address{Universit\'e de Nantes, 44322 Nantes, France}
\email{paolo.ghiggini@univ-nantes.fr}
\urladdr{http://www.math.sciences.univ-nantes.fr/\char126 Ghiggini}

\author{Ko Honda}
\address{University of Southern California, Los Angeles, CA 90089}
\email{khonda@usc.edu} \urladdr{http://www-bcf.usc.edu/\char126 khonda}

\date{}

\keywords{contact structure, Reeb dynamics, embedded contact homology, Heegaard Floer homology, open book decompositions}

\subjclass[2000]{Primary 57M50; Secondary 53D10,53D40.}

\thanks{VC supported by the Institut Universitaire de France, ANR Symplexe, ANR Floer Power, and ERC Geodycon. PG supported by ANR Floer Power and ANR TCGD. KH supported by NSF Grants DMS-0805352, DMS-1105432, and DMS-1406564.}

\begin{abstract}
This paper is the sequel to \cite{CGH-I} and is devoted to proving some of the technical parts of the HF=ECH isomorphism.
\end{abstract}

\maketitle

\setcounter{tocdepth}{2}
\tableofcontents

\section{Introduction}

This paper is the sequel to \cite{CGH-I} and is devoted to proving some of the technical parts of the isomorphism between $\widehat{HF}(-M)$ and $\widehat{ECH}(M)$. References from \cite{CGH-I} will be written as ``Section I.$x$'' to mean ``Section $x$'' of \cite{CGH-I}, for example.  The notation from \cite{CGH-I} carries over to this paper and is summarized in the Index of Notation, which appears at the beginning of \cite{CGH-I}.

In \cite{CGH-I} we defined the chain maps
$$\Phi: \widehat{CF}(S,{\bf a},\hh({\bf a}))\to PFC_{2g}(N),$$
$$\Psi: PFC_{2g}(N)\to \widehat{CF}(S,{\bf a},\hh({\bf a})).$$

In this paper we prove the following two results:

\begin{thm}[Quasi-isomorphism]
\label{thm: isomorphism}
The chain maps $\Phi$ and $\Psi$ are quasi-isomor\-phisms, provided the monodromy map $\hh$ does not have any elliptic periodic points of period $\leq 2g$ in $int(S)$.
\end{thm}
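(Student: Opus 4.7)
My plan is to prove the theorem by constructing explicit chain homotopies
\[
\Psi\circ\Phi \simeq \op{id}_{\widehat{CF}(S,{\bf a},h({\bf a}))}, \qquad \Phi\circ\Psi \simeq \op{id}_{PFC_{2g}(N)}.
\]
This is the standard SFT-style strategy for identifying two Floer-theoretic complexes defined by pseudoholomorphic curve counts in different geometric models; once the homotopies are in place, $\Phi$ and $\Psi$ induce mutually inverse isomorphisms on homology, and are therefore quasi-isomorphisms.

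To construct the first homotopy, I would reinterpret $\Psi\circ\Phi$ via gluing. Since $\Phi$ counts holomorphic curves in a cobordism from the Heegaard-type model to the symplectization side of $N$ (with positive ends on Reeb orbits), and $\Psi$ counts curves in the reversed cobordism, the composition counts two-level broken configurations; by SFT-type gluing these correspond to rigid curves in the concatenated cobordism. I would then deform this concatenation through a one-parameter family of admissible almost complex structures $(J_s)_{s\in[0,1]}$---for instance by compressing the intermediate neck, or by interpolating through a family of cobordism data---to a ``trivial'' cobordism whose count manifestly realises the identity on $\widehat{CF}(S,{\bf a},h({\bf a}))$. Counting rigid curves in the parameterized moduli space $\bigcup_s\mathcal{M}(J_s)$ then defines a degree $+1$ operator $K$, and the standard boundary analysis of the compactified $1$-dimensional parameterized moduli spaces yields
\[
\Psi\circ\Phi - \op{id} \;=\; \partial K + K\partial.
\]
The symmetric argument, running the deformation on the PFC side, gives $\Phi\circ\Psi\simeq\op{id}$.

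The hard part will be the compactness and transversality analysis for these parameterized moduli spaces: one must verify that no unexpected boundary strata appear beyond those accounted for in the chain-homotopy identity. The most delicate strata are those involving multiple covers of Reeb orbits---especially multiple covers of elliptic orbits, which are generically non-transverse and whose virtual dimensions can drop below the naive expectation---since such strata would produce spurious contributions invalidating the homotopy. This is precisely where the hypothesis on $h$ enters: the Reeb orbits contributing to $PFC_{2g}(N)$ correspond to periodic points of $h$ of period $\leq 2g$, and the hypothesis that none of these is elliptic in $\op{int}(S)$ ensures that the orbits in play are hyperbolic, so their multiple covers are either ruled out by index reasons or can be made transverse. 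The remaining technical work is to select a generic path in the space of admissible almost complex structures so that all moduli spaces are cut out transversally, to control action filtrations so that SFT compactness stays within the truncation $PFC_{2g}$, and to verify via the gluing analysis from Section I.$\ast$ that no exotic broken configurations occur beyond those already accounted for.
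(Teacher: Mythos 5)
Your overall strategy (a one-parameter family of cobordisms interpolating between the glued composition and a degenerate limit, with the homotopy operator defined by the parameterized moduli space) is indeed the skeleton of the paper's argument in Sections~\ref{section: homotopy of cobordisms I} and~\ref{section: homotopy of cobordisms II}. But there is a genuine gap at the $\tau\to-\infty$ end of the deformation: the family $\overline{W}_\tau$ does \emph{not} degenerate to a ``trivial cobordism whose count manifestly realises the identity.'' For $\Psi\circ\Phi$ it degenerates to a two-component building $\overline{W}_{-\infty,1}\cup\overline{W}_{-\infty,2}$, and the resulting chain map is not $\op{id}$ but a map $\Theta_0$ into $\widehat{CF}(S,{\bf b},h({\bf b}))$ (note the change of Lagrangians from $\overline{\bf a}$ to a pushoff $\overline{\bf b}$); one must then separately prove that $\Theta_0$ is a quasi-isomorphism by a further degeneration identifying it with tensoring by top generators (Lemma~\ref{theta zero is an isomorphism}), and the count of curves in the $\overline{W}_{-\infty,2}$ piece is a nontrivial relative Gromov--Witten computation (Theorem~\ref{thm: calc of G sub 3}, $G_3=1$). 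For $\Phi\circ\Psi$ the identity does appear, but only after another enumerative computation ($G_2=1$, Theorem~\ref{thm: calc of G sub 2}) and a cancellation argument for the extra stratum $A_3$. Without these computations the homotopy identity you write down cannot be closed; moreover your clean formula omits the extra term $\widetilde\bdry_1\circ\widetilde V$ arising from triple breakings at $\tau=+\infty$, which is only killed after passing to a quotient complex.

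A second, smaller inaccuracy: the hypothesis on elliptic periodic points is not primarily about transversality of the parameterized moduli spaces. The dominant analytic difficulty is compactness in the presence of the section at infinity $\sigma_\infty$ (branched covers of $\R\times\{z_\infty\}$ and ends at $\delta_0$), which forces the long case analysis of Sections~\ref{subsection: chain homotopy part 1}--\ref{subsection: additional degenerations III} using ECH index inequalities at $z_\infty$, rescaling and involution arguments, and tropical-type estimates. The elliptic-point assumption is used later, in the gluing at the $\tau=-\infty$ end of the $\Phi\circ\Psi$ homotopy (condition ($\dagger\dagger$) in Section~\ref{subsection: proof of thm chain homotopy two}), to control partition conditions and invoke automatic transversality when gluing branched covers of trivial cylinders over elliptic orbits. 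Your proposal correctly senses that multiple covers of elliptic orbits are the issue, but locates the problem in the wrong part of the argument and does not identify the $z_\infty$ degenerations that constitute the bulk of the proof.
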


Theorem~\ref{thm: isomorphism} is quite involved and takes up most of this paper.  The condition on $h$ is a technical assumption which simplifies gluing and one should be able to remove this condition with more work.

\begin{thm}[Stabilization]\label{thm: stabilization}
$\widehat{ECH}(M)\simeq PFH_{2g} (N).$
\end{thm}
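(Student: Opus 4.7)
The plan is to use a contact form $\alpha$ on $M$ adapted to the open book decomposition $(S,h)$: the binding components $K_1,\ldots,K_b$ become elliptic Reeb orbits $e_1,\ldots,e_b$, and the complement of a tubular neighborhood of the binding is diffeomorphic to the mapping torus $N$ of $h$, with Reeb vector field a suitable perturbation of the suspension of $h$. Each ECH orbit set then factors uniquely as $\mathbf{e}^K\cdot\gamma$, where $\mathbf{e}^K=e_1^{k_1}\cdots e_b^{k_b}$ is supported on the binding and $\gamma$ is an orbit set of the interior flow; the intersection with a page is $|K|+d(\gamma)$, where $d(\gamma)$ denotes the intersection of $\gamma$ with a fiber of $N$.

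With this model in hand, I would construct a chain map
\[
\iota\colon PFC_{2g}(N)\longrightarrow \widehat{ECC}(M,\alpha),\qquad \gamma\longmapsto \mathbf{e}^{K_0}\cdot\gamma,
\]
for a canonical binding multiplicity vector $K_0$ prescribed by the hat construction. Showing that $\iota$ is a bijection on generators should reduce to a linking-number and Euler-characteristic bookkeeping that identifies the relevant ``hat sector'' of $\widehat{ECC}(M,\alpha)$ with exactly those orbit sets $\mathbf{e}^{K_0}\cdot\gamma$ satisfying $d(\gamma)=2g$, which is the defining condition of $PFC_{2g}(N)$. Matching differentials then amounts to splitting finite-energy ECH holomorphic curves in $\R\times M$ into those contained in $\R\times N$ (which count the PFH differential on $\gamma$) and those interacting non-trivially with the binding cylinders $\R\times K_i$ (which realize the ECH $U$-map action and are annihilated in the hat quotient).

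I expect the main obstacle to be the analysis of curves that hit the binding. The argument should combine the ECH partition conditions at elliptic orbits --- which force the binding-adjacent asymptotic ends to factor through trivial cylinders --- with index and low-energy gluing estimates that pair any remaining binding-incident curves with $U$-map contributions. The geometric content is a stabilization principle: once the page-intersection reaches $2g$, extra binding multiplicity carries no new homological information in the hat flavor, so $PFH_{2g}(N)$ already captures all of $\widehat{ECH}(M)$. As the technically hardest analytic inputs have already been developed in \cite{CGH2} for the proof of Theorem~\ref{thm: isomorphism}, the remaining work should be a matter of reorganizing those inputs in the simpler, single-manifold ECH-vs-PFH setting.
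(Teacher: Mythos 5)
Your proposal does not follow the paper's route, and it has a genuine gap at its core. The paper reduces Theorem~\ref{thm: stabilization} to showing that the direct-system maps $ECC_{2j}(N,f_{2j}\alpha)\to ECC_{2j+2}(N,f_{2j+2}\alpha)$, $\gamma\mapsto e^2\gamma$, are quasi-isomorphisms for $j\geq g$, since by Theorem~I.\ref{P1-thm: direct limit} one has $\widehat{ECH}(M)\simeq \lim_{j} ECH_{2j}(N,f_{2j}\alpha)$. Your first problem is the model of $\widehat{ECH}(M)$ itself: there is no ``canonical binding multiplicity vector $K_0$'' and no ``hat sector'' of $\widehat{ECC}(M,\alpha)$ consisting of orbit sets $\mathbf{e}^{K_0}\cdot\gamma$ with $d(\gamma)=2g$. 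The hat flavor is the cone of the $U$-map (equivalently, in this framework, the direct limit above), not a subcomplex of $ECC(M,\alpha)$ cut out by fixing the binding multiplicities and the page intersection number; so the bijection on generators you hope to establish is a bijection with an object that does not exist.

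The second and more serious problem is that the statement you describe as a ``stabilization principle'' --- that once the page-intersection reaches $2g$, increasing it further carries no new homological information --- \emph{is} the theorem, and your proposal supplies no mechanism for proving it. In the paper this is Proposition~\ref{prop: isomorphism of j} ($j_*:PFH_{2g}(S)\to PFH_{2g+2}(S)$ is an isomorphism), and its proof is not a reorganization of curve counts in $\R\times M$: it requires (i) applying the quasi-isomorphism Theorem~\ref{thm: isomorphism} to a second open book $(S',h')$ of genus $g+1$ obtained by two positive stabilizations, (ii) the commutative square \eqref{equation: diagram on level of homology} comparing $\Phi_{(S,h)}$ and $\Phi_{(S',h')}$ with the Heegaard Floer stabilization map $\Theta$ (an isomorphism because the trefoil open book computes $\widehat{HF}(S^3)\simeq\F$), which yields that $i_*\circ j_*$ is an isomorphism, and (iii) a spectral-sequence decomposition of $PFH_{2g+2}(S')$ in terms of $PFH_i(S)\otimes PFH_j(T)$ (Lemma~\ref{lemma: spectral sequence calculation}), from which surjectivity of $j_*$ is extracted. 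The threshold $2g$ enters only through the Heegaard Floer side of this diagram; partition conditions at the binding and index estimates in $\R\times M$ alone will not detect it. Until you replace the ``bookkeeping'' step with an actual argument that adding binding (or boundary-elliptic) multiplicity is a quasi-isomorphism in degrees $\geq 2g$, the proof is not there.
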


In view of Theorem~I.\ref{P1-thm: direct limit}, Theorem~\ref{thm: stabilization} would immediately follow from showing that the chain maps
$$\mathfrak{K}_j:ECC_j(N,f_j\alpha)\to ECC_{j+1}(N,f_{j+1}\alpha),$$
defined in Section~I.\ref{P1-direct limits} (or rather its PFC variants) are quasi-isomorphisms for $j\geq 2g$. What we actually prove is slightly weaker, but sufficient: the similarly defined chain maps
$$ECC_{2j}(N,f_{2j}\alpha)\to ECC_{2j+2}(N,f_{2j+2}\alpha)$$
are quasi-isomorphisms for $2j\geq 2g$. Theorem~\ref{thm: stabilization} is proved in Section~\ref{section: stabilization}.

\begin{notation}[Sub/superscripts $*$]  \label{notation: sub superscripts}
In this paper, as in \cite{CGH-I}, $*$ is often used to denote a variety of possible subscripts/superscripts (e.g., intersection numbers $n^*(\overline{u})$ in Equation~\eqref{n and n alt}, curves $\overline v_*$ and their domains $\dot F_*$ as in Notation~\ref{notation}, and moduli spaces $\mathcal{M}^*(\star)$ as in Notation~\ref{notation: modifiers}).
\end{notation}

\s\n {\em Organization of this paper.}  In Sections~\ref{section: homotopy of cobordisms I} and ~\ref{section: homotopy of cobordisms II} we prove the chain homotopy between the chain maps $\Psi\circ\Phi$ and $id$, as well as the chain homotopy between the chain maps $\Phi\circ\Psi$ and $id$. The necessary Gromov-Witten type calculations are carried out in Section~\ref{section: Gromov-Witten computation}. Finally, Section~\ref{section: stabilization} is devoted to proving Theorem~\ref{thm: stabilization}.

\section{Gromov-Witten type computations}
\label{section: Gromov-Witten computation}

This section is devoted to Gromov-Witten type calculations which are used in the proof of Theorem~\ref{thm: isomorphism}.

After a brief review of relative Gromov-Witten invariants in Section~\ref{subsection: brief review of relative Gromov-Witten invariants}, we treat a slightly simpler model situation in Section~\ref{subsection: first GW calculation}.  We then tackle the specific situations of interest in this paper in Sections~\ref{subsection: second GW calculation} and ~\ref{subsection: third GW calculation}.

\subsection{A brief review of relative Gromov-Witten invariants}
\label{subsection: brief review of relative Gromov-Witten invariants}

We now briefly review relative Gromov-Witten invariants, following Ionel-Parker~\cite{IP1}.  The reader is referred to Ionel-Parker~\cite{IP1}, as well as to Li-Ruan~\cite{LR} and McDuff~\cite{M2}, for more complete discussions of relative Gromov-Witten invariants.

Let $(X,\omega)$ be a compact symplectic manifold and $J$ a compatible almost complex structure on $(X,\omega)$.

Let $M_{g,n}$ be the moduli space of genus $g$ curves with $n$ (ordered) marked points and let $\overline{M}_{g,n}$ be the Deligne-Mumford compactification.
Let $\overline{\mathcal{U}}_{g,n}\to \overline{M}_{g,n}$ be the universal curve over
$\overline{M}_{g,n}$, with a fixed embedding $\overline{\mathcal{U}}_{g,n}\subset \C\P^N$. Then let $\nu$ be a section of the bundle $$\op{Hom}^{0,1}(\pi_2^*T\C\P^N, \pi_1^*TX)\to X\times \C\P^N$$ of $\C$-anti-linear maps, where $\pi_1$ and $\pi_2$ are projections onto the first and second factors of $X\times \C\P^N$.

Let $(F,j,{\bf x}=(x_1,\dots,x_n))$ be a closed, connected (i.e. connected after gluing together all the components at the nodes), marked nodal Riemann surface, let $st(F,j,{\bf x})$ be the stable Riemann surface obtained by collapsing the unstable components to points and let $\phi_0: st(F,j,{\bf x})\to \overline{\mathcal{U}}_{g,n}$ be a biholomorphism onto a fiber of $\overline{\mathcal{U}}_{g,n}$.

\begin{rmk}
The notation ${\bf x}=(x_1,\dots,x_n)$ as well as its close cousins will only be used in Section~\ref{section: Gromov-Witten computation} and will not conflict with the usage as a generator of a Heegaard Floer chain group in the other sections of this paper.
\end{rmk}

\begin{defn}
A {\em $(J,\nu)$-holomorphic map} is a map
$$(u,\phi_0\circ st): (F,j,{\bf x})\to X\times \overline{\mathcal{U}}_{g,n}$$
which satisfies the perturbed $J$-holomorphic equation
\begin{equation} \label{eqn: perturbed J-holomorphic}
\overline{\partial}_{J} u=(u,\phi_0\circ st)^*\nu.
\end{equation}
\end{defn}

We will usually abbreviate Equation~\eqref{eqn: perturbed J-holomorphic} as $\overline{\partial}_J u=\nu$ and not mention $\phi_0\circ st$.

Now let $V$ be a codimension $2$ symplectic submanifold of $X$. Then $(J,\nu)$ is {\em $V$-compatible} if $TV$ is $J$-invariant, the component of $\nu$ in the direction normal to $V$ is zero, and certain tensors given in Definition 3.2 (b) and (c) of \cite{IP1} vanish.

{\em From now on we assume that all $(J,\nu)$ are $V$-compatible without further mention.}

\begin{defn}
A $(J,\nu)$-holomorphic stable map
$$u: (F,j,{\bf x}, {\bf x'})\to X,$$
with ${\bf x}=(x_1,\dots,x_n)$ and ${\bf x'}=(x_1',\dots,x_l')$, is said to be {\em $V$-regular} if no component of the domain, no marked point in ${\bf x}$, and no double point of $u$ is mapped to $V$. The points in ${\bf x'}$ are mapped to $V$ with multiplicities ${\bf s}=(s_1,\dots,s_l)$.  We write $l({\bf s})=l$ and $\deg({\bf s})=\sum_{i=1}^l s_i$.
\end{defn}

 Let $\mathcal{M}_{g,n+l({\bf s})}(X,A,J,\nu)$ be the moduli space of $(J,\nu)$-holomorphic maps
$$u:(F,j,{\bf x}\cup{\bf x'})\to X$$
in the class $A\in H_2(X)$ such that $(F,j)$ is a closed genus $g$ Riemann surface, modulo automorphisms of the domain. Next we let
$$\mathcal{M}=\mathcal{M}_{g,n,{\bf s}}^V(X,A,J,\nu)\subset \mathcal{M}_{g,n+l({\bf s})}(X,A,J,\nu)$$
be the subset of (equivalence classes of) $V$-regular $(J,\nu)$-holomorphic maps $u:(F,j,{\bf x},{\bf x'})\to X$ such that ${\bf x'}$ is mapped to $V$ with multiplicities ${\bf s}$.
We also use the modifier $gi$ to denote the subset of {\em generically injective} maps, i.e., maps $u$ such that $u^{-1}(u(x))=x$ for almost all points $x$.
Then, for a generic $(J,\nu)$, $\mathcal{M}^{gi}\subset \mathcal{M}_{g,n+l({\bf s})}(X,A,J,\nu)$ is transversely cut out by \cite[Lemma~4.2]{IP1}.

\begin{rmk} \label{rmk: irred}
In \cite{IP1}, generically injective maps are called ``irreducible''. In our series of papers an {\em irreducible map} $u: F\to X$ will always refer to a map whose domain cannot be written as the union of two or more curves glued together at nodes.
\end{rmk}

By \cite[Theorem~7.4]{IP1}, there is a compactification $\overline{\mathcal{M}}=\overline{\mathcal{M}}_{g,n,{\bf s}}^V(X,A,J,\nu)$ of $\mathcal{M}$ and,  under ideal conditions (e.g., all the curves of $\overline{\mathcal{M}}$ are generically injective), $\overline{\mathcal{M}}-\mathcal{M}$ is the union of (real) codimension $\geq 2$ strata, and $\overline{\mathcal{M}}$ carries a fundamental class. Here $\overline{\mathcal{M}}$ is the space of {\em $V$-stable maps} $u: (F,j,{\bf x}, {\bf x'})\to X$ as defined in \cite[Definition~7.2]{IP1}. A map $u\in \overline{\mathcal{M}}-\mathcal{M}$ can have irreducible components $F_i$ which are mapped to $V$. Such components come equipped with solutions $\xi_i$ of Cauchy-Riemann type equations on the pullback of the normal bundle $N_{X/V}$. The sections $\xi_i$ keep track of how a sequence of $V$-regular maps approaches $u|_{F_i}$.

We then consider the evaluation map:
\begin{equation} \label{eqn: ev map}
\overline{ev}: \overline{\mathcal{M}}\to X^n\times V^{l({\bf s})},
\end{equation}
$$(u:(F,j,{\bf x},{\bf x'})\to X)\mapsto (u({\bf x}), u({\bf x'})).$$
We write $\overline{ev}_{(J,\nu)}$ when we want to emphasize the pair $(J,\nu)$. We also write $ev$ for the restriction of $\overline{ev}$ to $\mathcal{M}$.

We can make the following definition, assuming ideal conditions:

\begin{defn}
The {\em relative Gromov-Witten invariant} $GW^V_{X,A,g,n,{\bf s}}$ is the cycle
$$\overline{ev}_*[\overline{\mathcal{M}}]\in H_*(X^n\times V^{l({\bf s})};\Q).$$
\end{defn}

\subsection{First relative Gromov-Witten calculation}
\label{subsection: first GW calculation}

Let $\Sigma$ be a surface of genus $g$.  We consider $(X,\omega)=\Sigma\times \C\P^1$ with a product symplectic form.  Let $\pi_1$ and $\pi_2$ be the projections of $X$ onto $\Sigma$ and $\C\P^1$, respectively.

The goal of this subsection is to compute the top-dimensional summand of
$$GW^V_{X,A,g,n,{\bf s}}\in H_*(X^n\times V^{l({\bf s})}),$$ where $V=\Sigma\times\{\infty\}$, $A=[\Sigma]+2g[\C\P^1]$, $g=g(\Sigma)$, $n=g+1$, ${\bf s}=(1,\dots,1)$, and $l({\bf s})=2g$.  In other words, we are computing the degree of $\overline{ev}$, which is an integer that we denote by $G_1$.

\begin{thm} \label{thm: count of G sub 1}
$G_1=\pm 1$.
\end{thm}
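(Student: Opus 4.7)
I would compute $G_1$ explicitly for the product almost complex structure $J_0 = j_\Sigma \times j_{\C\P^1}$ on $X = \Sigma \times \C\P^1$ with $\nu \equiv 0$, which is manifestly $V$-compatible. Fix generic incidence data $\vec{y} = (y_1,\dots,y_{g+1}) \in X^{g+1}$, with $y_i = (p_i,q_i)$ and $q_i \in \C$, and $\vec{z} = (z_1,\dots,z_{2g}) \in V^{2g}$, with $z_k = (r_k,\infty)$. The task is to classify $\overline{ev}^{-1}(\vec{y},\vec{z})$.

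\emph{Step 1 (geometric classification).} Every irreducible $J_0$-holomorphic curve in $X$ has class $a[\Sigma] + b[\C\P^1]$ with $a,b \geq 0$, and curves with $a = 0$ lie in a single fiber $\{s\} \times \C\P^1$. Combined with $A = [\Sigma] + 2g[\C\P^1]$, arithmetic genus $g$, $V$-regularity, and distinctness of the $z_k$, the only possible configuration consists of one ``section'' component $F_0$ sent biholomorphically onto $\Sigma$ by $\pi_1 \circ u$, of class $[\Sigma] + k[\C\P^1]$ for some $0 \leq k \leq 2g$, together with $2g-k$ distinct simple fibers $\{s_j\}\times \C\P^1$; multiply-covered fibers are ruled out because each $z_k$ has relative multiplicity $1$ and the $z_k$ are pairwise distinct. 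Thus $u|_{F_0}$ is the graph of a meromorphic function $v : \Sigma \to \C\P^1$ of degree $k$, and since the $p_i$ generically differ from all $r_k$, the $g{+}1$ absolute marked points lie on the section.

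\emph{Step 2 (Riemann--Roch count).} Let $S \subset \{r_1,\dots,r_{2g}\}$ be the pole set of $v$, of cardinality $k$, with $D_S = \sum_{r \in S} r$; the fiber positions are the remaining $r_k$. Then $v \in L(D_S)$ with $v(p_i) = q_i$ for $i=1,\dots,g{+}1$. For $k = 2g$, $\deg D_S = 2g > 2g-2 = \deg K_\Sigma$, so Riemann--Roch yields $\dim L(D_S) = g{+}1$, and the linear evaluation $L(D_S) \to \C^{g+1}$, $v \mapsto (v(p_1),\dots,v(p_{g+1}))$, has kernel $L(D_S - p_1 - \cdots - p_{g+1})$ of degree $g{-}1$, which is trivial for a generic effective divisor. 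Hence there is exactly one such $v$. For $k < 2g$, generically $\dim L(D_S) \leq g$, so the $g{+}1$ value conditions are incompatible for generic $\vec{y}$. Summed over $S$, $\overline{ev}^{-1}(\vec{y},\vec{z}) \cap \mathcal{M}$ is a single point.

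\emph{Step 3 (regularity, sign, and boundary).} The normal bundle of the unique graph $u$ is $v^*T\C\P^1 \cong v^*\mathcal{O}(2)$, of degree $4g > 2g-2 = \deg K_\Sigma$; Serre duality gives $H^1(v^*\mathcal{O}(2)) = 0$, so the linearized $\overline{\partial}$-operator in the normal direction is surjective and $u$ is a transverse point of $\mathcal{M}$. The complex orientation of the moduli space then assigns sign $+1$ to $u$. Finally, the boundary $\overline{\mathcal{M}} \setminus \mathcal{M}$ -- which by \cite[Theorem~7.4]{IP1} is a union of $V$-stable strata (including limits with components mapped into $V$ carrying Cauchy--Riemann sections of $N_{X/V}$) of real codimension $\geq 2$ -- does not meet $\overline{ev}^{-1}(\vec{y},\vec{z})$ for generic data. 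Altogether, $G_1 = 1$.

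\emph{Main obstacle.} The subtle point is Step 3: one must justify computing with the non-generic pair $(J_0,0)$ in place of a generic $(J,\nu)$. The Riemann--Roch argument shows that $(J_0,0)$ is already regular at the unique element of $\overline{ev}^{-1}(\vec{y},\vec{z})$, and the codimension assertion for the Ionel--Parker compactification ensures that the boundary gives no contribution for generic data; threading these two points together rigorously is where the technical work lies.
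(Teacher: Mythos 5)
Your Steps 1 and 2 reproduce the paper's argument for the open part of the moduli space: the classification into a section component plus fibers, the elimination of the reducible configurations $k<2g$ by $l(D_S)\leq g$ versus $g+1$ value conditions, and the unique degree-$2g$ function via $l(D_S)=g+1$ are exactly Lemmas~\ref{example}, \ref{lemma: no other curves} and the final count in Section~\ref{subsubsection: proof of theorem count of G sub 1}. The gap is in Step 3, and it is the part you yourself flag as the "main obstacle": you cannot dispose of $\overline{\mathcal{M}}\setminus\mathcal{M}$ by citing the codimension-$\geq 2$ statement of \cite[Theorem~7.4]{IP1}, because that statement holds for a \emph{generic} $(J,\nu)$, while you are computing with the non-generic pair $(J_0,0)$. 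For the product structure the relevant boundary stratum is perfectly concrete and not obviously small: since $V=\Sigma\times\{\infty\}$ is $J_0$-holomorphic, a sequence of graphs can degenerate to a $V$-stable map whose section component is mapped onto $V$ itself, with $2g$ fiber bubbles attached at points ${\bf x}''\supset{\bf x}$. This configuration satisfies all the incidence constraints at the level of images, so it must be excluded by an argument specific to $(J_0,0)$, not by genericity of $(J,\nu)$.

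The paper's Lemma~\ref{lemma: no bubbling}, case (2), is exactly this missing step: one applies the renormalization procedure of \cite[Proposition~6.6]{IP1} to extract the meromorphic section $\xi$ of $N_{X/V}$ recorded by the stratum, observes that $\xi$ has poles at most at $D''=x_1''+\dots+x_{2g}''$ and must vanish at the $\geq g+1$ relative marked points of ${\bf x}'$ that are not attaching points of fibers, and then invokes a \emph{second} Riemann--Roch count, $l(D'')=g+1$, to conclude that no nonzero such $\xi$ exists for generic constraints. Your proposal contains only the first Riemann--Roch count (for the section component away from $V$); without the second one, the compactness of the fiber of $\overline{ev}$ over the chosen point --- equivalently, the claim that nearby generic $(J,\nu)$ produce no extra solutions splitting off a copy of $V$ --- is unproven. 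Everything else in your write-up (automatic transversality via $\deg v^*\mathcal{O}(2)=4g>2g-2$, the positive sign from the complex orientation, the exclusion of multiply covered fibers) is sound and matches Lemma~\ref{lemma: regularity and dimension} and Corollary~\ref{aardvark}.
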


Since $X,A,g,n,{\bf s}$ are fixed in this subsection, we abbreviate
$$\mathcal{M}^V_{(J,\nu)}:= \mathcal{M}_{g,n,{\bf s}}^V(X,A,J,\nu),\quad \mathcal{M}_{(J,\nu)}:=\mathcal{M}_{g,n+l({\bf s})}(X,A,J,\nu).$$

In the proof of Theorem~\ref{thm: count of G sub 1}, instead of a generic $(J,\nu)$, we will use $(J,0)$ with $J=j_\Sigma\times j_{\C\P^1}$ a product complex structure to carry out our calculations. This is legitimate because of certain automatic transversality results which will be discussed in Section~\ref{subsubsection: regularity}.

{\em From now on assume that $J=j_\Sigma\times j_{\C\P^1}$ is a product complex structure and moreover $(\Sigma,j_\Sigma)$ has trivial automorphism group.}

\subsubsection{Regularity} \label{subsubsection: regularity}

Let $\mathcal{M}_g(X,A,J)$ be the moduli space of holomorphic maps
$$u: (F,j) \to (X,J)$$
in the class $A$, modulo automorphisms of the domain. Here $F$ is a closed surface of genus $g$ and $j$ ranges over all complex structures on $F$.

Recall Hofer-Lizan-Sikorav's automatic transversality theorem \cite[Theorem~1]{HLS} (first suggested by Gromov~\cite{Gr}), which states that the moduli space $\mathcal{M}_g(X,A,J)$ is regular in a neighborhood of $u$, provided $u$ is embedded and
$$c_1(A)=\langle c_1(TM),A\rangle>0.$$
If $N$ is the normal bundle of an embedded curve $u$, then $c_1(A)>0$ is equivalent to $c_1(N)> 2g-2$ by the adjunction formula $c_1(A)=c_1(T\Sigma)+c_1(N)$.

The following lemma is a consequence of automatic transversality:

\begin{lemma} \label{lemma: regularity and dimension}
The moduli space $\mathcal{M}_g(X,A,J)$ is regular and $$\dim_{\R}\mathcal{M}_g(X,A,J)=6g+2.$$
\end{lemma}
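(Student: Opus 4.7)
The plan is to compute the dimension from the standard index formula and to establish regularity by showing that every element of $\mathcal{M}_g(X,A,J)$ is embedded, so that the Hofer-Lizan-Sikorav criterion recalled just above applies directly.

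First, I would compute $c_1(A)$ from the product decomposition $TX = \pi_1^*T\Sigma \oplus \pi_2^*T\C\P^1$: pairing with $[\Sigma]$ and $[\C\P^1]$ gives $\langle c_1(TX),[\Sigma]\rangle = 2-2g$ and $\langle c_1(TX),[\C\P^1]\rangle = 2$, so
$$c_1(A) = (2-2g) + 2g\cdot 2 = 2g+2.$$
The standard virtual dimension formula for unparametrized genus-$g$ curves in a complex surface (\,$n=2$\,), namely $\dim_{\R} = 2c_1(A) + 2(n-3)(1-g)$, then yields $2(2g+2) - 2(1-g) = 6g+2$, which is the asserted dimension.

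For regularity I would exploit the fact that $J$ is a product. Any $u\in\mathcal{M}_g(X,A,J)$ splits as $u = (u_1,u_2)$ with $u_1 : F\to \Sigma$ and $u_2 : F\to \C\P^1$ separately holomorphic. Intersecting $A$ with a fiber $\{p\}\times\C\P^1$ shows $\deg u_1 = 1$, and a degree-one holomorphic map between closed genus-$g$ Riemann surfaces is forced by Riemann-Hurwitz to be unramified, hence a biholomorphism. In particular $u$ is an injective immersion, so every element of the moduli space is embedded. It then remains to check the HLS hypothesis: by adjunction the normal bundle of such a $u$ satisfies
$$c_1(N) = c_1(A) - c_1(TF) = (2g+2) - (2-2g) = 4g,$$
which exceeds $2g-2$ for every $g\ge 0$, so Hofer-Lizan-Sikorav delivers regularity at every point of $\mathcal{M}_g(X,A,J)$.

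The only step with any content beyond a direct computation is the embeddedness claim, and even this is short given the product structure. Its real role is to justify the announcement just made that $(J,0)$ with $J$ a product complex structure suffices for the subsequent enumeration in Theorem~\ref{thm: count of G sub 1}: no generic perturbation $\nu$ is needed, because transversality is already automatic on the full moduli space.
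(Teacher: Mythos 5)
Your proof is correct and follows essentially the same route as the paper: both establish embeddedness via the degree-one projection $\pi_1\circ u$ being a biholomorphism, verify the Hofer--Lizan--Sikorav hypothesis by computing $c_1(N)=4g>2g-2$ (the paper via $A\cdot A$, you via adjunction — the same thing), and obtain the dimension from the standard index formula. The only difference is that you spell out the embeddedness step (degree one from intersecting with a fiber, then Riemann--Hurwitz), which the paper dismisses as clear.
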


\begin{proof}
Any curve $u\in \mathcal{M}_g(X,A,J)$ is clearly embedded, since the projection $\pi_1\circ u$ is a biholomorphism. For an embedded curve $u$ in the class $A=[\Sigma]+ 2g[\C\P^1]$, $$c_1(N)=A\cdot A= 4g>2g-2.$$ This implies the regularity of $\mathcal{M}_g(X,A,J)$.

Now $c_1(A)=c_1(T\Sigma)+c_1(N)=2g+2,$ and the Fredholm index is given by:
$$\op{ind}(u)= -\chi(F)+ 2c_1(A)= (2g-2)+ 2(2g+2)= 6g+2.$$
By the regularity of $\mathcal{M}_g(X,A,J)$, $\dim \mathcal{M}_g(X,A,J)=\op{ind}(u)=6g+2$.
\end{proof}

Let $u\in \mathcal{M}_g(X,A,J)$.  Since $\pi_1\circ u: F \to \Sigma$ must have degree one, it follows that $(F,j)$ must be biholomorphic to our chosen $(\Sigma,j_\Sigma)$.  Now fix a particular identification of $(F,j)$ with $(\Sigma,j_\Sigma)$; this has the effect of eliminating automorphisms of the domain. Hence there is a one-to-one correspondence between $u\in \mathcal{M}_g(X,A,J)$ and meromorphic functions (i.e., branched covers)
$$v=\pi_2\circ u:\Sigma\to \C\P^1$$
of degree $2g$ and $u$ can be recovered by taking the graph of the function $v$.

\begin{cor} \label{aardvark}
The moduli space $\mathcal{M}_{(J,0)}$ is regular and the moduli space $\mathcal{M}^{V}_{(J,0)}$ is transversely cut out in $\mathcal{M}_{(J,0)}$.
\end{cor}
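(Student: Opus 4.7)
The plan is to derive both claims from Lemma~\ref{lemma: regularity and dimension} together with the explicit graph description of curves in $\mathcal{M}_g(X,A,J)$ that was established in the paragraph preceding the corollary.

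For the regularity assertion, I would observe that with $\nu = 0$ the marked points play no role in the Cauchy--Riemann equation; they are simply free parameters on the domain. Consequently $\mathcal{M}^{irr}_{(J,0)}$ fibers over $\mathcal{M}_g(X,A,J)^{irr}$ with fiber an open subset of $\Sigma^{n+l({\bf s})}$ (the complement of the fat diagonal), once we use the identification $F = \Sigma$ that eliminates automorphisms of the domain. The regularity of the base supplied by Lemma~\ref{lemma: regularity and dimension} therefore transfers directly to the total space.

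The substantive part is the transversality of $\mathcal{M}^{V,irr}_{(J,0)}$ in $\mathcal{M}^{irr}_{(J,0)}$, which is cut out by the condition that the evaluation map $ev_V \colon \mathcal{M}^{irr}_{(J,0)} \to X^{2g}$ at the last $2g$ marked points lands in $V^{2g}$. Any $u \in \mathcal{M}^{irr}_{(J,0)}$ is the graph of a meromorphic function $v = \pi_2 \circ u \colon \Sigma \to \C\P^1$ of degree $2g$, and for $u \in \mathcal{M}^{V,irr}_{(J,0)}$ the multiplicity condition ${\bf s} = (1,\dots,1)$ translates into the statement that $v$ has a simple pole at each $x'_i$. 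In particular, $dv_{x'_i} \colon T_{x'_i}\Sigma \to T_\infty \C\P^1$ is an isomorphism onto the normal direction to $V$ in $X$ at the point $u(x'_i)$.

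Transversality would then follow from varying the marked points $x'_i$ independently in the domain: for any $\eta_i \in T_{x'_i}\Sigma$, the induced infinitesimal change in $u(x'_i) = (x'_i, v(x'_i))$ has normal component $dv_{x'_i}(\eta_i)$, which sweeps out all of $T_\infty \C\P^1$ as $\eta_i$ ranges over $T_{x'_i}\Sigma$. Taking all $\eta_i$ independently shows that the linearization of $ev_V$ surjects onto the normal bundle of $V^{2g}$ in $X^{2g}$, which is precisely the required transversality. The one identification that needs care --- ``multiplicity $1$ intersection with $V$'' equals ``simple pole of $v$'' --- is immediate because $V = \Sigma \times \{\infty\}$ is cut out linearly in the fiber direction, so once this translation is in hand the argument is essentially formal.
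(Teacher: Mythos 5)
Your proof is correct and follows essentially the same route as the paper: regularity of $\mathcal{M}^{irr}_{(J,0)}$ comes for free from Lemma~\ref{lemma: regularity and dimension} since the marked points are merely adjoined parameters, and transversality to $V^{l(\mathbf{s})}$ is obtained by varying the marked points $\mathbf{x}'$ in the domain while keeping $u$ fixed. Your extra step of translating ``$u$ meets $V$ transversely at $x_i'$'' into ``$v=\pi_2\circ u$ has a simple pole at $x_i'$'' just makes explicit what the paper summarizes as ``$u$ is transverse to $V$ ... by definition.''
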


By definition, all curves of $\mathcal{M}^{V}_{(J,0)}$ are irreducible.

\begin{proof}
The moduli space $\mathcal{M}_{(J,0)}$ is regular by Lemma~\ref{lemma: regularity and dimension}, since we are just adjoining the variables ${\bf x}=(x_1,\dots,x_{g+1}), {\bf x'}=(x_1',\dots,x_{2g}')$ to each $u: (F,j)\to X$.  The second statement of the corollary is more precisely stated as:
\begin{equation} \label{condition}
ev(\mathcal{M}^{V}_{(J,0)})\pitchfork (X^n\times V^{l({\bf s})}) ~\mbox{ in }~ X^{n+l({\bf s})},
\end{equation}
where $ev: \mathcal{M}_{(J,0)}\to X^{n+l({\bf s})}$ is the evaluation map $ev(u,{\bf x},{\bf x}')=(u({\bf x}),u({\bf x}'))$.
If $(u,{\bf x},{\bf x'})\in \mathcal{M}^{V}_{(J,0)}$, then $u$ is transverse to $V$ and intersects it at $2g$ points by definition. Hence Condition~\eqref{condition} is easily attained at $(u,{\bf x},{\bf x'})$ by considering the variations of ${\bf x'}$ while keeping $(u,{\bf x})$ fixed.
\end{proof}

\subsubsection{The Riemann-Roch theorem}
\label{subsubsection: Riemann-Roch}

Let $\Sigma$ be a closed Riemann surface of genus $g$. The classical Riemann-Roch theorem states that:
$$l(D)-l(K-D) =\op{deg}(D)+1-g,$$
where $D$ is a divisor on $\Sigma$, $K$ is the canonical divisor, and $l(D)$ is the dimension of the space of meromorphic functions on $\Sigma$ with poles at most at $D$.

\begin{lemma} \label{example}
If $\deg(D)= 2g-1+m$ with $m\geq 0$, then $l(D)=g+m$.
\end{lemma}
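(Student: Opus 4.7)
The plan is to apply the Riemann-Roch formula directly and observe that under the hypothesis the correction term $l(K-D)$ vanishes purely for degree reasons.

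First, I would compute the degree of $K-D$. Since $\deg(K) = 2g-2$ on a genus $g$ Riemann surface and $\deg(D) = 2g-1+m$ by hypothesis, we have
\[
\deg(K-D) = (2g-2) - (2g-1+m) = -1-m,
\]
which is strictly negative because $m \geq 0$.

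Next I would argue $l(K-D)=0$. If $f$ were a nonzero meromorphic function with $(f) + (K-D) \geq 0$, then taking degrees gives $\deg((f)) + \deg(K-D) \geq 0$. But principal divisors on a compact Riemann surface have degree zero, so this would force $\deg(K-D) \geq 0$, contradicting the previous step. Hence the only such function is zero and $l(K-D)=0$.

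Finally, plugging into the Riemann-Roch formula stated just above the lemma,
\[
l(D) = l(K-D) + \deg(D) + 1 - g = 0 + (2g-1+m) + 1 - g = g+m,
\]
as claimed. There is no genuine obstacle here: the lemma is a direct corollary of Riemann-Roch, and the only observation required is the degree computation that kills the $l(K-D)$ term.
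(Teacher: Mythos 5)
Your proof is correct and takes essentially the same route as the paper: compute $\deg(K-D)=-1-m<0$, conclude $l(K-D)=0$, and plug into Riemann--Roch. The only difference is that you spell out the (standard) reason why a divisor of negative degree has no nonzero sections, which the paper leaves implicit.
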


\begin{proof}
Since $\deg(K)=2g-2$, we have $\op{deg}(K-D)<0$ and $l(K-D)=0$. Hence $l(D)=\op{deg}(D)+1-g=(2g-1+m)+1-g=g+m$.
\end{proof}

\subsubsection{Proof of Theorem~\ref{thm: count of G sub 1}}
\label{subsubsection: proof of theorem count of G sub 1}

Let $\widetilde{\bf x}=(\widetilde{x}_1,\dots,\widetilde{x}_{g+1})$ and $\widetilde{\bf x}'=(\widetilde{x}'_1,\dots,\widetilde{x}'_{2g})$ be ordered $(g+1)$- and $2g$-tuples in $\Sigma$ and let ${\bf y}=(y_1,\dots,y_{g+1})$ be an ordered $(g+1)$-tuple in $\C\P^1$.  We assume that $(\widetilde{\bf x},\widetilde{\bf x}')$ and ${\bf y}$ are in generic position; in particular, the points of $(\widetilde{\bf x},\widetilde{\bf x}')$ are distinct and the points of ${\bf y}$ are distinct and different from $\infty$. If $(u,{\bf x},{\bf x'})\in \mathcal{M}^{V}_{(J,0)}\cap \overline{ev}^{-1}((\widetilde{\bf x},{\bf y}),(\widetilde{\bf x}',\boldsymbol{\infty}))$,\footnote{By slight abuse of notation, $(\widetilde{\bf x},{\bf y})$ means $((\widetilde{x}_1,y_1),\dots,(\widetilde{x}_{g+1},y_{g+1}))$. Also $\boldsymbol{\infty}$ is a tuple of $\infty$'s.} where $u:(F,j)\to (X,J)$ is a holomorphic map, then ${\bf x}=\widetilde{\bf x}$ and ${\bf x}'=\widetilde{\bf x}'$, since $(F,j)$ is identified with the Riemann surface $\Sigma$. Henceforth, we write ${\bf x}$ and ${\bf x}'$ instead of $\widetilde{\bf x}$ and $\widetilde{\bf x}'$.

The moduli spaces $\mathcal{M}^{V}_{(J,0)}$ and $\mathcal{M}_{(J,0)}$ are regular by Corollary~\ref{aardvark}.  We have the evaluation map
$$ev_{(J,0)}: \mathcal{M}^V_{(J,0)}\to X^n \times V^{l({\bf s})}.$$
We then write ${\frak E}=(ev_{(J,0)})^{-1}(({\bf x},{\bf y}),({\bf x'},\boldsymbol{\infty}))$.

\begin{lemma}\label{lemma: no bubbling}
If $({\bf x},{\bf x}')$ and ${\bf y}$ are generic, then ${\frak E}$ is compact.
\end{lemma}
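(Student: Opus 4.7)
The plan is to combine Gromov compactness for $V$-stable maps with a Riemann--Roch dimension count to rule out bubbling. By Theorem~7.4 of Ionel--Parker quoted above, $\overline{\mathcal M}^V_{(J,0)}$ is compact and $\overline{ev}_{(J,0)}$ is continuous, so $\overline{ev}^{-1}(({\bf x},{\bf y}),{\bf x}')$ is automatically compact. To prove ${\frak E}^{irr}$ is compact it therefore suffices to show every element of this preimage has irreducible domain. Since $J$ is the product complex structure and $g(\Sigma)\ge 1$, Riemann--Hurwitz applied to $\pi_1$ forces every non-constant holomorphic sphere in $X$ to be a (possibly multiply covered) fiber $\{p\}\times\C\P^1$. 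Suppose for contradiction that a limit $\overline u$ has bubbles; its principal component then carries class $[\Sigma]+k[\C\P^1]$ with $k<2g$ and (by adjunction in $X$) has genus $g$, so its degree-one holomorphic projection to $\Sigma$ is a biholomorphism. Hence the principal is the graph of a degree-$k$ meromorphic function $\overline v:\Sigma\to\C\P^1$.

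The next step is a pole-order inventory. Because a fiber bubble sits over a single point of $\Sigma$, a marked point lying on a bubble must coincide, as a point of $\Sigma$, with that bubble's base-point. Let $a$ (resp.\ $b$) be the number of $x_i$'s (resp.\ $x'_j$'s) on bubbles. On one hand, $\overline v$ must have poles at all $2g-b$ unabsorbed $x'_j$'s, so $k=\deg\overline v\ge 2g-b$; on the other, the total fiber degree carried by the bubbles is at least $a+b$ (one per absorbed marked point, since distinct marked points lie in distinct fibers), so $k\le 2g-a-b$. Together these force $a=0$ and saturation: every bubble is a simple cover of its fiber and every pole of $\overline v$ is simple. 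Consequently no $x_i$ lies on a bubble, each imposes $\overline v(x_i)=y_i$, and the pole divisor $D=\sum_{j\notin J_{\mathrm{bub}}}x'_j$ of $\overline v$ has degree $k=2g-b<2g$.

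The final step is the dimension count. Since $k<2g$, Riemann--Roch (as used in Lemma~\ref{example}) combined with Clifford's inequality in the special case gives $\dim L(D)\le g$. The linear evaluation map $L(D)\to\C^{g+1}$, $v\mapsto(v(x_1),\dots,v(x_{g+1}))$, then has image in a proper affine subspace of $\C^{g+1}$, which for generic ${\bf y}$ does not contain $(y_1,\dots,y_{g+1})$. This contradicts the existence of $\overline v$, so no bubbling can occur, and ${\frak E}^{irr}$ is compact. The main obstacle in executing the plan rigorously is the combinatorial bookkeeping of the second paragraph: degenerate configurations such as multi-cover bubbles, several bubbles over a single fiber, or nodes of $\overline u$ landing in $V$ must all be reduced to the same pole-order inequality, and this requires some care with the Ionel--Parker structure on $\overline{\mathcal M}^V_{(J,0)}$, but the resulting argument is uniform across all degeneration types.
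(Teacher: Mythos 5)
Your pole-order inventory only covers half of the possible degenerations: it presupposes that the principal (genus-$g$) component of the limit is the graph of a genuine meromorphic function, i.e.\ that it is \emph{not} contained in the divisor $V=\Sigma\times\{\infty\}$. But the limit can perfectly well satisfy $u(\Sigma)=V$. In that case $\overline v\equiv\infty$, the principal component has class $[\Sigma]$ (so $k=0$), all $2g$ units of fiber degree sit on bubbles, the $g+1$ marked points $x_i$ are forced onto $g+1$ of those bubbles (since $y_i\neq\infty$), and the $x'_j$ need \emph{not} lie on bubbles at all, because the image of the principal component is the whole of $V$ and therefore already contains every $(\widetilde x'_j,\infty)$. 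Your inequality $k=\deg\overline v\ge 2g-b$ fails outright here (the constant map $\infty$ has ``poles everywhere'' but degree $0$), and this configuration is a legitimate stable map in the class $[\Sigma]+2g[\C\P^1]$ meeting every point constraint, so no dimension count on $\overline v$ can exclude it. This is exactly Case (2) of the paper's proof, and ruling it out genuinely requires the relative (Ionel--Parker) structure that you defer to at the end: one applies the renormalization procedure of \cite[Proposition~6.6]{IP1} to the $u_i$ near $V$ to extract a nonzero meromorphic function $\xi$ on $\Sigma$ recording the leading-order approach of $u_i(\Sigma)$ to $V$; this $\xi$ has poles at most at the $2g$ bubble base points $D''$ and must vanish at the $\ge g+1$ points of ${\bf x}'$ not absorbed by bubbles, which contradicts $l(D'')=g+1$ from Lemma~\ref{example} for generic data. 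Without this second rescaled Riemann--Roch count the proof is incomplete.

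Your treatment of the other case (principal component not contained in $V$) is essentially the paper's Case (1), with some extra but correct bookkeeping showing that no $x_i$ can sit on a bubble and that each bubble is a simple cover of its fiber; that part is fine.
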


\begin{proof}
Arguing by contradiction, let $u_i\in {\frak E}$ be a sequence that converges to a limit curve $u:\Sigma'\to X$. Since $u$ is $(J,0)$-holomorphic, $\pi_1\circ u$ is holomorphic and one component of $\Sigma'$ must be biholomorphic to $\Sigma$. Hence $\Sigma'$ is obtained from $\Sigma$ by attaching spheres which are mapped by $u$ to $\{ x''_i \} \times \C \P^1$, $1\leq i\leq k$. Moreover, $k\leq 2g$ by the positivity of intersections, since $\langle u(\Sigma'), \Sigma\times\{pt\}\rangle=2g,$ where $\langle\cdot,\cdot\rangle$ is the intersection pairing.

\s\n (1) Suppose that $u(\Sigma)\not=V=\Sigma\times\{\infty\}$. Since $\langle u(\Sigma'),V\rangle=2g$ and the intersection points are $(x'_1,\infty),\dots,(x'_{2g},\infty)$, we must have ${\bf x}''=\{x_1'',\dots,x_k''\}\subset {\bf x}'$.\footnote{Here we are slightly abusing notation and viewing ${\bf x}'', {\bf x}'$ as sets.} We claim the map $u|_\Sigma$ cannot exist. Let us write $v=\pi_2\circ u|_\Sigma: \Sigma\to \C\P^1$. Then $v$ is a meromorphic function with poles at ${\bf x}'-{\bf x}''$, subject to $v(x_i)=y_i$ for all $i=1,\dots,g+1$. Hence $v\in l(D'-D'')$, where $D'=x_1'+\dots+x_{2g}'$ and $D''= x_1''+\dots+x_k''$. By Lemma~\ref{example}, $l(D'-D'')\leq g$. [Apply the lemma to $D'-\{x_1''\}$, which has degree $2g-1$. This gives us $l(D'-\{x_1''\})=g$.  Then observe that $l(D'-\{x_1''\})\geq l(D'-D'')$, more or less by definition.] On the other hand, there are more constraints $v(x_i)=y_i$ than there are linearly independent functions. Hence $v$ cannot exist provided ${\bf x}$ and ${\bf y}$ are generic, a contradiction.

\s\n (2) Now suppose that $u(\Sigma)=V$. Then $u$ consists of $u|_\Sigma$, together with $2g$ bubbles $\{ x_i''\} \times \C\P^1$, $i=1,\dots, 2g$. The set ${\bf x''}=\{x_1'',\dots,x_{2g}''\}$ contains ${\bf x}=\{x_1,\dots,x_{g+1}\}$ and may also contain $k\leq g-1$ elements of ${\bf x'}$. (Recall that $({\bf x},{\bf x}')$, ${\bf y}$ are in generic position.) We then apply the renormalization procedure of \cite[Proposition~6.6]{IP1} to the sequence $u_i$ restricted to a neighborhood of $V$. After choosing suitable restrictions and rescalings, the sequence $u_i$ converges to a  nonconstant meromorphic function $\xi:\Sigma \rightarrow \C \P^1$ which encodes how the curves $u_i (\Sigma)$ approach $V$. The function $\xi$ has poles at most at $D''=x_1''+\dots+x_{2g}''$ and zeros at $2g-k\geq g+1$ points of ${\bf x'}$. The details of this argument are left to the reader.
Since $l(D'')=g+1$ by Lemma~\ref{example}, we have at least as many constraints as linearly independent functions.  This implies that $\xi=0$, which is again a contradiction.
\end{proof}

By Corollary~\ref{aardvark} and Lemma~\ref{lemma: no bubbling}, $G_1$ is the number of meromorphic functions $v: \Sigma\to \C\P^1$ with poles at ${\bf x}'$ such that $v(x_i)=y_i$ for all $i=1,\dots,g+1$. By Lemma~\ref{example}, $l(x_1'+\dots+x_{2g}')=g+1$. On the other hand, since there are $g+1$ constraints $v(x_i)=y_i$, there is a unique solution for generic ${\bf x}$, ${\bf y}$. This shows that $G_1=\pm 1$.

\subsection{Second relative Gromov-Witten calculation}
\label{subsection: second GW calculation}

\subsubsection{Definitions}

Let $\Sigma=\overline{S}$, where $\overline{S}=S\cup D^2$ is obtained by capping off a page $S$ of an open book decomposition with connected binding as in Section~I.\ref{P1-subsubsection: overline W pm}. Also recall the point at infinity $z_\infty=\{\rho=0\}\in \overline{S}$.

Consider $(X,\omega)$, where $X=\Sigma\times \C\P^1$ and $\omega$ is a product symplectic form. Let $V=V_1\cup V_\infty$, where $V_*=\Sigma\times\{*\}$ and $*=1,\infty$.  We take $A=[\Sigma]+2g[\C\P^1]$, $g=g(\Sigma)=g(S)$, $n=1$, and ${\bf s}={\bf s}_1\cup {\bf s}_\infty$, where:
\begin{itemize}
\item ${\bf s}_1=(s_{1,1},\dots,s_{1,l})$,  $l({\bf s}_1)=l$, $\deg ({\bf s}_1)=2g$;
\item ${\bf s}_\infty=(1,\dots,1)$, $l({\bf s}_\infty)=2g$, $\deg ({\bf s}_\infty)=2g$; and
\item $l({\bf s})=l({\bf s}_1)+l({\bf s}_\infty)$.
\end{itemize}

Let $\mathcal{M}_{(J,\nu)}:=\mathcal{M}_{g,n+l({\bf s})}(X,A,J,\nu)$ and let
$$\mathcal{M}^{V_1,V_\infty}_{(J,\nu)}:= \mathcal{M}_{g,n,{\bf s}_1,{\bf s}_\infty}^{V_1,V_\infty}(X,A,J,\nu)\subset \mathcal{M}_{g,n,{\bf s}}^{V}(X,A,J,\nu)$$
be  the subset of $V$-regular maps $u:(F,j,x,{\bf x}'_1,{\bf x}'_\infty)\to X$, where
$${\bf x}'_1=(x'_{1,1},\dots,x'_{1,l({\bf s}_1)}),\quad {\bf x}'_\infty = (x'_{\infty,1},\dots,x'_{\infty,l({\bf s}_\infty)}),$$
and ${\bf x}'_*$, $*=1,\infty$, is mapped to $V_*$ with multiplicities ${\bf s}_*$.  Similarly, for $*=1,\infty$, let
$$\mathcal{M}^{V_*}_{(J,\nu)}:=\mathcal{M}_{g,n,{\bf s}_*}^{V_*}(X,A,J,\nu)$$
be the subset of $V_*$-regular maps $(u,{\bf x}'_*)$ such that ${\bf x}'_*$ is mapped to $V_*$ with multiplicities ${\bf s}_*$.

We define the evaluation map
$$\overline{ev}_{(J,\nu)}: \overline{\mathcal{M}}^{V_1,V_\infty}_{(J,\nu)} \to X\times V_1^{l({\bf s}_1)}\times V_\infty^{l({\bf s}_\infty)},$$
$$(u:(F,j,x,{\bf x}'_1,{\bf x}'_\infty)\to X)\mapsto(u(x), u({\bf x}'_1), u({\bf x}'_\infty)),$$
and, for $*=1,\infty$, we define the evaluation maps
$$\overline{ev}_{(J,\nu)}^{V_*}: \overline{\mathcal{M}}^{V_*}_{(J,\nu)} \to V_*^{l({\bf s}_*)}, \quad (u,{\bf x}'_*)\mapsto u({\bf x}'_*).$$
 Let $ev_{(J,\nu)}$ and $ev_{(J,\nu)}^{V_*}$ be the restrictions of $\overline{ev}_{(J,\nu)}$ and $\overline{ev}_{(J,\nu)}^{V_*}$ to $\mathcal{M}^{V_1,V_\infty}_{(J,\nu)}$ and $\mathcal{M}^{V_*}_{(J,\nu)}$, respectively.

Recall the closed curves $\overline{a}_i\subset \Sigma=\overline{S}$, $i=1,\dots,2g$, from Section~I.\ref{P1-coconut}.  Let $\overline{a}_i'$, $i=1\dots,2g$, be a simple closed curve homotopic to $\overline{a}_i$ and let us write $\overline{\bf a}'=(\overline{a}_1',\dots,\overline{a}'_{2g})$.

\begin{defn}
A pair $((z_0,y_0), {\bf z}'_1)$ consisting of $(z_0,y_0)\in X$ and ${\bf z}_1'\in \Sigma^{l({\bf s}_1)}$ is {\em generic with respect to $\overline{\bf a}'$} if $(z_0,y_0)$ is a generic point in $X$, $(z_0, {\bf z}'_1)$ is a generic $(l({\bf s}_1)+1)$-tuple in $\Sigma^{l({\bf s}_1)+1}$, and, in particular, $y_0\not = 1,\infty$ and the points of $(z_0, {\bf z}'_1)$ are disjoint and do not lie on any $\overline{a}'_i$.
\end{defn}

\begin{defn} \label{defn: G sub 2}
The relative Gromov-Witten invariant $G_2(\Sigma,{\bf s}_1)$ is defined as follows:
$$G_2(\Sigma,{\bf s}_1)=\langle (\overline{ev}_{(J,\nu)})_* ([\overline{\mathcal{M}}^{V_1,V_\infty}_{(J,\nu)}]),\{(z_0,y_0)\} \times {\bf z}'_1 \times \overline{a}'_1 \times \dots \times \overline{a}'_{2g}\rangle,$$
where $((z_0,y_0), {\bf z}_1')$ is generic with respect to $\overline{\bf a}'$. When we want to specify the constraints, we write $G_2(\Sigma,{\bf s}_1; \{(z_0,y_0)\} \times {\bf z}'_1 \times \overline{a}'_1 \times \dots \times \overline{a}'_{2g})$.
\end{defn}

The main theorem of this subsection is the following:

\begin{thm} \label{thm: calc of G sub 2}
If $\deg({\bf s}_1)=2g$, then $G_{2}(\Sigma,{\bf s}_1)$ does not depend on ${\bf s}_1$ and $G_{2}(\Sigma,{\bf s}_1)=\pm 1$.
\end{thm}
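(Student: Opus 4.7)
The plan is to adapt the proof of Theorem~\ref{thm: count of G sub 1}: take $(J,\nu)=(J,0)$ with $J$ a product complex structure on $X=\Sigma\times\C\P^1$, identify the $V$-regular irreducible $(J,0)$-holomorphic curves with graphs of meromorphic functions $v:\Sigma\to\C\P^1$ of degree $2g$, rule out bubbling via Riemann-Roch, and finally reduce the enumeration to the topological degree of an Abel-Jacobi-type map.

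The first step is the analogue of Corollary~\ref{aardvark}: $\mathcal{M}^{irr}_{(J,0)}$ is regular by Lemma~\ref{lemma: regularity and dimension}, and $\mathcal{M}^{V_1,V_\infty,irr}_{(J,0)}$ is transversely cut out in it, since the constraints that the marked points ${\bf x}'_*$ hit $V_*$ with the prescribed multiplicities can be made transverse by varying those marked points alone. Every $u\in\mathcal{M}^{V_1,V_\infty,irr}_{(J,0)}$ is thus the graph of a degree-$2g$ meromorphic function $v$ whose pole divisor has the form $D=\sum_{j=1}^{2g}p_j$ with $p_j\in\overline{a}_j$, such that $v-1$ vanishes to order $s_{1,i}$ at $z'_{1,i}$ and $v(z_0)=y_0$. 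Next come the analogues of Lemmas~\ref{lemma: no bubbling} and \ref{lemma: no other curves}: a sphere bubble on a fiber $\{x''\}\times\C\P^1$ would redistribute the zero or pole data of the residual meromorphic function and place it in an $L$-space of too small a dimension to satisfy the remaining marked-point constraints, contradicting Lemma~\ref{example} for generic data; the case where the principal component falls into $V_1$ or $V_\infty$ is handled by the Ionel-Parker renormalization \cite[Proposition~6.6]{IP1} as in case~(2) of Lemma~\ref{lemma: no bubbling}.

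The final step is the count. Setting $D'_1=\sum_i s_{1,i}z'_{1,i}$, the vanishing conditions on $v-1$ translate to $v-1\in L(D-D'_1)$. Since $\deg(D-D'_1)=0$, this space is trivial unless $D\sim D'_1$ in $\op{Pic}^{2g}(\Sigma)$, in which case it is one-dimensional over $\C$, and the remaining condition $v(z_0)=y_0$ then fixes the scalar uniquely for generic $(z_0,y_0)$. Hence $G_2(\Sigma,{\bf s}_1)$ equals the topological degree of the map
$$
\Phi:\overline{a}_1\times\cdots\times\overline{a}_{2g}\longrightarrow \op{Pic}^{2g}(\Sigma),\qquad (p_1,\ldots,p_{2g})\mapsto [p_1+\cdots+p_{2g}].
$$
Under the identification $\op{Pic}^{2g}(\Sigma)\cong\op{Jac}(\Sigma)$, the induced map on top homology sends $[\overline{a}_1]\times\cdots\times[\overline{a}_{2g}]$ to $[\overline{a}_1]\wedge\cdots\wedge[\overline{a}_{2g}]\in\Lambda^{2g}H_1(\op{Jac}(\Sigma))=H_{2g}(\op{Jac}(\Sigma))$. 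Because $\{\overline{a}_i\}$ arises from a cutting system of arcs on the page $S$, the classes $[\overline{a}_i]$ form an integral basis of $H_1(\overline{S};\Z)$, so this wedge equals the generator up to sign; with the standard orientations this gives $\deg\Phi=1$, and since the count depends on $D'_1$ only through its generic class, independence from ${\bf s}_1$ follows at once.

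The main obstacle I anticipate is the orientation bookkeeping needed to secure $\deg\Phi=+1$ rather than $-1$: this requires matching the orientations of each $\overline{a}_i$ fixed in Section~I against the complex orientation of $\op{Jac}(\Sigma)$, and one must also check that $\{[\overline{a}_i]\}$ is genuinely an integral basis of $H_1(\overline{S};\Z)$ (not merely a rational one) for the specific cutting system used. A secondary technical point is extending the bubble analysis of Lemmas~\ref{lemma: no bubbling} and \ref{lemma: no other curves} to the two-divisor setting $V=V_1\cup V_\infty$, so that the Riemann-Roch obstructions go through for bubble components attaching near either $V_1$- or $V_\infty$-tangencies.
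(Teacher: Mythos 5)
Your setup, regularity, and compactness steps coincide with the paper's (Lemmas~\ref{aardvark2}, \ref{lemma: no bubbling2}, \ref{lemma: no other curves 2}, \ref{lemma: count}), but your actual enumeration takes a genuinely different route. The paper first reduces to ${\bf s}_1=(1,\dots,1)$ via Lemma~\ref{lemma: inv}, then degenerates $\Sigma$ along separating curves into a chain of genus-one pieces (Section~\ref{subsubsection: reduction for G 2}, Lemma~\ref{lemma: limit pigeon}) and computes $G_2(T^2)=1$ by an explicit analysis of branched double covers and cross-ratios (Lemma~\ref{lemma: G two for T two}); multiplicativity under the degeneration then gives $G_2(\Sigma)=1$. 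You instead observe that, after Lemma~\ref{lemma: count}, a solution $v$ is equivalent to a point of $\Phi^{-1}([D'_1])$ for $\Phi:\overline{a}_1\times\dots\times\overline{a}_{2g}\to \op{Pic}^{2g}(\Sigma)$, and compute $\deg\Phi=\pm 1$ from the Pontryagin product in the Jacobian and the fact that $\{[\overline{a}_i]\}$ is an integral basis of $H_1(\overline{S};\Z)$ (which it is, since $S$ cut along the $a_i$ is a disk). This is cleaner in that it avoids both the degeneration formula and the genus-one computation, it handles all ${\bf s}_1$ uniformly, and the sign worry you raise is moot because the count in Lemma~\ref{lemma: count} is taken mod $2$. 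Interestingly, the paper uses the same Abel--Jacobi circle of ideas, but only to rule out bubbling under (${\frak G}_2$) in Lemma~\ref{lemma: no bubbling2}, not for the count itself.

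Two points need shoring up. First, equating $\#\Phi^{-1}([D'_1])$ with $\deg\Phi$ requires $[D'_1]$ to be a regular value of $\Phi$; for ${\bf s}_1=(1,\dots,1)$ a generic ${\bf z}'_1$ achieves this by Sard, but for non-reduced ${\bf s}_1$ the locus $Z_{{\bf s}_1}$ maps to a positive-codimension subvariety of $\op{Pic}^{2g}(\Sigma)$, and you must rule out that this subvariety sits inside the critical values of $\Phi$ (e.g.\ by perturbing the $\overline{a}_i$ within their homology classes, or by falling back on the paper's Lemma~\ref{lemma: inv}, whose vector-bundle argument over $\op{Sym}^{2g}(\Sigma)$ reduces everything to the reduced case). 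Second, your "independence from ${\bf s}_1$ follows at once" is doing real work: the invariants for different ${\bf s}_1$ are a priori different relative GW invariants, and the identification of each with $\deg\Phi$ is exactly what needs the regular-value statement above; the paper isolates this as Lemma~\ref{lemma: inv} for good reason.
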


\subsubsection{First steps in the calculation of $G_2$}

We will use $(J,\nu)=(J,0)$, where $J=j_\Sigma\times j_{\C\P^1}$ is a product complex structure on $\Sigma\times \C\P^1$.

There is a one-to-one correspondence between $(u,x,{\bf x}'_1,{\bf x}'_\infty)\in \mathcal{M}^{V_1,V_\infty}_{(J,0)}$ and meromorphic functions
$$v: (F,j,x,{\bf x}'_1,{\bf x}'_\infty) \to \C\P^1$$
satisfying $\Sigma=(F,j)$, $v({\bf x}'_1)=1$ and $v({\bf x}'_\infty)=\infty$, where the correspondence is given by $v=\pi_2 \circ u$. Moreover, if
$$ev_{(J,0)}(u,x,{\bf x}'_1,{\bf x}'_\infty)\in \{(z_0,0)\} \times {\bf z}'_1 \times \overline{a}'_1 \times \dots \times \overline{a}'_{2g},$$
then $(x,{\bf x}'_1)=(z_0,{\bf z}'_1)$ and $x'_{\infty,i}\in \overline{a}'_i$, $i=1,\dots,2g$.  Henceforth we identify $(x,{\bf x}'_1)=(z_0,{\bf z}'_1)$.  Since we have the freedom to postcompose $v$ by a fractional linear transformation, we may (and we will) assume that $y_0=0$.

The analog of Corollary~\ref{aardvark} is the following:

\begin{lemma} \label{aardvark2}
The moduli space $\mathcal{M}^{V_1}_{(J,0)}$ is regular.
\end{lemma}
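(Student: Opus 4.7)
The plan is to imitate the argument of Corollary~\ref{aardvark}, but with one new wrinkle: because the multiplicities ${\bf s}_1=(s_{1,1},\dots,s_{1,l})$ need not all equal $1$, the tangencies of $u$ to $V_1$ at the contact points are no longer generically transverse, so ``varying ${\bf x}'$ while keeping $u$ fixed'' does not suffice. Instead I would set up the linearized problem on a suitably twisted normal bundle and invoke automatic transversality.

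First I would observe, exactly as in the proof of Corollary~\ref{aardvark}, that by Lemma~\ref{lemma: regularity and dimension} the moduli space $\mathcal{M}^{irr}_{(J,0)}$ is regular, since adjoining the marked points $x,{\bf x}'_1$ to an already regular moduli space preserves regularity. Next, for $(u,x,{\bf x}'_1)\in \mathcal{M}^{V_1,irr}_{(J,0)}$, I would write $u=(\mathrm{id},v)$ where $v=\pi_2\circ u:\Sigma\to\C\P^1$ is meromorphic of degree $2g$, and observe that the $V_1$-regularity with multiplicities ${\bf s}_1$ is the divisorial condition $v^{-1}(1)=\sum_{i=1}^l s_{1,i}\, x'_{1,i}$.

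The main step is the following. The linearized Cauchy--Riemann operator at $u$ acts on sections of the normal bundle $N=v^*T\C\P^1$, which has $\deg N=2\cdot 2g=4g$. A deformation $\xi\in\Gamma(N)$ preserves the prescribed contact multiplicity $s_{1,i}$ at $x'_{1,i}$ if and only if, after absorbing the infinitesimal motion of $x'_{1,i}$, it vanishes to order $s_{1,i}-1$ there. The resulting deformation complex at $u$ is therefore governed by $\bar\partial$ acting on sections of the twisted line bundle $N(-D)$, where $D=\sum_{i=1}^l(s_{1,i}-1)\,x'_{1,i}$, augmented by the finite-dimensional space parametrizing the positions of $x$ and the $x'_{1,i}$. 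A direct computation gives
\[
\deg N(-D)=4g-\deg D=4g-(2g-l)=2g+l\geq 2g-1>2g-2,
\]
so by the Hofer--Lizan--Sikorav automatic transversality theorem (equivalently, by Serre duality one has $H^1(\Sigma,N(-D))^{*}\cong H^0(\Sigma,K_\Sigma\otimes N(-D)^{-1})=0$ since this dual bundle has negative degree) the operator $\bar\partial$ on $\Gamma(N(-D))$ is surjective. This yields regularity of $\mathcal{M}^{V_1,irr}_{(J,0)}$ at $u$.

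The hardest part is the bookkeeping in Step~3: correctly identifying the twist $N(-D)$ corresponding to the higher-order tangency condition, and accounting for the extra finite-dimensional freedom contributed by the varying positions of the contact marked points. Once this linear-algebraic setup is in place, the degree computation and automatic transversality finish the proof cleanly; no additional analytic input is required beyond what is already invoked in Corollary~\ref{aardvark}.
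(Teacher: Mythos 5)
Your argument is correct, but it takes a genuinely different route from the paper's. You work pointwise: at a fixed $(u,x,{\bf x}'_1)$ you identify the deformation complex of the tangency condition with the $\bar\partial$-operator on the twisted normal bundle $N(-D)$, $D=\sum_i(s_{1,i}-1)x'_{1,i}$, and kill the obstruction space by the degree count $\deg N(-D)=2g+l({\bf s}_1)>2g-2$ together with Serre duality. (Note that it is the Serre-duality vanishing, not the Hofer--Lizan--Sikorav theorem as literally stated for unconstrained embedded curves, that actually closes this step, so keep that parenthetical.) The paper instead argues globally: it considers the map $\pi:\mathcal{M}\to\op{Sym}^{2g}(\Sigma)$ recording the intersection divisor of $u$ with $V_1$, observes via Lemma~\ref{example} that every fiber is an open dense subset of the $(g+1)$-dimensional space $H^0(\Sigma,\mathcal{O}_D)$, so that $\pi$ is the restriction of a holomorphic vector bundle over $\op{Sym}^{2g}(\Sigma)$ and hence a submersion, and then realizes $\mathcal{M}^{V_1,irr}_{(J,0)}$ as $\pi^{-1}(Z_{{\bf s}_1})$ for the submanifold $Z_{{\bf s}_1}$ of divisors of type ${\bf s}_1$. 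Both proofs ultimately rest on the same Riemann--Roch vanishing, but the global fibration packages all the strata ${\bf s}_1$ at once, which is precisely what the paper exploits afterwards in Lemma~\ref{lemma: inv} to show that $G_2(\Sigma,{\bf s}_1)$ is independent of ${\bf s}_1$; your computation establishes regularity for each fixed ${\bf s}_1$ but would need to be supplemented by the bundle picture (or a separate cobordism argument) to recover that independence.
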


\begin{proof}
Let $\mathcal{M}$ be the moduli space of $V$-regular $(J,0)$-holomorphic maps  from a closed genus $g$ surface $(F,j)$ to $X$ in the class $A$. $\mathcal{M}$ is regular by Lemma~\ref{lemma: regularity and dimension}.  We consider the map
$$\pi:\mathcal{M}\to \op{Sym}^{2g}(\Sigma),$$
which sends $u$ to the intersection of $u$ with $V_1$, counted with multiplicity. Each fiber $\pi^{-1}(D)$ can be viewed as a dense open subset of the space $H^0(\Sigma,\mathcal{O}_D)$ of meromorphic sections of $\Sigma$ with poles at most at $D$.  By Lemma~\ref{example},  $H^0(\Sigma,\mathcal{O}_D)$ is a complex vector space of dimension $l(D)=g+1$ for each $D\in \op{Sym}^{2g}(\Sigma)$; the denseness of $\pi^{-1}(D)$ is a consequence of the fact that $l(D')= g$ if $\deg(D')=2g-1$ and $D'\leq D$ (i.e., $D-D'$ is effective). Hence $\pi$ is the restriction of a holomorphic vector bundle
$$\widetilde\pi: \mathcal{E}\to \op{Sym}^{2g}(\Sigma)$$
to the dense open subset $\mathcal{M}$, where $\widetilde\pi^{-1}(D)=H^0(\Sigma,\mathcal{O}_D)$. In particular, $\pi$ is a submersion.

Let $Z_{{\bf s}_1}\subset \op{Sym}^{2g}(\Sigma)$ be the submanifold consisting of divisors of type $D=\sum_{i=1}^{l({\bf s}_1)} s_{1,i} w'_{1,i}$, $\deg(D)=\deg({\bf s}_1)=2g$, with pairwise distinct $w'_{1,i}$. Then $\pi$ is transverse to $Z_{{\bf s}_1}\subset \op{Sym}^{2g}(\Sigma)$ since $\pi$ is a submersion. This implies the regularity of $\mathcal{M}^{V_1}_{(J,0)}$.
\end{proof}

Consider the following genericity conditions:
\begin{enumerate}
\item[(${\frak G}_1$)] $((z_0,0),{\bf z}'_1)$ is generic with respect to $\overline{\bf a}'=\overline{\bf a}$.
\item[(${\frak G}_2$)] $(z_0,0)=(z_\infty,0)$,\footnote{Here $z_\infty$ is the point at infinity of $\Sigma=\overline{S}$.} ${\bf z}'_1$ is generic, and $\overline{\bf a}'$ is generic subject to the condition that each $\overline{a}'_i$ pass through $z_\infty$; in particular, the points of ${\bf z}'_1$ are disjoint and do not lie on any $\overline{a}_i'$.
\end{enumerate}
Let us write
$$ {\frak E}=({ev}_{(J,0)})^{-1}(\{(z_0,0)\} \times {\bf z}'_1 \times \overline{a}'_1 \times \dots \times \overline{a}'_{2g}),$$
The following lemma is analogous to Lemma~\ref{lemma: no bubbling}:

\begin{lemma}\label{lemma: no bubbling2}
If (${\frak G}_1$) or (${\frak G}_2$) holds, then ${\frak E}$ is compact.
\end{lemma}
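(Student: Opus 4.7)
The plan is to follow the strategy of Lemma~\ref{lemma: no bubbling}, adapted to the two-divisor setting $V=V_1\cup V_\infty$. Suppose for contradiction there is a sequence $u_i\in{\frak E}$ with no convergent subsequence in ${\frak E}$. By the Ionel--Parker compactness theorem \cite[Theorem~7.4]{IP1}, a subsequence converges to a $V$-stable map $u:\Sigma'\to X$ in $\overline{\mathcal{M}}^{V_1,V_\infty}_{(J,0)}$; the constraints pass to the limit by continuity of the evaluation map, so $u\notin{\frak E}$ forces $u$ to have an irreducible component mapped into $V_1$ or $V_\infty$. Since $J$ is split and $u$ has class $A=[\Sigma]+2g[\C\P^1]$, the projection $\pi_1\circ u$ is holomorphic of total degree one, so there is a distinguished component $\Sigma_0\subset\Sigma'$ with $\pi_1\circ u|_{\Sigma_0}:\Sigma_0\to\Sigma$ a biholomorphism, while every other component is a bubble in a fiber $\{x_i''\}\times\C\P^1$ of class $b_i[\C\P^1]$ with $\sum b_i\leq 2g$. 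A bubble is transverse to both $V_1$ and $V_\infty$, so the $V$-component must be $\Sigma_0$ itself; hence $v:=\pi_2\circ u|_{\Sigma_0}$ is identically $1$ or identically $\infty$.

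Case $v\equiv\infty$: $u|_{\Sigma_0}$ identifies $\Sigma_0$ with $V_\infty$. Each bubble attaches to $\Sigma_0$ through a node at $(x_i'',\infty)\in V_\infty$, which absorbs the entirety of the bubble's $V_\infty$-intersection (of multiplicity $b_i$); the bubble's $V_1$-intersection at $(x_i'',1)$, of multiplicity $b_i$, must then be witnessed by $V_1$-marked points on the bubble with $\Sigma$-projection $x_i''$. By the genericity of ${\bf z}_1'$, at most one such marked point can exist, forcing $x_i''=z_{1,j}'$ and $b_i=s_{1,j}$ and setting up a bijection between bubbles and points of ${\bf z}_1'$. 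Summing gives $\sum b_i=\sum_j s_{1,j}=2g$, exhausting the bubble class budget. But the distinguished marked point $x$ satisfies $u(x)=(z_0,0)\notin V_\infty$, so $x$ must lie on a further bubble at $z_0$ of positive class---impossible, since $z_0\neq z_{1,j}'$ under either $({\frak G}_1)$ (generic $z_0$) or $({\frak G}_2)$ (where $z_0=z_\infty$ is distinct from the generic ${\bf z}_1'$).

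Case $v\equiv 1$ is symmetric: bubble-nodes now absorb $V_1$-intersections, while the transverse $V_\infty$-intersections (each $s_{\infty,\ell}=1$) force each bubble to have $b_i=1$ and $x_i''=x_{\infty,\ell}'$, accounting for all $2g$ units of bubble class. Again the marked point $x$ requires an additional bubble at $z_0$, but $z_0=x_{\infty,\ell}'\in\overline{a}_\ell$ is excluded by $({\frak G}_1)$ (generic $z_0$) or $({\frak G}_2)$ (where the generic $\overline{a}_\ell$ can be chosen to avoid $z_\infty$). Both contradictions establish compactness of ${\frak E}$.

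The main obstacle is the multiplicity bookkeeping for $V$-stable limits: specifically, the claim that the node connecting a bubble to a component lying in $V$ absorbs a $V$-intersection equal to the bubble's full $\C\P^1$-class at that $V$-stratum, while the remaining $V$-intersections of the bubble are witnessed by marked points with matching total multiplicity. These bookkeeping rules follow from the definition of $V$-stable maps in \cite[\S6--7]{IP1}, after which the case analysis above, combined with the genericity hypotheses $({\frak G}_1)$ and $({\frak G}_2)$, forces the contradictions directly.
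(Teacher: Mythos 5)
Your argument is essentially the paper's for condition $({\frak G}_1)$ and for the sub-case of $({\frak G}_2)$ where the section component lands in $V_\infty$, but it has a genuine gap in the remaining sub-case of $({\frak G}_2)$. When the section component $\Sigma_0$ is mapped into $V_1$, the $2g$ bubbles sit at points $z'_{\infty,i}\in\overline{a}_i$, and you dismiss the possibility that one of them passes through $(z_0,0)=(z_\infty,0)$ by asserting that ``the generic $\overline{a}_\ell$ can be chosen to avoid $z_\infty$.'' That is false in this setting: the $\overline{a}_i$ are the completions through the capping disk of the basis arcs of the page, and every $\overline{a}_i$ passes through the point at infinity $z_\infty$ by construction. (This is exactly why the definition of a generic pair requires $z_0$ and ${\bf z}'_1$ to avoid the $\overline{a}_i$, and why $({\frak G}_2)$ --- where $z_0=z_\infty$ is deliberately \emph{not} generic --- is singled out as a separate hypothesis at all.) So the degenerate configuration ``$V_1$ together with bubbles $\{z'_{\infty,i}\}\times\C\P^1$, one of them at $z'_{\infty,i}=z_\infty$'' does satisfy all the point and curve constraints, and no contradiction arises from incidence conditions alone.

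Ruling out this configuration requires a different mechanism, which is where the paper does real work: one applies the renormalization procedure of Ionel--Parker to the sequence $u_i$ near $V_1$, producing a meromorphic function $\xi$ on $\Sigma$ whose divisor is $D_2-D_1$ with $D_2\in Z_{{\bf s}_1}$ (the prescribed tangency divisor determined by ${\bf z}'_1$) and $D_1$ a degree-$2g$ divisor supported on the $\overline{a}_i$ with at least one point equal to $z_\infty$. Since the space of such $D_1$ has real dimension $<2g$, its image in $\op{Pic}^{2g}(\Sigma)$ is a proper subset, and by the Abel--Jacobi theorem a generic $D_2$ is not linearly equivalent to any such $D_1$; hence $\xi$ cannot exist. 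Your proof is missing this entire step, and without it the compactness claim under $({\frak G}_2)$ is not established.
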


\begin{proof}
Arguing by contradiction, let $u_i\in {\frak E}$ be a sequence that converges to a limit curve $u:\Sigma'\to X$. As argued in Lemma~\ref{lemma: no bubbling}, $\Sigma'$ is obtained from $\Sigma$ by attaching spheres and a nonconstant sphere bubble must be mapped to some $\{w\} \times \C\P^1$. We have the homological constraint:
\begin{equation}\label{eqn: 2g}
\langle u(\Sigma'),V_1\rangle = \langle u(\Sigma'),V_\infty\rangle=2g.
\end{equation}

\s\n (1) Suppose (${\frak G}_1$) holds. If $w\not\in {\bf z}'_1$, then $V_1$ must be a component of $u(\Sigma')$; otherwise, $\langle u(\Sigma'), V_1\rangle \geq 2g+1$, which contradicts Equation~\eqref{eqn: 2g}.  Hence $u(\Sigma')$ consists of $V_1$ and $2g$ bubbles $\{z'_{\infty,i}\}\times \C\P^1$, where $z'_{\infty,i}\in \overline{a}_i$, $i=1,\dots,2g$, and $w$ is equal to some $z'_{\infty,i}$. This contradicts the fact that $u(\Sigma')$ passes through $(z_0,0)$.

On the other hand, if $w\in {\bf z}'_1$, then $w\not \in \overline{\bf a}$.  Again, by Equation~\eqref{eqn: 2g}, $u(\Sigma')$ consists of $V_\infty$ and bubbles $\{z'_{1,i}\}\times \C\P^1$, $i=1,\dots,l({\bf s}_1)$, of total multiplicity $2g$, and this contradicts the fact that $u(\Sigma')$ passes through $(z_0,0)$.

\s\n (2) Suppose (${\frak G}_2$) holds. If $w\in {\bf z}'_1$, then the proof is the same as (1).

On the other hand, if $w\not\in {\bf z}'_1$, then $u(\Sigma')$ consists of $V_1$ and $2g$ bubbles $\{z'_{\infty,i}\}\times \C\P^1$, $z'_{\infty,i}\in \overline{a}'_i$. Since $u(\Sigma')$ passes through $(z_\infty,0)$, we must have $z'_{\infty,i}=z_\infty$ for one or more $i=1,\dots,2g$. Let $D(\overline{\bf a}')$ be the set of divisors $z'_{\infty,1}+\dots+z'_{\infty,2g}$, $z'_{\infty,i}\in \overline{a}'_i$, such that $z'_{\infty,i}=z_\infty$ for at least one $i$. Applying the renormalization procedure of \cite[Proposition 6.6]{IP1} as in Lemma~\ref{lemma: no bubbling}, we obtain a meromorphic function $\xi: \Sigma\to \C\P^1$ with divisor $D(\xi)=D_2- D_1$, where $D_1\in D(\overline{\bf a}')$ and $D_2\in Z_{{\bf s}_1}$.

By the Abel-Jacobi theorem, the Picard group $\op{Pic}^{2g}(\Sigma)$ of divisors of degree $2g$ modulo linear equivalence is isomorphic to a $g$-dimensional complex torus. Since $\dim_\R D(\overline{\bf a}')<2g$, its image in $\op{Pic}^{2g}(\Sigma)$ has $\op{dim}_\R<2g$.  Hence, a generic $D_2\in Z_{{\bf s}_1}$ is not linearly equivalent to any $D_1\in D(\overline{\bf a}')$, provided $\overline{\bf a}'$ is generic. This contradicts the existence of $\xi$.
\end{proof}

\begin{lemma} \label{lemma: count}
If (${\frak G}_1$) or (${\frak G}_2$) holds, then the invariant $G_2(\Sigma,{\bf s}_1)$  is the count of holomorphic maps
$$v: (\Sigma,z_0,{\bf z}'_1,{\bf z}'_\infty) \to \C\P^1,$$
such that $v(z_0)=0$, $v(z'_{1,i})=1$ with multiplicity $s_{1,i}$ for $i=1,\dots,l({\bf s}_1)$, $v({\bf z}'_\infty)=\infty$, and $z'_{\infty,i}\in \overline{a}'_i$ for $i=1,\dots,2g$.
\end{lemma}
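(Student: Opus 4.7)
The plan is to assemble the machinery already set up in this subsection and then rewrite the incidence conditions on $u$ as pointwise conditions on the meromorphic projection $v=\pi_2\circ u$. By Lemma~\ref{aardvark2}, $\mathcal{M}^{V_1,irr}_{(J,0)}$ is regular, and by Lemmas~\ref{lemma: no bubbling2} and \ref{lemma: no other curves 2}, under either genericity hypothesis the fiber ${\frak E}={\frak E}^{irr}$ is compact and consists entirely of irreducible, $V$-regular curves. What remains analytically is the transversality of the full evaluation map $ev^{irr}_{(J,0)}$ to the product cycle $\{(z_0,0)\}\times {\bf z}'_1\times \overline{a}_1\times\dots\times\overline{a}_{2g}$.

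For the $V_\infty$-factors, I would follow the argument of Corollary~\ref{aardvark}: because $V_\infty$ is contacted with multiplicities all equal to $1$, varying the marked points ${\bf x}'_\infty$ along the (fixed) curve $u$ sweeps out all tangent directions to $V_\infty^{l({\bf s}_\infty)}$, so transversality to $\overline{a}_1\times\dots\times \overline{a}_{2g}$ is guaranteed by the genericity of the arcs $\overline{a}_i$. Transversality to $\{(z_0,0)\}$ at the free marked point $x$ and to ${\bf z}'_1$ at the $V_1$-contact points is immediate from Lemma~\ref{aardvark2} together with the description of the fibers $\pi^{-1}(D)=H^0(\Sigma,\mathcal{O}_D)$ (of dimension $g+1$ by Lemma~\ref{example}): the value of $v$ at $g+1$ generic points is free. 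Since \cite[Theorem~7.4]{IP1} guarantees that $\overline{\mathcal{M}}^{V_1,V_\infty}_{(J,0)}\setminus \mathcal{M}^{V_1,V_\infty,irr}_{(J,0)}$ has real codimension at least two, the intersection pairing defining $G_2(\Sigma,{\bf s}_1)$ reduces to the mod-$2$ count of ${\frak E}^{irr}$.

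Finally, I would invoke the bijection $u\leftrightarrow v=\pi_2\circ u$ recorded at the beginning of this subsection (with the domain identified with $\Sigma$, and $y_0=0$ after the fractional linear normalization) to translate the incidence conditions pointwise: $u(x)=(z_0,0)$ becomes $v(z_0)=0$; the $V_1$-contact of multiplicity $s_{1,i}$ at $z'_{1,i}$ becomes $v(z'_{1,i})=1$ to order $s_{1,i}$; and the simple $V_\infty$-contact at $z'_{\infty,i}$ becomes a simple pole of $v$ at $z'_{\infty,i}\in \overline{a}_i$. This produces exactly the stated count.

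The main obstacle I anticipate is the simultaneous transversality across the three factor types, since the $V_1$-contact profile ${\bf s}_1$ may be nontrivial while the $V_\infty$-contacts are all simple and must moreover lie on prescribed arcs. However, this is already controlled by the submersive structure of $\pi:\mathcal{M}\to \op{Sym}^{2g}(\Sigma)$ built in the proof of Lemma~\ref{aardvark2}, together with the standard variation of marked points used in Corollary~\ref{aardvark}, so the verification should be essentially routine.
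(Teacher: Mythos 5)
Your proposal is correct and follows essentially the same route as the paper, whose proof of this lemma is simply the citation of Lemmas~\ref{aardvark2}, \ref{lemma: no bubbling2}, and \ref{lemma: no other curves 2}; you have merely made explicit the routine evaluation-map transversality and the translation of the incidence conditions through $v=\pi_2\circ u$ that the paper leaves implicit.
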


\begin{proof}
This follows from Lemmas~\ref{aardvark2} and \ref{lemma: no bubbling2}.
\end{proof}

\begin{lemma} \label{lemma: inv}
$G_{2}(\Sigma,{\bf s}_1)$ does not depend on ${\bf s}_1$.
\end{lemma}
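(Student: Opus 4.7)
The plan is to use Lemma~\ref{lemma: count} to rewrite $G_2(\Sigma,{\bf s}_1)$ as the mod $2$ degree of a classical Abel--Jacobi map that manifestly does not involve ${\bf s}_1$. First I would observe that any $v:\Sigma\to\C\P^1$ counted by Lemma~\ref{lemma: count} has $v-1$ of divisor exactly $D'_1-D'_\infty$, where $D'_1=\sum_i s_{1,i}z'_{1,i}$ and $D'_\infty=\sum_i z'_{\infty,i}$, because the equality $\deg({\bf s}_1)=2g=\deg({\bf s}_\infty)$ forces the prescribed zeros and poles to exhaust those of $v-1$. Consequently such a $v$ exists if and only if $D'_1\sim D'_\infty$ in $\op{Pic}^{2g}(\Sigma)$; when it does, $v-1$ is unique up to a nonzero scalar (as $\deg(D'_1-D'_\infty)=0$), and the scalar is pinned down by the remaining condition $(v-1)(z_0)=-1$, which is well-posed since $z_0$ is generic and hence not a pole of $v-1$.

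This reduces $G_2(\Sigma,{\bf s}_1)$ to the mod $2$ count of tuples ${\bf z}'_\infty\in\overline{a}_1\times\cdots\times\overline{a}_{2g}$ satisfying $\sum_i z'_{\infty,i}\sim D'_1$, equivalently the mod $2$ number of preimages of $[D'_1]\in\op{Pic}^{2g}(\Sigma)$ under the Abel--Jacobi map
$$\mu_{\overline{\bf a}}:\overline{a}_1\times\cdots\times\overline{a}_{2g}\to\op{Pic}^{2g}(\Sigma),\qquad (z'_{\infty,i})_i\mapsto\Bigl[\textstyle\sum_i z'_{\infty,i}\Bigr].$$
Both source and target are closed manifolds of real dimension $2g$, so at any regular value of $\mu_{\overline{\bf a}}$ the mod $2$ count of preimages equals the mod $2$ degree of $\mu_{\overline{\bf a}}$, a topological invariant independent of the chosen target.

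To finish I would invoke genericity: by Sard's theorem, the regular values of $\mu_{\overline{\bf a}}$ form an open dense subset of $\op{Pic}^{2g}(\Sigma)$, and for each ${\bf s}_1$ with $\deg({\bf s}_1)=2g$ the map $(z'_{1,i})_i\mapsto[\sum_i s_{1,i}z'_{1,i}]$ has dense image because the ordinary Abel--Jacobi map $\op{Sym}^{2g}(\Sigma)\to\op{Pic}^{2g}(\Sigma)$ is surjective (as $2g\geq g$). Hence for a generic ${\bf z}'_1$ the class $[D'_1]$ is a regular value of $\mu_{\overline{\bf a}}$, and $G_2(\Sigma,{\bf s}_1)$ equals the mod $2$ degree of $\mu_{\overline{\bf a}}$, which involves neither ${\bf s}_1$ nor ${\bf z}'_1$. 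The step most in need of care is verifying the bijection between elements of ${\frak E}^{irr}$ and preimages of $[D'_1]$ under $\mu_{\overline{\bf a}}$: one must check that each preimage ${\bf z}'_\infty$ genuinely yields a $V$-regular element of ${\frak E}^{irr}$, which uses the simple-intersection condition ${\bf s}_\infty=(1,\ldots,1)$ together with the generic position of $\overline{\bf a}$ relative to $z_0$ and ${\bf z}'_1$.
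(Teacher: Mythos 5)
Your reduction of the count to preimages of the Abel--Jacobi type map $\mu_{\overline{\bf a}}$ is correct, and it is a genuinely different route from the paper's: the paper instead observes (via the proof of Lemma~\ref{aardvark2}) that the moduli spaces $\mathcal{M}^{V_1,irr,{\bf s}_1}_{(J,0)}$ for all ${\bf s}_1$ glue into the single family $\pi:\mathcal{M}\to \op{Sym}^{2g}(\Sigma)$ and concludes by a generic intersection argument inside that one family, whereas you identify every $G_2(\Sigma,{\bf s}_1)$ with one topological invariant. However, your final genericity step has a genuine gap. You assert that $(z'_{1,i})_i\mapsto[\sum_i s_{1,i}z'_{1,i}]$ has dense image in $\op{Pic}^{2g}(\Sigma)$ because $\op{Sym}^{2g}(\Sigma)\to\op{Pic}^{2g}(\Sigma)$ is surjective. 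This does not follow: your map is the restriction of the Abel--Jacobi map to the stratum $Z_{{\bf s}_1}$, which has complex dimension $l({\bf s}_1)$, and when $l({\bf s}_1)<g$ the closure of its image is a proper analytic subvariety of the $g$-dimensional torus $\op{Pic}^{2g}(\Sigma)$ --- for example ${\bf s}_1=(2g)$ gives the curve $\{[2g\cdot z]\}$. Such a subvariety could a priori be contained in the critical value locus of $\mu_{\overline{\bf a}}$, so Sard's theorem gives you nothing: you cannot conclude that $[D'_1]$ is a regular value for generic ${\bf z}'_1$, and at a non-regular value the set-theoretic mod $2$ preimage count need not equal the mod $2$ degree. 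Note that the ${\bf s}_1$ with repeated multiplicities are precisely the ones the lemma is needed for, since the paper immediately reduces to ${\bf s}_1=(1,\dots,1)$ afterwards.

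The gap is repairable, but by perturbing the other set of constraints rather than $[D'_1]$: since $G_2$ is a homological pairing it depends on the arcs $\overline{a}_i$ only through their classes (the paper uses this explicitly in Section~\ref{subsubsection: reduction for G 2}), and condition $({\frak G}_2)$ already permits generic $\overline{a}_i$. For a fixed class $[D'_1]$, a generic isotopy of the $\overline{a}_i$ makes $\overline{a}_1\times\cdots\times\overline{a}_{2g}$ transverse in $\Sigma^{2g}$ to the ($g$-complex-dimensional) Abel--Jacobi fiber over $[D'_1]$, i.e., makes $[D'_1]$ a regular value of $\mu_{\overline{\bf a}}$, and the mod $2$ degree of $\mu_{\overline{\bf a}}$ is unchanged under such isotopies. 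With that substitution your argument closes; as written, it does not.
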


\begin{proof}
Let us write $ev^{V_1,{\bf s}_1}_{(J,0)}$ and $\mathcal{M}^{V_1,{\bf s}_1}_{(J,0)}$ for $ev^{V_1}_{(J,0)}$ and $\mathcal{M}^{V_1}_{(J,0)}$ corresponding to ${\bf s}_1$. By the proof of Lemma~\ref{aardvark2}, the moduli spaces $\mathcal{M}^{V_1,{\bf s}_1}_{(J,0)}$ for all the ${\bf s}_1$ can be smoothly glued into $\pi: \mathcal{M}\to \op{Sym}^{2g}(\Sigma)$. The lemma then follows from Lemma~\ref{lemma: count} by intersecting $\pi^{-1}(D)$ with generic $(z_0,0)$ and generic $(\overline{a}'_i,\infty)$, $i=1,\dots,2g$, and observing that the count is locally constant on a neighborhood of $D\in \op{Sym}^{2g}(\Sigma)$.
\end{proof}

In view of Lemma~\ref{lemma: inv}, from now on we may assume that ${\bf s}_1=(1,\dots,1)$ with $l({\bf s}_1)=2g$. For simplicity we write $G_2(\Sigma)$ for $G_2(\Sigma,{\bf s}_1)$.

The proof strategy for Theorem~\ref{thm: calc of G sub 2} is to first compute $G_2(T^2)$ and then reduce the higher genus case to $g=1$.

\subsubsection{Case of $T^2$}

We consider the situation where $\Sigma=T^2$. Suppose (${\frak G}_1$) holds with $g=1$.

\begin{lemma} \label{lemma: application of Riemann-Roch}
If $z_1'\not= z_2'$, then every meromorphic function on $T^2$ with poles at $z'_1$ and $z'_2$ of order $1$ can be written as $a_1+a_2 f$, where $a_1,a_2\in \C$, $a_2\not=0$, and $f$ is a function on $T^2$ with poles at $z'_1$ and $z'_2$ of order $1$.
\end{lemma}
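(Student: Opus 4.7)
The plan is a direct application of the Riemann-Roch computation in Lemma~\ref{example} to the divisor $D=z_1'+z_2'$ on $T^2$, combined with the residue theorem to force a basis of the right shape.

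First, taking $g=1$ and $m=1$ in Lemma~\ref{example}, we get $l(D)=g+m=2$. Since the constant functions form a $1$-dimensional subspace of $l(D)$, there exists a nonconstant meromorphic function $f\in l(D)$, and moreover $\{1,f\}$ is a basis of $l(D)$. Any meromorphic function on $T^2$ with simple poles at $z_1'$ and $z_2'$ lies in $l(D)$ and hence can be written uniquely as $a_1+a_2 f$ for some $a_1,a_2\in\C$. If $a_2=0$, the function would be a constant, contradicting the assumption that it has a pole; hence $a_2\neq 0$.

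It remains to check that the chosen $f$ really does have simple poles at \emph{both} $z_1'$ and $z_2'$, not just one of them. Since $f$ is nonconstant and lies in $l(D)$, it has at least one pole, and all its poles occur at $\{z_1',z_2'\}$ with order at most $1$. A nonconstant meromorphic function on $T^2$ cannot have a single simple pole, since by the residue theorem the sum of residues of a meromorphic $1$-form vanishes on a compact Riemann surface, and here $f\,dz$ (with $dz$ the nowhere-vanishing holomorphic $1$-form on $T^2$) would have a single simple pole with nonzero residue. Therefore $f$ must have simple poles at both $z_1'$ and $z_2'$, completing the verification.

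The only subtle point is this last step — ruling out $f$ having a simple pole at only one of $z_1'$ or $z_2'$ — but the residue theorem argument handles it cleanly. The remainder is just linear algebra in the $2$-dimensional space $l(D)$.
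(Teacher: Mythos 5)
Your proof is correct and follows essentially the same route as the paper: both rest on the Riemann--Roch computation $l(z_1'+z_2')=2$ from Lemma~\ref{example}, with the constants accounting for one dimension. The only difference is in ruling out that the nonconstant basis element $f$ has a pole at just one of the two points: the paper applies Lemma~\ref{example} again to get $l(z_i')=1$, whereas you invoke the residue theorem for $f\,dz$; both are valid one-line justifications of the same fact.
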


In other words, $f$ is unique up to a postcomposition by a fractional linear transformation $\C\P^1 \to \C\P^1$ which fixes $\infty$.

\begin{proof}
By Lemma~\ref{example}, $l(z'_i)=1$ and is given by the constants, and $l(z'_1+z'_2)=2$ and one dimension is taken by the constants.  The lemma follows.
\end{proof}

\begin{lemma} \label{lemma: G two for T two}
$G_2(T^2)=\pm 1$.
\end{lemma}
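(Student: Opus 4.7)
By Lemma~\ref{lemma: count} applied under condition $({\frak G}_1)$ with $g=1$, the invariant $G_2(T^2)$ equals the (mod $2$) count of holomorphic maps $v:T^2\to\C\P^1$ of degree $2$ satisfying $v(z_0)=0$, $v(z'_{1,1})=v(z'_{1,2})=1$, with simple poles at points $z'_{\infty,1}\in\overline{a}_1$ and $z'_{\infty,2}\in\overline{a}_2$.  The strategy is to parametrize such $v$ by their pole divisor, impose the three value constraints, and then reduce the whole count to a transverse intersection number on $T^2$.

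Fix a candidate pole pair $(z'_{\infty,1},z'_{\infty,2})\in\overline{a}_1\times\overline{a}_2$.  By Lemma~\ref{lemma: application of Riemann-Roch}, every meromorphic $v$ with at most these simple poles has the form $v=a_1+a_2 f$ with $f$ a fixed reference function with simple poles at $z'_{\infty,1},z'_{\infty,2}$, and $a_2\neq 0$ to ensure that $v$ is nonconstant.  The three conditions $v(z_0)=0$ and $v(z'_{1,i})=1$ ($i=1,2$) form a $3\times 2$ affine system in $(a_1,a_2)$; it is consistent (and then uniquely solvable for generic data) precisely when $f(z'_{1,1})=f(z'_{1,2})$, i.e., when $z'_{1,1}$ and $z'_{1,2}$ lie in a common fiber of the degree $2$ map $f$.

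Since any two points $p,q\in T^2$ lie in a common fiber of $f$ iff the divisors $p+q$ and $z'_{\infty,1}+z'_{\infty,2}$ are linearly equivalent, the Abel--Jacobi isomorphism $\op{Pic}^2(T^2)\cong T^2$ rewrites this as the group-law equation $p+q=z'_{\infty,1}+z'_{\infty,2}$ on $T^2$.  Setting $c:=z'_{1,1}+z'_{1,2}\in T^2$, the count of admissible $v$ therefore reduces to the count (mod $2$) of pairs $(z'_{\infty,1},z'_{\infty,2})\in\overline{a}_1\times\overline{a}_2$ with $z'_{\infty,1}+z'_{\infty,2}=c$, which is the cardinality of $\overline{a}_1\cap(c-\overline{a}_2)$.

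To finish, I will observe that $c-\overline{a}_2$ is the image of $\overline{a}_2$ under the orientation-preserving diffeomorphism $z\mapsto c-z$ of $T^2$; since $z\mapsto -z$ acts as $-1$ on $H_1(T^2;\Z)$ and translations act trivially, $[c-\overline{a}_2]=-[\overline{a}_2]$.  The arcs $a_1,a_2$ close up in $\overline{S}=T^2$ to a symplectic basis with $[\overline{a}_1]\cdot[\overline{a}_2]=\pm 1$, so for generic $c$ (ensured by the genericity of $z'_{1,1},z'_{1,2}$) the curves $\overline{a}_1$ and $c-\overline{a}_2$ meet transversally in an odd number of points.  Hence $G_2(T^2)=1$.

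The main obstacle I anticipate is book-keeping in the genericity step: one has to show that $({\frak G}_1)$ simultaneously guarantees (a) that the pole divisor is disjoint from $z_0,z'_{1,1},z'_{1,2}$, (b) that $f(z_0)\neq f(z'_{1,1})$ so the $(a_1,a_2)$ system is nondegenerate whenever it is consistent, and (c) that $\overline{a}_1\pitchfork(c-\overline{a}_2)$ on $T^2$.  Each of these holds for a generic choice of data and should follow from a routine dimension count, but they must all be arranged at once.
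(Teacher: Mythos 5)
Your proposal is correct, but it organizes the count differently from the paper, and the comparison is worth recording. The paper's proof also starts from Lemma~\ref{lemma: count} and Lemma~\ref{lemma: application of Riemann-Roch}, but it quantifies in the opposite order: it first pins down the degree-$2$ map $v$ up to postcomposition by $\eta\in PSL(2,\C)$ fixing $0$ and $1$ using the three value constraints, then chooses \emph{specific} representatives $\overline{a}_1,\overline{a}_2$ adapted to $v$ (lifts of loops $C_{12},C_{23}$ encircling pairs of branch points, meeting once upstairs over $y_1$), and finally observes that the pole condition forces $\{z_3',z_4'\}=v^{-1}(y_2)$, giving a unique solution by direct inspection. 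You instead fix the pole divisor first, use the two-dimensionality of $L(z'_{\infty,1}+z'_{\infty,2})$ to see that the affine system in $(a_1,a_2)$ is solvable exactly when $z'_{1,1}+z'_{1,2}\sim z'_{\infty,1}+z'_{\infty,2}$, and then convert the count of admissible pole pairs into the mod $2$ intersection number $[\overline{a}_1]\cdot[\overline{a}_2]\equiv 1$ via Abel--Jacobi and the group law. Your route has the advantage of working for \emph{arbitrary} representatives $\overline{a}_1,\overline{a}_2$ of a symplectic basis (the paper's proof implicitly leans on the homology invariance of $G_2$ to justify its adapted choices), and it makes the topological source of the answer --- the intersection pairing on $H_1(T^2)$ --- explicit; the paper's proof is more elementary in that it never invokes Abel--Jacobi in this step. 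Your three flagged genericity points (a)--(c) are all genuine but routine: (a) and (b) follow from the clause in the definition of genericity that the points of $(z_0,{\bf z}_1')$ are distinct and avoid the $\overline{a}_i$, and (c) follows by applying Sard to the sum map $\overline{a}_1\times\overline{a}_2\to T^2$, $(p,q)\mapsto p+q$, whose mod $2$ degree is exactly the intersection number you compute.
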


\begin{proof}
Suppose (${\frak G}_1$) holds with $g=1$. In view of Lemmas~\ref{lemma: count} and \ref{lemma: inv}, $G_2(T^2)$ is the count of irreducible curves $u\in \mathcal{M}_{g=1}(X,A,J)$ that pass through $(z_0,0)$, $(z'_1,1)$, $(z'_2,1)$, $\overline{a}'_1\times\{\infty\}$, and $\overline{a}'_2\times\{\infty\}$, where $u$ intersects $\overline{a}'_1\times\{\infty\}$ and $\overline{a}'_2\times\{\infty\}$ at distinct points.

\begin{figure}[ht]
\begin{center}
\psfragscanon
\psfrag{a}{\small $z_0$}
\psfrag{b}{\small $z_1'$}
\psfrag{c}{\small $z_2'$}
\psfrag{d}{\small $\overline{a}'_1$}
\psfrag{e}{\small $\overline{a}'_2$}
\psfrag{0}{\small $0$}
\psfrag{1}{\small $1$}
\psfrag{3}{\small $y_1$}
\psfrag{4}{\small $y_2$}
\includegraphics[width=4cm]{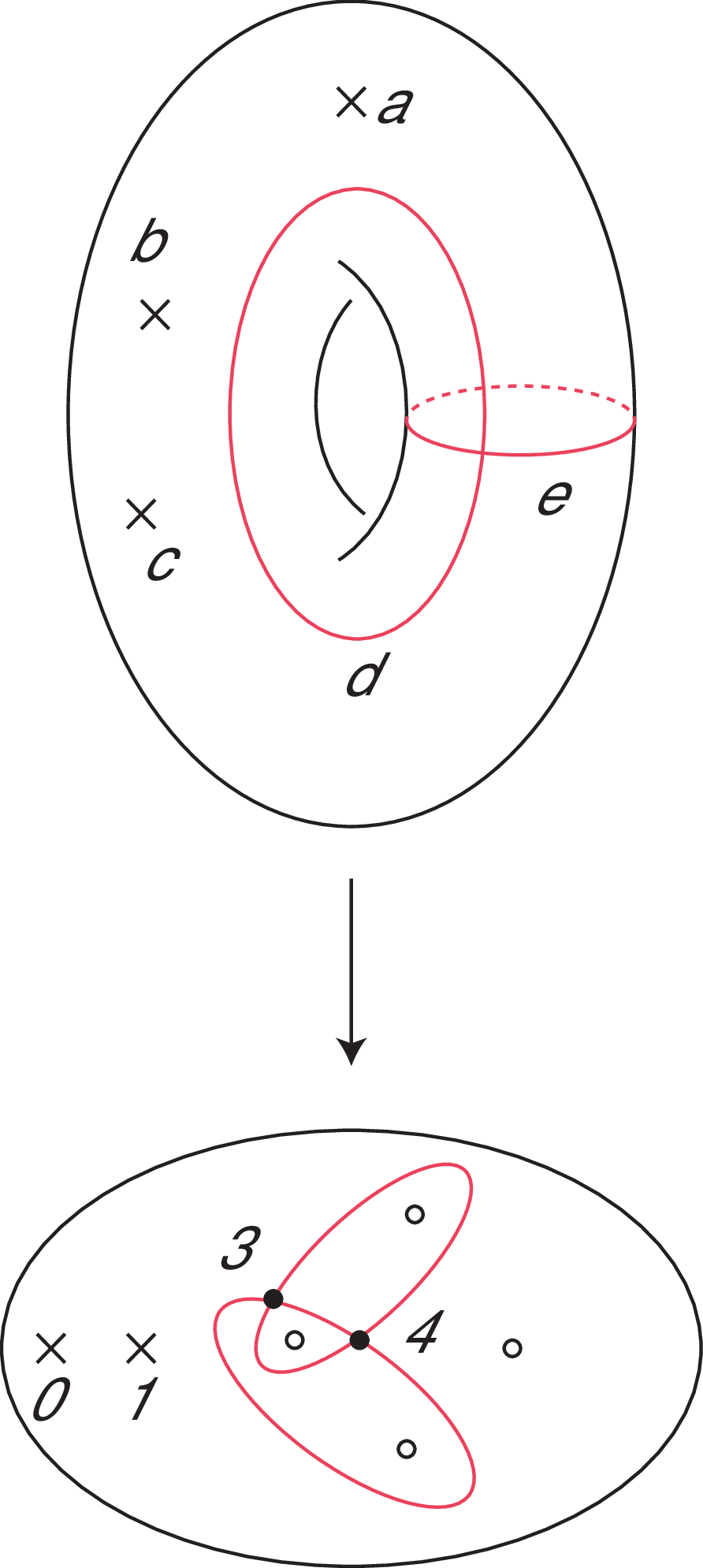}
\end{center}
\caption{The branched double cover $v$ of the torus $T^2$. The branch points $w_1,\dots,w_4$ of $v$ are indicated by $\circ$.} \label{fig: hyperelliptic2}
\end{figure}

According to Lemma~\ref{lemma: application of Riemann-Roch}, a branched double cover $v: T^2\to \C\P^1$ satisfying $v(z_0)=0$ and $v(z_1')=v(z_2')=1$ is determined up to postcomposition by $\eta\in PSL(2,\C)$ which fixes $0$ and $1$; see Figure~\ref{fig: hyperelliptic2}. Let $w_1,\dots,w_4\in \C\P^1$ be the branch points of $v$.  Since $z_1'$ and $z_2'$ are generic, we may assume that $w_i\not=0,1$ for $i=1,\dots,4$. Let $C_{ij}$ be a simple closed curve which bounds a small neighborhood of an arc from $w_i$ to $w_j$.  We may assume that $C_{12}$ and $C_{23}$ intersect only at $2$ points $y_1,y_2$.  Let $\overline{a}'_1,\overline{a}'_2$ be lifts of $C_{12},C_{23}$ such that $\#(\overline{a}'_1\cap \overline{a}'_2)=1$ and $v(\overline{a}'_1\cap \overline{a}'_2)=y_1$.  We now impose the condition $\eta\circ v(z_3')=\eta \circ v(z_4')=\infty$, where $z_3'\in \overline{a}'_1$, $z_4'\in \overline{a}'_2$, and $\eta$ is a fractional linear transformation which fixes $0,1$. This is possible only if $z_3',z_4'\in v^{-1}(y_i)$ for $i=1$ or $2$. On the other hand, since $y_1,y_2$ are not branch points, $z_3'$ and $z_4'$ must be distinct. Hence the only possibility is $\{z_3',z_4'\}=v^{-1}(y_2)$ and $\eta\in PSL(2,\C)$ is the uniquely determined by $\eta(0)=0$, $\eta(1)=1$, $\eta(y_2)=\infty$. This completes the proof of the lemma.
\end{proof}

\subsubsection{Proof of Theorem~\ref{thm: calc of G sub 2}}
\label{subsubsection: reduction for G 2}

We now reduce to the torus case and prove Theorem~\ref{thm: calc of G sub 2}. Suppose that $g(\Sigma)=2$; the general case is similar.

The strategy is to degenerate the fiber $\Sigma$ into a nodal surface $\Sigma^\infty$ consisting of two irreducible components of genus one. More precisely, let $\Sigma^\tau=(\overline{S},\overline{i}_\tau)$, $\tau\in[0,\infty)$, be a $1$-parameter family of $g=2$ Riemann surfaces which degenerates into a nodal Riemann surface
\begin{equation*}
(\Sigma^\infty,{\frak n})  = \left((\Sigma_-,{\frak n}_-)\sqcup (\Sigma_+,{\frak n}_+)\right)/ {\frak n}_-\sim {\frak n}_+,
\end{equation*}
where $\Sigma_-, \Sigma_+$ have genus one and ${\frak n}=\{{\frak n}_-,{\frak n}_+\}$. The nodal surface $\Sigma^\infty$ is obtained topologically as follows: Let $\gamma$ be a separating curve in $\overline{S}$ and let $\sim_0$ be an equivalence relation on $\overline{S}$ such that $x\sim_0 y$ if $x,y\in\gamma$. Then $\Sigma^\infty=\overline{S}/\sim_0$.  We also set $\widetilde{\bf a}'=\overline{\bf a}'/\sim_0$. Here $\overline{\bf a}'$ and $\gamma$ are fixed, while the complex structure $\overline{i}_\tau$ varies as $\tau\to \infty$.

Since $G_2(\Sigma)$ only depends on the homology classes of the $\overline{a}'_j$, $j=1,\dots, 4$, we may assume that the following hold:
\begin{enumerate}
\item[(i)] $\gamma$ is disjoint from the $\overline{a}'_j$, $j=1,\dots,4$, $\widetilde{a}'_1,\widetilde{a}'_2\subset \Sigma_+$, and $\widetilde{a}'_3,\widetilde{a}'_4\subset \Sigma_-$.
\item[(ii)] The points $z_0,z'_1,\dots,z'_4\in \overline{S}-\gamma$ are fixed for all $\tau$ and in the limit $z_0,z'_1,z_2'\in \Sigma_+$ and $z'_3,z'_4\in\Sigma_-$.
\item[(iii)] $\overline{a}'_1,\dots,\overline{a}'_4$ are generic and $z_0,z_1',\dots,z_4'$ are generic.
\end{enumerate}

Let $u^\tau: \Sigma^\tau\to\Sigma^\tau\times\C\P^1$, $\tau\to \infty$, be a sequence of curves in the class $A=[\Sigma^\tau\times\{pt\}]+4[\{pt\}\times\C\P^1]$ that pass through
$$(z_0,0),(z'_1,1),\dots,(z'_4,1), \overline{a}'_1\times\{\infty\},\dots,\overline{a}'_4\times\{\infty\}.$$
By Lemma~\ref{lemma: no bubbling2}, no bubbling can occur at finite $\tau$.  Let $u=u_-\cup u_0\cup u_+$ be the limit of $u^\tau$, where $\op{Im}(u_\pm)\subset \Sigma_\pm\times\C\P^1$, $u_\pm$ has no components of $\{{\frak n}\}\times \C\P^1$, and $u_0$ maps to $\{{\frak n}\}\times \C\P^1$.  If $u_\pm$ is irreducible, then we write $v_\pm=\pi_2\circ u_\pm$, where $\pi_2$ is the projection to $\C\P^1$.

\begin{lemma} \label{lemma: limit pigeon}
The levels $u_+$ and $u_-$ are irreducible and $u_0=\varnothing$.
\end{lemma}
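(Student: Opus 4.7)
The plan is to pin down $u$ by homological degree bookkeeping combined with positivity of intersection against $V_1$, $V_\infty$ and the generic position of the constraints. I would begin with the homological decomposition. Since $\pi_1\circ u^\tau$ has degree one on each factor of the nodal base, each of $u_+, u_-$ contains a unique main component $C_\pm$ of class $[\Sigma_\pm]+d'_\pm[\C\P^1]$, necessarily realized as the graph of a meromorphic function $v_\pm:\Sigma_\pm\to\C\P^1$ of degree $d'_\pm$; all remaining irreducible components are sphere bubbles of class $[\C\P^1]$, based at points of $\Sigma_+$, $\Sigma_-$, or ${\frak n}$, with total $\C\P^1$-degrees $b_+, b_-, d_0$. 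The total-class identity reads
\[ d'_+ + d'_- + b_+ + b_- + d_0 = 4. \]
In particular no component of $u$ can coincide with $V_1$ or $V_\infty$, since the classes $[\Sigma_\pm]$ are already exhausted by $C_\pm$.

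Next I would rule out all bubbles by a positivity argument against $V_1$ and $V_\infty$. Each marked intersection $(z'_i,1)$, $i=1,\ldots,4$, contributes multiplicity at least one to $u\cap V_1$; since the homological total is $4$ and the four marked points are distinct, $u\cap V_1$ consists of exactly these four points, each with multiplicity one. The analogous statement holds for $u\cap V_\infty$, whose four points lie on $\overline{a}_1,\ldots,\overline{a}_4$. Now if $B=\{w\}\times\C\P^1$ were a sphere bubble in $u_+$, its $V_1$-intersection $(w,1)$ would force $w\in\{z'_1,z'_2\}$, while its $V_\infty$-intersection $(w,\infty)$ would force $w\in\overline{a}_1\cup\overline{a}_2$; both together contradict the generic-position assumption (iii) that the $z'_j$ lie off the $\overline{a}_i$. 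The case $w=z_0$ (a bubble carrying the marked point $x$) is ruled out by the same $V_1$-count, as $z_0\notin\{z'_1,\ldots,z'_4\}$ generically. The same argument rules out bubbles on the $\Sigma_-$ side and at the node, so $b_+=b_-=d_0=0$, whence $d'_+=d'_-=2$ and $u_0=\varnothing$.

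The only conceptually delicate point I foresee is justifying that $u\cap V_1$ (and $u\cap V_\infty$) really is accounted for entirely by the four marked points with multiplicity one each — a priori the limit in the Ionel-Parker $V$-stable compactification could absorb extra intersection multiplicity into a component lying inside $V_1$ or $V_\infty$, in the sense of \cite[Prop.~6.6]{IP1}. This scenario is excluded immediately by the homological decomposition above, since any such absorbed component would have to carry a class $[\Sigma_\pm]$ already used up by $C_\pm$. Modulo this observation, the whole argument reduces to straightforward intersection-number bookkeeping powered by the generic position of the constraints.
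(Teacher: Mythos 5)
Your argument has a genuine gap at the step where you claim that no component of $u$ can lie inside $V_1$ or $V_\infty$. The unique component $C_\pm$ of $u_\pm$ on which $\pi_1$ has degree one is indeed the graph of a meromorphic function $v_\pm:\Sigma_\pm\to\C\P^1$, but nothing a priori prevents $v_\pm$ from being the \emph{constant} function $1$ or $\infty$, in which case $C_\pm=\Sigma_\pm\times\{1\}$ or $\Sigma_\pm\times\{\infty\}$ is itself a fiber component contained in $V_1$ or $V_\infty$ (with $d'_\pm=0$, perfectly consistent with your degree identity). Your dismissal --- ``any such component would have to carry a class $[\Sigma_\pm]$ already used up by $C_\pm$'' --- is circular, since the component using up $[\Sigma_\pm]$ could be the offending one. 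This matters because your next step, pinning down $u\cap V_1$ as exactly the four marked points with multiplicity one by positivity of intersections, is only valid for components not contained in $V_1$; once a component lies in $V_1$ the local intersection count collapses and the exclusion of vertical bubbles $\{w\}\times\C\P^1$ (which is otherwise a clean argument, essentially that of Lemma~\ref{lemma: no other curves 2}) no longer follows.

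The fiber-component case is in fact the crux of the paper's proof, and it is excluded there by a further degree count rather than by homological bookkeeping alone: if, say, $\Sigma_+\times\{1\}$ is a component of $u_+$, it absorbs the constraints $(z'_1,1),(z'_2,1)$ but misses $(z_0,0)$ and $\widetilde{a}_1\times\{\infty\}$, $\widetilde{a}_2\times\{\infty\}$, forcing at least three bubbles in $u_+$ and hence $\langle u_+,\Sigma_+\times\{w\}\rangle\geq 3$ for generic $w$; meanwhile $u_-$ must still meet $(z'_3,1)$ and $(z'_4,1)$ at distinct points, so $\langle u_-,\Sigma_-\times\{w\}\rangle\geq 2$, and the total exceeds the available degree $4$. (The paper's first step conversely shows that if there is a bubble but no fiber component, the same kind of count already gives $\langle u_+,\Sigma_+\times\{w\}\rangle\geq 3$, since the bubble point $z$ cannot lie simultaneously in $\{z'_1,z'_2\}$ and in $\widetilde{a}_1\cup\widetilde{a}_2$.) You need to supply an argument of this kind before your positivity step can be invoked.
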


\begin{proof}
The proof is similar to that of Lemma~\ref{lemma: no bubbling2}. Arguing by contradiction, suppose $u_+$ has a bubble component. The bubble is necessarily of the form $\{z\} \times \C\P^1$.  When $\tau$ is finite, $\op{Im}(u^\tau) \cap (\Sigma^\tau\times\{1\})=\{(z_1',1),\dots,(z_4',1)\}$.

We claim that $\Sigma_+ \times \{1\}$ or $\Sigma_+\times\{\infty\}$ must be a component of $u_+$. Otherwise, suppose that $\Sigma_+\times\{w\}$ is not a component of $u_+$ for $w=1$ or $\infty$. Since $z$ cannot be in $\{z_1',z_2'\}$ and $\widetilde{a}'_1\cup\widetilde{a}'_2$ at the same time, $\langle u_+, \Sigma_+\times\{w\}\rangle \geq 3$ for $w\in \C\P^1$. We also have $\langle u_-,\Sigma_-\times\{w\}\rangle\geq 2$ and $\langle u_0,\Sigma_\pm\times\{w\}\rangle\geq 0$. Hence $\langle u^\tau,\Sigma^\tau\times\{w\}\rangle>4$ for $w\in \C\P^1$, which is a contradiction. This implies the claim.

Now, the claim implies that $u_+$ consists of $\Sigma_+\times\{w\}$, $w=1$ or $\infty$, and $3$ bubbles: $\{z_0\} \times \C\P^1$ and either $\{z_1',z_2'\} \times \C\P^1$ or $\{z_3',z_4'\} \times \C\P^1$, where $z_{j+2}' \in \widetilde{a}'_j$, $j=1,2$. Since $\langle u_-,\Sigma_-\times\{w\}\rangle\geq 2$ and $\langle u_0,\Sigma_\pm\times\{w\}\rangle\geq 0$, we again obtain $\langle u^\tau,\Sigma^\tau\times\{w\}\rangle>4$ for $w\in \C\P^1$.
Hence $u_+$ does not have bubble components and is irreducible.

We can similarly prove that $u_-$ is irreducible and $u_0=\varnothing$.
\end{proof}

By Lemma~\ref{lemma: limit pigeon}, $u_+$ and $u_-$ are irreducible. Hence we are counting pairs $(v_-,v_+)$ such that $v_+({\frak n}_+)=v_-({\frak n}_-)$.  By Lemma~\ref{lemma: G two for T two},
$$G_2(\Sigma_+)=G_2(\Sigma_+;\{(z_0,0)\},z_1',z_2',\widetilde{a}'_1,\widetilde{a}'_2)=\pm 1$$
and is represented by a unique map $v_+$. Now the point $({\frak n}_-,v_+({\frak n}_+))\in \Sigma_-\times\C\P^1$ plays the role of $(z_0,0)$ for $v_-$. Hence, by Lemma~\ref{lemma: G two for T two},
$$G_2(\Sigma_-)=G_2(\Sigma_-; \{({\frak n}_-,v_+({\frak n}_+))\},z'_3,z'_4,\widetilde{a}'_3,\widetilde{a}'_4)=\pm 1.$$

 It remains to consider the gluing of $u_+$ and $u_-$. Note that the gluing can be viewed as an SFT gluing: we can consider
$$((\Sigma_- - \{{\frak n}_+\})\times \C\P^1) \sqcup ((\Sigma_+ -\{{\frak n}_-\})\times \C\P^1)$$
as a $2$-level building with cylindrical ends and the simply-covered stable Hamiltonian orbits come in a $\C\P^1$-Morse-Bott family. The curves $u_+$ and $u_-$, viewed as SFT curves $\widetilde u_+$ and $\widetilde u_-$, are simple and the gluing conditions for $u_+$ and $u_-$ are transverse by (iii). Hence the usual SFT gluing procedure with weights applies and we obtain
\begin{equation} \label{eqn: G sub 2}
G_2(\Sigma)= G_2(\Sigma_+)\times G_2(\Sigma_-)=\pm 1.
\end{equation}

This completes the proof of Theorem~\ref{thm: calc of G sub 2}.

\subsection{Third relative Gromov-Witten calculation}
\label{subsection: third GW calculation}

\subsubsection{Definitions}
\label{subsubsection: third GW calculation definitions}

Let $D^2$ be the closed unit disk $\{|z|\leq 1\}\subset \C$ and let $\Sigma=\overline{S}$. We consider $(X,\omega)$, where $X=\Sigma\times D^2$ and $\omega$ is a product symplectic form. Recall the Lagrangian submanifold $\widehat{\bf a}\subset \overline{S}$ as well as its components $\widehat{a}_i$, $i=1,\dots,2g$, from Section~I.\ref{P1-coconut}. Then $L_{\widehat{\bf a}}=\widehat{\bf a}\times \bdry D^2\subset \bdry X$ is a Lagrangian submanifold of $(X,\omega)$. We write $\pi_i$, $i=1,2$, for the projection of $X$ onto the $i$th factor. Let $A$ be the relative homology class
$$A=[\Sigma\times\{pt\}]+ 2g\cdot[\{pt\}\times D^2]\in H_2(X,\bdry X),$$
and let $(F,j,x,{\bf x}')$ be a compact Riemann surface of genus $g$ with $2g$ boundary components, an interior marked point $x$, and boundary marked points ${\bf x}'=(x'_1,\dots,x'_{4g})$, where $x_i',x'_{2g+i}$ are on the $i$th boundary component $\bdry_iF$.

Let $\mathcal{J}$ be the space of compatible almost complex structures on $(X,\omega)$ and let $\mathcal{J}^{prod}\subset \mathcal{J}$ be the subset of product complex structures on $X$. Given $J\in \mathcal{J}$, let $\mathcal{M}(X,A,J)$ be the moduli space of holomorphic maps $$u: (F,j,x,{\bf x}')\to (X,J)$$ in the class $A$ such that $u(\bdry_i F)\subset L_{\widehat{a}_i}$, modulo automorphisms of the domain.

Consider the evaluation map:
\begin{equation}
\label{eqn: ev map 2}
ev: \mathcal{M}(X,A,J)\to \mathfrak{X}= X\times L_{\widehat{a}_1}\times\dots\times L_{\widehat{a}_{2g}} \times L_{\widehat{a}_1}\times\dots\times L_{\widehat{a}_{2g}},
\end{equation}
$$(u,x,{\bf x'})\mapsto (u(x),u(x'_1),\dots,u(x'_{4g})).$$
Let ${\bf z}=(z_1,\dots,z_{4g})$ be a $4g$-tuple of points such that $z_i, z_{2g+i}\in \widehat{a}_i$, $i=1,\dots,2g$. Then we write
$$\mathcal{M}_J(z_\infty,{\bf z}):=ev^{-1}((z_\infty,0),(z_1,1),\dots,(z_{2g},1),(z_{2g+1},-1),\dots,(z_{4g},-1)),$$
where $z_\infty$ is the point at infinity.

\begin{defn}
The relative Gromov-Witten type invariant $G_3(\Sigma; J;z_\infty ,{\bf z})$ is the count of equivalence classes $(u,x,{\bf x}') \in \mathcal{M}_J(z_\infty,{\bf z})$, where $J\in \mathcal{J}^{prod}$ and ${\bf z}$ are generic.
\end{defn}

\begin{thm} \label{thm: calc of G sub 3}
If $J\in\mathcal{J}^{prod}$ and ${\bf z}$ is generic, then $\mathcal{M}_J(z_\infty,{\bf z})$ is regular and compact. The count $G_3(\Sigma; J;z_\infty ,{\bf z})$ is independent of the choice of $J\in\mathcal{J}^{prod}$ and ${\bf z}$ generic and is equal to $\pm 1$.
\end{thm}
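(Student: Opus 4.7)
The plan is to parallel the calculations of $G_1$ and $G_2$, exploiting a product almost complex structure $J = j_\Sigma \times j_{D^2} \in \mathcal{J}^{prod}$ to split every curve in the moduli space into its projections to $\Sigma$ and to $D^2$, and then to count the resulting holomorphic maps directly. Given $u \in \mathcal{M}_J(z_\infty,{\bf z})$ and writing $u_1 = \pi_1 \circ u$, $v = \pi_2 \circ u$, the class $A$ forces $u_1$ to be a degree-one holomorphic map to $\Sigma$ (hence a biholomorphism between $(F,j)$ and a fixed reference Riemann surface with boundary $F_0$ determined by $\Sigma$ and the $\widehat{a}_i$, eliminating the $j$-moduli and the generic automorphisms of the domain), while $v: F_0 \to D^2$ is a proper holomorphic map of degree $2g$ sending each $\partial_i F_0$ into $\partial D^2$.

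Regularity and compactness would be established by direct adaptation of Section~\ref{subsubsection: regularity} and Lemmas~\ref{lemma: no bubbling}, \ref{lemma: no bubbling2}. For regularity, automatic transversality applies in the Lagrangian boundary setting because each $L_{\widehat{a}_i}$ is a product whose Maslov contribution vanishes, and the Chern number of the normal bundle along $A$ is again $4g$, which strictly exceeds $2g(F)+b(F)-2 = 4g-2$. For compactness of $\mathcal{M}_J(z_\infty,{\bf z})$, a degenerating sequence could acquire either sphere bubbles of the form $\{w\} \times S^2$ (appearing in $X$ as disk bubbles $\{w\} \times D^2$), disk bubbles into some $L_{\widehat{a}_i}$, or nodal degenerations of the domain; in each case, the homological constraints, together with the interior constraint at $(z_\infty,0)$ and the relative Riemann--Roch bound in Lemma~\ref{example}, force more independent conditions on the remaining principal component than there are linearly independent holomorphic functions, contradicting the existence of such a curve for generic $z_\infty$ and ${\bf z}$.

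With regularity and compactness in hand, $G_3(\Sigma; J; z_\infty,{\bf z})$ becomes the count of proper holomorphic maps $v: F_0 \to D^2$ of degree $2g$ with $v(\partial_i F_0) \subset \partial D^2$, $v(x) = 0$, $v(x'_i) = 1$ for $i=1,\dots,2g$, and $v(x'_{2g+i}) = -1$ for $i=1,\dots,2g$. I would evaluate this by Schwarz reflection: since $|v| = 1$ on $\partial F_0$, $v$ extends to a meromorphic function $\widetilde v$ of degree $4g$ on the Schottky double $DF_0$ (a closed Riemann surface of genus $4g-1$) that is equivariant under the anti-holomorphic involution $\sigma$. The constraints translate into $\widetilde v^{-1}(0) \supset \{x\}$ (hence $\widetilde v^{-1}(\infty) \supset \{\sigma(x)\}$) and $4g$ $\sigma$-fixed points mapping to $\pm 1$. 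A relative Riemann--Roch count on $DF_0$, together with a $\sigma$-equivariant degeneration of $DF_0$ into lower-genus pieces analogous to Section~\ref{subsubsection: reduction for G 2}, reduces the count to the $g=1$ base case and yields $1$. Independence from $J \in \mathcal{J}^{prod}$ and from generic ${\bf z}$ then follows from the standard one-parameter cobordism argument, using the same compactness control.

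The main obstacle will be the equivariant count on the Schottky double: one must arrange the degeneration of $DF_0$ into nodal pieces compatibly with the involution $\sigma$, and verify that the local count on each torus component (the analogue of Lemma~\ref{lemma: G two for T two}) respects the reality constraints imposed by the original boundary data. Once this equivariant degeneration is set up correctly, the remaining bookkeeping closely mirrors Sections~\ref{subsubsection: proof of theorem count of G sub 1} and \ref{subsubsection: reduction for G 2}.
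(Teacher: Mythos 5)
Your treatment of regularity, compactness, and independence is essentially the paper's: Hofer--Lizan--Sikorav automatic transversality plus an extra argument at the nongeneric point $z_\infty$ (Lemma~\ref{berkeley}), and intersection-positivity arguments to exclude disk bubbles, boundary nodes, and interior nodes (Lemmas~\ref{lemma: compact} and \ref{berkeley2}). One caveat: the criterion actually used is positivity of the \emph{total} Maslov index of $u|_{\bdry F}$, which comes out to $2g\cdot 2 + 2(2-2g) = 4 > 0$ (Lemma~\ref{lemma: moduli prime for D regular and dim 4g plus 2}); your normal-bundle inequality ``$4g > 4g-2$'' is not the hypothesis of the boundary version of the theorem, though the conclusion is the same.

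The gap is in the count itself. The Schottky-double reduction does not close as stated. First, the doubled map $\widetilde v$ has degree $2g$ (not $4g$): its fiber over an interior point of $D^2$ is just $v^{-1}(w)$. On the double $DF_0$, of genus $4g-1$, the polar divisor therefore has degree $2g < (4g-1)-1$, so Riemann--Roch gives $l(D)-l(K-D)=2-2g\leq 0$ and tells you nothing about $l(D)$; the divisor is highly special, and $\widetilde v$ exists only because the complex structure of $DF_0$ (which varies with the slits cut along the $\widehat a_i$, hence with the curve being counted) is tuned to admit it. So the enumeration is a Brill--Noether problem on a \emph{moving} real curve, not a dimension count on a fixed one, and your base case ``yields $1$'' is asserted rather than proved --- yet that base case is where all the content of the theorem sits. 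The paper's route is different and more hands-on: after pinching $\Sigma$ down to tori and then pinching each torus along three circles into a chain of spheres (Sections~\ref{subsubsection: reduction to torus for G 3} and \ref{subsubsection: reduction to sphere}), the count localizes on two model families of degree-one disks in $\C\P^1\times D^2$; the marked points of these families trace out two arcs $C_1, C_2\subset D^2$ whose endpoints alternate on $\bdry D^2$ (Lemmas~\ref{C one} and \ref{C two}), and $G_3$ is the signed intersection number $\#(C_1\cap C_2)=\pm 1$ (Lemmas~\ref{lemma: intersection C one and C two} and \ref{lemma: G sub 3 for torus}). If you want to salvage the double-cover picture you would need to reproduce exactly this kind of transversal one-parameter matching argument in the equivariant setting, at which point nothing is gained over the direct computation.
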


\begin{proof}
Recall that $\{z_\infty\}= \overline{a}_i-\widehat{a}_i$ for each $i=1,\dots,2g$. Observe that if $(u,x,{\bf x}')\in \mathcal{M}_J(z_\infty,{\bf z})$, then $u$ is transverse to $\bdry X$; this is easily seen by projecting to the base $D^2$ and observing that each $L_{\widehat{a}_i}$ is used exactly once.

The first statement is proved in Lemmas~\ref{berkeley} and \ref{lemma: compact}. The independence of $G_3(\Sigma; J;z_\infty ,{\bf z})$ is proved in Lemma~\ref{berkeley2} and $G_3(\Sigma; J;z_\infty ,{\bf z})$ is computed in Sections~\ref{subsubsection: reduction to torus for G 3}--\ref{subsubsection: reduction to sphere}.
\end{proof}

\subsubsection{Transversality}

We recall the following automatic transversality result from \cite{HLS}; see Theorem~2 and Remark (1) following the statement of Theorem~2 in \cite{HLS}:

\begin{thm}[Hofer-Lizan-Sikorav]
\label{thm: automatic transversality HLS}
Let $(F,j)$ be a compact Riemann surface with nonempty boundary, $(M,J)$ be an almost complex $4$-manifold, and $L\subset (M,J)$ be a totally real surface. Let
$$u_0:(F,\bdry F, j)\to (M,L,J)$$
be a holomorphic map which sends $\bdry F$ to $L$.  If $u_0$ is immersed and the sum of the Maslov indices of $u_0|_{\bdry F}$ with respect to any unitary trivialization of $u_0^*TM$ is positive, then the set of holomorphic maps $u: (F,\bdry F, j)\to (M,L,J)$ near $u_0$ is regular.
\end{thm}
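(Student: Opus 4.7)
The statement has four ingredients — regularity of $\mathcal{M}_J(z_\infty, {\bf z})$, its compactness, independence of the count $G_3$ from the generic choices of $J \in \mathcal{J}^{prod}$ and ${\bf z}$, and the actual value $G_3 = 1$ — which I address in turn via automatic transversality, intersection-theoretic bubble exclusion, a standard cobordism, and a genus-reducing degeneration in the spirit of the proof of Theorem~\ref{thm: calc of G sub 2}.

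For regularity I invoke Theorem~\ref{thm: automatic transversality HLS}. For $J \in \mathcal{J}^{prod}$ and $u \in \mathcal{M}_J(z_\infty, {\bf z})$, the projections $\pi_1 \circ u$ and $\pi_2 \circ u$ are separately holomorphic. The first has relative degree $[\Sigma \times \{pt\}] \cdot A = 1$, so $\pi_1 \circ u: F \to \Sigma$ is a biholomorphism of the interior of $F$ onto the complement in $\Sigma$ of the arcs $\widehat{\bf a}$. In particular $u$ is embedded, hence immersed. On each boundary component $\bdry_i F$ the loop $\pi_2 \circ u|_{\bdry_i F}$ has positive winding in $\bdry D^2$, and the total winding over $\bdry F$ is forced to equal the fiber intersection $[\{pt\} \times D^2] \cdot A = 2g$. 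The sum of the Maslov indices along $\bdry F$ is therefore strictly positive, and HLS yields transverse cut-out.

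For compactness I rule out bubbling in limits of sequences in $\mathcal{M}_J(z_\infty, {\bf z})$. Any closed sphere bubble must, by the maximum principle applied to $\pi_2$, collapse into a single fiber $\Sigma \times \{w\}$, where it is null-homologous for $g(\Sigma) \geq 1$ and hence constant. A disk bubble at some $x'_i$ has boundary on $L_{\widehat{a}_i}$; its $\pi_1$-projection is a holomorphic map of a disk into $\Sigma$ sending the boundary into $\widehat{a}_i$, which must be constant since $\widehat{a}_i$ is homologically essential. The bubble therefore lies in a fiber $\{z\} \times D^2$, but any nontrivial such bubble would redistribute the intersection with $\Sigma \times \{w\}$ and force the residual main component to violate the total intersection count dictated by $A$. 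Genericity of ${\bf z}$ excludes the remaining nodal limits, so $\mathcal{M}_J(z_\infty, {\bf z})$ is compact. Independence of $G_3$ from $(J, {\bf z})$ is then the usual parametric cobordism: connect two generic pairs by a path $(J_t, {\bf z}_t)$ in $\mathcal{J}^{prod} \times \{\text{generic constraints}\}$, verify that the bubble analysis is stable along the path, and read off the invariance from the resulting mod-$2$ cobordism between fibers.

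To compute $G_3 = 1$ I degenerate $\Sigma$ into a nodal $\Sigma_+ \cup_{\frak n} \Sigma_-$ with $g(\Sigma_+) = 1$ and $g(\Sigma_-) = g - 1$, distributing the curves $\widehat{a}_i$ so that two of them lie on $\Sigma_+$ and the remaining $2g-2$ lie on $\Sigma_-$, while keeping $z_\infty \in \Sigma_+$, exactly as in Section~\ref{subsubsection: reduction for G 2}. A limit of curves in $\mathcal{M}_{J_t}$ splits into irreducible pieces matching at the node, each contributing an invariant of the same type but in smaller genus, with the matching condition providing the new interior point constraint. Induction on $g$ reduces the count to a product of a torus invariant — handled as in Lemma~\ref{lemma: G two for T two} — and, ultimately, to a sphere/disk base case in which the unique branched cover is pinned down by the three-point normalization of a M\"obius transformation, each factor contributing $1$; multiplying gives $G_3 = 1$. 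The step I expect to be most delicate is the disk-bubble part of compactness: the Lagrangians $L_{\widehat{a}_i}$ can a priori bound nontrivial Maslov-$2$ disks, and the combination of the maximum principle on $\pi_2$, the homological essentiality of $\widehat{a}_i \subset \Sigma$, and the tight numerical constraint imposed by the class $A$ is what keeps disk bubbling uniformly under control along the whole degeneration.
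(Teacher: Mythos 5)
You have proved the wrong statement. The statement under review is Theorem~\ref{thm: automatic transversality HLS}, the automatic transversality criterion of Hofer--Lizan--Sikorav: for an immersed $J$-holomorphic map $u_0 : (F,\bdry F,j) \to (M,L,J)$ into an almost complex $4$-manifold with totally real boundary condition, positivity of the total Maslov index of $u_0|_{\bdry F}$ implies the moduli of nearby holomorphic maps is regular. In this paper that is a \emph{cited} result (Theorem~2 of [HLS] and the remark following it); no proof is given or asked for here, and a genuine proof would have to analyze the relevant Cauchy--Riemann operator, degenerate into line bundles, and run a Riemann--Roch vanishing argument for the cokernel.

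What you have written instead is an outline of the proof of Theorem~\ref{thm: calc of G sub 3} (regularity and compactness of $\mathcal{M}_J(z_\infty,{\bf z})$, independence of $G_3$, and $G_3=1$). Your own first sentence even \emph{invokes} Theorem~\ref{thm: automatic transversality HLS} as a black box --- which is a sure sign you are not proving it. So the proposal does not engage the assigned statement at all; as an answer to the question posed it is off-topic, whatever its merits as a sketch of the $G_3$ computation.
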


Using Theorem~\ref{thm: automatic transversality HLS}, we prove the following key result:

\begin{lemma}
\label{lemma: moduli prime for D regular and dim 4g plus 2}
If $J\in \mathcal{J}^{prod}$, then the moduli space $\mathcal{M}(X,A,J)$ is regular and
$$\dim_{\R} \mathcal{M}(X,A,J)= 8g+4.$$
\end{lemma}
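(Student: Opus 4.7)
The plan is to apply the Hofer--Lizan--Sikorav automatic transversality theorem (Theorem~\ref{thm: automatic transversality HLS}), mirroring the closed-curve argument of Lemma~\ref{lemma: regularity and dimension}. Two ingredients must be checked: every $u \in \mathcal{M}(X,A,J)$ is immersed, and the total Maslov index of $u|_{\bdry F}$ with respect to any unitary trivialization of $u^*TX$ is strictly positive.

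For immersedness, the product structure $J = j_\Sigma \oplus j_{D^2}$ forces both $u_1 := \pi_1 \circ u$ and $u_2 := \pi_2 \circ u$ to be holomorphic. Pairing $A$ against $[\{p\} \times D^2]$ for generic $p \in \Sigma$ and against $[\Sigma \times \{q\}]$ for generic $q \in D^2$ pins down the relative degrees of $u_1$ and $u_2$ as $1$ and $2g$ respectively, and positivity of intersections for holomorphic curves then gives that $u$ is embedded. For the Maslov positivity, decompose $u^*TX = u_1^*T\Sigma \oplus u_2^*TD^2$ with the corresponding splitting $u^*TL_{\widehat{\bf a}} = u_1^*T\widehat{\bf a} \oplus u_2^*T\bdry D^2$ along $\bdry F$. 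The contribution $\mu_2$ from the $D^2$-factor is immediate: $u_2^*TD^2$ is trivial, and $u_2|_{\bdry_iF}$ wraps positively around $\bdry D^2$ with windings $d_i \geq 1$ summing to $2g$, yielding $\mu_2 = 2\sum d_i = 4g$. The contribution $\mu_1$ from the $\Sigma$-factor is read off from the geometry of $u_1$ on $\widehat{\bf a}$ (using $du_1$ to compare $u_1^*T\Sigma$ with $TF$ and accounting for how $\bdry F$ wraps along $\widehat{\bf a}$), and the total $\mu = \mu_1 + \mu_2$ turns out to be strictly positive. Theorem~\ref{thm: automatic transversality HLS} then applies and yields regularity of $\mathcal{M}(X,A,J)$.

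The dimension then follows by Riemann--Roch for $J$-holomorphic maps with Lagrangian boundary, one interior marked point, and $l = 4g$ boundary marked points:
$$
\dim_\R \mathcal{M}(X,A,J) \;=\; (n-3)\chi(F) + \mu + 2k + l \;=\; 8g + \mu,
$$
with $n=2$ and $\chi(F) = 2-4g$; the claim $\dim_\R = 8g+4$ corresponds to $\mu = 4$. The main obstacle is the Maslov index bookkeeping, in particular the $\mu_1$ piece: while $\mu_2 = 4g$ is immediate from the product structure and the degree of $u_2$, verifying that $\mu_1$ exactly matches what is needed requires an honest computation of how the immersed map $u_1$ pulls back the totally real boundary $\widehat{\bf a} \subset \Sigma$, and this is where the precise structure of the $\widehat{\bf a}$ coming from the open book decomposition enters the argument.
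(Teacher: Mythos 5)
Your strategy is exactly the paper's: check that $u$ is immersed, show the total boundary Maslov index is positive, invoke Theorem~\ref{thm: automatic transversality HLS}, and then read off the dimension from the index formula with marked points (your count $-\chi(F)+\mu+2+4g$ agrees with the paper's $-\chi(F)+\mu+(4g+2)$). The immersedness argument and the $D^2$-factor contribution $\mu_2=2\sum d_i=4g$ also match the paper, which phrases the latter as $2g\cdot\mu(\{pt\}\times D^2)$ with $\mu(\{pt\}\times D^2)=2$.

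However, there is a genuine gap: you never compute $\mu_1$, and you only assert that ``the total $\mu=\mu_1+\mu_2$ turns out to be strictly positive.'' Positivity of $\mu$ is precisely the hypothesis of the Hofer--Lizan--Sikorav theorem, so without pinning down $\mu_1$ the regularity claim is unproved, and the dimension count (which needs $\mu=4$ exactly) is likewise unestablished; a priori $\mu_1$ could be very negative for large $g$. The paper closes this by observing that, because $\pi_1\circ u$ restricts to a biholomorphism of $int(F)$ onto its image in $\Sigma$ (so $F$ is effectively $\Sigma$ cut along the arcs $\widehat{\bf a}$, with each $\bdry_i F$ traversing $\widehat{a}_i$), the $\Sigma$-factor of $(u^*TX, u^*TL_{\widehat{\bf a}})$ contributes
$$\mu_1 \;=\; 2\,c_1(T\Sigma) \;=\; 2\,\chi(\Sigma) \;=\; 4-4g,$$
whence $\mu = 4g+(4-4g)=4>0$ and $\dim_\R\mathcal{M}(X,A,J) = -(2-4g)+4+(4g+2)=8g+4$. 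You correctly identified this as the crux, but identifying the obstacle is not the same as overcoming it; the doubling/Chern-class identity above is the missing step.
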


\begin{proof}
Let $(u,x,{\bf x})\in\mathcal{M}(X,A,J)$, where $u:(F,j)\to (X,J)$. Since there are no branch points of $\pi_2\circ u$ along $\bdry F$, the curve $u$ is immersed near $\bdry F$. On the other hand, the restriction of $\pi_1\circ u$ to $int(F)$ is a biholomorphism onto its image. Hence $u$ is an immersion on all of $F$.

The total Maslov index $\mu(u)$ relative to the Lagrangian $L_{\widehat{\bf a}}$, with respect to any unitary trivialization of $u^*TX$ is:
$$\mu(u)= 2g\cdot \mu(\{pt\}\times D^2) + 2c_1(T\Sigma),$$
where $\{pt\}\times D^2$ has boundary on $L_{\widehat{\bf a}}$ and $\mu(\{pt\}\times D^2)$ is computed with respect to a trivialization of $TX|_{\{pt\}\times D^2}$.
An easy calculation gives $\mu(\{pt\}\times D^2)=2$ and $c_1(T\Sigma)=\chi(\Sigma)=2-2g.$ Hence
\begin{equation}
\mu(u)=2g (2)+2(2-2g)= 4>0.
\end{equation}
The regularity of $\mathcal{M}(X,J,A)$ then follows from Theorem~\ref{thm: automatic transversality HLS}. [Strictly speaking, Theorem~\ref{thm: automatic transversality HLS} applies to the case where $(M,J)$ has no boundary.  However, we note that Theorem~\ref{thm: automatic transversality HLS} is a local result, i.e., only uses an open neighborhood of the image of $u_0$.  The complex manifold $(X=\Sigma\times D^2,J)$ can then be slightly enlarged to a product complex manifold $\Sigma\times D^2_{1+\varepsilon}$, where $D^2_{1+\varepsilon}$ is a disk of radius $1+\varepsilon$, so that the theorem applies.  We then note that any nearby curve in the enlargement with boundary on $L_{\widehat{\bf a}}$ has image in $X$ by considering the projection to $D^2_{1+\varepsilon}$.]

Using the usual index formula for holomorphic curves with Lagrangian boundary, we obtain:
\begin{align*}
\dim_{\R}\mathcal{M}(X,A,J) &=  -\chi(F) +\mu(u) +(4g+2)\\
& = -(2-4g) +4 +(4g+2)= 8g+4,
\end{align*}
where the summand $4g+2$ comes from the marked points $x$ and ${\bf x}'$. This completes the proof of Lemma~\ref{lemma: moduli prime for D regular and dim 4g plus 2}.
\end{proof}

\begin{lemma} \label{berkeley}
If $J\in \mathcal{J}^{prod}$, then $\mathcal{M}_J(z_\infty,{\bf z})$ is regular for generic ${\bf z}$.
\end{lemma}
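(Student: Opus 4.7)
The plan is a standard parametric Sard--Smale argument. Since $\dim \mathcal{M}(X,A,J) = 8g+4 = \dim \mathfrak{X}$ by Lemma~\ref{lemma: moduli prime for D regular and dim 4g plus 2}, the lemma reduces to showing that the constraint point
$$c(\mathbf{z}) := ((z_\infty,0),(z_1,1),\dots,(z_{2g},1),(z_{2g+1},-1),\dots,(z_{4g},-1))$$
is a regular value of $ev$ for generic $\mathbf{z}$. I would let $\mathcal{P} = \prod_{i=1}^{2g}\hat{a}_i \times \prod_{i=1}^{2g}\hat{a}_i$ (of dimension $4g$) be the parameter space, and consider the map
$$\Phi : \mathcal{M}(X,A,J) \times \mathcal{P} \to \mathfrak{X} \times \mathfrak{X}, \quad ((u,x,\mathbf{x}'),\mathbf{z}) \mapsto (ev(u,x,\mathbf{x}'),\, c(\mathbf{z})).$$
If $\Phi$ is transverse to the diagonal $\Delta \subset \mathfrak{X} \times \mathfrak{X}$, then $\Phi^{-1}(\Delta)$ is a smooth manifold of dimension $4g$; applying Sard's theorem to its projection to $\mathcal{P}$ produces the dense set of $\mathbf{z}$ yielding regular $\mathcal{M}_J(z_\infty,\mathbf{z})$.

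The crux is therefore the verification that $\Phi \pitchfork \Delta$, which amounts to showing that at every solution $(u,x,\mathbf{x}',\mathbf{z})$,
$$\mathrm{Im}(d\,ev) + \mathrm{Im}(d\,c) = T_{c(\mathbf{z})}\mathfrak{X}.$$
Since the image of $d\,c$ spans exactly the $T\hat{a}_i$-summand of each $TL_{\hat{a}_i} = T\hat{a}_i \oplus T(\bdry D^2)$, the task is to show that $d\,ev$ alone covers the complementary $(4g+4)$-dimensional subspace $T_{(z_\infty,0)}X \oplus \bigoplus_{i=1}^{4g} T(\bdry D^2)$. I would assemble this from three ingredients: (i) variations of the interior marked point $x \in F$ span a 2-plane inside $T_{u(x)}X$, because $u$ is immersed (as established in the proof of Lemma~\ref{lemma: moduli prime for D regular and dim 4g plus 2}); (ii) variations of each $x_i'$ along $\bdry_i F$ produce a tangent vector to $u(\bdry_i F) \subset L_{\hat{a}_i}$ whose $T(\bdry D^2)$-component is nonzero at generic $x_i'$, since $\pi_2 \circ u|_{\bdry_i F}$ has nonzero mapping degree overall; and (iii) the remaining directions must be supplied by unmarked infinitesimal deformations of $u$ within $\mathcal{M}(X,A,J)$.

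The main obstacle is producing enough unmarked deformations in (iii). Here the product structure of $J$ is the decisive tool: a deformation $\xi \in \ker D_u$ splits as $\xi=(\xi_1,\xi_2)$ with $\xi_1 \in \Gamma(u^*\pi_1^*T\Sigma)$ and $\xi_2 \in \Gamma(u^*\pi_2^*TD^2)$, each satisfying its own linearized $\bar{\partial}$-problem with totally real boundary conditions. The $\Sigma$-factor $\pi_1 \circ u$ is a degree-$1$ holomorphic map whose deformations provide the two missing $T\Sigma$-directions at $z_\infty$, while the $D^2$-factor $\pi_2 \circ u$ is a degree-$2g$ map of pairs $(F,\bdry F) \to (D^2,\bdry D^2)$ with $(4g+2)$-dimensional moduli (from the index computation in Lemma~\ref{lemma: moduli prime for D regular and dim 4g plus 2}), ample enough to translate the evaluations $\pi_2 \circ u(x_i')$ independently along $\bdry D^2$ once the other constraints are fixed. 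The surjectivity of $d\,ev$ onto the complementary subspace then follows, and combined with (i)--(ii) this yields $\Phi \pitchfork \Delta$. Sard's theorem on the projection $\Phi^{-1}(\Delta) \to \mathcal{P}$ completes the proof.
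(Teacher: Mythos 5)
Your Sard--Smale framework is sound, and it is in fact more explicit than the paper's two-line proof about the genuinely delicate point: since the $\bdry D^2$-coordinates of the constraints are frozen (at $0$ for the interior point and at $\pm 1$ for the boundary points), the differential $d\,ev$ by itself must cover the $(4g+4)$-dimensional complement $T_{(z_\infty,0)}X\oplus\bigoplus_{i=1}^{4g}T(\bdry D^2)$. The gap is that your verification of this surjectivity does not go through. After using the variation of $x$, whose image is the $2$-plane $\op{Im}(du_x)$ -- a \emph{graph} over $T_{z_\infty}\Sigma$ -- the directions still missing at $(z_\infty,0)$ are the $T_0D^2$-directions, not ``$T\Sigma$-directions''; deformations of the $\Sigma$-factor evaluate into $T_{z_\infty}\Sigma\oplus 0$ and cannot supply them. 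What does supply them, and what the paper's proof explicitly invokes, is postcomposition of $u$ by fractional linear transformations: for $J\in\mathcal{J}^{prod}$ and $\varphi\in\op{Aut}(D^2)$ the map $(id\times\varphi)\circ u$ is again $J$-holomorphic with boundary on $L_{\widehat{\bf a}}$, and the $\op{Aut}(D^2)$-orbit of $0$ sweeps out $T_0D^2$. This ingredient is absent from your argument, so transversality of the interior constraint is not established.

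The treatment of the $4g$ directions $\bigoplus T(\bdry D^2)$ is also not justified. The count ``the $D^2$-factor has $(4g+2)$-dimensional moduli'' misattributes the index of Lemma~\ref{lemma: moduli prime for D regular and dim 4g plus 2}, which includes the variations of the domain complex structure $j$ and the $\Sigma$-factor, to the $D^2$-factor alone: for fixed $(F,j)$ the Riemann--Hilbert problem for $\xi_2\in\Gamma(u_2^*TD^2)$ with boundary values in $T(\bdry D^2)$ has index $\mu+\chi(F)=4g+(2-4g)=2$, and its solution space is essentially the $3$-dimensional $\op{Aut}(D^2)$-family -- nowhere near enough to move the $4g$ evaluations $\pi_2\circ u(x_i')$ independently. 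Likewise ``at generic $x_i'$'' in (ii) is not available: the $x_i'$ are determined by the solution, transversality must hold at \emph{every} point of $\Phi^{-1}(\Delta)$, and at a boundary branch point of $\pi_2\circ u$ the variation of $x_i'$ contributes nothing to $T(\bdry D^2)$. The paper sidesteps this entire burden by first cutting out $\mathcal{M}'=\mathcal{M}(X,A,J)\cap\{u(x)=(z_\infty,0)\}$ transversally using the two inputs above, and then applying Sard's theorem to $ev':\mathcal{M}'\to L_{\widehat{a}_1}\times\dots\times L_{\widehat{a}_{2g}}\times L_{\widehat{a}_1}\times\dots\times L_{\widehat{a}_{2g}}$, i.e., treating the full points of $L_{\widehat{a}_i}$ as the generic data; if you prefer to keep your parametrization by $\prod\widehat{a}_i$ only, you must supply a genuine argument that the unmarked deformations (which necessarily move the slits in $\Sigma$ and the complex structure on $F$ simultaneously) project onto $\bigoplus T(\bdry D^2)$.
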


\begin{proof}
This is basically Sard's theorem, except that $z_\infty$ is nongeneric.  The modification is as follows: Let $(u,x,{\bf x}')\in \mathcal{M}_J(z_\infty,{\bf z})$. Then $ev_*(u,x,{\bf x}')$ composed with the projection onto $T_{(z_\infty,0)} X$ is surjective due to two inputs: (i) postcomposition of $u$ by a fractional linear transformation and (ii) variation of the point $x$. Hence $\mathcal{M}'=\mathcal{M}(X,A,J)\cap \{u(x)=(z_\infty,0)\}$ is transversely cut out.  The rest of the proof follows from applying Sard's theorem to
$$ev':\mathcal{M}'\to L_{\widehat{a}_1}\times\dots\times L_{\widehat{a}_{2g}}\times L_{\widehat{a}_1}\times\dots\times L_{\widehat{a}_{2g}},$$
$$(u,x,{\bf x}')\mapsto (u(x_1'),\dots,u(x'_{4g})).$$
\end{proof}

\subsubsection{Compactness}

\begin{lemma}\label{lemma: compact}
If $J\in\mathcal{J}^{prod}$, then $\mathcal{M}_J(z_\infty,{\bf z})$ is compact for generic ${\bf z}$.
\end{lemma}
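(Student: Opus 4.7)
My plan is to argue by contradiction in the spirit of Lemmas~\ref{lemma: no bubbling} and \ref{lemma: no bubbling2}. By the version of Gromov compactness for holomorphic curves with Lagrangian boundary, any sequence in $\mathcal{M}_J(z_\infty,{\bf z})$ subconverges to a stable nodal map $u:F_\infty\to X$, and the task is to show $F_\infty$ has no extra irreducible components beyond the main one. The essential simplifying input is that $J\in\mathcal{J}^{prod}$: on every irreducible component $C$ the restriction decomposes as $u|_C=(v_1,v_2)$ with $v_1:C\to\Sigma$ and $v_2:C\to D^2$ separately holomorphic.

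First I would rule out sphere bubbles. On such a component $v_2:S^2\to D^2$ is constant because $D^2$ is a bounded domain, so $v_1:S^2\to\Sigma$ would have to be non-constant, which is impossible for $g=g(\Sigma)\geq 1$; a totally constant sphere component is unstable. Disk bubbles are the central case. A disk bubble $C$ with boundary on $L_{\widehat{a}_i}=\widehat{a}_i\times\bdry D^2$ satisfies that $v_2:(D^2,\bdry D^2)\to(D^2,\bdry D^2)$ is a Blaschke product of some degree $k\geq 0$, while $v_1:(D^2,\bdry D^2)\to(\Sigma,\widehat{a}_i)$ is a holomorphic disk. Since $\widehat{a}_i$ is essential in $\Sigma$, any such disk is null-homologous in $H_2(\Sigma,\widehat{a}_i)$, hence has zero symplectic area and is constant at some $p\in\widehat{a}_i$; for the bubble to be non-trivial (hence stable) one must therefore have $k\geq 1$. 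Now let $F^{(0)}$ be the unique component carrying the interior marked point $x$. Because $v_2(x)=0\notin\bdry D^2$, the map $v_2|_{F^{(0)}}$ is non-constant, and Schwarz reflection then forces $v_2|_{\bdry_iF^{(0)}}:S^1\to\bdry D^2$ to have winding number $\geq 1$ for each $i$. Winding is preserved under the degeneration, so
\[
\op{wind}(v_2|_{\bdry_iF^{(0)}}) + \sum_{\text{disk bubbles on }\bdry_iF}k = \op{wind}(u_{n,2}|_{\bdry_iF}) = 1,
\]
which forces every bubble degree $k\leq 0$ and contradicts $k\geq 1$.

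For nodal degenerations not purely of bubble type, I would use that $v_1$ has total degree $1$ onto $\Sigma$, so exactly one component $F^{(0)}$ carries that degree and on every other component $F'$ the map $v_1|_{F'}$ has degree $0$. If $F'$ is closed, either $v_1|_{F'}$ is constant and $F'$ is an unstable ghost, or $v_2|_{F'}$ is a non-constant closed holomorphic map to $D^2$ (impossible). If $F'$ has non-empty boundary on $\widehat{\bf a}$, the null-homology of $v_1|_{F'}$ together with the essentialness of each $\widehat{a}_j$ meeting $\bdry F'$ forces $v_1|_{F'}$ to have zero area and be constant at some $p\in\bigcap_j\widehat{a}_j$ taken over the relevant indices; generic placement of the $\widehat{a}_j$ makes such a point nonexistent when $F'$ meets two or more distinct curves, and in the remaining case $F'$ is effectively a disk bubble already excluded in the previous step. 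The genericity of ${\bf z}$ eliminates the residual possibility that $F'$ absorbs a preimage of $0$, $1$, or $-1$ exactly as in Lemmas~\ref{lemma: no bubbling} and \ref{lemma: no bubbling2}.

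The step I expect to be most delicate is the last one: boundary nodes that pinch two different boundary components of $F$ together require care, because two distinct $\widehat{a}_j$'s then get glued and the winding balance has to be applied separately on each side. The combination of winding conservation on the $D^2$ side and the zero-area-implies-constant conclusion on the $\Sigma$ side should nevertheless dispatch all cases uniformly, in the same spirit as Lemma~\ref{lemma: no bubbling2}.
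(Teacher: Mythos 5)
Your argument overlooks the feature of this setup that makes compactness nontrivial, and the root of the problem is a misreading of the boundary condition: the $\widehat{a}_i$ are not essential simple closed curves in $\Sigma$ but \emph{arcs} (the $\overline{a}_i$ with a neighborhood of $z_\infty$ deleted), so each Lagrangian $L_{\widehat{a}_i}=\widehat{a}_i\times\bdry D^2$ is a Lagrangian \emph{with boundary} $\bdry\widehat{a}_i\times\bdry D^2$. Consequently you cannot simply invoke Gromov compactness with Lagrangian boundary and then classify bubbles: a sequence in $\mathcal{M}_J(z_\infty,{\bf z})$ could a priori degenerate by having $u_i(\bdry F_i)$ run off the end of an arc, i.e.\ approach $\{z_\infty\}\times\bdry D^2$, in which case there is no limiting stable map with the required boundary condition at all. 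Ruling this out is the heart of the paper's proof (its step (ii)): if $u(\bdry F)$ met $\{z_\infty\}\times\bdry D^2$, then for $z\in\Sigma-\overline{\bf a}$ close to $z_\infty$ the curve would intersect the fiber $\{z\}\times D^2$ both near $(z_\infty,0)$ (forced by the marked-point constraint $u(x)=(z_\infty,0)$) and near $\{z_\infty\}\times\bdry D^2$, giving $\langle u,\{z\}\times D^2\rangle\geq 2$ and contradicting $\pi_1$-degree $1$. The same fiber-intersection count is what excludes boundary nodes (two boundary circles on the same $L_{\widehat{a}_i}$ would again force $\langle u,\{z\}\times D^2\rangle\geq 2$), which is exactly the case you flag as ``delicate'' and leave unresolved. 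Your proposal contains no substitute for this step, so it does not establish compactness.

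A secondary point: your winding-number exclusion of disk bubbles presupposes that the main component retains a boundary circle over each $\bdry_iF$ with $\op{wind}(v_2)\geq 1$ there, which is not known until the boundary degenerations above have been controlled (a boundary circle could in principle be absorbed entirely into a bubble of degree $1$, making your balance $1\geq 1+k$ vacuous). The paper avoids this by a different, self-contained argument: a disk bubble must be $\{z\}\times D^2$, and since it meets both $\Sigma\times\{1\}$ and $\Sigma\times\{-1\}$ while the constraint points $(z_1,1),\dots,(z_{2g},1)$ and $(z_{2g+1},-1),\dots,(z_{4g},-1)$ already exhaust the homological intersection numbers $\langle u,\Sigma\times\{\pm1\}\rangle=2g$ and the points of ${\bf z}$ are distinct, one gets an immediate contradiction. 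Your sphere-bubble and interior-node discussions are fine and match the paper's step (iii), but as written the proof does not close.
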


\begin{proof}
Let $u_i: F_i\to X$ be a sequence of curves in $\mathcal{M}_J (z_\infty,{\bf z})$ that limits to $u: F\to X$. We will eliminate all possible degenerations in the limit to show that $u\in \mathcal{M}_J(z_\infty,{\bf z})$.  Note in particular that we are assuming that all the points of ${\bf z}$ are distinct.

\s\n
(i) We first claim that $u$ can have no disk bubbles. 
Arguing by contradiction, suppose that $u$ has a disk bubble $u'$. Since $\pi_1\circ u_i|_{int(F_i)}$ is a biholomorphism onto its image for all $i$, $\op{Im}(u')=\{z \} \times D^2$ for some $z$.

The point $z$ must be an element of $\{z_1,\dots,z_{2g}\}$: For each $u_i$ with $i\gg 0$, there exists $w_i\in int(D^2)$ close to $1$ such that $u_i$ intersects $\Sigma\times \{w_i\}$ near $\{z_1,\dots,z_{2g}\}\times \{w_i\}$.  On the other hand, $u'$ intersects $\Sigma\times \{w_i\}$ at $(z,w_i)$. This implies that $z\in \{z_1,\dots,z_{2g}\}$.

Using the same argument as in the previous paragraph, we can show that $z\in \{z_{2g+1},\dots,z_{4g}\}$.  This is a contradiction.

\s\n
(ii) Next we claim that $u(\bdry F)$ does not intersect $\{z_\infty\}\times \bdry D^2$.  Arguing by contradiction, suppose that $u(\bdry F)\cap (\{z_\infty\}\times \bdry D^2)\not=\varnothing$. Since $u$ cannot have a disk bubble $\{ z_\infty \}\times D^2$ by (i), there exists a point $z\in\Sigma-\overline{\bf a}$ close to $z_\infty$, such that $u$ intersects $\{z\}\times D^2$ at two points: once near $(z_\infty,0)$ and once near $\{z_\infty\}\times \bdry D^2$. Hence $\langle u, \{z\}\times D^2\rangle\geq 2$, which is a contradiction.

A similar argument also implies that $u$ has no nodes along the boundary: if there are two components of $\bdry F$ that map to the same $\widehat{a}_i\times \bdry D^2$, then $\langle u, \{z\}\times D^2\rangle\geq 2$  for $z\in \Sigma-\overline{\bf a}$, which is a contradiction.

\s\n
(iii) Finally we eliminate interior nodes.  Since $u$ has no disk bubbles and sphere bubbles (the latter easily follows by projecting to $D^2$), $u$ must have a component $u':F'\to X$ of genus $<g$ if $u$ has interior nodes. Since $\pi_1\circ u'|_{int(F')}$ cannot map onto the genus $g$ surface $\Sigma-(\widehat{\bf a}-N(z_\infty))$, we have a contradiction.
\end{proof}

\begin{lemma} \label{berkeley2}
The count $G_3(\Sigma; J;z_\infty ,{\bf z})$ is independent of the choice of $J\in\mathcal{J}^{prod}$ and ${\bf z}$ generic.
\end{lemma}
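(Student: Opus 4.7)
The plan is to use a standard cobordism argument: connect two admissible pairs $(J_0, {\bf z}_0)$ and $(J_1, {\bf z}_1)$ by a generic path $(J_t, {\bf z}_t)_{t\in[0,1]}$ in $\mathcal{J}^{prod}$ times the boundary configuration space, and show that the resulting parametric moduli space
\[
\mathcal{M}^{par} := \{ (t, u, x, {\bf x}') \mid t\in[0,1],\ (u,x,{\bf x}') \in \mathcal{M}_{J_t}(z_\infty, {\bf z}_t) \}
\]
is a compact $1$-manifold with boundary $\mathcal{M}_{J_0}(z_\infty, {\bf z}_0) \sqcup \mathcal{M}_{J_1}(z_\infty, {\bf z}_1)$. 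Since the mod $2$ count of the boundary of any compact $1$-manifold vanishes, this forces $G_3$ to agree at the two endpoints. Such a path exists because $\mathcal{J}^{prod}$ is path-connected: a product structure is determined by complex structures on $\Sigma$ and on $D^2$, and each of those spaces of almost complex structures is contractible.

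For regularity along the path, I would observe that the Maslov index computation in Lemma~\ref{lemma: moduli prime for D regular and dim 4g plus 2} is purely topological, so $\mu(u) = 4 > 0$ independently of $J_t \in \mathcal{J}^{prod}$; and for any product almost complex structure the map $\pi_1 \circ u$ is holomorphic of degree one on $int(F)$, hence a biholomorphism onto its image, so $u$ is automatically immersed. Theorem~\ref{thm: automatic transversality HLS} therefore applies uniformly in $t$, and $\mathcal{M}(X,A,J_t)$ is regular for every $t$. A parametric version of the argument in Lemma~\ref{berkeley} — using surjectivity onto $T_{(z_\infty,0)}X$ coming from M\"obius postcomposition together with variation of the interior marked point, followed by Sard applied to the boundary evaluation along the one-parameter family — then produces a comeager set of paths for which $\mathcal{M}^{par}$ is transversely cut out and realizes the desired cobordism.

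The bulk of the work is parametric compactness, but this amounts to a rerun of Lemma~\ref{lemma: compact}. Each step in that proof used only (a) biholomorphicity of $\pi_1 \circ u$ on the interior, (b) positivity of intersections in dimension $4$, and (c) distinctness and genericity of the boundary constraints. Properties (a) and (b) persist along the entire path, and for a generic path the configuration ${\bf z}_t$ remains distinct and sufficiently generic for every $t \in [0,1]$, so the same exclusions of disk bubbles, sphere bubbles, boundary intersections of $u(\bdry F)$ with $\{z_\infty\} \times \bdry D^2$, boundary nodes, and interior nodes go through without modification.

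The main obstacle I anticipate is bookkeeping rather than analysis: one must verify that a generic path $(J_t, {\bf z}_t)$ can simultaneously avoid all the codimension-$1$ failure strata — coincidences among the $z_{i,t}$, incidence of some $z_{i,t}$ with the arc emanating from $z_\infty$, or loss of immersivity along $\bdry F$ — so that the compactified parametric moduli space acquires no hidden boundary components beyond the two endpoints. Since each bad locus has real codimension at least one in the path space, a transversality argument selects paths avoiding them all; once this is arranged, invariance of $G_3(\Sigma; J; z_\infty, {\bf z})$ follows from the standard cobordism principle.
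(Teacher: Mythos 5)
There is a genuine gap, and it sits exactly where you wave it away. You assert that ``for a generic path the configuration ${\bf z}_t$ remains distinct'' and, later, that the coincidence loci can be avoided because ``each bad locus has real codimension at least one in the path space.'' That logic is backwards for a $1$-parameter family: a codimension-one stratum of the parameter space is generically \emph{crossed transversally at isolated times}, not avoided; only codimension $\geq 2$ strata can be dodged. Here the coincidence in question is $z_i = z_{2g+i}$, two points constrained to lie on the same one-dimensional arc $\widehat{a}_i$, so the diagonal is codimension one in $\widehat{a}_i\times\widehat{a}_i$ and its complement is \emph{disconnected} (the two points can sit in either order along the arc). Any path joining two generic configurations with opposite orderings must cross the diagonal, so collisions of constraint points are unavoidable in principle, not just hard to rule out.

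This matters because the disk-bubble exclusion in Lemma~\ref{lemma: compact}(i) uses the distinctness of the points of ${\bf z}$ in an essential way: a bubble $\{z\}\times D^2$ can only absorb one constraint when $z_i\neq z_{2g+i}$, which is what forces the intersection-number contradiction. At an isolated collision time $t_0$ a single disk bubble through $(z_i,1)$ and $(z_{2g+i},-1)$ becomes homologically possible, and your ``rerun of Lemma~\ref{lemma: compact}'' no longer goes through without modification. The paper's proof is precisely about this configuration: the limit would consist of a disk bubble together with a component $u''$ of genus $g$ with $2g-1$ boundary components, and one computes (as in Lemma~\ref{lemma: moduli prime for D regular and dim 4g plus 2}) that $\op{ind}(u'')=8g-1$ while passing through $z_\infty$ and the remaining $4g-2$ points is a constraint of dimension $4+2(4g-2)=8g$. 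The bubbled configuration is therefore overdetermined and cannot occur, which is the missing step that restores parametric compactness. The rest of your cobordism framework (path-connectedness of $\mathcal{J}^{prod}$, persistence of automatic transversality, parametric Sard) is fine and agrees with the paper.
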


\begin{proof}
We prove the compactness of $\coprod_{t\in[0,1]}\mathcal{M}_{J_t}(z_\infty,{\bf z}_t)$, where $(J_t,{\bf z}_t)$, $t\in[0,1]$, is a generic $1$-parameter family and we are writing ${\bf z}_t=((z_1)_t,\dots,(z_{4g})_t)$. The proof is similar to that of Lemma~\ref{lemma: compact}.  The only difference is that two of the points of ${\bf z}_{t_0}$ may come together at isolated $t_0\in[0,1]$. Hence, compared to Lemma~\ref{lemma: compact}(i), it is possible that a limit holomorphic curve $u$ consists of a disk bubble $u'$ and a component $u'': F''\to X$ where $F''$ is a compact surface of genus $g$ with $2g-1$ boundary components. By a calculation similar to that of Lemma~\ref{lemma: moduli prime for D regular and dim 4g plus 2}, $u''$ is regular and $\op{ind}(u'')=8g-1$. On the other hand, passing through $z_\infty$ and remaining $4g-2$ points of ${\bf z}_{t_0}$ (which we may assume is generic since ${\bf z}_t$, $t\in[0,1]$, is generic) is a constraint of dimension $4+2(4g-2)=8g$, which is overdetermined.  Hence $u$ must be irreducible, and the proof of Lemma~\ref{lemma: compact} carries over.

The regularity of $\coprod_{t\in[0,1]}\mathcal{M}_{J_t}(z_\infty,{\bf z}_t)$ follows from Lemma~\ref{berkeley} adapted to a generic $1$-parameter family. The compactness and regularity then imply the lemma.
\end{proof}

\subsubsection{Reduction to a torus}
\label{subsubsection: reduction to torus for G 3}

We now explain how to reduce to the case of a torus. Suppose that $g(\Sigma)=2$; the general case is similar. As in Section~\ref{subsubsection: reduction for G 2}, we degenerate $\Sigma^\tau=(\Sigma,i_\tau)$, $\tau\in [0,\infty)$, into a nodal Riemann surface
$$(\Sigma^\infty,{\frak n})=\left((\Sigma_-,{\frak n}_-)\sqcup (\Sigma_+,{\frak n}_+)\right)/ {\frak n}_-\sim {\frak n}_+$$
by pinching along a separating curve $\gamma$. Let $(u^\tau,x^\tau,({\bf x}')^\tau)\in \mathcal{M}_{J_\tau}(z_\infty,{\bf z})$ be a sequence where
$$u^\tau:(F_\tau,j_\tau)\to (\Sigma^\tau\times D^2,J_\tau)$$
and $J_\tau$ is the product complex structure $(i_\tau, i_{D^2})$. Here we assume the following:
\begin{enumerate}
\item[(i)] $z_\infty$, ${\bf z}$, and $\widehat{\bf a}$ are fixed for all $\tau\in[0,\infty)$;
\item[(ii)] $\gamma$ is disjoint from ${\bf z},\widehat{a}_1,\widehat{a}_2$, but intersects $\widehat{a}_3,\widehat{a}_4$ at two points each;
\item[(iii)] in the limit $z_\infty,z_1,z_2,z_5,z_6\in \Sigma_+$ and $z_3,z_4,z_7,z_8\in \Sigma_-$.
\end{enumerate}
Let $\widetilde{\bf a}$ be the quotient of $\widehat{\bf a}$ at $\tau=\infty$. Let $v^\tau=\pi_2\circ u^\tau$, where $\pi_2$ is the projection to $D^2$. We also write ${\bf z}_1=(z_1,z_2,z_5,z_6)$ and ${\bf z}_2=(z_3,z_4,z_7,z_8)$.

We first note that $u^\tau$ cannot degenerate as $\tau$ approaches a finite $\tau_0$. This is as argued in Lemma~\ref{lemma: compact}.

Let $u=u_-\cup u_0\cup u_+$ be the limit of the sequence $u^\tau$ as $\tau\to \infty$, where $\op{Im}(u_\pm) \subset \Sigma_\pm \times D^2$, $u_\pm$ has no components of $\{{\frak n}\}\times D^2$, and $u_0$ maps to $\{{\frak n}\}\times D^2$, and let $v_\pm=\pi_2\circ u_\pm$.  By the argument of Lemma~\ref{lemma: compact}(i), disks $\{z\}\times D^2$ cannot bubble off, since otherwise they would introduce extra intersection points with
$$\Sigma^\infty\times\{w\}=\left((\Sigma_+\sqcup \Sigma_-)/\sim\right)\times \{w\},$$
where $w\in int(D^2)$.  In particular, this implies that $u_0=\varnothing$.

Let $F_\pm$ be the domain of $u_\pm$. By Lemma~\ref{lemma: compact}(ii), $u_+(\bdry F_+)$ does not intersect $\{z_\infty\}\times \bdry D^2$. Let $\#(\bdry F_\pm)$ be the number of boundary components of $F_\pm$.

We claim that $\#(\bdry F_+)=\#(\bdry F_-)=2$. First observe that $\# (\bdry F_+)\geq 2$, since two boundary components are needed to map to $\widetilde{a}_i$, $i=1,2$; similarly $\#(\bdry F_-)\geq 2$  since $z_3,z_4,z_7,z_8\in \Sigma_-$. The restriction of $v_+$ to each component $C$ of $\bdry F_+$ is either a positive degree map $C\to \bdry D^2$ or is a constant map to a point $w\in\bdry D^2$. If $v_+$ maps $C$ to a point $w\in \bdry D^2$, then $u_+$ maps an irreducible component of $F_+$ to a fiber $\Sigma_+\times\{w\}$. This in turn implies that $u_+$ has disk bubbles, a contradiction. Hence the restriction of $v_+$ to each $C$ is a positive degree map. If $\#(\bdry F_+)> 2$, then $\op{deg}(v_+|_{\bdry F_+})>2$. Similarly we obtain that $\op{deg}(v_-|_{\bdry F_-})\geq 2$. This implies that $\op{deg}((v_+\cup v_-)|_{\bdry F})>4$, which is a contradiction. Hence $\#(\bdry F_+)=\#(\bdry F_-)=2$.

The above claim implies that there exist $x_i,{\bf x}'_i$, $i=1,2$, such that
$$(u_+,x_1,{\bf x}'_1)\in \mathcal{M}_{J_1}(z_\infty,{\bf z}_1), \quad (u_-,x_2,{\bf x}'_2)\in \mathcal{M}_{J_2}({\frak n}_-,{\bf z}_2).$$

By the usual gluing argument we obtain the following:

\begin{lemma}
$$G_3(\Sigma^\tau;J^\tau;z_\infty,{\bf z})=G_3 (\Sigma_+;z_\infty,{\bf z}_1)\times  G_3(\Sigma_-;{\frak n}_-,{\bf z}_2).$$
\end{lemma}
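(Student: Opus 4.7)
The plan is a standard degeneration/gluing argument analogous to the $G_2$ computation in Section~\ref{subsubsection: reduction for G 2}. The convergence direction is exactly the preceding discussion: every sequence $u^\tau\in \mathcal{M}_{J^\tau}(z_\infty,{\bf z})$ has a subsequence converging to a limit $u=u_+\cup u_-$ with $u_0=\varnothing$, and the identification of the marked points shows that $(u_+,x_1,{\bf x}'_1)\in \mathcal{M}_{J_1}(z_\infty,{\bf z}_1)$ and $(u_-,x_2,{\bf x}'_2)\in \mathcal{M}_{J_2}({\frak n}_-,{\bf z}_2)$, subject to the node-matching condition $v_+({\frak n}_+)=v_-({\frak n}_-)\in int(D^2)$; here the interior marked point on the minus side is naturally pinned to the matching value $w=v_+({\frak n}_+)$ rather than to $0$.

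For the converse direction I would invoke the standard relative gluing theorem of Ionel-Parker~\cite{IP1}: for any matched pair $(u_+,u_-)$ and all sufficiently large $\tau$, there is a unique (up to parametrization) $J^\tau$-holomorphic curve in $\Sigma^\tau\times D^2$ that converges back to $u_+\cup u_-$. Transversality of the fiber product at the matching point is essentially automatic: each factor is regular by Lemma~\ref{berkeley}, and a routine dimension count shows that the evaluation maps $u_\pm\mapsto v_\pm({\frak n}_\pm)$ into $D^2$ are submersions, using the freedom to move the interior marked points together with the $S^1$-action on $D^2$ preserving the Lagrangians $L_{\widehat{\bf a}}$.

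To promote this bijection to the product formula I would verify that $G_3(\Sigma_-;{\frak n}_-,{\bf z}_2)$, defined by pinning the interior marked point to $({\frak n}_-,0)$, coincides with the analogous count obtained by pinning it to $({\frak n}_-,w)$ for any $w\in int(D^2)$. This is the direct analogue of Lemma~\ref{berkeley2}: a generic $1$-parameter family of interior target values in $int(D^2)$ yields a cobordism between the two counts, and the compactness analysis of Lemma~\ref{lemma: compact} carries over verbatim because $w$ never reaches $\bdry D^2$ and the boundary constraints ${\bf z}_2$ stay pairwise distinct on $\bdry D^2$. Granted this invariance, summing over the $u_+\in \mathcal{M}_{J_1}(z_\infty,{\bf z}_1)$ --- each of which pins some value $w\in int(D^2)$ and contributes exactly $G_3(\Sigma_-;{\frak n}_-,{\bf z}_2)$ matching partners $u_-$ --- yields the stated product formula.

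The main obstacle is the transverse gluing step at the interior node: one has to preglue across ${\frak n}$ inside a simultaneously degenerating surface while preserving the Lagrangian boundary conditions on the four circles, and verify that the Newton iteration converges uniformly in $\tau$. This is standard but delicate, and I would rely on the relative gluing machinery of Ionel-Parker~\cite{IP1} (or Li-Ruan~\cite{LR}) rather than redo the analysis from scratch.
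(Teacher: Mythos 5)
Your argument is correct and is essentially the paper's proof: the paper establishes the splitting $u=u_+\cup u_-$ with $u_0=\varnothing$ in the preceding paragraphs and then simply invokes ``the usual gluing argument'' for this lemma. Your additional care in pinning the interior marked point of $u_-$ to the matching value $w=v_+({\frak n}_+)\in int(D^2)$ and then deforming $w$ back to $0$ by a Lemma~\ref{berkeley2}-type cobordism makes explicit a detail the paper leaves implicit.
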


\subsubsection{Two calculations on $\C\P^1\times D^2$}
\label{subsubsection: two calculations}

We now calculate two model situations which are key ingredients in the proof of Lemma~\ref{lemma: G sub 3 for torus} below. Note that all the holomorphic curves on $\C\P^1\times D^2$ that are considered below satisfy automatic transversality; see Hofer-Lizan-Sikorav~\cite[Theorem~$2'$]{HLS}.

Fix real numbers $a>b>0$. Let $\mathcal{S}_1$ be the set of pairs $(v_1,w)$, where $v_1$ is a degree $1$ holomorphic map $D^2\to \C\P^1$  (more precisely, is a biholomorphism onto its image when restricted to $int(D^2)$) such that $v_1(\bdry D^2)\subset \R^+$, $v_1(1)=a$, $v_1(-1)=b$, $v_1(0)=\infty$, and $w$ is a point in $D^2$ such that $v_1(w)=0$, and let $C_1$ be the set of points $w$ for which there is some $v_1$ with $(v_1,w)\in\mathcal{S}_1$. Similarly, let $\mathcal{S}_2$ be the set of pairs $(v_2,w)$, where $v_2$ is a degree $1$ holomorphic map $D^2\to \C\P^1$ such that $v_2(\bdry D^2)\subset \R$, $v_2(1)=a$, $v_2(-1)=b$, $v_2(0)=\infty$, and $w$ is a point on $D^2$ such that $v_2(w)=-i$, and let $C_2$ be the set of points $w$ for which there is some $v_2$ with $(v_2,w)\in\mathcal{S}_2$.

Let $\mathcal{R}_\theta$ be the restriction to $D^2$ of the radial ray which passes through $0\in D^2$ and makes an angle of $\theta$ with the positive real axis.

\begin{lemma} \label{C one}
There exists $0<\theta_0< {\pi\over 2}$ such that $C_1\subset D^2$ can be written as the image of a curve which is parametrized by $\theta\in (\pi-\theta_0,\pi+\theta_0)$, $C_1(\theta)= C_1\cap int(\mathcal{R}_\theta)$ (here we are abusing notation and using $C_1$ for both the curve and its image), and
$$\lim_{\theta\to (\pi +\theta_0)^-} C_1(\theta)= e^{i(\pi+\theta_0)},\quad\displaystyle\lim_{\theta\to (\pi -\theta_0)^+} C_1(\theta)= e^{i(\pi-\theta_0)}.$$
\end{lemma}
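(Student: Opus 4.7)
The plan is to make every admissible pair $(v_1,w)$ completely explicit via a one-parameter family of slit maps, and then read off $C_1$ directly.

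First I would classify the admissible $v_1$. Since $v_1\colon D^2\to \C\P^1$ is holomorphic with $v_1(\bdry D^2)\subset \R^+\subset \R$, Schwarz reflection extends $v_1$ to a meromorphic $\widetilde v_1$ on $\C\P^1$ satisfying $\widetilde v_1(1/\bar z)=\overline{\widetilde v_1(z)}$. The degree-one hypothesis gives a single (simple) pole for $v_1$ on $D^2$ at $0$, so $\widetilde v_1$ has poles only at $0$ and $\infty$; the general such function compatible with the reflection symmetry has the form $\widetilde v_1(z)=\alpha_1 z+\overline{\alpha_1}/z+\gamma$ with $\gamma\in\R$. Writing $\alpha_1=\lambda e^{-i\alpha_0}$ with $\lambda>0$ and rearranging, every admissible $v_1$ has the form
\[
v_1(z)\;=\;g_{c,d}(e^{i\alpha}z),\qquad g_{c,d}(w)\;:=\;\frac{d-c}{4}\bigl(w+w^{-1}\bigr)+\frac{c+d}{2},
\]
where $(c,d)=(\gamma-2\lambda,\gamma+2\lambda)$ and $\alpha=-\alpha_0$. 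The map $g_{c,d}$ is the standard Joukowski slit map: $g_{c,d}(0)=\infty$, $g_{c,d}|_{D^2}$ is a biholomorphism onto $\C\P^1\setminus[c,d]$, and $g_{c,d}(\bdry D^2)=[c,d]$; the hypothesis $[c,d]\subset\R^+$ forces $c\geq 0$.

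Next I would impose $v_1(1)=a$ and $v_1(-1)=b$. A direct computation gives $v_1(\pm 1)=\pm\frac{d-c}{2}\cos\alpha+\frac{c+d}{2}$, so these two conditions are equivalent to $c+d=a+b$ together with $\cos\alpha=(a-b)/(d-c)$. Parametrizing by $t:=d-c$, the constraint $c\geq 0$ gives $t\in[a-b,a+b]$, and $\alpha$ then sweeps through $(-\theta_0,\theta_0)$ (with both signs of $\alpha$ occurring for each $t>a-b$), where
\[
\theta_0\;:=\;\cos^{-1}\!\bigl((a-b)/(a+b)\bigr)\;\in\;(0,\pi/2).
\]

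Finally I would compute $w$. Solving $g_{c,d}(w_0)=0$ for $w_0\in D^2$ gives the negative real root $w_0=-(\sqrt{d}-\sqrt{c})/(\sqrt{d}+\sqrt{c})$, hence
\[
w\;=\;v_1^{-1}(0)\;=\;e^{-i\alpha}w_0\;=\;r(\alpha)\,e^{i(\pi-\alpha)},\qquad r(\alpha)\;:=\;\frac{\sqrt{d}-\sqrt{c}}{\sqrt{d}+\sqrt{c}},
\]
with $r(\alpha)\in(0,1)$ for $\alpha\in(-\theta_0,\theta_0)$ and $r(\alpha)\to 1$ exactly as $\alpha\to\pm\theta_0$ (equivalently, as $c\to 0$). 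Setting $\theta:=\pi-\alpha$ then realises $C_1$ as the smooth curve $\theta\mapsto r(\pi-\theta)\,e^{i\theta}$ for $\theta\in(\pi-\theta_0,\pi+\theta_0)$, meeting each radial ray $\mathcal{R}_\theta$ in the single interior point $C_1(\theta)$ and satisfying $C_1(\theta)\to e^{i(\pi\mp\theta_0)}$ as $\theta\to(\pi\mp\theta_0)^{\pm}$, as required. The only non-routine step is the Schwarz-reflection classification of admissible $v_1$; everything else is explicit algebra, so I expect no real obstacle.
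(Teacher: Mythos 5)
Your proof is correct, and it shares the paper's first step --- Schwarz reflection across $\bdry D^2$, which realizes $v_1$ as half of a degree-$2$ rational map --- but the execution is genuinely different. The paper normalizes the doubled map as a factorization $D^2\stackrel{f_\theta}\to\H\stackrel{z\mapsto z^2}\to\C\P^1\stackrel{g}\to\C\P^1$ and encodes the constraints $v_1(1)=a$, $v_1(-1)=b$, $v_1(w)=0$ in a cross-ratio identity; it then argues qualitatively that this identity has a unique solution $w\in\mathcal{R}_\theta$ at $\theta=\pi$ and that solvability is lost at some critical angle because the images of $\pm 1$ collide as $\theta\to 3\pi/2$. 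You instead put the doubled map in the Joukowski normal form $\alpha_1 z+\overline{\alpha_1}/z+\gamma$ and solve the constraints in closed form, obtaining $C_1(\theta)=r(\pi-\theta)e^{i\theta}$ with explicit $r$ and $\theta_0=\cos^{-1}\bigl((a-b)/(a+b)\bigr)$. What your route buys is transparency: uniqueness of $C_1\cap int(\mathcal{R}_\theta)$, smoothness of $C_1$, and the two boundary limits all drop out of the formulas, and you get the exact value of $\theta_0$, which the paper only shows to exist. What the paper's route buys is uniformity: the same factorization-plus-cross-ratio framework is reused verbatim for $C_2$ in Lemma~\ref{C two}, where the boundary condition is all of $\R$ rather than $\R^+$ and a bounded-slit normal form is less convenient. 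The one step you should spell out is that the degree-one hypothesis forces the pole of $v_1$ at $0$ to be simple and to be the only pole (equivalently, that the Schwarz double has degree exactly $2$); this is the same input the paper invokes and is routine, but it is the load-bearing hypothesis for your normal form.
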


See Figure~\ref{fig: disks} for a rough picture of the curve $C_1$.

\begin{figure}[ht]
\begin{overpic}[width=4.5cm]{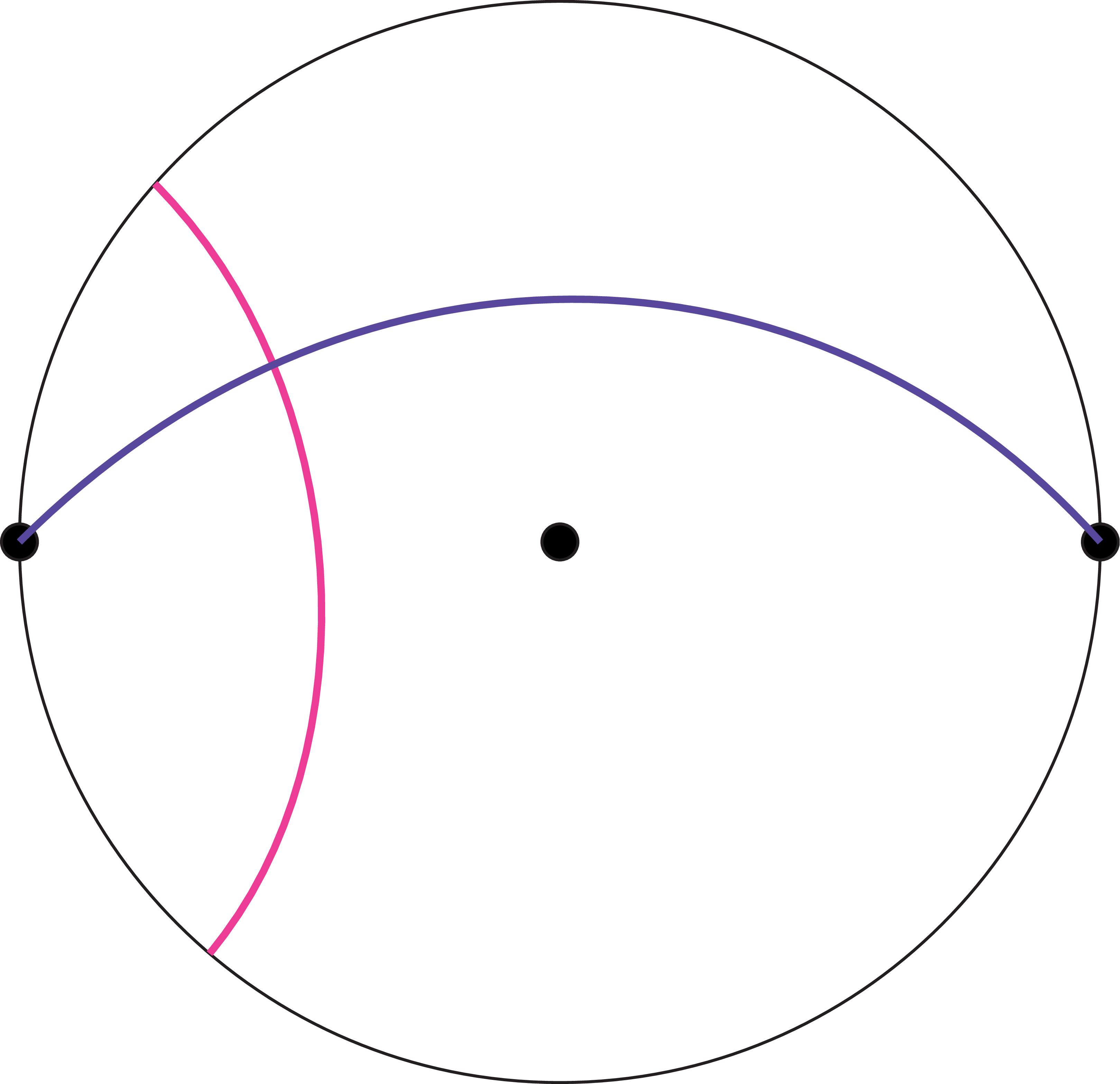}
\put(19,32){\tiny $C_1$} \put(48,74.4){\tiny $C_2$}
\end{overpic}
\caption{} \label{fig: disks}
\end{figure}

\begin{proof}
By the Schwarz reflection principle, a map $v_1$ with $(v_1,w)\in \mathcal{S}_1$ extends to a degree $2$ branched cover $\C\P^1\to \C\P^1$ with two branch points which lie on $\R^+\subset \C\P^1$. Hence $v_1$ admits a factorization
$$D^2\stackrel{f}\to \H \stackrel{\tilde v_1}\to \C\P^1\stackrel{g}\to\C\P^1,$$
where $\H=\{\op{Im}(z)\geq 0\}$ is the upper half plane, $f:D^2\stackrel\sim\to \H$ is a fractional linear transformation, $\tilde v_1(z)=z^2$, and $g\in PSL(2,\R)$.  In order for $v_1(0)=\infty$ and $v_1(w)=0$ to hold, $f$ must map the ray $\mathcal{R}_\theta$ through $0$ and $w$ to the line $\{\op{Re}(z)=0\}\subset \H$. We may set $f(e^{i\theta} )=0$ and $f(0)=i$; these conditions uniquely determine $f=f_\theta$. We leave it to the reader to verify that for each $\theta$, there is a one-to-one correspondence between $(v_1,w)\in \mathcal{S}_1$ with $w\in \mathcal{R}_\theta$ and $w\in \mathcal{R}_\theta$ satisfying the following equality of cross ratios:
\begin{equation} \label{cross ratio}
(\tilde v_1\circ f_\theta(w), \tilde v_1 \circ f _\theta (1); \tilde v_1 \circ f _\theta (-1), -1 )=(0,b;a,\infty).
\end{equation}
Note that $\tilde v_1\circ f_\theta(\mathcal{R}_\theta)=[-1,0]$ and that there is at most one $w\in \mathcal{R}_\theta$ such that Equation~\eqref{cross ratio} holds.

When $\theta=\pi$,  $\tilde v_1 \circ f_\pi (1)=\infty$ and $\tilde v_1 \circ f_\pi (-1) =0$. Hence there is a unique $w\in \mathcal{R}_\pi$ satisfying Equation~\eqref{cross ratio}.  As $\theta$ moves from $\pi$ to ${3\pi\over 2}$, the points $\tilde v_1 \circ f_\theta (1)$ and $\tilde v_1 \circ f_\theta (-1)$ approach each other and become equal when $\theta={3\pi\over 2}$. Hence there exists $0<\theta_0<{\pi\over 2}$ such that there is no $w\in \mathcal{R}_\theta$ for $\pi+\theta_0 <\theta <{3\pi\over 2}$ and there is a unique $w\in \mathcal{R}_\theta$ for $\pi\leq \theta < \pi+\theta_0$. The situation of $\theta\in ({\pi\over 2},\pi]$ is symmetric. The lemma then follows.
\end{proof}

\begin{lemma} \label{C two}
There exists a parametrization of $C_2\subset D^2$ by $\theta\in ({\pi\over 2},{3\pi\over 2})$ such that
$$\lim_{\theta\to {\pi\over 2}+}C_2(\theta) = 1, \quad \lim_{\theta\to {3\pi\over 2}-}C_2(\theta)=-1.$$
\end{lemma}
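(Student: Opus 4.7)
The plan is to mirror the factorization argument of Lemma~\ref{C one}. By Schwarz reflection, any $v_2$ with $(v_2,w)\in\mathcal{S}_2$ extends to a degree-two branched cover $\tilde v_2\colon\C\P^1\to\C\P^1$, and so factors as
\[
D^2 \xrightarrow{f_\theta} \H \xrightarrow{z\mapsto z^2} \C\P^1 \xrightarrow{h} \C\P^1,
\]
where $h\in PSL(2,\R)$ and $f_\theta(z) = i(1-e^{-i\theta}z)/(1+e^{-i\theta}z)$ is the Möbius biholomorphism normalized by $f_\theta(e^{i\theta})=0$ and $f_\theta(0)=i$. Setting $t = \tan^2(\theta/2)$, so that $f_\theta(1)^2 = t$ and $f_\theta(-1)^2 = 1/t$, the conditions $v_2(0)=\infty$, $v_2(1)=a$, $v_2(-1)=b$ uniquely determine
\[
h(z) = \frac{(at-b)z + (bt-a)}{(t-1)(z+1)}
\]
whenever $\theta \not\equiv \pi/2 \pmod \pi$.

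The condition $v_2(w)=-i$, obtained by invoking Möbius invariance of the cross ratio under $h$ in the same spirit as equation~\eqref{cross ratio}, translates to
\[
u(t) := f_\theta(w)^2 = \frac{(a-bt)-i(t-1)}{(at-b)+i(t-1)}.
\]
A direct calculation gives $\op{Im}\,u(t) = -(a-b)(t^2-1)/|(at-b)+i(t-1)|^2$, so for $\theta\in(\pi/2,3\pi/2)$ (equivalently $t>1$) the value $u(t)$ lies strictly in the open lower half-plane. The unique square root of $u(t)$ belonging to $\H$ therefore sits in the open second quadrant, and I define $w(\theta) := f_\theta^{-1}(-\sqrt{u(t)})$, where $\sqrt{u(t)}$ denotes the principal (fourth-quadrant) branch. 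The quadratic in $w$ coming from $v_2(w)=-i$ has constant term equal to the complex conjugate of its leading coefficient, so its two roots have reciprocal modulus; since $-i\notin\R$ precludes $w\in\partial D^2$, exactly one root lies in $\op{int}(D^2)$, and $\theta\mapsto w(\theta)$ is a smooth injective parametrization of $C_2$.

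Finally, I analyze the endpoint behavior. As $\theta\to\pi/2^+$ or $\theta\to 3\pi/2^-$ one has $t\to 1^+$ and $u(t)\to 1$ tangentially from the lower half-plane, so $-\sqrt{u(t)}\to -1\in\partial\H$. Direct substitution gives $f_{\pi/2}(1) = f_{3\pi/2}(-1) = -1$, whence $w(\theta)\to 1$ as $\theta\to\pi/2^+$ and $w(\theta)\to -1$ as $\theta\to 3\pi/2^-$, proving the stated limits. The main obstacle is correctly pinning down the $\H$-branch of $\sqrt{u(t)}$: the sign of $\op{Im}\,u(t)$ forces the choice $-\sqrt{u(t)}$ rather than $+\sqrt{u(t)}$, and it is exactly this choice that selects the endpoints as $\pm 1$ rather than some other pair of boundary points.
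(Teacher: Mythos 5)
Your proof is correct and follows the same route as the paper: factor $v_2$ through $f_\theta\colon D^2\to\H$ and $z\mapsto z^2$, pin down the real Möbius map $h$ from the three point conditions, and track $h^{-1}(-i)$ as $\theta$ varies; you are in fact more careful than the paper at the one delicate point, namely selecting the square root of $u(t)$ lying in $\H$ and thereby showing the endpoint limits are $1$ and $-1$ rather than the other pairing (the paper only asserts this). One small imprecision: the constant term of your quadratic in $w$ is $-(1+u)$ while the leading coefficient is $-(1+u)e^{-2i\theta}$, so they are not complex conjugates, but they do have equal modulus (the product of the roots is $e^{2i\theta}$), which is all you need for the reciprocal-modulus conclusion.
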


See Figure~\ref{fig: disks} for a rough picture of the curve $C_2$.

\begin{proof}
As in the proof of Lemma~\ref{C one}, $v_2$ can be factored as
$$D^2\stackrel{f_\theta}\to \H\stackrel{\tilde v_2}\to \C\P^1\stackrel{g}\to \C\P^1,$$
where $\tilde v_2(z)=z^2$ and $f_\theta$ maps $0$ to $i$ and $\mathcal{R}_\theta$ to $\{\op{Re}(z)=0, 0\leq \op{Im}(z)\leq 1\}\subset \H$. Note that there is a unique fractional linear transformation $g$ such that $g(-1)=\infty$, $g(0)=b$, and $g(\infty)=a$.  For each $\theta\in ({\pi\over 2},{3\pi\over 2})$, there exists $(v_2,C_2(\theta))\in\mathcal{S}_2$ such that $C_2(\theta)$ is in one of the half-disks of $D^2$ divided by $\mathcal{R}_\theta\cup \mathcal{R}_{\theta+\pi}$. As a reference point, $C_2(\pi)$ is in the upper half-disk.  As $\theta$ approaches ${\pi\over 2}$ from above, the corresponding $v_2$ sends $-1$ and $1$ to arbitrarily close points.  Hence $\displaystyle\lim_{\theta\to {\pi\over 2}+}C_2(\theta) = 1$. Similarly, $\displaystyle\lim_{\theta\to {3\pi\over 2}-}C_2(\theta)=-1$.
\end{proof}

The following lemma is immediate from Lemmas~\ref{C one} and \ref{C two}; the key ingredient is that the endpoints of $C_1$ and $C_2$ alternate on $\bdry D^2$:

\begin{lemma} \label{lemma: intersection C one and C two}
The total signed count of intersections between $C_1$ and $C_2$ is $\pm 1$.
\end{lemma}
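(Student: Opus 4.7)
The plan is to reduce the computation to a standard topological fact about properly embedded arcs in a disk. From Lemmas~\ref{C one} and \ref{C two}, both $C_1$ and $C_2$ are proper arcs in $D^2$: each is the graph of a point over an angular parameter $\theta$, with the two endpoints converging to distinct boundary points. In particular, $C_1$ has endpoints $e^{i(\pi-\theta_0)}$ and $e^{i(\pi+\theta_0)}$, while $C_2$ has endpoints $1=e^{i\cdot 0}$ and $-1=e^{i\pi}$.

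First I would verify that these four boundary points occur in alternating cyclic order on $\bdry D^2$: traversing $\bdry D^2$ counterclockwise starting from $1$, one encounters $e^{i(\pi-\theta_0)}$, then $-1$, then $e^{i(\pi+\theta_0)}$ (using $0<\theta_0<\pi/2$). Thus the endpoints of $C_1$ separate the endpoints of $C_2$ on the boundary circle.

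Next I would isotope $C_1$, rel endpoints, to the straight chord $\ell$ from $e^{i(\pi-\theta_0)}$ to $e^{i(\pi+\theta_0)}$. The chord $\ell$ splits $D^2$ into two half-disks $H_+$ and $H_-$; by the alternating property, $1\in H_+$ and $-1\in H_-$ (or vice versa). Fix orientations of $C_1$ and $C_2$. Since $C_2$ is a continuous arc from $H_+$ to $H_-$, after a small transverse perturbation it meets $\ell$ in finitely many points, and the signed count of these intersections equals $\pm 1$ because the net change in ``side of $\ell$'' between the endpoints is $\pm 1$. As signed intersection number is an isotopy invariant of properly embedded arcs with fixed boundary, this computes the signed count of $C_1\cap C_2$ as $\pm 1$.

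The only potential obstacle is ensuring that $C_1$ and $C_2$ are genuinely embedded proper arcs so that the isotopy and perturbation arguments apply. This is handled by the parametrizations provided in Lemmas~\ref{C one} and \ref{C two}: each $C_i$ is a graph $\theta\mapsto C_i(\theta)\in\mathcal{R}_\theta$ over an open interval of angles, hence an embedded arc whose closure meets $\bdry D^2$ transversally at exactly two points. This gives the desired signed count $\pm 1$.
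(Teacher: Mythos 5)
Your argument is correct and is exactly the paper's intended (one-line) proof: the signed intersection number of two arcs in $D^2$ depends only on their endpoints, and the endpoints of $C_1$ and $C_2$ alternate on $\bdry D^2$, forcing the count to be $\pm 1$. One small inaccuracy: only $C_1$ is a graph over the radial rays ($C_1(\theta)\in\mathcal{R}_\theta$); the parametrization of $C_2$ by $\theta$ in Lemma~\ref{C two} does not place $C_2(\theta)$ on $\mathcal{R}_\theta$ — but your argument only needs $C_2$ to be a continuous arc from $1$ to $-1$, so nothing is affected.
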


\subsubsection{Reduction to $\C\P^1\times D^2$} \label{subsubsection: reduction to sphere}

We now explain how to further reduce from $T^2$ to $S^2$. We pinch $T^2$ along three parallel, disjoint, essential closed curves $\gamma_1,\gamma_2,\gamma_3$ to obtain a ``sausage''
\begin{equation}\label{eqn: sausage}
\left((\Sigma_1,w_1,w_2') \sqcup (\Sigma_2,w_2,w_3') \sqcup (\Sigma_3,w_3,w_1')\right)/\sim,
\end{equation}
where $\Sigma_i\simeq S^2$, $i=1,2,3$, and $w_i\sim w_i'$, $i=1,2,3$. More precisely, pick an oriented identification $T^2\simeq \R^2/\Z^2$ with coordinates $(x,y)$ so that $\overline{a}_1=\{y=0\}$ and $\overline{a}_2=\{x=0\}$. Then $\gamma_1=\{x={1\over 4}\}$, $\gamma_2=\{x={1\over 2}\}$, and
$\gamma_3$ is obtained from $\{x={3\over 4}\}$ by applying a finger move along the arc $[{3\over 4},1+\varepsilon]\times\{{1\over 2}\}$ so that $\gamma_3$ has two intersections $y={1\over 2}-\varepsilon, {1\over 2}+\varepsilon$ with $\widehat{a}_2$. We also assume that $z_1,z_3
\in \widehat{a}_1$ lie on $\{{1\over 4} < x <{1\over 2}\}$ and $z_2,z_4\in \widehat{a}_2$ lie on $\{{1\over 2}-\varepsilon < y < {1\over 2}+\varepsilon\}$.  Then $\Sigma_1$ is obtained from the closure of the connected component of $T^2-\cup_{i=1}^3\gamma_i$ which is bounded by (copies of) $\gamma_1$ and $\gamma_2$, by identifying all of $\gamma_1$ to $w_1$ and all of $\gamma_2$ to $w_2'$. The other components
$\Sigma_2$ and $\Sigma_3$ are defined similarly.  In particular, $z_\infty \in \Sigma_3$. See Figure~\ref{fig: torus-degeneration}.

\begin{figure}[ht]
\begin{overpic}[width=4.5cm]{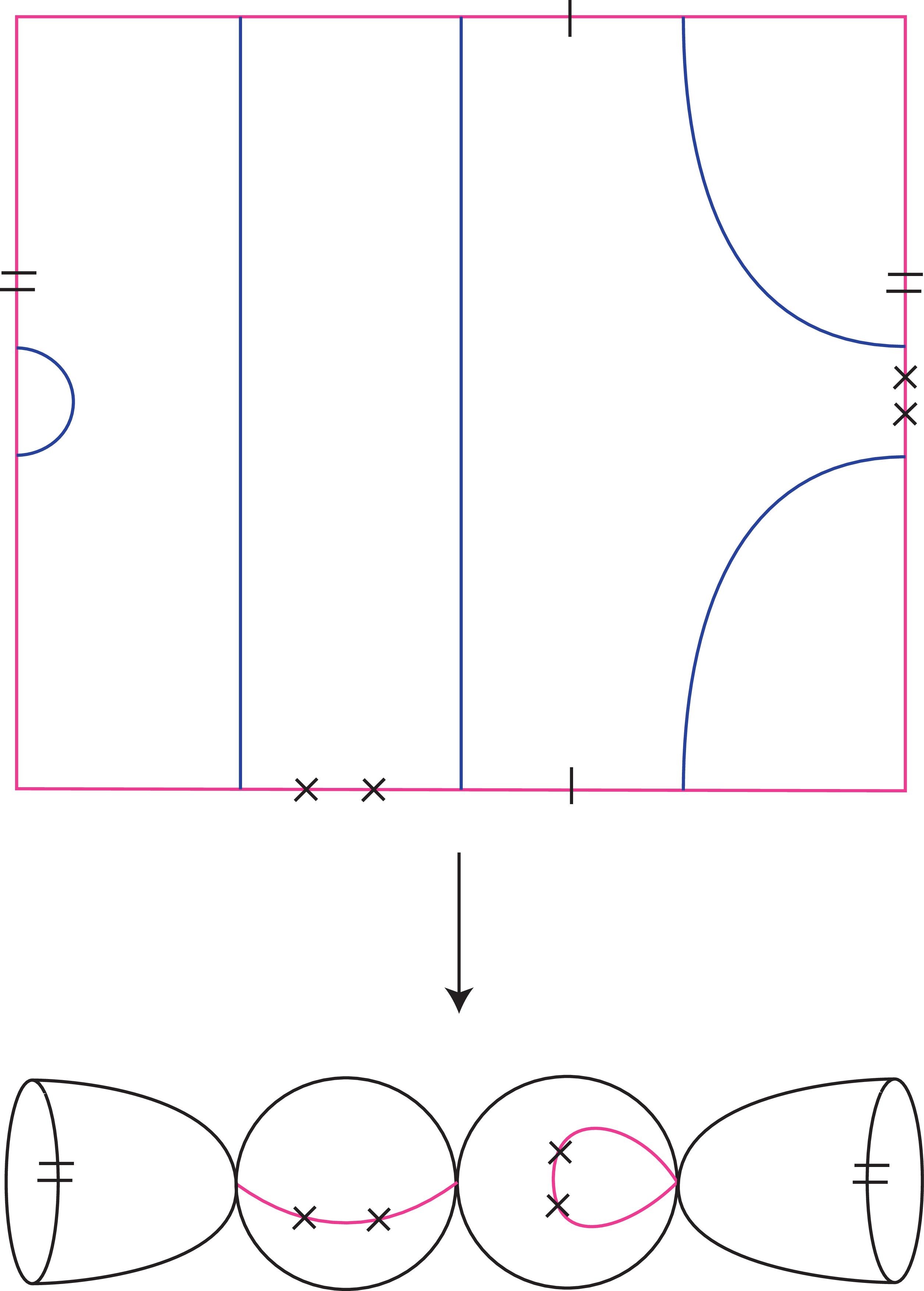}
\put(20,80){\tiny $\gamma_1$} \put(37,80){\tiny $\gamma_2$} \put(55,87){\tiny $\gamma_3$} \put(21,34.5){\tiny $z_1$} \put(28.5,34.5){\tiny $z_3$} \put(72,71){\tiny $z_2$} \put(72,66){\tiny $z_4$} \put(25,18){\tiny $\Sigma_1$}\put(42,18){\tiny $\Sigma_2$} \put(60,18){\tiny $\Sigma_3$} \put(8,18){\tiny $\Sigma_3$} \put(8,34){\tiny $\widehat{a}_1$} \put(72,87){\tiny $\widehat{a}_2$}
\end{overpic}
\caption{The top diagram is the torus $T^2$, where the sides are identified and the top and the bottom are identified. The arrow indicates the projection of $T^2$ onto $\Sigma_1\sqcup\Sigma_2\sqcup\Sigma_3/\sim$, given in \eqref{eqn: sausage}. The components of $\widetilde{a}_i$ which do not contain points in the set $\{z_1,\dots,z_4\}$ are not drawn in $\Sigma_i$. Also the two disks on the left and the right of the bottom diagram are glued into $\Sigma_3$.}
\label{fig: torus-degeneration}
\end{figure}

\begin{lemma}
\label{lemma: G sub 3 for torus}
$G_3(T^2;J;z_\infty,{\bf z})=\pm 1$, where ${\bf z}=\{z_1,\dots,z_4\}$.
\end{lemma}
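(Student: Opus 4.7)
The plan is to further degenerate $T^2$ to the sausage $(\Sigma_1\sqcup \Sigma_2\sqcup \Sigma_3)/\!\sim$ from \eqref{eqn: sausage}, and reduce the count to the mod-$2$ intersection $\#(C_1\cap C_2)$ provided by Lemma~\ref{lemma: intersection C one and C two}. First pick a family of complex structures $i_\tau$ on $T^2$ pinching along $\gamma_1,\gamma_2,\gamma_3$ and set $J^\tau=(i_\tau,i_{D^2})$. A sequence $(u^\tau,x^\tau,({\bf x}')^\tau)\in \mathcal{M}_{J^\tau}(z_\infty,{\bf z})$ cannot degenerate at finite $\tau$ by the proof of Lemma~\ref{berkeley2}, and passes to a limit $u=u_1\cup u_2\cup u_3$ with $u_i:F_i\to \Sigma_i\times D^2$. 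As in Lemma~\ref{lemma: compact} and the proof of Lemma~\ref{lemma: limit pigeon}, positivity of intersection with generic fibers $\Sigma^\infty\times\{w\}$ combined with the incidence conditions at $(z_\infty,0)$ and the four $(z_j,\pm 1)$ rules out disk bubbles and fiber components $\Sigma_i\times\{w\}$; hence each $u_i$ is irreducible, $\pi_1\circ u_i$ is a biholomorphism onto $\Sigma_i\simeq\C\P^1$, and $v_i=\pi_2\circ u_i$ is a holomorphic map from a planar domain $F_i$ to $D^2$.

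Next I would identify each $v_i$ with one of the model problems in Section~\ref{subsubsection: two calculations}. The constraint $(z_\infty,0)$ lies in $\Sigma_3$, the two $(\cdot,1)$-constraints distribute to $\Sigma_1$, and the $(\cdot,-1)$-constraints fall into whichever of $\Sigma_2,\Sigma_3$ inherits the relevant arc of $\widehat{a}_2$ after the finger move along $\gamma_3$. Choosing identifications $\Sigma_i\simeq \C\P^1$ which send the two nodes of $\Sigma_i$ to $0,\infty$, the $v_i$ assume the normal form of the degree-$1$ disk maps in $\mathcal{S}_1$ and $\mathcal{S}_2$: the component carrying the two $(\cdot,1)$-constraints takes the $\mathcal{S}_1$ form, the component carrying $(z_\infty,0)$ together with the $(\cdot,-1)$-constraints takes the $\mathcal{S}_2$ form, and the real parameters $a>b>0$ in those lemmas are precisely the $D^2$-images of the two nodes adjacent to the sphere in question. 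The remaining component $\Sigma_2$ carries no interior constraint, so $v_2$ is determined by its two nodal values.

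Finally, the nodal matching conditions $v_i(\mathfrak{n})=v_j(\mathfrak{n})$ at the three nodes, together with the pinned component $v_2$, reduce the existence problem to finding a single $D^2$-point that lies simultaneously on $C_1$ (the locus of admissible nodal values for the $\mathcal{S}_1$ component) and on $C_2$ (the corresponding locus for the $\mathcal{S}_2$ component). Transverse gluing at all three nodes, which holds by the genericity used in Lemma~\ref{berkeley}, then identifies $G_3(T^2;J;z_\infty,{\bf z})$ with $\#(C_1\cap C_2)\pmod 2$, which equals $1$ by Lemma~\ref{lemma: intersection C one and C two}. The main technical obstacle I anticipate is in the second step: verifying that after the identifications the boundary arcs of $F_1$ and $F_3$ really do land on the $\R^+$- and $\R$-loci of $\C\P^1$ specified in $\mathcal{S}_1,\mathcal{S}_2$, and that the cyclic ordering of the nodal endpoints along $\bdry D^2$ is the interleaved one required for Lemma~\ref{lemma: intersection C one and C two} to apply.
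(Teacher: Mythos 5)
Your overall strategy---pinch along $\gamma_1,\gamma_2,\gamma_3$ and reduce to the count $\#(C_1\cap C_2)$ of Lemma~\ref{lemma: intersection C one and C two}---is exactly the paper's, but your identification of the limit configuration in the second step is wrong, and the reduction does not go through as you have set it up. First, the component over $\Sigma_3$ is necessarily a \emph{fiber}, not a section: the domain $F$ has only two boundary components, and both are already needed over $\Sigma_1$ and $\Sigma_2$ (the points $z_1,z_3$ lie on $\widehat{a}_1\cap\{1/4<x<1/2\}\subset\Sigma_1$ and $z_2,z_4$ lie on the finger of $\widehat{a}_2$ inside $\Sigma_2$), so $F_3$ is closed, $\pi_2\circ u_3$ is constant, and the constraint at $(z_\infty,0)$ forces $u_3=\Sigma_3\times\{0\}$. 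Positivity of intersections does not exclude this, since a fiber over $0$ contributes nothing to $\langle u,\Sigma^\infty\times\{w\}\rangle$ for generic $w$. Second, your distribution of the incidence conditions is incorrect: each of $\Sigma_1$ and $\Sigma_2$ carries exactly one $(\cdot,1)$- and one $(\cdot,-1)$-constraint (namely $(z_1,1),(z_3,-1)$ and $(z_2,1),(z_4,-1)$ respectively), while $\Sigma_3$ carries only $(z_\infty,0)$. Your grouping---both $(\cdot,1)$'s on one sphere, both $(\cdot,-1)$'s together with $(z_\infty,0)$ on another, and one unconstrained sphere---cannot match the models: $\mathcal{S}_1$ and $\mathcal{S}_2$ each require the two boundary marked points to map to $1$ and to $-1$ in $\bdry D^2$, i.e., one constraint of each type, and they require two \emph{distinct} target values $a>b$ on the Lagrangian arc.

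The correct dictionary is the following. Identify $F_i\simeq D^2$ ($i=1,2$) via the degree-one map $\pi_2\circ u_i$, so that $u_i$ becomes a graph $D^2\to\Sigma_i\simeq\C\P^1$ over the disk factor; the parameters $a>b>0$ are the positions of the two constraint points $z_j$ on the arc $\widetilde{a}_1\simeq\R^+$ (for $\Sigma_1$) or $\widetilde{a}_2\simeq\R$ (for $\Sigma_2$, whose arc passes through the node with $\Sigma_3$ because of the finger move)---they are not the $D^2$-images of nodes. The condition $v_i(0)=\infty$ is precisely the matching with the fiber $u_3=\Sigma_3\times\{0\}$ at the two nodes adjacent to $\Sigma_3$, and only the remaining node, between $\Sigma_1$ and $\Sigma_2$, produces a free gluing parameter $w\in D^2$; matching there is the condition $w\in C_1\cap C_2$, whence $G_3(T^2;J;z_\infty,{\bf z})=\#(C_1\cap C_2)=1$ mod $2$. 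As written, your argument assigns the roles of $\mathcal{S}_1$ and $\mathcal{S}_2$ to the wrong components and would not produce maps of the required normal form, so the appeal to Lemma~\ref{lemma: intersection C one and C two} is not justified.
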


\begin{proof}
We degenerate the Riemann surfaces $\Sigma^\tau=(T^2,i_\tau)$, $\tau\to \infty$, by pinching along $\gamma_1\cup\gamma_2\cup \gamma_3$. Then a sequence of holomorphic maps
$$u^\tau: (F,j_\tau)\to (\Sigma^\tau\times D^2,J_\tau)$$
in $\mathcal{M}_{J_\tau}(z_\infty,{\bf z})$ converges to a cusp curve $(u_1,u_2,u_3)$, where
$$u_i: F_i\to \Sigma_i\times D^2, \quad i=1,2,3,$$
and $F_1=F_1=D^2$, $F_3=\C\P^1$. This is because $z_1,z_3\in \Sigma_1$ and $z_2,z_4\in \Sigma_2$, and
the total number of boundary components $\sum_{i=1}^3\#(\bdry F_i)$ is equal to two by the argument in Section~\ref{subsubsection:
reduction to torus for G 3}.  Now, $u_3$ must have image
$\Sigma_3\times\{0\}$ since $z_\infty\in \Sigma_3$. The sets $\mathcal{S}_1$ and $\mathcal{S}_2$
for $v_i=\pi_2\circ u_i$, $i=1,2$, were determined in
Section~\ref{subsubsection: two calculations}.  The gluing of
intersecting curves $u_1$ and $u_2$ is given by the signed
intersection number of $C_1$ and $C_2$, which is $\pm 1$ by Lemma~\ref{lemma: intersection C one and C two}.
\end{proof}

\section{Homotopy of cobordisms I}
\label{section: homotopy of cobordisms I}

In this section and the next we prove Theorem~\ref{thm: isomorphism}. The chain homotopies that appear in the proof of Theorem~\ref{thm: isomorphism} are induced by homotopies of cobordisms $\overline{W}^\pm_\tau$ and $\overline{W}^\mp_\tau$ which are parametrized by $\tau\in\R$.  In this section we treat $\overline{W}^\pm_\tau$, leaving $\overline{W}^\mp_\tau$ for Section~\ref{section: homotopy of cobordisms II}. If $\pm$ is understood (as it will be in the rest of this section), then it will be omitted.

We now give a brief description of $\overline{W}_\tau$, leaving precise definitions for later. The base $B_\tau$ of $\overline{W}_\tau$ is biholomorphic to an annulus with one puncture on each boundary component; the neighborhoods of the punctures are viewed as strip-like ends.  As $\tau\to +\infty$, $\overline{W}_\tau$ degenerates to the stacking of
$\overline{W}_+$ ``on top of'' $\overline{W}_-$, where $\overline{W}_+$ and $\overline{W}_-$ are used in the definitions of the chain maps $\Phi$ and $\Psi$. 
(This is the reason why there is a $\pm$ in $\overline{W}^\pm_\tau$.)  On the other hand, as $\tau\to -\infty$, $\overline{W}_\tau$ degenerates to $\overline{W}_{-\infty}$, whose base $B_{-\infty}$ is (more or less) given by:
\begin{equation}
\label{eqn: base B sub D} B_{-\infty}=\left((\R\times[0,1])\sqcup D\right)/\sim,
\end{equation}
where $D=\{|z|\leq 1\}\subset \C$ and $\sim$ identifies $(0,1)\in \R\times[0,1]$ with $1\in D$ and $(0,0)\in \R\times[0,1]$ with $-1\in D$.

\subsection{Construction of the homotopy of cobordisms for $\Psi\circ \Phi$}

\subsubsection{Recollections}
\label{subsubsection: recollections}

In this subsection we recall some notation from \cite{CGH-I}.

Recall that $S$ is a compact oriented surface of genus $g$ with connected boundary (a page of an open book $(S,\hh)$), $\overline{S}=S\cup D^2$ is a closed surface obtained by capping off $S$, $\overline{\hh}=\overline{\hh}_m: \overline{S}\stackrel\sim\to \overline{S}$ is an extension of $\hh$ which is dependent on the integer $m\gg 0$ as in Section~I.\ref{P1-subsubsection: overline W pm}, and $\overline\omega$ is the area form on $\overline{S}$ from Section~I.\ref{P1-subsubsection: overline W pm} which is invariant under $\overline{\hh}$.  Also $z_\infty$ is the origin $\rho=0$ of $D^2=\{\rho\leq 1\}$ with polar coordinates $(\rho,\phi)$.

The mapping tori
$$N=(S\times[0,2])/(x,2)\sim (\hh(x),0), \quad \overline{N}=(\overline{S}\times[0,2])/(x,2)\sim (\overline{\hh}(x),0)$$
were defined in Section~I.\ref{P1-subsection: symplectic cobordisms}. Let $W=\R\times[0,1]\times S$, $\overline{W}=\R\times[0,1]\times\overline{S}$, $W'=\R\times N$, and $\overline{W'}=\R\times \overline{N}$; they admit symplectic fibrations with fibers diffeomorphic to $S$, $\overline{S}$, $S$, and $\overline{S}$, respectively. We also have the symplectic fibrations $\pi_{B_+}: W_+\to B_+$, $\overline\pi_{B_+}: \overline{W}_+\to B_+$, and $\overline{\pi}_{B_-}:\overline{W}_-\to B_-$ from Sections~I.\ref{P1-acorn} and I.\ref{P1-subsubsection: overline W pm}, with fibers diffeomorphic to $S$, $\overline{S}$, and $\overline{S}$, respectively.

The fibration $W$ (or $\overline{W}$) was used in the definition of $\widehat{CF}(S,{\bf a},\hh({\bf a}))$ and the fibration $W'$ (or $\overline{W'}$) in the definition of $PFC_{2g}(N)$. The positive end of $\overline{W}_+$ and the negative end of $\overline{W}_-$ agree with those of $\overline{W}$ and the negative end of $\overline{W}_+$ and the positive end of $\overline{W}_-$ agree with those of $\overline{W'}$. The fibrations $\pi_{B_+}$ and $\overline{\pi}_{B_-}$ were used in the definitions of the chain maps
$$\Phi: \widehat{CF}(S,{\bf a},\hh({\bf a}))\to PFC_{2g}(N),$$
$$\Psi: PFC_{2g}(N)\to \widehat{CF}(S,{\bf a},\hh({\bf a})).$$

\subsubsection{Definition of the family $\overline{W}_\tau$}
\label{defn of family 1}

For each $r\in[2,\infty)$, consider the fibration
$$\pi_r : \R \times\overline{N}_r \rightarrow \R \times (\R/r\Z),$$
where
$$\overline{N}_r=(\overline{S}\times[0,r])/(x,r)\sim (\overline{\hh}(x),0)$$
and $(s,t)$ are coordinates on $\R \times (\R/r\Z)$. For each $l,r\in[2,\infty)$, define $\overline{W}_{l,r} =\pi^{-1}_r (B_{l,r} ),$ where the base $B_{l,r}$ is obtained by smoothing the corners of
$$\{-l\leq s\leq l\} \cup \{ 0 \leq t\leq 1\}\subset \R \times
(\R/r\Z).$$

Next choose a function
\begin{equation} \label{l and r}
\eta=(l,r): \R\to[2,\infty)\times[2,\infty),
\end{equation}
which is obtained by smoothing
$$\eta_0(\tau)=\left\{ \begin{array}{cc} (\tau+2,2), & \mbox{for } \tau\geq 0;\\
(2, 2-\tau), & \mbox{for } \tau\leq 0;\end{array}\right.$$
near $\tau=0$. We then let $\overline{W}_\tau= \overline{W}_{\eta(\tau)}$ and $B_\tau=B_{\eta(\tau)}$.  Let $\pi_{B_\tau}:\overline{W}_\tau\to B_\tau$ be the projection along $\{(s,t)\}\times\overline{S}$.
\nom[1p]{$\pi_{B_\tau}: \overline{W}_\tau \to B_\tau$}{Symplectic fibration with fiber $S$ used in the definition of the chain homotopy}
\nom[B]{$B_\tau$}{Base of the projection $\pi_{B_\tau}: \overline{W}_\tau \to B_\tau$}
\nom[B]{$B_{-\infty,i}$, $i=1,2$}{Base of $\overline{W}_{-\infty,i}$}
\nom[W]{$\overline{W}_\tau$}{Total space of the projection $\pi_{B_\tau}: \overline{W}_\tau \to B_\tau$}
\nom[W]{$\overline{W}_{-\infty}= \overline{W}_{-\infty,1}\cup \overline{W}_{-\infty,2}$}{Limit of $\overline{W}_\tau$ as $\tau\to -\infty$}

As $\tau\to +\infty$, the cobordism $\overline{W}_\tau$ approaches the concatenation of $\overline{W}_+$ and $\overline{W}_-$; see Figure~\ref{figure: base degeneration 1}. On the other hand, as $\tau\to -\infty$, the cobordism $\overline{W}_\tau$ degenerates to a $2$-component building $\overline{W}_{-\infty}=\overline{W}_{-\infty,1}\cup\overline{W}_{-\infty,2}$, which we describe now; see Figure~\ref{figure: winfty}. [The decomposition as $\tau\to-\infty$ into a $2$-component building is a bit arbitrary.  What we mean to do is introduce a marked point $\overline{\frak m}(\tau)\in \overline{W}_\tau$ for each finite $\tau$ in Section~\ref{subsubsubsection: marked points} below and declare that $\overline{W}_\tau$ decomposes into $\overline{W}_{-\infty,1}\cup\overline{W}_{-\infty,2}$ as $\tau\to -\infty$, where $\overline{\frak m}(\tau)$ is used to introduce local coordinates centered at this marked point so that $\overline{\frak m}(\tau)$ limits to $\overline{\frak m}(-\infty)=(\overline{\frak m}^b(-\infty),\overline{\frak m}^f(-\infty))$ as $\tau\to-\infty$ and $\overline{\frak m}^b(-\infty)=(0,0)\in B_{-\infty,2}$ makes sense.]

The base of $\overline{W}_{-\infty}=\overline{W}_{-\infty,1}\cup\overline{W}_{-\infty,2}$ is $B_{-\infty}=B_{-\infty,1}\cup B_{-\infty,2}$, where $B_{-\infty,1}$ is obtained from $\{-2\leq s\leq 2\}\cup\{0\leq t\leq 1\} \subset \R^2$ by smoothing the corners and $B_{-\infty,2}=[-2,2]\times\R$. Here both $\R^2$ and $[-2,2]\times\R$ have coordinates $(s,t)$.  The component $B_{-\infty,1}$ has four strip-like ends: the ends $s\to +\infty$, $s\to-\infty$, $t\to +\infty$, $t\to-\infty$ will be referred to as the top, bottom, left, and right ends. The component $B_{-\infty,2}$ has two strip-like ends: the ends $t\to +\infty$ and $t\to -\infty$ will be referred to as the left and right ends. As usual, $B_{-\infty}$ is endowed with identifications of the compactifications of the strip-like ends.  More precisely, if the compactification $\check B_{-\infty,1}$ is obtained from $B_{-\infty,1}$ by attaching $\{\pm \infty\}\times [0,1]$ and $[-2,2]\times\{\pm \infty\}$ and the compactification $\check B_{-\infty,2}$ is obtained from $B_{-\infty,2}$ by attaching $[-2,2]\times\{\pm \infty\}$, then we identify $(s,\pm \infty)\in \check B_{-\infty,1}$ with $(s,\mp \infty)\in \check B_{-\infty,2}$.

Let $\overline{W}_{-\infty,i}=B_{-\infty,i}\times \overline{S}$ for $i=1,2$. The $2$-component building $\overline{W}_{-\infty}=\overline{W}_{-\infty,1}\cup\overline{W}_{-\infty,2}$ is endowed with identifications of compactifications of the ends. The compactification $\check {\overline{W}}_{-\infty,1}$ is obtained from $\overline{W}_{-\infty,1}$ by attaching $\{\pm\infty\}\times[0,1]\times \overline{S}$ and $[-2,2]\times\{\pm \infty\}\times\overline{S}$, the compactification $\check {\overline{W}}_{-\infty,2}$ is obtained from $\overline{W}_{-\infty,2}$ by attaching $[-2,2]\times\{\pm \infty\}\times \overline{S}$, and we identify $(s,+\infty,x)\in \check{\overline{W}}_{-\infty,1}$ with $(s,-\infty,x)\in \check{\overline{W}}_{-\infty,2}$ and $(s,+\infty,x)\in\check{\overline{W}}_{-\infty,2}$ with $(s,-\infty,\overline{\hh}(x))\in\check{\overline{W}}_{-\infty,1}$. We write $\pi_{B_{-\infty,i}}:\overline{W}_{-\infty,i}\to B_{-\infty,i}$ for the projection along $\overline{S}$.

\begin{figure}[ht]
\begin{overpic}[width=12.5cm]{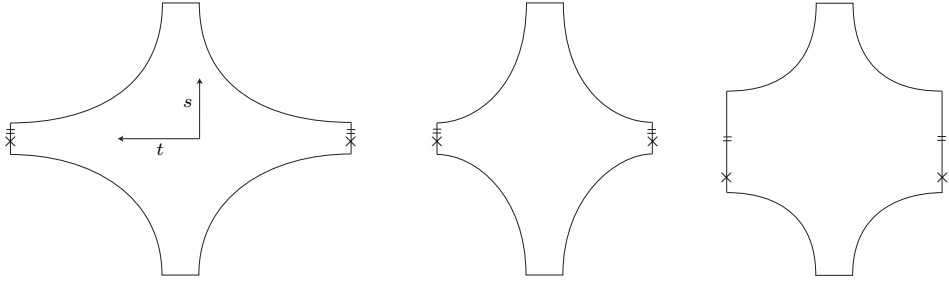}
\put(18.8,18){\tiny $s$} \put(16,12.8){\tiny $t$}
\put(-0.4,13.8){\tiny $\times$} \put(35.8,13.8){\tiny $\times$}
\put(45,13.8){\tiny $\times$} \put(67.92,13.8){\tiny $\times$}
\put(75.73,10){\tiny $\times$} \put(98.75,10){\tiny $\times$}
\end{overpic}
\caption{The bases of the family $\overline{W}_\tau$. The parameter $\tau$ increases as we go to the right. The sides are identified in this picture, as indicated. The location of $\overline{\frak m}^b(\tau)$ is indicated by $\times$.}
\label{figure: base degeneration 1}
\end{figure}

\begin{figure}[ht]
\begin{center}
\psfragscanon
\psfrag{a}{\small $\overline{\bf a}$}
\psfrag{b}{\small $\overline{\bf b}$}
\psfrag{c}{\small $\overline{\hh}(\overline{\bf b})$}
\psfrag{d}{\small $\overline{\hh}(\overline{\bf a})$}
\psfrag{x}{\small $\times$}
\psfrag{y}{\small $B_{-\infty,2}$}
\psfrag{z}{\small $B_{-\infty,1}$}
\includegraphics[width=10cm]{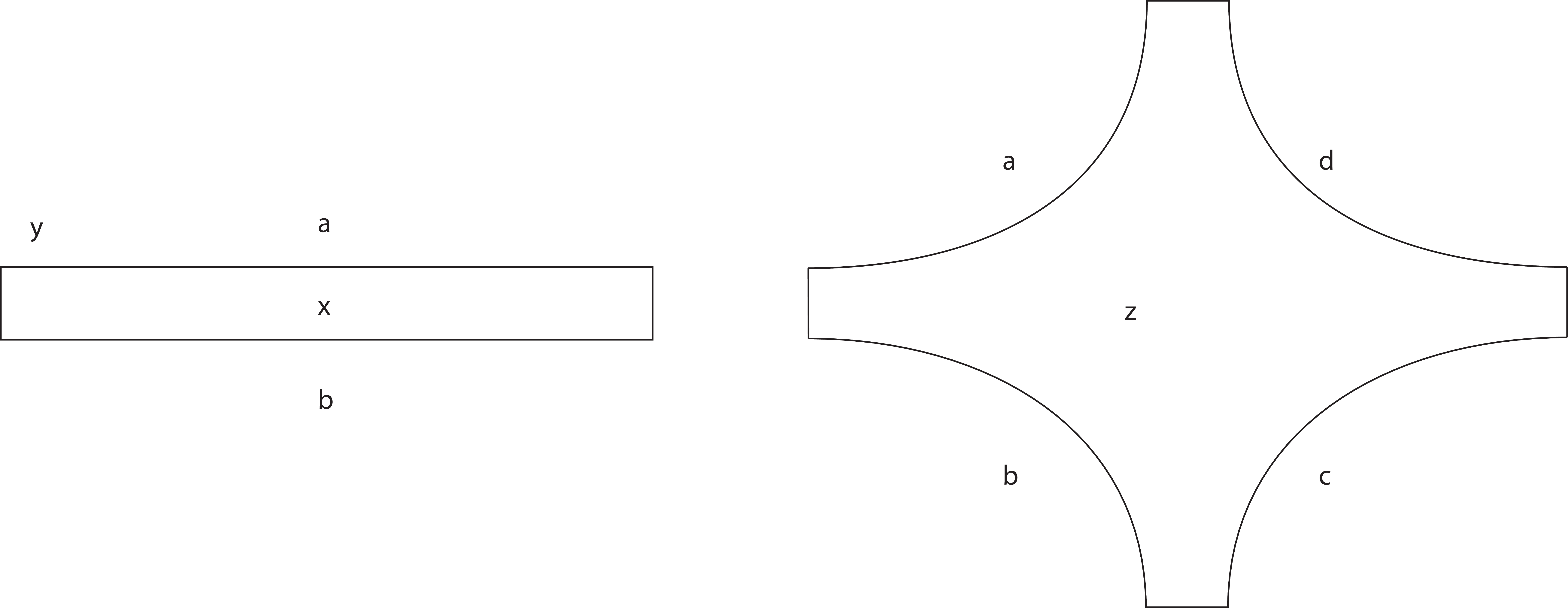}
\end{center}
\caption{The bases of $\overline{W}_{-\infty,2}$ to the left and $\overline{W}_{-\infty,1}$ to the right.}
\label{figure: winfty}
\end{figure}

We write $cl(B_\tau)$, $cl(B_+)$, $cl(B_-)$ to denote the compactifications of $B_\tau$, $B_+$, $B_-$, obtained by adjoining a point at infinity for each end $s=\pm \infty$. Similarly, we write $cl(B_{-\infty,1})$ for the compactification of $B_{-\infty,1}$, obtained by adding $4$ points $s=\pm \infty$ and $t=\pm \infty$, and $cl(B_{-\infty,2})$ for the compactification of $B_{-\infty,2}$, obtained by adding $2$ points $t=\pm \infty$.

\subsubsection{Marked points} \label{subsubsubsection: marked points}

We choose a $1$-parameter family of
\nom[m]{$\overline{\frak m}(\tau)=(\overline{\frak m}^b(\tau),\overline{\frak m}^f(\tau))$}{$1$-parameter family of marked points on $\overline{W}_\tau$}
\nom[m]{$\overline{\frak m}(+\infty)$, $\overline{\frak m}(-\infty)$}{Marked points on $\overline{W}_-$ and $\overline{W}_{-\infty,2}$, respectively}
marked points
$$\overline{\frak m}(\tau)=(\overline{\frak m}^b(\tau),\overline{\frak m}^f(\tau))=((-l(\tau)+2,(r(\tau)+1)/2),z_\infty)\in \overline{W}_\tau$$
for $\tau\in\R$, such that the following hold:
\begin{itemize}
\item[(i)] as $\tau\to +\infty$, $\overline{\frak m}(\tau)$ limits to $\overline{\frak m}(+\infty)=(\overline{\frak m}^b(+\infty),\overline{\frak m}^f(+\infty))$, where $\overline{\frak m}^b(+\infty)=(0,{3\over 2})\in B_-$ and $\overline{\frak m}^f(+\infty)=z_\infty$;
\item[(ii)] as $\tau\to -\infty$, $\overline{\frak m}(\tau)$ limits to $\overline{\frak m}(-\infty)=(\overline{\frak m}^b(-\infty),\overline{\frak m}^f(-\infty))$, where $\overline{\frak m}^b(-\infty)=(0,0)\in B_{-\infty,2}$ and $\overline{\frak m}^f(-\infty)=z_\infty$.
\end{itemize}

\begin{convention}
In this section and the next, $\overline{\frak m}$ will denote a $1$-parameter family (as opposed to a single marked point).
\end{convention}

Let us write $\mathcal{L}_{t_0}$ for the locus $\{t=t_0\}$, viewed as a subset of $B_+$, $B_-$, $B_\tau$, as appropriate.
\nom[L]{$\mathcal{L}_{t_0}$}{The locus $\{t=t_0\}$, viewed as a subset of $B_+$, $B_-$, $B_\tau$, as appropriate}
Of particular interest is $\mathcal{L}_{(r(\tau)+1)/2}$, which passes through $\overline{\frak m}^b(\tau)\in B_\tau$ or $\overline{\frak m}^b(+\infty)\in B_-$.

\subsubsection{Stable Hamiltonian structures and symplectic forms} \label{subsubsection: stable Hamiltonian}

We first consider $\overline{W}_\tau$.  The stable Hamiltonian structure on $\overline{N}_{r(\tau)} = (\overline{S} \times[0,r(\tau)])/ \sim$ is obtained from $(dt,\overline{\omega})$ on $\overline{S}\times[0,r(\tau)]$ by passing to the quotient, where $\overline\omega$ is the area form on $\overline{S}$ from Section~I.\ref{P1-subsubsection: overline W pm}.  The $2$-plane field is $\xi_\tau=\ker dt=T\overline{S}$ and the Hamiltonian vector field is $\overline{R}_\tau=\bdry_t$. The symplectic form $\overline{\Omega}_\tau$ is obtained from the symplectic form $ds\wedge dt+\overline\omega$ on $\R\times \overline{S}\times[0,r(\tau)]$ by passing to the quotient $\R\times\overline{N}_{r(\tau)}$ and then restricting to $\overline{W}_\tau$.

Next we consider $\overline{W}_{-\infty}=\overline{W}_{-\infty,1}\cup\overline{W}_{-\infty,2}$. Let $\omega_{-\infty,1}$ be the restriction of the area form $ds\wedge dt$ on $\R^2$ to $B_{-\infty,1}$ and let $\omega_{-\infty,2}=ds\wedge dt$ on $B_{-\infty,2}=[-2,2]\times \R$. Then we set $\overline{\Omega}_{-\infty,i}=\omega_{-\infty,i}+\overline{\omega}$. The stable Hamiltonian structure at the $s\to \pm \infty$ ends of $\overline{W}_{-\infty,1}$ are given by $(dt,\overline{\omega})$ and the stable Hamiltonian structure at the $t\to\pm \infty$ ends of $\overline{W}_{-\infty,1}$ are given by $(ds,\overline{\omega})$.

\subsection{Holomorphic curves and moduli spaces}

\subsubsection{Lagrangian boundary conditions}

Recall that the monodromy map $\overline{\hh}=\overline{\hh}_m: \overline{S}\to \overline{S}$ depends on the integer $m\gg 0$. Also $\overline{\mathbf{a}}=\{\overline{a}_1,\dots,\overline{a}_{2g}\}$ is the extension of the basis $\mathbf{a}=\{a_1,\dots,a_{2g}\}$ to $\overline{S}$ so that $\overline{a}_i=a_i\cup \overline{a}_{i,0,}\cup \overline{a}_{i,1}$, as described in Section~I.\ref{P1-coconut}. We also note that $\overline{\mathbf{a}}$ depends on $m$.

We first describe the pushoff $\overline{\bf b}$ of $\overline{\bf a}$ which also depends on $m$:
\nom[b]{$\overline{\bf b}=\{\overline{b}_1,\dots,\overline{b}_{2g}\}$}{Pushoff of $\overline{\bf a}=\{\overline{a}_1,\dots,\overline{a}_{2g}\}$}
Let $\varepsilon_0=\varepsilon_0(m)>0$ be sufficiently small and let $\overline{b}_i$ be a $\varepsilon_0$-close transverse pushoff of $\overline{a}_i$ which satisfies the following:
\begin{itemize}
\item in a neighborhood of $z_\infty$, $\overline{b}_i$ is obtained from $\overline{a}_i$ by a $-{2\pi\over m K(m)}$-rotation, where $K(m)$ is a positive integer such that $\lim_{m\to\infty}K(m)=\infty$; and
\item $\overline{a}_i$ and $\overline{b}_i$ intersect at three points $x_{i1}^\#$, $x_{i2}^\#$ and $x_{i3}^\#$
\nom[x]{$x_{i1}^\#$, $x_{i2}^\#$, $x_{i3}^\#$}{Intersection points of $\overline{a}_i$ and $\overline{b}_i$ besides $z_\infty$}
(besides at $z_\infty$); $x_{i2}^\#\in int(S)$ and $x_{i1}^\#,x_{i3}^\#\in \overline{S}-S$.
\end{itemize}
See Figure~\ref{figure: aandb}. We also write $\overline{b}_{i,j}$ for the portion of $\overline{b}_i$ analogous to $\overline{a}_{i,j}$. When we want to signal that $z_\infty$ is an intersection point of $\overline{a}_i$ and $\overline{b}_i$, then we write it as $z_\infty^\#$.

Let us write $\phi(\overline{a}_{i,j})$ for the $\phi$-coordinate of the portion of $\overline{a}_{i,j}$ near $z_\infty$, subject to the condition $0\leq \phi(\overline{a}_{i,j})<2\pi$; similarly define $\phi(\overline{b}_{i,j})$, etc.  We additionally assume that
$$0<\phi(\overline{a}_{i,j}),\phi(\overline{b}_{i,j}),\phi(\overline{\hh}(\overline{a}_{i,j})),\phi(\overline{\hh}(\overline{b}_{i,j}))\leq c(m)$$
where $c(m)$ is a function which satisfies $c(m)\to 0$ as $m\to \infty$; cf.\ Section~I.\ref{P1-coconut}.

\begin{figure}[ht]
\begin{center}
\psfragscanon
\psfrag{a}{\tiny $\overline{a}_i$}
\psfrag{b}{\tiny $\overline{b}_i$}
\psfrag{c}{\tiny $\overline{\hh}(\overline{a}_i)$}
\psfrag{d}{\tiny $\overline{\hh}(\overline{b}_i)$}
\psfrag{x}{\tiny $x^\#_{i1}$}
\psfrag{y}{\tiny $x^\#_{i2}$}
\psfrag{z}{\tiny $z_\infty$}
\includegraphics[width=2cm]{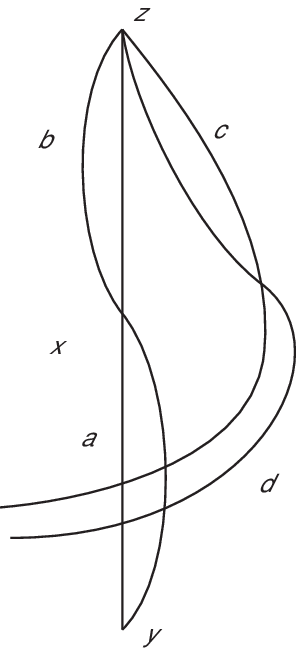}
\end{center}
\caption{The arcs $\overline{a}_i$, $\overline{b}_i$, $\overline{\hh}(\overline{a}_i)$, and $\overline{\hh}(\overline{b}_i)$ near $z_\infty$.}
\label{figure: aandb}
\end{figure}

\begin{rmk} \label{comparing rates}
In view of the choices of $\overline{a}_{i,j}$ and $\overline{\hh}(\overline{a}_{i,j})$ from Section~I.\ref{P1-coconut} and the choices of $\overline{b}_{i,j}$ above, as $m\to\infty$:
\begin{itemize}
\item $\phi(\overline{a}_{i,j})-\phi(\overline{b}_{i,j})\to 0$ the fastest;
\item $\phi(\overline{\hh}(\overline{a}_{i,j}))-\phi(\overline{a}_{i,j})\to 0$ the next fastest;
\item $\phi(\overline{a}_{i,j})-\phi(\overline{a}_{i',j'})\to 0$ the slowest if $(i,j)\not=(i',j')$.
\end{itemize}
\end{rmk}

The symplectic fibration
$$\pi_{B_\tau}:(\overline{W}_\tau,\overline{\Omega}_\tau)\to (B_\tau,ds\wedge dt)$$
induces a symplectic connection, defined
as the $\overline{\Omega}_\tau$-orthogonal of the tangent plane to the fibers. We place a copy of $\overline{\mathbf{a}}$ on the fiber $\pi^{-1}_{B_\tau}(s,1)$ with $s> l(\tau)$ and use the symplectic connection to parallel transport $\overline{\bf a}$ along $\bdry \overline{W}_\tau$. (Note that $\bdry \overline{W}_\tau$ is equal to the {\em vertical boundary} $\bdry_v \overline{W}_\tau:= \pi^{-1}_{B_\tau}(\bdry B_\tau)$.) This gives us a singular Lagrangian submanifold $L^{\tau,+}_{\overline{\mathbf{a}}}$.  (Note that $\overline{\bf a}$ is a singular Lagrangian submanifold of $\overline{S}$ with singularity $z_\infty$ and is a union of radial rays in a neighborhood of $z_\infty$. Hence the singular set of $L^{\tau,+}_{\overline{\mathbf{a}}}$ and its neighborhood in $L^{\tau,+}_{\overline{\mathbf{a}}}$ are obtained from those of $\overline{\bf a}$ by multiplying with the upper boundary of $B_\tau$.) Similarly, we place a copy of $\overline{\mathbf{b}}$ on the fiber $\pi^{-1}_{B_\tau}(s,1)$ with $s<-l(\tau)$ and use the symplectic connection to parallel transport $\overline{\bf b}$ along $\bdry \overline{W}_\tau$ to construct the singular Lagrangian submanifold $L^{\tau,-}_{\overline{\mathbf{b}}}$. The Lagrangian submanifolds $L^{\tau,+}_{\widehat{\bf a}}$, $L^{\tau,+}_{\widehat{a}_i}$, $L^{\tau,-}_{\widehat{\bf b}}$, $L^{\tau,-}_{\widehat{b}_i}$, etc.\ are defined similarly, where $\widehat{\bf a}=\{\widehat{a}_1,\dots,\widehat{a}_{2g}\}$ and $\widehat{a}_i=\overline{a}_i-\{z_i\}$ were defined in Section~I.\ref{P1-coconut} and $\widehat{\bf b}$ and $\widehat{b}_i$ are defined analogously.
\nom[L]{$L^{\tau,+}_{\widehat{\bf a}}$, $L^{\tau,-}_{\widehat{\bf b}}$}{Lagrangian submanifolds on $\overline{W}_\tau$}
\nom[L]{$L^{-\infty,1}_{\overline{\bf a},1}$, $L^{-\infty,1}_{\overline{\bf b},2}$, $L^{-\infty,1}_{\overline{\hh}(\overline{\bf b}),3}$, $L^{-\infty,1}_{\overline{\hh}(\overline{\bf a}),4}$}{Lagrangian submanifolds on $\overline{W}_{-\infty,1}$}
\nom[L]{$L^{-\infty,2}_{\overline{\bf a},+}$, $L^{-\infty,2}_{\overline{\bf b},-}$}{Lagrangian submanifolds on $\overline{W}_{-\infty,2}$}

On $(\overline{W}_{-\infty},\overline{\Omega}_{-\infty})$, we define the singular Lagrangian submanifolds as follows: Let us write
$$\bdry B_{-\infty,1}=\sqcup_{i=1}^4 \bdry_i B_{-\infty,1},$$ where the boundary components, in order from $i=1$ to $i=4$, satisfy
$$s>0, t>1/2; \ \ s<0, t>1/2; \ \  s<0,t<1/2;\ \mbox{ and } s>0, t<1/2.$$
Also let $\bdry_i \overline{W}_{-\infty,1}$ be the component of $\bdry\overline{W}_{-\infty,1}$ corresponding to $\bdry_i B_{-\infty,1}$. Then we define:
\begin{itemize}
\item $L^{-\infty,1}_{\overline{\bf a},1}=\bdry_1 B_{-\infty,1}\times  \overline{\mathbf{a}}$ on $\bdry_1 \overline{W}_{-\infty,1}$;
\item $L^{-\infty,1}_{\overline{\bf b},2}=\bdry_2 B_{-\infty,1}\times  \overline{\mathbf{b}}$ on $\bdry_2\overline{W}_{-\infty,2}$;
\item $L^{-\infty,1}_{\overline{\hh}(\overline{\bf b}),3}=\bdry_3 B_{-\infty,1}\times  \overline{\hh}(\overline{\mathbf{b}})$ on $\bdry_3 \overline{W}_{-\infty,1}$;
\item $L^{-\infty,1}_{\overline{\hh}(\overline{\bf a}),4}= \bdry_4 B_{-\infty,1}\times \overline{\hh}(\overline{\mathbf{a}})$ on $\bdry_4 \overline{W}_{-\infty,1}$;
\item $L^{-\infty,2}_{\overline{\bf a},+}=\{2\}\times \R \times \overline{\bf a}$ on $\bdry \overline{W}_{-\infty,2}$;
\item $L^{-\infty,2}_{\overline{\bf b},-}=\{-2\}\times \R \times \overline{\bf b}$ on $\bdry \overline{W}_{-\infty,2}$.
\end{itemize}

\subsubsection{Almost complex structures}
\label{subsubsection: almost complex structures for W tau}

Recall the space $\mathcal{J}_{\overline{W}}$ of admissible almost complex structures $\overline{J}$ on $\overline{W}$ from Definition~I.\ref{P1-defn: almost complex structures on overline W}, the space $\mathcal{J}_{\overline{W'}}$ of adapted almost complex structures $\overline{J'}$ on $\overline{W'}$ from Definition~I.\ref{P1-defn: almost complex structures on R times overline N}, and the spaces $\mathcal{J}_{\overline{W}_+}$ and $\mathcal{J}_{\overline{W}_-}$ of admissible almost complex structures $\overline{J}_+$ and $\overline{J}_-$ on $\overline{W}_+$ and $\overline{W}_-$ from Definition~I.\ref{P1-defn: admissible J for W plus}.

\begin{defn}
An almost complex structure $\overline{J}_{-\infty,2}$ on $\overline{W}_{-\infty,2}$ is {\em admissible} if the following hold:
\begin{enumerate}
\item $\overline{J}_{-\infty,2}$ is $t$-invariant, $\overline{J}_{-\infty,2}(\bdry_t)=-\bdry_s$, and $\overline{J}_{-\infty,2}(T\overline{S})=T\overline{S}$; and
\item there exists $\varepsilon>0$ such that $\overline{J}_{-\infty,2}$ restricts to the standard complex structure on the subsurface $D^2_\varepsilon=\{\rho\leq \varepsilon\}\subset \overline{S}$ of each fiber.
\end{enumerate}
The space of all admissible $\overline{J}_{-\infty,2}$ will be denoted by $\mathcal{J}_{\overline{W}_{-\infty,2}}$.
\end{defn}

\begin{defn} \label{adm2}
An almost complex structure $\overline{J}_{-\infty,1}$ on $\overline{W}_{-\infty,1}$ is {\em admissible} if the following hold:
\begin{enumerate}
\item the projection $\pi_{B_{-\infty,1}}$ is $(\overline{J}_{-\infty,1},j_{-\infty,1})$-holomorphic with respect to the standard complex structure  $j_{-\infty,1}$ on $B_{-\infty,1}$;
\item there exists $\varepsilon>0$ such that $\overline{J}_{-\infty,1}$ restricts to the standard complex structure on the subsurface $D^2_\varepsilon=\{\rho\leq \varepsilon\}\subset \overline{S}$ of each fiber;
\item there exist $\overline{J}\in\mathcal{J}_{\overline{W}}$ and $\overline{J}_{-\infty,2}\in \mathcal{J}_{\overline{W}_{-\infty,2}}$ such that $\overline{J}_{-\infty,1}$ agrees with $\overline{J}$ on $\overline{W}=\R\times[0,1]\times\overline{S}$, with $\overline{J}_{-\infty,2}$ on $[-2,2]\times[3,+\infty)\times\overline{S}$, and with $(id\times\overline{\hh})_*(\overline{J}_{-\infty,2})$ on $[-2,2]\times[-2,-\infty)\times\overline{S}$.
\end{enumerate}
If (3) holds, we say that $\overline{J}_{-\infty,1}$ is {\em compatible} with $\overline{J}\in\mathcal{J}_{\overline{W}}$ and $\overline{J}_{-\infty,2}\in \mathcal{J}_{\overline{W}_{-\infty,2}}$. The space of admissible $\overline{J}_{-\infty,1}$ will be denoted by $\mathcal{J}_{\overline{W}_{-\infty,1}}$.
\end{defn}

\nom[J1]{$\overline{J}_{-\infty}=\overline{J}_{-\infty,1}\cup\overline{J}_{-\infty,2}$}{Almost complex structure on $\overline{W}_{-\infty}=\overline{W}_{-\infty,1}\cup\overline{W}_{-\infty,2}$}
\nom[J]{$\mathcal{J}_{\overline{W}_{-\infty}}$}{Space of admissible $\overline{J}_{-\infty}$}

\begin{defn}
An almost complex structure $\overline{J}_{-\infty}=\overline{J}_{-\infty,1}\cup\overline{J}_{-\infty,2}$ on $\overline{W}_{-\infty}=\overline{W}_{-\infty,1}\cup\overline{W}_{-\infty,2}$ is {\em admissible} if $\overline{J}_{-\infty,i}\in \mathcal{J}_{\overline{W}_{-\infty,i}}$ for $i=1,2$ and $\overline{J}_{-\infty,1}$ is compatible with $\overline{J}_{-\infty,2}$. The space of admissible $\overline{J}_{-\infty}$ will be denoted by $\mathcal{J}_{\overline{W}_{-\infty}}$.
\end{defn}

\nom[J1]{$\overline{J}_\tau$}{Almost complex structure on $\overline{W}_\tau$}
\nom[J]{$\mathcal{J}_{\overline{W}_\tau}$}{Space of all admissible $\overline{J}_\tau$}

\begin{defn} \label{adm1}
An almost complex structure $\overline{J}_\tau$ on $\overline{W}_\tau$ is {\em admissible} if the following hold:
\begin{enumerate}
\item the projection $\pi_{B_\tau}$ is $(\overline{J}_\tau,j_\tau)$-holomorphic with respect to the standard complex structure $j_\tau$ on $B_\tau$;
\item there exists $\varepsilon>0$ such that $\overline{J}_\tau$ restricts to the standard complex structure on the subsurface $D^2_\varepsilon=\{\rho\leq \varepsilon\}\subset \overline{S}$ of each fiber;
\item if $\tau\geq 0$, then $\overline{J}_\tau$ is the restriction of some $\overline{J'}\in \mathcal{J}_{\overline{W'}}$;
\item if $\tau\leq 0$, then $\overline{J}_\tau$ agrees with some $\overline{J}\in\mathcal{J}_{\overline{W}}$ on $\overline{W}=\R\times[0,1]\times\overline{S}$ and with some $\overline{J}_{-\infty,2}\in\mathcal{J}_{\overline{W}_{-\infty,2}}$ on $[-2,2]\times[3,r(\tau)-2]\times\overline{S}$.
\end{enumerate}
If (3) holds, we say that $\overline{J}_\tau$ is {\em compatible} with $\overline{J'}\in \mathcal{J}_{\overline{W'}}$ and if (4) holds, we say that $\overline{J}_\tau$ is {\em compatible} with $\overline{J}\in\mathcal{J}_{\overline{W}}$ and $\overline{J}_{-\infty,2}\in\mathcal{J}_{\overline{W}_{-\infty,2}}$. The space of all admissible $\overline{J}_\tau$ on $\overline{W}_\tau$ will be denoted by $\mathcal{J}_{\overline{W}_\tau}$.
\end{defn}

\begin{defn} \label{adm family}
A family $\{\overline{J}_\tau\in \mathcal{J}_{\overline{W}_\tau}\}_{\tau\in\R}$ of almost complex structures is {\em admissible} if there exist $\overline{J'}\in \mathcal{J}_{\overline{W'}}$, $\overline{J}\in\mathcal{J}_{\overline{W}}$, $\overline{J}_+\in \mathcal{J}_{\overline{W}_+}$, $\overline{J}_-\in\mathcal{J}_{\overline{W}_-}$ and $\overline{J}_{-\infty}=\overline{J}_{-\infty,1} \cup\overline{J}_{-\infty,2}\in \mathcal{J}_{\overline{W}_{-\infty}}$ such that the following hold:
\begin{enumerate}
\item $\overline{J}_\tau$ converges to $\overline{J}_{-\infty}$ as $\tau\to-\infty$;
\item $\overline{J}_\tau$ converges to $\overline{J}_+$ and $\overline{J}_-$ as $\tau\to +\infty$;
\item $\overline{J}_+$ and $\overline{J}_-$ are compatible with $\overline{J'}$ and $\overline{J}$; and
\item $\overline{J}_\tau$ is compatible with $\overline{J}$ and $\overline{J}_{-\infty,2}$ for $\tau \leq 0$ and with $\overline{J'}$ for $\tau \geq 0$.
\end{enumerate}
The space of all admissible $\{\overline{J}_\tau\in \mathcal{J}_{\overline{W}_\tau}\}_{\tau\in\R}$ will be denoted by $\overline{\mathcal{I}}$.
\end{defn}

\nom[I]{$\overline{\mathcal{I}}$}{Space of all admissible $\{\overline{J}_\tau\in \mathcal{J}_{\overline{W}_\tau}\}_{\tau\in\R}$}

\subsubsection{Some notation and conventions} \label{subsubsection: convention bambi}

We now collect some notation and conventions.

\begin{notation} [Tuples and orbit sets]  \label{notation: tuples and orbit sets}
When we write a tuple of $\overline{\bf a}\cap \overline{\hh}(\overline{\bf a})$ as ${\bf y}$ or an orbit set of $\overline{N}$ as $\bs\gamma$ (with possible superscripts, subscripts and other decorations), it is assumed that ${\bf y}\subset S$ and $\bs\gamma\subset N$.  In particular, ${\bf y}$ and $\bs\gamma$ do not contain any multiples of $z_\infty$ or $\delta_0$.
\end{notation}

\begin{notation} [Sections at $\infty$] \label{notation: section at infty}
The sections $\{\rho=0\}$ of $\overline{W}$, $\overline{W'}=\R\times\overline{N}$, $\overline{W}_\tau$, $\overline{W}_+$, $\overline{W}_-$ and $\overline{W}_{-\infty,i}$ are holomorphic with respect to almost complex structures in $\mathcal{J}_{\overline{W}}$, $\mathcal{J}_{\overline{W'}}$, $\mathcal{J}_{\overline{W}_\tau}$, etc. They are called {\em sections at $\infty$} and are denoted by $\sigma_\infty$, $\sigma_\infty'$, $\sigma_\infty^\tau$, $\sigma_\infty^+$, $\sigma_\infty^-$ and $\sigma_{\infty}^{-\infty,i}$.
\nom[1]{$\sigma_\infty^\tau$, $\sigma_\infty^{-\infty,i}$}{Sections at infinity of $\overline{W}_\tau$ and $\overline{W}_{-\infty,i}$}
\end{notation}

\begin{notation}[The intersection numbers $n^*(\overline{u})$ and $n^{*,alt}(\overline{u})$]  \label{notation: intersection numbers}
Let $\delta_{\rho_0,\phi_0}$ be a closed orbit of the Hamiltonian vector field $\bdry_t$ which lies on the torus
$$\{\rho=\rho_0\}\subset  \overline{N}_r= (\overline{S}\times[0,r])/(x,1)\sim(\overline{\hh}(x),0)$$
for appropriate $r$ and $\rho_0>0$ sufficiently small and which passes through the point $(t,\rho,\phi)=(0,\rho_0,\phi_0)$. Since $\overline\hh=\overline\hh_m$ is a ${2\pi\over m}$-rotation on $D^2_{1/2}=\{\rho\leq 1/2\}\subset D^2$, the orbit $\delta_{\rho_0,\phi_0}$ winds $m$ times in the longitudinal direction and once in the meridian direction. The point $(0,\rho_0,\phi_0)$ is with respect to balanced coordinates on $\overline{N}_r$; see Section~I.\ref{P1-subsubsection: overline W pm}.  We assume additionally that $\delta_{\rho_0,\phi_0}$ does not intersect the projections of the Lagrangians of $\overline{W}_\tau$, $\overline{W}_+$ and $\overline{W}_-$ to $\overline{N}_r$.

Let us write $\phi(\overline{a}_{i,j})$ for the $\phi$-coordinate of $\overline{a}_{i,j}$ such that $0\leq \phi(\overline{a}_{i,j})<2\pi$. Also let $\varepsilon_0={2\pi\over Km}$ be the constant appearing in the definition of $\overline{\bf b}$ and let $\varepsilon_1$ be a constant satisfying  $0<\varepsilon_1<\varepsilon_0$. We consider two possibilities for $\phi_0$:
$$\phi^\pm_0= \phi(\overline{a}_{i,j})\pm \varepsilon_1.$$
Hence $\phi(\overline{b}_{i,j})<\phi_0^-<\phi(\overline{a}_{i,j})<\phi_0^+$.

We write $(\sigma_\infty^*)^{\dagger,\pm}$ for the restriction of $\R\times \delta_{\rho_0,\phi_0^\pm}$ to $\overline{W}_*$, where $*=\varnothing$, $'$, $\tau$, $+$, or $-$. For $\overline{W}_{-\infty,i}$, we write
$$(\sigma_\infty^{-\infty,i})^{\dagger,\pm}=B_{-\infty,i}\times\{\rho=\rho_0,\phi=\phi_0^\pm+2\pi k/m,k\in \Z\},$$
for some $\phi_0$. Finally we define:
\begin{equation} \label{n and n alt}
n^*(\overline{u}) =\langle\overline{u},(\sigma^*_\infty)^{\dagger,+}\rangle, \quad
n^{*,alt}(\overline{u}) = \langle\overline{u},(\sigma^*_\infty)^{\dagger,-}\rangle,
\end{equation}
where $*=\varnothing$, $'$, $\tau$, $+$, $-$, or $(-\infty,i)$.  The two quantities $n^*$ and $n^{*,alt}$ can be used interchangeably, except when $\tau=-\infty$; for the most part we will use $n^*$.
\nom[n]{$n^*(\overline{u})$, $n^{*,alt}(\overline{u})$}{Intersection numbers defined in Equation~\eqref{n and n alt}}
\end{notation}

\begin{notation}[Components of a holomorphic curve $\overline{u}$] \label{notation: overline u}
Given a holomorphic curve $\overline{u}$ in $\overline{W}$, $\overline{W'}$, etc., we write
$$\overline{u}=\overline{u}'\cup\overline{u}''=\overline{u}'\cup \overline{u}^\sharp\cup\overline{u}^\flat\cup\overline{u}^f,$$
where
\begin{itemize}
\item $\overline{u}'$ is a possibly disconnected branched cover of $\sigma_\infty^*$;
\item $\overline{u}''$ is the union of irreducible components which do not branch cover $\sigma_\infty^*$;
\item $\overline{u}^\sharp$ is the union of components of $\overline{u}''$ which are asymptotic to a multiple of $\delta_0$ or $z_\infty$ at one or more ends;
\item $\overline{u}^\flat$ is the union of the remaining non-fiber components of $\overline{u}''$; and
\item $\overline{u}^f$ is the union of fiber components of $\overline{u}$, including ghosts.
\end{itemize}
If $\overline{u}$ is a multisection, then $\deg{\overline{u}}$ is the degree of $\overline{u}$ as a multisection.
\end{notation}

\s\n
{\em The choice of hyperbolic orbit.}  By the definition of the monodromy map $\hh$, $\bdry N$ is a negative Morse-Bott torus; we denote the negative Morse-Bott family of simple orbits on $\bdry N$ by $\mathcal{N}$. Let $\phi_\gamma$ be the $\phi$-coordinate of $\gamma\in \mathcal{N}$. Also recall the orbit $\delta_0= \{z_\infty\}\times[0,2]/\sim$ of $\overline{N}$.

Let $\overline{J'}\in \mathcal{J}_{\overline{W'}}$.
Without loss of generality, we may assume that there is only one holomorphic cylinder $Z_\gamma$ in $(\overline{W'}=\R\times\overline{N},\overline{J'})$ from $\delta_0$ to any orbit $\gamma\in \mathcal{N}$, modulo $\R$-translation. Each $Z_\gamma$ corresponds to a radial ray $\mathcal{R}_{\phi_\gamma}=\{\phi=\phi_\gamma, \rho\geq 0\}\subset D^2$, which is the asymptotic direction of $\pi_{D^2}(Z_\gamma)$ at the positive end. Here $\pi_{D^2}:\overline{N}-int(N)\to D^2$ is the projection with respect to the balanced coordinates; see Section~I.\ref{P1-subsubsection: overline W pm}.

We now choose a hyperbolic orbit $h$ and an elliptic orbit $e$ in $\mathcal{N}$.
The choice of $h=\gamma_{\phi_h}$ is the same as that of Convention~I.\ref{P1-convention for h}:  $h$ is generic and $\phi_h$ is close to $-{2\pi \over m}$, where the integer $m$ which appears in the definition of $\overline{\hh}_m$ additionally satisfies the conditions of Section~I.\ref{P1-coconut}. In particular, the radial ray $\mathcal{R}_{\phi_h}$ does not lie on the thin wedges from $\overline{a}_i$ to $\overline{\hh}(\overline{a}_i)$ for all $i$. There are no restrictions on $e$ except that $e\not=h$.

\subsubsection{Holomorphic maps to $\overline{W}_\tau$}

Let $(F,j)$ be a compact Riemann surface, possibly disconnected, with two $k$-tuples of boundary punctures $\mathbf{q}^+=\{q^+_1,\dots,q^+_{k}\}$ and $\mathbf{q}^-=\{q^-_1,\dots,q^-_{k}\}$ on $\bdry F=\bdry_+ F \sqcup \bdry _- F$, such that:
\begin{itemize}
\item[(i)] each component of $F$ nontrivially intersects $\bdry_+ F$ and $\bdry_-F$;
\item[(ii)] each of $\bdry_+ F$ and $\bdry_- F$ is a union of connected components of $\bdry F$; and
\item[(iii)] on each component of $\bdry_+ F$ (resp.\ $\bdry_- F$) there is at least one puncture from $\mathbf{q}^+$ (resp.\ $\mathbf{q}^-$) and none from $\mathbf{q}^-$ (resp.\ $\mathbf{q}^+$).
\end{itemize}
We write $\dot F=F-\mathbf{q}^+-\mathbf{q}^-$, $\bdry_+ \dot F= \bdry_+ F- \mathbf{q}^+$ and $\bdry_- \dot F = \bdry_-F - \mathbf{q}^-$.

Let ${\bf z}=\{z_\infty^p(\overrightarrow{\mathcal{D}})\}\cup {\bf y}$, $p\geq 0$, be a $k$-tuple of points of $\overline{\bf a}\cap \overline{\hh}(\overline{\bf a})$, where $k\leq 2g$, $z_\infty$ has multiplicity $p$, and $\overrightarrow{\mathcal{D}}$ is the data at $z_\infty^p$ with respect to $\overline{\bf a}\cap \overline{\hh}(\overline{\bf a})$. The definition of ${\bf z}$ and the notion of data at $z_\infty^p$ are given in Section~I.\ref{P1-subsection: modified indices at z infty}. In particular, by definition, each arc of $\{\overline{a}_i, \overline{\hh}(\overline{a}_i)\}_{i=1}^{2g}$ is used at most once.  Also let ${\bf z'}=\{z_\infty^q(\overrightarrow{\mathcal{D}}')\}\cup {\bf y'}$, $q\geq 0$, be a $k$-tuple of points of $\overline{\bf b}\cap\overline{\hh}(\overline{\bf b})$, where $z_\infty$ has multiplicity $q$ and $\overrightarrow{\mathcal{D}}'$ is the data at $z_\infty^q$ with respect to $\overline{\bf b}\cap\overline{\hh}(\overline{\bf b})$.

Let $\overline{J}_\tau\in \mathcal{J}_{\overline{W}_\tau}$.  If $\overline{u}': \dot F\to (\overline{W}_\tau,\overline{J}_\tau)$ is a branched cover of $\sigma_\infty^\tau$, then $\overline{u}'$ comes equipped with data $\mathcal{C}$ (cf.\ Definition~I.\ref{P1-defn: data C}), which is a map from $\pi_0(\bdry_+\dot F)$ (resp.\ $\pi_0(\bdry_-\dot F)$) to the set of arcs $\overline{a}_{i,j}$ (resp.\ $\overline{b}_{i,j}$). In words, $\overline{u}'$ is viewed as mapping each component of $\bdry_+ \dot F$ (resp.\ $\bdry_-\dot F$) to some $L^{\tau,+}_{\overline{a}_{i,j}}$ (resp.\ $L^{\tau,-}_{\overline{b}_{i,j}}$). Then $\mathcal{C}$ determines the data $\overrightarrow{\mathcal{D}}$ and $\overrightarrow{\mathcal{D}}'$ at the positive and negative ends.

\s
We then make the following definition:

\begin{defn} \label{defn: W tau curve}
Let $\overline{J}_\tau\in \mathcal{J}_{\overline{W}_\tau}$, ${\bf z}=\{z_\infty^p(\overrightarrow{\mathcal{D}})\}\cup {\bf y}$ be a $k$-tuple of $\overline{\bf a}\cap \overline{\hh}(\overline{\bf a})$ and ${\bf z'}=\{z_\infty^q(\overrightarrow{\mathcal{D}}')\}\cup {\bf y'}$ be a $k$-tuple of $\overline{\bf b}\cap \overline{\hh}(\overline{\bf b})$.

A {\em degree $k$ multisection of $(\overline{W}_\tau,\overline{J}_\tau)$ from ${\bf z}$ to ${\bf z'}$} is a pair $(\overline{u},\mathcal{C})$ consisting of a holomorphic map
$$\overline{u}=\overline{u}'\cup\overline{u}'': (\dot F=\dot F'\sqcup \dot F'',j)\to (\overline{W}_\tau, \overline{J}_\tau)$$
which is a degree $k$ multisection of $\pi_{B_\tau}: \overline{W}_\tau\to B_\tau$ and data $\mathcal{C}$ for $\overline{u}'$, and which additionally satisfies the following:
\begin{enumerate}
\item $\overline{u}''(\bdry_+ \dot{F}'' )\subset L^{\tau,+} _{\widehat{\bf a}}$ and $\overline{u}'' (\bdry_-\dot F'') \subset L^{\tau,-}_{\widehat{\bf b}}$;
\item $\overline{u}$ maps each connected component of $\bdry_+\dot F$ to a different $L^{\tau,+}_{\overline{a}_i}$ and each connected component of $\bdry_-\dot F$ to a different $L^{\tau,-}_{\overline{b}_i}$ (here we are using $\mathcal{C}$ to assign some $L^{\tau,+}_{\overline{a}_i}$ or $L^{\tau,-}_{\overline{b}_i}$ to each component of $\bdry_\pm\dot F'$);
\item $\displaystyle{\lim_{w\rightarrow q_i^+}\pi_{\R}\circ \overline{u}(w) =+\infty}$ and $\displaystyle{\lim_{w\rightarrow q_i^-} \pi_{\R}\circ \overline{u}(w) =-\infty}$;
\item $\overline{u}$ converges to a strip over $[0,1]\times {\bf z}$ near ${\bf q}^+$ and to a strip over $[0,1]\times {\bf z'}$ near ${\bf q}^-$;
\item the positive and negative ends of $\overline{u}$ which limit to $z_\infty$ are described by $\overrightarrow{\mathcal{D}}$ and $\overrightarrow{\mathcal{D}}'$.
\end{enumerate}
Here $\pi_{\R}:\overline{W}_\tau\to\R$ is the projection to the $s$-coordinate.

A {\em $(\overline{W}_\tau,\overline{J}_\tau)$-curve from ${\bf y}$ to ${\bf y'}$} is a degree $2g$ multisection of $(\overline{W}_\tau,\overline{J}_\tau)$ satisfying $n^*(\overline{u})=m$. (Recall that the integer $m$ is the integer on which the monodromy map $\overline{\hh}=\overline{\hh}_{m}$ depends.)
\end{defn}

Let $\mathcal{M}_{\overline{J}_\tau}(\mathbf{z},\mathbf{z}')$ be the moduli space of degree $k$ multisections of $(\overline{W}_\tau,\overline{J}_\tau)$ from ${\bf z}$ to ${\bf z'}$ and let $\mathcal{M}_{\overline{J}_\tau}(\mathbf{z},\mathbf{z}';\overline{\frak m}(\tau))\subset \mathcal{M}_{\overline{J}_\tau} (\mathbf{z},\mathbf{z}')$ be the subset consisting of $\overline{W}_\tau$-curves that pass through $\overline{\frak m}(\tau)$.  We write
$$\mathcal{M}_{\{\overline{J}_\tau\}}(\mathbf{z},\mathbf{z}'):=\coprod_{\tau\in\R} \mathcal{M}_{\overline{J}_\tau}(\mathbf{z},\mathbf{z}'),$$
$$\mathcal{M}_{\{\overline{J}_\tau\}}(\mathbf{z},\mathbf{z}';\overline{\frak m}):=\coprod_{\tau\in\R} \mathcal{M}_{\overline{J}_\tau} (\mathbf{z},\mathbf{z}';\overline{\frak m}(\tau)).$$

\begin{notation}[Modifiers] \label{notation: modifiers}
For any moduli space $\mathcal{M}_{\star_1}(\star_2)$, we may place modifiers $*$ as in $\mathcal{M}^*_{\star_1}(\star_2)$ to denote the subset of $\mathcal{M}_{\star_1}(\star_2)$ satisfying $*$.  Typical self-explanatory modifiers are $I=i$, $n^*=m$, and $\op{deg}=k$. Note that the degree can be inferred from $\star_2$.

The following is a list of non-self-explanatory modifiers:
\begin{enumerate}
\item[$\dagger$] $=$ no component of $\overline{u}$ branch covers $\sigma_\infty^*$ with possibly empty branch locus.
\item[$s$] $=$ all the components of $\overline{u}$ are simply covered.
\item[$irr$] $=$ the curve $\overline{u}$ is irreducible.
\end{enumerate}
\end{notation}

\subsubsection{Holomorphic maps to $\overline{W}_{-\infty}$}
\label{subsubsection: widehat W sub D}

We first discuss holomorphic curves without ends at $z_\infty$.

\begin{defn} \label{defn: bambi 1}
Let $\overline{J}_{-\infty,1}\in \mathcal{J}_{\overline{W}_{-\infty,1}}$, ${\bf y}_1\in \mathcal{S}_{{\bf a}, \hh({\bf a})}$, ${\bf y}_2\in \mathcal{S}_{{\bf b}, {\bf a}}$, ${\bf y}_3\in \mathcal{S}_{{\bf b}, \hh({\bf b})}$ and ${\bf y}_4\in \mathcal{S}_{\hh({\bf a}), \hh({\bf b})}$.  (Recall that an element of $\mathcal{S}_{{\bf a}, \hh({\bf a})}$ is a tuple of intersection points $a_i\cap \hh(a_j)$, where each $a_i$ and each $\hh(a_j)$ is used at most once; the other $\mathcal{S}_{*,*}$ are defined analogously. In this definition all the tuples are $k$-tuples.)

A {\em degree $k\leq 2g$ multisection $\overline{u}$ of $(\overline{W}_{-\infty,1},\overline{J}_{-\infty,1})$ with ends ${\bf y}_1, {\bf y}_2, {\bf y}_3, {\bf y}_4$} is a holomorphic map
$$\overline{u}: (\dot F,j)\to (\overline{W}_{-\infty,1},\overline{J}_{-\infty,1})$$
which is degree $k$ multisection of $\pi_{B_{-\infty,1}}: \overline{W}_{-\infty,1}\to B_{-\infty,1}$ and which additionally satisfies the following:
\begin{enumerate}
\item $\overline{u}(\bdry \dot F)\subset L^{-\infty,1}_{\overline{\bf a},1}\cup L^{-\infty,1}_{\overline{\bf b},2}\cup L^{-\infty,1}_{\overline{\hh}(\overline{\bf b}),3}\cup L^{-\infty,1}_{\overline{\hh}(\overline{\bf a}),4}$ and $\overline{u}$ maps each component of $\bdry \dot F$ to a different $L^{-\infty,1}_{\overline{a}_i,1}$, $L^{-\infty,1}_{\overline{b}_i,2}$, $L^{-\infty,1}_{\overline{\hh}(\overline{b}_i),3}$, or $L^{-\infty,1}_{\overline{\hh}(\overline{a}_i),4}$;
\item $\overline{u}$ converges to a strip over $[0,1]\times{\bf y}_1$ as $s\to +\infty$; $[-2,2]\times {\bf y}_2$ as $t\to +\infty$;  $[0,1]\times{\bf y}_3$ as $s\to-\infty$; and $[-2,2]\times{\bf y}_4$ as $t\to -\infty$.
\end{enumerate}

A {\em $(\overline{W}_{-\infty,1},\overline{J}_{-\infty,1})$-curve with ends ${\bf y}_1,\dots,{\bf y}_4$} is a degree $2g$ multisection of $(\overline{W}_{-\infty,1},\overline{J}_{-\infty,1})$ satisfying $n^*(\overline{u})=0$.
\end{defn}

We will use the convention to list the ends of a multisection $\overline{u}$ of $\overline{W}_{-\infty,1}$ in counterclockwise order, starting with the top end.

\begin{defn}
Let $\overline{J}_{-\infty,2}\in \mathcal{J}_{\overline{W}_{-\infty,2}}$ and ${\bf y}, {\bf y'}\in \mathcal{S}_{{\bf b},{\bf a}}$. A {\em degree $k\leq 2g$ multisection $\overline{u}$ of $(\overline{W}_{-\infty,2},\overline{J}_{-\infty,2})$ with ends ${\bf y}$ and ${\bf y'}$} is a holomorphic map
$$\overline{u}: (\dot F,j)\to (\overline{W}_{-\infty,2},\overline{J}_{-\infty,2})$$
which is degree $k$ multisection of $\pi_{B_{-\infty,2}}: \overline{W}_{-\infty,2}\to B_{-\infty,2}$ and which additionally satisfies the following:
\begin{enumerate}
\item $\overline{u}(\bdry \dot F)\subset L^{-\infty,2}_{\overline{\bf a},+}\cup L^{-\infty,2}_{\overline{\bf b},-}$ and $\overline{u}$ maps each component of $\bdry \dot F$ to a different $L^{-\infty,2}_{\overline a_i,+}$ or a different $L^{-\infty,2}_{\overline b_i,-}$;
\item $\overline{u}$ converges to a cylinder over $[-2,2]\times {\bf y}$ as $t\to +\infty$ and to a cylinder over $[-2,2]\times{\bf y'}$ as $t\to -\infty$.
\end{enumerate}

A {\em $(\overline{W}_{-\infty,2},\overline{J}_{-\infty,2})$-curve with ends ${\bf y}$ and ${\bf y'}$} is a degree $2g$ multisection of $(\overline{W}_{-\infty,2},\overline{J}_{-\infty,2})$ satisfying $n^*(\overline{u})=m$.
\end{defn}

Note that the definition is not symmetric in ${\bf y}$ and ${\bf y'}$; the $t=+\infty$ end is always written first.

Next we discuss holomorphic curves with ends at $z_\infty$. For $s=1,\dots,4$, the data $\overrightarrow{\mathcal{D}}_s$ at $z_\infty^{p_s}$  (cf.\ Section~I.\ref{P1-subsection: modified indices at z infty}) corresponding to the $s^{\mbox{\tiny th}}$ end is given by
$$\overrightarrow{\mathcal{D}}_s=\{(i_{s,\ell}',j_{s,\ell}')\to (i_{s,\ell},j_{s,\ell})\}_{\ell=1}^{p_s}.$$
When $s=1$, $\mathcal{D}_{s=1}^{from}=\{(i_{1,\ell}',j_{1,\ell}')\}_{\ell=1}^{p_1}$ and $\mathcal{D}^{to}_{s=1}=\{(i_{1,\ell},j_{1,\ell})\}_{\ell=1}^{p_1}$ specify the initial points on $\overline{\hh}(\overline{a}_{i_{1,\ell}',j_{1,\ell}'})$ and terminal points on $\overline{a}_{i_{1,\ell},j_{1,\ell}}$, respectively; the cases $s=2,3,4$ are analogous.

We then extend the definition of a degree $k$ multisection $\overline{u}$ of $\overline{W}_{-\infty,1}$ to include those with ends ${\bf z}_s=\{z_\infty^{p_s}(\overrightarrow{\mathcal{D}}_s)\}\cup {\bf y}_s$, $s=1,\dots,4$, by attaching data $\mathcal{C}$ to $\overline{u}'$ (cf.\ Definition~I.\ref{P1-defn: data C}) and modifying Definition~\ref{defn: bambi 1} in the same way Definition~I.\ref{P1-overline W curve, version 2} modifies Definition~I.\ref{P1-HF-curve}. Degree $k$ multisections of $\overline{W}_{-\infty,2}$ with ends ${\bf z}=\{z_\infty^p(\overrightarrow{\mathcal{D}})\}\cup {\bf y}$ and ${\bf z'}=\{z_\infty^q(\overrightarrow{\mathcal{D}}')\}\cup{\bf y'}$ are defined similarly.

Let $\mathcal{M}_{\overline{J}_{-\infty,1}}({\bf z}_1,{\bf z}_2,{\bf z}_3,{\bf z}_4)$ be the moduli space of degree $k$ multisections of $(\overline{W}_{-\infty,1},\overline{J}_{-\infty,1})$ with ends ${\bf z}_s$, $s=1,\dots,4$, and let $\mathcal{M}_{\overline{J}_{-\infty,2}}({\bf z},{\bf z'})$ be the moduli space of degree $k$ multisections of $(\overline{W}_{-\infty,2},\overline{J}_{-\infty,2})$ with ends ${\bf z}$ and ${\bf z'}$. Also let
$$\mathcal{M}_{\overline{J}_{-\infty,2}}({\bf z},{\bf z'};\overline{\frak m}(-\infty))\subset \mathcal{M}_{\overline{J}_{-\infty,2}}({\bf z},{\bf z'})$$
be the subset of curves which pass through $\overline{\frak m}(-\infty)$. We then define the extended moduli spaces $\mathcal{M}^{\dagger,ext}_{\overline{J}_{-\infty,1}}({\bf z}_1,{\bf z}_2,{\bf z}_3,{\bf z}_4)$ and $\mathcal{M}^{\dagger,ext}_{\overline{J}_{-\infty,2}}({\bf z},{\bf z'})$ in a manner similar to that of Section~I.\ref{P1-subsubsection: more moduli spaces}.  The precise definitions will be omitted.

\subsubsection{Indices}
\label{subsubsection: indices part 1}

We now discuss the Fredholm index $\op{ind}(\overline{u})$ and the ECH index $I(\overline{u})$ of a $\overline{W}_\tau$-curve $\overline{u}: \dot F \to \overline{W}_\tau$ from ${\bf y}$ to ${\bf y}'$ (i.e., when $\overline{u}'=\varnothing$). The discussion will be brief since all the key ingredients have already been discussed in Sections~I.\ref{P1-subsection: the Fredholm index W plus W minus} and I.\ref{P1-subsection: ECH index W plus minus}.

We remark that, once again, $\op{ind}(\overline{u})$ and $I(\overline{u})$ do not take into account the point constraint $\overline{\frak m}(\tau)$ and that the condition ``passing through $\overline{\frak m}(\tau)$'' is a codimension $2$ condition.

Let $\check{\overline{W}}_\tau=\overline{W}_\tau-\{s>l(\tau)+1\}-\{s< -l(\tau)-1\}$, where $l(\tau)$ is given in Equation~\eqref{l and r}.\footnote{Note that the definition of $\check{\overline{W}}_\tau$ is slightly different from that of Section~\ref{defn of family 1}.} Let $\check{\overline{u}}: \check F\to \check{\overline{W}}_\tau$ be the compactification of $\overline{u}$, where $\check F$ is obtained by performing a real blow-up of $F$ at its boundary punctures. We also define
\begin{align*}
Z_{{\bf y}, {\bf y'}}&= (\{l(\tau)+1\}\times[0,1]\times{\bf y}) \cup (\{-l(\tau)-1\}\times [0,1]\times {\bf y'})\\
&  \qquad \qquad \cup ((L^{\tau,+}_{\widehat{\bf a}} \cup L^{\tau,-}_{\widehat{\bf b}})\cap \check{\overline{W}}_\tau).
\end{align*}

The trivialization $\tau^\star$ of $T\overline{S}$ along $Z_{{\bf y}, {\bf y'}}$ is defined in a manner similar to that of Section~I.\ref{P1-subsubsection: Fredholm index second version}:\footnote{Here we are write $\tau^\star$ instead of $\tau$ due to the notational conflict with $\tau\in\R$.} First we define the trivialization $\tau^\star$ of $T\overline{S}|_{\pi^{-1}_{B_\tau}(l(\tau)+1,1)}$ (resp.\ $T\overline{S}|_{\pi^{-1}_{B_\tau}(-l(\tau)-1,1)}$) along the $\widehat{a}_i$ (resp.\ $\widehat{b}_i$) by choosing a nonsingular tangent vector field along $\widehat{a}_i$ (resp.\ $\widehat{b}_i$).  We then parallel transport $\tau^\star$ along $\bdry \overline{W}_\tau$ and extend $\tau^\star$ arbitrarily to $\{l(\tau)+1\}\times[0,1]\times{\bf y}$ and $\{-l(\tau)-1\}\times [0,1]\times {\bf y'}$.

Let $Q_{\tau^\star}(\check{\overline u})$ be the relative intersection form given by intersecting $\check{\overline u}$ and a pushoff of $\check{\overline{u}}$ in the direction of $J\tau^\star$ along $\bdry \check{\overline{W}}_\tau$. Then
\begin{equation}
I({\overline u})=c_1(\check{\overline
u}^*T\check{\overline{W}}_\tau,(\tau^\star,\bdry_t))
+Q_{\tau^\star}(\check{\overline u})+ \mu_{\tau^\star}({\bf
y})-\mu_{\tau^\star}({\bf y'})-2g,
\end{equation}
\begin{equation}
\op{ind}(\overline{u})=-\chi(\dot F)+ 2c_1(\check{\overline
u}^*T\check{\overline{W}}_\tau,(\tau^\star,\bdry_t))+
\mu_{\tau^\star}({\bf y})-\mu_{\tau^\star}({\bf y'})-2g,
\end{equation}
and the index inequality holds as usual:
\begin{equation}
\op{ind}(\overline{u})+2\delta(\overline{u})\leq I(\overline{u}),
\end{equation}
where $\delta(\overline{u})\geq 0$ and equals zero if and only if $\overline{u}$ is an embedding.

In the general case when $\overline{u}'\not=\varnothing$, modifications can be made as in Section~I.\ref{P1-lerici2} and one can easily verify the following:
\begin{equation} \label{index for sigma infty plus}
I(\sigma_\infty^+)=\op{ind}(\sigma_\infty^+)=-1;
\end{equation}
\begin{equation} \label{index for sigma infty minus}
I(\sigma_\infty^-)=\op{ind}(\sigma_\infty^-)=0;
\end{equation}
\begin{equation} \label{index for sigma infty tau}
I(\sigma_\infty^\tau)=\op{ind}(\sigma_\infty^\tau)=-1.
\end{equation}

The Fredholm and ECH indices for $\overline{W}_{-\infty,i}$-curves can be defined and computed similarly. We now prove the following:

\begin{lemma} \label{lemma: ECH for W minus infinity 2}
If there exists a $\overline{W}_{-\infty,2}$-curve $\overline{u}$ with $I=2$ and ends ${\bf y}$ and ${\bf y'}$, then ${\bf y}=\{ x_{ij(i)}^\#\}_{i=1}^{2g}$ and ${\bf y'}=\{x_{ik(i)}^\#\}_{i=1}^{2g}$, where $j(i)$ is odd and $k(i)$ is even for all $i$.
\end{lemma}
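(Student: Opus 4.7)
The plan is to analyze the projection of $\overline{u}$ to the fiber $\overline{S}$, use $n^*(\overline{u})=m$ to pin down the multiplicity of the projected domain near $z_\infty$, and then apply the ECH index formula to force the parity constraints.

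First I would exploit the product structure. Because $\overline{J}_{-\infty,2}\in\overline{\mathcal{J}}_{-\infty,2}$ is $t$-invariant, sends $\bdry_t$ to $-\bdry_s$, and preserves $T\overline{S}$, the projection $\pi_{\overline{S}}:\overline{W}_{-\infty,2}\to\overline{S}$ is holomorphic. Thus $\pi_{\overline{S}}\circ\overline{u}:\dot F\to\overline{S}$ is holomorphic and its push-forward defines a $2$-chain $\mathcal{D}\subset\overline{S}$ whose topological boundary lies on $\overline{\bf a}\cup\overline{\bf b}$ and whose corners are exactly ${\bf y}\cup{\bf y}'$. Translating $n^*(\overline{u})=m$ into a statement about $\mathcal{D}$ via the admissibility of $\overline{J}_{-\infty,2}$ (which is standard on $D_\varepsilon$), the local multiplicity of $\mathcal{D}$ on each of the $m$-fold symmetric wedges $\{\rho=\rho_0,\phi=\phi_0^++2\pi k/m\}$ equals one. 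Combined with positivity of intersection and the fact that $\mathcal{D}$ is supported in $\overline{S}$, this forces the multiplicity profile of $\mathcal{D}$ in a neighborhood of $z_\infty$.

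Next I would reduce the statement to a corner-by-corner analysis. For each $i$, the domain $\mathcal{D}$ meets a neighborhood of $z_\infty$ along two boundary strands, one on $\overline{a}_i$ and one on $\overline{b}_i$, joining the corners $x^\#_{i,j(i)}$ and $x^\#_{i,k(i)}$. Since $x^\#_{i,1},x^\#_{i,3}\in\overline{S}-S$ sit in the cap near $z_\infty$ while $x^\#_{i,2}\in\op{int}(S)$ sits far from $z_\infty$, each parity choice produces a qualitatively different projected domain: an odd corner contributes a small ``near-$z_\infty$'' bigon/triangle between $\overline{a}_i$ and $\overline{b}_i$, while an even corner forces the domain to extend all the way across the cap. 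The condition $n_{z_\infty}(\mathcal{D})=m$ fixed in Step 1 is compatible with any parity choice, so the parity is not forced by the multiplicity alone; it must come from the index.

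The heart of the proof is then the ECH-index computation from Section~\ref{subsubsection: indices part 1}. Using the formula
$$I(\overline{u})=c_1(\check{\overline u}^*T\check{\overline{W}}_{-\infty,2},(\tau^\star,\bdry_t))+Q_{\tau^\star}(\check{\overline u})+\mu_{\tau^\star}({\bf y})-\mu_{\tau^\star}({\bf y}')-2g,$$
with trivialization $\tau^\star$ induced from the pushoff defining $n^*$, I would compute each term in terms of the corner data. The $c_1$ and $Q_{\tau^\star}$ contributions are controlled by $n^*(\overline{u})=m$ and the geometry of $\mathcal{D}$ near $z_\infty$. The Maslov contributions $\mu_{\tau^\star}(x^\#_{i,j})$ at the three generators are computed directly from the local crossing pattern of $\overline{a}_i$ and $\overline{b}_i$: odd-indexed corners (near $z_\infty$, obtained by the small angular rotation by $-2\pi/(mK(m))$) carry one value of the Conley--Zehnder-like contribution, and the even corner $x^\#_{i,2}\in\op{int}(S)$ carries a different value, with the positive/negative end sign convention. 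Summing the $2g$ local contributions and imposing $I(\overline{u})=2$ yields a linear relation that is satisfied only when each $j(i)$ is odd and each $k(i)$ is even.

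The main obstacle is the explicit Maslov bookkeeping in the last step: one must verify the exact numerical values of $\mu_{\tau^\star}(x^\#_{i,j})$ with the trivialization coming from $\tau^\star$, check that the $c_1+Q$ portion is precisely accounted for by the forced multiplicity profile near $z_\infty$, and ensure that no reducible configuration (involving fiber components, branch covers of $\sigma_\infty^{-\infty,2}$, or bubbles at $\delta_0$) slips through and produces extra freedom — the latter being handled by the definition of a $\overline{W}_{-\infty,2}$-curve ($n^*=m$, degree $2g$) together with positivity of intersection with $(\sigma_\infty^{-\infty,2})^{\dagger,\pm}$.
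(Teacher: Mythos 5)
Your overall strategy --- that the parity constraint must come from an ECH index computation in terms of the endpoint data, not from $n^*(\overline{u})=m$ alone --- is the same as the paper's, and your observation that $n^*=m$ pins down the homology class (one fiber class plus $2g$ strip classes) is the correct starting point. But the proposal has a genuine gap: the decisive step, namely the evaluation of $\mu_{\tau^\star}({\bf y})-\mu_{\tau^\star}({\bf y}')$ together with the $c_1+Q$ terms and the verification that $I=2$ forces all $j(i)$ odd and all $k(i)$ even, is exactly the part you defer as ``the main obstacle.'' As written, you have reduced the lemma to an unperformed bookkeeping exercise, and the corner-by-corner description of the projected domain $\mathcal{D}$ in your second step does not by itself produce the needed linear relation.

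The paper closes this gap without any corner-by-corner Maslov bookkeeping, by using that $I$ depends only on the relative homology class and the endpoints. One evaluates $I$ on the reference representative in the forced class consisting of a fiber $\{pt\}\times\overline{S}$ together with $2g$ trivial strips, with ${\bf y}'={\bf y}$: here
$$\op{ind}(\{pt\}\times\overline{S})=-\chi(\overline{S})+2c_1(T\overline{S})=(2g-2)+2(2-2g)=2-2g,$$
the trivial strips contribute $0$, and the $2g$ intersection points of the fiber with the strips contribute $2\cdot 2g$, giving $I=2g+2$. Since the index must drop by exactly $2g$ to reach $I=2$, and the maximal available drop from the relative gradings of the generators $x^\#_{ij}$ is $2g$, attained precisely when every $j(i)$ is odd and every $k(i)$ is even (i.e.\ ${\bf y}$ and ${\bf y}'$ are summands of the top generators $\Theta_{\overline{\bf a},\overline{\bf b}}$ and $\Theta_{\overline{\bf b},\overline{\bf a}}$), the conclusion follows. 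If you want to salvage your version, you should either carry out the term-by-term Maslov computation you postponed, or replace it with this reference-representative argument.
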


In other words, ${\bf y}$ is a summand of the top generator $\Theta_{\overline{\bf a},\overline{\bf b}}\in \widehat{HF}(\overline{\bf a},\overline{\bf b})$ and ${\bf y'}$ is a summand of the top generator $\Theta_{\overline{\bf b},\overline{\bf a}}\in \widehat{HF}(\overline{\bf b},\overline{\bf a})$. We remind the reader that our convention is that ${\bf y}$ and ${\bf y'}$ do not contain $z_\infty$.

\begin{proof}
The proof is similar to the index calculation of Lemma~\ref{lemma: regularity and dimension}. Let $\overline{u}$ be a $\overline{W}_{-\infty,2}$-curve; note that $n^*(\overline{u})=m$ by definition. First consider the situation where $\overline{u}$ is in the homology class consisting of a copy $\{pt\}\times \overline{S}$ of the fiber and $2g$ trivial strips.  Then $$\op{ind}(\{pt\}\times\overline{S})=-\chi(\overline{S})+2c_1(T\overline{S})=-(2-2g)+2(2-2g)=2-2g.$$
The strips contribute $0$ to $\op{ind}$ and there are $2g$ intersection points.  Hence
$$I(\overline{u})=(2-2g)+ 0 +2(2g)=2g+2$$
when ${\bf y'}={\bf y}$.  The only way to lower $I(\overline{u})$ to $2$ is to take ${\bf y}=\{ x_{ij(i)}^\#\}_{i=1}^{2g}$ and ${\bf y'}=\{x_{ik(i)}^\#\}_{i=1}^{2g}$ so that all the $j(i)$ are odd and all the $k(i)$ are even.
\end{proof}

\subsubsection{Regularity} \label{subsubsection: regularity tomodachi}

We now discuss the regularity of the family $\{\overline{J}_\tau\}_{\tau\in\R}$.

\begin{defn} $\mbox{}$
\begin{enumerate}
\item $\overline{J}_{-\infty,2}\in \mathcal{J}_{\overline{W}_{-\infty,2}}$ is {\em regular} if all the moduli spaces  $\mathcal{M}^{\dagger,ext}_{\overline{J}_{-\infty,2}}({\bf z},{\bf z'})$ are transversely cut out.
\item $\overline{J}_{-\infty,1}\in \mathcal{J}_{\overline{W}_{-\infty,1}}$ is {\em regular} if all the moduli spaces $\mathcal{M}^{\dagger,ext}_{\overline{J}_{-\infty,1}}({\bf z}_1,\dots,{\bf z}_4)$ are transversely cut out and the restrictions $\overline{J}_{-\infty,2}$ and $\overline{J}$ of $\overline{J}_{-\infty,1}$ to the ends are regular.
\item $\overline{J}_{-\infty}=\overline{J}_{-\infty,1}\cup \overline{J}_{-\infty,2}\in \mathcal{J}_{\overline{W}_{-\infty}}$ is {\em regular} if $\overline{J}_{-\infty,i}$ are regular for $i=1,2$.
\end{enumerate}
\end{defn}

\begin{defn} \label{defn: regularity for W tau part 1}
The family $\{\overline{J}_\tau\}_{\tau\in\R}\in \overline{\mathcal{I}}$ is {\em regular} if:
\begin{enumerate}
\item all the moduli spaces $\mathcal{M}_{\{\overline{J}_\tau\}}^{\dagger,ext}({\bf z},{\bf z'})$ are transversely cut out;
\item the restriction $\overline{J}$ of $\overline{J}_\tau$ to the positive and negative ends is regular;
\item $\overline{J}_+$ and $\overline{J}_-$ in the limit $\tau\to +\infty$ are regular; and
\item $\overline{J}_{-\infty}$ in the limit $\tau\to -\infty$ is regular.
\end{enumerate}
\end{defn}

Let $\overline{\mathcal{I}}^{reg}$ be the space of regular $\{\overline{J}_\tau\}\in \overline{\mathcal{I}}$. As usual, we have:

\begin{lemma}
The generic $\{\overline{J}_\tau\}\in \overline{\mathcal{I}}$ is regular. 
\end{lemma}

Next we discuss the regularity of moduli spaces passing through $\overline{\frak m}$. Since the point constraints $\overline{\frak m}(\tau)$ are nongeneric, we need to introduce a perturbation of the family $\{\overline{J}_\tau\}$:

\begin{defn} \label{defn: diamond}
Let $\varepsilon>0$ and let $\{U_\tau\}_{\tau\in\R}$, $U_\tau\subset\overline{W}_\tau$, be a family of open sets such that $U_\tau\not\ni \overline{\frak m}(\tau)$. Then a family $\{\overline{J}_\tau^\Diamond\}_{\tau\in \R}$ of almost complex structures on $\{\overline{W}_\tau\}$ is {\em $(\varepsilon,\{U_\tau\})$-close} to a regular $\{\overline{J}_\tau\}$ if:
\begin{itemize}
\item $\overline{J}_\tau^\Diamond=\overline{J}_\tau$ on $\overline{W}_\tau-U_\tau$;
\item $\overline{J}_\tau^\Diamond$ is $\varepsilon$-close to $\overline{J}_\tau$ on $U_\tau$; and
\item $\nabla \overline{J}_\tau^\Diamond$ is $\varepsilon$-close to $\nabla\overline{J}_\tau$ on $U_\tau$.
\end{itemize}
\end{defn}

Here the $\varepsilon$-closeness is measured with respect to a family $\{g_\tau\}_{\tau\in\R}$ of Riemannian metrics which is defined as follows: Let $h$ be an $s$-invariant metric on $\R\times \overline{N}_2$ such that $h$, viewed as a metric on $\R\times[0,2]\times\overline{S}$, is also $t$-invariant on $\R\times[1,2]\times\overline{S}$. There is an extension of $h$ to $h_\tau$ on $\R\times[0,r(\tau)]\times\overline{S}$ which is $s$- and $t$-invariant on $\R\times[2,r(\tau)]\times\overline{S}$. We then view $h_\tau$ as a metric on $\R\times\overline{N}_{r(\tau)}$ and define $g_\tau$ as the restriction of $h_\tau$ to $\overline{W}_\tau\subset \R\times\overline{N}_{r(\tau)}$.

 Let ${\frak p}(\tau)\subset int(B_\tau)$, $\tau\in[-\infty,\infty]$, be a family of points, where:
\begin{itemize}
\item the cardinality $\#{\frak p}(\tau)$ is finite and independent of $\tau\in[-\infty,\infty]$;
\item ${\frak p}(\tau)$ is smooth for $\tau\in(-\infty,\infty)$;
\item $\lim_{\tau\to +\infty}{\frak p}(\tau)$ exists and equals ${\frak p}(+\infty)$;
\item $\lim_{\tau\to -\infty}{\frak p}(\tau)$ exists and equals ${\frak p}(-\infty)$.
\end{itemize}
In order for $\lim_{\tau\to +\infty}{\frak p}(\tau)$ to be defined we require the existence of $C>0$ such that, for all $\tau\gg 0$, ${\frak p}(\tau)$ is contained in a $C$-neighborhood of $\bdry B_\tau$. Then ${\frak p}(\tau)$ can be viewed as a subset of $B_+\cup B_-$ and we are asking $\lim_{\tau\to +\infty}{\frak p}(\tau)={\frak p}(+\infty)$ in $B_+\cup B_-$. $\lim_{\tau\to -\infty}{\frak p}(\tau)$ is defined analogously.
\begin{itemize}
\item ${\frak p}(+\infty)$ is a nontrivial intersection of points of $int(B_+)$ and $int(B_-)$; similarly, ${\frak p}(-\infty)$ is a nontrivial intersection of points of $int(B_{-\infty,1})$ and $int(B_{-\infty,2})$;
\item for each $\tau\in [-\infty,\infty]$, $\overline{\frak m}^b(\tau)\not\in {\frak p}(\tau)$.
\end{itemize}

We will use the following specific open sets $U_\tau$:  For $\tau\in \R$, let $U_\tau$ be an open $\delta$-neighborhood of $K_\tau=\pi^{-1}_{B_\tau}({\frak p}(\tau))-\{\rho<2\delta\}$, where $\delta>0$ is arbitrarily small. Then let $U_{\pm\infty}$ and $K_{\pm\infty}$ be the limits of $U_\tau$ and $K_\tau$ as $\tau\to \pm\infty$. When we want to emphasize $(\varepsilon,{U_\tau})$ or $(\varepsilon,\delta,{\frak p}(\tau))$, we write $\overline{J}_\tau^\Diamond(\varepsilon,U_\tau)$ or $\overline{J}_\tau^\Diamond(\varepsilon,\delta,{\frak p}(\tau))$ for $\overline{J}_\tau^\Diamond$, $U_{\varepsilon,\delta,{\frak p}(\tau)}$ for $U_\tau$, and $K_{{\frak p}(\tau),\delta}$ for $K_\tau$.

We define a degree $k$ {\em almost multisection $\overline{u}$ of $(\overline{W}_\tau,\overline{J}_\tau^\Diamond)$} in the same way as a degree $k$ multisection of $(\overline{W}_\tau,\overline{J}_\tau)$, except that $\overline{u}$ is a multisection of
$$\pi_{B\tau}: \overline{W}_\tau - \pi_{B_-}^{-1}(V_\tau)\to B_\tau - V_\tau,$$
where $V_\tau = \pi_{B_-}(U_\tau)$. The moduli spaces of almost multisections are defined in the same way as the moduli spaces of multisections, with $\overline{J}_\tau^\Diamond$ replacing $\overline{J}_\tau$.  If $\{K_\tau\not\ni \overline{\frak m}(\tau)\}$ is a family of compact sets of $\overline{W}_\tau$, then the modifier $\{K_\tau\}$ means that $\overline{u}$ in $\overline{W}_\tau$ passes through $K_\tau$.

Almost complex structures $\overline{J}_{-\infty,i}^\Diamond$, almost multisections on $(\overline{W}_{-\infty,i},\overline{J}_{-\infty,i}^\Diamond)$, and moduli spaces of almost multisections are defined similarly.

\begin{defn}
The family $\{\overline{J}_\tau^\Diamond\}$ is {\em $\{K_\tau\}$-regular with respect to $\overline{\frak m}$} if all the moduli spaces $\mathcal{M}^{\dagger,ext,\{K_\tau\}}_{\{\overline{J}_\tau^\Diamond\}} ({\bf z},{\bf z'};\overline{\frak m})$ are transversely cut out.
\end{defn}

\begin{lemma} \label{lemma: regularity of W tau family}
A generic family $\{\overline{J}_\tau^\Diamond\}$ is $\{K_{\tau}\}$-regular with respect to $\overline{\frak m}$.
\end{lemma}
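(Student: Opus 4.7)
The plan is to apply the Sard-Smale theorem to the projection from a universal moduli space, mirroring the analogous regularity arguments in Part~I. First I would set up
$$\mathcal{M}^{univ} = \bigl\{(\overline{u},\{\overline{J}_\tau^\Diamond\}) : \{\overline{J}_\tau^\Diamond\}\in\mathcal{I}^\Diamond,\ \overline{u}\in\mathcal{M}^{\dagger,ext,\{K_\tau\}}_{\{\overline{J}_\tau^\Diamond\}}(\mathbf{z},\mathbf{z}';\overline{\frak m})\bigr\},$$
where $\mathcal{I}^\Diamond$ denotes a Banach space completion of the admissible families that are $(\varepsilon,\{U_\tau\})$-close to a fixed regular $\{\overline{J}_\tau\}\in\overline{\mathcal{I}}^{reg}$. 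Once $\mathcal{M}^{univ}$ is shown to be a Banach manifold, the projection $\pi:\mathcal{M}^{univ}\to\mathcal{I}^\Diamond$ is Fredholm, Sard-Smale supplies a Baire-generic set of regular values of $\pi$, and a countable intersection over $(\mathbf{z},\mathbf{z}')$, the Fredholm and ECH indices, and the discrete extended-moduli-space data delivers the lemma.

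The Banach manifold structure reduces to surjectivity of the linearization
$$L(\xi,Y) = \bigl(D_{\overline{J}_\tau^\Diamond}\xi + Y\circ du\circ j,\ ev_{\overline{\frak m}}(\xi),\ ev_{K_\tau}(\xi)\bigr).$$
Regularity of the unperturbed $\{\overline{J}_\tau\}$ (and its stability under small perturbations) gives surjectivity of $D_{\overline{J}_\tau^\Diamond}$ on the first factor with $Y=0$. For the $K_\tau$-evaluation the standard localized-$Y$ argument applies verbatim since $K_\tau\subset U_\tau$: a $Y$-variation bumped near a somewhere-injective intersection of $\overline{u}$ with a fiber of $K_\tau$ produces, via the inhomogeneous CR equation, a $\xi$ whose value at the constrained point is arbitrary while its contribution to $ev_{\overline{\frak m}}$ can be made as small as desired.

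The hard part will be the evaluation at $\overline{\frak m}(\tau)$, since this point lies on the holomorphic section $\sigma_\infty^\tau$ \emph{outside} $U_\tau$ and so is inaccessible to direct $Y$-perturbations. The key observation is that the $\dagger$ modifier forbids branch-cover components along $\sigma_\infty^\tau$, so every $\overline{u}\in\mathcal{M}^{\dagger,ext}$ meets $\sigma_\infty^\tau$ transversely at finitely many points, and passing through $\overline{\frak m}(\tau)$ is a codimension-two constraint on the location of one such intersection. I would then use $Y$-variations supported at a somewhere-injective point of $\overline{u}$ inside $U_\tau$ and solve $D\xi = -Y\circ du\circ j$; the resulting $\xi$ is globally defined on $\dot F$, and unique continuation for the linearized CR operator (with totally real boundary data) prevents $\xi$ from vanishing identically in a neighborhood of the preimage of $\overline{\frak m}(\tau)$. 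Varying the support of $Y$ inside $U_\tau$ then produces arbitrary first-order motions of $\overline{u}\cap\sigma_\infty^\tau$ at the marked point, yielding surjectivity of $ev_{\overline{\frak m}}$. Multiple-cover configurations must be treated separately, but the $\dagger$ and $ext$ decorations restrict to somewhere-injective ones, so no new subtlety arises beyond what was already handled in Part~I.
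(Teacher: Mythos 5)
Your proposal is correct and follows essentially the same route as the paper, whose entire proof is a one-line reference to the Sard--Smale/universal-moduli-space argument of \cite[Theorem~3.1.7]{MS} together with the modifications of Proposition~I.\ref{P1-prop: J+ and J- regular} for the marked-point constraint; your sketch simply fills in those details, correctly identifying that the only nonstandard point is surjectivity of the evaluation at $\overline{\frak m}(\tau)$, which sits outside the perturbation region $U_\tau$. (The cleanest way to finish that step is the dual formulation: a cokernel element $\eta$ is killed on an open set by the localized $Y$-variations at a somewhere-injective point in $K_\tau$ and hence vanishes identically by unique continuation, rather than arguing directly about non-vanishing of $\xi$.)
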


\begin{proof}
The proof is similar to that of \cite[Theorem~3.1.7]{MS}, with modifications as in Proposition~I.\ref{P1-prop: J+ and J- regular}.
\end{proof}

The following can also be proved using a standard regularity argument:

\begin{lemma} \label{lemma: codimension one}
If $\{\overline{J}_\tau\}$ is a generic family, then for $\varepsilon,\delta>0$ sufficiently small, there exist a generic family $\{\overline{J}_\tau^\Diamond(\varepsilon,\delta,{\frak p}(\tau))\}$ which is $\{K_{{\frak p}(\tau),\delta}\}$-regular with respect to $\overline{\frak m}$ and disjoint finite subsets $\mathcal{T}_1,\mathcal{T}_2\subset \R$ with the following properties:
\begin{enumerate}
\item $\tau\in \mathcal{T}_1$ if and only if there exists $\overline v_\tau\in \mathcal{M}_{\overline{J}_\tau}^{\dagger,s,irr,\op{ind}=-1}({\bf z},{\bf z'})$ for some ${\bf z}$ and ${\bf z'}$.
\item $\tau\in \mathcal{T}_2$ if and only if there exists $\overline v_\tau\in \mathcal{M}^{\dagger,s,irr,\{K_\tau\},\op{ind}=1}_{\overline{J}_\tau^\Diamond(\varepsilon,\delta,{\frak p}(\tau))} ({\bf z},{\bf z'};\overline{\frak m})$ for some ${\bf z}$ and ${\bf z'}$.
\end{enumerate}
Moreover, for each $\tau\in\mathcal{T}_i$ there is a unique such irreducible curve $\overline v_\tau$.
\end{lemma}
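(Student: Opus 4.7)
The plan is to argue that, as $\tau$ varies, the existence of the special curves in (1) and (2) is a codimension-one phenomenon inside the family, and then to use standard Sard-Smale/SFT compactness to show that this is realized at only finitely many, and generically distinct, parameter values. Concretely, I would consider the universal parametrized moduli spaces
\[
\mathcal{M}^{\dagger,s,irr,\op{ind}=-1}_{\{\overline{J}_\tau\}}({\bf z},{\bf z}')=\coprod_{\tau\in\R}\mathcal{M}^{\dagger,s,irr,\op{ind}=-1}_{\overline{J}_\tau}({\bf z},{\bf z}')
\]
and, for the perturbed family,
\[
\mathcal{M}^{\dagger,s,irr,\{K_\tau\},\op{ind}=1}_{\{\overline{J}_\tau^\Diamond\}}({\bf z},{\bf z}';\overline{\frak m})=\coprod_{\tau\in\R}\mathcal{M}^{\dagger,s,irr,\{K_\tau\},\op{ind}=1}_{\overline{J}_\tau^\Diamond(\varepsilon,\delta,{\frak p}(\tau))}({\bf z},{\bf z}';\overline{\frak m}(\tau)).
\]
For a generic $\{\overline{J}_\tau\}\in\overline{\mathcal{I}}^{reg}$ the first space is, via the usual Sard-Smale argument (cf.\ the proof of Definition~\ref{defn: regularity for W tau part 1} and Lemma~\ref{lemma: regularity of W tau family}), a zero-dimensional $C^\infty$ manifold: the fiberwise dimension is $\op{ind}=-1$, and one gains one dimension by varying $\tau$. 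Applying Lemma~\ref{lemma: regularity of W tau family} to a generic perturbation $\{\overline{J}_\tau^\Diamond(\varepsilon,\delta,{\frak p}(\tau))\}$, the second space is also $0$-dimensional, since $\op{ind}=1$ gains one dimension from $\tau$ and loses two from passing through the codimension-two constraint $\overline{\frak m}(\tau)$.

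Next I would verify that each of these zero-dimensional universal moduli spaces is in fact finite. The main step is to rule out, for a generic family, any sequence $\overline{v}_{\tau_i}$ escaping to a broken/degenerate limit: each irreducible piece and side-level of such a limit is also transversely cut out by regularity, and a simple index bookkeeping using the Fredholm formulas of Section~\ref{subsubsection: indices part 1} together with the index inequality $\op{ind}+2\delta\le I$ shows that any nontrivial SFT-type breaking (side-splitting into $\overline{W}_+ \cup \overline{W}_-$ as $\tau\to +\infty$, breaking into $\overline{W}_{-\infty,1}\cup\overline{W}_{-\infty,2}$ as $\tau\to -\infty$, bubbling of fiber components or of covers of $\sigma_\infty^\tau$, or splitting off $\overline{W'}$-cylinders at the ends) produces a configuration of strictly negative virtual dimension once one subtracts the codimension of the broken stratum, contradicting the genericity already ensured by the regularity of $\{\overline{J}_\tau\}$ (case (1)) and the regularity of $\{\overline{J}_\tau^\Diamond\}$ with respect to $\overline{\frak m}$ (case (2)). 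Hence $\mathcal{T}_1$ and $\mathcal{T}_2$, defined as the projections to $\R$ of the two zero-dimensional universal moduli spaces, are finite.

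The uniqueness of $\overline{v}_\tau$ at each $\tau\in\mathcal{T}_i$, as well as the disjointness $\mathcal{T}_1\cap \mathcal{T}_2=\varnothing$, are also generic conditions: the projection to $\R$ from a zero-dimensional universal moduli space is generically an immersion (in fact a local diffeomorphism), so by a further generic perturbation of $\{\overline{J}_\tau\}$ within $\overline{\mathcal{I}}^{reg}$ and of $\{\overline{J}_\tau^\Diamond\}$ (supported away from $\overline{\frak m}(\tau)$), distinct points of the universal moduli space project to distinct $\tau$-values, and the finitely many values in $\mathcal{T}_1$ and in $\mathcal{T}_2$ can be arranged to be pairwise distinct.

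The main obstacle, and where most of the real work goes, will be the compactness/index bookkeeping needed to rule out degenerations into the limit strata $\tau\to\pm\infty$ while preserving the constraints ($n^*=m$, passage through $\overline{\frak m}$, ends at ${\bf z},{\bf z}'$): one has to go case-by-case through the possible splittings into $(\overline{W}_+,\overline{W}_-)$-configurations and $(\overline{W}_{-\infty,1},\overline{W}_{-\infty,2})$-configurations, including the possibility that a branched cover of $\sigma_\infty^*$ splits off or that a fiber component appears, and check that the corresponding stratum has codimension at least one in the universal moduli space when $\{\overline{J}_\tau\}$, resp.\ $\{\overline{J}_\tau^\Diamond\}$, is generic. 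The key inputs are \eqref{index for sigma infty plus}--\eqref{index for sigma infty tau} and the compatibility conditions in Definition~\ref{adm family}, which force the side-levels to be regular with the indices computed in Sections I and in Section~\ref{subsubsection: indices part 1}, so that the total virtual dimension of any broken stratum is $\le -1$.
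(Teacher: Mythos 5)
Your proposal is correct and is essentially the argument the paper has in mind: the paper offers no written proof of this lemma beyond the remark that it "can also be proved using a standard regularity argument," and your Sard--Smale dimension count (fiberwise index $-1$ plus one for $\tau$, resp.\ index $1$ plus one for $\tau$ minus two for the point constraint $\overline{\frak m}$), followed by compactness of the resulting zero-dimensional universal moduli spaces and a further generic perturbation to ensure uniqueness and disjointness of $\mathcal{T}_1$ and $\mathcal{T}_2$, is exactly that standard argument. No discrepancy with the paper's approach.
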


By shrinking $\overline{\mathcal{I}}$, we assume that all $\{\overline{J}_\tau\}\in \overline{\mathcal{I}}$ satisfy Lemma~\ref{lemma: codimension one}.

\subsection{Proof of half of Theorem~\ref{thm: isomorphism}}
\label{subsection: proof of thm chain homotopy one}

In this subsection we prove that $\Psi\circ \Phi$ is an isomorphism on the level of homology.

In the next several paragraphs we briefly recall the chain complexes
$$(\widehat{CF'}(S,\mathbf{a},\hh(\mathbf{a})),\bdry'), \quad (\widetilde{CF}(S,{\bf a},\hh({\bf a})),\widetilde\bdry)$$
with isomorphic homology groups from Sections~I.\ref{P1-subsubsection: variant CF of S} and I.\ref{P1-subsection: variant widetide psi}, the quotient map
$$q:\widehat{CF'}(S,\mathbf{a},\hh(\mathbf{a}))\to \widehat{CF}(S,\mathbf{a},\hh(\mathbf{a})),$$
and the maps $\widetilde\Phi$ and $\Psi,\Psi'$ from Sections~I.\ref{P1-subsection: variant widetide psi} and I.\ref{P1-subsection: defn of psi}.

The chain complex $(\widehat{CF'}(S,\mathbf{a},\hh(\mathbf{a})),\bdry')$ is generated by the set $\mathcal{S}_{{\bf a}, \hh({\bf a})}$ of $2g$-tuples of intersection points of ${\bf a}$ and $\hh(\bf a)$, where each ${\bf y}\in \mathcal{S}_{{\bf a}, \hh({\bf a})}$ intersects $a_i$ and $\hh(a_i)$ exactly once and the differential $\bdry'$ counts $I=1$, degree $2g$ multisections of $W=\R\times[0,1]\times S$. The chain complex $\widehat{CF}(S,\mathbf{a},\hh(\mathbf{a}))$ is the quotient of $(\widehat{CF'}(S,\mathbf{a},\hh(\mathbf{a})),\bdry')$ under the equivalence relation $\sim$ which identifies ${\bf y}\sim {\bf y'}$ if ${\bf y'}$ can be obtained from ${\bf y}$ by successively replacing $x_i$ by $x_i'$ or $x_i'$ by $x_i$ for any $i=1,\dots,2g$; the map $q$ is the corresponding quotient map.  Here $x_i$ and $x_i'$ are intersection points of $a_i$ and $\hh(a_i)$ on $\bdry S$ given in Section~I.\ref{P1-subsubsection: Heegaard diagram compatible with S h}.

The chain complex $(\widetilde{CF}(S,{\bf a},\hh({\bf a})),\widetilde\bdry)$ is a variant of $\widehat{CF}(S,\mathbf{a},\hh(\mathbf{a}))$ (with isomorphic homology groups) generated by $2g$-tuples of intersection points $\{z_{\infty,i}\}_{i\in \mathcal{I}}\cup {\bf y'}$ (with $\mathcal{I}\subset\{1,\dots,2g\}$) of $\overline{\bf a}$ and $\overline{\hh}({\bf a})$, where:
\begin{itemize}
\item $z_\infty$ can be used more than once, but is always viewed as an intersection point of $\overline{a}_i$ and $\hh({\overline{a}}_i)$ and hence is written as $z_{\infty,i}$;
\item each $\{z_{\infty,i}\}_{i\in \mathcal{I}}\cup {\bf y'}$ intersects $\overline{a}_i$ and $\hh({\overline{a}}_i)$ exactly once.
\end{itemize}
The differential counts $I=1$, degree $2g$ multisections $\overline{u}=\overline{u}'\cup\overline{u}''$ of $\overline{W}$ with $n^*(\overline{u})\leq 1$ such that $\overline{u}'$ has empty branch locus; see Definition~I.\ref{P1-defn: variant chain cx}.

The map
$$\widetilde{\Phi}: \widetilde{CF}(S,{\bf a},\hh({\bf a}))\to PFC_{2g}(N)$$
is a variant of the map $\Phi:\widehat{CF}(S,{\bf a}, \hh({\bf a}))\to PFC_{2g}(N)$ and
$$\langle \widetilde{\Phi}(\{z_{\infty,i}\}_{i\in \mathcal{I}}\cup {\bf y'}),\bs\gamma\rangle,$$
counts $I=0$, degree $2g$ multisections of $\overline{W}_+$ with $n^*\leq |\mathcal{I}|$ from $\{z_{\infty,i}\}_{i\in \mathcal{I}}\cup {\bf y'}$ to $\bs \gamma$.

The map
$$\Psi':PFC_{2g}(N)\to \widehat{CF'}(S,\mathbf{a},\hh(\mathbf{a}))$$
counts $I=2$, degree $2g$ almost multisections of $(\overline{W}_-,\overline{J}_-^\Diamond)$ (cf.\ Definition~I.\ref{P1-defn: almost multisection}) with $n^*=m$ which pass through $\overline{\frak m}(+\infty)$. The map
$$\Psi: PFC_{2g}(N)\to \widehat{CF}(S,\mathbf{a},\hh(\mathbf{a}))$$
is then given as the composition $q\circ \Psi'$.

The following is proved in this subsection:

\begin{thm} \label{thm: chain homotopy part i}
Suppose $m\gg 0$. Then there exist maps
$$H', \Theta'_0: \widetilde{CF}(S,\mathbf{a},\hh(\mathbf{a}))\to \widehat{CF'} (S,\mathbf{b},\hh(\mathbf{b})),$$
$$\widetilde{V}:\widetilde{CF}(S,\mathbf{a},\hh(\mathbf{a}))\to \widetilde{CF} (S,\mathbf{b},\hh(\mathbf{b})),$$
which satisfy the following:
\begin{equation} \label{eqn chain homotopy part i}
\Psi'\circ \widetilde\Phi -\Theta'_0 = (\bdry' H'+H'\widetilde\bdry) + \widetilde\bdry_1 \circ \widetilde{V},
\end{equation}
where $\widetilde\bdry_1$, defined in Equation~(I.\ref{P1-eqn: bdry sub 1}),  satisfies:
$$\widetilde\bdry_1(\{z_{\infty,i}\}\cup {\bf y'})=\{x_i\}\cup {\bf y'} +\{x_i'\}\cup {\bf y'}$$
for all $i=1,\dots,2g$ and is zero for any other generator. (Here $z_{\infty,i}$, $x_i$, and $x_i'$ are for the basis ${\bf b}$.)
Postcomposing with $q$ (for the basis ${\bf b}$), we obtain the chain homotopy
\begin{equation} \label{eqn: chain homotopy part I variant}
\Psi\circ \widetilde\Phi -\Theta_0 = \bdry H+H\widetilde\bdry,
\end{equation}
where $\Theta_0=q\circ \Theta'_0$ induces an isomorphism on homology.
\end{thm}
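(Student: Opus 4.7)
The plan is to obtain the chain homotopy by a standard parametrized moduli space argument, using the family $\overline{W}_\tau$ constructed in Section~\ref{defn of family 1} together with the perturbed almost complex structures $\{\overline{J}_\tau^\Diamond\}$ regularized via the point constraint $\overline{\frak m}(\tau)$. Define $H'$ on a generator ${\bf y}$ by counting the rigid ($\op{ind}=0$ in the parametrized family, once the codimension-$2$ point constraint is imposed) elements of $\mathcal{M}^{\dagger,s,irr,\{K_\tau\},\op{ind}=0}_{\{\overline{J}_\tau^\Diamond\}}({\bf y},{\bf y}';\overline{\frak m})$ for each ${\bf y}'\in\mathcal{S}_{{\bf b},h({\bf b})}$. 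Define $\Theta'_0$ by counting curves that appear in the $\tau\to -\infty$ limit of the parametrized moduli space: such a limit splits as $\overline{u}_1\cup\overline{u}_2$ with $\overline{u}_i\subset\overline{W}_{-\infty,i}$, and the $\overline{\frak m}(-\infty)$ constraint forces $\overline{u}_2$ to pass through $\overline{\frak m}(-\infty)$. Define $\widetilde V$ by the analogous count of $\tau\to-\infty$ limits in which the top end of $\overline{u}_1$ is asymptotic to a generator containing a $z_\infty^{p}$-component with $p\geq 1$.

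The chain homotopy \eqref{eqn chain homotopy part i} then arises from the usual boundary analysis of the compactification of the $1$-dimensional parametrized moduli space $\overline{\mathcal{M}}^{\dagger,s,irr,\{K_\tau\},\op{ind}=1}_{\{\overline{J}_\tau^\Diamond\}}({\bf y},{\bf y}';\overline{\frak m})$. The codimension-$1$ boundary strata are: (i) SFT-type breakings at a negative puncture (contributing $\bdry' H'$); (ii) breakings at a positive puncture (contributing $H'\widetilde\bdry$); (iii) the $\tau\to+\infty$ limit, where the base $B_\tau$ pinches into the concatenation of $B_+$ and $B_-$, producing by the usual gluing picture the composition $\Psi'\circ\widetilde\Phi$ (Gromov--Witten contributions from branched covers of $\sigma_\infty^\tau$ are evaluated using Theorems~\ref{thm: count of G sub 1}, \ref{thm: calc of G sub 2}, \ref{thm: calc of G sub 3}, which ensure that the relevant ``section at infinity'' counts equal~$1$); and (iv) the $\tau\to-\infty$ limit. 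The analysis of (iv), together with Lemma~\ref{lemma: ECH for W minus infinity 2} pinning down the intermediate generators on the $\overline{W}_{-\infty,2}$-level, produces $\Theta'_0$ together with the correction term $\widetilde\bdry_1\circ\widetilde V$ coming from configurations where the intermediate chain of orbits meets $z_\infty$ nontrivially.

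To pass to \eqref{eqn: chain homotopy part I variant}, observe that $\widetilde\bdry_1$ by construction lands in generators which $q$ annihilates, so $q\circ\widetilde\bdry_1=0$ and $q\circ H'$ descends to a chain homotopy $H$ between $\Psi\circ\widetilde\Phi$ and $\Theta_0:=q\circ\Theta'_0$. For the quasi-isomorphism statement, note that by Lemma~\ref{lemma: ECH for W minus infinity 2} the $\overline{W}_{-\infty,2}$-level of a curve contributing to $\Theta_0$ connects ${\bf y}$ to a summand of the top generator $\Theta_{\overline{\bf a},\overline{\bf b}}$, and the $\overline{W}_{-\infty,1}$-level then implements the standard Heegaard Floer ``small triangle'' count associated to the isotopy ${\bf a}\leadsto {\bf b}$, $h({\bf a})\leadsto h({\bf b})$. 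Since $\overline{\bf b}$ is a $C^0$-small transverse pushoff of $\overline{\bf a}$ (the parameter $\varepsilon_0(m)\to 0$), the Ozsváth--Szabó invariance argument identifies this triangle map with the standard continuation quasi-isomorphism $\widehat{CF}(S,{\bf a},h({\bf a}))\to\widehat{CF}(S,{\bf b},h({\bf b}))$.

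The main obstacle will be the boundary analysis at $\tau\to -\infty$: one must enumerate all $2$-level configurations $\overline{u}_1\cup\overline{u}_2$ with total $\op{ind}=1$ that meet $\overline{\frak m}(-\infty)$, verify gluing and transversality for each, and isolate precisely those whose top asymptote meets $z_\infty$ so that they assemble into the clean form $\widetilde\bdry_1\circ\widetilde V$ rather than appearing as extra uncontrolled boundary terms. In particular, one needs that curves with branching over $\sigma_\infty^{-\infty,i}$ are either excluded on index grounds or contribute with the multiplicity dictated by the Gromov--Witten calculations of Section~\ref{section: Gromov-Witten computation}; this is where the hypothesis $m\gg 0$ enters, through the control on $\phi$-coordinates summarized in Remark~\ref{comparing rates} that prevents spurious intersections of branch covers with the pushed-off Lagrangians.
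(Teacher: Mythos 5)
Your overall framework --- a one-parameter family of cobordisms $\overline{W}_\tau$ with the chain homotopy read off from the codimension-one boundary of the parametrized index-one moduli space --- is the same as the paper's, and your definitions of $H'$ and $\Theta'_0$, the passage to \eqref{eqn: chain homotopy part I variant} via $q$, and the strategy for showing $\Theta_0$ is a quasi-isomorphism (degenerating $B_{-\infty,1}$ and reducing to triangle maps with top generators) all agree in spirit with what is done there.

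However, there is a genuine gap in your accounting of the boundary strata, and it concerns precisely the term $\widetilde\bdry_1\circ\widetilde V$ that distinguishes \eqref{eqn chain homotopy part i} from a naive chain homotopy. You assert that the $\tau\to+\infty$ limit produces ``by the usual gluing picture'' exactly $\Psi'\circ\widetilde\Phi$, and you place the correction term $\widetilde\bdry_1\circ\widetilde V$ at the $\tau\to-\infty$ end, coming from limits whose intermediate asymptote meets $z_\infty$. The paper establishes the opposite distribution. At $\tau\to-\infty$ the degenerate configurations (those with $\overline{v}'_*\cup\overline{v}^\sharp_*\not=\varnothing$) are entirely excluded --- this is the content of Lemmas~\ref{alishan} and \ref{alishan2}, which require the tropical/interval-contraction analysis of Sections~\ref{truncation}--\ref{second case} --- so $\bdry_{\{-\infty\}}\mathcal{M}$ contributes only the clean term $A_3$ giving $\Theta'_0$ (Lemma~\ref{cherries5}, with the count pinned down by Theorem~\ref{thm: calc of G sub 3}). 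At $\tau\to+\infty$, by contrast, not all degenerate configurations can be excluded: Case~(2$_1$) of Lemma~\ref{hojicha} survives the elimination argument of Lemma~\ref{vancouver}, and these three-level buildings --- a curve in $\overline{W}_+$ with a negative end at $\delta_0\gamma$, then $\sigma_\infty^-$ together with a curve from $\gamma$ to $\{z_\infty\}\cup{\bf y}''$, then a thin strip from $z_\infty$ to some $x_i$ or $x_i'$ --- are exactly what assemble into $\widetilde\bdry_1\circ\widetilde V$ (the set $A_2$ of Lemma~\ref{cherries4}). Consequently your definition of $\widetilde V$ (a count at $\tau=-\infty$) is not the right map, and your claim that the $+\infty$ boundary is exactly $\Psi'\circ\widetilde\Phi$ leaves the $A_2$ boundary components unaccounted for, so the identity \eqref{eqn chain homotopy part i} would fail on the chain level. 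Relatedly, the Gromov--Witten computations are not what controls the $+\infty$ end for this theorem: the relevant input there is the renormalization/involution argument ruling out the other cases of Lemma~\ref{hojicha}, while Theorem~\ref{thm: calc of G sub 3} enters at the $-\infty$ end to evaluate the $\overline{W}_{-\infty,2}$ count appearing in $\Theta'_0$.
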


Since $\Theta_0$ is an isomorphism on the level of homology, so is $\Psi\circ \widetilde\Phi$. In view of Corollary~I.\ref{P1-tigger}, $\Psi\circ \Phi$ is also an isomorphism on the level of homology.

\begin{proof}
We prove Theorem~\ref{thm: chain homotopy part i}, assuming the results of Sections~\ref{subsection: chain homotopy part 1}--\ref{subsection: additional degenerations III}.   In Steps 1--3 we consider the situation where the holomorphic curves $\overline{u}$ in $\overline{W}_\tau$ are asymptotic to some ${\bf y}\in \mathcal{S}_{{\bf a},\hh({\bf a})}$ at the positive end.  In Step 4 we describe the modifications needed for the situation where $\overline{u}$ is asymptotic to some ${\bf z}=\{z_{\infty,i}\}_{i\in\mathcal{I}}\cup {\bf y}$ at the positive end and $\mathcal{I}$ is a subset of $\{1,\dots,2g\}$ with $|\mathcal{I}|>0$. Steps 1--4 prove Equation~\eqref{eqn chain homotopy part i} and hence Equation~\eqref{eqn: chain homotopy part I variant}. In Step 5 we prove that $\Theta_0$ induces an isomorphism on homology by further degenerating $\overline{W}_{-\infty,1}$.

Suppose $m\gg 0$. Choose ${\frak p(\tau)}$ and $\{\overline{J}_\tau\}\in \overline{\mathcal{I}}^{reg}$. For sufficiently small $\varepsilon,\delta>0$ (which depend on the choices of $m$ and $\{\overline{J}_\tau\}$), there exists $\{\overline{J}_\tau^\Diamond(\varepsilon,\delta,{\frak p}(\tau))\}$ so that Lemma~\ref{lemma: codimension one} holds.

Fix ${\bf y}\in \mathcal{S}_{{\bf a},\hh({\bf a})}$, ${\bf y'}\in\mathcal{S}_{{\bf b},\hh({\bf b})}$ and abbreviate
$$\mathcal{M}=\mathcal{M}^{I=2,n^*=m}_{\{\overline{J}_\tau^\Diamond(\varepsilon,\delta,{\frak p}(\tau))\}}({\bf y},{\bf y'};\overline{\frak m}),~\mathcal{M}^{\{K_{{\frak p}(\tau),\delta}\}}=\mathcal{M}^{I=2,n^*=m,\{K_{{\frak p}(\tau),\delta}\}}_{\{\overline{J}_\tau^\Diamond(\varepsilon,\delta,{\frak p}(\tau))\}}({\bf y},{\bf y'};\overline{\frak m}).$$
Let $\overline{\mathcal{M}}$ be the SFT compactification of $\mathcal{M}$ and let $\bdry \mathcal{M}= \overline{\mathcal{M}}-\mathcal{M}$
be the boundary of $\mathcal{M}$. If $U\subset [-\infty,+\infty]$, then we write $\bdry_U \mathcal{M}$ for the set of $\overline{u}_\infty\in \bdry \mathcal{M}$ where $\overline{u}_\infty$ is a building which corresponds to some $\tau\in U$. By Lemma~\ref{lemma: regularity of W tau family}, we may take $\mathcal{M}^{\{K_{{\frak p}(\tau),\delta}\}}$ to be regular.

\s\n {\em Step 1 (Breaking at $+\infty$).}
Recall the definition of a {\em bad radial ray} $\mathcal{R}_\phi$ from Definition~I.\ref{P1-defn: radial rays}. We now enlarge the class of bad radial rays as follows: Let $(\overline{J}_+)_\infty$ be the limit of $(\overline{J}_+)_m$ as $m\to \infty$.  Let
$$\coprod_{{\bf y},\delta_0^{l_i}\bs\gamma'}\mathcal{M}_{(\overline{J}_+)_\infty}^{I=0,(l_i)}({\bf y},\delta_0^{l_i}\bs\gamma')=\{\mathcal{C}_1,\dots,\mathcal{C}_{r}\},$$
where the disjoint union is over all ${\bf y}$ and $\delta_0^{l_i}\bs\gamma'$ with $l_i>0$ and let $f_{i}: \R/2l_{i}\Z\to \C$ be the asymptotic eigenfunction corresponding to the ends $\delta_0^{l_{i}}$ of $\mathcal{C}_i$, where $(l_i)$ is the partition of $l_i$ corresponding to $\mathcal{C}_i$.
 (We remark that $\mathcal{C}_1,\dots, \mathcal{C}_r$ and $f_{i}$ also appear in Section~I.\ref{P1-subsubsection: asymptotic eigenfunction at an end}, but denote similar but different things.) We then add the radial rays which pass through
$$\{f_{i}(t)~|~ i=1,\dots,r; 0<t< 2l_{i}; t\equiv 3/2 \mbox{ mod } 2\}$$
to the class of bad radial rays.  We can still assume that $\mathcal{R}_\pi$ is a good radial ray.

Recall the set $\widehat{\mathcal{O}}_k$ of orbit sets constructed from $\widehat{\mathcal{P}}$ (the set of simple Reeb orbits in $int(N)$, together with $h$ and $e$) which intersect $\overline{S}\times\{0\}$ exactly $k$ times.  We then have the following:

\begin{lemma} \label{cherries4}
$\bdry_{\{+\infty\}} \mathcal{M}\subset A_1\sqcup A_2$, where
\begin{align*}
A_1&=\coprod_{\bs\gamma\in \widehat{\mathcal{O}}_{2g}}\left(\mathcal{M}^{I=0}_{J_+^\Diamond(\varepsilon,\delta,{\frak p}(+\infty))}({\bf y},\bs\gamma)\times \mathcal{M}^{I=2,n^*=m}_{\overline{J}_-^\Diamond(\varepsilon,\delta,{\frak p}(+\infty))}(\bs\gamma,{\bf y}';\overline{\frak m}(+\infty))\right);\\
A_2&=\coprod_{\delta_0\bs\gamma, \{z_\infty\}\cup {\bf y}''}\left( \mathcal{M}^{I=1,n^*=m-1, f_{\delta_0}}_{\overline{J}_+^\Diamond(\varepsilon,\delta,{\frak p}(+\infty))}({\bf y},\delta_0\bs\gamma)\times \mathcal{M}^{I=0,n^*=0}_{\overline{J}_-^\Diamond(\varepsilon,\delta,{\frak p}(+\infty))}(\delta_0\bs\gamma,\{z_\infty\}\cup {\bf y}'')\right.\\
&  \qquad \qquad \qquad \left.\times~~~ \mathcal{M}^{I=1,n^*=1}_{\overline{J}}(\{z_\infty\}\cup {\bf y}'',{\bf y}')\right),
\end{align*}
if ${\bf y}'=\{x_i^j\}\times{\bf y}''$ for some $x_i^j$ and $A_2=\varnothing$ otherwise.  The disjoint union for $A_2$ ranges over all $\delta_0\bs\gamma$ such that $\bs\gamma\in \widehat{\mathcal{O}}_{2g-1}$ and all $\{z_\infty\}\cup {\bf y}''$ such that ${\bf y}'$ can be written as $\{x_i^j\}\cup {\bf y}''$ for some $x_i^j$. Here we have omitted the potential contributions of connector components and we are writing $x_i^0:=x_i$ and $x_i^1:=x_i'$.
\end{lemma}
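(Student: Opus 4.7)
The approach is to apply SFT compactness to a sequence $\overline{u}_i\in \mathcal{M}$ with $\tau_i\to +\infty$, and then use index and intersection-number bookkeeping to constrain the possible limiting buildings. As $\tau\to+\infty$ the base $B_\tau$ degenerates by pinching an arc separating $B_+$ from $B_-$, so every limit building decomposes into a top piece in $\overline{W}_+$, a bottom piece in $\overline{W}_-$, together with possible symplectization levels inserted at the positive end of $\overline{W}_+$, at the neck orbits, or at the negative end of $\overline{W}_-$ (where curves in the symplectization $\overline{W}$ can appear).

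I would next distribute the data $(I,n^*)=(2,m)$ across the levels. Since the point constraint $\overline{\frak m}(+\infty)$ lies in $\overline{W}_-$ and imposes a codimension-$2$ condition, exactly one irreducible component contains $\overline{\frak m}(+\infty)$ and must carry at least $I=2$; by regularity every other non-connector level has $I\geq 0$, with equality forced on symplectization levels that would otherwise violate the index inequality. A parallel ledger on $n^*$ distributes the total $m$ across the levels and forces $n^*=0$ on any level asymptotic only to ends away from $z_\infty$ and $\delta_0$. In case $A_1$, the neck orbit $\gamma$ lies in $\widehat{\mathcal{O}}_{2g}$, the top piece has $(I,n^*)=(0,0)$, and the bottom piece carries both $(I,n^*)=(2,m)$ and the marked point.

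In case $A_2$, a $\delta_0$-factor enters the neck breakdown. The asymptotic eigenfunction at a $\delta_0$-end determines a radial ray, which by the enlarged bad-radial-ray list at the start of Step~1, combined with the choice of $\phi_h$ from Section~\ref{subsubsection: choice of hyperbolic orbit h}, is pinned to $f_{\delta_0}$. This forces the top level to have $(I,n^*)=(1,m-1)$ and eigenfunction $f_{\delta_0}$. A further breaking at the negative end of $\overline{W}_-$ then yields a rigid piece with $(I,n^*)=(0,0)$ which exchanges the $\delta_0$-end for a $z_\infty$-end, followed by a piece in $\overline{W}$ with $(I,n^*)=(1,1)$ whose $z_\infty$-end is absorbed on a page. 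The matching ${\bf y}'=\{x_i^j\}\cup{\bf y}''$ is necessary for the last piece to exist, and higher powers $\delta_0^{\ell}\gamma$ with $\ell\geq 2$ at the neck are ruled out by the index budget together with the improved bad-radial-ray restriction.

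The main obstacle is the case analysis at $\delta_0$: one must track asymptotic eigenfunctions carefully, show that a $\delta_0$-end on the top level forces the specific three-level structure of $A_2$ (in particular that the intermediate piece in $\overline{W}_-$ is rigid and exchanges $\delta_0$ for a $z_\infty$-end), and verify that no further breakings are consistent with $(I,n^*)=(2,m)$, the point constraint, and the regularity hypotheses of Lemmas~\ref{lemma: regularity of W tau family} and~\ref{lemma: codimension one}. Connector components (trivial cylinders at the Morse--Bott neck orbits) are suppressed throughout, as recorded in the statement.
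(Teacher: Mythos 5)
Your overall strategy (SFT compactness, then distributing $(I,n^*)=(2,m)$ over the levels) is the right starting point, but two of your key assertions are not correct as stated, and they are precisely where the real content of the lemma lies. First, you claim that ``by regularity every other non-connector level has $I\geq 0$.'' This fails for the components that branch cover the sections at infinity: $I(\sigma_\infty^+)=-1$, so $\overline{v}_+'$ contributes $-p_+$ to the index, and these multiple covers are not regular. Establishing that each level nonetheless satisfies $I(\overline{v}_*)\geq I(\overline{v}_*')+I(\overline{v}_*'')$ is the main technical difficulty; it requires the refined index inequality of Lemma~I.\ref{P1-index inequality for z infinity case}, which can only be applied after proving that the initial and terminal points of the ends limiting to $z_\infty$ alternate along $\bdry D_\varepsilon$ (Lemma~\ref{alternate}, via the continuation argument of Lemma~\ref{intersezione prime}), together with a separate analysis of boundary punctures removable at $z_\infty$ (the trichotomy (P$_1$)--(P$_3$) and Lemmas~\ref{Lisbeth}--\ref{nonnegative ECH indices better}). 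None of this is present in your plan.

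Second, your claim that configurations with $\delta_0^{\ell}$, $\ell\geq 2$, at the neck ``are ruled out by the index budget together with the improved bad-radial-ray restriction'' is false: Cases (3)--(5) of Lemma~\ref{hojicha} (with $\delta_0^2$) and Case (6$_i$) satisfy all the index and intersection constraints and cannot be excluded by bookkeeping alone. The paper eliminates them in Lemma~\ref{vancouver} by a renormalization argument: one takes a diagonal subsequence with $m_i\to\infty$, rescales the projection to $D_{\rho_0}$, and derives a contradiction from the Involution Lemmas applied to the limiting meromorphic maps $w_\pm$, $w_{0,1}$ (using the planarity of the glued domain from Lemma~\ref{banff}). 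The same mechanism, not the bad-radial-ray list, is what pins down the eigenfunction $f_{\delta_0}$ and the structure $\overline{v}_-'=\sigma_\infty^-$ plus a thin strip in $A_2$. As written, your proposal identifies the case analysis at $\delta_0$ as ``the main obstacle'' but does not supply an argument that would overcome it, and the two shortcuts you propose in its place do not work.
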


We will explain the moduli spaces that are involved in $A_2$: $f_{\delta_0}$ is a nonzero normalized asymptotic eigenfunction of $\delta_0$ at the negative end such that $f_{\delta_0}({3\over 2})$ lies on the good radial ray $\mathcal{R}_\pi$.  Used as a modifier, $f_{\delta_0}$ stands for ``the normalized asymptotic eigenfunction at the negative end $\delta_0$ is $f_{\delta_0}$''. If
$$\overline{u}=\overline{u}'\cup \overline{u}''\in \mathcal{M}^{I=0,n^*=0}_{\overline{J}_-^\Diamond(\varepsilon,\delta,{\frak p}(+\infty))} (\delta_0\bs\gamma,\{z_\infty\}\cup {\bf y}''),$$
then $\overline{u}$ consists of $\overline{u}'=\sigma_\infty^-$ and a curve $\overline{u}''$ from $\bs\gamma$ to ${\bf y}''$ which is arbitrarily close to a curve with image in $W_-$. If
$$\overline{u}\in \mathcal{M}^{I=1,n^*=1}_{\overline{J}}(\{z_\infty\}\cup {\bf y}'',{\bf y}'),$$
then ${\bf y}'= \{x_i^j\}\cup{\bf y}''$ for some $i,j$ and $\overline{u}$ consists of one thin strip from $z_\infty$ to $x_i^j$ and $2g-1$ trivial strips.

\begin{rmk}
As in the proof of the chain map property for $\Psi$ from Section~I.\ref{P1-subsection: outline of proof}, the point constraint of passing through $\overline{\frak m}(+\infty)$ is converted to an asymptotic constraint of the normalized asymptotic eigenfunction being $f_{\delta_0}$ when a section at infinity $\sigma_\infty^-$ is present.
\end{rmk}

Lemma~\ref{cherries4} will be proved in Section~\ref{subsection: chain homotopy part 1}. Gluing the pairs in $A_1$ using the Hutchings-Taubes gluing theorem~\cite{HT1,HT2} (see Section~I.\ref{P1-subsection: gluing for Phi}) accounts for the term $\Psi'\circ \widetilde\Phi$ in Equation~\eqref{eqn chain homotopy part i}.

Gluing the triples in $A_2$ accounts for the term $\bdry_1\circ \widetilde{V}$. This is similar to Section~I.\ref{P1-subsection: gluing for psi}, and the details will be omitted. For each triple $(\overline{v}_+,\overline{v}_-,\overline{v}_{-1,1})\in A_2$, where the nontrivial component of $\overline{v}_{-1,1}$ is a thin strip from $z_{\infty,i,0}$ to $x_i$, there is another triple $(\overline{v}_+,\overline{v}_-,\overline{v}^*_{-1,1})\in A_2$, where the nontrivial component of $\overline{v}^*_{-1,1}$ is a thin strip from $z_{\infty,i,1}$ to $x_i'$.

Since $\mathcal{M}^{\{K_{{\frak p}(\tau),\delta}\}}$ is regular but $\mathcal{M}-\mathcal{M}^{\{K_{{\frak p}(\tau),\delta}\}}$ is not a priori regular, it remains to verify the following:

\begin{claim} \label{elmwood}
For $\varepsilon,\delta>0$ sufficiently small, the restriction of $\bdry \mathcal{M}^{\{K_{{\frak p}(\tau),\delta}\}}$ to a neighborhood of $\tau=+\infty$ is $A_1\cup \widetilde{A}_2$, where $\# A_2\equiv \# \widetilde{A}_2 \mbox{ mod } 2$.
\end{claim}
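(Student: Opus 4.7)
The plan is as follows. Since $\mathcal{M}^{\{K_{{\frak p}(\tau),\delta}\}}\subset \mathcal{M}$, Lemma~\ref{cherries4} gives $\bdry_{\{+\infty\}}\mathcal{M}^{\{K_{{\frak p}(\tau),\delta}\}}\subset A_1\sqcup A_2$. I would set
$$\widetilde{A}_2 := \bdry_{\{+\infty\}}\mathcal{M}^{\{K_{{\frak p}(\tau),\delta}\}}\cap A_2,$$
reducing the claim to two statements: (a) $A_1\subset \bdry_{\{+\infty\}}\mathcal{M}^{\{K_{{\frak p}(\tau),\delta}\}}$, and (b) $\#A_2\equiv \#\widetilde{A}_2\pmod 2$.

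For (a), given $(\overline{v}_+,\overline{v}_-)\in A_1$, the Hutchings--Taubes gluing of Section~I.\ref{P1-subsection: gluing for Phi} produces a one-parameter family $\{\overline{u}_\tau\}\subset \mathcal{M}$ that converges to $\overline{v}_+\cup\overline{v}_-$ as $\tau\to +\infty$. By construction ${\frak p}(+\infty)$ splits as $({\frak p}(+\infty)\cap B_+)\sqcup ({\frak p}(+\infty)\cap B_-)$, and by the regularity of $A_1$ each $\overline{v}_\pm$ meets $K_{{\frak p}(+\infty)\cap B_\pm,\delta}$. Exponential convergence of the gluing together with continuity of the family $\{K_{{\frak p}(\tau),\delta}\}$ then ensures that $\overline{u}_\tau\in \mathcal{M}^{\{K_{{\frak p}(\tau),\delta}\}}$ for $\tau$ sufficiently large and $\varepsilon,\delta$ sufficiently small, placing the pair in $\bdry_{\{+\infty\}}\mathcal{M}^{\{K_{{\frak p}(\tau),\delta}\}}$.

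The main obstacle is (b). A triple $(\overline{v}_+,\overline{v}_-,\overline{v}_{-1,1})\in A_2$ consists of a top-level curve in $\overline{W}_+$ asymptotic to $\delta_0\gamma$ with prescribed eigenfunction $f_{\delta_0}$, a middle-level curve in $\overline{W}_-$ that is essentially a branched cover of $\sigma_\infty^-$, and a Heegaard Floer thin strip from $z_\infty$ to some $x_i^j$ in the bottom cobordism $\overline{W}$. Since ${\frak p}(+\infty)$ lies entirely in $B_+\cup B_-$, the $K$-constraint is borne by the top two levels alone, and the question of whether a triple glues into $\mathcal{M}^{\{K_{{\frak p}(\tau),\delta}\}}$ reduces to an obstruction at the matching orbit $\delta_0\gamma$ and at $\{z_\infty\}\cup{\bf y}''$. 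The plan is to exhibit a fixed-point-free involution on the obstructed triples $A_2\setminus \widetilde{A}_2$, produced by swapping the combinatorial data that records how the branches of the eigenfunction $f_{\delta_0}$ are threaded through the sheets of the $\sigma_\infty^-$-cover in the middle level.

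The delicate point is verifying that this involution is well-defined and respects the $K$-constraint, so that its orbits really lie in $A_2\setminus \widetilde{A}_2$. I expect to combine the asymptotic analysis underlying the definition of $f_{\delta_0}$ (as in the paragraph defining good and bad radial rays) with the Gromov--Witten-type rigidity of branched covers of $\sigma_\infty^-$ (via Theorem~\ref{thm: count of G sub 1} and the uniqueness portion of Lemma~\ref{lemma: codimension one}) to show that the obstruction is genuinely invariant under the swap. This will give $\#(A_2\setminus \widetilde{A}_2)\equiv 0\pmod 2$ and complete the proof.
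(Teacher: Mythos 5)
There is a genuine gap, and it starts with your very first reduction. You write that since $\mathcal{M}^{\{K_{{\frak p}(\tau),\delta}\}}\subset \mathcal{M}$, Lemma~\ref{cherries4} gives $\bdry_{\{+\infty\}}\mathcal{M}^{\{K_{{\frak p}(\tau),\delta}\}}\subset A_1\sqcup A_2$, and you then define $\widetilde{A}_2$ as a \emph{subset of} $A_2$. But the boundary of $\mathcal{M}^{\{K_{{\frak p}(\tau),\delta}\}}$ is not contained in the boundary of $\mathcal{M}$: the condition ``passes through $K_{{\frak p}(\tau),\delta}$'' is a closed condition cutting out a sub-$1$-manifold with boundary, and imposing it creates \emph{new} boundary points that are honest curves of $\mathcal{M}$ at large finite $\tau$ (where the intersection point with $K_\tau$ escapes through $\bdry K_\tau$), not SFT buildings at all. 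This is exactly what $\widetilde{A}_2$ is in the paper: as an end of $\mathcal{M}$ approaches an $A_2$-building, the curves eventually fail the $K_\tau$-constraint (the nontrivial part of the limit building lies in $\overline{W}$ and the branched cover $\sigma_\infty^-$, away from $K_{{\frak p}(+\infty),\delta}$), so $\mathcal{M}^{\{K_{{\frak p}(\tau),\delta}\}}$ is obtained from $\mathcal{M}$ by \emph{truncating} those ends, and $\widetilde{A}_2$ is the set of truncation points. The identity $\#\widetilde{A}_2\equiv \#A_2$ then follows because, by the transversality of the evaluation map (Theorem~I.\ref{P1-thm: transversality of ev map} together with Lemma~I.\ref{P1-claim 1}), each end limiting to an $A_2$-building is truncated at an odd number of points --- generically one. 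No involution on ``obstructed triples'' is involved, and indeed your proposed pairing is aimed at the wrong object: the statement to prove is not $\#(A_2\setminus\widetilde{A}_2)\equiv 0$ for some subset $\widetilde{A}_2\subset A_2$, but rather that the new, non-SFT boundary created by the point constraint matches $\#A_2$ mod $2$.

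Your part (a) is essentially right in spirit, though the reason $\overline{v}_-$ meets $K_{{\frak p}(+\infty),\delta}$ is not ``regularity of $A_1$'' but simply that $\overline{v}_-$ is a degree $2g$ multisection with controlled $n^*$, hence must intersect the fiber over each point of ${\frak p}(+\infty)\cap int(B_-)$ away from a $2\delta$-neighborhood of the section at infinity once $\delta$ is small; curves in $\mathcal{M}$ close to such a building then inherit the constraint. For part (b) I would discard the involution idea entirely and instead argue as above: show that every $\overline{u}\in\mathcal{M}-\mathcal{M}^{\{K_{{\frak p}(\tau),\delta}\}}$ with $\tau$ near $+\infty$ is close to an $A_2$-building, and then count the truncation points of each such end using the transversality of the evaluation map.
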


\begin{proof}
If $\varepsilon,\delta>0$ are sufficiently small, then $\overline{v}_-$ passes through $K_{{\frak p}(\tau),\delta}$ whenever $(\overline{v}_+,\overline{v}_-)\in A_1$.  Hence if $\overline{u}\in \mathcal{M}$ is close to a curve in $A_1$, then $\overline{u}\in \mathcal{M}^{\{K_{{\frak p}(\tau),\delta}\}}$. This accounts for the term $A_1$ in the claim.

Next suppose that $\overline{u}\in \mathcal{M}-\mathcal{M}^{\{K_{{\frak p}(\tau),\delta}\}}$ for $\tau$ near $+\infty$.  By the argument of Lemma~I.\ref{P1-claim 1}, $\overline{u}$ is arbitrarily close to a building of type $A_2$ and $\mathcal{M}^{\{K_{{\frak p}(\tau),\delta}\}}$ is obtained from $\mathcal{M}$ by truncating ends that are close to $A_2$.  Then, by the adaptation of Theorem~I.\ref{P1-thm: transversality of ev map} to our case, $\widetilde{A}_2:= \bdry \mathcal{M}^{\{K_{{\frak p}(\tau),\delta}\}}-A_1$ satisfies $\# A_2\equiv \# \widetilde{A}_2 \mbox{ mod } 2$.
\end{proof}

\s\n {\em Step 2 (Breaking at $-\infty$).}

\begin{lemma} \label{cherries5}
$\bdry_{\{-\infty\}} \mathcal{M}\subset A_3$, where
\begin{align*}
A_3&=\coprod_{{\bf y}_2,{\bf y}_4} \left(\mathcal{M}_{\overline{J}_{-\infty,1}^\Diamond(\varepsilon,\delta,{\frak p}(-\infty))}^{I=0,n^*=0} ({\bf y},{\bf y}_2,{\bf y}',\overline{\hh}({\bf y}_4))\times \mathcal{M}_{\overline{J}_{-\infty,2}^\Diamond(\varepsilon,\delta,{\frak p}(-\infty))}^{I=2,n^*=m}({\bf y}_4,{\bf y}_2;\overline{\frak m}(-\infty))\right).
\end{align*}
Here the union is over all ${\bf y}_2$, ${\bf y}_4$ such that ${\bf y}_4=\{ x_{ij(i)}^\#\}_{i=1}^{2g}$ and ${\bf y}_2=\{x_{ik(i)}^\#\}_{i=1}^{2g}$, where $j(i)$ is odd and $k(i)$ is even.
\end{lemma}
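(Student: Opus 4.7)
The plan is to extract a limit building from a sequence $\overline{u}_n \in \mathcal{M}$ with $\tau_n \to -\infty$ via SFT compactness, and then force the building into the stated form using index bookkeeping and the marked-point constraint. By SFT compactness adapted to the Morse--Bott setting of $\bdry N$, we obtain a building $\overline{u}_\infty$ whose components map to $\overline{W}_{-\infty,1}$, to $\overline{W}_{-\infty,2}$, and possibly to connector levels at each of the four seams: the ECH ends at $s \to \pm\infty$ (in the symplectization $\R \times \overline{N}_2$) and the interior matchings at $t \to \pm\infty$ between $\overline{W}_{-\infty,1}$ and $\overline{W}_{-\infty,2}$. Since $\overline{\frak m}^b(\tau)=(-l(\tau)+2,(r(\tau)+1)/2)$ lies in the stretching strip region of $B_\tau$, its limit $\overline{\frak m}(-\infty)\in \overline{W}_{-\infty,2}$, so at least one component $\overline{u}_2$ of $\overline{u}_\infty$ mapping into $\overline{W}_{-\infty,2}$ must pass through $\overline{\frak m}(-\infty)$.

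The next step is to rule out nontrivial connector levels and any spurious components. At $s\to+\infty$ the argument is the same as in Step~1 of Lemma~\ref{cherries4}: generic regularity of $\{\overline{J}_\tau\}\in\overline{\mathcal{I}}^{reg}$, together with the choice of the hyperbolic orbit $h$ in Section~\ref{subsubsection: choice of hyperbolic orbit h}, forces any nontrivial ECH-type Morse--Bott connector to contribute at least one unit of $I$, exhausting the $I=2$ budget. The $s\to-\infty$ end is symmetric, using $\overline{J}_-$ and the analogous Morse--Bott analysis at $\bdry N$. At the interior seams $t\to\pm\infty$, any nontrivial strip connector in $\R\times[0,1]\times\overline{S}$ with Lagrangians in $\overline{\bf a}\cup\overline{\bf b}\cup \overline{h}(\overline{\bf a})\cup \overline{h}(\overline{\bf b})$ also contributes at least $1$ to $I$ and, by regularity of $\overline{J}_{-\infty,1}$ and $\overline{J}_{-\infty,2}$, is absent in a generic $0$-dimensional boundary stratum. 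What survives is a two-level building $\overline{u}_\infty=\overline{u}_1\cup\overline{u}_2$, with $\overline{u}_i$ in $\overline{W}_{-\infty,i}$, glued along common HF-type strip limits ${\bf y}_2$ at one seam and, via the monodromy identification $(s,+\infty,x)\sim(s,-\infty,\overline{h}(x))$ of Section~\ref{defn of family 1}, $\overline{h}({\bf y}_4)$ at the other. This gives the four-end description $({\bf y},{\bf y}_2,{\bf y'},\overline{h}({\bf y}_4))$ of $\overline{u}_1$ and the two-end description $({\bf y}_4,{\bf y}_2)$ of $\overline{u}_2$ required by $A_3$.

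Finally I allocate the indices and the intersection number $n^*$. Because $\mathcal{M}$ is a $1$-dimensional parametric moduli space (the curve has $I=2$ while passing through $\overline{\frak m}(\tau)$ is a codimension-$2$ constraint, so the fiber over each $\tau$ is rigid), a generic boundary building has $I(\overline{u}_\infty)=2$. Passing through the codimension-$2$ point $\overline{\frak m}(-\infty)$ forces $I(\overline{u}_2)\geq 2$, whence additivity of $I$ gives $I(\overline{u}_2)=2$ and $I(\overline{u}_1)=0$. Analogously, the $n^*=m$ condition on the full building distributes as $n^*(\overline{u}_1)=0$ and $n^*(\overline{u}_2)=m$, the only consistent allocation once the perturbed sections at infinity on the two levels are taken into account. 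Applying Lemma~\ref{lemma: ECH for W minus infinity 2} to $\overline{u}_2$ (which is a $(\overline{W}_{-\infty,2},\overline{J}_{-\infty,2})$-curve with $I=2$ and $n^*=m$) yields precisely ${\bf y}_4=\{x^\#_{ij(i)}\}_{i=1}^{2g}$ with every $j(i)$ odd and ${\bf y}_2=\{x^\#_{ik(i)}\}_{i=1}^{2g}$ with every $k(i)$ even, matching the range of the disjoint union defining $A_3$. The main obstacle is the simultaneous elimination of connector levels at all four seams, especially the Morse--Bott connectors at $\bdry N$; this is handled by the same mechanism already used at $+\infty$ in Lemma~\ref{cherries4}, together with the $(\varepsilon,\delta)$-smallness of the perturbation $\overline{J}_\tau^\Diamond$ required to keep the marked-point-passing curves near the compact sets $K_{{\frak p}(\tau),\delta}$.
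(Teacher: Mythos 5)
There is a genuine gap. Your argument rules out extra levels by an index budget: every nontrivial connector or spurious component ``contributes at least one unit of $I$,'' so the $I=2$ total is exhausted. This implicitly assumes that every component of the limit building has nonnegative ECH index, and that is exactly what fails here. The limit $\overline{u}_\infty$ can contain branched covers of the sections at infinity $\sigma_\infty^{-\infty,1}$ (the components denoted $\overline{v}'_*$ in the paper), which have \emph{negative} ECH index, together with components $\overline{v}^\sharp_*$ with ends at multiples of $z_\infty$ or $\delta_0$ that absorb almost all of $n^*$. Your additivity step ``$I(\overline{u}_2)\geq 2$, hence $I(\overline{u}_1)=0$'' and the clean distribution $n^*(\overline{u}_1)=0$, $n^*(\overline{u}_2)=m$ are only valid once this entire class of degenerations has been excluded, which your proposal never does. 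Worse, in this particular degeneration the standard mechanism from the $+\infty$ case does not suffice: the paper states explicitly that the nonnegativity of $I(\overline{v}_{L,j})$ and $I(\overline{v}_{R,j})$ could not be established when $\overline{v}'_1\neq\varnothing$, because the groomings needed for the ECH index inequality at $z_\infty$ cannot be controlled when one stretches simultaneously in the $t$- and $\log\rho$-directions.

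The paper's actual proof therefore splits into two parts. The easy part (Lemma~\ref{los angeles 3}) handles the case $\overline{v}'_*\cup\overline{v}^\sharp_*=\varnothing$ for all levels and produces the two-level building of $A_3$, with ${\bf y}_2,{\bf y}_4$ pinned down by Lemma~\ref{lemma: ECH for W minus infinity 2} exactly as you say. The hard part (Lemmas~\ref{sencha 3}, \ref{alishan}, \ref{alishan2}) eliminates the remaining configurations by a substitute for the index inequality: one truncates $\overline{u}_i$ near the section at infinity, encodes its large-scale behavior as a piecewise-linear ``tropical curve'' $\overline\Xi_i:\Gamma\to[-1,1]\times[0,d_i]$ via interval contractions, and derives a contradiction from the Comparison Lemma and the weight function $\mathcal{W}_i$ (the edges of slope $\approx 1$ cannot traverse the required horizontal distance when $m\gg 0$). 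Without this step, or some replacement for it, the containment $\bdry_{\{-\infty\}}\mathcal{M}\subset A_3$ is not established.
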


Lemma~\ref{cherries5} will be proved in Section~\ref{subsection: chain homotopy part 3}. Gluing the pairs $(\overline{v}_1,\overline{v}_2)$ in $A_3$ accounts for the term $\Theta_0'$ in Equation~\eqref{eqn chain homotopy part i}. The map $\Theta_0'$ is given by:
$$\langle \Theta_0' ({\bf y}), {\bf y}'\rangle=\sum_{{\bf y}_2,{\bf y}_4} \# \mathcal{M}_{\overline{J}_{-\infty,1}^\Diamond(\varepsilon,\delta,{\frak p}(-\infty))}^{I=0,n^*=0} ({\bf y},{\bf y}_2,{\bf y}',\overline{\hh}({\bf y}_4)),$$
where ${\bf y}_2$, ${\bf y}_4$ are as in $A_3$, and Theorem~\ref{thm: calc of G sub 3} can be rephrased as (verification left to the reader):

\begin{thm}
Suppose ${\bf y}_4=\{ x_{ij(i)}^\#\}_{i=1}^{2g}$ and ${\bf y}_2=\{x_{ik(i)}^\#\}_{i=1}^{2g}$, where $j(i)$ is odd and $k(i)$ is even. Then $$\#\mathcal{M}_{\overline{J}_{-\infty,2}^\Diamond(\varepsilon,\delta,{\frak p}(-\infty))}^{I=2,n^*=m}({\bf y}_4,{\bf y}_2;\overline{\frak m}(-\infty))\equiv 1 \mbox{ mod } 2.$$
\end{thm}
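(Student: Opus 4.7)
The plan is to exhibit a mod $2$ identification between the moduli space
$$\mathcal{M}_{\overline{J}_{-\infty,2}^\Diamond(\varepsilon,\delta,{\frak p}(-\infty))}^{I=2,n^*=m}({\bf y}_4,{\bf y}_2;\overline{\frak m}(-\infty))$$
and the moduli space $\mathcal{M}_J(z_\infty,{\bf z})$ whose cardinality defines $G_3(\Sigma;J;z_\infty,{\bf z})$; by Theorem~\ref{thm: calc of G sub 3}, this latter count is $1$ mod $2$.

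First I would conformally identify the base $B_{-\infty,2}=[-2,2]_s\times\R_t$ with the punctured disk $D^2\setminus\{\pm 1\}$, sending $t=+\infty$ to $+1$ and $t=-\infty$ to $-1$. Under this identification $\overline{W}_{-\infty,2}$ becomes an open subset of $X=\Sigma\times D^2$, and an admissible $\overline{J}_{-\infty,2}$ --- which by definition preserves $T\overline{S}$, is $t$-invariant, and satisfies $\overline{J}_{-\infty,2}(\bdry_t)=-\bdry_s$ --- extends to an element of $\mathcal{J}^{prod}$ on $X$. The point constraint $\overline{\frak m}(-\infty)=((0,0),z_\infty)$ corresponds to the interior point $(z_\infty,0)\in X$, i.e., the constraint used to define $\mathcal{M}_J(z_\infty,{\bf z})$.

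Next I would track the Lagrangian data and the ends. The two Lagrangian strips $L^{-\infty,2}_{\overline{\bf a},+}$ and $L^{-\infty,2}_{\overline{\bf b},-}$ become the upper and lower arcs of $\bdry D^2\setminus\{\pm 1\}$ decorated by $\overline{\bf a}$ and $\overline{\bf b}$ respectively. Since $\overline{\bf b}$ is an $\varepsilon_0$-close pushoff of $\overline{\bf a}$ with $\varepsilon_0=2\pi/(mK(m))\to 0$ as $m\to\infty$, the union is a $C^\infty$-small Hamiltonian perturbation of $L_{\widehat{\bf a}}=\widehat{\bf a}\times\bdry D^2$. The asymptotic ends ${\bf y}_4=\{x^\#_{i,j(i)}\}$ with $j(i)$ odd (resp.\ ${\bf y}_2=\{x^\#_{i,k(i)}\}$ with $k(i)$ even) degenerate in the limit into $2g$ boundary marked points clustering near $+1$ (resp.\ $-1$), one on each $\widehat{a}_i$-component; these play the role of the $4g$-tuple ${\bf z}$ in Definition of $G_3$. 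The parity hypotheses on $j(i)$ and $k(i)$ are precisely what is needed for the intersection points to sit on the \emph{inner} intersections of $\overline{a}_i$ and $\overline{b}_i$ that survive the collapse. Finally, the degree-$2g$ multisection condition together with $n^*(\overline u)=m$ forces the relative homology class to be $A=[\Sigma\times\{pt\}]+2g[\{pt\}\times D^2]$ once the ends at $z_\infty$ are compactified by gluing in the section at infinity with multiplicity $m$.

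To pass from $\overline{J}^\Diamond_{-\infty,2}$ and the pair of Lagrangians $(\overline{\bf a},\overline{\bf b})$ to an admissible pair $(J,L_{\widehat{\bf a}})$ for $G_3$, I would use two invariance steps. (i) A cobordism from $\overline{J}^\Diamond_{-\infty,2}$ to $\overline{J}_{-\infty,2}$ supported in $U_{\varepsilon,\delta,{\frak p}(-\infty)}$, which avoids $\overline{\frak m}(-\infty)$: the compactness arguments of Lemma~\ref{lemma: compact} (disk bubbles are blocked by degree together with the constraint at $(z_\infty,0)$, boundary nodes by positivity of intersections, interior nodes by the projection to $\Sigma$) carry through for this one-parameter family, so the mod $2$ count is unchanged. (ii) A Hamiltonian isotopy from $\overline{\bf b}$ to $\overline{\bf a}$, turning the pair of Lagrangians into $L_{\widehat{\bf a}}$ with $4g$ marked boundary points. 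For $m\gg 0$ the parametric moduli space is transverse by Lemma~\ref{berkeley} and compact by Lemmas~\ref{lemma: compact} and \ref{berkeley2}; hence the mod $2$ count is invariant and equals $G_3(\Sigma;J;z_\infty,{\bf z})=1$ by Theorem~\ref{thm: calc of G sub 3}.

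The main obstacle is step (ii): controlling compactness along the isotopy that merges $\overline{\bf b}$ into $\overline{\bf a}$ while the $4g$ boundary marked points cluster at $\pm 1\in\bdry D^2$. One must rule out new disk bubbles and boundary nodes as the Lagrangians collide, and verify that the intersections $x^\#_{i,j(i)},\, x^\#_{i,k(i)}$ track well-defined generic points on $\widehat{a}_i$ throughout the isotopy. This can be handled by the same degree and intersection arguments as in Lemma~\ref{lemma: compact}, provided $m$ is taken large enough that $\varepsilon_0$ is below the injectivity scale relevant for Gromov compactness.
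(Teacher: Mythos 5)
Your proposal follows the paper's route exactly: the paper states this theorem as a rephrasing of Theorem~\ref{thm: calc of G sub 3} with the ``verification left to the reader,'' and your argument is precisely that verification --- conformally identifying $\overline{W}_{-\infty,2}$ with (an open subset of) $\Sigma\times D^2$, matching the Lagrangian, point-constraint, and end data, and transferring the count $G_3(\Sigma;J;z_\infty,{\bf z})=1$ via the deformation and compactness arguments of Lemmas~\ref{berkeley}--\ref{berkeley2}. Nothing essentially different from the intended argument is involved.
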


The argument of Claim~\ref{elmwood} gives:

\begin{claim} \label{elmwood2}
For $\varepsilon,\delta>0$ sufficiently small, the restriction of $\bdry \mathcal{M}^{\{K_{{\frak p}(\tau),\delta}\}}$ to a neighborhood of $-\infty$ is $A_3$.
\end{claim}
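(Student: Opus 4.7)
I would follow the template of Claim~\ref{elmwood}, but with the simplification that no analog of the correction term $\widetilde{A}_2$ arises. The argument has two halves. For the inclusion $A_3\subset\bdry_{\{-\infty\}}\mathcal{M}^{\{K_{{\frak p}(\tau),\delta}\}}$: every $(\overline{v}_1,\overline{v}_2)\in A_3$ consists of degree-$2g$ multisections of $\pi_{B_{-\infty,1}}$ and $\pi_{B_{-\infty,2}}$ respectively, so each meets every fiber over ${\frak p}(-\infty)\cap int(B_{-\infty,1})$, respectively ${\frak p}(-\infty)\cap int(B_{-\infty,2})$, transversely. Choosing $\delta>0$ small enough that these intersection points avoid $\{\rho\leq 2\delta\}$, any $\overline{u}_\tau\in\mathcal{M}$ sufficiently close to the building $(\overline{v}_1,\overline{v}_2)$ therefore meets $K_{{\frak p}(\tau),\delta}$ and so belongs to $\mathcal{M}^{\{K_{{\frak p}(\tau),\delta}\}}$. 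Standard gluing on each side (as in Section~I.\ref{P1-subsection: gluing for Phi}) shows that each such pair corresponds to a unique end of $\mathcal{M}^{\{K_{{\frak p}(\tau),\delta}\}}$.

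For the reverse inclusion, let $\overline{u}_{\tau_k}$, $\tau_k\to-\infty$, be a sequence in $\mathcal{M}^{\{K_{{\frak p}(\tau_k),\delta}\}}$; by SFT compactness it converges to a holomorphic building $\overline{u}_{-\infty}$. By Lemma~\ref{cherries5}, together with the constraints $I=2$, $n^*=m$, passage through $\overline{\frak m}(-\infty)$, and the regularity of $\{\overline{J}_\tau^\Diamond\}$ from Lemma~\ref{lemma: regularity of W tau family}, the limit has precisely two levels $(\overline{v}_1,\overline{v}_2)$ whose ends and index data are exactly those defining $A_3$; extra symplectization levels, fiber ghosts, or sphere/disk bubbles are excluded by dimension counts and by the automatic transversality arguments already invoked for $\overline{W}_{-\infty,2}$ in Lemma~\ref{lemma: ECH for W minus infinity 2}.

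The point that makes the absence of an $\widetilde{A}_2$-type correction routine is the contrast with the $+\infty$ end. At $+\infty$ the correction encoded HF$=$ECH ``thin strip'' degenerations involving the negative-end orbit $\delta_0\subset\bdry N$ and an auxiliary $\overline{W}$-level containing a strip from $z_\infty$ to some $x_i^j$, both arising from the Reeb dynamics on $\bdry N$. At $-\infty$, by contrast, the splitting of $\overline{W}_\tau$ is a purely topological pinch of the fiber $\overline{S}$ along a separating curve $\gamma$ disjoint from $\overline{\bf a}\cup\overline{\bf b}$; no Reeb orbit of $\bdry N$ is involved, so neither $\delta_0$-ends nor $z_\infty\!\to\!x_i^j$ strips can appear in a limit building, and no curve of $\mathcal{M}$ near $-\infty$ can lie outside $\mathcal{M}^{\{K_{{\frak p}(\tau),\delta}\}}$ except through the mechanism already encoded in $A_3$.

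The main obstacle is controlling the SFT limit so that only the two levels claimed in $A_3$ appear; this is handled by combining the transversality built into Definition~\ref{defn: regularity for W tau part 1} with the rigidity imposed by the constraints $I=2$ and the two codimension-$2$ conditions (passage through $\overline{\frak m}$ and through $K_{{\frak p}(\tau),\delta}$). The conclusion is $\bdry_{\{-\infty\}}\mathcal{M}^{\{K_{{\frak p}(\tau),\delta}\}}=A_3$ exactly, with no correction term.
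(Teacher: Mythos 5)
Your core argument is correct and is essentially the paper's: the paper proves this claim in one line by invoking the argument of Claim~\ref{elmwood}, i.e.\ (i) for $\varepsilon,\delta>0$ small the components of any pair in $A_3$ are degree $2g$ multisections and hence pass through $K_{{\frak p}(\tau),\delta}$, so nearby curves of $\mathcal{M}$ lie in $\mathcal{M}^{\{K_{{\frak p}(\tau),\delta}\}}$, and (ii) the reverse inclusion is Lemma~\ref{cherries5} plus SFT compactness, with no correction term because $\bdry_{\{-\infty\}}\mathcal{M}\subset A_3$ already. One caution about your third paragraph: the degeneration at $\tau\to-\infty$ is \emph{not} a pinch of the fiber $\overline{S}$ along a separating curve (that degeneration occurs only in the Gromov--Witten computations of Section~\ref{section: Gromov-Witten computation}); it is a stretching of the base, $r(\tau)\to\infty$, producing the two-level building $\overline{W}_{-\infty,1}\cup\overline{W}_{-\infty,2}$, and the absence of $\delta_0$/thin-strip corrections is not ``routine'' but is precisely the (lengthy) content of the proof of Lemma~\ref{cherries5}. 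Since you take that lemma as given, as the paper does at this point, the logic of your proof is unaffected, but the heuristic explanation should be deleted or corrected.
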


\s\n {\em Step 3 (Breaking in the middle).}

\begin{lemma} \label{cherries3}
$\bdry_{(-\infty,+\infty)} \mathcal{M}\subset A_4\sqcup A_5$, where:
\begin{align*}
A_4&=\coprod_{{\bf y''}\in \mathcal{S}_{{\bf a},\hh({\bf a})}}\left(\mathcal{M}^{I=1}_{J}({\bf y},{\bf y''})\times \mathcal{M}^{I=1,n^*=m}_{\{\overline{J}_\tau^\Diamond(\varepsilon,\delta,{\frak p}(\tau))\}}({\bf y''},{\bf y'};\overline{\frak m})\right); \\
A_5&=\coprod_{{\bf y'''}\in \mathcal{S}_{{\bf b},\hh({\bf b})}}\left(\mathcal{M}^{I=1,n^*=m}_{\{\overline{J}_\tau^\Diamond(\varepsilon,\delta,{\frak p}(\tau))\}}({\bf y},{\bf y'''};\overline{\frak m})\times \mathcal{M}^{I=1}_{J}({\bf y'''},{\bf y}')\right).
\end{align*}
\end{lemma}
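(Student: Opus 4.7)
The strategy is to analyze the SFT compactification of a sequence $\{\overline{u}_i\}\subset \mathcal{M}$ whose parameters $\tau_i$ converge to some $\tau_\infty\in(-\infty,+\infty)$. After passing to a subsequence, $\overline{u}_i$ converges to a holomorphic building $\overline{u}_\infty$ whose central level $\overline{v}_0$ lies in $(\overline{W}_{\tau_\infty},\overline{J}_{\tau_\infty}^\Diamond)$, together with (possibly) nontrivial outer levels in $(\overline{W},\overline{J})$ attached at the positive and/or negative strip-like ends of $\overline{W}_{\tau_\infty}$; the presence of such a nontrivial outer level is precisely what realizes $\overline{u}_\infty$ as a boundary point at finite $\tau$. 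As in Lemmas~\ref{lemma: no bubbling2} and \ref{lemma: compact}, the multisection condition $\deg(\overline{u}_i)=2g$, together with the generic location of $\overline{\frak m}(\tau_\infty)$ and of the Lagrangians $L^{\tau,\pm}_{\overline{\mathbf{a}}}$ and $L^{\tau,\pm}_{\overline{\mathbf{b}}}$, rules out disk and sphere bubbles as well as boundary nodes.

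Next I would apply additivity of the ECH index and of the Fredholm index over the SFT building (once the partitions at $\delta_0$ and the data $\overrightarrow{\mathcal{D}}$ at $z_\infty$ are fixed) to write $I(\overline{v}_0)+\sum_j I(\overline{v}_j)=2$, where the sum ranges over nontrivial outer levels. The index inequality $\op{ind}+2\delta\leq I$, together with the regularity of $\overline{J}$, forces each nontrivial outer level, after quotienting by $\R$-translation, to satisfy $I(\overline{v}_j)\geq 1$, with equality achieved only by a somewhere injective index-one holomorphic strip in $(\overline{W},\overline{J})$ possibly paired with trivial cylinders and with branched covers of $\sigma_\infty$ whose total $n^*$ respects the condition $n^*(\overline{u}_i)=m$. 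By Lemma~\ref{lemma: codimension one}(1), isolated $\op{ind}=-1$ curves do not appear at generic interior $\tau$, so the only admissible splitting is $I(\overline{v}_0)=1$ paired with a single index-one outer level, attached either at the positive or at the negative end. A dimension count also rules out buildings with three or more levels at finite $\tau$, since these would live in boundary strata of codimension at least two.

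Because the point constraint $\overline{\frak m}(\tau_i)$ lies in $\overline{W}_{\tau_\infty}$ away from the strip-like ends, it is inherited by the central level, so $\overline{v}_0\in \mathcal{M}^{I=1,n^*=m}_{\{\overline{J}_\tau^\Diamond(\varepsilon,\delta,{\frak p}(\tau))\}}(\cdot,\cdot;\overline{\frak m})$ for an appropriate intermediate generator, while the outer level lies in $\mathcal{M}^{I=1}_{J}$ between the correct pair of chord generators. An outer level at the positive end produces the stratum $A_4$, and one at the negative end produces $A_5$.

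The main obstacle will be the careful handling of the ends at $z_\infty$. An end of $\overline{u}_i$ at $z_\infty^p(\overrightarrow{\mathcal{D}})$ may in principle degenerate, in the SFT limit, into several shorter ends at $z_\infty$ together with additional branched covers of $\sigma_\infty^{\tau_\infty}$ and $\sigma_\infty$; one must verify, using Remark~\ref{comparing rates} and the index identities \eqref{index for sigma infty plus}--\eqref{index for sigma infty tau}, that no such splitting contributes new boundary strata beyond those listed in $A_4\cup A_5$. This runs parallel to the analyses in Sections~\ref{subsection: chain homotopy part 1} and \ref{subsection: chain homotopy part 3} for the $\tau\to \pm\infty$ limits, and it adapts to $\tau=\tau_\infty$ finite with the simplification that the middle-level geometry does not itself change in the limit.
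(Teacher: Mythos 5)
Your overall skeleton is right --- the limit building has a central level in $\overline{W}_{T'}$ and outer levels in $\overline{W}$, and the case where no level carries branched covers of the section at infinity or components asymptotic to $z_\infty$/$\delta_0$ does reduce, exactly as you say, to a two-level $I=1+I=1$ splitting giving $A_4$ or $A_5$ (this is Lemma~\ref{los angeles2} in the paper). But there is a genuine gap where you declare the "main obstacle" and then dispose of it as a routine adaptation. The configurations involving branched covers of $\sigma_\infty^{T'}$ and of $\sigma_\infty$ cannot be excluded by index additivity or by a dimension count: by Equation~\eqref{index for sigma infty tau} one has $I(\sigma_\infty^{T'})=-1$, so a degree~$p$ branched cover $\overline{v}_0'$ contributes $-p$ to the total ECH index and is not transversely cut out. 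Consequently there are multi-level buildings (with $3$, $4$, even $5$ levels, enumerated in Lemma~\ref{hojicha2} and Figure~\ref{figure: graphs2}) whose ECH indices sum to exactly $2$; your assertion that buildings with three or more levels "live in boundary strata of codimension at least two" is false for precisely these configurations, and your claim that regularity forces every nontrivial outer level to satisfy $I\geq 1$ fails for the levels containing $\overline{v}_*'$. The paper rules these out not by counting but by the renormalization argument: restricting to a neighborhood of $\sigma_\infty^{T'}$, applying the ansatz, rescaling, and invoking the Involution Lemma to show the limit map $w_0$ would have to send $\infty$ to a boundary point of $cl(B_{T'})$, contradicting its boundary condition (Lemma~\ref{vancouver2}). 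This step genuinely uses $m\gg 0$ and is where the hypothesis on $m$ enters; it is not a consequence of the transversality package.

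Two further points. First, the exclusion in Lemma~\ref{vancouver2} only holds for $T'$ in a fixed compact interval $[-T,T]$ with $m=m(T)$, so the proof must separately treat diagonal sequences with $m_i\to\infty$ and $T_i\to\pm\infty$; your proposal does not address this, while the paper handles it by carrying over the arguments of Lemmas~\ref{vancouver} and \ref{alishan2}. Second, your appeal to Lemma~\ref{lemma: codimension one}(1) to say that $\op{ind}=-1$ curves "do not appear at generic interior $\tau$" misreads the role of $\mathcal{T}_1$: the boundary points of the one-parameter family occur exactly at the isolated non-generic values $T'\in\mathcal{T}_1\cup\mathcal{T}_2$, and at $T'\in\mathcal{T}_1$ the middle level $\overline{v}_0$ \emph{does} contain an $\op{ind}=-1$ component (not passing through $\overline{\frak m}(T')$), with the level still having total $I(\overline{v}_0)=1$. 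This does not change the statement of the lemma, but it is how the breaking actually arises, and getting it wrong would derail the subsequent gluing count.
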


Lemma~\ref{cherries3} will be proved in Section~\ref{subsection: chain homotopy part 2}.
Using the technique of \cite[Prop. A.1 and A.2]{Li}, we can glue each of the pairs in $A_4$ and $A_5$. This gluing accounts for the term $\bdry' H'+H'\widetilde\bdry$ in Equation~\eqref{eqn chain homotopy part i}.

\begin{claim}\label{elmwood3}
For $\varepsilon,\delta>0$ sufficiently small, $$\bdry \mathcal{M}^{\{K_{{\frak p}(\tau),\delta}\}}=A_1\cup \widetilde{A}_2\cup A_3\cup A_4\cup A_5.$$
\end{claim}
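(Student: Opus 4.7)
The strategy is to assemble the claim by combining the three preceding analyses of $\bdry\mathcal{M}$, localized to the constrained moduli space $\mathcal{M}^{\{K_{{\frak p}(\tau),\delta}\}}$.

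First I would invoke Claims~\ref{elmwood} and \ref{elmwood2}, which respectively identify $\bdry_{\{+\infty\}}\mathcal{M}^{\{K_{{\frak p}(\tau),\delta}\}}$ with $A_1 \cup \widetilde{A}_2$ and $\bdry_{\{-\infty\}}\mathcal{M}^{\{K_{{\frak p}(\tau),\delta}\}}$ with $A_3$. What remains is to identify the part of the boundary corresponding to finite $\tau \in \R$, namely to show $\bdry_{(-\infty,+\infty)}\mathcal{M}^{\{K_{{\frak p}(\tau),\delta}\}} = A_4 \cup A_5$.

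For the inclusion $\subset$, I would start from Lemma~\ref{cherries3}, which gives $\bdry_{(-\infty,+\infty)}\mathcal{M} \subset A_4 \sqcup A_5$, and then restrict to $\mathcal{M}^{\{K\}}$. In a pair $(\overline v_1,\overline v_2)\in A_4$ the $\overline J$-holomorphic factor lives in $\R\times\overline N$ and carries no point constraint, so the marked point $\overline{\frak m}(\tau)$ must lie on the $\overline{W}_\tau$-level factor; the same holds for $A_5$ with the roles reversed. The counting argument of Claim~\ref{elmwood} then applies verbatim: since $K_{{\frak p}(\tau),\delta}$ is chosen disjoint from $\overline{\frak m}(\tau)$ by construction, for $\varepsilon,\delta>0$ sufficiently small the $\overline W_\tau$-level factor automatically meets $K_{{\frak p}(\tau),\delta}$, and any truncation of ends in $\mathcal{M}^{\{K\}}$ occurs in pairs mod $2$ by the transversality of the evaluation map at $K$ (Theorem~I.\ref{P1-thm: transversality of ev map}). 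For the reverse inclusion, each pair $(\overline v_1,\overline v_2)\in A_4\cup A_5$ whose $\overline W_\tau$-level component passes through both $\overline{\frak m}(\tau)$ and $K_{{\frak p}(\tau),\delta}$ can be glued by the Lipshitz-type gluing theorem \cite[Prop.~A.1, A.2]{Li}, producing a one-parameter family in $\mathcal{M}^{\{K\}}$ whose endpoint is $(\overline v_1,\overline v_2)$; this matches each such pair with a boundary point of $\mathcal{M}^{\{K\}}$.

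\textbf{Expected main obstacle.} The principal technical point is verifying the compatibility of the $\{K\}$-constraint with gluing and with the breaking at finite $\tau$. One must check that transverse gluing remains valid in the presence of the additional codimension-$2$ incidence condition at $K_{{\frak p}(\tau),\delta}$, and that no spurious boundary stratum appears from sequences in $\mathcal{M}^{\{K\}}$ whose intersection point with $K$ escapes to an end of the $\overline W_\tau$-level factor in the limit. Both issues are controlled as in the proof of Claim~\ref{elmwood}, using the regularity of $\mathcal{M}^{\{K_{{\frak p}(\tau),\delta}\}}$ supplied by Lemma~\ref{lemma: regularity of W tau family} together with the fact that $K$ is held away from the asymptotic ends by the $\{\rho<2\delta\}$-excision in the definition of $K_\tau$.
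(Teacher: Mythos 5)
Your proposal is correct and follows exactly the route the paper intends: the paper states Claim~\ref{elmwood3} without proof, leaving it as the combination of Claims~\ref{elmwood} and \ref{elmwood2} for the $\tau=\pm\infty$ strata with Lemma~\ref{cherries3} and the Lipshitz-type gluing for the finite-$\tau$ strata, which is precisely what you carry out. Your verification that the $\{K_{{\frak p}(\tau),\delta}\}$-constraint is automatically satisfied near $A_4\cup A_5$ (since the point constraint sits on the $\overline{W}_\tau$-level and the curve must meet the fiber over ${\frak p}(\tau)$ away from $\{\rho<2\delta\}$) is the same mechanism used in the proof of Claim~\ref{elmwood}.
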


\s\n
{\em Step 4 (Additional degenerations).} In this step we give the necessary modifications for
$$\mathcal{M}=\mathcal{M}^{I=2,n^*=m+|\mathcal{I}|}_{\{\overline{J}_\tau^\Diamond(\varepsilon,\delta,{\frak p}(\tau))\}}({\bf z},{\bf y}';\overline{\frak m}),$$
where ${\bf z}=\{z_{\infty,i}\}_{i\in\mathcal{I}}\cup {\bf y}$, ${\bf y}$ and ${\bf y}'$ are tuples in $S$, and $\mathcal{I}\subset\{1,\dots,2g\}$ with $|\mathcal{I}|>0$.

The following is proved in Section~\ref{subsection: additional degenerations I}:

\begin{lemma} \label{kyoho plus infty}
$\bdry_{\{+\infty\}}\mathcal{M}\subset A'_2$, where
\begin{align*}
A'_2&=\coprod_{\delta_0\bs\gamma, \{z_\infty\}\cup {\bf y}''}\left( \mathcal{M}^{I=1,n^*=m+|\mathcal{I}|-1, f_{\delta_0},\dagger}_{\overline{J}_+^\Diamond(\varepsilon,\delta,{\frak p}(+\infty))}({\bf z},\delta_0\bs\gamma)\times \mathcal{M}^{I=0,n^*=0}_{\overline{J}_-^\Diamond(\varepsilon,\delta,{\frak p}(+\infty))}(\delta_0\bs\gamma,\{z_\infty\}\cup {\bf y}'')\right.\\
&  \qquad \qquad \qquad \left.\times~~~ \mathcal{M}^{I=1,n^*=1}_{\overline{J}}(\{z_\infty\}\cup {\bf y}'',{\bf y}')\right),
\end{align*}
if ${\bf y}'=\{x_i^j\}\times{\bf y}''$ for some $x_i^j$ and $A'_2=\varnothing$ otherwise.  The disjoint union for $A'_2$ ranges over all $\delta_0\bs\gamma$ such that $\bs\gamma\in \widehat{\mathcal{O}}_{2g-1}$ and all $\{z_\infty\}\cup {\bf y}''$ such that ${\bf y}'$ can be written as $\{x_i^j\}\cup {\bf y}''$ for some $x_i^j$. Here we have omitted the potential contributions of connector components and we are writing $x_i^0:=x_i$ and $x_i^1:=x_i'$.
\end{lemma}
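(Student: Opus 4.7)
The plan is to parallel the proof of Lemma~\ref{cherries4}, highlighting only the modifications needed to accommodate the $|\mathcal{I}|\geq 1$ positive $z_\infty$-asymptotes and to rule out the analog of the $A_1$ decomposition (which is the essential new point).

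I will take a sequence $\overline u_i\in\mathcal{M}$ with $\tau_i\to+\infty$, extract an SFT-convergent subsequence, and describe the limit building as a top level $\overline u_+$ in $\overline W_+$, a bottom level $\overline u_-$ in $\overline W_-$, and possibly some connector components in $\R\times\overline N$. Because $\overline{\frak m}(+\infty)\in \overline W_-$, the point constraint will be carried by $\overline u_-$ as a codimension-$2$ condition. The next step is to apply index additivity: $I(\overline u_+)+I(\overline u_-)+I(\text{conn.})=2$ and $n^*(\overline u_+)+n^*(\overline u_-)+n^*(\text{conn.})=m+|\mathcal{I}|$, with nonnegative connector contributions.

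The heart of the argument will be to enumerate the resulting splittings and rule out every pattern except the three-level $A'_2$ one. Most patterns are excluded exactly as in Lemma~\ref{cherries4}. The main obstacle, and the reason the lemma is stated without an $A_1$-style contribution, is the analog of the $A_1$ configuration, in which $\overline u_+\in \mathcal{M}^{I=0}_{\overline J_+^\Diamond}({\bf z},\gamma)$ for some $\gamma\in\widehat{\mathcal{O}}_{2g}$ (i.e.\ with no $\delta_0$ factors). I will argue that each of the $|\mathcal{I}|$ positive $z_\infty$-ends of $\overline u_+$ must be balanced at the negative end either by a $\delta_0$-asymptote or by a branched cover of $\sigma_\infty^+$ appearing in $\overline u_+'$; combined with the index budget $I(\overline u_+)=0$, the prescribed $n^*$-value, and Lemma~\ref{lemma: codimension one}, this will force $\gamma$ to contain at least one $\delta_0$-factor, contradicting $\gamma\in\widehat{\mathcal{O}}_{2g}$.

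Having excluded the $A_1$ analog, the only remaining admissible configuration is the $A'_2$ three-level building, with $\overline u_+$ of index $1$ carrying the full $z_\infty$-data and one $\delta_0$-negative end of total $n^*=m+|\mathcal{I}|-1$, an intermediate section-at-infinity level in $\overline W_-$ of index $0$ producing a single $z_\infty$-exit, and a bottom thin strip joining $z_\infty$ to some $x_i^j$ (which forces the decomposition ${\bf y}'=\{x_i^j\}\cup{\bf y}''$, and $A'_2=\varnothing$ otherwise). The asymptotic eigenfunction condition $f_{\delta_0}$ at the $\delta_0$-end, the triviality of connector components, and the fine $n^*$ bookkeeping will be inherited verbatim from the $A_2$ portion of Lemma~\ref{cherries4}, with $m$ systematically replaced by $m+|\mathcal{I}|$.
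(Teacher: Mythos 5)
Your overall architecture is reasonable, but the key exclusion is argued by the wrong mechanism, and the claim that everything else is inherited ``verbatim with $m\mapsto m+|\mathcal{I}|$'' hides most of the actual work. The dichotomy you propose for ruling out the $A_1$-analog --- that each positive $z_\infty$-end of $\overline{u}_+$ must be balanced at the negative end by a $\delta_0$-asymptote or by a branched cover of $\sigma_\infty^+$, so that $\gamma$ is forced to contain a $\delta_0$-factor --- is false: a component of $\overline{v}_+''$ with a positive end at $z_\infty$ can perfectly well be a section of $\overline{W}_+-W_+$ from $z_\infty$ to the elliptic orbit $e$, which contributes $n^*=1$ and leaves $\gamma\in\widehat{\mathcal{O}}_{2g}$ untouched. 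The paper's exclusion (Lemma~\ref{eeyore}) runs differently: when $\overline{v}_-'=\varnothing$ the point constraint already forces $n^*(\overline{v}_-'')\geq m$ and $I(\overline{v}_-'')\geq 2$; one then shows $p_{0,b+1}=0$ via the higher-order asymptotic expansion of Lemma~\ref{lemma 2012}, kills $z_\infty\to h$ sections by the choice of $h$ from Section~\ref{subsubsection: choice of hyperbolic orbit h}, and observes that each surviving $z_\infty\to e$ section costs $I\geq 1$, so the total ECH index is $\geq 3$. That is an index-budget argument, not a homological-balancing one, and your plan as written would not produce the needed contradiction.

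Second, the reductions you propose to import from Lemma~\ref{cherries4} do not carry over verbatim, and the paper flags exactly where they fail. The nonnegativity of the ECH indices of the upper levels (Lemma~\ref{nonnegative ECH indices 5}) requires the new notion of an \emph{almost alternating} pair, because the data at the positive end of $\overline{v}'_{1,a}$ now contributes thin pairs to the groomings; one must split into representatives $\check C_1,\check C_2,\check C_3$ and invoke Lemmas~I.\ref{P1-calc of almost sum} and I.\ref{P1-calc of almost sum 2}. Likewise the analog of Lemma~\ref{sencha} (Lemma~\ref{eliminate fiber components 2}) is substantially more involved, using rescaling and the Involution Lemmas to upgrade several index bounds; and even once $\overline{v}_-'\neq\varnothing$ is established, you still need Lemma~\ref{pooh} and the case analysis of Lemmas~\ref{jeeves} and \ref{piglet} to land exactly in $A_2'$. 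So the proposal identifies the correct target configuration but supplies neither the correct exclusion mechanism nor the genuinely new technical content.
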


If $\mathcal{I}\not=\varnothing$ (i.e., $|\mathcal{I}|\geq 1$), then $\widetilde\Phi({\bf z})=0$ by Lemma~I.\ref{P1-lemma: value of widetilde Phi} and we have $\Psi\circ\widetilde\Phi({\bf z})=0$; this is consistent with the analog of $A_1$ being empty. On the other hand, gluing the triples in $A_2'$ accounts for the term $\widetilde \bdry_1\circ \widetilde V$.

Next, the following is proved in Section~\ref{subsection: additional degenerations II}:

\begin{lemma}\label{kyoho minus infty}
$\bdry_{\{-\infty\}}\mathcal{M}\subset A_3'$, where:
\begin{align*}
A_3'&=\coprod_{{\bf y}_2,{\bf y}_4} \left(\mathcal{M}_{\overline{J}_{-\infty,1}^\Diamond(\varepsilon,\delta,{\frak p}(-\infty))}^{I=0,n^*= |\mathcal{I}|} ({\bf z},{\bf y}_2,{\bf y}',\overline{\hh}({\bf y}_4))\times \mathcal{M}_{\overline{J}_{-\infty,2}^\Diamond(\varepsilon,\delta,{\frak p}(-\infty))}^{I=2,n^*=m}({\bf y}_4,{\bf y}_2;\overline{\frak m}(-\infty))\right)
\end{align*}
and the summation is over ${\bf y}_2$ and ${\bf y}_4$ as in Lemma~\ref{cherries5}.
\end{lemma}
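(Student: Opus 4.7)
The plan is to mirror the argument of Lemma~\ref{cherries5} verbatim on the bulk of the curve, with the only new feature being that the $z_\infty$-punctures listed in ${\bf z}=\{z_{\infty,i}\}_{i\in\mathcal{I}}\cup{\bf y}$ lie exclusively at the positive end and therefore are inherited by the top component of the limit building. First I would take a sequence $\overline{u}^k\in\mathcal{M}$ with $\tau_k\to -\infty$ and apply SFT compactness, together with the fact that $\overline{\frak m}(\tau_k)\to \overline{\frak m}(-\infty)\in B_{-\infty,2}$, to produce a limit building whose constituents sit in $\overline{W}_{-\infty,1}$, $\overline{W}_{-\infty,2}$, and possibly in the symplectization ends $\R\times\overline{N}_2$ (above $\overline{W}_{-\infty,1}$) and $[-2,2]\times\R\times\overline{S}$ (above and below $\overline{W}_{-\infty,2}$). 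The compatibility of the Lagrangian boundary conditions forces any intermediate level to be a union of trivial strips on $\overline{\bf a}\cup\overline{\bf b}\cup\overline{h}(\overline{\bf a})\cup\overline{h}(\overline{\bf b})$, so after discarding these the building reduces to a pair $(\overline{u}_1,\overline{u}_2)$.

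Next I would use additivity of the ECH index and of $n^*$. Since $\overline{u}_2$ passes through the generic point constraint $\overline{\frak m}(-\infty)$, a codimension-two condition on which the family $\{\overline{J}^\Diamond_{-\infty,2}\}$ is regular, we must have $I(\overline{u}_2)\geq 2$; together with $I(\overline{u}_1)\geq 0$ and $I(\overline{u}_1)+I(\overline{u}_2)=2$ this yields $I(\overline{u}_1)=0$ and $I(\overline{u}_2)=2$. Lemma~\ref{lemma: ECH for W minus infinity 2} then forces the top and bottom ends of $\overline{u}_2$ to be tuples ${\bf y}_4=\{x^\#_{ij(i)}\}$ and ${\bf y}_2=\{x^\#_{ik(i)}\}$ with all $j(i)$ odd and all $k(i)$ even, and the same lemma also pins down $n^*(\overline{u}_2)=m$ since $\overline{u}_2$ is a $\overline{W}_{-\infty,2}$-curve. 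Consequently $n^*(\overline{u}_1)=|\mathcal{I}|$, which is exactly the contribution of the $z_\infty$-punctures at the top end of $\overline{u}_1$. Matching of asymptotic conditions at the interior ends $t=\pm\infty$ identifies the ends of $\overline{u}_2$ with the left/right ends of $\overline{u}_1$, giving $(\overline{u}_1,\overline{u}_2)\in A_3'$.

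Conversely, every pair in $A_3'$ arises as a limit: both factors are regular (by the choice of $\{\overline{J}^\Diamond_{-\infty,1}\}$ and $\{\overline{J}^\Diamond_{-\infty,2}\}$ as $\{K_{{\frak p}(-\infty),\delta}\}$-regular in the sense of Section~\ref{subsubsection: regularity tomodachi}), the ECH indices add correctly, and the matching of ends together with the one-dimensional family parameter $\tau$ delivers the correct dimension count for gluing, exactly as in the proof of Lemma~\ref{cherries5}. A verbatim application of the Claim~\ref{elmwood} style argument shows that the truncation to $\mathcal{M}^{\{K_{{\frak p}(\tau),\delta}\}}$ does not lose any element near $\tau=-\infty$, since each $\overline{u}_2$ genuinely passes through $K_{{\frak p}(-\infty),\delta}$ for $\varepsilon,\delta>0$ sufficiently small.

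The main obstacle is the usual one of ruling out exotic degenerations: (i) the appearance of additional intermediate cylindrical levels in $\R\times\overline{N}_2$ that might absorb some of the $z_\infty$-ends, and (ii) sphere or disk bubbling onto the section at infinity on either piece. For (i), the Morse--Bott analysis of Section~I and the fact that $\mathcal{I}\subset\{1,\dots,2g\}$ forces any such level to contribute non-negative ECH index, so the index budget $I=2$ combined with $I(\overline{u}_2)\geq 2$ collapses the intermediate levels to trivial strips. For (ii), the intersection number $n^*$ is additive and the computation of $I(\sigma^{-\infty,i}_\infty)$ in the manner of Equations~\eqref{index for sigma infty plus}--\eqref{index for sigma infty tau} ensures that any branch cover of $\sigma^{-\infty,i}_\infty$ would force $I$ to exceed $2$. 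Once these degenerations are excluded, the identification $\bdry_{\{-\infty\}}\mathcal{M}\subset A_3'$ follows.
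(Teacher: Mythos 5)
Your top-level strategy (split according to whether the limit building has components covering the sections at infinity or ends at $z_\infty$, then use index additivity to pin down the two-level building) is the right skeleton, and your treatment of the ``clean'' case $\overline{v}'_1=\overline{v}'_2=\varnothing$ matches Lemma~\ref{owl5}. But the core of your argument has a genuine gap: you dispose of the degenerate cases (i) and (ii) by asserting that every intermediate level has nonnegative ECH index and that a branched cover of $\sigma_\infty^{-\infty,i}$ ``would force $I$ to exceed $2$.'' Neither claim is available here. Branched covers of $\sigma_\infty^{-\infty,1}$ are precisely the components of \emph{negative} ECH index (Lemma~\ref{nonnegative ECH indices 3}(2)), so they lower, not raise, the index budget; and the nonnegativity of $I(\overline{v}_{L,j})$, $I(\overline{v}_{R,j})$ is only established under the hypothesis $\overline{v}'_1=\varnothing$ (Lemma~\ref{nonnegative ECH indices 3}(4)) --- the paper explicitly states in the outline of the proof of Lemma~\ref{cherries5} that the authors could not control the groomings well enough to prove it when $\overline{v}'_1\not=\varnothing$. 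Consequently the index count alone cannot close the argument, and this is exactly why the paper substitutes the truncation/interval-contraction (``tropical curve'') machinery of Sections~\ref{truncation}--\ref{second case}: the weight function $\mathcal{W}_i$ and the Comparison Lemma~\ref{comparison lemma} are what actually rule out $\overline{v}'_2\not=\varnothing$ (Lemmas~\ref{alishan}, \ref{alishan2}, and in the present setting Lemmas~\ref{alishan3} and \ref{alishan4}).

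Beyond that, the part-II setting with ${\bf z}=\{z_{\infty,i}\}_{i\in\mathcal{I}}\cup{\bf y}$ at the positive end introduces new work that your proposal does not engage with: the dichotomy of Lemma~\ref{alternative} (the $z_\infty$-ends may all project to thin sectors rather than sweeping out $D_{\rho_0}$), the exclusion of fiber components, removable punctures, and interior intersections with the section at infinity (Lemmas~\ref{eliminate fiber components 3} and \ref{eliminate fiber components 3 part 2}, the latter requiring its own rescaling/Involution Lemma argument), and especially Lemma~\ref{every comp trivial strip}, which shows every component of $\overline{v}^\sharp_{L,j}$, $\overline{v}^\sharp_{R,j}$ is a thin strip via the writhe/grooming computations of Claims~\ref{claim: summing ECH indices} and \ref{calc of I}. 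These are prerequisites for even running the tropical argument. Finally, note that the lemma only asserts the inclusion $\bdry_{\{-\infty\}}\mathcal{M}\subset A_3'$, so the gluing/surjectivity discussion in your third paragraph, while harmless, is not needed here.
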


The following is proved in Section~\ref{subsection: additional degenerations III}. The corresponding gluing accounts for the term $\bdry H +H\widetilde \bdry$ in Equation~\eqref{eqn: chain homotopy part I variant} when $|\mathcal{I}|\geq 1$.

\begin{lemma} \label{kyoho middle}
$\bdry_{(-\infty,+\infty)} \mathcal{M}\subset A_4'\sqcup A_5'$, where:
\begin{align*}
A_4'&=\coprod_{{\bf z}'}\left(\mathcal{M}^{I=1,n^*\leq |\mathcal{I}|}_{\overline{J}}({\bf z},{\bf z}')\times \mathcal{M}^{I=1,n^*\leq m+|\mathcal{I}|}_{\{\overline{J}_\tau^\Diamond(\varepsilon,\delta,{\frak p}(\tau))\}}({\bf z}',{\bf y}';\overline{\frak m})\right);\\
A_5'&=\coprod_{{\bf y}'''\in \mathcal{S}_{{\bf b},\hh({\bf b})}}\left(\mathcal{M}^{I=1,n^*=m+|\mathcal{I}|}_{\{\overline{J}_\tau^\Diamond(\varepsilon,\delta,{\frak p}(\tau))\}} ({\bf z},{\bf y}''';\overline{\frak m})\times \mathcal{M}^{I=1}_{J}({\bf y}''',{\bf y}')\right).
\end{align*}
Moreover, if $\overline{v}_{1,1}\in \mathcal{M}^{I=1,n^*\leq |\mathcal{I}|}_{\overline{J}}({\bf z},{\bf z}')$, then either (i) $\overline{v}_{1,1}^\sharp$ is a thin strip and $\overline{v}_{1,1}^\flat$ is a union of trivial strips, or (ii) $\overline{v}_{1,1}^\sharp=\varnothing$ and $\overline{v}_{1,1}^\flat$ has image in $W$.
\end{lemma}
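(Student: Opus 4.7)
The plan mirrors the proof of Lemma~\ref{cherries3}, with extra bookkeeping for the $z_\infty$-asymptotes at the positive end. Apply SFT compactness to a sequence $\overline{u}_i\in\mathcal{M}$ whose parameters $\tau_i$ converge to some $\tau_0\in(-\infty,+\infty)$ and whose limit leaves $\mathcal{M}$. Because $\tau_0$ is finite, the base $B_{\tau_0}$ does not degenerate, so the only possible splitting in the compactness limit is the insertion of a neck of $\overline{W}=\R\times[0,1]\times\overline{S}$ at either the positive or the negative strip-like end of $B_{\tau_0}$. This yields a two-level building in which exactly one level lives in $\overline{W}_{\tau_0}$ and the other in $\overline{W}$, glued along a matching HF-type asymptote. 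Sphere bubbles, branched covers of $\sigma_\infty^{\tau_0}$, multiply covered components, and simultaneous multi-level breakings are excluded by Lemmas~\ref{lemma: regularity of W tau family} and \ref{lemma: codimension one}, together with positivity of intersection with $\sigma_\infty^{\tau_0}$.

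The moduli space $\mathcal{M}$ is $1$-dimensional (Fredholm index $2$, plus $1$ for the parameter $\tau$, minus $2$ for the constraint $\overline{\frak m}$), so its codimension-$1$ boundary consists of buildings in which each level has $I=1$ and the constraint $\overline{\frak m}$ is imposed on the $\overline{W}_{\tau_0}$-level. A neck at the negative end yields $A_5'$: since $\mathbf{y}'\in\mathcal{S}_{\mathbf{b},h(\mathbf{b})}$ is disjoint from $z_\infty$, additivity of $n^*$ together with positivity of intersection with $\sigma_\infty$ forces the entire intersection number $n^*=m+|\mathcal{I}|$ to be carried by the $\overline{W}_{\tau_0}$-level, hence $\mathbf{y}'''\in\mathcal{S}_{\mathbf{b},h(\mathbf{b})}$ and the negative-level Floer strip has image in $W$, identifying it with an element of $\mathcal{M}_J^{I=1}(\mathbf{y}''',\mathbf{y}')$. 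A neck at the positive end produces $A_4'$, with the intermediate asymptote $\mathbf{z}'$ possibly still containing $z_\infty$-points and with the bound $n^*(\overline{v}_{1,1})\leq|\mathcal{I}|$ inherited from positivity of intersection with $\sigma_\infty$ on the upper level.

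It remains to establish the structural dichotomy for $\overline{v}_{1,1}\in\mathcal{M}_{\overline{J}}^{I=1,n^*\leq|\mathcal{I}|}(\mathbf{z},\mathbf{z}')$. Decomposing $\overline{v}_{1,1}=\overline{v}_{1,1}'\cup\overline{v}_{1,1}^\sharp\cup\overline{v}_{1,1}^\flat\cup\overline{v}_{1,1}^f$, the genericity of $\overline{J}$ and the componentwise index inequality rule out nonempty $\overline{v}_{1,1}'$ and $\overline{v}_{1,1}^f$. An ECH-type index computation parallel to the ones carried out for $\widetilde{\Phi}$ and $\widetilde{V}$ in Section~I then distributes the total index $1$ among the remaining components, leaving exactly two possibilities: (i) $\overline{v}_{1,1}^\sharp$ is a single thin strip from some $z_{\infty,i}$ to an intersection point $x_i^j$, contributing $n^*=1$, with the remaining $2g-1$ ends carried by trivial strips forming $\overline{v}_{1,1}^\flat$; or (ii) $\overline{v}_{1,1}^\sharp=\varnothing$, every $z_\infty$-end of $\mathbf{z}$ is matched to a $z_\infty$-end of $\mathbf{z}'$ by a trivial $z_\infty$-strip, $n^*(\overline{v}_{1,1})=0$, and $\overline{v}_{1,1}^\flat$ has image in the compact slice $W=\R\times[0,1]\times S$ by positivity of intersection with $\sigma_\infty$.

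The hard part of the argument is precisely this structural dichotomy: ruling out \emph{partial} absorption of $z_\infty$-asymptotes into $\overline{v}_{1,1}^\sharp$ requires the full thin-strip index catalog from Section~I and depends crucially on the rotational choices made for $\overline{\mathbf{a}}$ and $\overline{\mathbf{b}}$ near $z_\infty$ and on the standard complex structure on $D_\varepsilon$, which together force the rigid thin-strip behavior and prevent exotic $I=1$ configurations with intermediate values of $n^*$.
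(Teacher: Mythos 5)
The overall skeleton of your argument (SFT compactness, an index count showing that each level of the limit building must have $I=1$ with the point constraint carried by the $\overline{W}_{T'}$-level, and the two cases of breaking at the positive or the negative end) matches the paper's route through Lemma~\ref{owl4}. The structural dichotomy for $\overline{v}_{1,1}$ also comes out correctly, although the paper obtains it more directly from the intersection bound: since $n^*(\overline{v}_0)\geq m$ forces $n^*(\overline{v}_{1,1})\leq|\mathcal{I}|\leq 2g$, a negative end of $\overline{v}_{1,1}^\sharp$ at $z_\infty$ is impossible because it would contribute $n^*\geq k_0-1\gg 2g$ by Lemma~\ref{intersezione prime}, and every positive end at $z_\infty$ must then satisfy $n^*(\mathcal{E}_+)=1$, i.e.\ be a thin strip.

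The gap is your one-sentence exclusion of ``branched covers of $\sigma_\infty^{\tau_0}$'' via regularity and positivity of intersections. Neither mechanism applies: the section at infinity $\sigma_\infty^{T'}$ is holomorphic for \emph{every} admissible $\overline{J}_{T'}$, so it cannot be removed by genericity; it has $I=\op{ind}=-1$ by Equation~\eqref{index for sigma infty tau part 2}'s analogue \eqref{index for sigma infty tau}, so index positivity of the remaining components does not rule it out (the deficit $-p$ can be absorbed by thin strips below, as in Lemma~\ref{hojicha2}); and it contributes $0$ to $n^*$, so intersection positivity says nothing. Eliminating the case $\overline{v}'_0\not=\varnothing$ is the substantive content of the proof: the paper first disposes of fiber components, removable boundary punctures, and interior intersections $\overline{v}'_*\cap\overline{v}''_*$ (Lemma~\ref{eliminate fiber components}), and then runs the rescaling argument of Lemma~\ref{owl3} (modeled on Lemma~\ref{vancouver2}), which truncates $\overline{u}_i$ near $\sigma_\infty^{\tau_i}$, projects to $D_{\rho_0}$, rescales, and invokes the Involution Lemmas to reach a contradiction. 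Moreover, that argument only works for $m\gg 0$ depending on a compact interval $[-T,T]$ of the parameter $\tau$, so the proof must separately handle diagonal sequences with $\tau_i\to\pm T_i$ and $T_i\to\infty$ (by the methods of Lemmas~\ref{eeyore}, \ref{alishan3}, and \ref{alishan4}). None of this appears in your proposal, so as written it does not establish the lemma.
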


As before, the analog of Claim~\ref{elmwood3} holds.

\s\n {\em Step 5 (The map $\Theta_0$).}
The map $\Theta_0$ is defined as follows:
$$\Theta_0: \widetilde{CF}({\bf a},\hh({\bf a}))\to \widehat{CF}({\bf b},\hh({\bf b})),$$
\begin{align*}
\langle \Theta_0 ({\bf z}), {\bf y}'\rangle &=\sum_{{\bf y}_2,{\bf y}_4} \# \mathcal{M}_{\overline{J}_{-\infty,1}}^{I=0,n^*=|\mathcal{I}|} ({\bf z},{\bf y}_2,{\bf y}',\overline{\hh}({\bf y}_4)).
\end{align*}

\begin{lemma} \label{theta zero is an isomorphism}
The map $\Theta_0$ induces an isomorphism on the level of homology.
\end{lemma}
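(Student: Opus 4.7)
The plan is to further degenerate $\overline{W}_{-\infty,1}$ so as to realize $\Theta_0$, up to chain homotopy, as the composition of two standard Heegaard Floer triangle maps, each of which is a continuation quasi-isomorphism since $\overline{\bf b}$ is a small transverse Hamiltonian pushoff of $\overline{\bf a}$.

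First I would introduce a $1$-parameter family of bases $\{B^T_{-\infty,1}\}_{T\ge 0}$ by stretching $B_{-\infty,1}$ along a properly embedded arc $\gamma\subset B_{-\infty,1}$ going from a point on $\bdry_1\cup\bdry_2$ to a point on $\bdry_3\cup\bdry_4$, chosen so that as $T\to\infty$ the base degenerates into an upper-right triangle $B_\triangle^{\mathrm{ur}}$ and a lower-left triangle $B_\triangle^{\mathrm{ll}}$ joined along a long strip. The Lagrangian boundary conditions are $\overline{\bf a}, \overline{\bf b}, \overline{h}(\overline{\bf a})$ on $B_\triangle^{\mathrm{ur}}$ (with corners ${\bf z}$, ${\bf y}_2$, and a new intermediate corner ${\bf y}''$) and $\overline{\bf b}, \overline{h}(\overline{\bf b}), \overline{h}(\overline{\bf a})$ on $B_\triangle^{\mathrm{ll}}$ (with corners ${\bf y}''$, ${\bf y}'$, and $\overline{h}({\bf y}_4)$), where ${\bf y}''\in \mathcal{S}_{{\bf b},h({\bf a})}$ is the intermediate generator carried by the long strip.

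Next I would analyze the boundary of the $1$-dimensional moduli space of $I=1$ almost multisections over $\{B^T_{-\infty,1}\}_{T\in[0,\infty]}$, in complete analogy with Lemmas~\ref{cherries4}--\ref{cherries3} and \ref{kyoho plus infty}--\ref{kyoho middle}. At $T=0$ the boundary contributes $\bdry\circ\Theta_0+\Theta_0\circ\bdry$; at $T=\infty$ it contributes a composition $F_2\circ F_1$, where
$$F_1\colon \widetilde{CF}({\bf a}, h({\bf a})) \to \widehat{CF}({\bf b}, h({\bf a})), \qquad F_2\colon \widehat{CF}({\bf b}, h({\bf a})) \to \widehat{CF}({\bf b}, h({\bf b}))$$
are the Heegaard Floer triangle maps whose ``third'' corners are summed over summands of the top generators $\Theta_{\overline{\bf a}, \overline{\bf b}}$ and $\overline{h}(\Theta_{\overline{\bf b}, \overline{\bf a}})$ respectively. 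The restriction of ${\bf y}_2$ and ${\bf y}_4$ to such summands is forced by the $I=0$ condition on each triangular component, exactly as in Lemma~\ref{lemma: ECH for W minus infinity 2}. This yields a chain homotopy $\Theta_0\simeq F_2\circ F_1$.

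Finally, since $\overline{\bf b}$ is an $\varepsilon_0$-small transverse pushoff of $\overline{\bf a}$, the map $F_1$ is the standard Heegaard Floer continuation (``nearest point'') map induced by a Hamiltonian isotopy from $\overline{\bf a}$ to $\overline{\bf b}$, and is therefore a quasi-isomorphism; the same argument applies to $F_2$ using the pushoff of $\overline{h}(\overline{\bf a})$ to $\overline{h}(\overline{\bf b})$. Consequently $F_2\circ F_1$, and hence $\Theta_0$, induces an isomorphism on homology. The main technical obstacle will be the compactness and transversality analysis at $T=\infty$: ruling out spurious contributions from branched covers of $\sigma_\infty^{-\infty,1}$, connector components at the negative Morse-Bott torus $\bdry N$, and ends at $z_\infty^p(\overrightarrow{\mathcal D})$. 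These are controlled via the good-radial-ray convention for the hyperbolic orbit $h$ of Section~\ref{subsubsection: choice of hyperbolic orbit h} and the generic perturbation $\{\overline{J}_{-\infty,1}^\Diamond\}$ of Lemmas~\ref{lemma: regularity of W tau family}--\ref{lemma: codimension one}, in exact parallel with the preceding sections of this paper.
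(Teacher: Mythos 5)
Your strategy is the same as the paper's: degenerate $B_{-\infty,1}$ along a diagonal into two triangles, realize $\Theta_0$ up to chain homotopy as the composition $\Theta_B\circ\Theta_T$ of two triangle maps obtained by tensoring with the top generators $\Theta_{\overline{\bf b},\overline{\bf a}}$ and $\Theta_{\overline{h}(\overline{\bf a}),\overline{h}(\overline{\bf b})}$, and conclude because each factor is the nearest-point continuation map associated to the small transverse pushoff $\overline{\bf a}\rightsquigarrow\overline{\bf b}$. Your identification of the intermediate complex $\widehat{CF}({\bf b},h({\bf a}))$ and of the two triangle maps matches the paper's $\Theta_T$ and $\Theta_B$ exactly.

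There is, however, one genuine gap. The list of degenerations you propose to rule out (branched covers of $\sigma_\infty^{-\infty,1}$, connectors at $\bdry N$, ends at $z_\infty$) omits the one breaking that is actually problematic here and is not excluded by generic regularity. The constraint that ${\bf y}_2$ and $\overline{h}({\bf y}_4)$ be summands of the top generators is imposed on the $\overline{W}_{-\infty,1}$-curve only externally, via the adjacent $\overline{W}_{-\infty,2}$-level (Lemma~\ref{lemma: ECH for W minus infinity 2}); during the homotopy the parametrized moduli space can therefore break at the ${\bf y}_2$-side into a two-level building $(\overline{u}_1,\overline{u}_2)$, where $\overline{u}_1$ is an $I=-1$, $n^*=|\mathcal{I}|$ curve with ends ${\bf z},{\bf y}_2',{\bf y}',\overline{h}({\bf y}_4)$ and $\overline{u}_2\in\mathcal{M}^{I=1,n^*=0}_{\overline{J}_{-\infty,2}}({\bf y}_2,{\bf y}_2')$ with ${\bf y}_2-{\bf y}_2'=\{x^\#_{i1}\}$ or $\{x^\#_{i3}\}$. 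The total index is dimensionally consistent with the boundary of a $1$-parameter family, so transversality alone does not kill it, and it spoils the identity $\Theta_0\simeq \Theta_B\circ\Theta_T$ if present. The paper disposes of it by a geometric observation specific to Figure~\ref{figure: aandb}: the only possible component of $\overline{u}_1$ with left end $x^\#_{i1}$ projects to the quadrilateral $Q$ with edges $\overline{a}_i,\overline{b}_i,\overline{h}(\overline{b}_i),\overline{h}(\overline{a}_i)$, and that component has ECH index $I=0$, contradicting $I(\overline{u}_1)=-1$. You need to supply this step (or an equivalent one) for the chain homotopy to close up.
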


\begin{proof}[Proof of Lemma~\ref{theta zero is an isomorphism}.]
We degenerate the base $B_{-\infty,1}$ as given in Figure~\ref{figure: base degeneration 3}. Slightly more precisely, we take $B_{-\infty,1,\tau'}$, $\tau'\in[0,+\infty)$, which is obtained from
$$\{-2\leq s \leq 2\}\cup \{ 0\leq t\leq 1, s\leq 2\} \cup \{\tau'\leq t \leq \tau'+1, s\geq -2\}\subset \R^2=\C$$
by smoothing the corners and use the complex structure $j_{-\infty,1,\tau'}$ induced from the standard complex structure on $\C$; then $B_{-\infty,1,\tau'=0}=B_{-\infty,1}$ and $j_{-\infty,1,\tau'=0}=j_{-\infty,1}$. Let $\overline{W}_{-\infty,1,\tau'}=B_{-\infty,1,\tau'}\times \overline{S}$ and define the almost complex structures $\overline{J}_{-\infty,1,\tau'}$ on $\overline{W}_{-\infty,1,\tau'}$ in the same way as on $\overline{W}_{-\infty,1}$ with $j_{-\infty,1}$ replaced by $j_{-\infty,1,\tau'}$.
\begin{figure}[ht]
\begin{center}
\psfragscanon
\psfrag{a}{\small $\overline{\bf a}$}
\psfrag{b}{\small $\overline{\hh}(\overline{\bf a})$}
\psfrag{c}{\small $\overline{\bf b}$}
\psfrag{d}{\small $\overline{\hh}(\overline{\bf b})$}
\includegraphics[width=7cm]{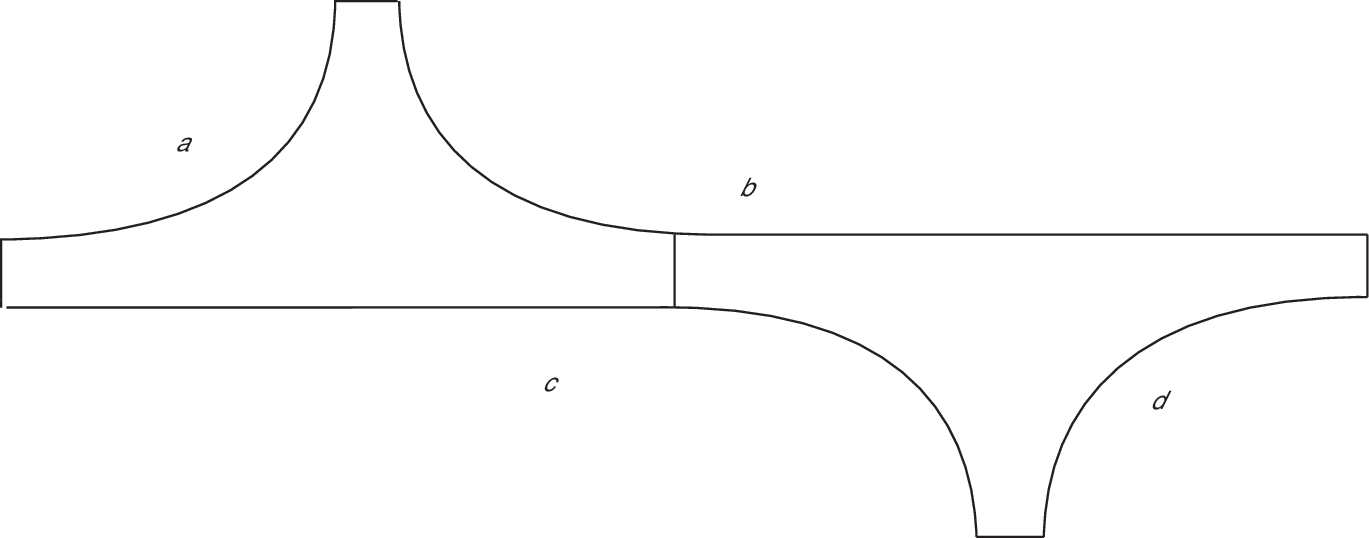}
\end{center}
\caption{Degeneration of the base $B_{-\infty,1}$.}
\label{figure: base degeneration 3}
\end{figure}

The $1$-parameter family $(\overline{W}_{-\infty,1,\tau'}, \overline{J}_{-\infty,1,\tau'})$ induces a map $\Theta_0$ which is chain homotopic to $\Theta_B\circ \Theta_T$, where:
$$\Theta_T:\widetilde{CF}({\bf a}, \hh({\bf a}))\to \widehat{CF}({\bf b},\hh({\bf a})),$$
$$\Theta_B:\widehat{CF}({\bf b},\hh({\bf a}))\to \widehat{CF}({\bf b}, \hh({\bf b}))$$
are defined by tensoring with the top generators $\Theta_{\overline{\bf b},\overline{\bf a}}\in \widehat{CF}(\overline{\bf b}, \overline{\bf a})$ and $\Theta_{\overline{\hh}(\overline{\bf a}),\overline{\hh}(\overline{\bf b})}\in \widehat{CF}(\overline{\hh}(\overline{\bf a}),\overline{\hh}(\overline{\bf b}))$.  There is one undesirable type of breaking which can a priori occur as we vary $\tau'$: a two-level building $\overline{u}_1\cup \overline{u}_2$, where
\begin{itemize}
\item $\overline{u}_1$ is an index $I=-1$, $n^*=|\mathcal{I}|$ curve in $\overline{W}_{-\infty,1,\tau'}$ with ends ${\bf z}, {\bf y}_2', {\bf y}', \overline{\hh}({\bf y_4})$;
\item $\overline{u}_2\in \mathcal{M}_{\overline{J}_{-\infty,2}}^{I=1,n^*=0}({\bf y}_2,{\bf y}_2')$; and
\item ${\bf y}_2$ and ${\bf y}_2'$ differ by replacing one $x_{ik(i)}^\#$ by $x_{ij(i)}^\#$ where $j(i)$ is odd and $k(i)$ is even.
\end{itemize}
One can verify that the only possible component of $\overline{u}_1$ with left end $x^\#_{i1}$ projects to the quadrilateral $Q$ with edges $\overline{a}_i,\overline{b}_i,\overline{\hh}(\overline{b}_i), \overline{\hh}(\overline{a}_i)$ in Figure~\ref{figure: aandb}.  However, the component corresponding to $Q$ has ECH index $I=0$, which is a contradiction. The index calculation basically follows from the fact that $Q$ has three angles smaller than $\pi$ and one larger.
 This implies that undesirable breakings do not exist and that all the breakings contribute toward the chain homotopy.

Both $\Theta_T$ and $\Theta_B$ --- and hence $\Theta_0$ --- induce isomorphisms on the level of homology.
\end{proof}

This completes the proof of Theorem~\ref{thm: chain homotopy part i}, assuming the results from Sections~\ref{subsection: chain homotopy part 1}--\ref{subsection: additional degenerations III}.
\end{proof}

\subsection{Degeneration at $+\infty$}
\label{subsection: chain homotopy part 1}

In this subsection we study the limits of holomorphic maps to $\overline{W}_{\tau}$ as $\tau \to \infty$, i.e., when $\overline{W}_{\tau}$ degenerates into the concatenation of $\overline{W}_+$ with $\overline{W}_-$ along the ECH-type end, in order to prove Lemma~\ref{cherries4}.

We assume that $m\gg 0$; $\varepsilon,\delta>0$ are sufficiently small; and $\{\overline{J}_\tau\}\in \overline{\mathcal{I}}^{reg}$ and $\{\overline{J}_\tau^\Diamond(\varepsilon,\delta,{\frak p}(\tau))\}$ satisfies Lemma~\ref{lemma: codimension one}.  Fix ${\bf y}\in \mathcal{S}_{{\bf a},\hh({\bf a})}$, ${\bf y}'\in\mathcal{S}_{{\bf b},\hh({\bf b})}$ and let
$$\mathcal{M}=\mathcal{M}^{I=2,n^*=m}_{\{\overline{J}_\tau^\Diamond(\varepsilon,\delta,{\frak p}(\tau))\}}({\bf y},{\bf y}';\overline{\frak m}), \quad\mathcal{M}_\tau= \mathcal{M}^{I=2,n^*=m}_{\overline{J}_\tau^\Diamond(\varepsilon,\delta,{\frak p}(\tau))}({\bf y},{\bf y}';\overline{\frak m}(\tau)).$$
We will analyze $\bdry_{\{+\infty\}}\mathcal{M}$.

Let $\overline{u}_i$, $i\in \N$, be a sequence of curves in $\mathcal{M}$ such that $\overline{u}_i\in\mathcal{M}_{\tau_i}$ and $\displaystyle\lim_{i\to\infty} \tau_i=+\infty$, and let
\begin{equation} \label{limit at plus infty}
\overline{u}_\infty = (\overline{v}_{-1,1}\cup\dots \cup \overline{v}_{-1,c}) \cup \overline{v}_-\cup (\overline{v}_{0,1}\cup\dots\cup \overline{v}_{0,b})\cup \overline{v}_+\cup (\overline{v}_{1,1}\cup\dots\cup \overline{v}_{1,a})
\end{equation}
be the limit holomorphic building, where each $\overline{v}_*$ is an SFT level (recall Notation~\ref{notation: sub superscripts} regarding the use of subscripts $*$), the levels are ordered from bottom to top as we go from left to right, $\overline{v}_{-1,j}$, $j=1,\dots,c$, maps to $\overline{W}$; $\overline{v}_-$ maps to $\overline{W}_-$; $\overline{v}_{0,j}$, $j=1,\dots,b$, maps to $\overline{W'}$; $\overline{v}_+$ maps to $\overline{W}_+$; and $\overline{v}_{1,j}$, $j=1,\dots,a$, maps to $\overline{W}$. Here we are allowing the possibility that $a$, $b$, or $c=0$.  For notational convenience, sometimes we will refer to $\overline{v}_+$ as $\overline{v}_{0,b+1}$ or $\overline{v}_{1,0}$ and $\overline{v}_-$ as $\overline{v}_{-1,c+1}$ or $\overline{v}_{0,0}$.

\begin{notation} \label{notation}
 We will be using the conventions established in Section~\ref{subsubsection: convention bambi} (and in particular Notation~\ref{notation: overline u}).
\begin{itemize}
\item We write $\dot F_*$, $\dot F_*'$, $\dot F_*''$ for the domains of $\overline{v}_*$, $\overline{v}_*'$, $\overline{v}_*''$.
\item We write $p_*$ for the covering degree of $\overline{v}'_*$.
\item If $\widetilde{v}_{*_1}$ is a union of components of a level $\overline{v}_{*_1}$ and $\widetilde{v}_{*_2}$ is a union of components of a possibly different level $\overline{v}_{*_2}$, then we write $\widetilde{v}_{*_1}\succ \widetilde{v}_{*_2}$ (resp.\ $\widetilde{v}_{*_1}\succeq \widetilde{v}_{*_2}$) to indicate that the level $\overline{v}_{*_1}$ is above (resp.\ equal to or above) the level $\overline{v}_{*_2}$.
\end{itemize}
\end{notation}

Since ghost components can be eliminated by the discussion in Lemma~I.\ref{P1-lemma: no ghosts} they will not be explicitly mentioned in the rest of the paper.

We have the following two constraints:
\begin{align}
\label{sum of n} n^*(\overline{u}_i) &=\sum_{\overline{v}_*} n^*(\overline{v}_*)=m;\\
\label{sum of I} I(\overline{u}_i) &=\sum_{\overline{v}_*} I(\overline{v}_*)=2,
\end{align}
where the summations are over all the levels $\overline{v}_*$ of $\overline{u}_\infty$.

\s\n
{\em Outline of proof of Lemma~\ref{cherries4}.}
The proof of Lemma~\ref{cherries4} follows the same general outline of Sections~I.\ref{P1-subsection: intersection numbers}--I.\ref{P1-proof of lemma}: First we calculate the contributions to $n^*$ of the ends that limit to multiples of $z_\infty$ or $\delta_0$ in Section~\ref{orange1} and obtain lower bounds on the ECH indices of the levels $\overline{v}_*$ in Section~\ref{bounds on ECH indices}, under the assumption that there are no boundary points at $z_\infty$.  Boundary points at $z_\infty$ are treated in Sections~\ref{boundpunc} and ~\ref{subsub: bounds on ech indices part II}. The main new difficulty is to show that $I(\overline{v}_*)\geq I(\overline{v}'_*)+I(\overline{v}''_*)$ for $\overline{v}_*\succeq \overline{v}_+$; this uses the more complicated version of the ECH index inequality given in Lemma~I.\ref{P1-index inequality for z infinity case}. We then use Equations~\eqref{sum of n} and \eqref{sum of I} to obtain Lemma~\ref{los angeles}, which describes the case when $\overline{v}'_*\cup \overline{v}^\sharp_*=\varnothing$ for all levels $\overline{v}_*$, and Lemma~\ref{hojicha}, which gives a preliminary list when $\overline{v}'_*\cup \overline{v}^\sharp_*\not=\varnothing$ for some $\overline{v}_*$.  The renormalization argument from Sections~I.\ref{P1-subsection: rescaling}--\ref{P1-subsection: theorem complement}, given in Lemma~\ref{vancouver}, eliminates all the possibilities with the exception of Case (2$_1$) of Lemma~\ref{hojicha} when $\overline{v}'_*\cup \overline{v}^\sharp_*\not=\varnothing$ for some $\overline{v}_*$.

\s
{\em At this point the reader is strongly encouraged to review Section~I.\ref{P1-subsection: modified indices at z infty} on holomorphic curves with ends at $z_\infty$.}

\subsubsection{Intersection numbers} \label{orange1}

In this subsection we give the analogs of Lemmas I.\ref{P1-intersezione 1}--I.\ref{P1-lemma: cherimoya2} for $\overline{u}_\infty\in \bdry_{\{+\infty\}} \mathcal{M}$:

\begin{lemma} \label{intersezione}
$\mbox{}$
\begin{enumerate}
\item If $\overline{v}''_*$ has a negative end $\mathcal{E}_-$ that converges to $\delta^p_0$, then $n^*(\mathcal{E}_-)\geq m-p.$
\item If $\overline{v}''_*$ has a positive end $\mathcal{E}_+$ that converges to $\delta^p_0$, then $n^*(\mathcal{E}_+)\geq p.$
\end{enumerate}
\end{lemma}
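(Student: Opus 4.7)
My plan is to mirror the strategy used for Lemmas~I.\ref{P1-intersezione 1} and~I.\ref{P1-lemma: cherimoya2} in Part~I, since the statement here is the same relative-intersection computation transported to the new cobordism $\overline{W}_\tau$ and its degenerate limits. The intersection number $n^*(\mathcal{E})$ is defined as the algebraic count of intersections of $\mathcal{E}$ with the reference multisection $(\sigma_\infty^*)^{\dagger,+}=\R\times\delta_{\rho_0,\phi_0^+}$, where $\phi_0^+=\phi(\overline{a}_{i,j})+\varepsilon_1$ lies just outside the collection of wedges emanating from $z_\infty$ that contain the arcs $\overline{a}_{i,j}$, $\overline{h}(\overline{a}_{i,j})$, and their $\overline{\bf b}$-pushoffs (cf.\ Remark~\ref{comparing rates} and Section~I.\ref{P1-coconut}).

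First I would localize near the end $\mathcal{E}$: in a small tubular neighborhood of $\delta_0^p$, the end admits a Siefring-type asymptotic expansion
\[
\mathcal{E}(s,t)\;\sim\;\exp_{\delta_0^p(t)}\!\bigl(e^{\lambda s}\,f(t)+\text{lower order}\bigr),
\]
with $\lambda<0$ for a positive end and $\lambda>0$ for a negative end, and with $f$ an eigenfunction of the asymptotic operator associated to $\delta_0^p$. The intersection $n^*(\mathcal{E})$ then splits as the sum of (i) a local count inside this neighborhood, governed by the winding of $f$ around $\delta_0^p$ relative to the radial ray $\mathcal{R}_{\phi_0^+}$, and (ii) an algebraic count of intersections outside the neighborhood, which by positivity of intersections (since $(\sigma_\infty^*)^{\dagger,+}$ is $\overline{J}_\tau$-holomorphic) is nonnegative.

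Next I would use the local model around $z_\infty$ from Section~I.\ref{P1-subsubsection: overline W pm}: the Reeb flow on the Morse-Bott torus rotates by $2\pi/m$ per unit of $t$, so $\delta_0^p$ lifts to $p$ copies of $\delta_0$ whose union has linking number $p$ with $\delta_{\rho_0,\phi_0^+}$ in the sense that the radial ray $\mathcal{R}_{\phi_0^+}$ meets a meridian of $\delta_0^p$ in $p$ points, and it meets the complementary rays (those associated with $\delta_0^{m-p}$) in $m-p$ points. For a positive end with $\lambda<0$, the winding of $f$ against $\mathcal{R}_{\phi_0^+}$ is bounded below by the covering degree $p$, giving $n^*(\mathcal{E}_+)\geq p$. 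For a negative end with $\lambda>0$, the winding picks up the complementary $m-p$ intersections of $\mathcal{R}_{\phi_0^+}$ with the "missing" branches, giving $n^*(\mathcal{E}_-)\geq m-p$. Adding (ii), which is nonnegative, preserves the inequalities.

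The main obstacle is keeping the winding-number conventions and trivializations consistent between the asymptotic eigenfunction $f$ and the reference ray $\mathcal{R}_{\phi_0^+}$, in particular tracking the offset $\varepsilon_1$ and the genericity of $\phi_h$ (Section~\ref{subsubsection: choice of hyperbolic orbit h}) which ensure $\mathcal{R}_{\phi_0^+}$ avoids every wedge from $\overline{a}_i$ to $\overline{h}(\overline{a}_i)$ and from $\overline{b}_i$ to $\overline{h}(\overline{b}_i)$. I would carry this out by passing to the universal cover of the Morse-Bott torus and reducing the computation to a two-dimensional linking problem on a punctured disk, exactly as in the proofs of the analogous Part~I lemmas, where the distinction between the $(\sigma^*_\infty)^{\dagger,+}$ and $(\sigma^*_\infty)^{\dagger,-}$ conventions becomes the tool for obtaining the sharp lower bounds $p$ and $m-p$.
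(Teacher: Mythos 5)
The paper itself disposes of this lemma with a one-line reference to Lemma~I.\ref{P1-intersezione 1}, so your proposal has to be judged against that argument, and your overall strategy is indeed the right one: work in the balanced coordinates near $z_\infty$, use the asymptotic expansion of the end, the winding bounds at the elliptic orbit $\delta_0^p$, positivity of intersections with the holomorphic reference cylinder, and the fact that $\delta_{\rho_0,\phi_0^+}$ closes up only after $m$ trips around the $t$-circle. However, the step where the actual constants $p$ and $m-p$ are produced --- which is the entire content of the lemma --- is both asserted rather than derived and justified incorrectly. First, the geometric claim that ``$\mathcal{R}_{\phi_0^+}$ meets a meridian of $\delta_0^p$ in $p$ points, and meets the complementary rays in $m-p$ points'' does not make sense: the reference orbit $\delta_{\rho_0,\phi_0^+}$ meets \emph{every} meridian disk (every page $D_{\rho_0}$) in exactly $m$ points $\phi_0^++2\pi k/m$, independently of $p$, and there is no decomposition of these $m$ points into $p$ and $m-p$ attached to the orbit $\delta_0^p$. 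Second, your winding inequality for the positive end goes the wrong way: at an elliptic orbit whose total rotation in the relevant trivialization lies in $(0,1)$ (which is the case for $\delta_0^p$ with $p\leq 2g\ll m$), a positive end has asymptotic winding bounded \emph{above} by $0$ and a negative end bounded \emph{below} by $1$; the bound $n^*(\mathcal{E}_+)\geq p$ is a consequence of the \emph{upper} bound $\op{wind}\leq 0$, not of a lower bound ``$\geq p$'' on the winding.

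The missing idea is how these two dichotomies combine. The reference cylinder is a degree-$m$ multisection of the fibration (it meets each fiber $\overline{S}$ in $m$ points at radius $\rho_0$), while the end is locally a degree-$p$ multisection collapsing onto the core $\delta_0$. The count $n^*(\mathcal{E})$ is obtained by comparing, via a helicoidal surface interpolating between $\delta_{\rho_0,\phi_0^+}$ and $\delta_0^m$ (equivalently, via the projection $\pi_{D_{\rho_0}}$ and the degree of the projected end over the $m$ reference points), the $p$-strand braid at the outer boundary of the tubular neighborhood with the asymptotic braid at the puncture; the winding bound $\op{wind}\leq 0$ for the positive end forces at least $p$ positive crossings, while $\op{wind}\geq 1$ for the negative end forces the projection to sweep out all but a $p/m$-fraction of the disk, hence to cover at least $m-p$ of the $m$ reference points. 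Note also that your decomposition into ``local winding contribution plus nonnegative outside contribution'' tacitly assumes the end actually reaches the radius $\rho_0$ where the reference lives; this is where one uses that the component belongs to $\overline{v}''_*$, i.e.\ is not a branched cover of $\sigma_\infty^*$, so the relevant sheets must exit the tubular neighborhood of $\delta_0$. Without these points your sketch does not yield the stated inequalities.
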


\begin{proof}
This is analogous to Lemma~I.\ref{P1-intersezione 1} and is proved in the same way.
\end{proof}

\begin{defn}
An end $\mathcal{E}$ is {\em nontrivial} if it is not an end of a trivial cylinder or trivial strip.
\end{defn}

\n
{\em Convention.}  In this paper we assume that an end of a holomorphic curve is connected, unless stated otherwise.

\s
Recall the sequence $\overline{u}_i\in \mathcal{M}_{\tau_i}$ with $\tau_i\to +\infty$ and its limit SFT building $\overline{u}_\infty$ given by Equation~\eqref{limit at plus infty}. Let $\dot G_i$ be the domain of $\overline{u}_i$. Fix $k\in \{1,\dots,2g\}$. The following Lemma~\ref{fact: continuation} describes the breaking/``SFT limit'' of the component of $\overline{u}_i|_{\bdry \dot G_i}$ which maps to $L_{\overline{a}_k}^{\tau_i,+}$, into a sequence of paths $g^1_a,\dots,g^1_1,g_0,g^0_1, \dots, g^0_a$, $g^*_j\subset \op{Im}(\overline{v}_{1,j})$, $j\geq 0$, $*=0,1,\varnothing$, as $\tau_i\to \infty$.

Let $\bdry_+ B_\tau$ (resp.\ $\bdry_+ \overline{W}_\tau$) be the $s>0$ boundary of $B_\tau$ (resp.\ $\overline{W}_\tau$) and let $\mathcal{C}_{1,j}$, $j\geq 0$, be the data for $\overline{v}'_{1,j}$; see Section~I.\ref{P1-subsubsection: multisections} for the definition of the data $\mathcal{C}_{1,j}$. Then we have the following, whose proof is immediate.

\begin{lemma}\label{fact: continuation}
Given a sequence $\overline{u}_i$ and a choice of $k\in \{1,\dots,2g\}$, the components of $\overline{u}_i|_{\bdry \dot G_i}$ which map to $L_{\overline{a}_k}^{\tau_i,+}$ converge uniquely (in the ``SFT sense'') to a sequence of paths
\begin{equation}\label{continuation}
g^1_a,\dots,g^1_1,g_0,g^0_1, \dots, g^0_a
\end{equation}
which satisfies the following:
\begin{enumerate}
\item $g^*_j=\overline{v}_{1,j}(f^*_j)$ where $j\geq 0$, $*=0,1,\varnothing$, and $f^*_j$ is a (connected) component of $\bdry \dot F_{1,j}$, where $\dot F_*$ is as given in Notation~\ref{notation}.
\item If $f^*_j$ is a component of $\bdry\dot F''_{1,j}$, then $g^1_j\subset \R\times\{1\}\times \overline{a}_k$ and $g^0_j\subset \R\times\{0\}\times \overline{\hh}(\overline{a}_k)$ if $j>0$ and $g_0\subset L^{+}_{\overline{a}_k}$ if $j=0$.
\item If $f^*_j$ is a component of $\bdry\dot F'_{1,j}$, then $f^*_j$ {\em and also $g^*_j$} come with extra data $\mathcal{C}_{1,j}$ which assigns:
$$f^1_j, g^1_j\mapsto L_{\overline{a}_{k,l}}= \R\times\{1\}\times \overline{a}_{k,l},\quad f^0_j,g^0_j\mapsto L_{\overline{\hh}(\overline{a}_{k,l'})}=\R\times\{0\}\times \overline{\hh}(\overline{a}_{k,l'}),$$
$$f_0,g_0\mapsto L^+_{\overline{a}_{k,l''}},$$
for some $l,l',l''=0$ or $1$.
\end{enumerate}
\end{lemma}

For convenience we write $g_0=g^1_0=g^0_0$.  We say an element $g^*_j$ is {\em trivial} if the corresponding $f^*_j$ satisfies (3).

\begin{defn}[Continuations] \label{defn: continuation}
If $g^*_j$ is any element of Sequence \eqref{continuation}, then Sequence \eqref{continuation} is the {\em continuation of $g^*_j$ along $\bdry_+ B_\tau$} and the terms to the right of $g^*_j$ in Sequence \eqref{continuation} form the {\em continuation of $g^*_j$ in the direction of $\bdry_+B_\tau$.}
\end{defn}

Lemma~\ref{fact: continuation} and Definition~\ref{defn: continuation} play an important role in the proof of Lemma \ref{intersezione prime}, which is a refined version of Lemma~I.\ref{P1-intersezione 2}, where the ends are considered collectively as well as individually. The proof strategy will usually be referred to as the {\em continuation argument}.

Let $\rho_0>0$ be small and let $\pi_{D^2_{\rho_0}}$ be the projection of
\begin{equation} \label{trader joes}
\mathfrak{D}_1:=\{\rho\leq \rho_0\}\subset \overline{W}-int(W) \ \mbox{ or } \ \mathfrak{D}_2:=\{\rho\leq \rho_0, |s|\geq l(\tau)+1\}\subset \overline{W}_\tau-int(W_\tau)
\end{equation}
to $D^2_{\rho_0}=\{(\rho,\phi)~|~\rho\leq \rho_0\}\subset \overline{S}$ along the stable Hamiltonian vector field $\overline{R}_\tau$ which was defined in Section~\ref{subsubsection: stable Hamiltonian}.

\begin{rmk} \label{rmk: tjs}
$\pi_{D^2_{\rho_0}}$ maps the intersection of $\R\times \delta_{\rho_0,\phi_0^+}$ (which appears in the definition of $n^*$ in Equation~\eqref{n and n alt}) and one of ${\frak D}_1$ or $\frak D_2$ to $m$ equally spaced points on $\bdry D^2_{\rho_0}$.
\end{rmk}

\begin{lemma} \label{intersezione prime}
Suppose $\overline{v}'_{1,j}\cup\overline{v}^\sharp_{1,j}\not=\varnothing$ for some $j>0$. Let $\mathcal{E}_{-,i}$, $i=1,\dots,q$, be the negative ends of $\cup_{j=1}^a\overline{v}_{1,j}^\sharp$ that converge to $z_\infty$ and let $\mathcal{E}_{+,i}$, $i=1,\dots,r$, be the positive ends of $\cup_{j=0}^{a-1}\overline{v}_{1,j}^\sharp$ that converge to $z_\infty$.
\begin{enumerate}
\item For each $i$,
\begin{equation} \label{jasmine}
n^*(\mathcal{E}_{-,i})\geq k_0-1\gg 2g,
\end{equation}
where the constant $k_0$ is as given in Section~I.\ref{P1-coconut}.
\item If there are no boundary points at $z_\infty$, then
\begin{equation} \label{oolong}
n^*( (\cup_{i=1}^q\mathcal{E}_{-,i})\cup (\cup_{i=1}^r\mathcal{E}_{+,i}))\geq m-p_+,
\end{equation}
where $p_+=\deg (\overline{v}_+')$, and $$D^2_{\rho_0}-(\cup_{i=1}^q \pi_{D^2_{\rho_0}}(\mathcal{E}_{-,i}))\cup (\cup_{i=1}^r \pi_{D^2_{\rho_0}}(\mathcal{E}_{+,i}))$$ consists of at most $p_+$ thin sectors.
\end{enumerate}
\end{lemma}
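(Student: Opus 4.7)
The plan is to refine the continuation argument of Lemma~I.\ref{P1-intersezione 2} so that it tracks all ends at $z_\infty$ simultaneously rather than one at a time. Working in balanced coordinates $(\rho,\phi)$ near $z_\infty$, I would fix $\rho_0>0$ small and use the auxiliary sections $(\sigma_\infty^*)^{\dagger,\pm}=\R\times\delta_{\rho_0,\phi_0^{\pm}}$ defined in Section~\ref{subsubsection: convention bambi} as the test cycles against which to measure $n^*$. Positivity of intersection with these (and with their $2\pi/m$-rotates) bounds each $n^*$ below by the number of times the projection $\pi_{D_{\rho_0}}$ of an end sweeps past the chosen angles $\phi_0^{\pm}+2\pi k/m$.

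For the pointwise bound~\eqref{jasmine}, I would continue the boundary arc of $\mathcal{E}_{-,i}$ downward along $\bdry_+B_\tau$ using Definition~\ref{defn: continuation}. Because no component of $\overline{v}^\sharp_{1,j}$ is trivial and the continuation must pass through the levels $\overline{v}_{1,a},\dots,\overline{v}_{1,1}$ before reaching $\overline{v}_+$, the angular coordinate $\phi$ along the boundary of $\mathcal{E}_{-,i}$ must complete a full rotation near $z_\infty$ at least once per step. By the choice of $k_0$ in Section~I.\ref{P1-coconut} (the arcs $\overline{a}_{k,j}$ and $\overline{h}(\overline{a}_{k,j})$ are separated in angle by at most $c(m)\ll 2\pi/(mk_0)$, while the continuation must cover the full circle $k_0-1$ times before closing up), the projection $\pi_{D_{\rho_0}}(\mathcal{E}_{-,i})$ sweeps through each $\phi_0^{\pm}$-ray at least $k_0-1$ times, giving $n^*(\mathcal{E}_{-,i})\geq k_0-1$.

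For the collective bound~\eqref{oolong}, I would invoke the global identity $n^*(\overline{u}_i)=m$ together with the decomposition $n^*(\overline{u}_\infty)=\sum n^*(\overline{v}_*)$ from~\eqref{sum of n}. The contribution from $\overline{v}'_+$ is exactly $p_+$ (it is a $p_+$-fold cover of $\sigma_\infty^+$), and under the no-removable-boundary-puncture assumption, no intersection number is lost or absorbed into the interior of any component $\overline{v}_*^\flat$: all the remaining $m-p_+$ intersections are carried by the ends asymptotic to $z_\infty$ of the $\overline{v}^\sharp_{1,j}$ and $\overline{v}_+^\sharp$. Collecting these ends gives the bound. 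For the thin sector statement, I would show that the union of the angular sectors $\pi_{D_{\rho_0}}(\mathcal{E}_{\pm,i})$ cuts $D_{\rho_0}$ into regions whose complement is covered by the branches of $\overline{v}'_+$ only; since $\deg(\overline{v}'_+)=p_+$, at most $p_+$ thin angular wedges (those between consecutive branches $\overline{a}_{k,j}$ and $\overline{h}(\overline{a}_{k,j})$, which by Remark~\ref{comparing rates} close up fastest as $m\to\infty$) remain uncovered.

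The hard part will be step two: carefully showing that the continuation of each $\mathcal{E}_{-,i}$ actually traverses every intervening level $\overline{v}_{1,j}$ and hence accumulates the full $k_0-1$ windings. One must rule out the possibility that an end escapes horizontally through some $\overline{v}^\flat_{1,j}$ or is absorbed into a trivial strip before reaching $\overline{v}_+$; this uses the hypothesis that $\overline{v}'_{1,j}\cup\overline{v}^\sharp_{1,j}\neq\varnothing$ for some $j>0$ together with the fact that trivial continuations are isolated between nontrivial pieces, exactly as in the proof of Lemma~I.\ref{P1-intersezione 2}. Once the bookkeeping is in place, both inequalities follow from positivity of intersection and the global constraint $n^*(\overline{u}_i)=m$.
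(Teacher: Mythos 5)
Your proof of the collective bound \eqref{oolong} runs the logic in the wrong direction, and this is the heart of the lemma. Equation \eqref{oolong} is a \emph{lower} bound on the intersection number carried by the ends at $z_\infty$, and the paper obtains it purely geometrically: the continuation of Definition~\ref{defn: continuation} produces an alternating chain of counterclockwise sectors ${\frak S}(\overline{h}(\overline{a}_{k_1,l_1}),\overline{a}_{k_2,l_2})$, ${\frak S}(\overline{a}_{k_2,l_2},\overline{h}(\overline{a}_{k_3,l_3})),\dots$ whose endpoints match, so that they concatenate and sweep out all of $D_{\rho_0}$ except at most $p_+$ thin sectors (one possible skipped sector for each continuation that is trivial all the way down to $\overline{v}'_+$, which is why the count is controlled by $\deg\overline{v}'_+$); since $\R\times\delta_{\rho_0,\phi_0^+}$ meets the fiber disk in $m$ points and each skipped thin sector in at most one, the total is at least $m-p_+$. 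Your substitute --- subtracting from the global identity $n^*(\overline{u}_i)=m$ --- would require an \emph{upper} bound of $p_+$ on the $n^*$ of everything else in $\overline{u}_\infty$, and that is simply not available at this stage: for instance $\overline{v}''_-$ alone can contribute $m$ to $n^*$ (a neighborhood of the marked point does exactly this when $\overline{v}'_-=\varnothing$), and components of $\overline{v}^\flat_*$ can cross $\{\rho=\rho_0\}$ freely. Indeed the entire purpose of \eqref{oolong} is to supply the lower bound that later (Lemma~\ref{sencha}) forces those other contributions to be small, so your argument is circular. You also miscount the contribution of $\overline{v}'_+$: a branched cover of $\sigma_\infty^+$ lies in $\{\rho=0\}$ and never meets $(\sigma_\infty^+)^{\dagger,+}\subset\{\rho=\rho_0\}$, so $n^*(\overline{v}'_+)=0$, not $p_+$; the $p_+$ in the statement enters only through the number of skippable thin sectors.

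A secondary point: your mechanism for the pointwise bound \eqref{jasmine} attributes the factor $k_0-1$ to the continuation ``covering the full circle $k_0-1$ times'' as it passes through the levels $\overline{v}_{1,a},\dots,\overline{v}_{1,1}$. That cannot be right --- the number of levels $a$ has nothing to do with $k_0$, and in any case the winding accumulated by the \emph{continuation} bounds the total $n^*$ of the whole chain of ends, not $n^*(\mathcal{E}_{-,i})$ of the single end. The bound $n^*(\mathcal{E}_{-,i})\geq k_0-1$ is a property of the individual end alone: its projection $\pi_{D_{\rho_0}}(\mathcal{E}_{-,i})$ already winds at least $k_0-1$ times around $z_\infty$ by the asymptotic analysis of Lemma~I.\ref{P1-intersezione 2} and the choice of arcs in Section~I.\ref{P1-coconut}. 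Since both you and the paper ultimately cite Part~I for this input, the conclusion survives, but the step as you wrote it does not bound the quantity being claimed.
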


Recall that a sector ${\frak S}$ of $D^2_{\rho_0}$ from $\phi_0$ to $\phi_1$ is the map
$$[0,\rho_0]\times[\phi_0,\phi_1]\to D^2_{\rho_0},\quad (\rho,\phi)\mapsto \rho e^{i\phi},$$
or its image (by abuse of notation).  A {\em thin sector} (also referred to as a {\em thin wedge} in \cite{CGH-I}) is the smallest counterclockwise sector from $\overline{a}_{i,j}\cap D^2_{\rho_0}$ to $\overline{\hh}(\overline{a}_{i,j})\cap D^2_{\rho_0}$ for some $i,j$ and has angle ${2\pi\over m}$.

We also refer the reader to Definition~I.\ref{P1-defn: removable at infty} for the definition of a boundary point at $z_\infty$.

\begin{proof}
We consider the $\pi_{D^2_{\rho_0}}$-projections of the positive and negative ends of $\overline{v}_{1,j}$, $j>0$, and the positive ends of $\overline{v}_{1,0}$ that limit to $z_\infty$.

(1) Given a nontrivial negative end $\mathcal{E}_{-,i}$ limiting to $z_\infty$, its projection can be written as:
$$\pi_{D^2_{\rho_0}}(\mathcal{E}_{-,i})={\frak S}(\overline{\hh}(\overline{a}_{k,l}),\overline{a}_{k',l'}),$$
where ${\frak S}(A,B)$ denotes the smallest counterclockwise sector from the radial ray $A$ to the radial ray $B$.  Then Equation~\eqref{jasmine} follows from Remark~\ref{rmk: tjs} and the fact that the angle between $\overline{\hh}(\overline{a}_{k,l})$ and $\overline{a}_{k',l'}$ is at least ${2\pi(k_0-1)\over m}$ as defined in Section~I.\ref{P1-coconut}. (This is more or less the same as Lemma~I.\ref{P1-intersezione 2}(2); in fact ``$>2g$'' in the statement can be replaced by ``$\geq k_0-1\gg 2g$''.)

(2) We start at a nontrivial negative end of some $\overline{v}_{1,j_1}$ limiting to $z_\infty$, which we call $\mathcal{E}_{-,1}$ without loss of generality. Then
$$\pi_{D^2_{\rho_0}}(\mathcal{E}_{-,1})={\frak S}(\overline{\hh}(\overline{a}_{k_1,l_1}),\overline{a}_{k_2,l_2}).$$

We analyze the continuation
\begin{equation} \label{continuation 1}
g^1_{j_1-1,1},\dots,g^1_{1,1},g_{0,1},g^0_{1,1},\dots,g^0_{a,1}
\end{equation}
of $g^1_{j_1,1}\supset \bdry_1\mathcal{E}_{-,1}$ in the direction of $\bdry_+ B_\tau$. Here $\bdry_k \mathcal{E}_{-,i}$, $k=0,1$, is the $t=k$ boundary of $\mathcal{E}_{-,i}$, and $f^{*_1}_{*_2}$ corresponds to $g^{*_1}_{*_2}$ as in Definition~\ref{defn: continuation}.

\s\n
(i) Suppose there is some $0\leq j<j_1$ such that $g^1_{j,1}$ is nontrivial. Let $j_2\geq 0$ be the first such occurrence in the continuation. Then $\overline{v}_{1,j_2}^\sharp$ has some nontrivial end which we call $\mathcal{E}_{+,1}$, such that
$$\pi_{D^2_{\rho_0}}(\mathcal{E}_{+,1})={\frak S}(\overline{a}_{k_2,l_2},\overline{\hh}(\overline{a}_{k_3,l_3})).$$
This is because all the terms of Equation~\eqref{continuation 1} are assigned $L_{\overline{a}_{k_2}}$ or $L^+_{\overline{a}_{k_2}}$ and all the terms between $g^1_{j_1,1}$ and $g^1_{j_2,1}$ correspond to the same $L_{\overline{a}_{k_2,l_2}}$.

\s\n
(ii) On the other hand, if $g^1_{j,1}$ is trivial for all $0\leq j\leq j_1$, then we set $j_2=0$ and $\overline{\hh}(\overline{a}_{k_2,l_2})=\overline{\hh}(\overline{a}_{k_3,l_3})$ and skip ${\frak S}(\overline{a}_{k_2,l_2},\overline{\hh}(\overline{a}_{k_3,l_3}))$, which is a thin sector.

\s
Next we consider the continuation
$$ g^0_{j_2+1,2},\dots, g^0_{a,2}$$
of $g^0_{j_2,2}$ in the direction of $\bdry_+ B_\tau$. Here $g^0_{j_2,2}\supset\bdry_0 \mathcal{E}_{+,1}$ if (i) holds; and $j_2=0$, $g_{0,2}=g_{0,1}$, and $f_{0,2}=f_{0,1}$ if (ii) holds. There must exist some nontrivial $g^0_{j,2}$, $j>j_2$, and we write $j_3$ for the first such occurrence in the continuation. Then $\overline{v}_{1,j_3}^\sharp$ has some nontrivial end which we call $\mathcal{E}_{-,2}$, such that
$$\pi_{D^2_{\rho_0}}(\mathcal{E}_{-,2})={\frak S}(\overline{\hh}(\overline{a}_{k_3,l_3}),\overline{a}_{k_4,l_4}).$$

Continuing in the same manner, we eventually return to $\mathcal{E}_{-,1}$, and the sectors
\begin{equation} \label{farmers mkt}
{\frak S}(\overline{\hh}(\overline{a}_{k_1,l_1}),\overline{a}_{k_2,l_2}), {\frak S}(\overline{a}_{k_2,l_2},\overline{\hh}(\overline{a}_{k_3,l_3})), {\frak S}(\overline{\hh}(\overline{a}_{k_3,l_3}),\overline{a}_{k_4,l_4}),\dots,
\end{equation}
with some thin sectors of type ${\frak S}(\overline{a}_{k_{2i},l_{2i}},\overline{\hh}(\overline{a}_{k_{2i+1},l_{2i+1}}))$ omitted if they correspond to (ii), sweep out a neighborhood of $z_\infty$ in $D^2_{\rho_0}$, possibly more than once and with the possible exception of $p_+$ thin sectors. This proves (2).
\end{proof}

We remark that, a posteriori, the sectors from Equation~\eqref{farmers mkt} sweep out a neighborhood of $z_\infty$ only once in view of Equation~\eqref{oolong}.

\begin{lemma} \label{cherimoya}
If ${\frak p}\in \bdry F''_*$ is a boundary point of $\overline{v}''_*$ at $z_\infty$, then $n^*(\overline{v}''_*(N({\frak p})))\geq k_0-1\gg 2g$, where $N({\frak p})\subset F''_*$ is a sufficiently small neighborhood of ${\frak p}$.
\end{lemma}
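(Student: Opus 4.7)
The plan is to model this on Lemma~I.\ref{P1-lemma: cherimoya2}, replacing the interior-branch-point analysis by the local model at a boundary puncture. First, I would choose $N({\frak p})$ small enough that $\overline{v}_*(N({\frak p}))\subset \{\rho\leq \rho_0\}$ and that the continuous extension $\overline{v}_*({\frak p})=z_\infty$ makes sense, then compose with $\pi_{D_{\rho_0}}$ and a biholomorphism identifying $N({\frak p})$ with the upper half-disk $\{|z|\leq 1,\ \op{Im}(z)\geq 0\}$, sending ${\frak p}\mapsto 0$. The resulting holomorphic half-disk $\tilde v\colon (D^+,0)\to (D_{\rho_0},z_\infty)$ sends its two boundary arcs to radial rays $\mathcal{R}_{\phi_1},\mathcal{R}_{\phi_2}$ with $\phi_1,\phi_2\in\{\phi(\overline{a}_{i,j}),\phi(\overline{b}_{i,j}),\phi(\overline{h}(\overline{a}_{i,j})),\phi(\overline{h}(\overline{b}_{i,j}))\}\subset (0,c(m)]$. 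By the standard local model at a removable boundary puncture with two real-analytic boundary rays, after a further biholomorphism we may write $\tilde v(z)=cz^\alpha(1+O(z))$ for some $\alpha>0$ and $c\in \C^*$, and the image covers (with multiplicity) the sector of opening angle $\pi\alpha$ between $\mathcal{R}_{\phi_1}$ and $\mathcal{R}_{\phi_2}$. Matching arguments on the boundary gives $\pi\alpha=(\phi_2-\phi_1)+2\pi k$ for some integer $k\geq 0$, after possibly swapping $\phi_1$ and $\phi_2$.

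The main obstacle is to rule out the ``thin sector'' case $k=0$, in which $\pi\alpha=\phi_2-\phi_1=O(c(m))$. I would argue in the spirit of Lemma~\ref{intersezione prime}, by continuing the two boundary arcs abutting ${\frak p}$ along $\bdry \overline{W}_*$. Remark~\ref{comparing rates} records that $\overline{h}$ rotates $\overline{a}_{i,j}$ by a small positive angle at $z_\infty$ and that $\overline{b}_{i,j}$ is obtained from $\overline{a}_{i,j}$ by a small negative rotation; together with the fact that ${\frak p}$ is truly removable (not an asymptotic end of the SFT compactification) and with the orientations on $\bdry \dot F_*$ induced by the global data $\mathcal{C}$, this forces the wrapping of $\tilde v$ around $z_\infty$ to be strictly positive. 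Hence $k\geq 1$ and $\pi\alpha\geq 2\pi-O(c(m))$.

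Once $k\geq 1$, the image $\tilde v(N({\frak p}))$ covers with positive local degree the disk $D_{\rho_0/2}$ with the exception of a thin wedge of opening $O(c(m))$. The auxiliary radial rays used to compute $n^*$ lie at angles $\phi(\overline{a}_{i,j})+\varepsilon_1+2\pi k'/m$ with $k'\in \Z$, and by the choice of $k_0$ and the bound $\varepsilon_1<\varepsilon_0=2\pi/(Km)$ from Section~I.\ref{P1-coconut}, at least $k_0-1$ of these rays lie in the covered angular range. Each such ray contributes at least one positive intersection with $\overline{v}_*(N({\frak p}))$, yielding $n^*(\overline{v}_*(N({\frak p})))\geq k_0-1\gg 2g$, as claimed.
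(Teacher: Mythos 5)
Your local model at the puncture is set up correctly, and the overall skeleton (sector swept near $z_\infty$, then counting intersections with the rays of $\delta_{\rho_0,\phi_0^+}$) is the right one. But the pivotal step — ruling out the ``thin sector'' case $k=0$ — is both unjustified and false. You assert that removability of ${\frak p}$ together with Remark~\ref{comparing rates} ``forces the wrapping of $\tilde v$ around $z_\infty$ to be strictly positive,'' but no mechanism is given, and the paper's own later analysis contradicts the claim: boundary punctures of type (P$_3$) are removable at $z_\infty$ and correspond to chords ${\frak S}(\overline{a}_{k,l},\overline{a}_{k',l'})$ of a cycle $\mathcal{Z}$ that winds around $\R/2\pi\Z$ only once in total, and in Lemma~\ref{Lisbeth} the associated arc ${\frak c}''_{L-}$ is explicitly allowed to have winding number $0$. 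So $k=0$ genuinely occurs, and an argument that the image always covers (nearly) all of $D_{\rho_0}$ cannot work.

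The point you are missing is that the lemma does not need the swept angle to be close to $2\pi$; it needs it to be bounded below by a ``slow'' angular gap. The two boundary arcs abutting a removable puncture lie over the same smooth boundary point of the base, hence both map to arcs of the \emph{same} family — both of type $\overline{a}_{i,j}$, or both $\overline{h}(\overline{a}_{i,j})$, or both $\overline{b}_{i,j}$, or both $\overline{h}(\overline{b}_{i,j})$. Consequently the swept sector is either a positive multiple of $2\pi$ (same arc) or, when $k=0$, a sector between two \emph{distinct} members of one family, whose angular separation is of the slowest type in Remark~\ref{comparing rates} and is, by the construction of Section~I.\ref{P1-coconut}, at least $k_0$ times the spacing $2\pi/m$ of the points of $\delta_{\rho_0,\phi_0^+}$ on a page; hence the sector contains at least $k_0-1$ of those points. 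The genuinely thin sectors — ${\frak S}(\overline{b}_{i,j},\overline{a}_{i,j})$ and ${\frak S}(\overline{a}_{i,j},\overline{h}(\overline{a}_{i,j}))$ with the same $(i,j)$ — pair arcs from different families or different base boundary points and therefore cannot arise at a removable boundary puncture; this is the actual content of the lemma. A secondary problem with your count: even granting winding $\geq 1$, the complementary wedge of angle $O(c(m))$ can straddle on the order of $g$ reference points (one per fast gap), not just one, so your argument would only give $n^*\geq k_0-O(g)$ rather than the stated $k_0-1$.
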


\begin{proof}
The proof is the same as that of Lemma~I.\ref{P1-lemma: cherimoya2}.
\end{proof}

\subsubsection{Bounds on ECH indices} \label{bounds on ECH indices}

The goal of this subsection is to show the nonnegativity of $I(\overline{v}_*)$ except when $\overline{v}_*=\overline{v}_+$, {\em under the assumption that there are no boundary points at $z_\infty$}; see Lemma~\ref{nonnegative ECH indices}. The main new difficulty is to show that $I(\overline{v}_*)\geq 0$ for $\overline{v}_*\succ \overline{v}_+$. If $\overline{v}_*\succ \overline{v}_+$ and $\overline{v}_*'\cup\overline{v}_*^\sharp\not=\varnothing$, then we need to apply the version of the ECH index inequality in the presence of ends that limit to $z_\infty$ (Lemma~I.\ref{P1-index inequality for z infinity case}).  To apply Lemma~I.\ref{P1-index inequality for z infinity case}, we need to verify a certain ``alternating property'' for the ends of $\overline{v}_*$ that limit to $z_\infty$; this is the content of Lemma~\ref{alternate}.

Let $A_\varepsilon=\bdry D^2_\varepsilon\times[0,1]$ for $0<\varepsilon<\rho_0$ small and let $\pi_{[0,1]\times\overline{S}}$ be the projection of $\overline{W}$ or the positive end of $\overline{W}_\tau$ to $[0,1]\times\overline{S}$.
By the proof of Lemma~\ref{intersezione prime}, the intersection ${\frak c}:= \pi_{[0,1]\times\overline{S}}(\cup_i \mathcal{E}_{-,i})\cap A_\varepsilon$ is groomed and the sets $P_0$ and $P_1$ of initial and terminal points of ${\frak c}$ alternate along $(0,2\pi)$.

\begin{defn}
A {\em cycle $\mathcal{Z}=({\frak z}_1\to {\frak z}_2\to \dots\to {\frak z}_k\to {\frak z}_1)$ on $\bdry D^2_\varepsilon=\R/2\pi\Z$} is a sequence of points ${\frak z}_i\in \R/2\pi\Z$, together with chords in $\R/2\pi\Z$ from ${\frak z}_i$ to ${\frak z}_{i+1}$, where the indices are taken modulo $k$.
\end{defn}

The continuation method from Lemma~\ref{intersezione prime} gives a cycle
$$\mathcal{Z}=({\frak z}_{10}\to {\frak z}_{11}\to \dots\to {\frak z}_{k0}\to {\frak z}_{k1}\to{\frak z}_{10}),$$
where $P_i=\{{\frak z}_{1i},\dots,{\frak z}_{ki}\}$ for $i=0,1$, the chords correspond to the sectors, the cycle winds around $\R/2\pi\Z$ once, and each point of $P_i$ appears only once in $\mathcal{Z}$. Note that some of the ${\frak z}_{i0}\to {\frak z}_{i1}$ may correspond to sectors $\frak{S}(\overline{a}_{k',l'},\overline{\hh}(\overline{a}_{k',l'}))$ of the type that are skipped in Step (ii) of Lemma~\ref{intersezione prime}.

Next let $\ar{\mathcal{D}}_{\pm,j}$ be the data at $z_\infty$ for the $\pm$ end of $\overline{v}_{1,j}$  and let $P_{\pm,j,0}$ and $P_{\pm,j,1}$ be the initial and terminal points on $A_\varepsilon$ determined by $\ar{\mathcal{D}}_{\pm,j}$. Then we write
\begin{equation} \label{athena}
P_{\pm,j,i}=P'_{\pm,j,i}\sqcup P''_{\pm,j,i},
\end{equation}
where $P'_{\pm,j,i}$ corresponds to $\overline{v}'_{1,j}$ and $P''_{\pm,j,i}$ corresponds to $\overline{v}''_{1,j}$. Note that
\begin{equation} \label{iris}
P_{+,j-1,i}=P_{-,j,i} \ \mbox{ and } \ P'_{+,j-1,i}=P'_{-,j-1,i}.
\end{equation}

\begin{defn}
Let $P$ be a finite subset of $S^1=\R/2\pi \Z$.  Denoting an element of $S^1$ by an equivalence class $[c]$, where $c\in \R$, if $[a]\not=[b]\in P$, then we write $[a]\lessdot_P[b]$ if there exist $a'\in[a]$, $b'\in[b]$ such that $a'< b'< a'+2\pi$ and there are no representatives of $P$ in the open interval $(a',b')$.
\end{defn}

Observe that the relation $\lessdot_P$ is not symmetric, i.e., $[a]\lessdot_P[b]$ does not necessarily imply that $[b]\lessdot_P[a]$.

\begin{lemma} \label{alternate}
For each $*=(\pm,j)$, $P_{*,0}\subset P_0$ and $P_{*,1}\subset P_1$ and the points of $P_{*,0}$ and $P_{*,1}$ alternate along $(0,2\pi)$.
\end{lemma}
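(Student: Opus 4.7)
The first assertion, that $P_{*,0}\subset P_0$ and $P_{*,1}\subset P_1$, is immediate from unwinding the definitions. The continuation argument of Lemma~\ref{intersezione prime}, together with Equation~\eqref{iris}, organizes every initial (resp.\ terminal) point appearing at an end of sign $\pm$ of any $\overline{v}_{1,j}$ into the global set $P_0$ (resp.\ $P_1$); restricting this organization to a single level and sign gives the claimed inclusions.

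For the alternation statement, my plan is to use positivity of intersections for the single curve $\overline{v}_{1,j}$, combined with a planar combinatorial argument. For a fixed sign, each end of $\overline{v}_{1,j}$ converging to $z_\infty$ projects, via $\pi_{D_{\rho_0}}$, to a sector emanating from $z_\infty$ in $D_{\rho_0}$ of the form ${\frak S}(\overline{h}(\overline{a}_{k,l}),\overline{a}_{k',l'})$ (negative ends) or ${\frak S}(\overline{a}_{k,l},\overline{h}(\overline{a}_{k',l'}))$ (positive ends), with bounding rays lying one each in $\overline{\bf a}$ and $\overline{h}(\overline{\bf a})$. Since $\overline{v}_{1,j}$ is a holomorphic curve and its projection has bounded multiplicity near $z_\infty$, the sectors associated to distinct ends of a given sign are pairwise disjoint. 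Intersecting each such sector with $A_\varepsilon=\partial D_\varepsilon\times[0,1]$ yields a groomed chord with one endpoint on $\{t=0\}$ lying on $\overline{h}(\overline{\bf a})$ (contributing to $P_{*,0}$) and one endpoint on $\{t=1\}$ lying on $\overline{\bf a}$ (contributing to $P_{*,1}$).

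The substance of the lemma is therefore to rule out nesting of these disjoint chords in $A_\varepsilon$: once nesting is excluded, I can cut $A_\varepsilon$ along a fiber $\{\phi_0\}\times[0,1]$ with $\phi_0$ avoiding all $\phi(\overline{a}_{k,l})$ and $\phi(\overline{h}(\overline{a}_{k,l}))$, view the chords as a non-crossing family of monotone arcs in a rectangle between the two horizontal sides, and observe that their endpoints automatically interleave in the $\phi$-direction; unrolling back to $A_\varepsilon$ gives the alternation of $P_{*,0}$ and $P_{*,1}$ along $(0,2\pi)$. This non-nesting step is the main obstacle. My plan is to argue that the Lagrangian boundary of $\overline{v}_{1,j}^\sharp\cup\overline{v}_{1,j}'$ near $z_\infty$, traced along the $\overline{h}(\overline{\bf a})$-arcs on $\{t=0\}$ and the $\overline{\bf a}$-arcs on $\{t=1\}$ between successive ends, links the sectors into the continuation pattern of Definition~\ref{defn: continuation}; a nested pair of sectors would disconnect this boundary in a way incompatible with $\overline{v}_{1,j}$ being a single curve with the prescribed Lagrangian conditions, so nesting cannot occur.
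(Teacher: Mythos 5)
Your reduction of the alternation claim to a statement about disjoint sectors of a single level $\overline{v}_{1,j}$ misses the main difficulty, which is that $P_{*,i}=P'_{*,i}\sqcup P''_{*,i}$ and only the $P''$ part comes from ends that genuinely project to sectors in $D_{\rho_0}$. The points of $P'_{\pm,j,i}$ are attached to $\overline{v}'_{1,j}$, a branched cover of the section at infinity $\sigma_\infty$; its image lies \emph{in} $\{z_\infty\}$, so there is no $\pi_{D_{\rho_0}}$-projection onto sectors, and the data $\overrightarrow{\mathcal{D}}$ there is purely combinatorial (coming from $\mathcal{C}$ and the identity $P'_{+,j-1,i}=P'_{-,j-1,i}$ of Equation~\eqref{iris}). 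Positivity of intersections for the single curve $\overline{v}_{1,j}$ therefore says nothing about how the $P'$ points interleave with the $P''$ points, and in the extreme case $\overline{v}^\sharp_{1,j}=\varnothing$, $\overline{v}'_{1,j}\neq\varnothing$ your argument produces no sectors at all even though $P_{*,i}$ is nonempty and the alternation must still be proved. Relatedly, the ``pairwise disjoint sectors'' step is not a consequence of the single level being holomorphic (a degree $\leq 2g$ multisection can cover $D_{\rho_0}$ up to $2g$ times); the near-disjointness used in the paper comes from the global constraint $\sum_{\overline{v}_*}n^*(\overline{v}_*)=m$ summed over \emph{all} levels, via Lemma~\ref{intersezione prime}.

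The paper's proof is accordingly a top-down induction on levels rather than a one-level planar argument: starting from the highest level with a negative end at $z_\infty$, it takes the master cycle $\mathcal{Z}$ produced by the continuation argument (which winds once around $\R/2\pi\Z$ with alternating vertices), and at each step removes the chords corresponding to $P''_{+,j,i}$ by concatenation to obtain nested sets $P_i^{(0)}\supset P_i^{(1)}\supset\cdots$ and cycles $\mathcal{Z}^{(j)}$; properties ($j0$)--($j4$) then simultaneously control the $P'$ and $P''$ points at every level and both signs. If you want to repair your argument, you need some mechanism that transports alternation from the geometric ends of the levels above down into the combinatorial data of $\overline{v}'_{1,j}$; your final paragraph about the Lagrangian boundary linking the sectors gestures at the continuation structure but does not supply this inductive bookkeeping.
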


\begin{proof}
The proof is by induction on the level; see Figure~\ref{figure: alternating} for an example.  We will inductively define $P_i^{(0)}\supset P_i^{(1)}\supset P_i^{(2)}\supset \dots$ and the corresponding cycles $\mathcal{Z}^{(0)},\mathcal{Z}^{(1)},\dots$ and show that the following hold for each $j=0,1,\dots$:
\begin{enumerate}
\item[($j0$)] the points of $P_0^{(j)}$ and $P_1^{(j)}$ alternate along $(0,2\pi)$;
\item[($j1$)] $P_{-,j_0-j,i}=P_{+,j_0-j-1,i}\subset P_i^{(j)}$, $i=0,1$;
\item[($j2$)] there is a partition of $P_{-,j_0-j,0}\cup P_{-,j_0-j,1}$ into pairs of type $\{{\frak p}_0,{\frak p}_1\}$, ${\frak p}_i\in P_{-,j_0-j,i}$, such that ${\frak p}_0\lessdot_{P_0^{(j)}\cup P_1^{(j)}}{\frak p}_1;$ in particular, the points of $P_{-,j_0-j,0}$ and $P_{-,j_0-j,1}$ alternate along $(0,2\pi)$;
\item[($j3$)] $P'_{+,j_0-j-1,i}=P'_{-,j_0-j-1,i}\subset P_i^{(j+1)}$, $i=0,1$;
\item[($j4$)] there is a partition of $P'_{+,j_0-j-1,0}\cup P'_{+,j_0-j-1,1}$ into pairs of type $\{{\frak p}_0,{\frak p}_1\}$, ${\frak p}_i\in P'_{+,j_0-j-1,i}$, such that ${\frak p}_0\lessdot_{P_0^{(j+1)}\cup P_1^{(j+1)}}{\frak p}_1;$ in particular, the points of $P'_{+,j_0-j-1,0}$ and $P'_{+,j_0-j-1,1}$ alternate along $(0,2\pi)$.
\end{enumerate}
Note that $P_{-,j_0-j,i}=P_{+,j_0-j-1,i}$ in ($j1$) and $P'_{+,j_0-j-1,i}=P'_{-,j_0-j-1,i}$ in ($j3$) by Equation~\eqref{iris}.

\begin{figure}[ht]
\begin{center}
\psfragscanon
\psfrag{a}{\tiny $P_i^{(0)}$}
\psfrag{b}{\tiny $P_{-,j_0,i}$}
\psfrag{c}{\tiny $P''_{-,j_0,i}$}
\psfrag{d}{\tiny $P_i^{(0)}$}
\psfrag{e}{\tiny $P_{+,j_0-1,i}$}
\psfrag{f}{\tiny $P''_{+,j_0-1,i}$}
\psfrag{g}{\tiny $P_i^{(1)}$}
\psfrag{h}{\tiny $P_{-,j_0-1,i}$}
\psfrag{i}{\tiny $P''_{-,j_0-1,i}$}
\includegraphics[width=13cm]{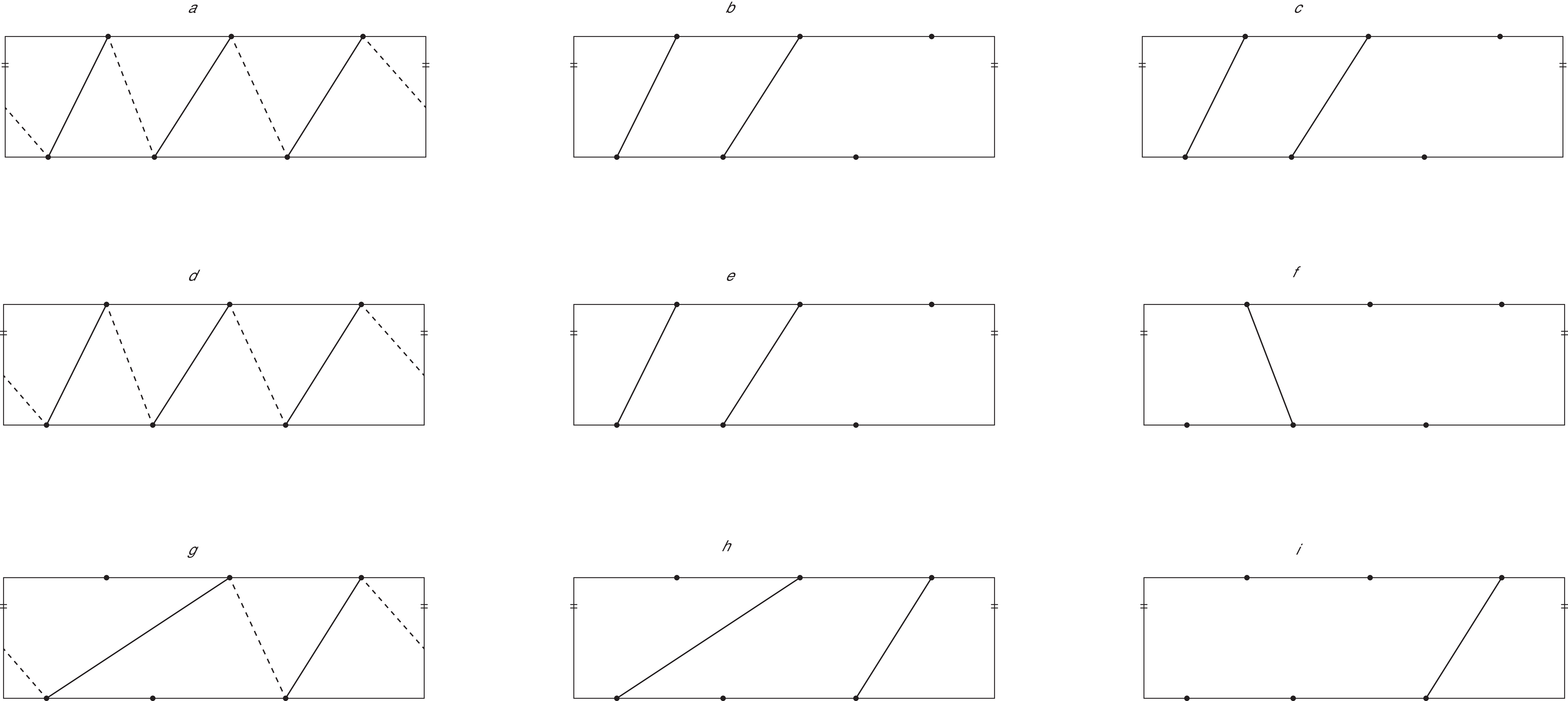}
\end{center}
\caption{Each rectangular box, with the sides identified, is $A_\varepsilon$; the top is $\bdry D^2_\varepsilon\times\{1\}$ and the bottom is $\bdry D^2_\varepsilon\times\{0\}$. For each figure, $P^\star_{*,0}$ and $P^\star_{*,0}$ are the sets of initial and terminal points of the arcs drawn, where the arcs are always oriented from bottom to top.  The extra dots are drawn only for reference. The $k$th row corresponds to Step $k$. The solid and dashed arcs in the left column together indicate $\mathcal{Z}^{(0)}$ and $\mathcal{Z}^{(1)}$. The dashed arcs are potential arcs whose endpoints give pairs of points in $P''_{+,j,0}$ and $P''_{+,j,1}$.}
\label{figure: alternating}
\end{figure}

\s\n {\em Step 1.} Let $\overline{v}_{1,j_0}$ be the highest level which has a negative end at $z_\infty$, let $P_i^{(0)}=P_i$, $i=0,1$, and let $\mathcal{Z}^{(0)}=\mathcal{Z}$. The construction of $\mathcal{Z}^{(0)}$ (following the proof of Lemma~\ref{intersezione prime}) immediately implies ($00$)--($02$).

\s\n {\em Step 2.} We consider the positive ends of $\overline{v}_{1,j_0-1}$ that limit to $z_\infty$. There is a partition of $P_{+,j_0-1,0}''\cup P_{+,j_0-1,1}''$ into pairs  of type $\{{\frak p}_0,{\frak p}_1\}$, ${\frak p}_i\in P_{+,j_0-1,i}''$, such that ${\frak p}_1\lessdot_{P_0^{(0)}\cup P_1^{(0)}}{\frak p}_0$, i.e., there is a chord ${\frak p}_1\to {\frak p}_0$ in $\mathcal{Z}^{(0)}$. Let $P_i^{(1)}=P_i^{(0)}-P_{+,j_0-1,i}''$ and let $\mathcal{Z}^{(1)}$ be obtained from $\mathcal{Z}^{(0)}$ by inductively replacing ${\frak q} \to {\frak p}_1\to {\frak p}_0\to {\frak q}'$ by ${\frak q}\to {\frak q}'$, given by concatenation. This makes sense since each point of $P_i^{(0)}$ appears only once in $\mathcal{Z}^{(0)}$. Then $P_0^{(1)}\cup P_1^{(1)}$ is the set of (alternating) points of $\mathcal{Z}^{(1)}$, each point of $P_i^{(1)}$ appears only once in $\mathcal{Z}^{(1)}$, and $\mathcal{Z}^{(1)}$ winds around $\R/2\pi\Z$ once.  Hence ($10$) follows from the description of $\mathcal{Z}^{(1)}$. Similarly, since $P'_{+,j_0-1,i}=P_{+,j_0-1,i}-P_{+,j_0-1,i}''$ and ($02$) holds, ($03$) and ($04$) follow immediately.

\s\n {\em Step 3.} We consider the negative ends of $\overline{v}_{1,j_0-1}$ that limit to $z_\infty$. There is a partition of $P''_{-,j_0-1,0}\cup P''_{-,j_0-1,1}$ into pairs of type $\{{\frak p}_0,{\frak p}_1\}$, ${\frak p}_i\in P''_{-,j_0-1,i}$, such that ${\frak p}_0\lessdot_{P_0^{(1)}\cup P_1^{(1)}}{\frak p}_1$, i.e., there is a chord ${\frak p}_0\to{\frak p}_1$ in $\mathcal{Z}^{(1)}$.  Since $P_{-,j_0-1,i}=P'_{-,j_0-1,i}\cup P''_{-,j_0-1,i}$ and ($04$) holds, ($11$) and ($12$) follow immediately.

\s
Repeated application of the above then implies the lemma.
\end{proof}

\begin{lemma} \label{nonnegative ECH indices}
If fiber components are removed from $\overline{u}_\infty$ and there are no boundary points at $z_\infty$, then:
\begin{enumerate}
\item  the only components of $\overline{v}_*'$ with negative ECH index are the branched covers of $\sigma_\infty^+$;
\item the ECH index of each level $\overline{v}_*\not= \overline{v}_+$ is nonnegative; and
\item $I(\overline{v}_{1,j})\geq I(\overline{v}'_{1,j}) +I(\overline{v}''_{1,j})$ for $0\leq j\leq a$, with equality for $j>0$.
\end{enumerate}
\end{lemma}

\begin{proof}
The proof is analogous to that of Lemma~I.\ref{P1-claim in proof}.

(1) If a component $\widetilde v$ of $\overline{u}_\infty$ is a branched cover of some $\sigma_\infty^*$ with possibly empty branch locus, then, by Lemmas~I.\ref{P1-lemma: HF index of sections at infinity} and I.\ref{P1-lemma: ECH index of sections at infinity}, $I(\widetilde{v})=0$ with the exception of $I(\widetilde{v})=-\deg(\widetilde{v})$ if $\widetilde{v}$ covers $\sigma_\infty^+$. For all other components $\widetilde v$ (i.e., those that do not branch cover any $\sigma_\infty^*$), the regularity of $\{\overline{J}_\tau\}$ and the ECH-type index inequalities imply that $I(\widetilde v)\geq 0$.

(2) for $\overline{v}_{0,j}$, $1\leq j\leq b$. By \cite[Proposition 7.15(a)]{HT1} and the regularity of $\{\overline{J}_\tau\}$, we have $I(\overline{v}_{0,j})\geq 0$ for $1\leq j\leq b$, where equality holds if and only if $\overline{v}_{0,j}$ is a connector.

(2) and (3) for $\overline{v}_{1,j}$. We claim that $I(\overline{v}_{1,j})\geq 0$ for $0< j \leq a$ and $I(\overline{v}_{1,j})\geq I(\overline{v}'_{1,j}) +I(\overline{v}''_{1,j})$ for $0\leq j\leq a$, with equality for $j>0$.

Suppose $z_\infty$ does not appear at an end of $\overline{v}_{1,j}$. With the possible exception of fiber components, $\overline{v}_{1,j}$ is simply-covered and regular since there is at least one HF end.  Hence the claim follows from the regularity of $\{\overline{J}_\tau\}$ and the usual index inequality (Lemmas~I.\ref{P1-thm: index inequality for HF} and I.\ref{P1-thm: index inequality for W+ and W-}).

On the other hand, if $z_\infty$ appears at an end of $\overline{v}_{1,j}$, then we use the version of the index inequality given by Lemma~I.\ref{P1-index inequality for z infinity case}. Recall that $\mathcal{E}_{-,i}$, $i=1,\dots,q$, are the negative ends of $\cup_{j=1}^a\overline{v}_{1,j}^\sharp$ that converge to $z_\infty$ and $\mathcal{E}_{+,i}$, $i=1,\dots,r$, are the positive ends of $\cup_{j=0}^{a-1}\overline{v}_{1,j}^\sharp$ that converge to $z_\infty$. Let $I_{-,j}$ be the subset of $\{1,\dots,q\}$ such that $\mathcal{E}_{-,i}$ is a negative end of $\overline{v}^\sharp_{1,j}$ if and only if $i\in I_{-,j}$, and let $I_{+,j}$ be the subset of $\{1,\dots,r\}$ such that $\mathcal{E}_{+,i}$ is a positive end of $\overline{v}^\sharp_{1,j}$ if and only if $i\in I_{+,j}$. Then let ${\frak c}_{\pm,j}$ (resp.\ ${\frak c}'_{\pm,j}$, ${\frak c}''_{\pm,j}$) be the groomings corresponding to the $\pm$ ends of $\overline{v}_{1,j}$ (resp.\ $\overline{v}'_{1,j}$, $\overline{v}''_{1,j}$) at $z_\infty$, subject to the following:
\begin{enumerate}
\item[($\alpha_1$)] ${\frak c}_{\pm,j}={\frak c}'_{\pm,j}\cup {\frak c}''_{\pm,j}$;
\item[($\alpha_2$)] ${\frak c}_{-,j}$ has winding $w({\frak c}_{-,j})\geq 0$ and ${\frak c}_{+,j}$ has winding $w({\frak c}_{+,j})\leq 0$;
\item[($\alpha_3$)] ${\frak c}''_{-,j}=\pi_{[0,1]\times\overline{S}}(\cup_{k\in I_{-,j}} \mathcal{E}_{-,k})\cap A_\varepsilon$ and ${\frak c}''_{+,j}= \pi_{[0,1]\times\overline{S}} (\cup_{k\in I_{+,j}} \mathcal{E}_{+,k})\cap A_\varepsilon$.
\end{enumerate}
By Lemma~\ref{alternate}, the sets $P_{\pm,j,0}$ and $P_{\pm,j,1}$ of initial and terminal points of ${\frak c}_{\pm,j}$ alternate along $(0,2\pi)$. Hence, by Lemma~I.\ref{P1-index inequality for z infinity case} and Equation~(I.\ref{P1-second eq}),
$$I(\overline{v}_{1,j})\geq I(\overline{v}'_{1,j})+I(\overline{v}''_{1,j})\geq \op{ind}(\overline{v}''_{1,j})\geq 0$$
with equality for $j>0$  since $\overline{v}'_{1,j}$ and $\overline{v}''_{1,j}$ do not intersect (if they did, then $\sum_{\overline{v}_*} n^*(\overline{v_*})>m$) and we can calculate that $d^+=d^-=0$. This proves the claim.

(2) and (3) for $\overline{v}_{-1,j}$ and $\overline{v}_-$. The case of $\overline{v}_{-1,j}$, $1\leq j\leq c+1$, (this includes $\overline{v}_-$) is similar, and the lemma follows.
\end{proof}

\subsubsection{Boundary points at $z_\infty$} \label{boundpunc}

In this subsection we describe the necessary modifications when $\overline{u}_\infty$ has boundary points at $z_\infty$.

Let us suppose the following:
\begin{enumerate}
\item[(S)] There is only one boundary point ${\frak r}\in F''_{*_0}$ at $z_\infty$ and $\overline{v}''_{*_0}({\frak r})\in \{(0,1)\}\times \overline{\bf a}\subset \overline{W}= \R\times [0,1]\times \overline{S}$.
\end{enumerate}
 We are assuming (S) for notational simplicity; the general case of multiple boundary points at $z_\infty$ is treated in exactly the same way (except for more complicated notation).

For the purposes of computing the Fredholm and ECH indices, we make the following modifications which allow us to use the considerations from Sections~\ref{orange1}, \ref{bounds on ECH indices}, and I.\ref{P1-subsection: modified indices at z infty}. We view the base $B=\R\times[0,1]$ with the puncture $(0,1)$ as a two-level building consisting of a disk $B^\circ_1$ with three boundary punctures and a disk $B^{\circ}_2$ with one boundary puncture. (What we are doing here is bubbling off a neighborhood of the puncture $(0,1)\in B$ when taking the limit $\overline{u}_i\to\overline{u}_\infty$.) All the punctures are viewed as strip-like ends: the punctures of $B^\circ_1$ are called the positive, left negative, and right negative ends, corresponding to the positive end, $(0,1)$, and the negative end of $B$, and the puncture of $B^\circ_2$ is called the positive end and is identified with the left negative end of $B^\circ_1$. See Figure~\ref{figure: puncture}. We write $\bdry B^\circ_1=\sqcup_{i=1}^3 \bdry_i B^\circ_1$, arranged in counterclockwise order, such that $\bdry_1B^\circ_1$, $\bdry_2B^\circ_1$, and $\bdry B^\circ_2$ correspond to $\R\times\{1\}$ and $\bdry_3 B^\circ_1$ corresponds to $\R\times\{0\}$. The cobordism $\overline{W}=B\times\overline{S}$ decomposes into $\overline{W}^\circ_1=B^\circ _1\times\overline{S}$ and $\overline{W}^\circ_2=B^\circ _2\times\overline{S}$, the Lagrangian submanifold $\R\times\{1\}\times\overline{\bf a}$ decomposes into $\bdry_1 B^\circ_1\times\overline{\bf a}$, $\bdry B^\circ_2\times\overline{\bf a}$, and $\bdry_2B^\circ_1\times\overline{\bf a}$, and the Lagrangian submanifold $\R\times\{0\}\times\overline{\hh}(\overline{\bf a})$ corresponds to $\bdry_3 B^\circ_1\times\overline{\hh}(\overline{\bf a})$.  We identify the positive end of $\overline{W}^\circ_1$ with $[0,\infty)\times[0,1]\times\overline{S}$, the left negative end with $(-\infty,0]\times[{1\over 2},1]\times \overline{S}$, and the right negative end with $(-\infty,0]\times[0,{1\over 2}]\times \overline{S}$.

\begin{figure}[ht]
\begin{center}
\psfragscanon
\psfrag{a}{\tiny $\overline{\bf a}$}
\psfrag{b}{\tiny $\overline{\hh}(\overline{\bf a})$}
\psfrag{c}{\tiny $B^\circ_1$}
\psfrag{d}{\tiny $B^\circ_2$}
\includegraphics[width=6cm]{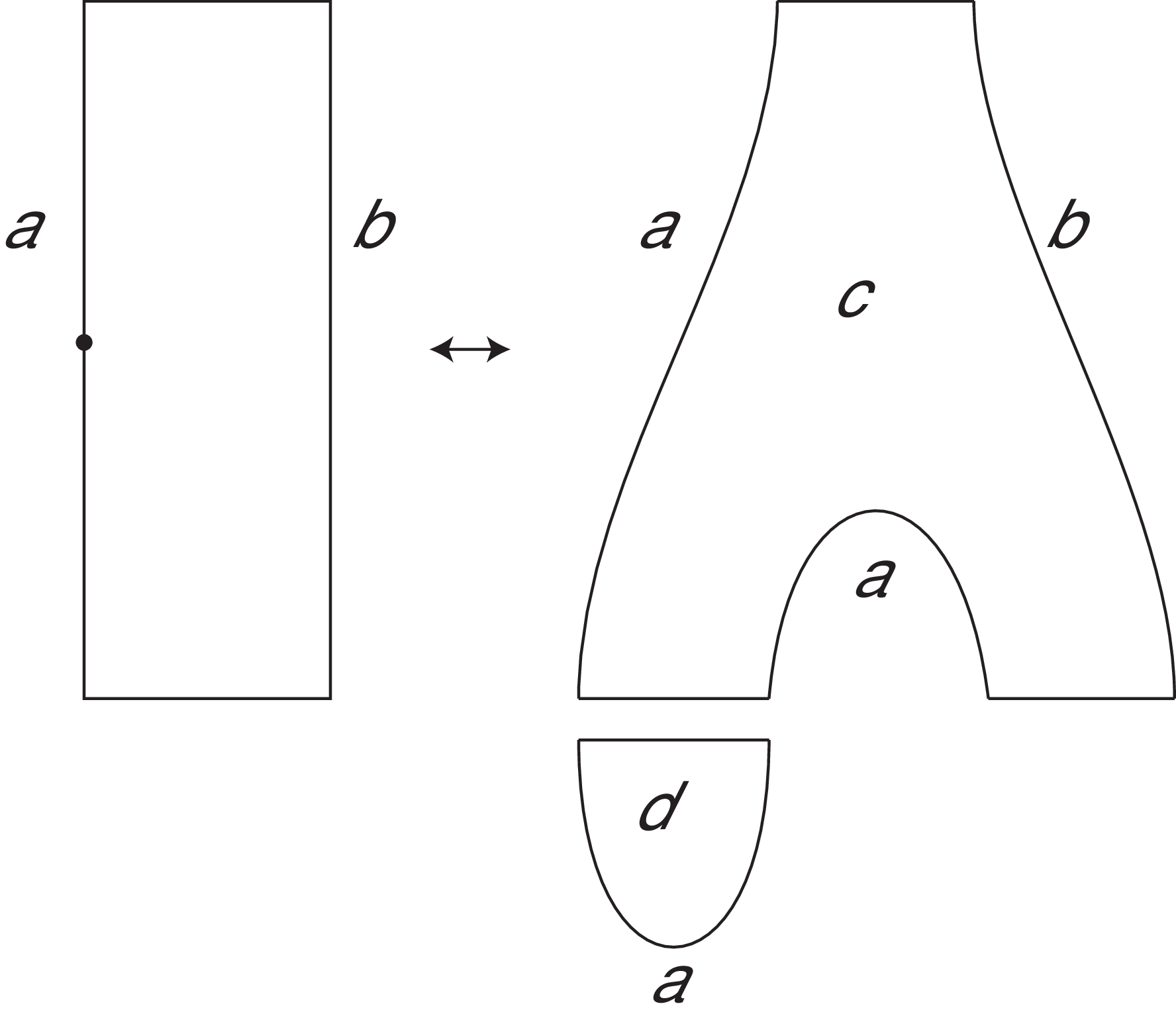}
\end{center}
\caption{}
\label{figure: puncture}
\end{figure}

Denote the sections at infinity of $\overline{W}^\circ_i$, $i=1,2$, by $\sigma_\infty^{\circ,i}= B^\circ_i\times\{z_\infty\}$. A curve $\overline{v}_{*_0}$ in $\overline{W}$ decomposes into $\overline{v}_{*_0}^{\circ,1}\cup \overline{v}_{*_0}^{\circ,2}$, where $\overline{v}_{*_0}^{\circ,i}=\overline{v}_{*_0}^{\circ,i,'}\cup \overline{v}_{*_0}^{\circ,i,''}$, $i=1,2$, is a curve in $\overline{W}^\circ_i$, $\overline{v}_{*_0}^{\circ,i,'}$ is a possibly branched cover of $\sigma_\infty^{\circ,i}$, and $\overline{v}_{*_0}^{\circ,2,''}$ is a union of constant sections $B^\circ_1\times\{pt\}$, $pt\in \widehat{\bf a}$.
\begin{enumerate}
\item[(R)] We view the chords at the positive ends of $\overline{v}_{*_0}^{\circ,2}$ and the left negative ends of $\overline{v}_{*_0}^{\circ,1}$, including $z_\infty$, as negative Morse-Bott Reeb chords. This means that, when calculating Maslov indices at ends that limit to $z_\infty$, we pretend that we have slightly rotated $\{1\}\times \overline{\bf a}$ in the positive $\phi$-direction.
\end{enumerate}
{\em We emphasize that $\overline{v}_{*_0}^{\circ,1}\cup \overline{v}_{*_0}^{\circ,2}$ is essentially the same thing as $\overline{v}_{*_0}$, described in a slightly different way.}

The reader can verify that, with our convention (R),
\begin{align}
\op{ind}(\sigma_\infty^{\circ,i})=0, i=1,2, & \quad I(\overline{v}_{*_0}^{\circ,2,'})=0,\\
\nonumber  I(\overline{v}_{*_0}^{\circ,2,''})=\op{ind}(\overline{v}_{*_0}^{\circ,2,''})=0, & \quad \op{ind}(\overline{v}_{*_0}'')=\op{ind}(\overline{v}_{*_0}^{\circ,1,''}).
\end{align}

We apply the continuation method to $\overline{v}_{1,j}$, $0\leq j\leq a$, $(1,j)\not=*_0=(1,j_0)$, and $\overline{v}_{*_0}^{\circ,i}$, $i=1,2$:  We adapt the definition of a continuation along $\bdry_+ B_\tau$ in a natural way by replacing $g^1_{j_0}$ by $g^1_{j_0,1}, g_{j_0,2}, g^{1/2}_{j_0,1}$, which correspond to $\bdry_1 B^\circ_1$, $\bdry B^\circ_2$, $\bdry_2 B^\circ_1$, respectively, in Equation~\eqref{continuation}. As in Lemma~\ref{intersezione prime}, start with a nontrivial negative end $\mathcal{E}_{1}$ of some $\overline{v}_{1,j_1}$ or $\overline{v}_{*_0}^{\circ,1}$ limiting to $z_\infty$ and continue the corresponding $g^1_{j_1}$, $g^1_{j_0,1}$, or $g^{1/2}_{j_0,1}$ in the direction of $\bdry_+ B_\tau$, until some $g^{\star_1}_{\star_2}\not\subset \sigma_\infty^*$ is reached. Suppose $\mathcal{E}_2$ is the end of some component of $\overline{u}_\infty$ which ``lies between'' $g^{\star_1}_{\star_2}$ and some $g^{\star_1'}_{\star'_2}$. We then switch from $g^{\star_1}_{\star_2}$ to $g^{\star_1'}_{\star_2'}$ and continue.  This gives a sequence of sectors of type ${\frak S}(\overline{a}_{k,l},\overline{a}_{k',l'})$ with $(k,l)\not=(k',l')$ (corresponding to the left negative end of $\overline{W}^\circ_1$), ${\frak S}(\overline{\hh}(\overline{a}_{k,l}),\overline{a}_{k',l'})$, or ${\frak S}(\overline{a}_{k',l'}, \overline{\hh}(\overline{a}_{k,l}))$, and hence a cycle
$$\mathcal{Z}=({\frak z}_1\to {\frak z}_2\to \dots\to {\frak z}_k\to {\frak z}_1)$$
which winds around $\R/2\pi\Z$ once.

We leave it to the reader to make the proper generalizations of $\mathcal{Z}$ when we do not assume (S); $*_0=+$ or $(-1,j_0)$, $1\leq j_0\leq c+1$; and/or the boundary point at $z_\infty$ lies on $L_{\overline{\hh}(\overline{\bf a})}$, $L^+_{\overline{\bf a}}$, $L_{\overline{\bf b}}$, $L_{\overline{\hh}(\overline{\bf b})}$, or $L^-_{\overline{\bf b}}$.

Let $A^{[a,b]}_\varepsilon$ be the annulus $\{\rho=\varepsilon\}\subset [a,b]\times D^2_{\rho_0}$. Writing $*_0=(1,j_0)$, let $\ar{\mathcal{D}}_{+,j_0}$, $\ar{\mathcal{D}}_{L-,j_0}$, $\ar{\mathcal{D}}_{R-,j_0}$ be the data at $z_\infty$ of the positive, left negative, and right negative ends, and let $P_{L-,j_0,1/2}$ and $P_{L-,j_0,1}$ be the initial and terminal points of $A^{[1/2,1]}_\varepsilon$ determined by $\ar{\mathcal{D}}_{L-,j_0}$, etc. We also decompose $P_{*,i}=P'_{*,i}\cup P''_{*,i}$, $*=(L-,j_0)$, $(R-,j_0)$, or $(+,j_0)$, as before so that $P'_{*,i}$ corresponds to $\overline{v}_{*}^{\circ,1,'}$ and $P''_{*,i}$ corresponds to $\overline{v}_{*}^{\circ,1,''}$.

The following definition does not assume (S).

\begin{defn} \label{types of boundary punctures}
A boundary point ${\frak r}\in \bdry F_{*_0}$, $*_0=(1,j_0)$, $0\leq j_0\leq a$, at $z_\infty$ falls into one of three (mutually exclusive) types:
\begin{enumerate}
\item[(P$_1$)] $\overline{v}'_{*_0}=\varnothing$;
\item[(P$_2$)] $\overline{v}'_{*_0}\not=\varnothing$ and all the vertices of $\mathcal{Z}$ correspond to points of type $\overline{a}_{k,l}$ or all the vertices of $\mathcal{Z}$ correspond to points of type $\overline{\hh}(\overline{a}_{k,l})$;
\item[(P$_3$)] $\overline{v}'_{*_0}\not=\varnothing$ and $\mathcal{Z}$ has vertices of both types $\overline{a}_{k,l}$ and $\overline{\hh}(\overline{a}_{k,l})$.
\end{enumerate}
Boundary points ${\frak r}\in \bdry F_{*_0}$, $*_0=(-1,j_0)$, $1\leq j_0\leq c+1$, at $z_\infty$ are classified similarly.
\end{defn}

The following is a strengthening of Lemma~\ref{intersezione prime} when $\overline{v}'_{1,j}\cup\overline{v}^\sharp_{1,j}\not=\varnothing$ for some $j>0$ and there are boundary points of type (P$_3$) at $z_\infty$:

\begin{lemma}\label{intersezione prime revisited}
Suppose $\overline{v}'_{1,j}\cup\overline{v}^\sharp_{1,j}\not=\varnothing$ for some $j>0$. Let $\mathcal{E}_{-,i}$, $i=1,\dots,q$, be the negative ends of $\cup_{j=1}^a\overline{v}_{1,j}^\sharp$ that converge to $z_\infty$, let $\mathcal{E}_{+,i}$, $i=1,\dots,r$, be the positive ends of $\cup_{j=0}^{a-1}\overline{v}_{1,j}^\sharp$ that converge to $z_\infty$, and let $\mathcal{E}'_i$, $i=1,\dots,s$, be the neighborhoods of the boundary points of type (P$_3$). Then
\begin{equation}\label{oolong bis}
n^*( (\cup_{i=1}^q\mathcal{E}_{-,i})\cup (\cup_{i=1}^r\mathcal{E}_{+,i})\cup(\cup_{i=1}^s \mathcal{E}'_i))\geq m-p_+.
\end{equation}
\end{lemma}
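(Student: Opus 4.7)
The plan is to adapt the continuation argument of Lemma~\ref{intersezione prime} by enlarging the class of ``continuations'' so that they may cross the neighborhoods of boundary punctures of type (P$_3$). In the proof of Lemma~\ref{intersezione prime}, a continuation along $\bdry_+ B_\tau$ remained on a single arc $\overline{a}_k$ (or $\overline{h}(\overline{a}_k)$), so that each sector in the sweep of $D_{\rho_0}$ had to alternate between a ``$\overline{h}(\overline{a})$-type'' and an ``$\overline{a}$-type'' radial ray. At a type (P$_3$) puncture ${\frak r}$ the branched-cover component $\overline{v}_{*_0}^{\circ,1,'}$ near ${\frak r}$ has, by definition, boundary on both $\overline{a}$-type and $\overline{h}(\overline{a})$-type Lagrangians, so $\pi_{D_{\rho_0}}(\mathcal{E}'_i)$ is itself a sector of the form ${\frak S}(\overline{h}(\overline{a}_{k,l}),\overline{a}_{k',l'})$ or ${\frak S}(\overline{a}_{k,l},\overline{h}(\overline{a}_{k',l'}))$ that can be inserted into the continuation.

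First I would re-run the continuation argument at the level of $D_{\rho_0}$, starting from a nontrivial negative end $\mathcal{E}_{-,1}$ of some $\overline{v}^\sharp_{1,j_1}$ and forming the alternating sequence of sectors exactly as in Lemma~\ref{intersezione prime}. Whenever the continuation meets a type (P$_3$) puncture, I splice in the sector $\pi_{D_{\rho_0}}(\mathcal{E}'_i)$ and resume along $\bdry_+ B_\tau$. The coherence of these splicings is guaranteed by the cycle $\mathcal{Z}=({\frak z}_1\to\dots\to {\frak z}_k\to {\frak z}_1)$ built in Section~\ref{boundpunc}, which records precisely this extended continuation: it uses each vertex only once and winds around $\R/2\pi\Z$ exactly once.

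Next I would translate the covering statement into the required intersection-number bound. The cycle $\mathcal{Z}$ implies that the union
\[
\pi_{D_{\rho_0}}\Bigl(\bigl(\textstyle\bigcup_{i=1}^q \mathcal{E}_{-,i}\bigr) \cup \bigl(\textstyle\bigcup_{i=1}^r \mathcal{E}_{+,i}\bigr) \cup \bigl(\textstyle\bigcup_{i=1}^s \mathcal{E}'_i\bigr)\Bigr)
\]
covers $D_{\rho_0}$ with the exception of at most $p_+$ thin sectors, and those thin sectors are exactly where the branched cover $\overline{v}'_+$ of $\sigma^+_\infty$ projects. Since $n^*(\overline{u})=m$ and the contribution of $\overline{v}'_+$ to $n^*$ is at most $p_+$, the complement carries $n^*$-mass at least $m-p_+$, which is Equation~\eqref{oolong bis}.

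The main obstacle is verifying that the sector $\pi_{D_{\rho_0}}(\mathcal{E}'_i)$ splices into the continuation with the correct cyclic orientation at a type (P$_3$) puncture; but this compatibility is built into the construction of $\mathcal{Z}$ in Section~\ref{boundpunc} (and is the reason why types (P$_1$), (P$_2$), (P$_3$) were separated in Definition~\ref{types of boundary punctures} and Remark~\ref{mutually exclusive}), so the argument is essentially a bookkeeping extension of Lemma~\ref{intersezione prime} rather than a new geometric input.
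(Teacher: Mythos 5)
Your overall strategy is the paper's own: the proof given there is literally ``similar to that of Lemma~\ref{intersezione prime}'', with the extended continuation and the cycle $\mathcal{Z}$ already set up in Section~\ref{boundpunc}, and your reduction of \eqref{oolong bis} to the statement that the spliced sectors sweep out $D_{\rho_0}$ up to at most $p_+$ thin sectors (the ones accounted for by $\overline{v}'_+$) is exactly right.

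One assertion in your second sentence is wrong, however, and if taken literally it would break the alternation count. A type (P$_3$) puncture ${\frak r}$ lies on a \emph{single} Lagrangian (say $L_{\overline{\bf a}}$ under assumption (S)), so the two boundary arcs adjacent to ${\frak r}$ both map to $\overline{a}$-type Lagrangians, possibly on different arcs $\overline{a}_{k,l}$ and $\overline{a}_{k',l'}$; accordingly $\pi_{D_{\rho_0}}(\mathcal{E}'_i)$ is a sector of type ${\frak S}(\overline{a}_{k,l},\overline{a}_{k',l'})$ (or its $\overline{h}(\overline{a})$-analogue), \emph{not} a mixed sector ${\frak S}(\overline{h}(\overline{a}_{k,l}),\overline{a}_{k',l'})$ as you claim. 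This is stated explicitly in Section~\ref{boundpunc}, where the puncture sector is the one ``corresponding to the left negative end of $\overline{W}^\circ_1$''. If the puncture sector really were mixed, splicing it in would produce two consecutive $\overline{h}(\overline{a})\to\overline{a}$ transitions without an intervening $\overline{a}\to\overline{h}(\overline{a})$ one, and the resulting cycle could not wind around $\R/2\pi\Z$ exactly once. The correct mechanism is that the continuation simply jumps from the arc $\overline{a}_{k,l}$ to the arc $\overline{a}_{k',l'}$ at the puncture while staying on the same side of the alternation; the ``both types'' condition in the definition of (P$_3$) refers to the vertex set of the whole cycle $\mathcal{Z}$, not to the endpoints of the individual puncture sector. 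With that correction your argument is the intended one.
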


\begin{proof}
Similar to that of Lemma~\ref{intersezione prime}.
\end{proof}

\begin{lemma}\label{cherimoya2}
If ${\frak p}_1,\dots,{\frak p}_s$ are the boundary points of type (P$_1$) and (P$_2$) and $N({\frak p}_i)\subset F_*$ is a small neighborhood of ${\frak p}_i$, then $\sum_{i=1}^s n^*(\overline{v}_*(N({\frak p}_i)))\geq m$.
\end{lemma}

\begin{proof}
(P$_2$) follows immediately from the description of $\mathcal{Z}$; (P$_1$) is similar.
\end{proof}

\subsubsection{Bounds on ECH indices, part II} \label{subsub: bounds on ech indices part II}

In this subsection we give bounds on ECH indices in the presence of boundary points at $z_\infty$.

For simplicity we are still assuming (S). The ECH indices of $\overline{v}_{*_0}^{\circ,i}$ are defined in a manner similar to that of Definition~I.\ref{P1-ECH index revisited}.

\begin{lemma} \label{Lisbeth}
If $\overline{v}'_{*_0}, \overline{v}''_{*_0}\not=\varnothing$, (S) holds, and the boundary point at $z_\infty$ is of type (P$_3$),  then there exist contributions $I_+\geq 0$, $I_{L-}= 2$, and $I_{R-}\geq 0$ from the ends that limit to $z_\infty$ at the positive, left negative, and right negative ends, such that
\begin{equation}
I(\overline{v}_{*_0})\geq I(\overline{v}_{*_0}^{\circ,1,'})+ I(\overline{v}_{*_0}^{\circ,1,''})+I_++I_{L-}+I_{R-}.
\end{equation}
\end{lemma}

\begin{proof}
The calculation of $I_{L-}$ is similar to the ECH index calculations of Section~I.\ref{P1-subsection: modified indices at z infty} and in particular that of Lemma~I.\ref{P1-calc of almost sum}.
\begin{figure}[ht]
\begin{center}
\psfragscanon
\psfrag{a}{\tiny ${\frak z}'_1$}
\psfrag{g}{\tiny ${\frak z}'_2 $}
\psfrag{h}{\tiny ${\frak z}''_1$}
\psfrag{i}{\tiny ${\frak z}''_s$}
\psfrag{B}{\tiny ${1\over 2}$}
\psfrag{A}{\tiny $1$}
\includegraphics[width=6.5cm]{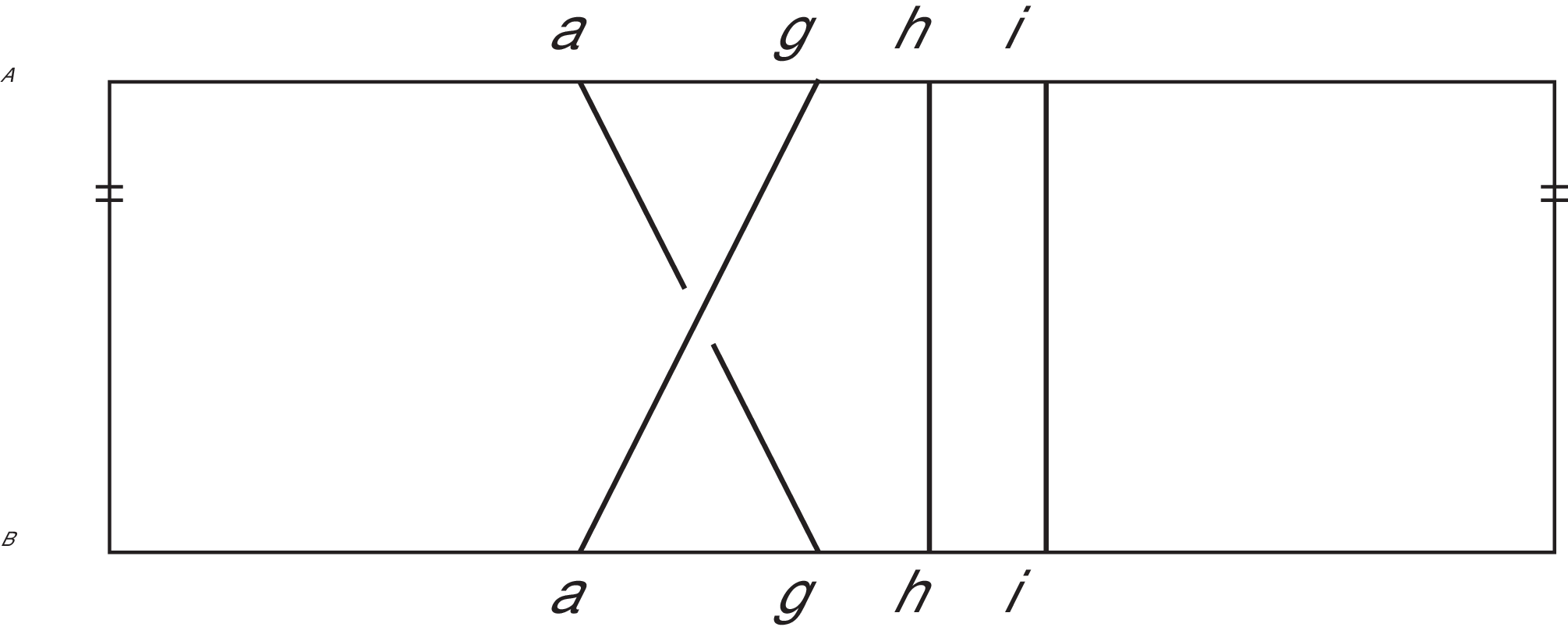}
\end{center}
\caption{${\frak c}'_{L-}\cup {\frak c}''_{L-}$ for ${\frak r}$ of type (P$_3$). Here ${\frak c}''_{L-}$ is the strand in the front.}
\label{figure: puncture3}
\end{figure}

First observe that $P_{L-,j_0,1/2}=P_{L-,j_0,1}$. For simplicity suppose that $\mathcal{Z}$ has only one chord ${\frak z}'_1\to{\frak z}'_2$ corresponding to a sector of type ${\frak S}(\overline{a}_{k,l},\overline{a}_{k',l'})$. Observe that there are no points of $\mathcal{Z}$ between ${\frak z}'_1$ and ${\frak z}'_2$; otherwise $\mathcal{Z}$ winds more than once around $\R/2\pi\Z$.  Hence $P''_{L-,j_0,1/2}=\{{\frak z}'_1\}$, $P'_{L-,j_0,1/2}=\{{\frak z}'_2,{\frak z}''_1,\dots,{\frak z}''_s\}$, $P''_{L-,j_0,1}=\{{\frak z}'_2\}$, and $P'_{L-,j_0,1}=\{{\frak z}'_1,{\frak z}''_1,\dots,{\frak z}''_s\}$, where $P_{L-,j_0,1/2}=P_{L-,j_0,1}$ is written as $\{{\frak z}'_1,{\frak z}'_2, {\frak z}''_1,\dots,{\frak z}''_s\}$ in cyclic order around $\R/2\pi\Z$, and the projection of the left negative end of $\overline{v}_{*_0}^{\circ,1,''}$ that limits to $z_\infty$ intersects $A^{[1/2,1]}_\varepsilon$ along an arc ${\frak c}''_{L-}$ with winding number $w({\frak c}''_{L-})=0$ or $1$, depending on whether ${\frak S}(\overline{a}_{k,l},\overline{a}_{k',l'})$ is a large sector; see Figure~\ref{figure: puncture3}. The left negative ends of $\overline{v}_{*_0}^{\circ,1,'}$ that limit to $z_\infty$ can modified to give a grooming ${\frak c}'_{L-}$ on $A^{[1/2,1]}_{\varepsilon/2}$ such that the winding number $w({\frak c}'_{L-})=0$ or $-1$ and ${\frak c}'_{L-}$ connects ${\frak z}''_i$ to ${\frak z}''_i$, $i=1,\dots,s$, by vertical arcs.  Then the writhe of ${\frak c}'_{L-}\cup {\frak c}''_{L-}$ is $+1$ and resolving the positive crossing of ${\frak c}'_{L-}\cup {\frak c}''_{L-}$ yields a grooming by vertical arcs from $P_{L-,j_0,1/2}$ to $P_{L-,j_0,1}$.

We now consider the disk $\check D$ corresponding to resolving the positive crossing that we ``append'' to the left negative end of $\overline{v}_{*_0}^{\circ,1}$ as in the proof of Lemma~I.\ref{P1-calc of almost sum}. The disk $\check D$ contributes $1,0,1$, and $0$ to $Q, c_1, \mu$, and the discrepancy: $Q$ is equal to the writhe $+1$. The calculations for $\mu$ assume (R). Suppose $w({\frak c}'_{L-})=w({\frak c}''_{L-})=0$ (the case $w({\frak c}'_{L-})=-1$ and $w({\frak c}''_{L-})=1$ is similar and is left to the reader). Then $\mu$ of the positive ends are $1,0,\dots,0$ and $\mu$ of the negative ends are all $0$.  The discrepancy contribution at the positive end is computed separately for ${\frak c}'_{L-}$ and ${\frak c}''_{L-}$ and both are zero.  At the negative end, the discrepancy is zero.  Hence $I_{L-}=2$.

Finally, since $I(\overline{v}_{*_0})=I(\overline{v}_{*_0}^{\circ,1})+ I(\overline{v}_{*_0}^{\circ,2})$ and $I(\overline{v}_{*_0}^{\circ,2})=0$, we have $I(\overline{v}_{*_0})=I(\overline{v}_{*_0}^{\circ,1})= I(\overline{v}_{*_0}^{\circ,1,'})+ I(\overline{v}_{*_0}^{\circ,1,''})+I_++I_{L-}+I_{R-}$. Lemma~\ref{nonnegative ECH indices} implies that $I_+\geq 0$ and $I_{R-}\geq 0$.
\end{proof}

\begin{rmk} \label{Salander}
In general, each collection of boundary points of type (P$_3$) that map to the same point on the base contributes at least $+2$ towards $I$.  The proof is similar to but slightly more complicated than that of Lemma~\ref{Lisbeth}, and is left to the reader.
\end{rmk}

The following is a strengthening of Lemma~\ref{nonnegative ECH indices} in the presence of boundary points at $z_\infty$:

\begin{lemma} \label{nonnegative ECH indices better}
If fiber components are removed from $\overline{u}_\infty$ and the only boundary points are of type (P$_3$), then
\begin{enumerate}
\item  the only components of $\overline{v}_*'$ with negative ECH index are the branched covers of $\sigma_\infty^+$;
\item the ECH index of each level $\overline{v}_*\not= \overline{v}_+$ is nonnegative; and
\item  For $0\leq j\leq a$,
$$ I(\overline{v}_{1,j})\geq  \left\{
\begin{array}{ll}
I(\overline{v}'_{1,j}) +I(\overline{v}''_{1,j}) & \mbox{if } \mathfrak{bp}_{1,j}=0;\\
I(\overline{v}'_{1,j}) +I(\overline{v}''_{1,j})+2  & \mbox{if } \mathfrak{bp}_{1,j}>0.
\end{array}
\right.
$$
\end{enumerate}
Here $\mathfrak{bp}_*$ is the number of boundary points of type (P$_3$) on $\overline{v}_*$.
\end{lemma}

\begin{proof}
We explain the modifications that need to be made when there are boundary points at $z_\infty$ in view of Lemma~\ref{Lisbeth} and Remark~\ref{Salander}. We assume (S) for simplicity.  The cycles $\mathcal{Z}^{(a-j)}$ are defined as before, for $j\geq j_0$. We define $\mathcal{Z}^{(a-j_0,+)}=\mathcal{Z}^{(a-j_0)}$ and $\mathcal{Z}^{(a-j_0,-)}$ as $\mathcal{Z}^{(a-j_0,+)}$ with ${\frak z}_1'\to {\frak z}'_2$ replaced by ${\frak z}'_2$. Then $\mathcal{Z}^{(a-(j_0-1))}$ is obtained using $\mathcal{Z}^{(a-j_0,-)}$ instead of $\mathcal{Z}^{(a-j_0)}$.  Also, $P_{R-,j_0,i}^\star$, $\star=\varnothing,',''$, is obtained from $P_{+,j_0,i}^\star$ by replacing ${\frak z}'_1$ by ${\frak z}'_2$. The rest of the argument of Lemma~\ref{nonnegative ECH indices} carries over.
\end{proof}

\subsubsection{Case of $\overline{v}'_*\cup \overline{v}^\sharp_*=\varnothing$ for all $\overline{v}_*$}

\begin{lemma} \label{los angeles}
If $\overline{u}_\infty\in \bdry_{\{+\infty\}}\mathcal{M}$ and $\overline{v}'_*\cup\overline{v}^\sharp_*=\varnothing$ for all levels $\overline{v}_*$ of $\overline{u}_\infty$, then $a=c=0$; $I(\overline{v}_+)=0$; $I(\overline{v}_-)=2$; $\overline{v}_+$ is a $W_+$-curve; $\overline{v}_-$ is a $\overline{W}_-$-curve; and there may be connectors $\overline{v}_{0,j}$ in between.
\end{lemma}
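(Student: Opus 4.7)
The plan is to extract the structure of $\overline{u}_\infty$ from the constraints $\sum_* I(\overline{v}_*) = 2$ and $\sum_* n^*(\overline{v}_*) = m$ together with the point constraint at $\overline{\frak m}(+\infty)$. Since $\overline{v}'_*\cup\overline{v}^\sharp_*=\varnothing$ at every level, no component branch-covers any $\sigma_\infty^*$ and no end is asymptotic to $\delta_0$ or $z_\infty$; Lemma~\ref{nonnegative ECH indices} then gives $I(\overline{v}_*)\geq 0$ for every level, including $\overline{v}_+$, since its exceptional clause (concerning branched covers of $\sigma_\infty^+$) is vacuous under our hypothesis.

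Next I would produce sharper lower bounds on the contributing levels. The marked point $\overline{\frak m}(+\infty)=((0,{3\over 2}),z_\infty)$ lies in $B_-\times\overline{S}$, so by the definition of the SFT limit it must be carried by $\overline{v}_-$. Passing through a prescribed interior point of $\overline{W}_-$ is a codimension-$2$ constraint, and regularity of $\mathcal{M}^{\{K_{{\frak p}(\tau),\delta}\}}$ forces $\op{ind}(\overline{v}_-)\geq 2$, whence $I(\overline{v}_-)\geq 2$ by the index inequality. Each nonempty $\overline{v}_{\pm 1,j}$ with $j\geq 1$ must have a non-trivial component, for otherwise the level could be absorbed into the adjacent one, contradicting stability of the limit building; the regularity of $\overline{J}$ and the HF index inequality (Lemma~I.\ref{P1-thm: index inequality for HF}) then give $I(\overline{v}_{\pm 1,j})\geq \op{ind}(\overline{v}_{\pm 1,j})\geq 1$. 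By \cite[Proposition~7.15(a)]{HT1}, $I(\overline{v}_{0,j})\geq 0$, with equality exactly when $\overline{v}_{0,j}$ is a connector.

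Summing via \eqref{sum of I} gives
\begin{equation*}
2 \;=\; \sum_* I(\overline{v}_*) \;\geq\; I(\overline{v}_+)+I(\overline{v}_-)+\sum_{j=1}^b I(\overline{v}_{0,j})+a+c \;\geq\; 0+2+0+a+c,
\end{equation*}
pinning down $a=c=0$, $I(\overline{v}_+)=0$, $I(\overline{v}_-)=2$, and every middle level $\overline{v}_{0,j}$ a connector. Since connectors contribute $0$ to $n^*$, \eqref{sum of n} reduces to $n^*(\overline{v}_+)+n^*(\overline{v}_-)=m$. An $I=0$ curve in $\overline{W}_+$ with $\overline{v}'_+=\overline{v}^\sharp_+=\varnothing$ is by definition a chain-level $W_+$-curve in the sense of Section~I.\ref{P1-subsection: gluing for Phi}, which forces $n^*(\overline{v}_+)=0$; therefore $n^*(\overline{v}_-)=m$ and $\overline{v}_-$ is a $\overline{W}_-$-curve in the sense required by $A_1$ of Lemma~\ref{cherries4}.

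The main obstacle, and the only non-routine point in the argument, is the last identification: verifying that an index-$0$ curve satisfying our structural hypotheses really has $n^*=0$ and matches the Section~I definition of $W_+$-curve. Without the absence of branched covers of $\sigma_\infty^+$ one could redistribute $n^*$-mass onto such covers while keeping $I$ unchanged; it is precisely the hypothesis $\overline{v}'_+ = \varnothing$, together with the absence of $z_\infty$- and $\delta_0$-ends, that rules this out and pins $n^*(\overline{v}_+)=0$. Everything else in the proof is a direct bookkeeping exercise once the nonnegativity and index bounds are in hand.
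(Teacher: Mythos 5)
Your overall index bookkeeping follows the paper's strategy, but there is a genuine gap at the start. The hypothesis $\overline{v}'_*\cup\overline{v}^\sharp_*=\varnothing$ says only that no component branch-covers a section at infinity and no component has an end asymptotic to a multiple of $z_\infty$ or $\delta_0$; it does \emph{not} exclude fiber components $\overline{v}^f_*$, nor boundary punctures that are removable at $z_\infty$, nor $\flat$-components that meet the interior of a section at infinity, and hence it does not by itself force $n^*(\overline{v}_*)=0$ for the levels other than $\overline{v}_-$. Both of your key steps silently assume these exclusions: Lemma~\ref{nonnegative ECH indices} is stated only under the hypotheses that fiber components are removed and that there are no boundary punctures removable at $z_\infty$, and your closing claim that an $I=0$ level with $\overline{v}'_+=\overline{v}^\sharp_+=\varnothing$ is ``by definition'' a $W_+$-curve is circular: a $W_+$-curve must have image in $W_+$, i.e.\ away from the section at infinity, whereas a $\flat$-component can cross $(\sigma^+_\infty)^{\dagger,+}$ and contribute positively to $n^*$ without being asymptotic to $z_\infty$ or $\delta_0$.

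The missing ingredient is the $n^*$ budget, which is the first step of the paper's proof. Since $\overline{v}'_-=\varnothing$, the point of $F_-$ mapping to $\overline{\frak m}(+\infty)=((0,{3\over 2}),z_\infty)$ does not lie on a cover of $\sigma_\infty^-$, so a small neighborhood of it already contributes $n^*\geq m$. Combined with \eqref{sum of n} and Lemma~\ref{cherimoya}, this rules out all boundary punctures removable at $z_\infty$ (each would contribute an additional $k_0-1\gg 2g$) and all fiber components except possibly one in $\overline{v}_-$ through $\overline{\frak m}(+\infty)$, and it forces every level $\neq\overline{v}_-$ to have $n^*=0$ and hence image in $W'$, $W$, or $W_+$. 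Only then do the hypotheses of Lemma~\ref{nonnegative ECH indices} hold and only then is $\overline{v}_+$ literally a $W_+$-curve. You must also dispose of the remaining possibility that $\overline{v}_-$ is a \emph{degenerate} $\overline{W}_-$-curve (the case where the surviving fiber component sits in $\overline{v}_-$); the paper does this by noting $I(\overline{v}_-)\geq 4$ in that case, contradicting \eqref{sum of I}. With these additions your index count goes through.
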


\begin{proof}
Suppose that $\overline{u}_\infty\in \bdry_{\{+\infty\}}\mathcal{M}$ and $\overline{v}'_*\cup\overline{v}^\sharp_*=\varnothing$ for all levels $\overline{v}_*$ of $\overline{u}_\infty$.  Then there is a point ${\frak q}\in int(F_-)$ and a sufficiently small neighborhood $N({\frak q})\subset \dot F_-$ of ${\frak q}$ such that $\overline{v}_-({\frak q})=\overline{\frak m}(\infty)$ and $n^*(\overline{v}_-(N({\frak q})))\geq m$. By Equation~\eqref{sum of n} and Lemma~\ref{cherimoya}, there are no boundary points at $z_\infty$ and the only possible fiber component passes through $\overline{\frak m}(\infty)$. Hence every level $\overline{v}_*$, $*\not=-$, has image inside $W'$, $W$, or $W_+$ and $\overline{v}_-$ is a $\overline{W}_-$-curve or a {\em degenerate} $\overline{W}_-$-curve by the analog of Lemma~I.\ref{P1-lemma: preliminary restrictions part 1}.

The ECH index of each level $\not=\overline{v}_+$ which has no fiber component is nonnegative by Lemma~\ref{nonnegative ECH indices}. Since $\overline{v}_+$ is a $W_+$-curve, $I(\overline{v}_+)\geq 0$. On other hand, by the previous paragraph, if there is a fiber component, then it is a component of $\overline{v}_-$. We claim that $I(\overline{v}_-)\geq 2$, with equality if and only if $\overline{v}_-$ is not a degenerate $\overline{W}_-$-curve. Indeed, if $\overline{v}_-$ is not degenerate, then $I(\overline{v}_-)\geq 2$ by the point constraint (this is the only place where we use the fact that $\overline{u}_\infty\in \bdry_{\{+\infty\}}\mathcal{M}$) and the ECH index inequality, and if $\overline{v}_-$ is degenerate, then $I(\overline{v}_-)\geq 4$, as computed in the proof of Lemma~I.\ref{P1-claim in proof}.

The lemma then follows from Equation~\eqref{sum of I}. In particular, degenerate $\overline{W}_-$-curves are not allowed.
\end{proof}

\subsubsection{Preliminary restrictions when $\overline{v}'_*\cup \overline{v}^\sharp_*\not=\varnothing$ for some $\overline{v}_*$}

We now consider the case where $\overline{v}'_*\cup\overline{v}^\sharp_*\not =\varnothing$ for some level $\overline{v}_*$.

\begin{lemma} \label{sencha}
If $\overline{v}'_*\cup\overline{v}^\sharp_*\not =\varnothing$ for some level $\overline{v}_*$ of $\overline{u}_\infty$, then:
\begin{enumerate}
\item $p_-=\deg(\overline{v}_-')>0$;
\item $\overline{u}_\infty$ has no boundary point of type (P$_1$) or (P$_2$);
\item $\overline{u}_\infty$ has no fiber components and no components of $\overline{v}''_*$ that intersect the interior of a section at infinity;
\item each component of $\overline{v}_{-1,j}^\sharp$, $j=1,\dots,c$, is a thin strip from $z_\infty$ to some $x_i$ or $x_i'$ with $I=1$;
\item each component of $\overline{v}_-^\sharp$ is a section of $\overline{W}_--W_-$ from $\delta_0$ to some $x_i$ or $x_i'$ with $I=1$;
\item each component of $\overline{v}_{0,j}^\sharp$, $j=1,\dots,b$, which has a positive end at a multiple of $\delta_0$ is a cylinder from $\delta_0$ to $h$ or $e$ with $I=1$ or $2$;
\item the only boundary points at $z_\infty$ are type (P$_3$) points of $\overline{v}_{1,j}^\sharp$, $j=0,\dots,a$.
\end{enumerate}
\end{lemma}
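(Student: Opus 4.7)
The argument is driven by the two global constraints
\[
\sum_{\overline{v}_*} n^*(\overline{v}_*)=m,\qquad \sum_{\overline{v}_*} I(\overline{v}_*)=2,
\]
from \eqref{sum of n} and \eqref{sum of I}, combined with the intersection-number lower bounds of Lemmas~\ref{intersezione}--\ref{cherimoya2} and the ECH-index lower bounds of Lemmas~\ref{nonnegative ECH indices} and~\ref{nonnegative ECH indices better}.  The key opening observation is that each $\overline{u}_i\in\mathcal{M}$ passes through $\overline{\frak m}(\tau_i)$, so the limit $\overline{u}_\infty$ must pass through $\overline{\frak m}(+\infty)=(\overline{\frak m}^b(+\infty),z_\infty)\in\sigma_\infty^-\subset\overline{W}_-$; in particular, some component of $\overline{v}_-$ attains this point.

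I would first rule out the ``expensive'' structural features.  A fiber component of $\overline{u}_\infty$ has $n^*=m$, since the pushoff $(\sigma_\infty^*)^{\dagger,+}$ wraps $m$-fold around each fiber near $z_\infty$; combined with the hypothesis $\overline{v}'_*\cup\overline{v}_*^\sharp\neq\varnothing$, which itself contributes strictly positively to $\sum n^*$ by Lemmas~\ref{intersezione} and~\ref{intersezione prime}, this exceeds the budget $m$ and yields a contradiction.  The same estimate rules out any component of $\overline{v}_*''$ meeting the interior of a section at infinity (proving~(3)), and, together with Lemma~\ref{cherimoya2}, it also rules out boundary punctures of type~(P$_1$) or~(P$_2$) (proving~(2) and~(7)).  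Once fiber components are excluded, the only way for $\overline{u}_\infty$ to pass through $(\overline{\frak m}^b(+\infty),z_\infty)$ is through a branched cover of $\sigma_\infty^-$ whose base image contains $\overline{\frak m}^b(+\infty)$, forcing $p_-=\deg(\overline{v}_-')>0$, which is~(1).

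For items~(4)--(6) I would then play the ECH-index budget tightly against the $n^*$ budget.  By Lemma~\ref{nonnegative ECH indices better}, every level other than $\overline{v}_+$ has $I\geq 0$, while $I(\overline{v}_+')$ is strictly negative by~\eqref{index for sigma infty plus}; after subtracting the contributions of the branched covers of $\sigma_\infty^\pm$ and of the main non-$\sharp$ body components (whose Fredholm indices are bounded below by the usual index inequalities), only a small residual of index is available to the $\sharp$-components on the levels $\overline{v}_-,\overline{v}_{-1,j},\overline{v}_{0,j}$.  Each such component has at least one end at $z_\infty$ or $\delta_0$ and therefore also consumes a positive portion of the $n^*$ budget through Lemmas~\ref{intersezione} and~\ref{intersezione prime}.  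Balancing these two constraints forces each $\sharp$-component to be of minimal type: a thin bigon from $z_\infty$ to some $x_i$ or $x_i'$ for $\overline{v}_{-1,j}^\sharp$ (item~4); a section of $\overline{W}_--W_-$ from $\delta_0$ to some $x_i$ or $x_i'$ for $\overline{v}_-^\sharp$ (item~5); and a cylinder from $\delta_0$ to $h$ or $e$ with $I=1$ or $2$ for the positive-$\delta_0$-end components of $\overline{v}_{0,j}^\sharp$ (item~6), the two possible values of $I$ corresponding to the two Conley--Zehnder indices of the target.  The main obstacle is precisely this combinatorial bookkeeping: the $p_-$ units of branched-cover data at the top of $\overline{v}_-$ must be compatible with the alternation property of Lemma~\ref{alternate} and with the refined index inequality Lemma~I.\ref{P1-index inequality for z infinity case} (embedded in Lemma~\ref{nonnegative ECH indices better}) applied to the upper levels $\overline{v}_{1,j}$, while simultaneously respecting the negative deficit $I(\overline{v}_+)<0$ and the $n^*$ budget --- and once all of these are enforced, only the configurations in items~(4)--(7) survive.
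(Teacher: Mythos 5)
Your argument is essentially the paper's: the entire lemma is driven by the $n^*$ budget of Equation~\eqref{sum of n} together with the lower bounds of Lemmas~\ref{intersezione}, \ref{intersezione prime} and \ref{cherimoya2}, and the marked-point constraint at $\overline{\frak m}(+\infty)\in\sigma_\infty^-$ is what forces $p_->0$. The only differences are cosmetic: the paper proves (1) first and then extracts the sharper inequality $\sum_{\overline{v}_*\succ\overline{v}_-}n^*(\overline{v}_*)\geq m-2g$ from which (2)--(7) all follow, whereas you prove (2),(3),(7) first and deduce (1) from them, and for (4)--(6) you additionally invoke the ECH-index budget \eqref{sum of I}, which is not needed here and is really the engine of the subsequent Lemma~\ref{hojicha}.
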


\begin{proof}
The lemma is a consequence of Equation~\eqref{sum of n}.  Suppose $\overline{v}'_*\cup\overline{v}^\sharp_*\not =\varnothing$ for some level $\overline{v}_*$. Then $\overline{v}^\sharp_*\not=\varnothing$ for some $\overline{v}_*$ and each end of $\overline{v}_*^\sharp$ that limits to some multiple of $z_\infty$ or $\delta_0$ contributes positively to $n^*$ by Lemmas~\ref{intersezione} and ~\ref{intersezione prime revisited}.

(1) If $\overline{v}_-'=\varnothing$, then the restriction of $\overline{v}_-''$ to a neighborhood of $\overline{\frak m}(+\infty)$ contributes $m$ towards $n^*(\overline{v}_-)$. This contradicts the discussion from the previous paragraph, proving (1).

Since $\overline{v}_-'\not=\varnothing$, some $\overline{v}_{0,j}^\sharp$, $j=1,\dots,b+1$, or $\overline{v}_{1,j}^\sharp$, $j=1,\dots,a$, has a negative end at $z_\infty$ or a multiple of $\delta_0$. By Lemmas~\ref{intersezione}(1) and ~\ref{intersezione prime revisited}, it follows that:
\begin{equation} \label{first}
\sum_{\overline{v}_*\succ \overline{v}_-}n^*(\overline{v}_*)\geq m-2g,
\end{equation}
where we are counting contributions from the ends and the boundary points of type (P$_3$).

(2)--(7) are consequences of Equation~\eqref{first}. We explain (2) and (3), leaving the rest to the reader. By Lemma~\ref{cherimoya2}, the neighborhoods of the boundary points of type (P$_1$) or (P$_2$) contribute at least $m$ towards $n^*(\overline{v}_*'')$ in total. Also, a non-ghost fiber component or a component of $\overline{v}''_*$ that intersects the interior of a section at infinity contributes $m$ towards $n^*$. They both contradict Equation~\eqref{first}.
\end{proof}

\begin{lemma} \label{a or b}
If $\overline{v}'_*\cup\overline{v}^\sharp_*\not =\varnothing$ for some level $\overline{v}_*$, then the following alternative holds:
\begin{enumerate}
\item[(a)] either some $\overline{v}_{0,j_0}^\sharp$, $j_0=1,\dots,b+1$, has a negative end at a multiple of $\delta_0$, in which case $\overline{v}_{0,j_0}'=\varnothing$, $\overline{v}_*'=\overline{v}_*^\sharp=\varnothing$ for all levels $\overline{v}_*\succ \overline{v}_{0,j_0}$, and $\overline{v}_{0,j}'\not=\varnothing$ for all levels $\overline{v}'_-\preceq \overline{v}_{0,j}\prec \overline{v}'_{0,j_0}$; or
\item[(b)] no $\overline{v}_{0,j}^\sharp$, $j=1,\dots,b+1$, has a negative end at a multiple of $\delta_0$, in which case $\overline{v}'_*\not=\varnothing$ for all levels $\overline{v}'_-\preceq \overline{v}'_*\preceq \overline{v}'_+$ and $\overline{v}_{1,j}'\cup \overline{v}_{1,j}^\sharp\not=\varnothing$ for some $j>0$.
\end{enumerate}
\end{lemma}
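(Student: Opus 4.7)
The proof is a bookkeeping argument combining the matching conditions at $\delta_0$-asymptotics with the $n^*$-budget constraint \eqref{sum of n}. First I would set up notation. For $0\leq j\leq b+1$, let $N_j$ (resp.\ $P_j$) denote the total $\delta_0$-multiplicity at the negative (resp.\ positive) ends of $\overline{v}_{0,j}$, with the conventions $\overline{v}_{0,0}=\overline{v}_-$ and $\overline{v}_{0,b+1}=\overline{v}_+$. Decompose $N_j=\deg\overline{v}'_{0,j}+A_j^-$ and $P_j=\deg\overline{v}'_{0,j}+A_j^+$, where $A_j^\pm$ record the $\delta_0$-contributions of $\overline{v}^\sharp_{0,j}$. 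By Lemma~\ref{sencha}(6), any component of $\overline{v}^\sharp_{0,j}$ contributing to $A_j^+$ is a simple cylinder $\delta_0\to h$ or $\delta_0\to e$, hence disjoint from the component(s) contributing to $A_j^-$. Matching of ends across levels reads $P_{j-1}=N_j$, and $P_0=p_-+|\overline{v}_-^\sharp|$ by Lemma~\ref{sencha}(5).

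In Case~(b) every $A_j^-=0$, so $\deg\overline{v}'_{0,j}=P_{j-1}$. Since $p_->0$ by Lemma~\ref{sencha}(1), induction yields $\deg\overline{v}'_{0,j}\geq p_->0$ for $j=1,\dots,b+1$; in particular $\overline{v}'_+\neq\varnothing$. The positive ends of $\overline{v}'_+$ are strips over $[0,1]\times\{z_\infty\}^l$, which must be absorbed by negative ends of $\overline{v}_{1,j}$ from above; since the topmost asymptotics ${\bf y}\subset S$ exclude $z_\infty$, some $\overline{v}_{1,j}$ with $j>0$ must have $\overline{v}'_{1,j}\cup\overline{v}^\sharp_{1,j}\neq\varnothing$, completing Case~(b).

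In Case~(a), let $j_0$ be the smallest index with $A_{j_0}^->0$. For $1\leq j<j_0$ the Case~(b) argument applies and yields $\deg\overline{v}'_{0,j}\geq p_->0$, which is~(a.3). For~(a.1) and~(a.2) the key is the $n^*$-budget. By Lemma~\ref{intersezione}(1) the negative $\delta_0^l$-end of $\overline{v}^\sharp_{0,j_0}$ contributes at least $m-l$ to $n^*$, where the inductive formula $\deg\overline{v}'_{0,j_0-1}=p_-+\sum_{k<j_0-1}A_k^+$ bounds $l\leq P_{j_0-1}$ by a constant of order $O(g)$ independent of $m$. This single end absorbs nearly the full budget $m$. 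Any further $\sharp$-end at $\delta_0^{l'}$ (via Lemma~\ref{intersezione}) or at $z_\infty^{l'}$ (via Lemma~\ref{intersezione prime}) at a higher level would contribute another $m-O(g)$ or $\geq k_0-1\gg 2g$, pushing the total past $m$; this rules out $\overline{v}^\sharp_*\neq\varnothing$ for all $\overline{v}_*\succ\overline{v}_{0,j_0}$. With that established, if $\overline{v}'_{0,j_0}\neq\varnothing$ or $\overline{v}'_*\neq\varnothing$ for some $\overline{v}_*\succ\overline{v}_{0,j_0}$, the positive $\delta_0$- (resp.\ $z_\infty$-) ends of this branched cover must be matched at the next level up by more branched covers, since $\overline{v}^\sharp_*$ vanishes above $j_0$. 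Chaining upward yields a branched cover of $\sigma^*_\infty$ at the topmost level whose positive $z_\infty^l$ ends must lie in ${\bf y}$, contradicting ${\bf y}\subset S$. This establishes~(a.1) and completes~(a.2).

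The main obstacle is the careful $n^*$-bookkeeping in Case~(a): one has to verify that the contributions of distinct $\sharp$-ends to $n^*$ are additive rather than overlapping (this is precisely what Lemma~\ref{intersezione prime} provides for the $z_\infty$-ends), and that the multiplicity $l$ of the distinguished negative $\delta_0$-end is controlled by a constant depending only on $g$ via the inductive formula and the bound $p_-\leq 2g$. Once these bounds are in place, the dichotomy reduces to formal matching bookkeeping combined with the structural restrictions of Lemma~\ref{sencha}.
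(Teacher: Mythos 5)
Your proof takes the same route as the paper: the whole lemma rests on Lemma~\ref{intersezione}(1) (a negative end at $\delta_0^p$ costs $n^*\geq m-p$), Lemma~\ref{intersezione prime revisited} (the $z_\infty$-ends of the upper levels collectively cost $n^*\geq m-p_+$), and the budget $n^*(\overline{u}_i)=m$ with $m\gg 2g$, which makes the two configurations mutually exclusive and forbids a second negative $\delta_0$-end; your multiplicity bookkeeping $P_{j-1}=N_j$ just makes explicit the finer structural claims that the paper leaves implicit.

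One step is stated too strongly and, as written, does not follow from the cited bounds: the claim that \emph{any} further $\sharp$-end above $\overline{v}_{0,j_0}$ contributes $m-O(g)$ or $\geq k_0-1$. That is true only for \emph{negative} ends at $\delta_0$ or $z_\infty$. A component of $\overline{v}^\sharp_*$ above $j_0$ whose only $\delta_0$/$z_\infty$-end is a \emph{positive} end at $\delta_0^{l'}$ contributes merely $l'$ by Lemma~\ref{intersezione}(2), and a positive end at $z_\infty$ projecting to a thin sector contributes almost nothing, so the budget alone does not rule these out. They are excluded instead by the matching argument you already use for the branched covers: such a positive end must be absorbed by a negative $\delta_0$- or $z_\infty$-end of a higher level (or by ${\bf y}\subset S$ at the top, which is impossible), and chasing upward one eventually either hits a negative $\sharp$-end (budget blown) or reaches a level $\overline{v}_{1,j}$, $j>0$, with $\overline{v}'_{1,j}\cup\overline{v}^\sharp_{1,j}\neq\varnothing$, at which point Lemma~\ref{intersezione prime revisited} applies and blows the budget. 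Restructuring the argument to run the chase and the budget count simultaneously (e.g.\ starting from the topmost level with $\overline{v}'_*\cup\overline{v}^\sharp_*\neq\varnothing$) removes the circularity between ``$\overline{v}^\sharp_*=\varnothing$ above $j_0$'' and the chaining step that presupposes it.
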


\begin{proof}
This follows from Lemmas~\ref{intersezione}(1) and ~\ref{intersezione prime revisited} by observing that either case contributes at least $m-2g$ towards $n^*$ and that it is not possible to have both since $m\gg 2g$. It is also not possible to have multiple occurrences of negative ends of $\overline{v}_{0,j}^\sharp$, $j>0$, that converge to some multiple of $\delta_0$.
\end{proof}

\subsubsection{List of possibilities when $\overline{v}'_*\cup \overline{v}^\sharp_*\not=\varnothing$ for some $\overline{v}_*$}

We first start with a useful definition.

\begin{defn} $\mbox{}$
\begin{enumerate}
\item Let $X_1\cup\dots\cup X_{r}$ be an $r$-level building of almost complex manifolds with cylindrical ends, ordered from bottom to top, and let
$$v= v_1\cup\dots \cup v_r, \quad \op{Im}(v_i)\subset X_i,$$ be a corresponding $r$-level holomorphic building, where each level has finite energy. If $\mathcal{E}$ is some collection of positive ends of $v_r$, then the {\em holomorphic building hanging from $\mathcal{E}$} is the union of irreducible components $\widetilde{v}_{i_0}$ for which there exist irreducible components $\widetilde{v}_i$, $i=i_0+1,\dots,r$, where the limit of a positive end of $\widetilde{v}_i$ agrees with the limit of a negative end of $\widetilde{v}_{i+1}$ for all $i=i_0,\dots,r-1$ and a positive end of $\widetilde{v}_r$ is contained in $\mathcal{E}$.
\item If $X_1\cup\dots\cup X_{r+1}$ is an $(r+1)$-level building, $v=v_1\cup\dots\cup v_{r+1}$ is a corresponding $(r+1)$-level holomorphic building and $\mathcal{E}$ is some collection of negative ends of $v_{r+1}$, then the {\em holomorphic building hanging from $\mathcal{E}$} is defined similarly.
\item If $X_1\cup\dots\cup X_r$ is an $r$-level building, $v=v_1\cup\dots\cup v_r$ is a corresponding $r$-level holomorphic building and $\mathcal{E}$ is some collection of negative ends of $v_1$, then the {\em holomorphic building sitting above $\mathcal{E}$} is the union of irreducible components $\widetilde{v}_{i_0}$ for which there exist irreducible components $\widetilde{v}_i$, $i=1,\dots,i_0-1$, where the limit of a negative end of $\widetilde{v}_i$ agrees with the limit of a positive end of $\widetilde{v}_{i-1}$ for all $i=2,\dots,i_0$ and a negative end of $\widetilde{v}_1$ is contained in $\mathcal{E}$.
\item If $X_0\cup\dots\cup X_{r}$ is an $(r+1)$-level building, $v=v_0\cup\dots\cup v_{r}$ is a corresponding $(r+1)$-level holomorphic building and $\mathcal{E}$ is some collection of positive ends of $v_0$, then the {\em holomorphic building sitting above $\mathcal{E}$} is defined similarly.
\end{enumerate}
\end{defn}

\begin{lemma}  \label{hojicha}
If $\overline{v}'_*\cup\overline{v}^\sharp_*\not =\varnothing$ for some level $\overline{v}_*$, then $\overline{v}_-'\not=\varnothing$, there are no removable points at $z_\infty$, and $\overline{u}_\infty$ contains one of the following subbuildings:
\begin{enumerate}
\item A $3$-level building consisting of a component of $\overline{v}_{0,1}^\sharp$ with $I=1$ from $\bs\gamma$ to $\delta_0\bs\gamma'$; $\overline{v}_-'=\sigma_\infty^-$; and a thin strip.
\item[($2_i$)] A $3$-level building consisting of a component of $\overline{v}_+^\sharp$ with $I=i$, $i=0,1$, from ${\bf y}$ or ${\bf y}''$ to $\delta_0\bs\gamma'$; $\overline{v}_-'=\sigma_\infty^-$; and a thin strip.
\item[(3)] A $4$-level building consisting of a component of $\overline{v}_+^\sharp$ with $I=0$ from ${\bf y}$ to $\delta_0^2\bs\gamma'$; $\overline{v}_{0,1}'=\sigma_\infty'$; a component of $\overline{v}_{0,1}^\sharp$ which is an $I=1$ cylinder from $\delta_0$ to $h$; $\overline{v}_-'=\sigma_\infty^-$; and a thin strip.
\item[(4)] A $4$-level building consisting of a component of $\overline{v}_+^\sharp$ with $I=0$ from ${\bf y}$ to $\delta_0^2\bs\gamma'$; $\overline{v}'_{0,1}$ with $I=0$ and $\deg=2$; $\overline{v}_-'=\sigma_\infty^-$; a component of $\overline{v}_-^\sharp$ which is an $I=1$ curve from $\delta_0$ to some $x_i$ or $x_i'$; and a thin strip.
\item[(5)] A $\geq 3$-level building consisting of a component of $\overline{v}_+^\sharp$ with $I=0$ from ${\bf y}$ to $\delta_0^2\bs\gamma'$; $\overline{v}'_{0,1}$ and $\overline{v}_-'$ with $I=0$ and $\deg=2$, where $\overline{v}'_{0,1}=\varnothing$ is possible; and two thin strips.
\item[($6_i$)] A $\geq 4$-level building consisting of a component of $\overline{v}_{1,1}^\sharp$ with $I=i$, $i=1,2$, from ${\bf y}$ to $\{z_\infty^{r}\}\cup {\bf y'}$; $0<p_{0,0}\leq p_{0,1}\leq \dots\leq p_{0,b+1}\leq r$; $I(\overline{v}_+')=-p_{0,b+1}$; $I(\overline{v}_{0,j}')=0$ for $j=1,\dots,b$; $\overline{v}_+^\sharp$ with $I=2-i$ which has no negative ends at a multiple of $\delta_0$; $p_{0,b+1}-p_{0,1}$ cylinders of $\overline{v}_{0,j}^\sharp$, $j\in\{1,\dots,b\}$, with $I=1$ each from $\delta_0$ to $h$; $p_{0,1}-p_{0,0}$ components of $\overline{v}_-^\sharp$ with $I=1$ each from $\delta_0$ to some $x_i$ or $x_i'$; and $p_{0,0}$ thin strips. The total ECH index of the building hanging from the negative end of $\overline{v}_+'$ is $p_{0,b+1}$.
\end{enumerate}
Here we are omitting levels which are connectors.  If there is more than one thin strip, then the thin strips could be on the same level or on different levels.
\end{lemma}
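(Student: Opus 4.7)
The plan is to pin down $\overline u_\infty$ through the two global budgets $\sum_* I(\overline v_*)=2$ and $\sum_* n^*(\overline v_*)=m$ of Equations~\eqref{sum of n} and \eqref{sum of I}, combined with the structural restrictions of Lemma~\ref{sencha}, the dichotomy of Lemma~\ref{a or b}, and the index-nonnegativity of Lemma~\ref{nonnegative ECH indices better}. Since we assume $\overline v'_*\cup\overline v^\sharp_*\neq\varnothing$ for some level, Lemma~\ref{sencha}(1) already gives $\deg\overline v'_-\geq 1$, hence $\overline v'_-\neq\varnothing$. To rule out the remaining boundary-puncture type (P$_3$), I would observe that each such puncture contributes at least $+1$ to the ECH index of its level by Remark~\ref{Salander}, while the forced components of $\overline v_{-1,j}^\sharp$, $\overline v_-^\sharp$, and the $\delta_0$-cylinders in $\overline v_{0,j}^\sharp$ from Lemma~\ref{sencha}(4)--(6) already saturate the budget once the compensating contribution $I(\overline v'_+)=-\deg\overline v'_+$ is accounted for; a stray (P$_3$) puncture therefore breaks $I=2$.

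Under alternative~(a) of Lemma~\ref{a or b} the negative end of some $\overline v^\sharp_{0,j_0}$ is $\delta_0^k$. The forced vanishing $\overline v'_*=\overline v^\sharp_*=\varnothing$ for $\overline v_*\succ \overline v_{0,j_0}$ trivializes the levels above $\overline v_{0,j_0}$ (up to connectors), so one is left to enumerate (i) the position $j_0\in\{1,\dots,b+1\}$, (ii) the multiplicity $k$ at $\delta_0$ (the budget forces $k\in\{1,2\}$ since $\deg\overline v'_-\leq 2$), and (iii) the possible terminations of the $\sharp$-ends below---thin strips in $\overline v_{-1,j}^\sharp$, sections ending at $x_i,x_i'$ in $\overline v_-^\sharp$, or cylinders to $h,e$ in $\overline v_{0,j}^\sharp$. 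Using $I(\overline v'_{0,j})=0$ for every connector level, together with $I(\sigma^-_\infty)=0$ from Equation~\eqref{index for sigma infty minus}, the admissible combinations are exactly the five families (1), ($2_0$), ($2_1$), (3), (4), (5).

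Under alternative~(b), a $z_\infty$-end of some $\overline v^\sharp_{1,j_0}$ with $j_0>0$ must cascade down through the levels. The continuation argument of Lemma~\ref{intersezione prime revisited} forces the degrees $p_{0,j}=\deg\overline v'_{0,j}$ to form a non-decreasing sequence $0<p_{0,0}\leq\cdots\leq p_{0,b+1}\leq r$, while the $z_\infty$-version of the index inequality (as packaged in Lemma~\ref{Lisbeth}) and Lemma~\ref{nonnegative ECH indices better} give $I(\overline v'_+)=-p_{0,b+1}$ and $I(\overline v'_{0,j})=I(\overline v'_-)=0$. The remaining index $2$ is split between $\overline v^\sharp_+$ and $\overline v^\sharp_{1,1}$ as $(2-i)+i$, and the successive differences $p_{0,j+1}-p_{0,j}$ are exactly absorbed by $\delta_0$-cylinders, $\delta_0$-sections in $\overline v_-^\sharp$, and thin strips; this yields case~($6_i$).

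The main obstacle is case~(b): one must show that the $z_\infty$-ends of $\overline v^\sharp_{1,j_0}$ cannot split off extra $\sharp$-components in intermediate levels or pick up additional positive index contributions. The alternating property of the initial and terminal points on $A_\varepsilon$ established in Lemma~\ref{alternate} is the essential ingredient here, because any failure of alternation would, via the $z_\infty$-index inequality, force a strict excess that breaks the $I=2$ budget. Once this is controlled, the intersection bound of Lemma~\ref{intersezione prime revisited} ensures that the cascade of degrees is the unique admissible configuration, completing the enumeration.
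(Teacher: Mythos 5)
Your overall strategy is the same as the paper's: run the two budgets \eqref{sum of n} and \eqref{sum of I} against the structural restrictions of Lemma~\ref{sencha}, split along the dichotomy of Lemma~\ref{a or b}, and use Lemma~\ref{nonnegative ECH indices better} to control the ECH indices of the upper levels. However, your enumeration has a concrete gap in alternative (b). After the contributions of $\overline{v}'_+$ (which give $-p_{0,b+1}$) and of the building hanging from its negative end (which give $+p_{0,b+1}$) cancel to $0$, the budget of $2$ is \emph{not} yet saturated: it must be absorbed by $I(\overline{v}''_{1,j})\geq 1$ for each nontrivial upper level, by the boundary punctures of type (P$_3$), \emph{and} by an extra $+1$ coming from the fact that some end of $\cup_j\overline{v}^\sharp_{1,j}$ at $z_\infty$ (or some (P$_3$) neighborhood) must project to a \emph{large sector} of $D_{\rho_0}$ — this is the paper's point ($\delta$), and it is precisely what forces $a=1$ and $\mathfrak{bp}=0$, i.e., a single nontrivial upper level and no removable punctures. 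Your exclusion of (P$_3$) punctures via ``the forced components already saturate the budget'' therefore does not close: without the large-sector $+1$, your accounting still permits $a=2$ configurations (two upper levels each with $I=1$ and $I(\overline{v}^\sharp_+)=0$) and permits one (P$_3$) puncture, neither of which appears in the statement.

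Two smaller points. First, in Case ($6_i$) the statement requires the intermediate cylinders to go from $\delta_0$ to $h$ with $I=1$; you need the paper's point ($\gamma$) — an $I=2$ cylinder from $\delta_0$ to $e$ would be followed by an $I=-1$ component of $\overline{v}_-^\flat$ from $e$ to some $x_i$, contradicting nonnegativity — to rule out the elliptic alternative. Second, in alternative (a) your bound ``$k\in\{1,2\}$ since $\deg\overline{v}'_-\leq 2$'' has the logic inverted: the bound on the multiplicity at $\delta_0$ comes from the fact that each unit of multiplicity is eventually terminated below by a component (thin strip, section to $x_i$, or cylinder to $h,e$) of $I\geq 1$, together with $I(\overline{v}^\sharp_{0,j_0})\geq 1$ when $j_0\leq b$ versus $I(\overline{v}^\sharp_+)\geq 0$ when $j_0=b+1$; it is exactly this distinction that separates Case (1) (where $p=1$ is forced) from Cases ($2_i$)--(5) (where $p=2$ is allowed), and your write-up does not make it.
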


See Figure~\ref{figure: graphs3bis}.

\begin{figure}[ht]
\begin{center}
\psfragscanon
\psfrag{A}{\tiny $\R\times\overline{N}$}
\psfrag{B}{\tiny $\overline{W}_-$}
\psfrag{C}{\tiny $\overline{W}$}
\psfrag{D}{\tiny $\overline{W}_+$}
\psfrag{1}{\tiny $1$}
\psfrag{2}{\tiny $2$}
\psfrag{0}{\tiny $0$}
\psfrag{z}{\tiny $0,1$}
\psfrag{y}{\tiny $1,2$}
\psfrag{x}{\tiny $-p_{0,b+1}$}
\psfrag{w}{\tiny $0,1$}
\psfrag{v}{\tiny total}
\psfrag{u}{\tiny $=p_{0,b+1}$}
\psfrag{a}{\small (1)}
\psfrag{b}{\small (2$_i$)}
\psfrag{c}{\small (3)}
\psfrag{d}{\small (4)}
\psfrag{e}{\small (5)}
\psfrag{f}{\small (6$_i$)}
\includegraphics[width=12cm]{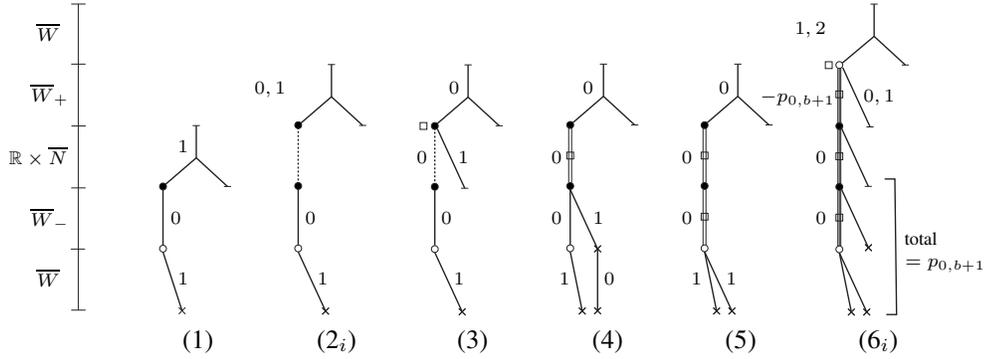}
\end{center}
\vskip.2in
\caption{Schematic diagrams for the possible types of degenerations. Here $\bullet$ represents $\delta_0$, $\circ$ represents $z_\infty$, {\tiny $\square$} represents a possible location of a branch point and $\times$ represents some $x_i$ or $x_i'$. A vertical line indicates the restriction of a trivial cylinder, a dotted vertical line indicates a trivial cylinder, a double vertical line indicates a degree $2$ branched cover of a trivial cylinder or a restriction of a trivial cylinder, and a triple vertical line indicates a degree $p\geq 2$ branched cover of a trivial cylinder or a restriction of a trivial cylinder. The labels on the graphs are ECH indices of each component. All the thin strips of $\overline{v}_{-1,j}^\sharp$ are drawn on the same level for convenience.}
\label{figure: graphs3bis}
\end{figure}

\begin{proof}
The lemma is a consequence of Equations~\eqref{sum of n} and \eqref{sum of I}.  By Lemmas \ref{nonnegative ECH indices better} and \ref{sencha}(2), (3), all the levels $\overline{v}_*$ besides $\overline{v}_+$ have nonzero ECH index, the only components of $\overline{u}_\infty$ which have negative ECH index are the branched covers of $\sigma_\infty^+$, and $I(\overline{v}_{1,j})\geq I(\overline{v}'_{1,j})+I(\overline{v}''_{1,j})$ or $I(\overline{v}'_{1,j})+I(\overline{v}''_{1,j})+2$ for $0\leq j\leq a$, depending on whether $\mathfrak{bp}_{1,j}=0$ or $>0$.  In particular, all the components of $\overline{u}_\infty$ which are left out of the subbuildings in (1)--(6$_i$) and are not drawn in Figure~\ref{figure: graphs3bis} have $I\geq 0$.

By Lemma~\ref{sencha}(1), $\overline{v}'_-\not=\varnothing$.  By Lemma~\ref{sencha}(2), (7), there are no boundary points of types (P$_1$) and (P$_2$) and no boundary points of type (P$_3$) except for $\overline{v}^\sharp_{1,j}$, $j=0,\dots,a$.   Cases (1)--(5) have no removable points at $z_\infty$  (since the ECH index would add up to more than $2$) and Case (6$_i$) may have boundary points of type (P$_3$).

First suppose that some $\overline{v}_{0,j_0}^\sharp$, $j_0\in\{1,\dots,b\}$, has a negative end at $\delta_0^p$ for some $p>0$. This is the situation of Lemma~\ref{a or b}(a) with $j_0\in\{1,\dots,b\}$.  In this case, all the components and levels have nonnegative ECH index.  Each component of $\overline{v}^\sharp_*\prec\overline{v}_{0,j_0}$ described in Lemma~\ref{sencha}(4)--(6) satisfies $I\geq 1$ and there must be $p>0$ such components since $\overline{v}_{0,j_0}$ has multiplicity $p$ at $\delta_0$. We then have the following contributions towards $I$:
\begin{itemize}
\item[(a)] $\sum_{\overline{v}^\sharp_*\prec\overline{v}_{0,j_0}} I(\overline{v}^\sharp_*)\geq p$ and there is at least one thin strip.
\item[(b)] $I(\overline{v}_{0,j_0}^\sharp)\geq 1$ since $\overline{v}_{0,j_0}$ is nontrivial.
\end{itemize}
Then $p=1$ and all the components and levels besides those of (a) and (b) satisfy $I=0$. We are in Case (1).

Next suppose that $\overline{v}_{0,b+1}^\sharp=\overline{v}_+^\sharp$ has a negative end at $\delta_0^p$. This is the situation of Lemma~\ref{a or b}(a) with $j=b+1$. Since $\overline{v}_+'=\varnothing$, all the components and levels satisfy $I\geq 0$. Here $\overline{v}_+^\sharp$ cannot have any multiple of $z_\infty$ at the positive end, since otherwise we contradict Equation~\eqref{sum of n}. Since $I(\overline{v}_+^\sharp)\geq 0$ and $\sum_{\overline{v}^\sharp_*\prec\overline{v}_+}I(\overline{v}^\sharp_*)\geq p$, either $p=1$ and we are in Case ($2_i$) or $p=2$ and we are in Cases (3)--(5).

Finally suppose that no $\overline{v}_{0,j}^\sharp$, $j=1,\dots,b+1$, has a negative end at a multiple of $\delta_0$. This is the situation of Lemma~\ref{a or b}(b). We have the following contributions:

\begin{enumerate}
\item[($\alpha$)] $\deg\overline{v}'_+=p_{0,b+1}$ and $I(\overline{v}'_+)= -p_{0,b+1}$.

\item[($\beta$)] By Lemma~\ref{sencha}(4)--(6), the holomorphic building hanging from the negative end of $\overline{v}'_+$ satisfies $\sum_{\overline{v}_*\prec\overline{v}_+} I(\overline{v}_*'\cup \overline{v}_*^\sharp)\geq p_{0,b+1}$, and equality holds if and only if there is no cylinder from $\delta_0$ to $e$.

\item[($\gamma$)] $I=2$ components of $\overline{v}_{0,j}^\sharp$ from $\delta_0$ to $e$ can be eliminated by observing that it is followed by an $I=-1$ component of $\overline{v}_-^\flat$ from $e$ to some $x_i$ or $x_i'$, which is a contradiction.\footnote{Strictly speaking, $\overline{J}_-$ is the restriction of a Morse-Bott $\overline{J'}$. To give a proper treatment of regularity, we must perturb $\overline{J'}$ using an arbitrarily small Morse function to obtain $(\overline{J'})^\Diamond$ and then restrict $(\overline{J'})^\Diamond$ to $\overline{W}_-$.}

\item[($\delta$)] Some $\overline{v}^\sharp_{1,j_0}$, $j_0\geq 0$, must have an end at $z_\infty$ (or some boundary point of type (P$_3$) must have a neighborhood) which projects to a large sector of $D^2_{\rho_0}$. Since $\overline{v}^\sharp_{1,j_0}$ generically lies on a codimension $1$ stratum of some $\op{ind}(\overline{v}^\sharp_{1,j_0})$-dimensional moduli space, the large sector contributes an additional $+1$ to the Fredholm and ECH indices, cf.\ Lemma~I.\ref{P1-index inequality for z infinity case}.
\item[($\varepsilon$)] If $\mathfrak{bp}$ is the number of boundary points of type (P$_3$) and $\mathfrak{bp}>0$, then the contribution to $I$ is at least $2$.
\end{enumerate}

($\alpha$), ($\beta$), and ($\gamma$) together give:
$$I(\overline{v}'_+)+\sum_{\overline{v}_*\prec\overline{v}_+} I(\overline{v}_*'\cup \overline{v}_*^\sharp)=0.$$
Now, $I(\overline{v}''_{1,j})\geq 1$ for each $j>0$,\footnote{Recall that we are ignoring connectors.} ($\delta$) contributes at least $1$ to $I$, and ($\varepsilon$) contributes at least $2$ to $I$ if $\mathfrak{bp}>0$. Hence $\mathfrak{bp}=0$, $a=1$, and we are in Case ($6_{i}$), $i=1,2$.
\end{proof}

\subsubsection{Elimination of some cases when $\overline{v}'_*\cup \overline{v}^\sharp_*\not= \varnothing$ for some $\overline{v}_*$}

The goal of this subsection is to eliminate all but one of the possibilities (namely Case (2$_1$)) given in Lemma~\ref{hojicha} and to prove Lemma~\ref{cherries4}.

\begin{defn} \label{branch points}
If $\pi:\Sigma\to\Sigma'$ is a branched cover and $\mathcal{B}(\pi)$ is the branch locus of $\pi$, then we define
$${\frak b}(\pi)=\sum_{x\in \mathcal{B}(\pi)} (\op{deg}(x)-1),$$
where $\op{deg}(x)$ is the degree of the branch point $x$.  We also write ${\frak b}(\overline{u})={\frak b}(\overline{\pi}_*\circ \overline{u})$, where $\overline{\pi}_*$ is the projection to the base $B$, $B'$, etc., and refer to it informally as ``the number of branch points of $\overline{u}$.''
\end{defn}

Let $\mathcal{E}_{-,i}$, $i=1,\dots,q$, be the negative ends of $\cup_{j=1}^a \overline{v}_{1,j}^\sharp$ that converge to $z_\infty$ and let $\mathcal{E}_{+,i}$, $i=1,\dots,r$, be the positive ends of $\cup_{j=0}^{a-1}\overline{v}_{1,j}^\sharp$ that converge to $z_\infty$ as in Lemma~\ref{intersezione prime}.

\begin{lemma} \label{banff}
Suppose we are in Case {\em ($6_i$)} of Lemma~\ref{hojicha}.  Then $\sum_{\overline{v}'_*}{\frak b}(\overline{v}'_*)$, including branched points of connector components, is equal to the number of negative ends $\mathcal{E}_{-,i}$. Moreover, if $F$ is the surface obtained by gluing all the domains of $\overline{v}'_*$, then $F$ is a connected planar surface whose compactification (after filling in the interior and boundary punctures) is a closed disk.
\end{lemma}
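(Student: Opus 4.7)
The plan is to compute the Euler characteristic of $F$ in two ways and equate, using the Riemann--Hurwitz formula on each branched cover $\overline{v}'_*$.

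First, each $\overline{v}'_*$ is a degree-$p_*$ branched cover of a section at infinity $\sigma_\infty^*$ (either $\sigma_\infty^+$, $\sigma_\infty'$, or $\sigma_\infty^-$). A direct topological analysis shows that each $\sigma_\infty^*$ has Euler characteristic $0$: $\sigma_\infty^\pm$ is a disk with one boundary puncture (at $z_\infty$) and one interior puncture (at $\delta_0$), while $\sigma_\infty'$ is a plain cylinder. Riemann--Hurwitz then yields
\[
\chi(F'_*) \;=\; p_* \cdot \chi(\sigma_\infty^*) - {\frak b}(\overline{v}'_*) \;=\; -{\frak b}(\overline{v}'_*),
\]
where $F'_*$ denotes the punctured domain. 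Gluing the $F'_*$'s along open annular neighborhoods of matched $\delta_0$-ends preserves $\chi$ (by Mayer--Vietoris with $\chi(S^1)=0$), so summing over all levels, \emph{including the connector components}, gives $\chi(F) = -\sum_{\overline{v}'_*}{\frak b}(\overline{v}'_*)$.

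Next, I will establish the topological assertion that $F$ is connected and planar, and that its compactification $\overline{F}$ (obtained by filling in all $\delta_0$ and $z_\infty$ punctures) is a closed disk, so that $\chi(\overline{F}) = 1$. Connectedness will follow from the continuation argument of Lemma~\ref{intersezione prime}, together with the Lagrangian matching data $\mathcal{C}$ of Definition~\ref{defn: continuation} and the alternating cycle $\mathcal{Z}$ of Lemma~\ref{alternate}: the cyclic order of arcs $\overline{a}_{k,l}$ and $\overline{h}(\overline{a}_{k,l})$ around $z_\infty$ links the sheets of the cover through the boundary, forcing transitive monodromy. Planarity and the disk structure will then follow by noting that the glued base $\sqcup_* \sigma_\infty^*/{\sim}$, after compactification at its punctures, is itself a closed disk, and that a connected branched cover of a disk with no hidden genus or boundary is again planar with compactification a closed disk.

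Combining the two computations, filling in each interior puncture increases $\chi$ by one while filling in boundary punctures is $\chi$-neutral, so $1 = \chi(\overline{F}) = \chi(F) + N_{\mathrm{int}}$, where $N_{\mathrm{int}}$ is the number of free interior $\delta_0$-punctures of $F$ (those $\delta_0$-ends of the $\overline{v}'_*$ that do not continue between levels and instead glue to cylinders from $\delta_0$ to $h$, to components from $\delta_0$ to some $x_i$ or $x_i'$, or to thin strips). Hence $\sum{\frak b}(\overline{v}'_*) = N_{\mathrm{int}}-1$. The final step is the combinatorial identification $N_{\mathrm{int}}-1 = q$, which is obtained by matching free $\delta_0$-ends of $F$ with the $\mathcal{E}_{-,i}$ at $z_\infty$ via the continuation sequence tracing sectors in $D_{\rho_0}$. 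The main obstacle will be the topological assertion that $\overline{F}$ is a disk: connectedness requires carefully tracing the Lagrangian data $\mathcal{C}$ through the continuation sequence and invoking the alternating property of Lemma~\ref{alternate}, while the disk structure reduces to ruling out hidden genus or extra boundary components using the sector analysis of Section~\ref{orange1}.
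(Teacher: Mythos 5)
Your strategy inverts the logical order of the paper's argument and, in doing so, assumes the hardest part of the statement. You propose to \emph{first} establish that the compactification $\overline{F}$ is a closed disk and \emph{then} read off $\sum_{\overline{v}'_*}{\frak b}(\overline{v}'_*)$ from an Euler characteristic count; but your justification of the disk structure --- ``a connected branched cover of a disk with no hidden genus or boundary is again planar'' --- is circular, since ruling out hidden genus is precisely what must be proved. The continuation argument together with Equation~\eqref{sum of n} does force the boundary of the relevant cover to be connected (this is how the paper argues), but a connected surface with connected boundary can have arbitrary genus, and neither the monodromy discussion nor the sector analysis of Section~\ref{orange1} can detect interior branch points that create genus without altering the ends. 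The paper excludes genus by an index-budget argument that your proof never touches: after pushing all branch points onto a single connector level $\overline{v}'_{1,1}$, one computes $\op{ind}(\overline{v}'_{1,1})={\frak b}(\overline{v}'_{1,1})-(p_{1,1}-1)$, uses connectedness of $\bdry F_{1,1}$ to see that this excess is nonnegative and even (it is twice the genus), and then shows that $\op{ind}(\overline{v}'_{1,1})\geq 2$ is incompatible with the total constraint $I(\overline{u}_i)=2$ of Equation~\eqref{sum of I} once the Fredholm index contributions of all remaining components ($\op{ind}(\overline{v}^\sharp_{1,2})\geq 1$, $\op{ind}(\overline{v}'_+)=-p_+$, the $\geq p_+$ from the building hanging below $\overline{v}'_+$, and the $+1$ from the large sector) are summed. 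Without some version of this step your argument cannot close.

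There are also errors in the bookkeeping itself. In Case ($6_i$) the covers $\overline{v}'_{1,j}$ live over $\overline{W}=\R\times[0,1]\times\overline{S}$, whose section at infinity is a strip with $\chi=1$ in the convention the paper uses (this is exactly why $\chi(\dot F_{1,1})=p_{1,1}-{\frak b}(\overline{v}'_{1,1})$ in the paper's index computation), not $0$ as you assert uniformly for all levels; moreover these levels are glued to $\overline{v}'_+$ along chord-type ends at $z_\infty$, i.e.\ along intervals, which is not $\chi$-neutral. Finally, the closing identification $N_{\mathrm{int}}-1=q$ is asserted without proof and is not routine: in the paper the equality $\sum{\frak b}=q$ emerges from ${\frak b}(\overline{v}'_{1,1})=p_{1,1}-1$ combined with the end/sector count of Lemma~\ref{intersezione prime}, not from a count of free $\delta_0$-punctures. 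If you wish to keep the Riemann--Hurwitz framing, you must still import the Fredholm index excess computation to pin down the genus, at which point you will essentially have reproduced the paper's proof.
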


\begin{proof}
Suppose we are in Case (6$_i$) of Lemma~\ref{hojicha}. We may make the following simplifying assumptions:
\begin{enumerate}
\item $\overline{v}_{1,1}$ is a connector;
\item all the ends $\mathcal{E}_{-,i}$ are negative ends of $\overline{v}_{1,2}^\sharp$;
\item all the ends $\mathcal{E}_{+,i}$ are positive ends of $\overline{v}_{+}^\sharp$;
\item all the branch points of $\overline{v}_*'$ lie on $\overline{v}_{1,1}'$.
\end{enumerate}
(1)--(3) are immediate. Note that in Lemma~\ref{hojicha} we omitted connectors, but in the present analysis we need to keep track of connectors which are branched covers. (4) is obtained by pushing all the branched points from the upper and lower levels to $\overline{v}_{1,1}'$; this operation is possible because $\overline{v}^\sharp_*\preceq \overline{v}_+$ does not have a negative end that limits to $z_\infty$ in Case (6$_i$). This operation does not affect: \begin{itemize}
\item the total Fredholm index $\sum_{\overline{v}_*'} \op{ind}(\overline{v}_*')$; and
\item the topological type of $F$.
\end{itemize}
As a consequence of (1)--(4), each component of $\overline{v}_*'\preceq \overline{v}_{+}'$ is an unbranched cover of the appropriate $\sigma_\infty^*$.

We now compute the Fredholm index of $\overline{v}'_{1,1}:\dot{F}_{1,1}\to \overline{W}$ using the Fredholm index formula
$$\op{ind}(\overline{v}'_{1,1})=-\chi(\dot{F}_{1,1})+p_{1,1} +\mu_\tau(\overline{v}'_{1,1})+2c_1((\overline{v}'_{1,1})^*T\overline{S},\tau);$$
see Equation~(I.\ref{P1-Fredholm index revisited}). Recall we are writing $p_*=\op{deg}(\overline{v}'_*)$. The groomed multivalued trivialization $\tau$ is defined as follows: Let ${\frak S}_{\pm, i}$ be the sector of $D^2_{\rho_0}$ given by $\pi_{D^2_{\rho_0}}(\mathcal{E}_{\pm,i})$. If (ii) and its analogs occur in the proof of Lemma~\ref{intersezione prime}, then we add the thin counterclockwise sectors ${\frak S}(\overline{a}_{k_2,l_2},\overline{\hh}(\overline{a}_{k_2,l_2}))$, etc.\ to the set $\{{\frak S}_{+,i}\}$. The sets $\{{\frak S}_{+,i}\}$ and $\{{\frak S}_{-,i}\}$ correspond to the data $\overrightarrow{\mathcal{D}}_-$ and $\overrightarrow{\mathcal{D}}_+$ at the negative and positive ends of $\overline{v}'_{1,1}$\footnote{Note that the plus and minus signs are switched. This is due to the fact that, for example, $\{{\frak S}_{-,i}\}$ gives the data for the negative ends of $\overline{v}_{1,2}$ that limit to $z_\infty$, which agrees with the data for the positive ends of $\overline{v}_{1,1}$ that limit to $z_\infty$.} and we let $\tau$ be the induced groomed multivalued trivialization.

By a calculation similar to that of Lemma~I.\ref{P1-lemma: HF index of sections at infinity}, $c_1((\overline{v}'_{1,1})^*T\overline{S},\tau)=1$ and the ends of $\overline{v}'_{1,1}$ contribute the following to $\mu_\tau(\overline{v}'_{1,1})$:
\begin{itemize}
\item $0$ if ${\frak S}_{-,i}$ is a small sector;
\item $-1$ if ${\frak S}_{-,i}$ is a large sector;
\item $-1$ if ${\frak S}_{+,i}$ is a small sector; and
\item $-2$ if ${\frak S}_{+,i}$ is a large sector.
\end{itemize}
Hence $\mu_\tau(\overline{v}'_{1,1})=-p_{1,1}-1$. Since $\chi(\dot{F}_{1,1})=p_{1,1}-{\frak b}(\overline{v}'_{1,1})$, we obtain:
\begin{align}  \label{ind}
\op{ind}(\overline{v}'_{1,1})&=({\frak b}(\overline{v}'_{1,1})-p_{1,1})+p_{1,1}+(-p_{1,1}-1)+2\\
\notag &= {\frak b}(\overline{v}'_{1,1})-(p_{1,1}-1).
\end{align}

Next we claim that $\bdry F_{1,1}$ is connected. Indeed, if $\bdry F_{1,1}$ is disconnected, then the method of Lemma~\ref{intersezione prime} implies that the union of all the ${\frak S}_{\pm,i}$ covers $D^2_{\rho_0}$ more than once; this contradicts Equation~\eqref{sum of n}. The claim in turn implies that ${\frak b}(\overline{v}'_{1,1})\geq p_{1,1}-1$, since otherwise $F_{1,1}$ is disconnected and $\bdry F_{1,1}$ will have more than one component. Hence $\op{ind}(\overline{v}'_{1,1})\geq 0$; moreover, if $\op{ind}(\overline{v}'_{1,1})>0$, then $\op{ind}(\overline{v}'_{1,1})\geq 2$.

We claim that $\op{ind}(\overline{v}'_{1,1})\geq 2$ is not possible. Indeed, we add up the Fredholm indices of all the remaining components as in the proof of Lemma~\ref{hojicha}:
\begin{enumerate}
\item[(a)] $\op{ind}(\overline{v}^\sharp_{1,2})\geq 1$ and $\op{ind}(\overline{v}'_+)=-p_+$;
\item[(b)] $\sum_{\widetilde{v}}\op{ind}(\widetilde{v})\geq p_+$, where the summation is over all components $\widetilde{v}$ of $\overline{u}_\infty$ that are hanging from the negative end of $\overline{v}'_+$;
\item[(c)] a large sector of $D^2$ contributes an additional $+1$ to the Fredholm index; and
\item[(d)] all the other components of $\overline{u}_\infty$ have nonnegative Fredholm index.
\end{enumerate}
(a), (b), (d) are clear, and (c) was explained in Lemma~\ref{hojicha}. The total of (a)--(d) is $\geq 2$, which is an index excess of $+2$, and the claim follows. The claim then implies that $\op{ind}(\overline{v}'_{1,1})=0$, ${\frak b}(\overline{v}'_{1,1})=p_{1,1}-1$, and $F_{1,1}$ is a disk.

Finally, since $F$ is diffeomorphic to a surface obtained by gluing $\dot F_{1,1}$ and $p_{1,1}$ surfaces which are diffeomorphic to $B_\tau$ or $B_+$, it must be connected and planar, and its compactification is a closed disk.
\end{proof}

\begin{lemma} \label{vancouver}
Suppose $m\gg 0$. If $\overline{v}'_*\cup\overline{v}^\sharp_*\not =\varnothing$ for some level $\overline{v}_*$, then Cases (1), (2$_0$), (3)--(6$_i$) of Lemma~\ref{hojicha} are not possible.  This leaves us with Case (2$_1$).
\end{lemma}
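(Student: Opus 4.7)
The plan is to apply the renormalization (rescaling) procedure of Sections~I.\ref{P1-subsection: rescaling}--I.\ref{P1-subsection: theorem complement} in a case-by-case analysis: whenever $\overline{v}'_*\cup\overline{v}^\sharp_*\not=\varnothing$, there is a component of $\overline{u}_\infty$ that branch-covers a section at infinity $\sigma_\infty^*$, and rescaling this branched cover yields a limiting meromorphic (or holomorphic-disk) object on $\overline{S}$ encoding the asymptotic approach to $\{\rho=0\}$. The goal is to show that, for each of Cases (1), ($2_0$), (3)--(6$_i$), the limiting object either does not exist for generic $\overline{\frak m}$ and generic Lagrangians, or violates the ``good radial ray'' constraint built into the modifier $f_{\delta_0}$ used in Step~1 of the proof of Theorem~\ref{thm: chain homotopy part i}.

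For Cases~(1), (3), (4), (5), the renormalization near $\sigma_\infty^-$ (and $\sigma_\infty'$, when present) produces a meromorphic function $\xi:\overline{S}\to \C\P^1$ whose pole divisor is prescribed by the asymptotic directions at the $\delta_0$-ends of the higher levels and whose zero divisor is prescribed by the asymptotic directions at the negative $\delta_0$-end of $\overline{v}_-^\sharp$ (going into $x_i$ or $x_i'$). The function $\xi$ is additionally constrained to send $\overline{\frak m}^b(+\infty)$ to $\overline{\frak m}^f(+\infty)=z_\infty$ in an appropriate sense. A Riemann--Roch count via Lemma~\ref{example} shows that for generic $\overline{\frak m}$ and generic perturbations of $\overline{\bf a},\overline{\bf b}$ the dimension $l(D)$ of the relevant linear system is strictly less than the number of independent constraints, so no such $\xi$ exists; this is the same mechanism used in Lemmas~\ref{lemma: no bubbling} and \ref{lemma: no bubbling2}. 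Case~($2_0$) is ruled out separately: the $I=0$ component of $\overline{v}_+^\sharp$ from ${\bf y}$ (or ${\bf y}''$) to $\delta_0\gamma'$ has, by the enlarged definition of bad radial rays at the start of Section~\ref{subsection: chain homotopy part 1}, an asymptotic eigenfunction lying on a bad radial ray; but gluing to $\overline{v}_-'=\sigma_\infty^-$ requires that eigenfunction to lie on the good radial ray $\mathcal{R}_\pi$, which is a codimension-one, non-generic coincidence.

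For Cases~($6_1$) and ($6_2$), Lemma~\ref{banff} tells us that the compactification of the domain of $\cup\overline{v}'_*$ is a disk whose boundary is distributed among the Lagrangians of the successive levels. Rescaling the whole tower $\cup\overline{v}'_*\cup \overline{v}^\sharp_*$ near $\sigma_\infty^*$ produces a holomorphic disk in a model fiber $\overline{S}\times \C\P^1$ with boundary on the rescaled copies of $\overline{a}_i$, $\overline{b}_i$, $\overline{h}(\overline{a}_i)$, $\overline{h}(\overline{b}_i)$, interior punctures at the images of the ends $\mathcal{E}_{\pm,i}$ specified by Lemma~\ref{intersezione prime revisited}, and a point constraint inherited from $\overline{\frak m}$. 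A count analogous to Theorems~\ref{thm: calc of G sub 2} and \ref{thm: calc of G sub 3}---in particular an Abel--Jacobi/Riemann--Roch dimension check exactly as in Lemma~\ref{lemma: no bubbling} Case~(2)---shows that after imposing the marked-point constraint together with the $p_{0,b+1}$ pole and zero conditions inherited from the branched cover, the system is overdetermined and admits no solution for generic $\overline{\bf a},\overline{\bf b}$; the type~(P$_3$) boundary punctures only strengthen the overdetermination via Remark~\ref{Salander}.

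The main obstacle will be Case~($6_i$): the branched cover $\overline{v}'_+$ is of arbitrary degree $p_{0,b+1}$ with boundary on singular Lagrangians coming from several different levels, so the rescaling has to be performed simultaneously across the levels glued together by the ends $\mathcal{E}_{\pm,i}$, and one must verify that the resulting disk is well-defined and that the Riemann--Hurwitz/Abel--Jacobi dimension count genuinely forces a contradiction. Cases (1) and (3)--(5) are straightforward adaptations of the arguments in Sections~I.\ref{P1-subsection: rescaling}--I.\ref{P1-subsection: theorem complement}, and Case~($2_0$) is a single eigenfunction genericity statement, so the bulk of the work will be in handling ($6_1$) and ($6_2$) uniformly.
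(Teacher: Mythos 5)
Your overall framework (rescale near the sections at infinity, treat $(6_i)$ as the hard case, use Lemma~\ref{banff} to control the glued domain) matches the paper, and your treatment of Case~$(2_0)$ via the good/bad radial ray dichotomy is essentially the intended one. But the mechanism you propose for deriving the contradiction in Cases (1), (3)--(5) and especially $(6_i)$ is not the paper's, and for $(6_i)$ it would not close.

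First, the rescaled limit does not live where you place it. After projecting to $D_{\rho_0}$ and applying the ansatz, the limit as $m_i\to\infty$ is a multi-level building of maps $w_*:\Sigma_*\to\C\P^1$ where $\Sigma_*$ branch-covers the \emph{planar base} $cl(B_*)$ (a disk, strip, or cylinder); the genus-$g$ fiber $\overline{S}$ has been collapsed to a neighborhood of $z_\infty$ identified with $\C$. There is no meromorphic function on $\overline{S}$ and no linear system of divisors on a genus-$g$ curve in sight, so the Riemann--Roch/Abel--Jacobi counts of Section~\ref{section: Gromov-Witten computation} (Lemmas~\ref{lemma: no bubbling}, \ref{lemma: no bubbling2}) are not the relevant model. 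Those counts concern closed curves in $\Sigma\times\C\P^1$ in the class $[\Sigma]+2g[\C\P^1]$, which are graphs of degree-$2g$ meromorphic functions on $\Sigma$; the rescaled objects here are degree-$p$ multisections over the base with Lagrangian boundary collapsing onto radial rays.

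Second, and more seriously, an overdetermination count cannot rule out Case~$(6_i)$, because the configuration is exactly index-balanced: the proof of Lemma~\ref{banff} shows $\op{ind}(\overline{v}'_{1,1})=0$ and that the total Fredholm index of the building adds up to precisely $2$, with no excess. What eliminates $(6_i)$ is positional, not dimensional, information: the point constraint $w_-(z_0)=0$ over $\overline{\frak m}^b(+\infty)$ together with the Involution Lemmas~I.\ref{P1-lemma: effect of involutions}, I.\ref{P1-lemma: effect 2}, I.\ref{P1-lemma: effect 3} forces the asymptotic marker $\dot{\mathcal{R}}_\pi(\infty)$ to be carried to $\dot{\mathcal{L}}_{3/2}(+\infty)$ level by level, and at the top level (using that $\Sigma_+$ compactifies to a closed disk, which is exactly where Lemma~\ref{banff} enters) this forces $f_+(\infty)=\mathcal{L}_{3/2}\cap\bdry B_+$, contradicting the Lagrangian boundary condition $w_+(\bdry\Sigma_+)\subset\{\phi=0\}$. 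Your proposal never invokes this marker-tracking, which is the key missing idea; without it the $(6_i)$ analysis stalls at a rigid, apparently admissible configuration. The same remark applies, in milder form, to Cases (1) and (3)--(5): the paper disposes of them by the rescaling-plus-involution arguments of Theorem~I.\ref{P1-thm: complement}, not by divisor counts on $\overline{S}$.
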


\begin{proof}
Arguing by contradiction, suppose there exist sequences $m_i\to \infty$, $\varepsilon_i,\delta_i\to 0$, and $\overline{u}_{ij}\to \overline{u}_{i\infty}$, where $\overline{u}_{ij}\in \mathcal{M}^{(m_i)}$, $\overline{u}_{i\infty}\in \bdry_{\{+\infty\}} \mathcal{M}^{(m_i)}$, $\mathcal{M}^{(m_i)}$ is $\mathcal{M}$ with respect to the family $\{\overline{J}^\Diamond_\tau(\varepsilon_i,\delta_i,{\frak p}(\tau);m_i)\}$, and $\overline{u}_{i\infty}$ falls into one of Cases (1), (2$_0$), (3)--(6$_i$).  We consider a diagonal subsequence $\overline{u}_i:= \overline{u}_{ij(i)}$.

Cases (1) and (2$_0$) are eliminated by an argument similar to that of Case (2) of Theorem~I.\ref{P1-thm: complement} and Cases (3)--(5) are eliminated by an argument similar to that of Cases (3)--(6) of Theorem~I.\ref{P1-thm: complement}.

\s\n {\em Case} (6$_i$). Suppose for simplicity that $b=1$, including connectors.

As in the proof of Theorem~I.\ref{P1-thm: complement}, for $i\gg 0$, we consider a truncation $\widetilde{u}_i:\Sigma_i\to \overline{W}_{\tau_i}$ of $\overline{u}_i$ (i.e., a restriction of $\overline{u}_i$ to a neighborhood of $\sigma_\infty^{\tau_i}$) such that there exist real numbers
$$R_{0,i}<R_{1,i}<R_{2,i}< R_{3,i}, \quad R_{0,i}\ll -l(\tau_i), \quad R_{3,i}\gg l(\tau_i)$$
and a map
$$\pi_i: \Sigma_i\to B_{\tau_i}\cap \{R_{0,i}\leq s\leq R_{3,i}\},$$
such that the restriction of $\pi_i$ to $\pi_i^{-1}(\{R_{j,i}\leq s\leq R_{j+1,i}\})$, $j=0,1,2$, is a degree $p_{0,j}$ branched cover. Here $p_{0,0}\leq p_{0,1}\leq p_{0,2}$. We project $\widetilde{u}_i$ to $D^2_{\rho_0}\subset \overline{S}$ for $\rho_0>0$ small using balanced coordinates and then apply the ansatz from Equation~(I.\ref{P1-eqn: ansatz}) to obtain $w_i: \Sigma_i\to\C$.

Applying the method of Section~I.\ref{P1-subsection: rescaling}, we rescale $w_i$ by a positive real constant and take the limit $m_i\to \infty$ to obtain a $3$-level holomorphic building
$$w_\infty=w_-\cup w_{0,1}\cup w_+.$$
We write $w_*:\Sigma_*\to \C\P^1$ for the components of the building $w_\infty$ and $\pi_*: \Sigma_*\to cl(B_*)$ for the corresponding branched covers, where $*=+$, $-$, or $(0,1)$, and $B_{0,1}=\R\times S^1$. We may also use subscripts $(0,0)$ and $(0,2)$ to mean $-$ and $+$. Note that $\deg \pi_{0,j}=p_{0,j}$. By Lemma~\ref{banff}, the surface $\Sigma_\infty$, obtained by gluing the $\Sigma_*$, is connected and planar.  The next few paragraphs are devoted to the description of $w_*$ and $\pi_*$.

Suppose for simplicity that $\Sigma_-$, $\Sigma_{0,1}$, and $\Sigma_+$ are connected.  Starting from the bottom, $w_-$ and $\pi_-$ satisfy the following:
\begin{enumerate}
\item[(i$_-$)] $ w_-(\bdry\Sigma_-)\subset \{\phi=0,\rho>0\}$;
\item[(ii$_-$)] $\pi_-^{-1}(+\infty)$ is a single point and $ w_-(\pi^{-1}_-(+\infty))=\infty$;
\item[(iii$_-$)] $ w_-(z_0)=0$ for one of the points $z_0\in \pi^{-1}_-(\overline{\frak m}^b(\infty))$; and
\item[(iv$_-$)] $ w_-|_{int(\Sigma_-)}$ is a biholomorphism onto its image.
\end{enumerate}
(i$_-$) and (iii$_-$) are clear, (ii$_-$) follows from the fact that $\Sigma_\infty$ (and hence $\Sigma_-$) is connected and planar, and (iv$_-$) is a consequence of Equation~\eqref{sum of n}. Let us write $f_*=\pi_*\circ w_*^{-1}$, where defined. Then $f_-$ maps the asymptotic marker $\dot{\mathcal{R}}_\pi(\infty)$ for $\infty\in \C\P^1$, corresponding to the radial ray $\mathcal{R}_\pi$, to the asymptotic marker $\dot{\mathcal{L}}_{3/2}(+\infty)$ for $+\infty\in cl(B_-)$, corresponding to the half-line $\mathcal{L}_{3/2}$, by the Involution Lemma~I.\ref{P1-lemma: effect of involutions}.

We then move up to $w_{0,1}$, which satisfies the following:
\begin{enumerate}
\item[(i$_{0,1}$)] $w_{0,1}(\pi^{-1}_{0,1}(-\infty))= \{d_1=0,d_2,\dots, d_k\}$, where $d_i\in \R^{\geq 0}$ and $d_i<d_{i+1}$;
\item[(ii$_{0,1}$)] $\pi_{0,1}^{-1}(+\infty)$ is a single point and $w_{0,1}(\pi^{-1}_{0,1}(+\infty))=\infty$;
\item[(iii$_{0,1}$)] $w_{0,1}$ is a biholomorphism; and
\item[(iv$_{0,1}$)] $f_{0,1}: \C\P^1\to cl(B_{0,1})$ maps $\dot{\mathcal{R}}_\pi(0)$ to $\dot{\mathcal{L}}_{3/2}(-\infty)$.
\end{enumerate}
The placement of the points $d_2,\dots,d_k$ in (i$_{0,1}$) follows from observing that $\overline{v}_-^\sharp$ consists of $I=1$ components from $\delta_0$ to $x_i$ or $x_i'$. In particular, the asymptotic eigenfunctions of $\overline{v}_-^\sharp$ at the positive end are close to constant functions with values on $\R^+\subset \C$. (ii$_{0,1}$) follows from the fact that $\Sigma_\infty$ is connected and planar and (iii$_{0,1}$) follows from Equation~\eqref{sum of n}. (iv$_{0,1}$) is a consequence of the fact that $f_-$ maps $\dot{\mathcal{R}}_\pi(\infty)$ to $\dot{\mathcal{L}}_{3/2}(+\infty)$.  Then $f_{0,1}$ maps $\dot{\mathcal{R}}_\pi(\infty)$ to $\dot{\mathcal{L}}_{3/2}(+\infty)$ by the Involution Lemma~I.\ref{P1-lemma: effect 2}.

We now describe the construction of $w_+$ in some detail. By translating
$$\pi_i^{-1}(B_{\tau_i}\cap \{R_{2,i}\leq s\leq R_{3,i}\})$$ down by $T_i$, we obtain the branched cover
$$\pi_i^+: \Sigma_i^+\to B_+\cap \{R_{2,i}-T_i\leq s\leq R_{3,i}-T_i\}$$ and the holomorphic map $w_i^+: \Sigma_i^+\to\C$. We may assume that $R_{2,i}-T_i\to -\infty$ and $R_{3,i}-T_i\to +\infty$ have been chosen so that there is no sequence of branch points of $\pi_i^+$ that limits to $s=\pm\infty$ as $i\to\infty$. Indeed, the branch points that ``escape to $s=\pm\infty$'' properly belong to a different level. Then $w_+$ is the limit of $w_i^+$, after suitably rescaling by positive real constants.

Let $\overrightarrow{\mathcal{D}}=\{(i_k',j_k')\to (i_k,j_k)\}_{k=1}^p$ be the data at the positive end of $w_+$.  For $i\gg 0$, the component of $w_i^+|_{(\pi_i^+)^{-1}(\{s=R_{3,i}-T_i\})}$ corresponding to $(i_k',j_k')\to(i_k,j_k)$ is arbitrarily close to a normalized asymptotic eigenfunction $\phi_k$ from $\overline{\hh}(\overline{a}_{i_k',j_k'})$ to $\overline{a}_{i_k,j_k}$, after multiplying by some positive real constant. (Here $\phi_k$ is an eigenfunction of an asymptotic operator and we are not making any {\em a priori} assumptions on the corresponding eigenvalues.) Now we claim that some $\phi_{k_0}$ must sweep out a large sector of $D^2$, since otherwise $\op{Im}(w_i^+)$ cannot pass through the origin, contradicting the ``continuity'' to $w_{0,1}$. In the limit $i\to\infty$, $\phi_{k_0}$ will sweep out an angle of $2\pi$.

The top level $w_+$ satisfies the following:
\begin{enumerate}
\item[(i$_+$)] $w_+(\pi^{-1}_+(-\infty))= \{d^+_1=0,d^+_2,\dots, d^+_{k_+}\}$, where $d^+_i\in \R^{\geq 0}$ and $d^+_i<d^+_{i+1}$;
\item[(ii$_+$)] $w_+(\bdry \Sigma_+) \subset \{\phi=0, 0<\rho\leq \infty\}$;
\item[(iii$_+$)] $w_+$ maps some point of $\pi_+^{-1}(+\infty)$ to $\infty$;
\item[(iv$_+$)] $f_+$ maps $\dot{\mathcal{R}}_\pi(0)$ to $\dot{\mathcal{L}}_{3/2}(-\infty)$; and
\item[(v$_+$)] $w_+$ is a biholomorphism onto its image.
\end{enumerate}
The placement of the points $d_2^+,\dots,d_{k_+}^+$ in (i$_+$) follows from observing that $\overline{v}_{0,1}^\sharp$ consists of $I=1$ components from $\delta_0$ to $h$ and that $\mathcal{R}_{\phi_h}\to \mathcal{R}_0$ as $m\to \infty$ by our choice of $h$ from Section~\ref{subsubsection: convention bambi}. (ii$_+$) and (iii$_+$) are immediate consequences of the construction of $w_+$ from the previous paragraphs. (iv$_+$) is a consequence of the fact that $f_{0,1}$ maps $\dot{\mathcal{R}}_\pi(\infty)$ to $\dot{\mathcal{L}}_{3/2}(+\infty)$.  We now apply the Involution Lemma~I.\ref{P1-lemma: effect 3} to conclude that $f_+(\infty)=\mathcal{L}_{3/2}\cap \bdry B_+$. Note that the Lemma~I.\ref{P1-lemma: effect 3} applies because the compactification of $\Sigma_+$ is a closed disk by Lemma~\ref{banff}. This contradicts (ii$_+$).  We have eliminated Case (6$_i$).
\end{proof}

We are now in a position to prove Lemma~\ref{cherries4}.

\begin{proof}[Proof of Lemma~\ref{cherries4}]
Suppose $\overline{u}_\infty\in \bdry_{\{+\infty\}}\mathcal{M}$. By Lemma~\ref{los angeles}, if $\overline{v}'_*\cup\overline{v}^\sharp_*=\varnothing$ for all levels $\overline{v}_*$ of $\overline{u}_\infty$, then $\overline{u}_\infty\in A_1$.  If $\overline{v}'_*\cup\overline{v}^\sharp_*\not =\varnothing$ for some level $\overline{v}_*$, then $\overline{u}_\infty$ is as given in Lemma~\ref{hojicha}. Now, by Lemma~\ref{vancouver}, the only possibility left is Case (2$_1$), which implies that $\overline{u}_\infty\in A_2$.
\end{proof}

\subsection{Degeneration at $-\infty$}
\label{subsection: chain homotopy part 3}

In this subsection we study the limit of holomorphic maps to $\overline{W}_{\tau}$ as $\tau \to -\infty$, i.e., when $\overline{W}_{\tau}$ degenerates into $\overline{W}_{-\infty,1}\cup\overline{W}_{-\infty,2}$. This will prove Lemma \ref{cherries5}.

We assume that $m\gg 0$; $\varepsilon,\delta>0$ are sufficiently small; and $\{\overline{J}_\tau\}\in \overline{\mathcal{I}}^{reg}$ and $\{\overline{J}_\tau^\Diamond(\varepsilon,\delta,{\frak p}(\tau))\}$ satisfy Lemma~\ref{lemma: codimension one}.  Fix ${\bf y}\in \mathcal{S}_{{\bf a},\hh({\bf a})}$, ${\bf y}'\in\mathcal{S}_{{\bf b},\hh({\bf b})}$ and let
$$\mathcal{M}=\mathcal{M}^{I=2,n^*=m}_{\{\overline{J}_\tau^\Diamond(\varepsilon,\delta,{\frak p}(\tau))\}}({\bf y},{\bf y}';\overline{\frak m}), \quad \mathcal{M}_\tau= \mathcal{M}^{I=2,n^*=m}_{\overline{J}_\tau^\Diamond(\varepsilon,\delta,{\frak p}(\tau))}({\bf y},{\bf y}';\overline{\frak m}(\tau)).$$
We will analyze $\bdry_{\{-\infty\}}\mathcal{M}$.

Let $\overline{u}_i$, $i\in \N$, be a sequence of curves in $\mathcal{M}$ such that $\overline{u}_i\in \mathcal{M}_{\tau_i}$ and $\displaystyle\lim_{i\to\infty} \tau_i=-\infty$, and let
$$\overline{u}_\infty=\overline{v}_2\cup (\overline{v}_{L,1}\cup\dots\cup \overline{v}_{L,a}) \cup \overline{v}_1
\cup(\overline{v}_{R,1}\cup\dots\cup \overline{v}_{R,b})$$
$$\cup(\overline{v}_{B,1}\cup\dots\cup \overline{v}_{B,c})\cup (\overline{v}_{T,1}\cup\dots\cup \overline{v}_{T,d}),$$
be the limit holomorphic building, where each $\overline{v}_*$ is an SFT-type level (now we are stretching sideways), $\overline{v}_j$ maps to $\overline{W}_{-\infty,j}$, $j=1,2$; $\overline{v}_{L,j}$ and $\overline{v}_{R,j}$ map to $[-2,2]\times\R\times \overline{S}$; $\overline{v}_{B,j}$ and $\overline{v}_{T,j}$ map to $\R\times[0,1]\times\overline{S}$. The levels in the first row are arranged in cyclic order from left to right, the levels in the second row are arranged in order from bottom to top, and $\overline{v}_1$ is between $\overline{v}_{B,c}$ and $\overline{v}_{T,1}$. For notational convenience we refer to $\overline{v}_1$ as $\overline{v}_{L,a+1}$, $\overline{v}_{R,0}$, $\overline{v}_{B,c+1}$, or $\overline{v}_{T,0}$, and $\overline{v}_2$ as $\overline{v}_{L,0}$ or $\overline{v}_{R,b+1}$. As usual, we write $p_*=\deg \overline{v}_*'$.

\s\n
{\em Terminology.} Thin counterclockwise sectors in $\C$ from $\overline{b}_{i,j}$ to $\overline{a}_{i,j}$ and from $\overline{\hh}(\overline{b}_{i,j})$ to $\overline{\hh}(\overline{a}_{i,j})$ will also be referred to as {\em thin sectors}.

\s\n
{\em Outline of proof of Lemma~\ref{cherries5}.} The initial steps of the proof are similar to those of Section~I.\ref{P1-subsection: intersection numbers}--I.\ref{P1-proof of lemma} and Section~\ref{subsection: chain homotopy part 1}.  However, the authors were unable to prove that $I(\overline{v}_{L,j})$ and $I(\overline{v}_{R,j})$ were nonnegative when $\overline{v}_1'\not=\varnothing$, since we could not sufficiently control the groomings in order to apply the ECH index inequality (Lemma~I.\ref{P1-index inequality for z infinity case}). Note that Lemma~\ref{nonnegative ECH indices 3}(4) specifically excludes the case $\overline{v}_1'\not=\varnothing$. Sections~\ref{truncation}--\ref{second case} are intended as a substitute, and involve ideas from tropical geometry (see for example Parker~\cite{Pa}) since we are stretching simultaneously in two directions.

\subsubsection{Continuation argument}

We discuss the continuation argument in the current case; this is similar to but more complicated than those of Sections~\ref{orange1} and \ref{boundpunc}. Suppose that $\overline{v}'_*\cup\overline{v}^\sharp_*\not =\varnothing$ for some level $\overline{v}_*$ of $\overline{u}_\infty$.  For simplicity we assume that there are no boundary points at $z_\infty$; we leave it to reader to make the appropriate modifications when there are boundary points at $z_\infty$.

\s\n {\em Case 1.}
Suppose that $\overline{v}_{T,j}'\cup \overline{v}_{T,j}^\sharp\not=\varnothing$ for some $j>0$. We start at a nontrivial negative end $\mathcal{E}_1$ of some $\overline{v}_{T,j_1}$, $j_1>0$, limiting to $z_\infty$. We consider the continuation
$$g^1_{j_1-1,1},\dots,g^1_{1,1},g_{-a-1,1},\dots,g_{b+1,1},g^0_{1,1},\dots,g^0_{d,1}$$
of the $t=1$ boundary of $\mathcal{E}_1$ in the direction of $\bdry_+B_\tau$. Here Definition~\ref{defn: continuation} needs to be adapted in the obvious way to the breaking of a component of $\overline{u}_i|_{\bdry \dot G_i}$ as $\tau_i\to -\infty$, where $\dot G_i$ is the domain of $\overline{u}_i$. The components $g_{-a-1,1}$ and $g_{b+1,1}$ correspond to $\overline{v}_1$ and the components $g_{j,1}$, $j=-a,\dots,b$, correspond to $\overline{v}_{L,j}$, $j=a,a-1,\dots,0$, and $\overline{v}_{R,j}$, $j=b,b-1,\dots,1$, in that order. There are three possibilities:
\begin{enumerate}
\item[(i)] there is some $g^1_{j,1}$, $0\leq j\leq j_1-1$, which is nontrivial;
\item[(ii)] all the $g^1_{j,1}$ are trivial but some $g_{j,1}$ is nontrivial;
\item[(iii)] all the $g^1_{j,1}$ and $g_{j,1}$ are trivial.
\end{enumerate}
Cases (i) and (iii) have already been treated in the proof of Lemma~\ref{intersezione prime}. If we are in Case (ii), then the nontrivial component $g_{j,1}$ contains the $s=2$ boundary of a right end $\mathcal{E}_2$ that limits to $z_\infty$. We then consider the continuation of the $s=-2$ boundary of $\mathcal{E}_2$ in the direction of $\bdry_-B_\tau$. The details of the continuation are left to the reader, but in the end the sectors $\pi_{D^2_{\rho_0}}(\mathcal{E}_i)$ will sweep out a neighborhood of $z_\infty$ with the exception of thin sectors.

\s\n {\em Case 2.}
Suppose that $\overline{v}_{T,j}'\cup\overline{v}_{T,j}^\sharp=\varnothing$ for all $j>0$. Then $\overline{v}_1'=\varnothing$. Consider the horizontal levels:
\begin{equation} \label{horizontal}
\overline{v}_1=\overline{v}_{R,0},\dots,\overline{v}_{R,b},\overline{v}_2,\overline{v}_{L,1}\dots,\overline{v}_{L,a+1}=\overline{v}_1.
\end{equation}
Suppose that $\overline{v}_*'\cup \overline{v}_*^\sharp\not=\varnothing$ for some horizontal level $\overline{v}_*$. Let $\overline{v}_*$ be the leftmost level in Equation~\eqref{horizontal} such that $\overline{v}_*^\sharp$ has a right end $\mathcal{E}_1$ at $z_\infty$. The sector  $\pi_{D^2_{\rho_0}}(\mathcal{E}_1)$ is not a thin sector and we apply the usual continuation argument to the levels of Equations~\eqref{horizontal}.

\s\n {\em Case 3.}
The case where $\overline{v}_{T,j}'\cup\overline{v}_{T,j}^\sharp=\varnothing$ for all $j>0$, $\overline{v}_*'\cup \overline{v}_*^\sharp=\varnothing$ for all horizontal levels $\overline{v}_*$, and $\overline{v}_{B,j}'\cup\overline{v}_{B,j}^\sharp\not=\varnothing$ for some $j\leq c$ is treated similarly.

\s
Let $\mathcal{Z}=({\frak z}_1\to \dots \to {\frak z}_k\to {\frak z}_1)$ be the cycle constructed using the above continuation argument. Boundary points of type (P$_1$), (P$_2$), and (P$_3$) are defined in the same way.

\subsubsection{Some restrictions on $\overline{u}_\infty$}

The following are analogs of Lemmas~\ref{intersezione prime revisited}, \ref{nonnegative ECH indices better} and \ref{sencha}.

\begin{lemma} \label{intersezione three}
Suppose that $\overline{v}'_*\cup\overline{v}^\sharp_*\not =\varnothing$ for some level $\overline{v}_*$ of $\overline{u}_\infty$.  If $\mathcal{E}_i$, $i=1,\dots,q$, are the ends of all the $\overline{v}^\sharp_*$ that converge to $z_\infty$ and $\mathcal{E}_i'$, $i=1,\dots,r$ are the neighborhoods of the boundary points of type (P$_3$), then
$$\sum_{i=1}^q n^*(\mathcal{E}_i)+\sum_{i=1}^r n^*(\mathcal{E}'_i)\geq m-2g.$$
\end{lemma}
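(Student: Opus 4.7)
The plan is to imitate the proof of Lemma~\ref{intersezione prime revisited}, but using the continuation argument appropriate to the degeneration at $\tau=-\infty$ which was sketched in Cases 1--3 just before the statement. Since $\overline{v}'_*\cup\overline{v}_*^\sharp\neq\varnothing$ for some level, at least one of the three cases applies, so there is a nontrivial end of some $\overline{v}_*^\sharp$ limiting to $z_\infty$ from which the continuation can be launched.

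First I would run the continuation through the building to produce a cycle $\mathcal{Z}=({\frak z}_1\to\dots\to{\frak z}_k\to{\frak z}_1)$ on $\bdry D_\varepsilon$ which winds around $\R/2\pi\Z$ exactly once. The chords of $\mathcal{Z}$ are of two kinds: genuine sectors coming from projections $\pi_{D_{\rho_0}}(\mathcal{E}_i)$ or $\pi_{D_{\rho_0}}(\mathcal{E}_i')$ of the ends at $z_\infty$ or of neighborhoods of (P$_3$)-punctures, and ``skipped'' thin sectors of the form $\frak{S}(\overline{a}_{k,l},\overline{h}(\overline{a}_{k,l}))$ or $\frak{S}(\overline{b}_{k,l},\overline{h}(\overline{b}_{k,l}))$ arising when the continuation crosses a trivial component (the analog of step (ii) in the proof of Lemma~\ref{intersezione prime}). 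As in the proof of Lemma~\ref{alternate}, the passage from a top end to a right end of some horizontal level, or from a right end back up to a top end, is dictated by the matching of asymptotic markers across levels, so the continuation is well-defined and terminates only when it closes up after one full loop around $z_\infty$.

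Next I would bound the number of thin skipped sectors. Thin sectors are in bijection with those germs $\overline{a}_{k,l}$ or $\overline{b}_{k,l}$ at $z_\infty$ for which the continuation passes through consecutively without picking up a nontrivial end; since there are only $2g$ arcs $\overline{a}_i$ in $\overline{\mathbf{a}}$ (equivalently $2g$ arcs $\overline{b}_i$ in $\overline{\mathbf{b}}$), the pairing forced by the chain of trivial components in the continuation yields at most $2g$ such thin sectors in $\mathcal{Z}$. Each nontrivial (i.e., non-thin) sector $\frak{S}$ in $\mathcal{Z}$ captures at least one point of intersection of the corresponding $\mathcal{E}_i$ or $\mathcal{E}_i'$ with $(\sigma_\infty^*)^{\dagger,+}$ per ``level'' of the section at infinity, by the definition of $n^*$ in~\eqref{n and n alt} and the same counting as in Lemmas~\ref{intersezione} and \ref{intersezione prime}. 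Summing over the $m$ layers near $z_\infty$, all $m$ layers are captured except for at most those in the $2g$ thin sectors, giving $\sum_i n^*(\mathcal{E}_i)+\sum_i n^*(\mathcal{E}_i')\geq m-2g$.

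The main obstacle I anticipate is the bookkeeping involved in the three-case continuation: unlike in Lemma~\ref{intersezione prime revisited}, the continuation here can alternate between the horizontal strip-like ends (left/right, belonging to $\overline{W}_{-\infty,2}$) and the vertical strip-like ends (top/bottom, belonging to $\overline{W}_{-\infty,1}$) as it jumps between levels and across the matching of compactifications, and one must verify that the concatenation of sectors across such transitions really does produce a single cycle winding once around $\bdry D_\varepsilon$. Handling the (P$_3$) boundary punctures is then an immediate variant: each such puncture behaves like a negative end limiting to $z_\infty$ and contributes a sector to $\mathcal{Z}$ by the same mechanism as in Section~\ref{boundpunc}, so the same lower bound applies after including the $\mathcal{E}_i'$ in the sum.
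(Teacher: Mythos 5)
Your proposal is correct and follows the same route the paper intends: the lemma is stated without an explicit proof precisely because it is the direct output of the three-case continuation argument in the preceding subsubsection, which produces a cycle $\mathcal{Z}$ winding once around $z_\infty$ whose non-thin sectors are swept by the $\pi_{D_{\rho_0}}(\mathcal{E}_i)$ and $\pi_{D_{\rho_0}}(\mathcal{E}_i')$, leaving at most $2g$ thin sectors unaccounted for. Your only cosmetic slip is in naming the thin sectors (in Case 2 of the continuation they are of type ${\frak S}(\overline{b}_{i,j},\overline{a}_{i,j})$, per the Terminology paragraph of this subsection, rather than ${\frak S}(\overline{a}_{k,l},\overline{h}(\overline{a}_{k,l}))$), but this does not affect the count.
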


\begin{lemma} \label{nonnegative ECH indices 3}
If fiber components are removed from $\overline{u}_\infty$ and the only boundary points at $z_\infty$ are of type (P$_3$), then the following hold:
\begin{enumerate}
\item the ECH index of each $\overline{v}''_*$ is nonnegative;
\item the only components of $\overline{u}_\infty$ which have negative ECH index are those of $\overline{v}_1'$, i.e., the branched covers of $\sigma_\infty^{-\infty,1}$;
\item the ECH index of each level $\overline{v}_{T,j},\overline{v}_{B,j}\not=\overline{v}_1$ is nonnegative;
\item if $\overline{v}'_1=\varnothing$, then the ECH index of each $\overline{v}_{L,j},\overline{v}_{R,j}\not=\overline{v}_1,\overline{v}_2$ is nonnegative;
\item there is an additional contribution of $\mathfrak{bp}_*+1$ towards $I$, where $\mathfrak{bp}_*$ is the number of boundary points of type (P$_3$) on $\overline{v}_*$.
\end{enumerate}
\end{lemma}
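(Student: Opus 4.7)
The plan is to mirror the argument of Lemmas~\ref{nonnegative ECH indices} and \ref{nonnegative ECH indices better}, now adapted to the $(\overline{W}_{-\infty,1}, \overline{W}_{-\infty,2})$ splitting. First I would record the ECH indices of the sections at infinity: every section at infinity appearing in $\overline{u}_\infty$ other than $\sigma_\infty^{-\infty,1}$ lies in a symplectization-type factor (either $\R\times[0,1]\times\overline{S}$ or $[-2,2]\times\R\times\overline{S}$) and is a trivial strip or trivial cylinder, hence has $I=0$ and $\op{ind}=0$. By \cite[Proposition~7.15(a)]{HT1}, any branched cover $\overline{v}_*'$ with $\overline{v}_*\neq\overline{v}_1$ then satisfies $I(\overline{v}_*')\geq 0$, with equality for connectors, while $\overline{v}_1'$ covers the genuine cobordism-type section $\sigma_\infty^{-\infty,1}$ and can have $I<0$. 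This immediately yields item~(2).

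For item~(1), fix a non-branched-cover component $\overline{v}_*''$. If $\overline{v}_*''$ has no end at $z_\infty$ and no type (P$_3$) puncture, then the regularity of $\{\overline{J}_\tau\}\in\overline{\mathcal{I}}^{reg}$ together with the standard index inequality gives $I(\overline{v}_*'')\geq\op{ind}(\overline{v}_*'')\geq 0$. Otherwise I would apply the refined Lemma~I.\ref{P1-index inequality for z infinity case}; the required input is that, at each end that limits to $z_\infty$, the sets of initial and terminal points of the associated grooming alternate along $\R/2\pi\Z$. This alternating property is proved by induction on the depth of the continuation, as in Lemma~\ref{alternate}: starting from the cycle $\mathcal{Z}$ built in Cases~1--3 of the continuation argument above, one peels off the subsets $P_{*,0}''\cup P_{*,1}''$ at each level while concatenating chords in $\mathcal{Z}$, preserving alternation at every stage. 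The additional $+1$ contribution from each type (P$_3$) puncture is exactly the calculation of Lemma~\ref{Lisbeth} and Remark~\ref{Salander}, which proves~(5). Items~(3) and~(4) then follow by combining (1) with the analogue of the inequality $I(\overline{v}_*)\geq I(\overline{v}_*')+I(\overline{v}_*'')+\mathfrak{bp}_*$ from Lemma~\ref{nonnegative ECH indices better}, using the nonnegativity of $I(\overline{v}_*')$ established in the first paragraph; this is valid unconditionally for the vertical levels $\overline{v}_{T,j},\overline{v}_{B,j}\neq\overline{v}_1$ and, under the hypothesis $\overline{v}_1'=\varnothing$, also for the horizontal levels $\overline{v}_{L,j},\overline{v}_{R,j}$.

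The main obstacle, and the reason for the restriction $\overline{v}_1'=\varnothing$ in item~(4), is verifying the alternating property itself when both horizontal and vertical continuations are simultaneously active. When $\overline{v}_1'=\varnothing$, the continuation of a given end at $z_\infty$ stays entirely within the vertical levels (Case~1 of the continuation) or entirely within the horizontal levels (Cases~2--3), so the one-directional induction of Lemma~\ref{alternate} applies verbatim in each direction. When $\overline{v}_1'\neq\varnothing$, by contrast, a single continuation can cross through $\overline{v}_1'$ between the two directions, and the resulting cycle $\mathcal{Z}$ intertwines horizontal and vertical ends; the groomings on $\overline{v}_{L,j},\overline{v}_{R,j}$ can then no longer be compared to those on $\overline{v}_{T,j},\overline{v}_{B,j}$ by a direct inductive peeling, so the ECH index inequality of Lemma~I.\ref{P1-index inequality for z infinity case} cannot be applied there. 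This is precisely the gap that the tropical-style truncation and rescaling argument of Sections~\ref{truncation}--\ref{second case} is designed to fill.
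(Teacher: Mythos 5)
Your proof is correct and follows essentially the same route as the paper, whose own proof of this lemma is just a pointer: (1) and (2) from the index inequality, (3) and (4) by rerunning the alternating/continuation argument of Lemma~\ref{nonnegative ECH indices} (with the horizontal levels and the arcs $\overline{\bf b},\overline{\bf a}$ replacing the vertical ones in (4)), and (5) as in Lemma~\ref{nonnegative ECH indices better}. Your closing paragraph also correctly identifies the reason for the hypothesis $\overline{v}_1'=\varnothing$ in item (4) — the authors state explicitly that they could not control the groomings well enough to apply Lemma~I.\ref{P1-index inequality for z infinity case} when $\overline{v}_1'\neq\varnothing$, which is exactly what the truncation/tropical argument of Sections~\ref{truncation}--\ref{second case} substitutes for.
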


\begin{proof}
(1) and (2) are consequences of the index inequality. (3) and (4) follow from the proof of Lemma~\ref{nonnegative ECH indices}. In the proof of (4), the vertical levels $\overline{v}_{1,a},\dots,\overline{v}_{1,0}$ and the arcs $\overline{\bf a}$, $\overline{\hh}(\overline{\bf a})$ are replaced by the horizontal levels given by Equation~\eqref{horizontal} and the arcs $\overline{\bf b}$, $\overline{\bf a}$. (5) is argued in the same way as Lemma~\ref{nonnegative ECH indices better}.
\end{proof}

\begin{lemma} \label{los angeles 3}
If $\overline{u}_\infty\in \bdry_{\{-\infty\}}\mathcal{M}$ and $\overline{v}'_*\cup\overline{v}^\sharp_*=\varnothing$ for all levels $\overline{v}_*$ of $\overline{u}_\infty$, then $\overline{u}_\infty$ satisfies the following: $a=b=c=d=0$; $I(\overline{v}_1)=\op{ind}(\overline{v}_1)=0$ and $I(\overline{v}_2)=\op{ind}(\overline{v}_2)=2$; and $\overline{v}_1$ is a $W_{-\infty,1}$-curve and $\overline{v}_2$ is a $\overline{W}_{-\infty,2}$-curve.
\end{lemma}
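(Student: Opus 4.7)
The plan is to mimic the strategy used for Lemma~\ref{los angeles}, replacing the roles of $\overline{W}_-$ and $\overline{W}_+$ by $\overline{W}_{-\infty,2}$ and $\overline{W}_{-\infty,1}$, and using the positivity results of Lemma~\ref{nonnegative ECH indices 3} that are now available to us because $\overline{v}'_*=\varnothing$ on every level.

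\textbf{Step 1 (Point constraint forces concentration of $n^*$ on $\overline{v}_2$).} The marked point $\overline{\frak m}(-\infty)=((0,0),z_\infty)$ lies in $\overline{W}_{-\infty,2}$, so some component of $\overline{v}_2$ must pass through it. Since $\overline{v}_2'\cup\overline{v}_2^\sharp=\varnothing$, this component belongs to $\overline{v}_2''$, and its projection to $D_{\rho_0}$ winds around $z_\infty$ with multiplicity that produces intersections with every strand of $(\sigma_\infty^{-\infty,2})^{\dagger,+}$. Arguing as in Lemma~\ref{cherimoya}, a small neighborhood $N(\mathfrak{q})$ of the preimage of $\overline{\frak m}(-\infty)$ satisfies $n^*(\overline{v}_2(N(\mathfrak{q})))\geq m$.

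\textbf{Step 2 (Eliminating other contributions to $n^*$).} Combining Step~1 with the total constraint $\sum_{\overline{v}_*} n^*(\overline{v}_*)=m$, every other level contributes $0$ to $n^*$. By Lemma~\ref{cherimoya} and the analogue of Lemma~I.\ref{P1-lemma: preliminary restrictions part 1}, this rules out boundary punctures removable at $z_\infty$, non-ghost fiber components away from $\overline{\frak m}(-\infty)$, and components of $\overline{v}''_*$ intersecting the sections at infinity. Consequently each level other than $\overline{v}_2$ maps into $\overline{W}$ (for $\overline{v}_{T,j}$ and $\overline{v}_{B,j}$), into $[-2,2]\times\R\times\overline{S}$ (for $\overline{v}_{L,j}$ and $\overline{v}_{R,j}$), or into $\overline{W}_{-\infty,1}$ (for $\overline{v}_1$); and $\overline{v}_2$ is a $\overline{W}_{-\infty,2}$-curve or a degenerate version thereof.

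\textbf{Step 3 (ECH index accounting and conclusion).} Because $\overline{v}_1'=\varnothing$, Lemma~\ref{nonnegative ECH indices 3}(1)--(4) gives $I(\overline{v}_*)\geq 0$ for every level. The point condition $\overline{v}_2(\mathfrak{q})=\overline{\frak m}(-\infty)$ is codimension two, so $I(\overline{v}_2)\geq 2$ via the ECH index inequality, with strict inequality if $\overline{v}_2$ is degenerate (an additional $+2$ arises just as in the computation at the end of Lemma~I.\ref{P1-claim in proof}). The constraint $\sum_{\overline{v}_*} I(\overline{v}_*)=2$ then forces $I(\overline{v}_2)=2$ (and non-degenerate), $I(\overline{v}_1)=0$, and $I=0$ for every remaining level; the index inequality simultaneously promotes these to $\op{ind}=I$, so in particular $\overline{v}_1$ and $\overline{v}_2$ are embedded.

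\textbf{Step 4 (Killing the intermediate levels).} It remains to rule out nontrivial levels $\overline{v}_{T,j}$, $\overline{v}_{B,j}$, $\overline{v}_{L,j}$, $\overline{v}_{R,j}$. In $\overline{W}=\R\times[0,1]\times\overline{S}$ a multisection with $I=0$ and no $z_\infty$ or $\delta_0$ ends is a union of trivial strips (by regularity of $\overline{J}$ and the index inequality), and in the symplectization-like piece $[-2,2]\times\R\times\overline{S}$ a multisection with $I=0$ satisfying the same end conditions is a union of trivial cylinders or connectors. This is the main technical point; it is verified by noting that any genuinely nontrivial such level would carry positive Fredholm index, incompatible with $I=0$ and the point-free regularity built into Definition~\ref{defn: regularity for W tau part 1}. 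Hence $a=b=c=d=0$ (aside from connectors, which we suppress as usual), completing the proof.
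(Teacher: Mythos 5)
Your argument is correct and is exactly the adaptation of the proof of Lemma~\ref{los angeles} that the paper intends (its own proof is omitted with the remark that it is ``similar to that of Lemma~\ref{los angeles}''): concentration of $n^*$ at the point constraint in $\overline{v}_2$, elimination of removable punctures, fiber components and intersections with the sections at infinity, nonnegativity of the ECH indices via Lemma~\ref{nonnegative ECH indices 3} (applicable since $\overline{v}'_1=\varnothing$), the bound $I(\overline{v}_2)\geq 2$ from the point constraint with the degenerate case excluded, and triviality of the remaining $I=0$ levels by regularity and translation invariance. No gaps.
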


\begin{proof}
Similar to that of Lemma~\ref{los angeles} and is omitted.
\end{proof}

The following is the analog of Lemma~\ref{sencha}.

\begin{lemma} \label{sencha 3}
If $\overline{v}'_*\cup\overline{v}^\sharp_*\not =\varnothing$ for some level $\overline{v}_*$ of $\overline{u}_\infty$, then:
\begin{enumerate}
\item $p_2=\deg(\overline{v}_2')>0$;
\item there is no boundary point of type (P$_1$) or (P$_2$), i.e., the only boundary points at $z_\infty$ are of type (P$_3$);
\item $\overline{u}_\infty$ has no fiber components and no components $\overline{v}''_*$ that intersect the interior of a section at infinity;
\item each of $\overline{v}_{L,j}$, $j=1,\dots,a$, and $\overline{v}_{R,j}$, $j=1,\dots,b+1$, consists of thin strips and trivial strips; in particular, $\overline{v}_{L,j}$, $j=1,\dots,a$, and $\overline{v}_{R,j}$, $j=1,\dots,b+1$ have no boundary points at $z_\infty$.
\end{enumerate}
\end{lemma}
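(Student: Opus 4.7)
The plan is to mirror the proof of Lemma~\ref{sencha} from Section~\ref{subsection: chain homotopy part 1}, adapting the continuation argument to the bidirectional degeneration $\overline{W}_{-\infty,1}\cup\overline{W}_{-\infty,2}$. The key inputs are Equation~\eqref{sum of n} (the conservation law $\sum_{\overline{v}_*} n^*(\overline{v}_*)=m$) and the lower bound supplied by Lemma~\ref{intersezione three}; as in Lemma~\ref{sencha}, the principle is that each end of some $\overline{v}^\sharp_*$ at $z_\infty$ or a multiple of $\delta_0$ consumes a strictly positive portion of the fixed $n^*$-budget $m$, so that the counting is tight.

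For (1), I will argue by contradiction: suppose $\overline{v}'_2=\varnothing$. The hypothesis $\overline{v}'_*\cup\overline{v}^\sharp_*\neq\varnothing$ at some level, together with the matching between the ends of any branched cover of a section at infinity and sharp components on the adjacent level, forces $\overline{v}^\sharp_{**}\neq\varnothing$ for some level $**$, and by Lemma~\ref{intersezione three} these ends contribute at least $1$ to $n^*$. On the other hand, because $\overline{v}'_2=\varnothing$, the marked-point condition $\overline{u}_\infty(\overline{\frak m}(-\infty))=((0,0),z_\infty)$ forces a component of $\overline{v}''_2$ (or a fiber component in $\overline{W}_{-\infty,2}$) to pass through $(0,0,z_\infty)$; positivity of intersection with the $m$ parallel sheets of the perturbed section $(\sigma_\infty^{-\infty,2})^{\dagger,+}$ near $z_\infty$ then yields $n^*(\overline{v}_2)\geq m$. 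Summing produces $n^*(\overline{u}_i)>m$, contradicting Equation~\eqref{sum of n}.

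Having established (1), I derive the analog of the inequality in the proof of Lemma~\ref{sencha}, namely $\sum_{\overline{v}_*\neq\overline{v}_2} n^*(\overline{v}^\sharp_*)\geq m-2g$ (also counting contributions of type~(P$_3$) punctures as in Lemma~\ref{intersezione three}): the positive ends of $\overline{v}'_2$ at $\delta_0$ and $z_\infty$ must be matched by non-trivial ends of sharp components on an adjacent level, each controlled by Lemma~\ref{intersezione three}. Statements (2), (3), (4) then follow by the same budget-counting as in Lemma~\ref{sencha}. By the analog of Lemma~\ref{cherimoya2}, boundary punctures of type (P$_1$) or (P$_2$) collectively contribute $\geq m$ to $n^*$, ruling out (2). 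Any non-ghost fiber component, or component of $\overline{v}''_*$ meeting the interior of a section at infinity, contributes $\geq m$, ruling out (3). For (4), any non-trivial, non-thin-strip component of $\overline{v}_{L,j}$ (for $1\leq j\leq a$) or $\overline{v}_{R,j}$ (for $1\leq j\leq b+1$) has ends at $z_\infty$ whose $\pi_{D_{\rho_0}}$-projections are non-thin sectors, and therefore, by an argument parallel to Lemma~\ref{intersezione prime}, produces an extra contribution to $n^*$ incompatible with the remaining budget of $2g$.

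The main obstacle will be part (4), since the horizontal levels $\overline{v}_{L,j}$ and $\overline{v}_{R,j}$ have no counterpart in Lemma~\ref{sencha}. Controlling them requires carefully tracing the cycle $\mathcal{Z}$ produced by the bidirectional continuation argument of Cases~1--3 above, and verifying that the projection of any non-thin-strip component would introduce a non-thin chord of $\mathcal{Z}$, forcing an end of $n^*$-contribution $\geq k_0-1\gg 2g$; this exceeds the remaining budget of $2g$ and yields the desired contradiction.
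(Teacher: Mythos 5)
Parts (1)--(3) of your proposal match the paper: the paper disposes of them exactly by the budget count you describe (the neighborhood of $\overline{\frak m}(-\infty)$ contributes $m$ if $\overline{v}_2'=\varnothing$; type (P$_1$)/(P$_2$) punctures contribute $\geq m$ in total; a fiber component or an interior intersection with a section at infinity contributes $\geq m$; each contradicts Lemma~\ref{intersezione three}).

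Part (4), however, has a genuine gap, and you have correctly identified it as the hard part but proposed the wrong mechanism. Your argument is that a non-thin-strip component of $\overline{v}_{L,j}$ or $\overline{v}_{R,j}$ "has ends at $z_\infty$ whose $\pi_{D_{\rho_0}}$-projections are non-thin sectors," which would then overdraw the $n^*$-budget via the continuation argument. But such a component need not have any ends at $z_\infty$ at all: it can be a multisection whose $\pi_{\overline{S}}$-image is large (compare the $\overline{W}_{-\infty,2}$-curves of Lemma~\ref{lemma: ECH for W minus infinity 2}, which are not thin strips and have no $z_\infty$ ends). The paper's actual starting point is the opposite of yours: precisely because the component is not a thin or trivial strip, $\op{Im}(\pi_{\overline{S}}\circ\overline{v}_{R,j_0})$ must contain the complement of the thin strips between the $\overline{b}_i$ and $\overline{a}_i$, so $n^*(\overline{v}_{R,j_0})=m$ --- the budget is exactly \emph{saturated}, not exceeded, and no contradiction arises from $n^*$ alone. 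The contradiction is then extracted in two further steps that your proposal does not supply: first, the saturation forces $\overline{v}_1'=\varnothing$, rules out left ends of $\overline{v}_{R,j_0}$ at $z_\infty$, and forces every $\overline{v}_*^\sharp\neq\overline{v}_1$ to the right of $\overline{v}_{R,j_0}$ to be a thin strip; second, an ECH index count (each thin strip has $I=1$, the large sector of $\overline{v}_{R,j_0}^\sharp$ forces $I(\overline{v}_{R,j_0})\geq 2$, and the total is $2$) shows no such thin strips can exist, so $\overline{v}_1^\sharp$ would need a left end at $z_\infty$ projecting only to thin wedges ${\frak S}(\overline{b}_{i,j},\overline{a}_{i,j})$, and such a curve does not exist by the arrangement of the arcs in Figure~\ref{figure: aandb}. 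Without the $\pi_{\overline{S}}$-image observation and the supplementary ECH index argument, your route for (4) does not close.
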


\begin{proof}
(1), (2), (3) We have the following contributions towards $n^*$: (1) the restriction of $\overline{v}_2''$ to a neighborhood of $\overline{\frak m}(-\infty)$ contributes $m$ if  $\overline{v}_2'=\varnothing$; (2) boundary points of type (P$_1$) or (P$_2$) contribute at least $m$ in total; and (3) a fiber component or a component of $\overline{v}''_*$ that intersects a section at infinity contributes at least $m$. All cases contradict Lemma~\ref{intersezione three}.

(4) Arguing by contradiction, suppose that some level $\overline{v}_{R,j_0}$, $1\leq j_0\leq b$, has a component which is not a thin strip or a trivial strip. (The case of $\overline{v}_{L,j_0}$, $0\leq j_0\leq a$, only differs in notation.) Here $\overline{v}_{R,j_0}$ may have a boundary point of type (P$_3$).

We claim that the following hold:
\begin{enumerate}
\item[(a)] $n^*(\overline{v}_{R,j_0})= m$ and $n^{*,alt}(\overline{v}_{R,j_0})\geq m-2g$;
\item[(b)] $\overline{v}^\sharp_{R,j_0}\not=\varnothing$ and some end of $\overline{v}_{R,j_0}^\sharp$ corresponds to a large sector;
\item[(c)] $\overline{v}_1'=\varnothing$ and $\overline{v}_{T,j}^\sharp=\varnothing$ for all $j>0$;
\item[(d)] no left end of $\overline{v}_{R,j_0}$ limits to a multiple of $z_\infty$; in particular $\overline{v}'_{R,j_0}=\varnothing$;
\item[(e)] there are no boundary points of type (P$_3$);
\item[(f)] the ECH index of each $\overline{v}_{L,j}$, $1\leq j\leq a$, and $\overline{v}_{R,j}$, $1\leq j\leq b+1$ is nonnegative and $I(\overline{v}_{R,j_0})\geq 2$.
\end{enumerate}
Recall the definition of $n^{*,alt}$ from Section~\ref{subsubsection: convention bambi}.

We first prove (a). Consider the projection
$$\pi_{\overline{S}}:[-2,2]\times\R\times\overline{S}\to \overline{S}.$$
Since we are only dealing with compactness issues, we may assume without loss of generality that $\overline{J}_{-\infty,2}^\Diamond(\varepsilon,\delta,{\frak p}(-\infty))$ is a product complex structure and the projection $\pi_{\overline{S}}$ is holomorphic. Since $\overline{v}_{R,j_0}$ has a component which is not a thin strip or a trivial strip, $\op{Im}(\pi_{\overline{S}}\circ\overline{v}_{R,j_0})$ must contain the complement of all the thin strips between the $\overline{b}_i$ and the $\overline{a}_i$. This proves (a).

We now prove (b)--(f). If (c) does not hold, then there is some negative end $\mathcal{E}$ of $\cup_{j=1}^a\overline{v}_{T,j}^\sharp$ that limits to $z_\infty$ and $n^*(\mathcal{E})\geq k_0-1\gg 2g$. This is a contradiction of (a). If (d) does not hold, i.e., a left end of $\overline{v}_{R,j_0}$ limits to a multiple of $z_\infty$, then in view of (c) there is some right end $\mathcal{E}$ of $\overline{v}^\sharp_*$ to the left of $\overline{v}_{R,j_0}$ that limits to $z_\infty$ and satisfies $n^*(\mathcal{E})\geq k_0-1\gg 2g$. This is a contradiction of (a). (e) is a consequence of (d). (b) follows from (a) and (e) by excluding some possibilities using (2), (3). Finally we prove (f).  By Lemma~\ref{nonnegative ECH indices 3}(4) and (c), the ECH indices of $\overline{v}_{L,j}$, $j=1,\dots,a$, and $\overline{v}_{R,j}$, $j=1,\dots,b+1$, are nonnegative.  Since $\overline{v}^\sharp_{R,j_0}$ has a large sector by (b), its ECH index is increased by one.  Hence $I(\overline{v}_{R,j_0})\geq 2$.

We now return to the proof of (4).
By (d), the right end of $\overline{v}_{R,j_0}$ limits to a multiple of $z_\infty$. By (a), each component of $\overline{v}_*^\sharp\not=\overline{v}_1$ to the right of $\overline{v}_{R,j_0}$ must be a thin strip and the projection to $\overline{S}$ of the union of all the ends of $\overline{v}_1^\sharp$ limiting to $z_\infty$ is a union of thin wedges of type ${\frak S}(\overline{b}_{i,j},\overline{a}_{i,j})$.  Using Figure~\ref{figure: aandb} one can verify that such a curve $\overline{v}_1^\sharp$ does not exist.  This implies that $\overline{v}_1^\sharp=\varnothing$ and that the ECH index of $\overline{v}_1$ is $\geq 0$. Finally, since the ECH index of each thin strip is $1$, the analog of Equation~\eqref{sum of I} for our case, Lemma~\ref{nonnegative ECH indices 3}, and (f) together imply that no component of $\overline{v}_*\not=\overline{v}_1$ to the right of $\overline{v}_{R,j_0}$ can be a thin strip.  This is a contradiction and (4) follows.
\end{proof}

\subsubsection{Truncations} \label{truncation}

In the rest of this section (until the proof of Lemma~\ref{alishan2}), we consider the case where $\overline{v}'_*\cup \overline{v}_*^\sharp\not=\varnothing$ for some $\overline{v}_*$. We have $\overline{v}'_2\not=\varnothing$ in view of Lemma~\ref{sencha 3}(1).

In the next few subsections we consider the case $\overline{v}'_1=\varnothing$. By Lemma~\ref{sencha 3}(4), $\overline{u}_\infty$ has no boundary points at $z_\infty$.

\begin{defn}
Let $X$ be set and $\varepsilon>0$ be a positive real number. Then two functions $f,g:X\to \C^\times$ are {\em $\varepsilon$-approximate} if
$$|f(x)-g(x)|< \varepsilon\cdot |g(x)| \quad \mbox{ and } \quad |f(x)-g(x)|< \varepsilon\cdot |f(x)|$$ for all $x\in X$.
\end{defn}

We now define the following sequence of truncations, analogous to those which appear in the proof of Theorem~I.\ref{P1-thm: complement}.

\begin{defn} \label{defn: truncation}
With respect to the above assumptions on $\overline{u}_i: \dot G_i\to \overline{W}_{\tau_i}$, an {\em $\varepsilon_i$-truncation of $\overline{u}_i$} with $0<\varepsilon_i< {\rho_0\over 2}$ is the restriction of $\overline{u}_i$ to a subsurface $\Sigma_i\subset \dot G_i$ which satisfies the following:  First write
\begin{align*}
\bdry_v \Sigma_i &= \bdry\Sigma_i-(\pi_{B_{\tau_i}}\circ \overline{u}_i)^{-1}(\bdry B_{\tau_i})\\
\bdry_h \Sigma_i &= \bdry\Sigma_i\cap(\pi_{B_{\tau_i}}\circ \overline{u}_i)^{-1}(\bdry B_{\tau_i}).
\end{align*}
Then
\begin{enumerate}
\item[(T1)] $\overline{u}_i(\Sigma_i)$ is contained in a $2\varepsilon_i$-neighborhood of $\sigma_\infty^{\tau_i}$;
\item[(T2)] if $\overline{u}_i(x)$ is contained in an ${\varepsilon_i\over 2}$-neighborhood of $\sigma_\infty^{\tau_i}$, then $x\in \Sigma_i$;
\item[(T3)] $\pi_{B_{\tau_i}}\circ \overline{u}_i$ maps each component $c$ of $\bdry_v\Sigma_i$ to some $t=\mbox{const}$ and $\bdry_h \Sigma_i$ to $\{s=\pm 2\}$;
\item[(T4)] there exist constants $r(\tau_i)-2>R_i^{(1)}> \dots > R_i^{(\iota+1)}> 2$ and a decomposition $\Sigma_i=\Sigma_i^{(1)}\cup\dots \cup \Sigma_i^{(\iota)}$ such that $$\Sigma_i^{(j)}=(\pi_{B_{\tau_i}}\circ \overline{u}_i)^{-1}([-2,2]\times [R_i^{(j+1)}, R_i^{(j)}])\cap \Sigma_i$$ and each $$\pi_{B_{\tau_i}}\circ \overline{u}_i:\Sigma_i^{(j)} \to [-2,2]\times [R_i^{(j+1)}, R_i^{(j)}]$$ is a branched cover with possibly empty branch locus;
\item[(T5)] for each component $c$ of $\bdry_v\Sigma_i$,  $\pi_{D^2_{\rho_0}}\circ \overline{u}_i|_{c}$ is ${\varepsilon_i\over 10}$-approximate to a positive multiple $\varepsilon_i$ of a {\em normalized} eigenfunction of $\overline{v}_{L,j}^\sharp$, $j=0,\dots,a$, or $\overline{v}_{R,j}^\sharp$, $j=0,\dots,b$; moreover, for all $c$, $\max_c |\pi_{D^2_{\rho_0}}\circ \overline{u}_i|=\varepsilon_i$.
\end{enumerate}
Here
$$\pi_{D^2_{\rho_0}}:\pi_{B_{\tau_i}}^{-1}([-2,2]\times[R_i^{(\iota+1)},R_i^{(1)}])\cap \{\rho\leq \rho_0\}\to D^2_{\rho_0}$$
is obtained by projecting out the $\bdry_s$- and $\overline{R}_{\tau_i}$-directions.
\end{defn}

{\em From now on we assume that $\overline{u}_i|_{\Sigma_i}$ is a sequence of $\varepsilon_i$-truncations, where $\varepsilon_i\to 0$ and $R_i^{(1)}-R_i^{(\iota+1)}\to\infty$ as $i\to\infty$.}

Let $\widetilde\pi_i$ be the map obtained by postcomposing
$$\pi_{B_{\tau_i}}\circ \overline{u}_i: \Sigma_i\to [-2,2]\times[R_i^{(\iota+1)},R_i^{(1)}]$$
with a $-{r(\tau_i)+1\over 2}$-translation in the $t$-direction, and let $\widetilde{w}_i=\pi_{D^2_{\rho_0}}\circ \overline{u}_i|_{\Sigma_i}$. Also let
$$R_i^+=R_i^{(1)}-{r(\tau_i)+1\over 2}, \quad R_i^-=R_i^{(\iota+1)}-{r(\tau_i)+1\over 2}.$$

\s\n
{\em Notation.} When we want to distinguish the $t$-coordinates for $B_{-\infty,1}$ and $B_{-\infty,2}$, we write $t_i$ for the $t$-coordinate for $B_{-\infty,i}$. Note that $\widetilde\pi_i$ can be viewed as a map to $B_{-\infty,2}$ with $(s,t_2)$-coordinates.

\begin{lemma} \label{otter2}
If $i\gg 0$, then $\Sigma_i$ is a disk with $\geq 2p_2$ boundary punctures.
\end{lemma}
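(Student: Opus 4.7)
The plan is to exploit the branched cover structure of $\Sigma_i$ provided by (T3) together with the constraints coming from $\overline{v}'_1=\varnothing$, reducing the topology of $\Sigma_i$ in the limit to that of the compactified domain of $\overline{v}'_2$.

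First, I would identify the degree profile of $\widetilde\pi_i=\pi_{B_{\tau_i}}\circ\widetilde{u}_i$ across the slices $\Sigma_i^{(j)}$. In the limit $i\to\infty$, the slices corresponding to the $\overline{v}_2$ level carry degree $p_2$ (coming from $\overline{v}'_2$), while slices corresponding to the $\overline{v}_1$ level would have degree $0$ since $\overline{v}'_1=\varnothing$. By Lemma~\ref{sencha 3}(4), the lateral levels $\overline{v}_{L,j},\overline{v}_{R,j}$ for $j\geq 1$ consist only of thin and trivial strips; these attach to $F'_2$ along trivial chords (at $z_\infty^\#$) and do not alter the underlying topology. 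Hence the only nontrivial branched cover component of the section at infinity in the limit is $\overline{v}'_2$, and $\Sigma_i$ is, up to homeomorphism, the truncation of $F'_2$ together with neighborhoods of the $\overline{v}^\sharp$ ends at $z_\infty$ that glue to $\overline{v}'_2$.

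Second, I would apply an analog of the Fredholm index calculation of Lemma~\ref{banff} to $\overline{v}'_2$. A computation modeled on Equation~\eqref{ind}, using the groomed trivialization determined by the data at the $z_\infty$ ends at $t_2=\pm\infty$, would give $\op{ind}(\overline{v}'_2)={\frak b}(\overline{v}'_2)-(p_2-1)$. Combining with the ECH constraint $I(\overline{u}_i)=2$, Equation~\eqref{sum of I} and Lemma~\ref{nonnegative ECH indices 3}, the remaining levels contribute a nonnegative amount that fills the budget exactly, forcing $\op{ind}(\overline{v}'_2)=0$ and ${\frak b}(\overline{v}'_2)=p_2-1$. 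The connectedness of $\bdry F'_2$ is then extracted as in the proof of Lemma~\ref{banff}: a disconnection would produce at least two boundary components, whose associated sectors in $D_{\rho_0}$ would over-cover a neighborhood of $z_\infty$ and contradict Lemma~\ref{intersezione three} (equivalently Equation~\eqref{sum of n}). Riemann--Hurwitz then shows the compactification of $F'_2$ is a closed disk, hence $\Sigma_i$ is a disk.

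Third, I would count the boundary punctures. The compactified $F'_2$ has boundary a union of preimages of the vertical edges $\{s=\pm 2\}\times\R$ of $B_{-\infty,2}$; generically (when no branch point lies on $\bdry B_{-\infty,2}$) this is exactly $p_2$ arcs over each of $s=-2$ and $s=2$. Each arc has one limit endpoint at $t_2=+\infty$ and one at $t_2=-\infty$. In the finite-$i$ picture, these $2p_2$ limit endpoints correspond to boundary punctures of $\Sigma_i$ arising from the $\overline{v}^\sharp$-ends at $z_\infty$, detected by (T4) via the normalized asymptotic eigenfunctions. Possible boundary punctures of type (P$_3$) and extra punctures from thin-strip attachments can only increase the count, giving $\geq 2p_2$.

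The principal obstacle in this plan is treating the case where an end of $\overline{v}'_2$ at $t_2=\pm\infty$ has multiplicity $r>1$: a single wrapped-strip end must be shown to contribute $r$ (rather than one) boundary punctures to $\Sigma_i$ after truncation. This requires unwrapping the multiplicity via the asymptotic eigenfunction structure in (T4), showing that for large $i$ each sheet of the wrapped strip gives its own $\bdry_v$-component near the truncation radius. A secondary technical point is confirming that the attachment of the thin/trivial strip components of $\overline{v}_{L/R,j}$ for $j\geq 1$ to $F'_2$ preserves the disk topology of $\Sigma_i$ and contributes only nonnegatively to the puncture count; this should follow from Lemma~\ref{sencha 3}(4) and the analog of Equation~\eqref{iris} at each lateral gluing.
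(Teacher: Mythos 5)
Your route is genuinely different from the paper's, and it has a gap at the decisive step. The paper's proof of Lemma~\ref{otter2} is a direct argument about the truncated domain $\Sigma_i$ itself: since $n^*(\widetilde{u}_i|_{\Sigma_i})=m$, the projection $\widetilde{w}_i=\pi_{D_{\rho_0}}\circ\widetilde{u}_i$ has degree one over generic points, hence $\widetilde{w}_i|_{int(\Sigma_i)}$ is a biholomorphism onto its image; this embeds all of $\Sigma_i$ into $\C$ at once, so $\Sigma_i$ is connected and planar, and the boundary conditions (each component of $\widetilde{\pi}_i^{-1}(\{s=2\})$, resp.\ $\{s=-2\}$, mapping to a distinct ray $\mathcal{R}_{\phi(\overline{a}_*)}$, resp.\ $\mathcal{R}_{\phi(\overline{b}_*)}$), together with (T4) and the marked-point condition, pin down the disk structure and the count of $\geq 2p_2$ punctures. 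You instead analyze the limit level $\overline{v}'_2$ and try to pull the conclusion back to $\Sigma_i$.

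The gap is exactly the point you defer to the end as a ``secondary technical point'': even granting that the compactified domain of $\overline{v}'_2$ is a closed disk, $\Sigma_i$ is the truncation of the \emph{entire} curve $\overline{u}_i$ near $\sigma_\infty^{\tau_i}$, i.e.\ the result of gluing that disk to the truncated neighborhoods of all the $z_\infty$-ends of the various $\overline{v}^\sharp_*$ through long neck regions. Gluing disks to a disk along boundary arcs can produce annuli or higher-genus planar surfaces unless one controls the combinatorics of the attachments; that control is precisely what the global degree-one statement ($n^*=m$ forcing $\widetilde{w}_i|_{int(\Sigma_i)}$ to be an embedding) supplies, and it is not a consequence of Lemma~\ref{sencha 3}(4) alone. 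A second problem is your index-budget step. The formula $\op{ind}(\overline{v}'_2)={\frak b}(\overline{v}'_2)-(p_2-1)$ is asserted by analogy with Equation~\eqref{ind}, but the geometry of $\overline{W}_{-\infty,2}=[-2,2]\times\R\times\overline{S}$ (two Lagrangian boundary faces $\overline{\bf a}$ and $\overline{\bf b}$, ends at $z_\infty$ at $t_2=\pm\infty$) gives different Maslov and $c_1$ contributions and needs its own computation. Moreover the accounting that forced $\op{ind}=0$ in Lemma~\ref{banff} does not transfer: there the index excess of $2$ was absorbed by components \emph{other} than the branched cover, whereas here the marked point $\overline{\frak m}(-\infty)=((0,0),z_\infty)$ lies on the section at infinity, so the codimension-two constraint falls on $\overline{v}'_2$ itself and your claim that the remaining levels ``fill the budget exactly'' is unsupported. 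If you want to salvage your approach, you should replace step 2 by the paper's observation that $n^*(\widetilde{u}_i|_{\Sigma_i})=m$ makes $\widetilde{w}_i$ injective on $int(\Sigma_i)$; once you have that, both the disk topology of the whole $\Sigma_i$ and the puncture count follow without any index computation.
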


\begin{proof}
This is a consequence of (T5) and the following:
\begin{enumerate}
\item[(i)] $n^*(\overline{u}_i|_{\Sigma_i})=m$;
\item[(ii)] $\widetilde{w}_i(z_0)=0$ for some $z_0\in (\pi_{B_{\tau_i}}\circ \overline{u}_i)^{-1}(\overline{\frak m}^b(\tau_i))$;
\item[(iii)] $\widetilde{w}_i$ maps each component of $\widetilde\pi_i^{-1}(\{s=2\})$ to a different $\mathcal{R}_{\phi(\overline{a}_*)}$ and each component of $\widetilde\pi_i^{-1}(\{s=-2\})$ to a different component of $\mathcal{R}_{\phi(\overline{b}_*)}$;
\item[(iv)] $\widetilde{w}_i|_{int(\Sigma_i)}$ is a biholomorphism onto its image.
\end{enumerate}
Here $\phi(\overline{a}_*)$ is the $\phi$-coordinate for $\overline{a}_*$, etc.
\end{proof}

\subsubsection{Large-scale behavior of $\Xi_i$} \label{orange2}

When taking the limits of $\widetilde\pi_i$ and $\widetilde w_i$, we are simultaneously stretching in two directions $t_2$ and $\log \rho$.  In this subsection we study the large-scale behavior of the map
$$\Xi_i=(t_2\circ \widetilde\pi_i,\log\rho\circ \widetilde w_i):\Sigma_i\to [R_i^-,R_i^+]\times [-\infty,\infty)$$
as $i\to \infty$.  We use coordinates $(x'=t_2,y')$ on $[R_i^-,R_i^+]\times[-\infty,\infty)$.
The goal is to construct a ``tropical curve''
$$\overline\Xi_i:\Gamma_i\to [-1,1]\times[0,d_i],$$
with some $d_i\geq 1$ which approximates $\Xi_i$ when viewed from ``far away''; see Figure~\ref{figure: interval-contraction}.  Here $\Gamma_i$ is a finite graph whose topological type is independent of $i\gg 0$.  The analysis is of the same type as that of Parker~\cite{Pa}.

\s\n {\em Step 1.}
We start with the following lemma, which is a consequence of Gromov compactness and which describes the behavior of $\widetilde{w}_i$ and $\widetilde{\pi}_i$  for large $i$.

\begin{lemma} \label{compact sets}
Given $\varepsilon>0$ small, after passing to a subsequence and possibly shrinking $\varepsilon_i>0$ subject to the condition $R_i^{(1)}-R_i^{(\iota+1)}\to\infty$, there exist constants $L>0$, $\kappa,\kappa'\in \Z^+$ such that for each $i$ there exist:
\begin{itemize}
\item disjoint compact subsurfaces $K_{i1},\dots,K_{i\kappa}\subset \Sigma_i$ and
\item components $C_{i1},\dots,C_{i\kappa'}$ of $\Sigma_i-\cup_j K_{ij}$ which are strips
\end{itemize}
such that:
\begin{enumerate}
\item $\widetilde \pi_i|_{K_{ij}}$ is a branched cover over $[-2,2]\times [\tau_{ij},\tau_{ij}+L]$ for some $\tau_{ij}$ and $\widetilde \pi_i$ has no branch points outside $K_{i1},\dots,K_{i\kappa}$;
\item $K_{ij}$ is disjoint from $\bdry_v \Sigma_i$;
\item there is a component $K_{ij_0}$ such that  $(0,-\infty)\in \Xi_i(K_{ij_0})$;
\item for each $j$, the sequence $\{\widetilde w_i|_{K_{ij}}\}_{i=1}^\infty$, after rescaling by positive constants, limits to some $\widetilde w_{\infty j}:K_{\infty j}\to \C$;
\item $\widetilde w_i|_{C_{ij}}$ is $\varepsilon$-approximate to a multiple of $e^{\lambda_{ij} (t-is)}$, where $\lambda_{ij}\in \R-\{0\}$ is $\pm {1\over 4}$ times the angle of a sector of type ${\frak S}(\overline{b}_{k,l},\overline{a}_{k',l'})$ (here $\pm{1\over 4}$ comes from the fact that $s\in[-2,2]$);
\item $\Xi_i(C_{ij})$ is $\varepsilon$-close to a line segment $\{y'=\lambda_{ij} x'+\beta_{ij}~|~ x'\in t_2\circ \widetilde\pi_i(C_{ij})\}$, where $\beta_{ij}$ is a constant;
\item there exists $d_i'\in \R$ such that $y'\circ \Xi_i(c)$ is $\varepsilon$-close to $d_i'$  and $\max_{c} y'\circ \Xi_i(c)=d_i'$ for each component $c$ of $\bdry_v\Sigma_i$.
\end{enumerate}
\end{lemma}

\begin{proof}
(1)--(5) follow from Gromov compactness, once we observe using the ECH compactness theorem (cf.\ Section~I.\ref{P1-subsection: compactness PFH case}) that there is an upper bound on the number of branch points of $\widetilde\pi_i$ which is independent of $i$ and $m\gg 0$. (6) follows from (5).  (7) is a consequence of (6) and (T5) in Definition~\ref{defn: truncation}.
\end{proof}

\s\n {\em Step 2.}  We now construct the ``tropical curve'' $\overline\Xi_i:\Gamma_i\to [-1,1]\times[0,d_i].$
We first define the map $$\Xi'_i: \Gamma_i\to [R_i^-,R_i^+]\times [-\infty,\infty),$$ where $\Gamma_i=(V_{\Gamma_i}, E_{\Gamma_i})$, $V_{\Gamma_i}=V_{\Gamma_i,i}\sqcup V_{\Gamma_i,e}$ is the set of vertices, $E_{\Gamma_i}$ is the set of edges, and the following hold:
\begin{enumerate}
\item $V_{\Gamma_i,i}$ is in one-to-one correspondence with the set $\{K_{i1},\dots, K_{i\kappa}\}$ and $V_{\Gamma_i,e}$ is in one-to-one correspondence with the components of $\bdry_v \Sigma_i$;
\item the set $E_{\Gamma_i}$ is in one-to-one correspondence with the set $\{C_{i1},\dots,C_{i\kappa'}\}$;
\item $\Xi'_i$ maps the vertices corresponding to $K_{ij}$ and the component $c$ of $\bdry_v\Sigma_i$ to
$$(x',y')=(\tau_{ij}, \max( \log \rho\circ \widetilde w_i|_{K_{ij}}))\mbox{ and }(t_2\circ \widetilde \pi_i(c), \max( \log \rho\circ \widetilde w_i|_{c}));$$
and each $e\in E_{\Gamma_i}$ to a straight line segment.
\end{enumerate}
Note that $\Xi'_i$ has image in $\{ y'_{ij_0} \leq y'\leq d_i'\}$, where $y'_{ij_0}= \max( \log \rho\circ \widetilde w_i|_{K_{ij_0}})$. The map $\overline\Xi_i$ is obtained by postcomposing $\Xi'_i: \Gamma_i\to [R_i^-,R_i^+]\times [y'_{ij_0},d_i']$ by an affine transformation
$$ [R_i^-,R_i^+]\times [y'_{ij_0},d_i']\stackrel\sim\to [-1,1]\times[0,d_i],$$
where $d_i\geq 1$ is chosen so that $\max_{e\in E_{\Gamma_i}} |\lambda_i'(e)|=1$, where $\lambda_i': E_{\Gamma_i}\to \R$ maps $e$ to the slope of $\overline\Xi_i(e)$.

Let $(x,y)$ be coordinates on $[-1,1]\times [0,d_i]$.

\begin{lemma} \label{map}
Fix $\varepsilon, \delta>0$ small. Then, after passing to a subsequence, the map $\overline\Xi_i$ satisfies the following:
\begin{enumerate}
\item $\max_{e\in E_{\Gamma_i}} |\lambda_i'(e)|=1$;
\item if $\lambda_i: E_{\Gamma_i}\to \R$ maps $e\mapsto\lambda_{ij}$, where $e$ corresponds to $C_{ij}$ and $\lambda_{ij}$ is as in Lemma~\ref{compact sets}(4),  then $\lambda_i'$ and a constant multiple of $\lambda_i$ are $\varepsilon$-approximate;
\item if $E'_{\Gamma_i}\subset E_{\Gamma_i}$ is the set of edges $e$ such that $|\lambda_i'(e)|<1-\delta$, then $|\lambda_i'(e)|\leq K/m$ for some constant $K>0$ which is independent of $m\gg 0$ and $i$;
\item each vertex of $V_{\Gamma_i,i}$, has the same number $\geq 1$ of adjacent edges whose interiors have larger $x$-coordinate and whose interiors have smaller $x$-coordinate;
\item $y\circ\overline\Xi_{i}(p)=d_i$ for all $p\in V_{\Gamma_i,e}$;
\item there is a vertex $q_{0}\in V_{\Gamma_i,i}$ corresponding to $K_{ij_0}$ (cf.\ Lemma~\ref{compact sets}(3)) which satisfies $y\circ \overline\Xi_i(q_{0})=0$.
\end{enumerate}
\end{lemma}

\begin{proof}
(1), (4), (5), (6) follow from the construction.  (2) follows from Lemma~\ref{compact sets}(5).  (3) follows from (2).
\end{proof}

See Figure~\ref{figure: interval-contraction} for an example.  The following lemma is immediate from the construction.

\begin{figure}[ht]
\begin{center}
\psfragscanon
\psfrag{a}{\tiny $y'$}
\psfrag{b}{\tiny $|\lambda'_i|\approx 1$}
\psfrag{c}{\tiny $x'$}
\psfrag{d}{\tiny $0$}
\psfrag{e}{\tiny $\dots$}
\psfrag{f}{\tiny $-\infty$}
\psfrag{g}{\tiny $d_i'$}
\psfrag{h}{\tiny $\times$}
\psfrag{i}{\tiny $R_i^+$}
\psfrag{j}{\tiny $R_i^-$}
\includegraphics[width=8cm]{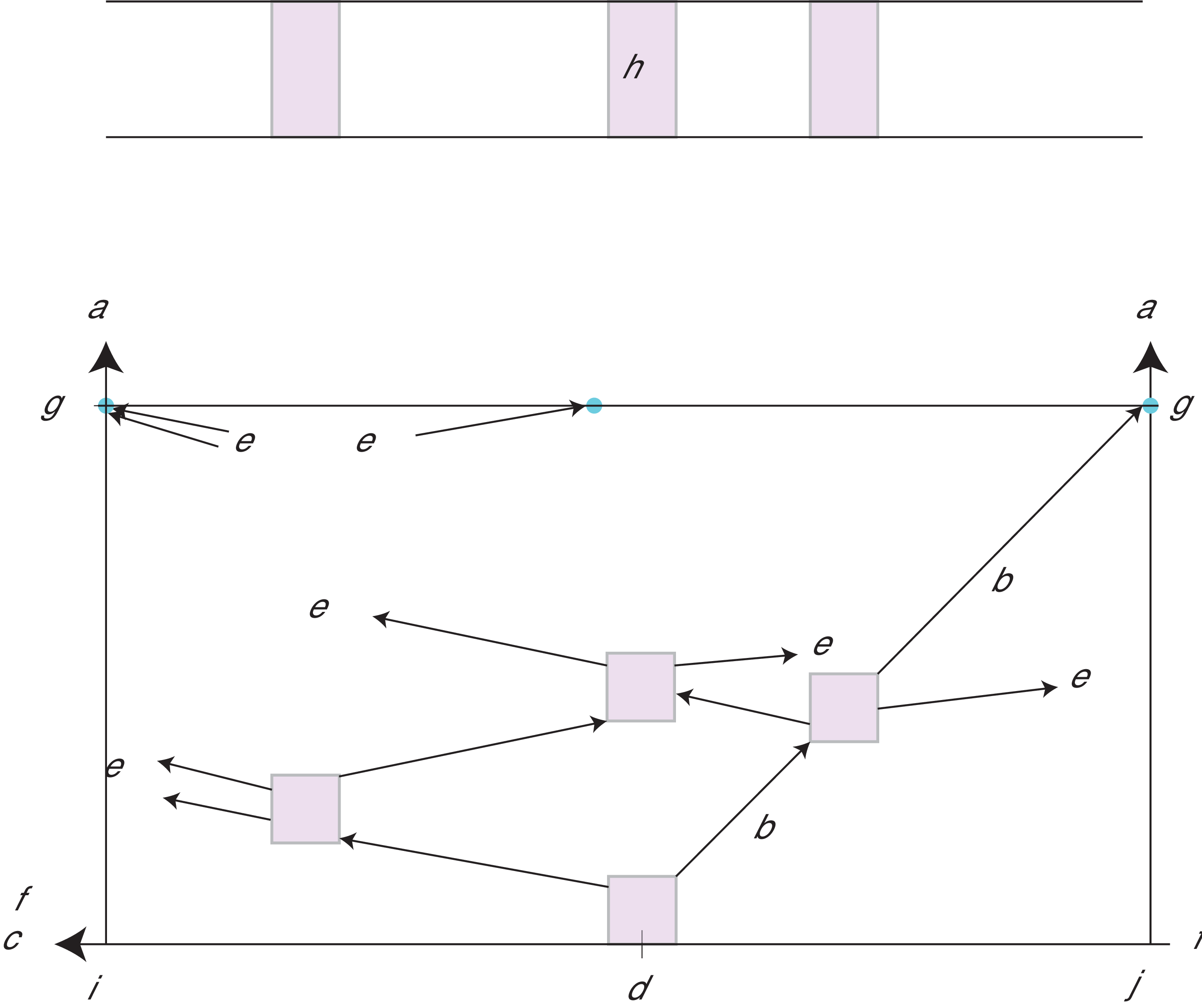}
\end{center}
\caption{The top figure represents $[-2,2]\times [R_i^-,R_i^+]$, where $\times$ indicates the location of $\overline{\frak m}^b(\tau_i)$ after translation and the shaded regions are $K_{ij}$. The bottom is a schematic diagram for the image of $\Xi_i$ which becomes the image of $\overline\Xi_i$ ``when seen from far away''. The dots along the line $y'=d_i'$ correspond to the endpoints of $V_{\Gamma_i,e}$ and the shaded regions are $t_2(K_{ij})\times \log\rho(\widetilde w_i\circ \widetilde \pi_i^{-1}(K_{ij}))$. {\em When viewed in $[-1,1]\times[0,d_i]$,} the edges with the largest slope (in absolute value) satisfy $|\lambda_i'|\approx 1$ and the remaining edges have much smaller slope when $m\gg 0$.}
\label{figure: interval-contraction}
\end{figure}

\begin{lemma}\label{independence}
After passing to a subsequence, we may assume that the following do not depend on the choice of $i$:
\begin{itemize}
\item the graph $\Gamma_i$, the function $\lambda_i$, and the set $E'_{\Gamma_i}$;
\item given an edge $E_{\Gamma_i}$ with endpoints $p,q$, whether $x\circ\overline\Xi_{i}(p)\geq x\circ\overline\Xi_{i}(q)$ and whether $y\circ \overline\Xi_{i}(p)\geq y\circ\overline\Xi_{i}(q)$.
\end{itemize}
\end{lemma}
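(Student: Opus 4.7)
The plan is to observe that each of the listed pieces of data takes values in a finite set (with cardinality bounded independently of $i$), so that a standard pigeonhole/diagonal argument extracts the desired subsequence.

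First, I would show that the combinatorial type of the graph $\Gamma_i = (V_{\Gamma_i}, E_{\Gamma_i})$ lies in a finite set. The number of interior vertices $\#V_{\Gamma_i,i}$ equals the number of connected components of $\widetilde\pi_i^{-1}(\cup_j K_{ij})$, each of which contains at least one branch point of $\widetilde\pi_i$ or is forced by the constraint $\cup_j K_{ij} \supset [-2,2] \times \{0\}$ of Lemma~\ref{compact sets}(3). By the ECH compactness remark following Lemma~\ref{compact sets}, the total number of branch points is bounded independently of $i$ and $m$, so $\#V_{\Gamma_i,i}$ is uniformly bounded. The number of external vertices $\#V_{\Gamma_i,e}$ is controlled by Lemma~\ref{otter2} (bounded by $2p_2 + \text{(number of type (P$_3$) punctures)} \leq 4g$), and $\#E_{\Gamma_i} \leq \#V_{\Gamma_i,e} + 2\#V_{\Gamma_i,i}$ by the valence bound of Lemma~\ref{map}(8) together with (7). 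Hence there are finitely many possible isomorphism types of $\Gamma_i$.

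Second, the function $\lambda_i : E_{\Gamma_i} \to \R$ takes values in the finite set $\Lambda$ consisting of $\pm \tfrac{\pi}{2}$ times the angles of sectors of type $\mathfrak{S}(\overline{b}_{k,l}, \overline{a}_{k',l'})$ for $1 \leq k,k' \leq 2g$ and admissible indices $l,l'$, as spelled out in Lemma~\ref{compact sets}(4). This set $\Lambda$ is determined once and for all by the configurations $\overline{\bf a}, \overline{\bf b}$ (and is independent of $i$). The subset $E'_{\Gamma_i} \subset E_{\Gamma_i}$ is then determined by $\lambda_i$ together with the combinatorial data of $\overline\Xi_i$ (via the estimate in Lemma~\ref{map}(5), which says $|\lambda_i'(e)| \leq K/m$ for $e \in E'_{\Gamma_i}$), so it too takes values in a finite set once $\Gamma_i$ and $\lambda_i$ are fixed. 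Finally, the orderings of the $x$- and $y$-coordinates of the vertices under $\overline\Xi_i$ define at most $(\#V_{\Gamma_i})!^2$ combinatorial types for a given $\Gamma_i$.

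With these finiteness statements in hand, I would apply a standard diagonal argument: successively pass to subsequences on which (i) the isomorphism type of $\Gamma_i$ stabilizes, (ii) the function $\lambda_i$ stabilizes (via the identification of $E_{\Gamma_i}$ from (i)), (iii) the subset $E'_{\Gamma_i}$ stabilizes, and (iv) the two orderings of $V_{\Gamma_i}$ by $x$- and $y$-coordinates stabilize. Since each stage partitions the indices $i$ into finitely many classes, the resulting subsequence retains all the desired constancy properties. There is no serious obstacle here; the only point one must be careful about is identifying the correct finite discrete invariants (especially the uniform bound on the number of branch points, which is where ECH compactness enters).
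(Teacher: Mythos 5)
Your proposal is correct and is precisely the finiteness-plus-pigeonhole argument that the paper leaves implicit: the paper offers no proof, stating only that the lemma is ``immediate from the construction,'' and the finiteness of each invariant (via the fixed $\kappa,\kappa'$ of Lemma~\ref{compact sets}, the finite set of slopes in Lemma~\ref{compact sets}(4), and the uniform bound on branch points from ECH compactness) is exactly what makes it immediate. The only quibble is the citation of Lemma~\ref{otter2} for an upper bound on $\#V_{\Gamma_i,e}$ (that lemma gives a lower bound on boundary punctures), but the needed upper bound already follows from $\#E_{\Gamma_i}=\kappa'$ being independent of $i$.
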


In view of Lemma~\ref{independence}, we may write $\Gamma=(V_\Gamma,E_\Gamma)$ for $\Gamma_i=(V_{\Gamma_i},E_{\Gamma_i})$.

Finally, we orient the edges $p\stackrel{e}\to q \in E_\Gamma$ so that $y\circ\overline\Xi_{i}(q)-y\circ\overline\Xi_{i}(p)>0$;  we denote the corresponding orientation by ${\frak o}$.  Given $p,q\in V_\Gamma$, we write $p\succeq q$ (resp.\ $p\simeq q$) to mean $y\circ\overline\Xi_{i}(p)-y\circ \overline\Xi_{i}(q)\geq 0$ (resp.\ $=0$) for all $i$.

\subsubsection{The case $\overline{v}'_1=\varnothing$, $\overline{v}'_2\not=\varnothing$}

We continue to assume that $\overline{v}'_1=\varnothing$, $\overline{v}'_2\not=\varnothing$. See Figure~\ref{figure: graphs4bis}(1).  We write $\approx$ to mean ``is close to''.

\begin{figure}[ht]
\vskip.2in
\begin{center}
\psfragscanon
\psfrag{0}{\tiny $0$}
\psfrag{1}{\tiny $1$}
\psfrag{g}{\tiny $\geq 1$}
\psfrag{x}{\tiny $\overline{v}'_2$}
\psfrag{y}{\tiny $\overline{v}''_1$}
\psfrag{z}{\tiny $\overline{v}'_1$}
\psfrag{u}{\tiny $\overline{v}''_{T,1}$}
\psfrag{v}{\tiny $\overline{v}''_{B,1}$}
\psfrag{a}{\small (1)}
\psfrag{b}{\small (2)}
\psfrag{c}{\small (3)}
\includegraphics[width=12cm]{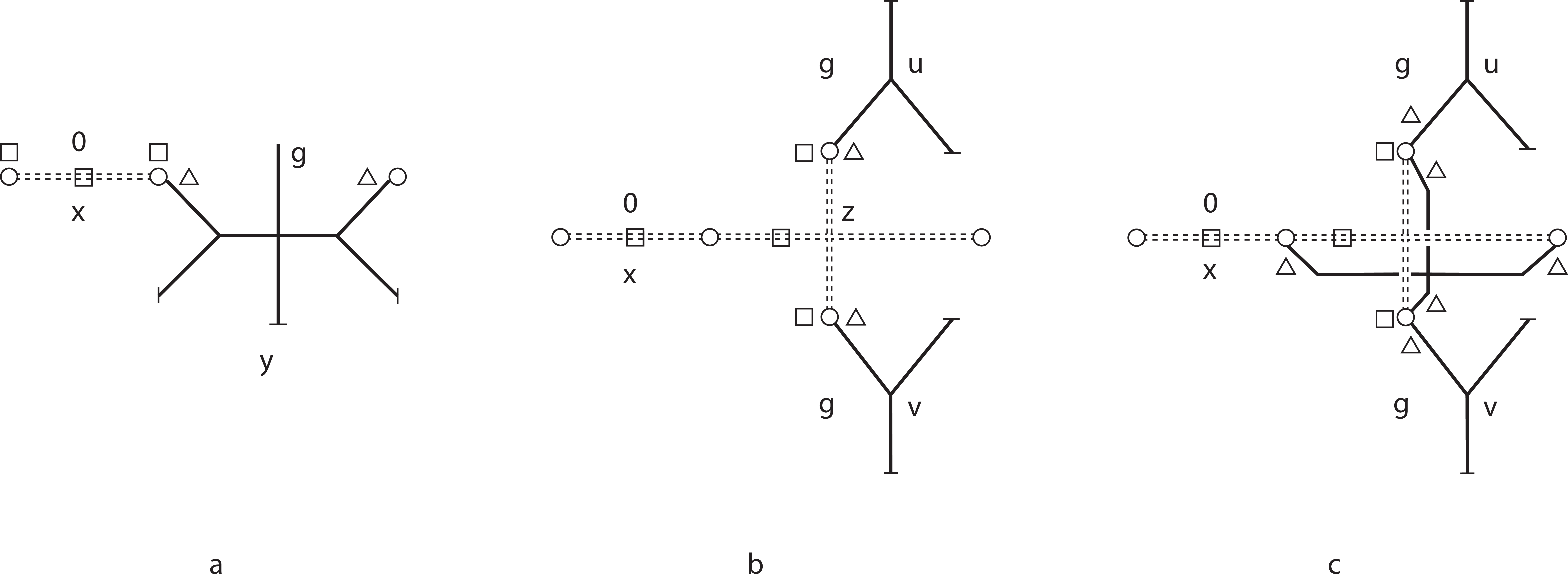}
\end{center}
\vskip.2in
\caption{Schematic diagrams for the possible types of degenerations corresponding to Lemmas~\ref{alishan} and \ref{alishan2}. Here $\circ$ represents $z_\infty$ or $z_\infty^\#$, {\tiny $\square$} represents a branch point, and {\tiny $\triangle$} represents an end with a large sector. Double dotted lines indicate multiple covers of $\sigma_\infty^*$. The labels on the graphs indicate the components and their ECH indices. If there is more than one {\tiny $\square$} or {\tiny $\triangle$} in a diagram, then we interpret it as one of the possible locations for {\tiny $\square$} or {\tiny $\triangle$}.}
\label{figure: graphs4bis}
\end{figure}

We start with the following useful lemma:

\begin{lemma}[Comparison Lemma] \label{comparison lemma}
Suppose $m\gg 0$, $i=i(m)\gg 0$, and $\varepsilon=\varepsilon(m)>0$ is small.
Let $q\stackrel{e}\to q'$ be an edge of $\Gamma$ such that $|\lambda'_i(e)|\approx 1$ and let $$\delta'=(p_1\stackrel{f_1}\to \dots \stackrel{f_{k}}\to p_{k+1})$$ be an oriented path of $\Gamma$ from $p_1$ to $p_{k+1}$ such that $p_1\preceq q\preceq q' \preceq p_{k+1}$ and $0<|\lambda'_i(f_j)|\leq K/m$ for $j=1,\dots,k$.  Then
$$|x\circ\overline\Xi_{i}(q')-x\circ\overline\Xi_{i}(q)|\leq K'/m,$$
where $K'$ is independent of $m$ and $i$.
\end{lemma}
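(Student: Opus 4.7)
The proof will be a direct geometric comparison inside the rescaled strip $[-1,1]\times[0,d_i]$, exploiting the bounded horizontal width to convert the smallness of the slopes $\lambda'_i(f_j)\leq K/m$ into smallness of the vertical displacement along the fast edge $e$.

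The plan is as follows. Since edges of $\Gamma$ are oriented so that $y$ strictly increases along them, each $f_j$ in the slow path $\delta'$ has $\Delta y_{f_j}>0$; combined with $\lambda'_i(f_j)>0$ this forces $\Delta x_{f_j}>0$ as well. Thus the horizontal displacements telescope without cancellation:
\begin{equation*}
\sum_{j=1}^k \Delta x_{f_j} = x\circ\overline\Xi_i(p_{k+1})-x\circ\overline\Xi_i(p_1)\leq 2,
\end{equation*}
since $\overline\Xi_i$ takes values in $[-1,1]\times[0,d_i]$. Using $\Delta y_{f_j}=\lambda'_i(f_j)\,\Delta x_{f_j}\leq (K/m)\,\Delta x_{f_j}$ and summing gives
\begin{equation*}
y\circ\overline\Xi_i(p_{k+1})-y\circ\overline\Xi_i(p_1)\leq \tfrac{2K}{m}.
\end{equation*}

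Next, by the hypothesis $p_1\preceq q\preceq q'\preceq p_{k+1}$ (which, thanks to Lemma~\ref{independence}, is stable in $i$ for all $i\gg 0$), the vertical span from $q$ to $q'$ is dominated by the vertical span of $\delta'$:
\begin{equation*}
0<y\circ\overline\Xi_i(q')-y\circ\overline\Xi_i(q)\leq y\circ\overline\Xi_i(p_{k+1})-y\circ\overline\Xi_i(p_1)\leq \tfrac{2K}{m}.
\end{equation*}
Finally, since $\lambda'_i(e)\simeq 1$, for $m$, $i$ sufficiently large and $\varepsilon$ sufficiently small we have $|\lambda'_i(e)|\geq \tfrac{1}{2}$, so the slope identity on $e$ yields
\begin{equation*}
|x\circ\overline\Xi_i(q')-x\circ\overline\Xi_i(q)|=\frac{|y\circ\overline\Xi_i(q')-y\circ\overline\Xi_i(q)|}{|\lambda'_i(e)|}\leq \tfrac{4K}{m},
\end{equation*}
and one can take $K'=4K$.

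There is no serious obstacle here; the content of the argument is purely combinatorial/geometric on the tropical curve. The main point to verify carefully is that the approximation built into the symbols $\simeq$ and $\lambda'_i(f_j)\leq K/m$ (in Lemma~\ref{map}(3),(5)) translates into genuine numerical bounds uniform in $i$ and $m$, which is exactly what Lemma~\ref{independence} guarantees once one fixes a subsequence.
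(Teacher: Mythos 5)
Your argument is correct and is essentially the paper's proof: bound the total horizontal variation of the slow path $\delta'$, multiply by the maximal slope $K/m$ to bound its vertical span, sandwich the vertical span of the fast edge $e$ inside it using $p_1\preceq q\preceq q'\preceq p_{k+1}$, and divide by $\lambda'_i(e)\simeq 1$.

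The one place you diverge is the bound on $\sum_j|\Delta x_{f_j}|$. You get $\leq 2$ by observing that the positivity of the slopes together with the upward orientation of the edges forces all horizontal increments to have the same sign, so they telescope inside $[-1,1]$. The paper instead bounds the (unsigned) horizontal variation by $2\mathcal{K}$, where $\mathcal{K}$ is the maximal covering degree of the $\overline{v}'_{L,j}$ and $\overline{v}'_{R,j}$, without using monotonicity. In the setting where the lemma is stated (target the strip $[-1,1]\times[0,d_i]$) your sharper constant is fine; but the lemma is reused in Section~\ref{second case}, where $\overline\Xi_i$ takes values in the cylinder $([-1,1]/(-1\sim 1))\times[0,d_i]$ and a monotone path can wind around several times, so the telescoped sum is only bounded by $2$ times the winding, i.e.\ by the covering degree. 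Your proof would need that adjustment there; the paper's covering-degree bound is written so as to carry over unchanged.
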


Note that, by Lemma~\ref{map}(3), $0<|\lambda'_i(f_j)|\leq K/m$ if and only if $|\lambda'_i(f_j)|$ is not close to $1$.

\begin{proof}
The horizontal variation $\sum_{j=1}^{k}|x\circ\overline\Xi_{i}(p_{j+1}) -x\circ\overline\Xi_{i}(p_j)|$
is bounded above by $2$ times the maximal covering degree of $\overline{v}'_{L,j}$, $j=1,\dots,a$, and $\overline{v}'_{R,j}$, $j=1,\dots,b+1$, which we denote by $\mathcal{K}$. (Here the $2$ comes from the width of the interval $[-1,1]$.) On the other hand, since $p_1\preceq q\preceq q' \preceq p_{k+1}$, we have:
\begin{align*}
|x\circ\overline\Xi_{i}(q')-x\circ\overline\Xi_{i}(q)| & \approx |y\circ\overline\Xi_{i}(q')-y\circ\overline\Xi_{i}(q)| \\
&\leq  \sum_{j=1}^{k} |\lambda'_i(f_j)|\cdot |x\circ\overline\Xi_{i}(p_{j+1}) -x\circ\overline\Xi_{i}(p_j)|\\
&\leq \left(\max_{j=1,\dots,k}|\lambda'_i (f_j)|\right)(2\mathcal{K})\leq K'/m,
\end{align*}
where $K'=2\mathcal{K} K$.
\end{proof}

Given $(x_0,y_0)\in [-1,1]\times [0,d_i]$, let $$l_{(x_0,y_0)}=\{x=x_0, 0\leq y\leq y_0\}$$ be a line segment oriented in the positive $y$-direction. Next choose an orientation $\widetilde{\frak o}$ of $\Gamma$ so that the oriented edges of $\overline\Xi_i(\Gamma)$ are pointing in the positive $x$-direction and write $\widetilde\Gamma=(\Gamma,\widetilde{\frak o})$.  (This is different from the orientation ${\frak o}$ used previously.)

Consider the {\em weight function}
$$\mathcal{W}_i: ([-1,1]\times[0,d_i])-\overline\Xi_i(\Gamma)\to \Z^{\geq 0},$$
which is defined as follows:
\begin{enumerate}
\item $\mathcal{W}_i$ is locally constant;
\item $\mathcal{W}_i(x,y)$ is the signed intersection number $\langle \overline\Xi_i(\widetilde\Gamma),l_{(x,y)}\rangle$ for generic $(x,y)$, with respect to the orientations $(\bdry_x,\bdry_y)$ for $[-1,1]\times[0,d_i]$ and $\widetilde{\frak o}$.
\end{enumerate}
The function $\mathcal{W}_i$ is well-defined by Lemma~\ref{map}(4).

\begin{lemma} \label{alishan}
If $m\gg 0$, then there is no $\overline{u}_\infty\in \bdry_{\{-\infty\}}\mathcal{M}$ such that $\overline{v}'_1=\varnothing$ and $\overline{v}'_2\not=\varnothing$.
\end{lemma}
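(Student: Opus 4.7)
The plan is to argue by contradiction, assuming $\overline{v}'_1=\varnothing$ and $\overline{v}'_2\ne\varnothing$, so $p_2\ge 1$. I first combine Lemma~\ref{sencha 3}(4) with (T4) of Definition~\ref{defn: truncation}: the horizontal levels $\overline{v}_{L,j}$, $\overline{v}_{R,j}$ ($j\ge 1$) consist of thin and trivial strips, none of which has an end asymptotic to $z_\infty$. Hence every component of $\bdry_v\Sigma_i$ must correspond to an end of $\overline{v}_1^\sharp$ limiting to $z_\infty$, so $V_{\Gamma,e}=V'_{\Gamma,e}$ and Lemma~\ref{map}(7) forces $\overline\Xi_i(V_{\Gamma,e})\subseteq\{(-1,d_i),(+1,d_i)\}$.

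Next I analyze the flow of $\mathcal{W}_i$ on $\widetilde\Gamma$. Interior vertices of $\widetilde\Gamma$ are balanced by Lemma~\ref{map}(8), external vertices at $(+1,d_i)$ are sinks in $\widetilde\Gamma$, and those at $(-1,d_i)$ are sources. Consequently $\mathcal{W}_i(x_0,d_i-\eta)$ is constant on generic $x_0\in(-1,1)$; call this constant $N$. The vertex $q_0=(0,0)$ encodes the preimage of the point constraint $\overline{\frak m}(-\infty)$ under the degree-$p_2$ branched cover $\overline{v}'_2$, and has at least $p_2$ edges going up to its right in $\Gamma$. These edges are steep (with $\lambda'_i\simeq +1$): the associated sectors are non-thin, intrinsic to the branched cover $\overline{v}'_2$, as opposed to the thin sectors between adjacent $\overline{b}_{k,l}$ and $\overline{a}_{k,l}$ which parametrize the boundary components along $\bdry_v\Sigma_i$. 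Select one such steep edge and extend it to an oriented path $\gamma=(q_0\to q_1\to\dots\to q_r)$ in $\widetilde\Gamma$ with $x\circ\overline\Xi_i(q_r)=+1$.

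The main obstacle is to apply the Comparison Lemma~\ref{comparison lemma} iteratively to bound the total vertical extent of $\gamma$ by $O(1/m)$, contradicting $y\circ\overline\Xi_i(q_r)-y\circ\overline\Xi_i(q_0)=d_i\ge 1$. Each edge of $\gamma$ is either steep ($|\lambda'_i|\simeq 1$) or shallow ($|\lambda'_i|\le K/m$). Shallow edges contribute cumulative vertical displacement at most $2K/m$, as the total horizontal displacement along $\gamma$ is bounded by the width of the rectangle $[-1,1]$. For each steep edge $q_j\to q_{j+1}$ along $\gamma$ with $\lambda'_i\simeq +1$, one must exhibit a flanking shallow subpath $p_1\preceq q_j\preceq q_{j+1}\preceq p_{k+1}$ of $\Gamma$; such a subpath can be extracted from the planar embedding $\overline\Xi_i(\Gamma)$ (the compactified truncation domain is a disk by Lemma~\ref{otter2}) combined with the balance condition at interior vertices, and a mirrored argument handles edges with $\lambda'_i\simeq -1$. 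Applying the Comparison Lemma to each steep edge bounds its horizontal displacement by $K'/m$, and since the slope is $\simeq \pm 1$, the vertical displacement is likewise $\le K'/m$. Combined with the uniform upper bound on the number of interior vertices of $\Gamma$ (from the ECH compactness theorem's bound on branch points independent of $m$), the total number of edges in $\gamma$ is uniformly bounded in $m$, so the total vertical displacement along $\gamma$ is $O(1/m)<1$ for $m$ sufficiently large --- the desired contradiction.
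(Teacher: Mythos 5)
Your overall strategy is the paper's: pass to the tropical graph $\overline\Xi_i$, run a path from $q_0=(0,0)$ to an exterior vertex, control each steep edge via the Comparison Lemma, and use the $m$-independent bound on the number of edges to get a contradiction for $m\gg 0$. (The paper phrases the contradiction as horizontal displacement $1$ versus $lK'/m$ along an all-steep path ending at $x=\pm1$; your vertical reformulation $d_i\geq 1$ versus $O(1/m)$ is an acceptable variant, and in fact it sidesteps your unjustified claim that $V_{\Gamma,e}=V'_{\Gamma,e}$ — that claim is false, since the thin strips of the intermediate levels $\overline{v}_{L,j}$, $\overline{v}_{R,j}$ also have ends at $z_\infty$ and hence contribute exterior vertices at interior $x$-coordinates; only Lemma~\ref{map}(6), that \emph{all} exterior vertices sit at height $d_i$, is what you need.)

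The genuine gap is exactly the step you label ``the main obstacle'': the existence, for each steep edge $q_j\to q_{j+1}$, of a flanking oriented path with all edges shallow. You assert it ``can be extracted from the planar embedding combined with the balance condition at interior vertices,'' but give no argument, and planarity of $\Sigma_i$ is not what does the work. The paper's mechanism is the weight function: if the claim fails for some $l_0$, then for $y_0$ strictly between the heights of $q_{l_0}$ and $q_{l_0+1}$ the steep edge is the \emph{only} point of $\overline\Xi_i(\Gamma)\cap\{y=y_0\}$, so $\mathcal{W}_i(\cdot,y_0)$ equals some $\kappa$ for $x<x_0$ and $\kappa\pm1$ for $x>x_0$; this contradicts $\mathcal{W}_i(-1,y_0)=\mathcal{W}_i(1,y_0)=0$, which holds precisely because every exterior vertex has $y$-coordinate $d_i>y_0$. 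You do introduce $\mathcal{W}_i$, but the one computation you perform with it — that $\mathcal{W}_i(x_0,d_i-\eta)$ is constant in $x_0$ — is both incorrect (the steep edges reaching the exterior vertices at height $d_i$ must cross the level $d_i-\eta$, and $\mathcal{W}_i$ jumps by $\pm1$ at each such crossing) and never used afterwards. Without the weight-function argument (or an equivalent one) the Comparison Lemma has no flanking path to compare against, and the proof does not close.
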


\begin{proof}
The proof we give is not the most efficient, but carries over more easily to other situations. We choose $m\gg 0$ so that $0< K'/m\ll 1$.

Let $\delta=(q_{0}\stackrel{e_{0}}\to \dots \stackrel{e_{l-1}}\to q_{l})$ be an oriented path from $q_0$ to $q_{l}\in V_{\Gamma,e}$, such that $|\lambda'_i(e_j)|\approx 1$ for each $i$ and each edge $e_j$, $j=0,\dots,l-1$. We may assume that $x\circ\overline\Xi_{i}(q_l)= \pm 1$ in view of Lemma~\ref{sencha 3}(4), since the only end $\mathcal{E}$ of $\overline{v}^\sharp_{L,j}$, $j=1,\dots,a+1$, or $\overline{v}^\sharp_{R,j}$, $j=1,\dots,b+1$, whose $\pi_{D^2_{\rho_0}}$-projection is a sector with angle $>\pi$ is an end of $\overline{v}_1^\sharp=\overline{v}^\sharp_{L,a+1}$.

We claim that, for each $l_0=0,\dots,l-1$, there is an oriented path
$$\delta'=(p_1\stackrel{f_1}\to \dots \stackrel{f_{k}}\to p_{k+1})$$ such that $p_1\preceq q_{l_0}\preceq q_{l_0+1} \preceq p_{k+1}$ and $0<|\lambda'_i(f_j)|< K'/m$ for $j=1,\dots,k$. Arguing by contradiction, if the claim does not hold, then $\overline\Xi_i(\Gamma)\cap \{y=y_0\}$ consists of only one point $(x_0,y_0)\in \overline\Xi_i(e_{l_0})\cap \{y=y_0\}$ for any constant $y_0$ in the interval $(y\circ\overline\Xi_{i}(q_{l_0}), y\circ\overline\Xi_{i}(q_{l_0+1}))$. This means that there is an integer $\kappa$ such that $\mathcal{W}_i(x,y_0)=\kappa$ for $-1\leq x<x_0$ and $\mathcal{W}_i(x,y_0)=\kappa\pm 1$ for $1\geq x>x_0$.  This contradicts $\mathcal{W}_i(-1,y_0)=\mathcal{W}_i(1,y_0)=0$, which is due to the fact that all the exterior vertices $p\in V_{\Gamma_i,e}$ satisfy $y\circ\overline\Xi_{i}(p)=d_i$ by Lemma~\ref{map}(5).

The claim, together with the Comparison Lemma, implies that
$$|x\circ\overline\Xi_{i}(q_{k+1})-x\circ\overline\Xi_{i}(q_k)|\leq K'/m$$ for each $k=0,\dots,l-1$.  Hence,
$$\sum_{k=0}^{l-1} |x\circ\overline\Xi_{i}(q_{k+1})-x\circ\overline\Xi_{i}(q_k)|\leq lK'/m\ll 1$$ for $m\gg 0$.  This contradicts Lemma~\ref{map}(5) with $d_i\geq 1$.
\end{proof}

\subsubsection{The case $\overline{v}'_1\not=\varnothing$, $\overline{v}'_2\not=\varnothing$} \label{second case}

In this subsection we consider the case $\overline{v}'_1\not=\varnothing$, $\overline{v}'_2\not=\varnothing$. For simplicity assume that there are no boundary points of type (P$_3$). Some of the possibilities are given by Figure~\ref{figure: graphs4bis}(2) and (3).

We outline the necessary modifications in the current case:

(1) We consider the truncation $\overline{u}_i:\Sigma_i\to \overline{W}_{\tau_i}$ of $\overline{u}_i$ so that (T1) and (T2) in Definition~\ref{defn: truncation} hold. (T4) becomes:
\begin{enumerate}
\item[(T4$'$)] there exists a decomposition $\Sigma_i=\Sigma_i^{(1)}\cup\dots\cup \Sigma_i^{(\iota)}$ such that each $\pi_{B_{\tau_i}}\circ \overline{u}_i|_{\Sigma_i^{(j)}}$ is a branched cover with possible empty branch locus over a component of $B_{\tau_i}$ which is cut up by (possibly multiple) arcs of type $t=R$ with $2< R<r(\tau_i)-2$ and $s=R'$ with $R'<-3$ or $R'>3$.
\end{enumerate}
(T5) is slightly modified so that the normalized eigenfunction is that of $\overline{v}^\sharp_*$ for any $*$.

(2) Given $\varepsilon>0$ small, there exists $L>0$ so that the analog of Lemma~\ref{compact sets} holds; here we restrict $\Sigma_i$ to $|s|\leq L$, while keeping the same notation. 
To the list of compact subsets $K_{ij}$ of Lemma~\ref{compact sets} (viewed as subsets of $B_{\tau_i}$ instead of $[-2,2]\times[R_i^-,R_i^+]$), we add the compact subset $K_{ij}$ of the following type, which we call ``type $\overline{v}_1$'':
$$K_{ij}= B_{\tau_i}\cap \{|s|\leq L,|t-1/2|\leq L\}.$$

(3) After suitable translations, contractions of components of $\pi^{-1}_{B_{\tau_i}}(K_{ij})$ to points, and rescalings, we obtain the ``tropical curves''
$$\overline\Xi_i:\Gamma\to [-1,1]/(-1\sim 1)\times [0,d_i],$$
where the equivalence relation $\sim$ is consistent with contracting each component of $\pi^{-1}_{B_{\tau_i}}(K_{ij})$ of type $\overline{v}_1$ to a point and the graph $\Gamma=(V_\Gamma,E_\Gamma)$ is independent of $i$.

(4) The set $V_\Gamma$ of vertices admits the decomposition $V_{\Gamma,i}\sqcup V_{\Gamma,e}$, where $V_{\Gamma,i}$ is in one-to-one correspondence with the set of components of $\pi^{-1}_{B_{\tau_i}}(K_{ij})$ and $V_{\Gamma,e}$ is in one-to-one correspondence with the set of left and right ends of $\overline{v}_{L,j}^\sharp$, $j=1,\dots,a+1$, and $\overline{v}_{R,j}^\sharp$, $j=1,\dots,b+1$, that limit to $z_\infty$. There is a subset $V'_{\Gamma,i}\subset V_{\Gamma,i}$ consisting of vertices which are not initial points of any oriented edge; $V'_{\Gamma,i}$ is obtained by contracting components of $\pi^{-1}_{B_{\tau_i}}(K_{ij})$ of type $\overline{v}_1$ to points.

(5) In Lemma~\ref{map}, (1)--(3) and (5) still hold, in (4) we replace $V_{\Gamma,i}$ by $V_{\Gamma_i}-V'_{\Gamma,i}$, and (6) becomes: there is a vertex $q_{0}\in V_{\Gamma,i}$ corresponding to $K_{ij_0}$ which satisfies $\overline\Xi_i(q_{0})=(0,0)$.

(6) The weight function $\mathcal{W}_i$ is now a function
$$\mathcal{W}_i: (([-1,1]/\sim) \times[0,d_i])-\overline\Xi_i(\Gamma)\to \Z^{\geq 0},$$
i.e., it is periodic in the $x$-direction.

\begin{lemma} \label{alishan2}
If $m\gg 0$, then there is no $\overline{u}_\infty\in \bdry_{\{-\infty\}}\mathcal{M}$ such that $\overline{v}'_1\not =\varnothing$ and $\overline{v}'_2\not=\varnothing$.
\end{lemma}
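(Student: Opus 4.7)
The plan is to adapt the argument of Lemma~\ref{alishan} to the periodic setting that arises when $\overline{v}'_1\neq\varnothing$, accounting for the identification $-1\sim 1$ on the $x$-axis and for the presence of the vertices in $V_{\Gamma,i'}$. First, one would pass to the universal cover of the target and lift the tropical curve $\overline\Xi_i\colon\Gamma\to S^1\times[0,d_i]$ to $\widetilde{\overline\Xi}_i\colon\widetilde\Gamma\to\R\times[0,d_i]$, fixing the lift $\tilde q_0$ of the bottom vertex with $\widetilde{\overline\Xi}_i(\tilde q_0)=(0,0)$. The weight function $\mathcal W_i$ descends to a well-defined $\Z$-valued function on $(S^1\times[0,d_i])\setminus\overline\Xi_i(\Gamma)$, so any horizontal loop at height $y_0$ must meet $\overline\Xi_i(\Gamma)$ algebraically zero times.

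Following Lemma~\ref{alishan}, one constructs an oriented path $\delta=(q_0\stackrel{e_0}\to q_1\to\dots\stackrel{e_{l-1}}\to q_l)$ from $q_0$ to some top vertex $q_l\in V_{\Gamma,e}\cup V_{\Gamma,i'}$ with $\lambda'_i(e_j)\approx 1$ on every edge, and lifts it to $\widetilde\Gamma$ starting at $\tilde q_0$. If $V_{\Gamma,e}$ contains a vertex $q_l$ corresponding to an end of $\overline{v}_1^\sharp$ at $z_\infty$ with a large sector, then the analog of Lemma~\ref{map}(7) gives $|x\circ\widetilde{\overline\Xi}_i(\tilde q_l)|=1$; otherwise $V_{\Gamma,e}$ contains only thin-strip ends (with $\lambda'_i\leq K/m$) or is empty, and $\delta$ must terminate at a vertex $q_l\in V_{\Gamma,i'}$. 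In the latter case one uses the fact that $\overline{v}'_1$ is a branched cover of $\sigma_\infty^{-\infty,1}$ of positive degree $p_1$ to force the tropical graph $\Gamma$ to wrap nontrivially around $S^1$, and then chooses $\delta$ realizing this wrapping so that the lift still satisfies $|x\circ\widetilde{\overline\Xi}_i(\tilde q_l)|\geq 1$.

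For each edge $e_{l_0}$ of $\delta$ one then produces an auxiliary oriented path $\delta'=(p_1\to\dots\to p_{k+1})$ with $p_1\preceq q_{l_0}\preceq q_{l_0+1}\preceq p_{k+1}$ and $\lambda'_i(f_j)\leq K/m$ on every edge of $\delta'$. The weight-function argument of Lemma~\ref{alishan} carries over to the periodic setting: if no such $\delta'$ existed for some $e_{l_0}$, then $\overline\Xi_i(\Gamma)\cap\{y=y_0\}$ would consist of the single point $\overline\Xi_i(e_{l_0})\cap\{y=y_0\}$ for generic $y_0\in(y(q_{l_0}),y(q_{l_0+1}))$, but traversing $S^1$ once at height $y_0$ would then change $\mathcal W_i$ by $\pm 1$, violating single-valuedness. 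Applying the Comparison Lemma to each edge of $\delta$ yields $|x\circ\widetilde{\overline\Xi}_i(\tilde q_l)|\leq lK'/m<1$ for $m\gg 0$, contradicting the lower bound obtained above.

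The main obstacle is the case $V_{\Gamma,e}=\varnothing$, where one must exploit the branched-cover structure of $\overline{v}'_1$ over $\sigma_\infty^{-\infty,1}$ to force nontrivial wrapping of $\Gamma$ around $S^1$ and then exhibit a path realizing this wrapping with all edges of slope $\approx 1$. Such ``fast'' edges incident to $V_{\Gamma,i'}$ are not automatic: their existence has to be extracted, via the continuation argument preceding Lemma~\ref{intersezione three}, from the presence in $\overline{v}_1^\sharp$ of ends at $z_\infty$ whose $\pi_{D_{\rho_0}}$-projections are non-thin sectors of the form ${\frak S}(\overline{h}(\overline{a}_{k,l}),\overline{a}_{k',l'})$.
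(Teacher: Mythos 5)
Your strategy is the same as the paper's: a maximal oriented path $\delta$ of fast edges ($\lambda'_i\approx 1$) from $q_0$, auxiliary slow paths $\delta'$ whose existence is forced by the single-valuedness (now $x$-periodicity) of the weight function $\mathcal{W}_i$, and the Comparison Lemma to bound $|x\circ\overline\Xi_i(q_l)-x\circ\overline\Xi_i(q_0)|$ by $lK'/m$, contradicting the fact that $q_l$ sits at $x=\pm 1$. The universal-cover lift is a harmless reformulation, and your worry about ``forcing nontrivial wrapping'' when $\delta$ terminates in $V_{\Gamma,i'}$ is unnecessary: the vertices of $V_{\Gamma,i'}$ are obtained by contracting the compact sets $K_{ij}$ of type $\overline{v}_1$, which by construction sit at the identified point $-1\sim 1$, so any path from $q_0$ (at $x=0$) to such a vertex automatically has horizontal displacement $\geq 1$. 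What does need justifying — and what the paper gets from Lemma~\ref{sencha 3}(4) — is that every fast edge is incident to $V'_{\Gamma,e}$ or $V_{\Gamma,i'}$ in the first place, since all other horizontal levels consist of thin and trivial strips and hence contribute only slow edges; you correctly point to the continuation argument and the non-thin sectors as the source of this.

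The one genuine omission is the case of boundary punctures of type (P$_3$), which your proposal never mentions. A (P$_3$) puncture contributes a chord of type ${\frak S}(\overline{a}_{k,l},\overline{a}_{k',l'})$ that can be a large sector, producing a fast edge whose terminal vertex need not lie at $x=\pm 1$; this breaks the dichotomy your argument relies on. The paper disposes of this separately: by Lemma~\ref{sencha 3}(4) such punctures cannot occur on $\overline{v}_{L,j}$, $j=1,\dots,a$, or $\overline{v}_{R,j}$, $j=1,\dots,b+1$, and then the maximal fast path has an endpoint whose $y$-coordinate is much larger than that of the endpoint of any other path, which is itself a contradiction. You would need to add this case (or an equivalent exclusion) for the proof to be complete.
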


\begin{proof}
The proof is similar to that of Lemma~\ref{alishan} and uses the Comparison Lemma.

Suppose that there are no boundary points of type (P$_3$). Let $\delta=(q_{0}\stackrel{e_{0}}\to \dots \stackrel{e_{l-1}}\to q_{l})$ be a maximal oriented path which starts from $q_0$, has $|\lambda_i(e)|\approx 1$ for each edge, and ends at some $q_l$ with $x\circ\overline\Xi_{i}(q_l)=\pm 1$. We claim that, for each $l_0=0,\dots,l-1$, there is an oriented path
$$\delta'=(p_1\stackrel{f_1}\to \dots \stackrel{f_{k}}\to p_{k+1})$$ such that $p_1\preceq q_{l_0}\preceq q_{l_0+1} \preceq p_{k+1}$ and $0<|\lambda'_i(f_j)|\leq K'/m$ for $j=1,\dots,k$. Indeed, using the same notation as that of Lemma~\ref{alishan}, there is a point $(x_0,y_0)$ and an integer $\kappa$ such that $\mathcal{W}_i(x,y_0)=\kappa$ for $-1\leq x<x_0$ and $\mathcal{W}_i(x,y_0)=\kappa\pm 1$ for $x_0\leq x\leq  1$, which is impossible by the $x$-periodicity.  The claim gives a contradiction as in the proof of Lemma~\ref{alishan}.

Suppose there are boundary points of type (P$_3$). Then, by Lemma~\ref{sencha 3}(4), there are no boundary points of type (P$_3$) on $\overline{v}_{L,j}$, $j=1,\dots,a$, and $\overline{v}_{R,j}$, $j=1,\dots,b+1$.  The maximal oriented path $\delta$ from the previous paragraph has $y\circ \overline \Xi_{i}(q_l)$ which is much larger than $y\circ \overline\Xi_i$ of the endpoint of another path.  This is a contradiction.
\end{proof}

\begin{proof}[Proof of Lemma~\ref{cherries5}]
Suppose $\overline{u}_\infty\in \bdry_{\{-\infty\}}\mathcal{M}$. If $\overline{v}'_*\cup \overline{v}_*^\sharp=\varnothing$ for all levels $\overline{v}_*$ of $\overline{u}_\infty$, then, by Lemma~\ref{los angeles 3}, $\overline{u}_\infty$ is a $2$-level building $\overline{v}_1\cup\overline{v}_2$, where $\overline{v}_1$ is a $W_{-\infty,1}$-curve with $I=0$ and $\overline{v}_2$ is a $\overline{W}_{-\infty,2}$-curve with $I=2$ which passes through $\overline{\frak m}(-\infty)$. By Lemma~\ref{lemma: ECH for W minus infinity 2}, ${\bf y}_2$ and ${\bf y}_4$ satisfy the conditions of $A_3$.

On the other hand, it is not possible that $\overline{v}'_*\cup \overline{v}_*^\sharp\not=\varnothing$ for some level $\overline{v}_*$ by Lemmas~\ref{alishan} and ~\ref{alishan2}.
\end{proof}

\subsection{Breaking in the middle}
\label{subsection: chain homotopy part 2}

In this subsection we study the limit of holomorphic maps to $\overline{W}_{\tau}$ as $\tau \to T'$ for some $T'\in(-\infty,\infty)$. This will prove Lemma~\ref{cherries3}.

We assume that $m\gg 0$; $\varepsilon,\delta>0$ are sufficiently small; and $\{\overline{J}_\tau\}\in \overline{\mathcal{I}}^{reg}$ and $\{\overline{J}_\tau^\Diamond(\varepsilon,\delta,{\frak p}(\tau))\}$ satisfy Lemma~\ref{lemma: codimension one}.  Fix ${\bf y}\in \mathcal{S}_{{\bf a},\hh({\bf a})}$, ${\bf y}'\in\mathcal{S}_{{\bf b},\hh({\bf b})}$ and let
$$\mathcal{M}=\mathcal{M}^{I=2,n^*=m}_{\{\overline{J}_\tau^\Diamond(\varepsilon,\delta,{\frak p}(\tau))\}}({\bf y},{\bf y}';\overline{\frak m}), \quad \mathcal{M}_\tau= \mathcal{M}^{I=2,n^*=m}_{\overline{J}_\tau^\Diamond(\varepsilon,\delta,{\frak p}(\tau))}({\bf y},{\bf y}';\overline{\frak m}(\tau)).$$
We will analyze $\bdry_{(-\infty,\infty)}\mathcal{M}$.

Let $\overline{u}_i$, $i\in \N$, be a sequence of curves in $\mathcal{M}$ such that $\overline{u}_i\in \mathcal{M}_{\tau_i}$ and $\displaystyle\lim_{i\to\infty} \tau_i=T'$, and such that its limit
$$\overline{u}_\infty= (\overline{v}_{-1,1}\cup\dots\cup \overline{v}_{-1,c})\cup \overline{v}_0 \cup (\overline{v}_{1,1}\cup\dots\cup\overline{v}_{1,a})$$
is a holomorphic building in $\bdry_{\{T'\}}\mathcal{M}$, ordered from bottom to top, where each $\overline{v}_*$ is an SFT level,  $\overline{v}_{-1,j}$, $j=1,\dots,c$, and $\overline{v}_{1,j}$, $j=1,\dots,a$, map to $\overline{W}$ and $\overline{v}_0$ maps to $\overline{W}_{T'}$. Sometimes we will refer to $\overline{v}_0$ as $\overline{v}_{-1,c+1}$ or $\overline{v}_{1,0}$.

The following are analogs of Lemmas~\ref{nonnegative ECH indices better}--\ref{sencha}, stated without proof.

\begin{lemma} \label{nonnegative ECH indices 2}
If fiber components are removed from $\overline{u}_\infty$ and the only boundary points at $z_\infty$ are of type (P$_3$), then the ECH index of each level $\overline{v}_*\not= \overline{v}_0$ is nonnegative,
$$ I(\overline{v}_{1,j})\geq  \left\{
\begin{array}{ll}
I(\overline{v}'_{1,j}) +I(\overline{v}''_{1,j}) & \mbox{if } \mathfrak{bp}_{1,j}=0;\\
I(\overline{v}'_{1,j}) +I(\overline{v}''_{1,j})+2  & \mbox{if } \mathfrak{bp}_{1,j}>0,
\end{array}
\right.
$$
for $0\leq j\leq a$,
and the only components of $\overline{u}_\infty$ which have negative ECH index are the following:
\begin{enumerate}
\item branched covers of $\sigma_\infty^{T'}$; and
\item at most one component $\widetilde{v}$ of $\overline{v}_0''$ with $I(\widetilde{v})=-1$.
\end{enumerate}
Here (2) occurs when $T'\in \mathcal{T}_1$ and $\widetilde{v}\in \mathcal{M}^{\dagger,s,irr,\op{ind}=-1}_{\overline{J}_{T'}^\Diamond(\varepsilon,\delta,{\frak p}(T'))}({\bf z},{\bf z}')$, as described in Lemma~\ref{lemma: codimension one}(1).
\end{lemma}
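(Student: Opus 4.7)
The plan is to mirror the argument of Lemma~\ref{nonnegative ECH indices better} for the outer levels $\overline{v}_{\pm 1,j}$, and to use Lemma~\ref{lemma: codimension one}(1) to pin down the sole source of possible exceptional negative-index behavior occurring at the middle level $\overline{v}_0$.

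First I would handle the outer levels $\overline{v}_{-1,j}$ for $1\le j\le c$ and $\overline{v}_{1,j}$ for $1\le j\le a$, all of which map to $\overline{W}=\R\times[0,1]\times\overline{S}$. Since the restriction of $\overline{J}_\tau$ to the ends of $\overline{W}_\tau$ is a regular $\overline{J}\in\overline{\mathcal{J}}$ by Definition~\ref{defn: regularity for W tau part 1}(2), each simply covered irreducible component of $\overline{v}_{\pm 1,j}''$ has nonnegative Fredholm index. For levels with $z_\infty$ ends, I would establish the alternating property of initial/terminal points at $z_\infty$ exactly as in Lemma~\ref{alternate}, by running the continuation argument around the middle level (rather than stopping there), and then invoke Lemma~I.\ref{P1-index inequality for z infinity case} to obtain
\[
I(\overline{v}_{1,j})\;\geq\; I(\overline{v}'_{1,j})+I(\overline{v}''_{1,j})+\mathfrak{bp}_{1,j}
\]
for $1\leq j\leq a$, with each summand nonnegative (branched covers of $\sigma_\infty$ inside $\overline{W}$ have $I=0$, since the section at infinity of $\overline{W}$ is a plain trivial cylinder); the analog for $\overline{v}_{-1,j}$ is identical.

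Next I would treat the middle level $\overline{v}_0:\dot F_0\to \overline{W}_{T'}$ and decompose it as $\overline{v}_0=\overline{v}'_0\cup\overline{v}''_0$. The branched-cover part $\overline{v}'_0$ is a union of covers of $\sigma_\infty^{T'}$, which by Equation~\eqref{index for sigma infty tau} and the modified ECH-index calculation of Section~I.\ref{P1-subsection: modified indices at z infty} contribute negatively (these are the type (1) components in the statement). For $\overline{v}''_0$, every irreducible component $\widetilde{v}$ is simply covered by the ``$\dagger$'' convention. By Lemma~\ref{lemma: codimension one}(1) applied to the family $\{\overline{J}_\tau^\Diamond\}$ on a small interval around $T'$, each such simply-covered irreducible component of $\overline{v}''_0$ satisfies $\op{ind}(\widetilde{v})\ge 0$ unless $T'\in \mathcal{T}_1$, in which case there is a unique exceptional $\widetilde{v}\in \mathcal{M}^{\dagger,s,irr,\op{ind}=-1}_{\overline{J}_{T'}^\Diamond}(\mathbf{z},\mathbf{z}')$. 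For this exceptional $\widetilde{v}$, I would invoke automatic transversality in the style of Section~\ref{subsubsection: regularity} (the component is simply covered, irreducible and unbranched over $\sigma_\infty^{T'}$), which forces $\delta(\widetilde{v})=0$; the index inequality $\op{ind}(\widetilde{v})+2\delta(\widetilde{v})\leq I(\widetilde{v})$ then gives $I(\widetilde{v})\ge -1$, and combined with the nonnegativity of all other summands and the known total $I(\overline{u}_i)=2$ it forces $I(\widetilde{v})=-1$. This yields the list of negative-index components in the statement and, by summing, the index inequality for $\overline{v}_0=\overline{v}_{1,0}$.

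The main obstacle will be the careful bookkeeping at $\overline{v}_0$: namely ensuring that no nodal or multiply covered phenomenon outside the one permitted by Lemma~\ref{lemma: codimension one}(1) arises, and verifying that the exceptional $\op{ind}=-1$ curve must be embedded so that $I=\op{ind}=-1$ rather than merely $I\ge -1$. This is handled by combining the codimension-counting of $\mathcal{T}_1$ with the automatic transversality-type arguments already used for the analogous analyses in Sections~\ref{subsubsection: regularity} and~\ref{subsubsection: regularity tomodachi}.
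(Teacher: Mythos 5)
Your proposal is correct and matches the paper's intended argument: the paper states this lemma without proof, explicitly presenting it as an analog of Lemmas~\ref{nonnegative ECH indices better}--\ref{sencha}, and your reconstruction (alternating/almost-alternating endpoints plus the $z_\infty$-version of the index inequality for the $\overline{W}$-levels, and Lemma~\ref{lemma: codimension one}(1) for the single possible $\op{ind}=-1$ simple component of $\overline{v}_0''$) is exactly that analog. The only cosmetic quibbles are that the ``$\dagger$'' modifier excludes branched covers of $\sigma_\infty$ rather than forcing simple covers, and that $I(\widetilde{v})=-1$ follows directly from $\op{ind}+2\delta\leq I<0$ together with genericity of the family (forcing $\op{ind}=-1$ and $\delta=0$) without any appeal to automatic transversality.
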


\begin{lemma}\label{los angeles2}
If $\overline{u}_\infty\in \bdry_{(-\infty,\infty)}\mathcal{M}$ and $\overline{v}'_*\cup\overline{v}^\sharp_*=\varnothing$ for all levels $\overline{v}_*$ of $\overline{u}_\infty$, then $\overline{u}_\infty$ is one of the following:
\begin{enumerate}
\item $a=0$, $c=1$; $\overline{v}_0$ is a $\overline{W}_{T'}$-curve with $I=1$ which passes through $\overline{\frak m}({T'})$; and $\overline{v}_{-1,1}$ is a $W$-curve with $I=1$; or
\item $a=1$, $c=0$; $\overline{v}_{1,1}$ is a $W$-curve with $I=1$; and $\overline{v}_0$ is a $\overline{W}_{T'}$-curve with $I=1$ which passes through $\overline{\frak m}({T'})$.
\end{enumerate}
Here either $T'\in \mathcal{T}_2$ and there is a component of $\overline{v}_0$ which is in
$$\mathcal{M}_{\overline{J}^\Diamond_{T'}(\varepsilon,\delta,{\frak p}(T'))}^{\dagger,s,irr, \op{ind}=1,n^*=m}({\bf z},{\bf z}',\overline{\frak m}(T'))$$
from Lemma~\ref{lemma: codimension one}(2), for some ${\bf z}$, ${\bf z}'$; or $T'\in \mathcal{T}_1$ and there is a component of $\overline{v}_0$ which does not pass through $\overline{\frak m}(T')$ but is in
$$\mathcal{M}_{\overline{J}_{T'}^\Diamond(\varepsilon,\delta,{\frak p}(T'))}^{\dagger,s,irr,\op{ind}=-1,n^*=0}({\bf z},{\bf z'})$$ from Lemma~\ref{lemma: codimension one}(1).
\end{lemma}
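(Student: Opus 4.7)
The plan is to mirror the proof of Lemma~\ref{los angeles}: first determine the $n^*$-allocation and constrain the structure of $\overline{v}_0$, then distribute the total ECH index subject to $\sum I(\overline{v}_*)=2$ using Lemma~\ref{nonnegative ECH indices 2}. The hypothesis $\overline{v}'_*\cup\overline{v}^\sharp_*=\varnothing$ at every level means no end limits to a multiple of $z_\infty$ or $\delta_0$, so by (the obvious analogs of) Lemmas~\ref{intersezione} and~\ref{cherimoya} no end or removable puncture contributes to $n^*$. Because $\overline{\frak m}(T')=(\overline{\frak m}^b(T'),z_\infty)$ sits on the fiber at infinity, any component passing through $\overline{\frak m}(T')$ carries local intersection with $(\sigma_\infty^{T'})^{\dagger,+}$ of at least $m$, by the argument used in Lemma~\ref{los angeles}. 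Since $\overline{\frak m}(T')\in \overline{W}_{T'}$, only $\overline{v}_0$ can pass through it; combined with $\sum n^*(\overline{v}_*)=m$ (Equation~\eqref{sum of n}), this forces $n^*(\overline{v}_0)=m$ and $n^*(\overline{v}_{\pm 1,j})=0$ for all $j$. In particular, every top- and bottom-level maps into $W\subset\overline{W}$, so each is a $W$-curve, and a generic-choice argument as in Lemma~\ref{los angeles} excludes fiber components of $\overline{v}_0$; hence $\overline{v}_0$ is an honest $\overline{W}_{T'}$-curve.

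Next I apply Lemma~\ref{nonnegative ECH indices 2}: every level $\overline{v}_*\neq \overline{v}_0$ has $I\geq 0$, so I analyze $\overline{v}_0$ by cases. The curve $\overline{v}_0$ is an $I$-constrained $\overline{W}_{T'}$-curve subject to a codimension-$2$ point condition, so it must be non-generic at $T'$, forcing $T'\in \mathcal{T}_1\cup \mathcal{T}_2$ by Lemma~\ref{lemma: codimension one}. If $T'\in \mathcal{T}_2$, Lemma~\ref{lemma: codimension one}(2) identifies a unique simply-covered, irreducible, $\dagger$-type $\op{ind}=1$ curve through $\overline{\frak m}(T')$; this curve is a component of $\overline{v}_0$ and satisfies $I=\op{ind}=1$ by the index equality for simply-covered embedded multisections. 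Any further non-trivial component of $\overline{v}_0$ would raise $I(\overline{v}_0)$ above $1$ and, together with the non-negative top/bottom contributions, violate $\sum I(\overline{v}_*)=2$ (Equation~\eqref{sum of I}); hence $I(\overline{v}_0)=1$. If $T'\in \mathcal{T}_1$, Lemma~\ref{lemma: codimension one}(1) provides a unique simply-covered, irreducible, $\dagger$-type $\op{ind}=-1$ curve (which does not pass through $\overline{\frak m}(T')$); this must appear in $\overline{v}_0$ and contributes $I=-1$, while the remaining components of $\overline{v}_0$ must pass through $\overline{\frak m}(T')$ and, by the point constraint together with the ECH index inequality, contribute total $I\geq 2$, giving $I(\overline{v}_0)\geq 1$. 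Conservation of ECH index and non-negativity of the top/bottom terms then force equality $I(\overline{v}_0)=1$.

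In either case the top and bottom levels carry a total ECH index of $1$; since each term is non-negative and levels consisting of trivial strips have $I=0$ (and are discarded by the usual SFT convention), exactly one non-trivial $W$-curve of $I=1$ remains, placed either above or below $\overline{v}_0$. This yields the two configurations $(a,c)=(0,1)$ and $(a,c)=(1,0)$ in the statement. The main obstacle I anticipate is controlling the remaining multisection components of $\overline{v}_0$ in the $\mathcal{T}_1$ case: one must exclude additional negative-index pieces and verify that the components passing through $\overline{\frak m}(T')$, together with the $\op{ind}=-1$ curve, exhaust $\overline{v}_0$ with total index exactly $1$. This is handled by combining the uniqueness (simple-cover, irreducibility) statement of Lemma~\ref{lemma: codimension one} with the ECH index inequality applied to the residual components, in direct analogy with the final paragraph of the proof of Lemma~\ref{los angeles}.
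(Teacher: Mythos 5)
Your argument is correct and is exactly the intended one: the paper states this lemma without proof as the analog of Lemma~\ref{los angeles}, and your proof reproduces that argument ($n^*$-allocation forcing $n^*(\overline{v}_0)=m$ and all other levels into $W$, nonnegativity of ECH indices away from $\overline{v}_0$ via Lemma~\ref{nonnegative ECH indices 2}, and the index budget $\sum I=2$), supplemented by Lemma~\ref{lemma: codimension one} to account for the wall-crossing at $T'\in\mathcal{T}_1\cup\mathcal{T}_2$. The only point worth making explicit is why $I(\overline{v}_0)=2$ is excluded: in that case all remaining levels would have $I=0$ and hence be trivial, so $\overline{u}_\infty$ would lie in $\mathcal{M}$ rather than in $\bdry\mathcal{M}$.
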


\begin{lemma} \label{sencha2}
If $\overline{v}'_*\cup\overline{v}^\sharp_*\not=\varnothing$ for some level $\overline{v}_*$ of $\overline{u}_\infty$, then:
\begin{enumerate}
\item $p_0=\deg(\overline{v}_0')>0$;
\item $\overline{u}_\infty$ has no boundary point of type (P$_1$) or (P$_2$);
\item $\overline{u}_\infty$ has no fiber components and no components of $\overline{v}''_*$ that intersect the interior of a section at infinity;
\item each component of $\overline{v}_{-1,j}^\sharp$, $j=1,\dots,c$, is a thin strip from $z_\infty$ to some $x_i$ or $x_i'$ with $I=1$;
\item the only boundary points at $z_\infty$ are type (P$_3$) points of $\overline{v}^\sharp_{1,j}$, $j=0,\dots,a$.
\end{enumerate}
\end{lemma}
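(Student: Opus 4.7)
The proof is a direct adaptation of that of Lemma~\ref{sencha}, with the middle level $\overline{v}_0$ over $\overline{W}_{T'}$ playing the role that $\overline{v}_-$ over $\overline{W}_-$ played there. The argument is driven by the identity $\sum_{\overline{v}_*} n^*(\overline{v}_*) = m$ (Equation~\eqref{sum of n} holds in this setting by the same proof) together with the lower bounds on $n^*$ coming from Lemmas~\ref{intersezione}, \ref{intersezione prime revisited}, and \ref{cherimoya2}.

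First I would verify the analog of Lemma~\ref{intersezione prime revisited} in the present context: assuming $\overline{v}'_*\cup\overline{v}^\sharp_*\neq \varnothing$ for some level, running the continuation argument along $\bdry_+ B_\tau$ through the consecutive levels $\overline{v}_{1,a},\dots,\overline{v}_{1,1},\overline{v}_0,\overline{v}_{-1,1},\dots,\overline{v}_{-1,c}$ produces
\begin{equation*}
n^*\!\left(\bigcup_i \mathcal{E}_i\ \cup\ \bigcup_j N(\mathfrak{p}_j)\right)\ \geq\ m - p_0,
\end{equation*}
where $\{\mathcal{E}_i\}$ is the collection of nontrivial ends of $\bigcup_* \overline{v}_*^\sharp$ limiting to $z_\infty$, $\{\mathfrak{p}_j\}$ is the set of type $(\mathrm{P}_3)$ boundary punctures removable at $z_\infty$, and $p_0 = \deg(\overline{v}'_0)$. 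The point is that only $\overline{v}'_0$ can absorb the ``flux at $z_\infty$'' through the cobordism $\overline{W}_{T'}$, whereas the $\sigma_\infty$-sheets over the $\overline{W}$-levels on either side only participate through the matching of the data $\overrightarrow{\mathcal{D}}$.

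Granted this bound, (1)--(5) follow by the same dichotomies as in Lemma~\ref{sencha}. \emph{For (1):} if $p_0 = 0$, then $\overline{v}''_0$ carries the marked point $\overline{\mathfrak{m}}(T')$; a small neighborhood contributes $m$ to $n^*(\overline{v}_0)$, while some $\overline{v}^\sharp_*\neq \varnothing$ contributes strictly positively, violating $\sum n^* = m$. \emph{For (2) and (3):} a boundary puncture of type $(\mathrm{P}_1)$ or $(\mathrm{P}_2)$ contributes $\geq m$ to $n^*$ by Lemma~\ref{cherimoya2}, as does a non-ghost fiber component or a component of $\overline{v}''_*$ meeting the interior of a section at infinity; together with $n^*(\overline{v}'_0) > 0$ these exceed the budget. \emph{For (4):} each component of $\overline{v}^\sharp_{-1,j}$ lies in $\overline{W}$ (no $\delta_0$ is available) and has its positive end at $z_\infty$; combining the $n^*$-bound above with the ECH-index budget $I(\overline{u}_\infty)=2$ and the nonnegativity of indices from Lemma~\ref{nonnegative ECH indices 2} forces each such component to be a thin strip to some $x_i$ or $x_i'$ with $I=1$. \emph{For (5):} by (2), every boundary puncture removable at $z_\infty$ is of type $(\mathrm{P}_3)$; by (4), $\overline{v}^\sharp_{-1,j}$ is a thin strip with no removable puncture, so every such puncture lies on some $\overline{v}^\sharp_{1,j}$ with $j \geq 0$.

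The main obstacle is setting up the continuation argument in the presence of the annular middle level. The base $B_{T'}$ is an annulus with one strip-like puncture on each boundary component, so the continuation along $\bdry_+ B_\tau$ traverses a closed cycle that visits the vertical levels $\overline{v}_{1,j}$ for $j=a,\dots,1$, then $\overline{v}_0$, then $\overline{v}_{-1,j}$ for $j=1,\dots,c$, and returns through $\bdry_- B_\tau$. Verifying that the resulting sweep of sectors covers $D_{\rho_0}$ minus at most $p_0$ thin wedges is a direct bookkeeping exercise, but requires carefully tracking how the data $\overrightarrow{\mathcal{D}}$ at the horizontal interfaces between the $\overline{W}$-levels match the data at the ends of $\overline{v}'_0$; once this is set up, the remainder transcribes verbatim from Lemma~\ref{sencha}.
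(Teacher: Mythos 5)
Your proposal is correct and takes exactly the approach the paper intends: the paper states Lemma~\ref{sencha2} without proof, explicitly as an analog of Lemma~\ref{sencha}, and your adaptation --- with $\overline{v}_0$ over $\overline{W}_{T'}$ playing the role of $\overline{v}_-$, the point constraint $\overline{\frak m}(T')$ forcing $p_0>0$, and the continuation/$n^*$-budget argument run through the annular middle level --- is precisely that analog. The only minor presentational point is that in (4) the fact that the $z_\infty$-end of a component of $\overline{v}^\sharp_{-1,j}$ must be a \emph{positive} end is itself a consequence of the $n^*$-bound (a negative end at $z_\infty$ would contribute $\geq k_0-1\gg 2g$), not an a priori fact, but this does not affect the argument.
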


The following is the analog of Lemma~\ref{hojicha}:

\begin{lemma} \label{hojicha2}
If $\overline{v}'_*\cup\overline{v}^\sharp_*\not=\varnothing$ for some level $\overline{v}_*$, then there are no removable points at $z_\infty$ and $\overline{u}_\infty$ contains a subbuilding consisting of $\overline{v}_{1,a}^\sharp$ with $I\geq 1$; $\overline{v}_{1,j}'$, $1\leq j<a$, which branch cover $\sigma_\infty$; $\overline{v}_0'$ with $I=-p$ which is a degree $p$ branched cover of $\sigma_\infty^{T'}$; $\cup_{j=1}^c \overline{v}^\sharp_{-1,j}$ which is a union of $p$ thin strips; and possibly the following:
\begin{itemize}
\item $\overline{v}_0^\sharp$ with positive ends at multiples of $z_\infty$ and no negative ends at multiples of $z_\infty$; and
\item $\overline{v}_{1,j}^\sharp$, $1\leq j<a$, with $I(\overline{v}_{1,j}^\sharp)\geq 1$ and (positive or negative) ends at multiples of $z_\infty$.
\end{itemize}
Here at most one component of $\overline{v}_0''$ satisfies $I=-1$ and the remaining components of $\overline{v}_*''$ satisfy $I\geq 0$.
\end{lemma}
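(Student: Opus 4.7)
The proof follows the template of Lemma~\ref{hojicha}, with simplifications coming from the fact that in the middle-breaking situation all non-$\overline{v}_0$ levels map to $\overline{W}$ (so there are no ends at multiples of $\delta_0$), and one new feature coming from the possibility, guaranteed by Lemma~\ref{nonnegative ECH indices 2}(2), that a single component of $\overline{v}_0''$ may have $I=-1$. First I would invoke Lemma~\ref{sencha2}: it rules out fiber components and components of $\overline{v}_*''$ intersecting the interior of a section at infinity, rules out boundary punctures of types (P$_1$) and (P$_2$), forces $p:=\deg(\overline{v}_0')>0$, and forces every component of $\overline{v}_{-1,j}^\sharp$, $1\leq j\leq c$, to be an $I=1$ thin strip from $z_\infty$ to some $x_i$ or $x_i'$. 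By Equation~\eqref{index for sigma infty tau}, $I(\overline{v}_0')=-p$.

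Next I would apply the continuation argument of Section~\ref{orange1}, starting from a nontrivial negative end of the topmost $\overline{v}_{1,j}^\sharp$ that limits to $z_\infty$ (which exists by hypothesis and the analog of Lemma~\ref{sencha}(1)). The continuation across the intermediate $\overline{W}$-levels $\overline{v}_{1,j}$ with $1\leq j<a$ shows that each $\overline{v}'_{1,j}$ is a possibly branched cover of $\sigma_\infty$ whose degree is dictated by matching the data at the adjacent levels, and that the $\overline{v}_0^\sharp$ components have only positive ends at $z_\infty$ (any negative end at $z_\infty$ of $\overline{v}_0^\sharp$ would have to be resolved by extra strips of $\overline{v}_{-1,j}^\sharp$, exceeding the budget). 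The number of thin strips of $\cup_{j=1}^c\overline{v}_{-1,j}^\sharp$ is then exactly the degree $p$ of $\overline{v}_0'$ at its negative end.

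I would then tally ECH indices via Equation~\eqref{sum of I}:
\begin{equation*}
2=I(\overline{v}_0')+\sum_{j=1}^c I(\overline{v}_{-1,j}^\sharp)+I(\overline{v}_0^\sharp)+I(\overline{v}_0^\flat)+\sum_{j=1}^{a-1}\bigl(I(\overline{v}'_{1,j})+I(\overline{v}''_{1,j})\bigr)+I(\overline{v}_{1,a})
=-p+p+(\cdots)+I(\overline{v}_{1,a}^\sharp).
\end{equation*}
By Lemma~\ref{nonnegative ECH indices 2}, every non-$\sigma_\infty^{T'}$ summand is $\geq 0$ except possibly one $I=-1$ component of $\overline{v}_0''$ (occurring precisely when $T'\in \mathcal{T}_1$), and each boundary puncture of type (P$_3$) adds at least $+1$ to $I(\overline{v}_{1,j})$. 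The argument of Lemma~\ref{hojicha} Case ($6_i$), using the refined index inequality of Lemma~I.\ref{P1-index inequality for z infinity case}, shows that $I(\overline{v}_{1,a}^\sharp)\geq 1$ because a nontrivial end at $z_\infty$ together with the alternating property forces at least a large sector or a type (P$_3$) puncture to contribute $+1$. Reconciling the arithmetic then yields the listed subbuilding, and constrains the remaining components to those enumerated under ``possibly the following.''

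The main obstacle is the bookkeeping at the intermediate levels $\overline{v}_{1,j}$, $1\leq j<a$, where both a (possibly branched) cover $\overline{v}'_{1,j}$ and a nontrivial $\overline{v}_{1,j}^\sharp$ may coexist. To handle this I would verify, as in Lemma~\ref{alternate}, the alternating property for the sets $P_{\pm,j,0}$ and $P_{\pm,j,1}$ of initial and terminal points of the ends at $z_\infty$ of each such level; this enables level-by-level application of Lemma~I.\ref{P1-index inequality for z infinity case} and in particular the decomposition $I(\overline{v}_{1,j})\geq I(\overline{v}'_{1,j})+I(\overline{v}''_{1,j})+\mathfrak{bp}_{1,j}$ already recorded in Lemma~\ref{nonnegative ECH indices 2}. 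The rest of the argument is then essentially a transcription of the combinatorial analysis used in Lemma~\ref{hojicha}, with the observation that the single permitted $I=-1$ component of $\overline{v}_0''$ absorbs what would otherwise be an excess of $+1$ in the index count, exactly parallel to how $\overline{v}_+$ could carry negative Fredholm/ECH index contributions in the $+\infty$-breaking analysis.
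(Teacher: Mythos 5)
The paper states Lemma~\ref{hojicha2} without proof, as an ``analog of Lemma~\ref{hojicha},'' and your proposal reconstructs essentially the intended argument: Lemma~\ref{sencha2} for the preliminary restrictions, the continuation argument for the structure of the $z_\infty$-column, the analog of Lemma~\ref{alternate} feeding into Lemma~\ref{nonnegative ECH indices 2} for the level-by-level index bounds, and the tally $\sum_* I(\overline{v}_*)=2$ to pin down the subbuilding. The simplifications you identify (no $\delta_0$-asymptotics since all non-$\overline{v}_0$ levels map to $\overline{W}$; the single permitted $I=-1$ component of $\overline{v}_0''$ coming from $T'\in\mathcal{T}_1$ via Lemma~\ref{lemma: codimension one}) are the right ones, and the forced existence of $a\geq 1$ and of exactly $p$ thin strips below follows as you indicate once the column structure is established.

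The one step whose justification does not hold up as written is the exclusion of negative ends of $\overline{v}_0^\sharp$ at $z_\infty$. You argue such an end ``would have to be resolved by extra strips of $\overline{v}_{-1,j}^\sharp$, exceeding the budget.'' If the budget is the ECH index, this fails: each extra thin strip contributes $+1$, but with $I(\overline{v}_0'')\geq -1$ and $I(\overline{v}_{1,a}^\sharp)\geq 1$ the identity $2=\sum_* I(\overline{v}_*)$ only bounds the number $q$ of such ends by $q\leq 2$, so $q=1,2$ are not excluded by index arithmetic alone. The correct exclusion is the $n^*$ count, exactly as in Lemma~\ref{a or b}: the ends of $\cup_{j\geq 1}\overline{v}_{1,j}^\sharp$ at $z_\infty$ already contribute at least $m-p$ to $n^*$ by the analog of Equation~\eqref{oolong}, while a negative end of $\overline{v}_0^\sharp$ at $z_\infty$ projects to a non-thin sector of type ${\frak S}(\overline{h}(\overline{b}_{k,l}),\overline{b}_{k',l'})$ and contributes an additional $k_0-1\gg 2g\geq p$ by the analog of Equation~\eqref{jasmine}, contradicting $n^*(\overline{u}_i)=m$. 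Note that the extra thin strips themselves contribute nothing to $n^*$, so it is not the strips that break any budget but the end itself. With this repair the rest of your outline goes through.
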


\begin{proof}
 We explain why there are no boundary points of type (P$_3$); the rest of the proof is similar to that of Lemma~\ref{hojicha}.  A boundary point of type (P$_3$) contributes $+2$ towards $I$ by Lemma~\ref{nonnegative ECH indices better}, there is a large sector which contributes $+1$, and some $\overline{v}_{1,j}$, $j>0$, contributes $+1$, for a total of $I=4$.  Since there is at most one component which contributes negatively to $I$, namely a component of $\overline{v}_0''$ with $I=-1$, we have a total of $I\geq 3$, a contradiction.
\end{proof}

See Figure~\ref{figure: graphs2} for some possibilities. Observe that in the current case there is at most one component of $\overline{v}_0''$ with $I=-1$ whereas there are none in Lemma~\ref{hojicha}.

\begin{figure}[ht]
\begin{overpic}[width=12cm]{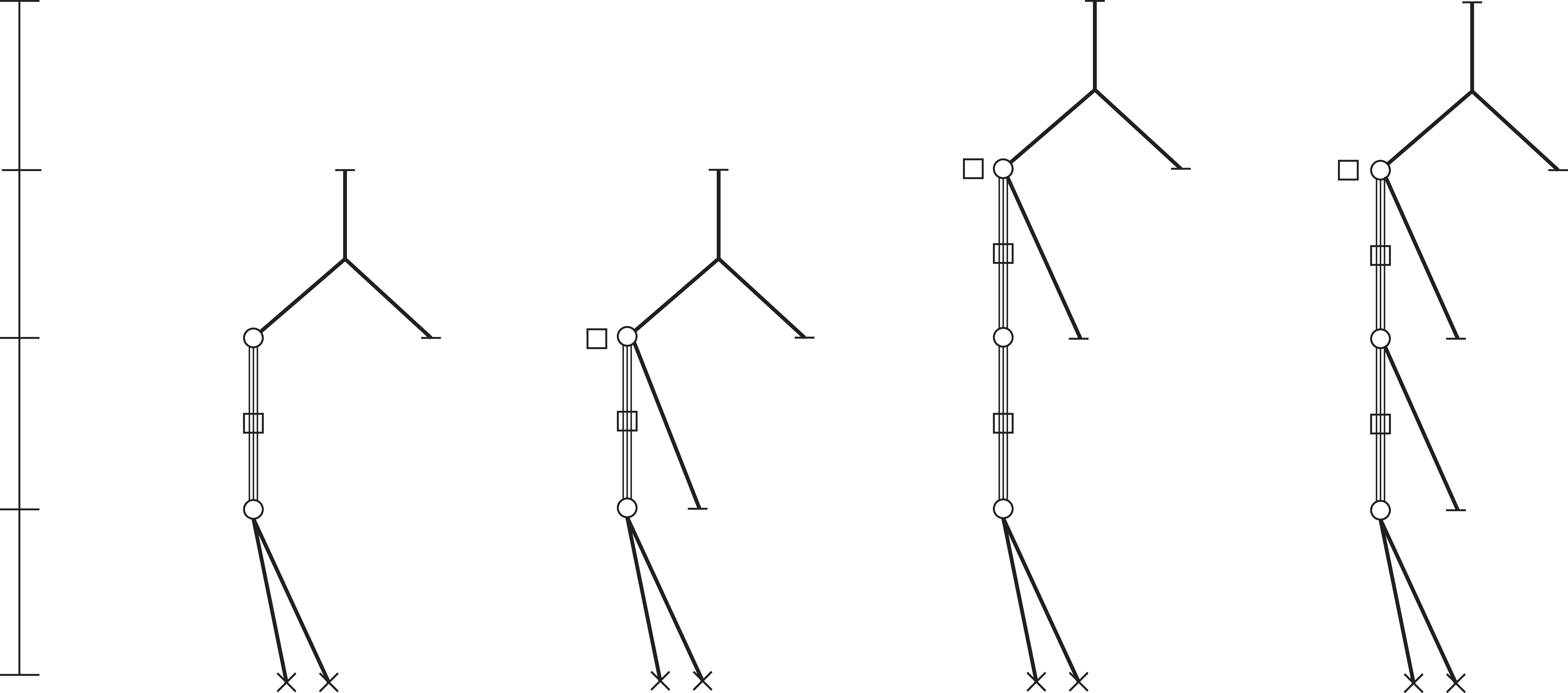}
\put(-3.8,5.5){\tiny $\overline{W}$} \put(-4.7,16) {\tiny $\overline{W}_\tau$} \put(-3.8,26.5) {\tiny $\overline{W}$} \put(-3.8,36.8) {\tiny $\overline{W}$}

\put(14.9,29){\tiny $1,2,3$} \put(17.25,17){\tiny $-p$}\put(19.5,6.5){\tiny $\mbox{total}=p$}

\put(43.65,6.5){\tiny $\mbox{total}=p$} \put(35.5,17){\tiny $-p$} \put(38.85,29){\tiny $1,2,3$} \put(43.7,17){\tiny $\geq -1$}

\put(65,39.7){\tiny $1,2$}\put(68,27.3){\tiny $\geq 1$} \put(61.35,27.3){\tiny $0$}\put(59.3,17){\tiny $-p$} \put(67,6.5){\tiny $\mbox{total}=p$}

\put(89.2,39.7){\tiny $1,2$}\put(85.2,27.3){\tiny $0$} \put(92.2,27.3){\tiny $\geq 1$} \put(83.25,17){\tiny $-p$}\put(92,17){\tiny $\geq -1$}  \put(91.6,6.5){\tiny $\mbox{total}=p$}
\end{overpic}
\vskip.2in
\caption{Schematic diagrams for some possible types of degenerations. Here $\circ$ represents $z_\infty$, {\tiny $\square$} represents one or more branch points and $\times$ represents some $x_i$ or $x_i'$. A vertical line indicates a trivial cylinder or a restriction of a trivial cylinder, and a triple vertical line indicates a degree $p$ branched cover of a trivial cylinder or a restriction of a trivial cylinder. The labels on the graphs are ECH indices of each component.}
\label{figure: graphs2}
\end{figure}

\begin{lemma} \label{vancouver2}
For each interval $[-T,T]$, there exists $m\gg 0$ such that there is no sequence of curves $\overline{u}_i\in \mathcal{M}_{\tau_i}$, $\tau_i\to T'\in[-T,T]$, that limits to $\overline{u}_\infty$ for which $\overline{v}'_*\cup \overline{v}_*^\sharp\not=\varnothing$ for some level $\overline{v}_*$.
\end{lemma}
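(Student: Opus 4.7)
We argue by contradiction, closely paralleling the elimination of Case~(6$_i$) in Lemma~\ref{vancouver}.  Suppose sequences $m_i\to\infty$, $\varepsilon_i,\delta_i\to 0$, and $\overline{u}_{ij}\to\overline{u}_{i\infty}$ exist with $\overline{u}_{ij}\in\mathcal{M}^{(m_i)}$, $\overline{u}_{i\infty}\in\bdry_{\{T_i'\}}\mathcal M^{(m_i)}$ for some $T_i'\in[-T,T]$, and $\overline{v}'_*\cup\overline{v}_*^\sharp\not=\varnothing$ for some level of $\overline{u}_{i\infty}$.  After extracting subsequences we may assume $T_i'\to T_\infty'\in[-T,T]$; set $\overline{u}_i:=\overline{u}_{i,j(i)}$.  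By Lemma~\ref{hojicha2}, each $\overline{u}_{i\infty}$ contains a subbuilding in which $\overline{v}_0'$ is a degree $p$ branched cover of $\sigma_\infty^{T_i'}$ with $I=-p$, capped above by $\overline{v}_{1,a}^\sharp$ with $I\geq 1$ and below by $p$ thin strips in $\cup_j\overline{v}_{-1,j}^\sharp$.

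For each $i\gg 0$, fix a truncation $\widetilde u_i:\Sigma_i\to\overline W_{\tau_i}$ around $\sigma_\infty^{\tau_i}$ as in Definition~\ref{defn: truncation}, project via balanced coordinates to $D_{\rho_0}$, and apply the ansatz of Equation~(I.\ref{P1-eqn: ansatz}) to obtain $w_i:\Sigma_i\to\C$.  After rescaling by positive real constants as in Section~I.\ref{P1-subsection: rescaling} and passing to the limit $m_i\to\infty$, we obtain a three-level holomorphic building
\begin{equation*}
w_\infty=w_-\cup w_0\cup w_+,
\end{equation*}
with $w_\pm:\Sigma_\pm\to\C\P^1$ over strip bases $cl(B_\pm)$ and $w_0:\Sigma_0\to\C\P^1$ a degree $p$ branched cover over $cl(B_{T_\infty'})$ recording $\overline{v}_0'$.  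An analog of Lemma~\ref{banff} then applies: combining a Fredholm index computation for $\pi_0$ of the form of Equation~\eqref{ind} (with the groomings produced by the continuation arguments of Sections~\ref{orange1} and~\ref{boundpunc} adapted to the bounded base $B_{T_\infty'}$) with the index excess bookkeeping of Lemma~\ref{banff} shows that $\Sigma_\infty$ is a connected planar disk.  Hence $\Sigma_\pm$ are disks, $w_\pm$ are biholomorphisms onto their images, each $\pi_\pm^{-1}(\pm\infty)$ is a single point with $w_\pm(\pi_\pm^{-1}(+\infty))=\infty$, $w_\pm(\bdry\Sigma_\pm)\subset\{\phi=0,\rho>0\}$, and $w_0(z_0')=0$ for some $z_0'\in\pi_0^{-1}(\overline{\frak m}^b(T_\infty'))$ arising from the point constraint on $\overline{v}_0$.

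Next track the asymptotic marker $\dot{\mathcal R}_\pi$ through the building using Lemmas~I.\ref{P1-lemma: effect of involutions}, I.\ref{P1-lemma: effect 2}, and~I.\ref{P1-lemma: effect 3}.  Writing $f_*=\pi_*\circ w_*^{-1}$ where defined, the map $f_-$ sends $\dot{\mathcal R}_\pi(\infty)$ to $\dot{\mathcal L}_{(r(T_\infty')+1)/2}(+\infty)\subset cl(B_-)$.  Matching markers across the negative puncture of $B_{T_\infty'}$ and propagating sheet-by-sheet across the degree $p$ cover $\pi_0$, the Involution Lemma carries the marker to the positive puncture of $cl(B_{T_\infty'})$, whereupon $f_+$ transports it to $\dot{\mathcal L}_{(r(T_\infty')+1)/2}(-\infty)\subset cl(B_+)$.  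Lemma~I.\ref{P1-lemma: effect 3} then forces $f_+(\infty)=\mathcal L_{(r(T_\infty')+1)/2}\cap\bdry B_+$; by the choice of $h$ from Section~\ref{subsubsection: choice of hyperbolic orbit h} and the fact that $r(T_\infty')$ is bounded, this boundary point has $\phi\not=0$ once $m\gg 0$, contradicting $w_+(\bdry\Sigma_+)\subset\{\phi=0\}$.

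The principal obstacle is the planarity analog of Lemma~\ref{banff}: because the middle level here lives over the bounded disk-like base $B_{T_\infty'}$ rather than over the infinite cylinder of Case~(6$_i$) of Lemma~\ref{vancouver}, one must carefully assemble the groomed trivializations at both the positive and negative ends of $B_{T_\infty'}$ and re-run the index excess argument in this new geometry, taking into account the potential boundary punctures of type (P$_3$) on $\overline{v}_{1,j}$.  Once planarity and the single-point structure of $\pi_\pm^{-1}(\pm\infty)$ are established, the propagation of asymptotic markers and the final contradiction at the top level proceed exactly as in the proof of Lemma~\ref{vancouver}.
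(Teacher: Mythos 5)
Your proposal misidentifies the geometry of the rescaled limit, and as a result the concluding contradiction does not go through. For breaking at a finite $T'$ the levels adjacent to $\overline{v}_0$ live in $\overline{W}=\R\times[0,1]\times\overline{S}$, not in $\overline{W}_\pm$, so there are no levels over $cl(B_\pm)$; more importantly, the marked point $\overline{\frak m}(T')$ lies on the section at infinity of $\overline{W}_{T'}$ itself, so after rescaling the zero of $w$ sits on the \emph{middle} level. There is therefore no bottom level $w_-$ with $w_-(z_0)=0$ to anchor the marker propagation, and your ``propagating sheet-by-sheet across the degree $p$ cover $\pi_0$'' has nothing to start from. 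The paper's argument is in fact simpler: the rescaled limit is a single map $w_0:\Sigma_0\to\C\P^1$ over $cl(B_{T'})$ satisfying $w_0(z_1)=0$ for some $z_1\in\pi_0^{-1}(\overline{\frak m}^b(T'))$, $w_0(z_0)=\infty$ for some $z_0\in\pi_0^{-1}(+\infty)$, and $w_0(\bdry\Sigma_0)\subset\{\phi=0,\rho>0\}$; the Involution Lemma~I.\ref{P1-lemma: effect 4} applied to $f_0=\pi_0\circ w_0^{-1}$ then forces the pole of $w_0$ to lie over $\mathcal{L}_{(r(T')+1)/2}\cap\bdry cl(B_{T'})$, contradicting the fact that it lies over the positive puncture. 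Your stated contradiction --- that $f_+(\infty)\in\bdry B_+$ ``has $\phi\not=0$'' --- does not typecheck ($\phi$ is a fiber coordinate near $z_\infty$, not a coordinate on the base), and the appeal to the choice of the hyperbolic orbit $h$ plays no role in this lemma.

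A second gap: Lemma~\ref{hojicha2} explicitly allows boundary punctures of type (P$_3$) on $\overline{v}_0$, and you only flag these as a ``potential obstacle'' without resolving them. The paper treats this case separately: an ECH index count shows there is at most one such puncture, the Involution Lemmas force its image to lie on $\mathcal{L}_{(r(T')+1)/2}\cap\bdry cl(B_{T'})$ (effectively a codimension one condition, cf.\ Remark~\ref{elmwood scone}), and the resulting contributions ($+2$ from the puncture, $+1$ from the large sector, $+1$ from some $\overline{v}_{1,j}$ with $j>0$) sum to $I\geq 4$, contradicting $I=2$. Without this branch your argument does not cover all the configurations permitted by Lemma~\ref{hojicha2}. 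Finally, note that the heavy planarity machinery of Lemma~\ref{banff} you propose to adapt is not needed here precisely because the whole contradiction is internal to the single level $w_0$.
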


\begin{proof}
This is similar to Case (6$_i$) of Lemma~\ref{vancouver}.
We apply the usual rescaling argument with $m\to\infty$ and obtain $w_0: \Sigma_0\to \C\P^1$ and a branched cover $\pi_0: \Sigma_0\to cl(B_{T'})$ such that:
\begin{enumerate}
\item[(i)] $w_0(\bdry \Sigma_0) \subset \{ \phi=0, \rho>0\}\cup \{\infty\}$;
\item[(ii)] $w_0(z_0)=\infty$ for some point $z_0\in \pi_0^{-1}(+\infty)$;
\item[(iii)] $w_0(z_1)=0$ for some point $z_1\in \pi_0^{-1}(\overline{\frak m}^b(T'))$;
\item[(iv)] $w_0|_{int(\Sigma_0)}$ is a biholomorphism onto its image.
\end{enumerate}
Let us write $f_0=\pi_0\circ w_0^{-1}$ where defined. We now apply the Involution Lemma I.\ref{P1-lemma: effect 4}. Using the notation of Lemma~I.\ref{P1-lemma: effect 4}, let $\overline\Sigma_1$ be the compact Riemann surface with boundary whose interior is biholomorphic to $w_0(int(\Sigma_0))$ and let $\overline\Sigma_2=cl(B_{T'})$. We then extend $f_0$ to a holomorphic map $\overline\Sigma_1\to \overline\Sigma_2$. By the Involution Lemma~I.\ref{P1-lemma: effect 4} and (i), (iii) and (iv), $f_0$ maps the point on $\overline\Sigma_1$ which corresponds to $\infty\in \C\P^1$ to $\mathcal{L}_{(r(T')+1)/2}\cap \bdry cl(B_{T'})$. This contradicts (ii).
\end{proof}

We are now in a position to prove Lemma~\ref{cherries3}.

\begin{proof}[Proof of Lemma~\ref{cherries3}]
Suppose $\overline{u}_\infty\in \bdry_{(-\infty,+\infty)} \mathcal{M}$.  By Lemma~\ref{los angeles2}, if $\overline{v}'_*\cup\overline{v}^\sharp_*=\varnothing$ for all levels $\overline{v}_*$ of $\overline{u}_\infty$, then $\overline{u}_\infty\in A_4$ or $A_5$. By Lemma~\ref{vancouver2}, for any $T>0$, there exists $m\gg 0$ such that if $\overline{u}_\infty\in \bdry_{[-T,T]}\mathcal{M}$, then $\overline{v}'_*\cup\overline{v}^\sharp_*=\varnothing$ for all $\overline{v}_*$.

It remains to consider the case where there exist sequences $m_i\to\infty$, $\varepsilon_i,\delta_i\to 0$, $T_i\to\infty$, and $\overline{u}_{ij}\to \overline{u}_{i\infty}$, where $\overline{u}_{ij}\in \mathcal{M}^{(m_i)}$, $\overline{u}_{i\infty}\in \bdry_{\{\pm T_i\}}\mathcal{M}^{(m_i)}$, $\mathcal{M}^{(m_i)}$ is $\mathcal{M}$ with respect to the family $\{\overline{J}^\Diamond_\tau(\varepsilon_i,\delta_i,{\frak p}(\tau);m_i)\}$, and $\overline{u}_{i\infty}$ satisfies $\overline{v}'_*\cup \overline{v}^\sharp_*\not=\varnothing$ for some $*$. We then take a diagonal subsequence $\overline{u}_{ij(i)}$. The proofs of Lemmas~\ref{vancouver} and \ref{alishan2} carry over to give a contradiction.
\end{proof}

\subsection{Degeneration at $+\infty$, part II}
\label{subsection: additional degenerations I}

In Sections~\ref{subsection: additional degenerations I}--\ref{subsection: additional degenerations III} we study the limit of holomorphic maps to $\overline{W}_\tau$ whose positive end is of the form ${\bf z}= \{z_{\infty,i}\}_{i\in \mathcal{I}}\cup {\bf y}$, i.e., we are in Step 4 of the proof of Theorem~\ref{thm: chain homotopy part i}.

Let us write
$$\mathcal{M}:=\mathcal{M}^{I=2,n^*\leq m+|\mathcal{I}|}_{\{\overline{J}_\tau^\Diamond(\varepsilon,\delta,{\frak p}(\tau))\}}({\bf z},{\bf y'};\overline{\frak m}), \mbox{ and}$$
$$\mathcal{M}_\tau:=\mathcal{M}^{I=2,n^*\leq m+|\mathcal{I}|}_{\overline{J}_\tau^\Diamond(\varepsilon,\delta,{\frak p}(\tau))}({\bf z},{\bf y'};\overline{\frak m}),$$
where $|\mathcal{I}|\geq 1$.

Let $\overline{u}_\infty\in \bdry_{+\infty}\mathcal{M}$ be the limit of $\overline{u}_i\in \mathcal{M}_{\tau_i}$, where $\tau_i\to +\infty$. We use the notation from Section~\ref{subsection: chain homotopy part 1} for the levels and components of $\overline{u}_\infty$ and write $p_*=\deg(\overline{v}_*')$ as before.

We now have two constraints
\begin{align}
\label{sum of n part 2} n^*(\overline{u}_i) &=\sum_{\overline{v}_*} n^*(\overline{v}_*)=m+|\mathcal{I}|;\\
\label{sum of I part 2} I(\overline{u}_i) &=\sum_{\overline{v}_*} I(\overline{v}_*)=2,
\end{align}
where the summations are over all the levels $\overline{v}_*$ of $\overline{u}_\infty$.

\s\n {\em Outline of proof of Lemma~\ref{kyoho plus infty}.} The proof is similar to that of Lemma~\ref{cherries4}, with the following differences: The proof of Lemma~\ref{eliminate fiber components 2}, which is the analog of Lemma~\ref{sencha}, is more involved and the proof of Lemma~\ref{nonnegative ECH indices 5}, which is the analog of Lemma~\ref{nonnegative ECH indices}, uses a slightly more complicated notion of an ``almost alternating'' pair $(P_{*,0},P_{*,1})$.

\subsubsection{Continuation argument} \label{continuation argument}

First observe that Lemma~\ref{intersezione} carries over verbatim. The analog of Lemmas~\ref{intersezione prime} and \ref{intersezione prime revisited} hinge on the continuation argument:  Let $\mathcal{E}_{-,i}$, $i=1,\dots,q$, be the negative ends of $\cup_{j=1}^a\overline{v}_{1,j}^\sharp$ that converge to $z_\infty$ and let $\mathcal{E}_{+,i}$, $i=1,\dots,r$, be the positive ends of $\cup_{j=0}^{a-1}\overline{v}_{1,j}^\sharp$ that converge to $z_\infty$.  Suppose that $q\not=0$ (i.e., some $\mathcal{E}_{-,i}$ exists) or not all $\mathcal{E}_{+,i}$ project to thin sectors. Also for simplicity we assume that there are no boundary points of type (P$_3$).  By assumption, we may start with an end $\mathcal{E}_{-,i}$ or $\mathcal{E}_{+,i}$ that projects to a non-thin sector. The continuation argument (i.e., the proof of Lemma~\ref{intersezione prime}) carries over with one modification: When we are considering the continuation $$g^0_{j_2+1,2},\dots, g^0_{a,2}$$ of $g^0_{j_2,2}$, it is possible that $g^0_{j,2}$ is trivial for all $j_2+1\leq j\leq a$. In other words, there is no nontrivial negative end $\mathcal{E}_{-,2}$ such that
$$\pi_{D^2_{\rho_0}}(\mathcal{E}_{-,2})={\frak S}(\overline{\hh}(\overline{a}_{k_3,l_3}),\overline{a}_{k_4,l_4})$$ for some $(k_4,l_4)$.  This happens when $(k_3,l_3)\to (k_3,l_3)$ belongs to the data $\overrightarrow{\mathcal{D}}$ at the positive end of $\overline{v}'_{1,a}$. In this case we set $j_3=a$ and $\overline{a}_{k_3,l_3}=\overline{a}_{k_4,l_4}$ and skip ${\frak S}(\overline{\hh}(\overline{a}_{k_3,l_3}),\overline{a}_{k_3,l_3})$. The rest of the argument is the same.

\subsubsection{Bounds on ECH indices} \label{snowman}

The goal of this subsection is to show the nonnegativity of $I(\overline{v}_*)$ except when $\overline{v}_*=\overline{v}_+$, {\em under the assumption that there are no boundary points at $z_\infty$}.

Let $A_\varepsilon=\bdry D^2_\varepsilon\times[0,1]$ for $0<\varepsilon<\rho_0$ small and let $\pi_{[0,1]\times\overline{S}}$ be the projection of $\overline{W}$ or the positive end of $\overline{W}_\tau$ to $[0,1]\times\overline{S}$. Let ${\frak c}'$ be the grooming on $A_\varepsilon$ corresponding to the data $\ar{\mathcal{D}}_{+,a}'$ at $z_\infty$ for the positive end of $\overline{v}'_{1,a}$, such that the winding number $w({\frak c}')$ is zero. Here the data $\ar{\mathcal{D}}_{+,a}'$ satisfies  $(\mathcal{D}'_{+,a})^{to}=(\mathcal{D}'_{+,a})^{from}$. Let ${\frak c}''= \pi_{[0,1]\times\overline{S}}(\cup_i \mathcal{E}_{-,i})\cap A_\varepsilon$.   By Section~\ref{continuation argument}, ${\frak c}''$ is groomed and the sets of initial and terminal points of ${\frak c}''$ alternate along $(0,2\pi)$. We then define $P_0$ and $P_1$ as the initial and terminal points of ${\frak c}'\cup{\frak c}''$.

\begin{rmk} \label{repeat}
It is possible for initial/terminal points of ${\frak c}'$ to also appear as initial/terminal points of ${\frak c}''$.
\end{rmk}

\begin{defn} \label{leopard}
A pair of points $({\frak q}_0,{\frak q}_1)\subset \bdry D^2_\varepsilon$ is a {\em thin pair} if ${\frak q}_0=\overline{\hh}(\overline{a}_{i,j})\cap \bdry D^2_\varepsilon$ and ${\frak q}_1=\overline{a}_{i,j}\cap \bdry D^2_\varepsilon$ for the same $i,j$. A pair $(P_{*,0},P_{*,1})$ consisting of disjoint finite subsets of $(0,2\pi)$ with the same cardinality is {\em almost alternating along $(0,2\pi)$} if each $P_{*,i}$, $i=0,1$, admits a splitting $P_{*,i}=Q_{*,i}\sqcup R_{*,i}$ such that the pair $(R_{*,0},R_{*,1})$ is alternating along $(0,2\pi)$ and there is a partition of $Q_{*,0}\cup Q_{*,1}$ into thin pairs $({\frak q}_0,{\frak q}_1)$, ${\frak q}_i\in Q_{*,i}$.
\end{defn}

By definition, $(P_0,P_1)$ is almost alternating along $(0,2\pi)$. See Figure~\ref{figure: almost-alternating} for an example.

Section~\ref{continuation argument} gives the main cycle
$$\mathcal{Z}_{\tiny\mbox{main}}=({\frak z}_{0}\to {\frak z}_{1}\to \dots\to {\frak z}_{k-1}\to {\frak z}_{0}),$$
where $\{{\frak z}_{1},\dots,{\frak z}_{k}\}$ can be decomposed into an almost alternating pair and the cycle winds around $\R/2\pi\Z$ once. If we apply the continuation method to the positive ends of $\overline{v}_{1,a}'$, then we also obtain a union $\mathcal{Z}_{\tiny\mbox{aux}}$ of auxiliary cycles of the form $({\frak z}_0\to{\frak z}_1\to{\frak z}_0)$, where $({\frak z}_0,{\frak z}_1)$ is a thin pair and the chords are short chords from ${\frak z}_i$ to ${\frak z}_{1-i}$, $i=0,1$.
The sets of initial and terminal points of $\mathcal{Z}_{\tiny\mbox{main}}\cup\mathcal{Z}_{\tiny\mbox{aux}}$ are $P_0$ and $P_1$.

\begin{figure}[ht]
\begin{center}
\psfragscanon
\psfrag{a}{\tiny $Q_0$}
\psfrag{b}{\tiny $Q_1$}
\psfrag{c}{\tiny $R_0$}
\psfrag{d}{\tiny $R_1$}
\includegraphics[width=6.5cm]{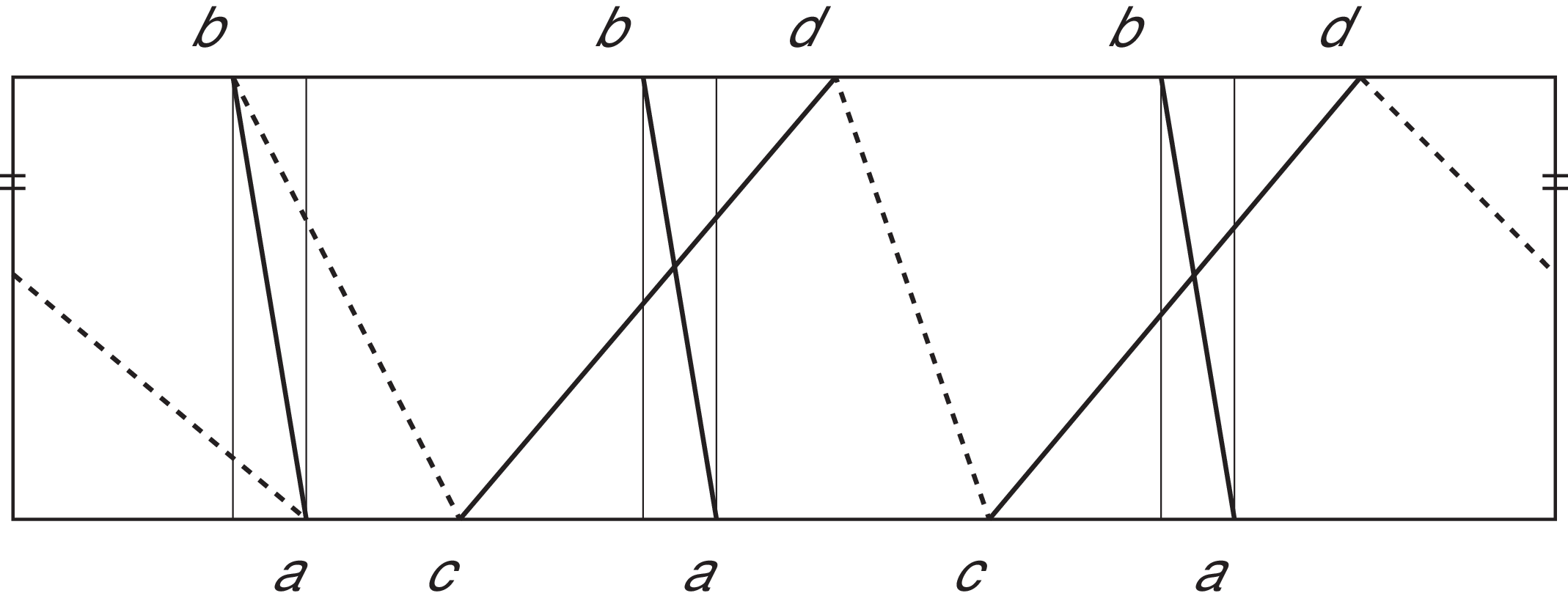}
\end{center}
\caption{The rectangular box, with the sides identified, is $A_\varepsilon$. The grooming ${\frak c}'$ connects $Q_0$ to $Q_1$ and the grooming ${\frak c}''$ connects $R_0$ to $R_1$. The vertical lines corresponding to $Q_1$ (resp.\ $Q_0$) are the intersections $(\overline{a}_{i,j} \cap \bdry D^2_\varepsilon) \times [0,1]$ (resp.\ $(\overline{\hh}(\overline{a}_{i,j}) \cap \bdry D^2_\varepsilon) \times [0,1]$). The dotted lines, together with some solid lines, give the main cycle $\mathcal{Z}_{\tiny\mbox{main}}$.  There are also two auxiliary cycles of $\mathcal{Z}_{\tiny\mbox{aux}}$ corresponding to the middle and right arcs of ${\frak c}'$.}
\label{figure: almost-alternating}
\end{figure}

Let $\ar{\mathcal{D}}_{\pm,j}$ be the data at $z_\infty$ for the $\pm$ end of $\overline{v}_{1,j}$ and let $P_{\pm,j,0}$ and $P_{\pm,j,1}$ be the initial and terminal points on $A_\varepsilon$ determined by $\ar{\mathcal{D}}_{\pm,j}$.  Then we write $$P_{\pm,j,i}=P'_{\pm,j,i}\cup P''_{\pm,j,i},$$ where $P'_{\pm,j,i}$ corresponds to $\overline{v}'_{1,j}$ and $P''_{\pm,j,i}$ corresponds to $\overline{v}''_{1,j}$.  For convenience we write $P_{-,a+1,i}=P'_{-,a+1,i}$, $i=0,1$, for the endpoints of ${\frak c}'$. Also let $Q^\star_*$ and $R^\star_*$ be the thin and alternating parts of $P_*^\star$ as given in Definition~\ref{leopard}.

The following lemmas are analogs of Lemma~\ref{alternate} and \ref{nonnegative ECH indices}:

\begin{lemma} \label{almost alternate}
If $\overline{u}_\infty$ has no boundary point at $z_\infty$, then, for each $*=(\pm,j)$, $P_{*,0}\subset P_0$ and $P_{*,1}\subset P_1$ and the pair $(P_{*,0},P_{*,1})$ is almost alternating along $(0,2\pi)$.
\end{lemma}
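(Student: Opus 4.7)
The plan is to adapt the inductive argument of Lemma~\ref{alternate} to the ``almost alternating'' setting. I would construct a filtration $P_i^{(0)} \supset P_i^{(1)} \supset \cdots$ of initial and terminal point sets on $A_\varepsilon$, together with an associated sequence of cycles $\mathcal{Z}^{(0)}, \mathcal{Z}^{(1)}, \dots$ on $\bdry D_\varepsilon$. The essential difference from Lemma~\ref{alternate} is that the ``base'' cycle $\mathcal{Z}^{(0)}$ is now the union $\mathcal{Z}_{\tiny\mbox{main}} \cup \mathcal{Z}_{\tiny\mbox{aux}}$, where the auxiliary short cycles encode precisely the thin-pair structure produced by the positive end of $\overline{v}'_{1,a}$ at $z_\infty$.

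First I would establish the base case at the top level $\overline{v}_{1,a}$: set $P_i^{(0)} := P_i$ and $\mathcal{Z}^{(0)} := \mathcal{Z}_{\tiny\mbox{main}} \cup \mathcal{Z}_{\tiny\mbox{aux}}$. By the construction preceding the lemma, $P_i^{(0)} = R_{+,a,i} \sqcup Q_{+,a,i}$, where $R_{+,a,i}$ consists of the initial/terminal points of ${\frak c}''$ (these alternate by the continuation argument of Section~\ref{continuation argument}) and $Q_{+,a,i}$ consists of the remaining points, partitioned into thin pairs by the short chords of $\mathcal{Z}_{\tiny\mbox{aux}}$. Points lying in both $P'$ and $P''$ in the sense of Remark~\ref{repeat} are placed in the thin part. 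Hence $(P_{+,a,0}, P_{+,a,1})$ is almost alternating.

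The inductive step proceeds exactly as Steps~2 and~3 of Lemma~\ref{alternate}, with one new bookkeeping rule. When moving from the positive ends of $\overline{v}_{1,j_0-1}$ at $z_\infty$ to the negative ones, one partitions $P''_{+,j_0-1,0} \cup P''_{+,j_0-1,1}$ into pairs $\{{\frak p}_0, {\frak p}_1\}$ with ${\frak p}_1 \lessdot {\frak p}_0$ in $P_0^{(j)} \cup P_1^{(j)}$ and collapses each triple ${\frak q} \to {\frak p}_1 \to {\frak p}_0 \to {\frak q}'$ in $\mathcal{Z}^{(j)}$ to a single chord ${\frak q} \to {\frak q}'$, preserving the alternating part $R^{(j+1)}$. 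For the branched-cover part $P'_{+,j_0-1,i}$, the matching is imposed by the data $\ar{\mathcal{D}}'_{+,j_0-1}$; because any end of $\overline{v}'_{1,j_0-1}$ at $z_\infty$ identifies an $\overline{h}(\overline{a}_{k,l})$-endpoint with an $\overline{a}_{k,l}$-endpoint for the same index, the resulting pairs are \emph{thin} and are appended to $Q^{(j+1)}$. A symmetric argument handles the negative ends on moving to the next level down. Thus the splitting $P_i^{(j+1)} = R_i^{(j+1)} \sqcup Q_i^{(j+1)}$ persists, proving the almost alternating property for $(P_{\pm,j_0-1,0}, P_{\pm,j_0-1,1})$.

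The main obstacle is to ensure that the collapsing operations interact correctly with the overlaps described in Remark~\ref{repeat}: a point of $P''_{+,j_0-1,i}$ might simultaneously appear in $P'_{+,j_0-1,i}$, and it must not be removed in two incompatible roles. The remedy is to carry out the excision in a fixed order, first acting on $P''$ via the collapse described above and only then reading off the thin-pair structure inherited from the surviving auxiliary (or ``short'') chords. Concretely, one checks that after the $P''$-excision every remaining short chord in $\mathcal{Z}^{(j+1)}$ still has both endpoints on $\bdry D_\varepsilon \cap (\overline{a}_{k,l} \cup \overline{h}(\overline{a}_{k,l}))$ for a common index $(k,l)$, so the partition into thin pairs descends. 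Once this bookkeeping lemma is verified, the induction terminates at the bottom level $\overline{v}_{1,0}$ and then a symmetric induction upward from the bottom of $\overline{v}_{-1,j}$'s (using the grooming at the negative ends) completes the proof.
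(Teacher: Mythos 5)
Your overall architecture is the same as the paper's: take $P_i^{(0)}=P_i$ with base cycle $\mathcal{Z}_{\tiny\mbox{main}}\cup\mathcal{Z}_{\tiny\mbox{aux}}$, run the downward induction of Lemma~\ref{alternate}, and modify the excision rule so that thin pairs survive into the $Q$ part. However, there is a genuine error in your inductive step: you assert that every end of $\overline{v}'_{1,j_0-1}$ at $z_\infty$ ``identifies an $\overline{h}(\overline{a}_{k,l})$-endpoint with an $\overline{a}_{k,l}$-endpoint for the same index,'' so that all pairs coming from $P'_{+,j_0-1,i}$ are thin and may be appended to $Q^{(j+1)}$. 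This is false. Only the positive end of the top branched cover $\overline{v}'_{1,a}$ satisfies $(\mathcal{D}'_{+,a})^{to}=(\mathcal{D}'_{+,a})^{from}$; at intermediate levels the data $\ar{\mathcal{D}}'_{\pm,j}$ is an arbitrary matching $(i',j')\to(i,j)$, and an end of $\overline{v}'_{1,j}$ can join $\overline{h}(\overline{a}_{k,l})$ to $\overline{a}_{k',l'}$ with $(k,l)\not=(k',l')$. Forcing all such pairs into the thin part violates Definition~\ref{leopard} (the members of $Q_{*,0}\cup Q_{*,1}$ would not form thin pairs), and the almost-alternating conclusion for $(P_{\pm,j,0},P_{\pm,j,1})$ at the lower levels is then not obtained. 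The correct inductive statement --- the analogue of ($j4$) --- is that $P'_{+,a-j,0}\cup P'_{+,a-j,1}$ partitions into pairs $\{{\frak p}_0,{\frak p}_1\}$ each of which is either $\lessdot$-adjacent with respect to the surviving alternating part $R_0^{(j+1)}\cup R_1^{(j+1)}$ or a thin pair of $Q_0^{(j+1)}\cup Q_1^{(j+1)}$; this is a consequence of the excision of the $P''$ pairs from $P^{(j)}$, not of any a priori structure of the branched-cover data.

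Two smaller points. First, your excision rule only describes the concatenation replacing ${\frak q}\to{\frak p}_1\to{\frak p}_0\to{\frak q}'$ by ${\frak q}\to{\frak q}'$; when the excised pair $\{{\frak p}_0,{\frak p}_1\}$ is itself a chord of an auxiliary cycle, there is no such triple, and one must instead delete the entire cycle $({\frak p}_1\to{\frak p}_0\to{\frak p}_1)$ from $\mathcal{Z}_{\tiny\mbox{aux}}$. You gesture at this with your ``fixed order'' remark, but the case distinction needs to be made explicit, since it is exactly how the thin pairs disappear from (or persist in) $Q^{(j)}$. Second, the induction should start from the fictitious level $a+1$, i.e., from $(P'_{-,a+1,0},P'_{-,a+1,1})$, the endpoints of ${\frak c}'$, which is almost alternating with empty alternating part; since $P_{+,j-1,i}=P_{-,j,i}$, the downward induction already covers all $*=(\pm,j)$ and no separate upward pass is required.
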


\begin{proof}
Similar to but slightly more complicated than that of Lemma~\ref{alternate}. We set $P_i^{(0)}=P_i$ and $\mathcal{Z}_*^{(0)}=\mathcal{Z}_*$, where $*=\mbox{main}$ or $\mbox{aux}$. Then $P_{-,a+1,i}=P'_{-,a+1,i}\subset P_i^{(0)}$ and the pair $(P_{-,a+1,0},P_{-,a+1,1})$ is almost alternating. In ($j0$)--($j4$) we replace ``alternating'' by ``almost alternating'', $j_0$ by $a+1$, and ($j2$) and ($j4$) by:
\begin{enumerate}
\item[($j2$)] there is a partition of $P_{-,a-j+1,0}\cup P_{-,a-j+1,1}$ into pairs of type $\{{\frak p}_0,{\frak p}_1\}$, ${\frak p}_i\in P_{-,a-j+1,i}$, such that ${\frak p}_0\lessdot_{R_0^{(j)}\cup R_1^{(j)}}{\frak p}_1$ or $\{{\frak p}_0,{\frak p}_1\}$ is a thin pair of $Q_0^{(j)}\cup Q_1^{(j)}$; in particular, the points of $P_{-,a-j+1,0}$ and $P_{-,a-j+1,1}$ almost alternate along $(0,2\pi)$;
\item[($j4$)] there is a partition of $P'_{+,a-j,0}\cup P'_{+,a-j,1}$ into pairs of type $\{{\frak p}_0,{\frak p}_1\}$, ${\frak p}_i\in P'_{+,a-j,i}$, such that ${\frak p}_0\lessdot_{R_0^{(j+1)}\cup R_1^{(j+1)}}{\frak p}_1$ or $\{{\frak p}_0,{\frak p}_1\}$ is a thin pair of $Q_0^{(j+1)}\cup Q_1^{(j+1)}$; in particular, the points of $P'_{+,a-j,0}$ and $P'_{+,a-j,1}$ almost alternate along $(0,2\pi)$.
\end{enumerate}

We inductively define $P_i^{(j)}$ and $\mathcal{Z}^{(j)}_*$ as follows: For each pair $\{{\frak p}_0,{\frak p}_1\}$ of $P''_{+,a-j,0}\cup P''_{+,a-j,1}$, if $({\frak p}_1\to {\frak p}_0)$ is a chord of $\mathcal{Z}^{(j-1)}_{\tiny\mbox{aux}}$, then we remove $({\frak p}_1\to{\frak p}_0\to{\frak p}_1)$ from $\mathcal{Z}^{(j-1)}_{\tiny\mbox{aux}}$; otherwise, in $\mathcal{Z}^{(j-1)}_{\tiny\mbox{main}}$, we replace ${\frak q} \to {\frak p}_1\to {\frak p}_0\to {\frak q}'$ by ${\frak q}\to {\frak q}'$, given by concatenation. Then $P_0^{(j)}$ and $P_1^{(j)}$ are the sets of endpoints of $\mathcal{Z}^{(j)}_{\tiny\mbox{main}}\cup\mathcal{Z}^{(j)}_{\tiny\mbox{aux}}$.

The verification of ($j0$)--($j4$) is left to the reader.
\end{proof}

\begin{lemma} \label{nonnegative ECH indices 5}
If  $\overline{u}_\infty$ has no fiber components and there are no boundary points at $z_\infty$, then the ECH index of each level $\overline{v}_*\not= \overline{v}_+$ is nonnegative, the only components of $\overline{u}_\infty$ which have negative ECH index are branched covers of $\sigma_\infty^+$, and $I(\overline{v}_{1,j})\geq I(\overline{v}'_{1,j}) +I(\overline{v}''_{1,j})$ for $0\leq j\leq a$.
\end{lemma}

Before embarking on the proof of Lemma~\ref{nonnegative ECH indices 5} we encourage the reader to review Section~I.\ref{P1-example of I calc} and in particular Lemmas I.\ref{P1-calc of almost sum} and I.\ref{P1-calc of almost sum 2}.

\begin{proof}
Similar to the proof of Lemma~\ref{nonnegative ECH indices}. The levels $\overline{v}_{0,j}$, $1\leq j\leq b$, satisfy $I(\overline{v}_{0,j})\geq 0$ by \cite[Proposition 7.15(a)]{HT1}.

In order to treat the case of $\overline{v}_{1,j}$, $0\leq j\leq a$, we use Lemma~\ref{almost alternate} and Lemmas~I.\ref{P1-calc of almost sum} and I.\ref{P1-calc of almost sum 2} to compute $I(\overline{v}_{1,j})$. Let
$$\pi_{[0,1]\times\overline{S}}: \check {\overline{W}}=[-1,1]\times[0,1]\times\overline{S}\to [0,1]\times\overline{S}$$
be the projection along $[-1,1]$.

We claim that there exist representatives $\check C_1,\check C_2, \check C_3\subset \check{\overline{W}}$ such that  the following hold:
\begin{enumerate}
\item[(0)] $\check C_1\cup \check C_2$ is a representative of $\overline{v}'_{1,j}$, $\check C_3$ is a representative of $\overline{v}''_{1,j}$,  $\check C_1\subset [-1,1]\times[0,1]\times D^2_{\varepsilon/2}$, and $\check C_2\subset[-1,1]\times[0,1]\times D^2_\varepsilon$;
\item ${\frak c}_1^\pm:=\pi_{[0,1]\times\overline{S}}(\check C_1|_{s=\pm 1})\subset A_{\varepsilon/2}$, ${\frak c}_1^+={\frak c}_1^-$ is groomed, and the endpoints of each component of ${\frak c}_1^\pm$ forms a thin pair;
\item the components of $\pi_{[0,1]\times\overline{S}}(\check C_i|_{s=\pm 1})$, $i=2,3$, corresponding to $z_\infty$ are contained in $A_\varepsilon$; let us write ${\frak c}_i^\pm$ for their unions;
\item ${\frak c}_2^+\cup{\frak c}_3^+$ and ${\frak c}_2^-\cup{\frak c}_3^-$ are groomed;
\item $w({\frak c}_2^+\cup {\frak c}_3^+)= 0$ or $-1$, and $w({\frak c}_2^-\cup {\frak c}_3^-)= 0$ or $1$;
\item ${\frak c}_2^+\cup {\frak c}_3^+$ satisfies (G$_3'$) from Section~I.\ref{P1-example of I calc} and ${\frak c}_2^-\cup {\frak c}_3^-$ satisfies (G$_3$) from Section~I.\ref{P1-example of I calc}.
\end{enumerate}
We first choose a representative $\check C_3$ of $\overline{v}''_{1,j}$ such that (2) holds for $i=3$ and
$${\frak c}_3^\pm = \pi_{[0,1]\times\overline{S}}(\cup_k\mathcal{E}_{\pm,k})\cap A_\varepsilon,$$ where $\mathcal{E}_{\pm,k}$ are the $\pm$ ends of $\overline{v}''_{1,j}$ that limit to $z_\infty$. We then choose the representative $\check C_1\cup\check C_2$ of $\overline{v}'_{1,j}$ as follows: Let $\check C_1=[-1,1]\times {\frak c}_1^+$ so that (1) holds and the endpoints of ${\frak c}_1^+={\frak c}_1^-$ are partitioned into thin pairs of $(P'_{+,j,0},P'_{+,j,1})$. Let $\check C_2\subset [-1,1]\times [0,1]\times D^2_\varepsilon$ be a disk of the type used in the proof of Lemma~I.\ref{P1-lemma: HF index of sections at infinity}, such that (2) holds for $i=2$, $\check C_2|_{s=\pm 1}= {\frak c}_2^\pm$, ${\frak c}_2^\pm$ are groomed, $w({\frak c}_2^+)= 0$ or $-1$, $w({\frak c}_2^-)= 0$ or $1$, and the endpoints of ${\frak c}_2^\pm$ are alternating along $(0,2\pi)$.  This is possible by ($j4$) in the proof of Lemma~\ref{almost alternate}.  (3)--(5), in particular the fact that adding ${\frak c}_3^+$ to ${\frak c}_2^+$ --- and similarly adding ${\frak c}_3^-$ to ${\frak c}_2^-$ --- leaves the grooming property invariant, follow from the proof of Lemma~\ref{almost alternate}.

We now prove that $I(\overline{v}_{1,j})\geq 0$ for $j>0$; the verification of $I(\overline{v}_{1,j})\geq I(\overline{v}'_{1,j})+I(\overline{v}''_{1,j})$ for $0\leq j\leq a$ is similar and is left to the reader.  First observe that $I(\check C_1)=0$ and $I(\check C_2)=0$, where the latter follows from the proof of Lemma~I.\ref{P1-lemma: HF index of sections at infinity}. Also $I(\check C_3)\geq 0$ by the ECH index inequality. Next observe that $\langle \check C_1,\check C_2\rangle=-l$, where $l$ is the degree of $\check C_1$. Hence the contribution of $\check C_1\cap \check C_2$ to $I(\overline{v}_{1,j})$ is $-2l$. We now apply Lemmas~I.\ref{P1-calc of almost sum} and I.\ref{P1-calc of almost sum 2}, where we split into $\check C_1$ and $\check C_2\cup \check C_3$ instead of $\overline{v}_{1,j}'$ and $\overline{v}_{1,j}''$. Observe that $q_2=0$ (resp.\ $q_2=1$) in Lemma~I.\ref{P1-calc of almost sum} corresponds to $q_2=-1$ (resp.\ $q_2=0$) in Lemma~I.\ref{P1-calc of almost sum 2}. One can verify that the extra contributions to $I$ from the rightmost terms of Equations~(I.\ref{P1-almost additive}) and (I.\ref{P1-almost additive 2}) add up to at least $2l$. Hence $I(\overline{v}_{1,j})\geq 0$ for $j>0$.

The case of $\overline{v}_{-1,j}$, $1\leq j\leq c+1$, is easier and will be omitted.
\end{proof}

\subsubsection{Boundary points at $z_\infty$} \label{boundpunc2}

In this subsection we describe the necessary modifications when $\overline{u}_\infty$ has boundary points at $z_\infty$.  We use the notation from Section~\ref{boundpunc}, with some modifications: The continuation method gives rise to a main cycle $\mathcal{Z}_{\mbox{\tiny main}}$ which winds around $\R/2\pi\Z$ once and a union $\mathcal{Z}_{\mbox{\tiny aux}}$ of auxiliary cycles. Also, in Definition~\ref{types of boundary punctures} we replace $\mathcal{Z}$ by $\mathcal{Z}_{\mbox{\tiny main}}$.

\begin{lemma} \label{nonnegative ECH indices 5 better}
If $\overline{u}_\infty$ has no fiber components, then the ECH index of each level $\overline{v}_*\not= \overline{v}_+$ is nonnegative, the only components of $\overline{u}_\infty$ which have negative ECH index are branched covers of $\sigma_\infty^+$, and there exist constants $\delta_{1,j}$ such that
\begin{equation}
I(\overline{v}_{1,j})= I(\overline{v}'_{1,j}) +I(\overline{v}''_{1,j})+\delta_{1,j}
\end{equation}
for $0\leq j\leq a$, where:
\begin{enumerate}
\item[(i)] if there is a boundary point of type (P$_3$) on $\overline{v}_{1,j}$, then $\delta_{1,j}\geq 2$;
\item[(ii)] if there is a boundary point of type (P$_2$) on $\overline{v}_{1,j}$, then $\delta_{1,j}\geq 2p_{1,j}$, where $p_*=\deg (\overline{v}'_*)$;
\item[(iii)] otherwise, $\delta_{1,j}=0$.
\end{enumerate}
\end{lemma}

We remark that we have not tried to obtain the best lower bound, just one that suffices for our purposes of eliminating boundary points at $z_\infty$.

\begin{proof}
Let ${\frak r}$ be a boundary point at $z_\infty$ on $\overline{v}_{*_0}$ with $*_0=(1,j_0)$. We compute the extra contribution $I_{L-}$ to $I(\overline{v}_{*_0}^{\circ,1})$ that comes from the left negative end at $z_\infty$ as in Lemma~\ref{Lisbeth}.

\begin{figure}[ht]
\begin{center}
\psfragscanon
\psfrag{a}{\tiny ${\frak z}'_1$}
\psfrag{b}{\tiny ${\frak z}'_2$}
\psfrag{c}{\tiny ${\frak z}'_3$}
\psfrag{d}{\tiny $\dots$}
\psfrag{e}{\tiny ${\frak z}'_s$}
\psfrag{s}{\tiny ${\frak z}''_i$}
\psfrag{f}{\tiny ${\frak z}''_1 $}
\psfrag{g}{\tiny ${\frak z}''_2 $}
\psfrag{h}{\tiny $\dots $}
\psfrag{i}{\tiny ${\frak z}''_{p_{*_0}} $}
\psfrag{B}{\tiny ${1\over 2}$}
\psfrag{A}{\tiny $1$}
\psfrag{C}{\tiny (i)}
\psfrag{D}{\tiny (ii)}
\includegraphics[width=6.5cm]{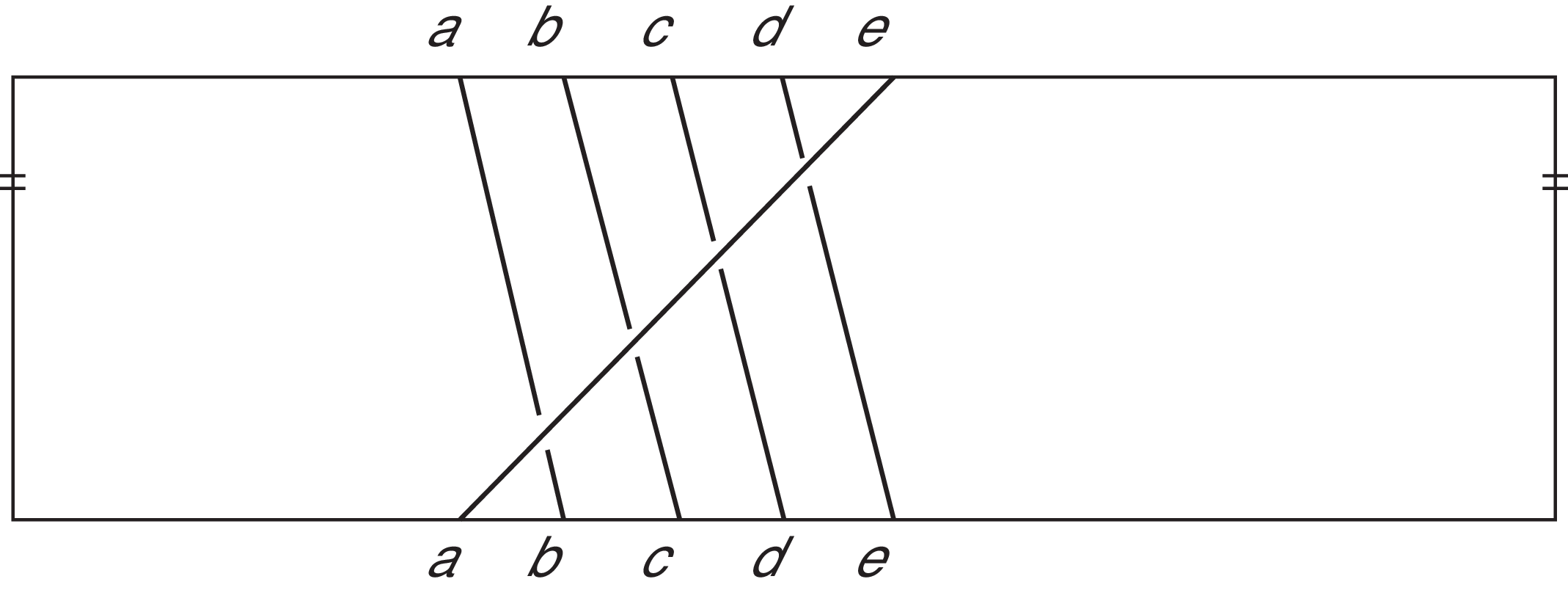}
\end{center}
\caption{${\frak c}'_{L-}\cup {\frak c}''_{L-}$ for ${\frak r}$ of (i) type (P$_3$) and (ii) type (P$_2$). Here ${\frak c}''_{L-}$ is the strand in the front.}
\label{figure: puncture2}
\end{figure}

(i) Suppose ${\frak r}$ is of type (P$_3$).  We assume (S) from Section~\ref{boundpunc} for simplicity.   First observe that $P_{L-,j_0,1/2}=P_{L-,j_0,1}$. By (S), $\mathcal{Z}_{\mbox{\tiny main}}$ has only one chord ${\frak z}'_1\to{\frak z}'_s$ corresponding to a sector of type ${\frak S}(\overline{a}_{k,l},\overline{a}_{k',l'})$. Hence we have:
$$P''_{L-,j_0,1/2}=\{{\frak z}'_1\},\quad P'_{L-,j_0,1/2}=\{{\frak z}'_2,\dots,{\frak z}'_s, {\frak z}''_1,\dots,{\frak z}''_t\},$$
$$P''_{L-,j_0,1}=\{{\frak z}'_s\},\quad P'_{L-,j_0,1}=\{{\frak z}'_1,\dots,{\frak z}'_{s-1}, {\frak z}''_1,\dots,{\frak z}''_t\},$$
where $P_{L-,j_0,1/2}=P_{L-,j_0,1}$ is written as $\{{\frak z}'_1,\dots,{\frak z}'_s, {\frak z}''_1,\dots,{\frak z}''_t\}$ in cyclic order around $\R/2\pi\Z$; see Figure~\ref{figure: puncture2}(i). Note that ${\frak z}'_2,\dots,{\frak z}'_{s-1}$ are points of $\mathcal{Z}_{\mbox{\tiny aux}}$ but not $\mathcal{Z}_{\mbox{\tiny main}}$; otherwise $\mathcal{Z}_{\mbox{\tiny main}}$ winds more than once around $\R/2\pi\Z$. The projection of the left negative end of $\overline{v}_{*_0}^{\circ,1,''}$ that limits to $z_\infty$ intersects $A^{[1/2,1]}_\varepsilon$ along an arc ${\frak c}''_{L-}$ with winding number $w({\frak c}''_{L-})=0$ or $1$, depending on whether ${\frak S}(\overline{a}_{k,l},\overline{a}_{k',l'})$ is a large sector. The left negative ends of $\overline{v}_{*_0}^{\circ,1,'}$ that limit to $z_\infty$ can modified to give a grooming ${\frak c}'_{L-}$ on $A^{[1/2,1]}_{\varepsilon/2}$ such that the winding number $w({\frak c}'_{L-})=0$ or $-1$ and ${\frak c}'_{L-}$ connects ${\frak z}''_i$ to ${\frak z}''_i$, $i=1,\dots,t$, by vertical arcs.

We now consider the disk $\check D$ that we ``append'' to the left negative end of $\overline{v}_{*_0}^{\circ,1}$ as in the proof of Lemma~I.\ref{P1-calc of almost sum}.
The writhe of ${\frak c}'_{L-}\cup {\frak c}''_{L-}$ is equal to $s-1$. Resolving the (positive) crossings of ${\frak c}'_{L-}\cup {\frak c}''_{L-}$ yields a grooming with vertical arcs from ${\frak z}'_i$ to ${\frak z}'_i$; this corresponds to a disk $\check D$ whose contributions to $Q, c_1, \mu$ are  $s-1,0,s-1$.  [The calculations for $\mu$ assume (R) from Section~\ref{boundpunc}. For example, if $w({\frak c}'_{L-})=w({\frak c}''_{L-})=0$, then $\mu$ of the positive ends are $1$ for $s-1$ arcs and $0$ for the rest and $\mu$ of the negative ends are all $0$.] The discrepancy contribution to $I$ is $\geq 0$.  Hence $I_{L-}=2(s-1)\geq 2$.

In general, each collection of boundary points of type (P$_3$) that map to the same point on the base contributes at least $+2$ towards $I$; this is analogous to Remark~\ref{Salander}.

The cycles $\mathcal{Z}^{(a-j)}_{\mbox{\tiny main}}$ and $\mathcal{Z}^{(a-j)}_{\mbox{\tiny aux}}$ are defined as before, for $j\geq j_0$. We define $\mathcal{Z}^{(a-j_0,+)}_*=\mathcal{Z}^{(a-j_0)}_*$, $\mathcal{Z}^{(a-j_0,-)}_{\mbox{\tiny aux}}=\mathcal{Z}^{(a-j_0,+)}_{\mbox{\tiny aux}}$, and $\mathcal{Z}^{(a-j_0,-)}_{\mbox{\tiny main}}$ as $\mathcal{Z}^{(a-j_0,+)}_{\mbox{\tiny main}}$ with ${\frak z}_1'\to {\frak z}'_s$ replaced by ${\frak z}'_s$.  Also, $P_{R-,j_0,i}^\star$, $\star=\varnothing,',''$, is obtained from $P_{+,j_0,i}^\star$ by replacing ${\frak z}'_1$ by ${\frak z}'_s$. The rest of the arguments of Lemmas~\ref{almost alternate} and \ref{nonnegative ECH indices 5} carry over.

(ii) Suppose ${\frak r}$ is the only boundary point of type (P$_2$). In this case ${\frak z}'_1={\frak z}'_s$ and $\mathcal{Z}_{\mbox{\tiny main}}=({\frak z}'_1\to {\frak z}'_1)$.  Then:
$$P''_{L-,j_0,1/2}=\{{\frak z}'_1\}, \quad P'_{L-,j_0,1/2}=\{{\frak z}''_1,\dots,{\frak z}''_{p_{*_0}}\},$$
$$P''_{L-,j_0,1}=\{{\frak z}'_1\}, \quad P'_{L-,j_0,1}=\{{\frak z}''_1,\dots,{\frak z}''_{p_{*_0}}\}.$$ Also ${\frak c}''_{L-}$ is an arc from ${\frak z}'_1$ to itself with $w({\frak c}''_{L-})=1$ and ${\frak c}'_{L-}$ consists of $p_{*_0}$ vertical arcs from ${\frak z}''_i$ to itself; see Figure~\ref{figure: puncture2}(ii).

In order to groom ${\frak c}'_{L-}\cup {\frak c}''_{L-}$ so that the result ${\frak c}$ satisfies $w({\frak c})=0$, we switch the (positive) crossings of ${\frak c}'_{L-}\cup {\frak c}''_{L-}$ while keeping the same endpoints.  This gives rise to $\check D$ which is a union of $p_{*_0}+1$ disks.  The total contributions to $Q,c_1,\mu$, and the discrepancy, are $2p_{*_0}+1, 1,-2,0$. [Again recall (R) from Section~\ref{boundpunc}. The writhe of ${\frak c}'_{L-}\cup {\frak c}''_{L-}$ is $2p_{*_0}$ which contributes $2p_{*_0}$ towards $Q$.  The writhe of ${\frak c}''_{L-}$ and its pushoff is $1$, which contributes an additional $1$ towards $Q$.  $\mu$ of the positive (resp.\ negative) ends are $0,\dots,0,-2$ (resp.\ all $0$).] Hence $I_{L-}=2p_{*_0}$.

We also obtain a lower bound of $2p_{*_0}$ in the general case of more boundary points of type (P$_2$); the details are left to the reader.
\end{proof}

The following analog of Lemma~\ref{intersezione prime revisited} is a consequence of Section~\ref{continuation argument}:

\begin{lemma} \label{intersezione revisited}
Suppose $\overline{v}'_{1,j}\cup\overline{v}^\sharp_{1,j}\not=\varnothing$ for some $j>0$. Let $\mathcal{E}_{-,i}$, $i=1,\dots,q$, be the negative ends of $\cup_{j=1}^a\overline{v}_{1,j}^\sharp$ that converge to $z_\infty$,  let $\mathcal{E}_{+,i}$, $i=1,\dots,r$, be the positive ends of $\cup_{j=0}^{a-1}\overline{v}_{1,j}^\sharp$ that converge to $z_\infty$, and let $\mathcal{E}'_i$, $i=1,\dots,s$, be the neighborhoods of the points of type (P$_3$).  Then
\begin{equation} \label{jasmine 2}
n^+(\mathcal{E}_{-,i})\geq k_0-1\gg 2g
\end{equation}
for each $i$, where the constant $k_0$ is as given in Section~I.\ref{P1-coconut}. If $q\not=0$ (i.e., some $\mathcal{E}_{-,i}$ exists) or not all $\mathcal{E}_{+,i}$ project to thin sectors, then
\begin{equation} \label{oolong 2}
n^+( (\cup_{i=1}^q\mathcal{E}_{-,i})\cup (\cup_{i=1}^r\mathcal{E}_{+,i})\cup (\cup_{i=1}^s \mathcal{E}'_i))\geq m-p_+,
\end{equation}
where $p_+=\deg (\overline{v}_+')$.
Moreover,
$$D^2_{\rho_0}-(\cup_{i=1}^q \pi_{D^2_{\rho_0}}(\mathcal{E}_{-,i}))\cup (\cup_{i=1}^r \pi_{D^2_{\rho_0}}(\mathcal{E}_{+,i}))\cup(\cup_{i=1}^s \pi_{D^2_{\rho_0}}(\mathcal{E}'_i))$$
consists of at most $p_+$ thin sectors.
\end{lemma}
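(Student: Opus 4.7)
The plan is to follow the proof strategy of Lemma~\ref{intersezione prime}, modified to incorporate the continuation procedure described in Section~\ref{continuation argument}. First I would verify Equation~\eqref{jasmine 2} by projecting each negative end $\mathcal{E}_{-,i}$ to $D_{\rho_0}$ via $\pi_{D_{\rho_0}}$. As in Lemma~\ref{intersezione prime}, each such end projects to a sector whose boundary radial rays are of the form $\overline{h}(\overline{a}_{k,l})$ and $\overline{a}_{k',l'}$; since the radial rays $\overline{a}_{i,j}$ are spaced at angular gaps of order $2\pi/(2gm)$ (compare with Section~I.\ref{P1-coconut}) and since any such sector must cross at least $k_0-1$ of the $\overline{a}_{*}$ slices used in computing $n^*$, the intersection with $(\sigma_\infty^*)^{\dagger,+}$ picks up at least $k_0-1$ transverse points. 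Choosing $k_0$ as in Section~I.\ref{P1-coconut} gives $k_0-1 \gg 2g$.

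For Equation~\eqref{oolong 2}, I would run the continuation argument of Section~\ref{continuation argument}. Starting from an end whose projection is not a thin sector (which exists by hypothesis), I would inductively produce a cyclic sequence of sectors $\frak S(\overline h(\overline a_{k_{i},l_{i}}),\overline a_{k_{i+1},l_{i+1}})$ and $\frak S(\overline a_{k_{i+1},l_{i+1}}, \overline h(\overline a_{k_{i+2},l_{i+2}}))$ as in the proof of Lemma~\ref{intersezione prime}. The new ingredient, compared to that proof, is the modification flagged in Section~\ref{continuation argument}: when the continuation of $g^0_{j_2,2}$ along $\bdry_+ B_\tau$ produces only trivial components, the corresponding data $(k_3,l_3)\to(k_3,l_3)$ belongs to $\overrightarrow{\mathcal{D}}$ at the positive end of $\overline v'_{1,a}$; in that case I set $j_3=a$ and skip the thin sector $\frak S(\overline h(\overline a_{k_3,l_3}),\overline a_{k_3,l_3})$, which contributes $1$ to the degree $p_+$ of $\overline v'_+$. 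Each boundary puncture of type (P$_3$) is handled exactly as in Section~\ref{boundpunc2}: its $\pi_{D_{\rho_0}}$-image corresponds to a sector of $\mathcal{Z}_{\mbox{\tiny main}}$ and contributes to the coverage of $\bdry D_{\rho_0}$.

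Adding up: the sectors $\pi_{D_{\rho_0}}(\mathcal{E}_{\pm,i})$ and $\pi_{D_{\rho_0}}(\mathcal{E}'_i)$ sweep out all of $D_{\rho_0}$ with the possible exception of the skipped thin sectors, whose total count equals the number of trivial entries in $\overrightarrow{\mathcal{D}}$, i.e., at most $p_+=\deg(\overline v'_+)$. Each nontrivial sector contributes at least $1$ to $n^*$ by the choice of $\delta_{\rho_0,\phi_0^+}$ in Section~\ref{subsubsection: convention bambi}, so the total intersection number is at least $m-p_+$, establishing Equation~\eqref{oolong 2} and the complementary statement about thin sectors.

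The main obstacle, as in Lemma~\ref{intersezione prime}, lies in verifying that the continuation procedure actually produces a cycle which winds exactly once around $\R/2\pi\Z$ rather than terminating prematurely or winding multiple times; the key is to start the continuation at a non-thin sector (guaranteed by the hypothesis $q\neq 0$ or not all $\mathcal{E}_{+,i}$ thin) so that the inductive step produces a strictly advancing sequence of $\overline h(\overline a_{*})$ and $\overline a_{*}$ radial rays until it returns to the starting point. The bookkeeping with boundary punctures of type (P$_3$) inside this continuation requires care, but is handled by the same scheme as in Section~\ref{boundpunc2}.
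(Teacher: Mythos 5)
Your proposal is correct and follows exactly the route the paper intends: the paper gives no separate proof of this lemma beyond the sentence ``The following is a consequence of Section~\ref{continuation argument}'', and your argument is precisely that continuation argument (the proof of Lemma~\ref{intersezione prime} adapted as in Section~\ref{continuation argument}, with the skipped thin sectors coming from the trivial entries of the data at $z_\infty$ and the type (P$_3$) punctures handled as in Section~\ref{boundpunc2}). The level of detail in your accounting of the at most $p_+$ skipped thin sectors matches that of the paper's own treatment in Lemma~\ref{intersezione prime}.
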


The following is the analog of Lemma~\ref{cherimoya2}:

\begin{lemma}\label{cherimoya3}
If ${\frak p}_1,\dots,{\frak p}_s$ are the boundary points of type (P$_1$) and (P$_2$) and $N({\frak p}_i)\subset F_*$ is a small neighborhood of ${\frak p}_i$, then $\sum_{i=1}^s n^*(\overline{v}_*(N({\frak p}_i)))\geq m$.
\end{lemma}

\subsubsection{Asymptotic eigenfunctions}

Fix $m\gg 0$ and let $\overline{u}_i\to \overline{u}_\infty\in \bdry_{\{+\infty\}} \mathcal{M}$.

\begin{lemma}\label{lemma 2012}
There is no $\overline{u}_\infty\in \bdry_{\{+\infty\}} \mathcal{M}$ such that:
\begin{enumerate}
\item[(a)]  for all $\overline{v}_*\succeq \overline{v}_+$, $\deg(\overline{v}'_*)=p_+\geq 1$, $\overline{v}^\sharp_*=\varnothing$, and $\overline{v}'_*\cap \overline{v}''_*=\varnothing$; and
\item[(b)] $\overline{v}^\sharp_{0,b}$ is a union of $p_+$ cylinders from $\delta_0$ to $h$.
\end{enumerate}
\end{lemma}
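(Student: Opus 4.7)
The plan is to apply Siefring's higher order asymptotic expansions at the negative end of $\overline{v}'_+$ and to compare with the leading asymptotics of the $p_+$ cylinders in $\overline{v}^\sharp_{0,b}$.

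First, I would localize to the negative end. Fix a sequence $\overline{u}_i\to\overline{u}_\infty$ in $\mathcal{M}$. Because $\overline{v}'_+\cap\overline{v}''_+=\varnothing$ and $\overline{v}^\sharp_+=\varnothing$, the branched cover $\overline{v}'_+$ is unbranched near its negative end at $\delta_0^{p_+}$, so Siefring's theorem applied in balanced coordinates gives an asymptotic expansion
\[
\pi_{D_{\rho_0}}\circ\overline{v}'_+(s,t)=e^{\lambda s}\,f(t)+\mbox{l.o.t.},\qquad(s,t)\in[0,-\infty)\times\R/p_+\Z,
\]
for some $\lambda>0$ and some nonzero eigenfunction $f\colon\R/p_+\Z\to\C$ of the asymptotic operator at $\delta_0^{p_+}$ associated to $\lambda$. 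Likewise, each $Z_{\phi_i}\in\overline{v}^\sharp_{0,b}$ has at its negative end at $\delta_0$ an asymptotic $e^{\lambda_1 s}c_i e^{i\phi_i}+\mbox{l.o.t.}$, where $\lambda_1>0$ is the smallest positive eigenvalue for the simple orbit $\delta_0$, $c_i>0$, and $\phi_i\in\{\phi_h,\phi_e\}$.

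Second, I would perform the neck-stretching matching. In the neck region of $\overline{u}_i$, the $p_+$ sheets of the cover merge smoothly into the $p_+$ cylinders. Passing to the limit $i\to\infty$ and rescaling in the normal direction, the matching condition forces $\lambda=\lambda_1$ and, labeling the sheets by $t=t_0+j\bmod p_+\Z$ for $j=0,\dots,p_+-1$, forces $f(t_0+j)$ to be a positive real multiple of $e^{i\phi_j}$, i.e., $\arg f(t_0+j)\equiv\phi_j\pmod{2\pi}$ with $\phi_j\in\{\phi_h,\phi_e\}$.

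Third, I would derive a contradiction. Any eigenfunction of the asymptotic operator at $\delta_0^{p_+}$ with eigenvalue $\lambda_1$ has the form $f(t)=A\,e^{2\pi ikt/p_+}$ for some $A\in\C^*$ and some $k\in\Z$ (the integer $k$ being determined by $\lambda_1$ together with the local model for $\delta_0$). Writing $d=\gcd(k,p_+)$, the angles $\arg f(t_0+j)$ take exactly $p_+/d$ distinct values, equally spaced on $S^1$. For $p_+\geq 3$: either $d=p_+$, in which case all $p_+$ cylinders coincide (as $Z_h$ or as $Z_e$) and the resulting configuration forces ${\frak b}(\overline{v}'_+)=p_+-1$ by the argument of Lemma~\ref{banff} adapted to $\delta_0$, producing an index excess relative to \eqref{ind} that is incompatible with $I(\overline{u}_\infty)=2$; or $d\leq p_+/2$, so that $p_+/d\geq 3$ and at least three distinct angles in $\{\phi_h,\phi_e\}$ would be required, a contradiction. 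For $p_+=2$: the case $d=2$ reduces to the previous index excess, while the case $d=1$ requires $\phi_e-\phi_h\equiv\pi\pmod{2\pi}$, which is excluded by the genericity of $\phi_e$ given $\phi_h\approx-2\pi/m$ from Section~\ref{subsubsection: choice of hyperbolic orbit h}. For $p_+=1$, the cover $\overline{v}'_+=\sigma_\infty^+$ has vanishing normal deviation near $\delta_0$, hence $f\equiv 0$, which cannot match the nontrivial asymptotic $c_1 e^{i\phi_1}$ of a single $Z_{\phi_1}$. The main obstacle will be to make the index-excess count in the deck-invariant case $d=p_+$ precise, in particular to combine Lemma~\ref{banff} with the Riemann--Hurwitz constraints on the (possibly disconnected) cover $\overline{v}'_+$ and with the hypothesis $\overline{v}'_+\cap\overline{v}''_+=\varnothing$ to rule out every branch-point configuration.
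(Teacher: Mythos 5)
There is a genuine gap, and it sits at the very first step. You treat $\overline{v}'_+$ as a curve with a nontrivial normal component admitting a Siefring expansion $e^{\lambda s}f(t)+\mbox{l.o.t.}$ with $f\not\equiv 0$. But $\overline{v}'_+$ is by definition a branched cover of the section at infinity $\sigma_\infty^+=\{\rho=0\}$, so $\pi_{D_{\rho_0}}\circ\overline{v}'_+$ is identically zero and no such nonzero eigenfunction exists. (You notice this yourself for $p_+=1$, but draw the wrong conclusion: the "mismatch" between $f\equiv 0$ and the nonzero asymptotics of a cylinder $Z_{\phi_1}$ is not a contradiction, because SFT level-matching only requires the limiting orbits to agree, not the asymptotic eigenfunctions or decay rates. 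If your $p_+=1$ argument were valid it would equally rule out the configuration $\overline{v}'_-=\sigma_\infty^-$ sitting above a thin strip, i.e.\ Case (2$_1$) of Lemma~\ref{hojicha}, which survives and is needed for the $A_2$ term of the chain homotopy.) The correct object carrying the first-order information is not $\overline{v}'_+$ but the renormalized limit of the pre-limit curves $\overline{u}_i$: one restricts $\overline{u}_i$ to a neighborhood of $\sigma_\infty^{\tau_i}$, projects to $D_{\rho_0}$, applies the ansatz, rescales by a positive real constant, and passes to the limit to obtain a nontrivial map $w_+:\Sigma_+\to\C\P^1$. The constraints on $w_+$ come from the Lagrangian boundary conditions (which confine $w_+(\bdry\Sigma_+)$ to specific radial rays) and the Involution Lemma, not from eigenfunctions of the asymptotic operator at $\delta_0^{p_+}$; in particular the $n^*$ count forces each boundary component of $\Sigma_+$ into a distinct thin sector, which is what reduces the lemma to $p_+=1$ and makes your $\gcd$ analysis for general $p_+$ unnecessary.

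The second half of your argument also misidentifies where the contradiction comes from. For the cylinder to $h$, the paper's contradiction is that the uniqueness of $w_+$ (Lemma~\ref{2012}(ii)) forces the renormalized direction at the negative puncture to be $\mathcal{R}_{\phi_0}$ with $\phi_0$ the bisector of a thin sector of $\overline{\bf a}$, while the positive asymptotic eigenfunction of the $\delta_0$-to-$h$ cylinder lies on $\mathcal{R}_{\phi_h}$ with $\phi_h\approx -2\pi/m$; this is a genuine angle incompatibility, but between $\phi_0$ and $\phi_h$, not among the set $\{\phi_h,\phi_e\}$. For the cylinder to $e$ your leading-order angle count cannot work at all: $e$ is only assumed generic with $e\neq h$, so nothing prevents its leading asymptotic direction from matching. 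The paper instead extracts the \emph{next} coefficient in the expansion $c_0+c_1e^{-s-\pi it}$: Lemma~\ref{2012}(ii) determines both $c_0$ and $c_1$ up to positive real constants, so matching imposes two independent conditions on the $e$-cylinder and forces $I\geq 3$ on it, contradicting $I(\overline{u}_\infty)=2$. Your proposal has no mechanism for producing this codimension-two constraint, so even after repairing the setup the $e$-case would remain open.
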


\begin{proof}
Arguing by contradiction, we restrict $\overline{u}_i$ to a neighborhood of $\sigma_\infty^{\tau_i}$ and further restrict it to the components that are close to $\overline{v}'_+$. After projecting to $D^2_{\rho_0}$ {\em using balanced coordinates,} applying the ansatz given by Equation~(I.\ref{P1-eqn: ansatz}), rescaling, and taking the SFT limit with $m\gg 0$ fixed, we obtain a holomorphic map $w_+: \Sigma_+\to \C\P^1$ together with the branched cover $\pi_+:\Sigma_+\to cl(B_+)$.  By (a), $w_+$ maps each component of $\bdry \Sigma_+$ to a distinct thin sector ${\frak S}(\phi=\phi(\overline{a}_{i,j})+\tfrac{\pi}{m},\overline{\hh}(\overline{a}_{i,j}))$.\footnote{This complicated expression is the result of using balanced coordinates instead of $\pi_{D^2_{\rho_0}}$.}  By (a) and (b), the image of $w_+$ cannot contain $\infty\in \C\P^1$. Hence, by the open mapping property, $w_+$ maps each component of $\Sigma_+$ to a distinct  ${\frak S}(\phi=\phi(\overline{a}_{i,j})+\tfrac{\pi}{m},\overline{\hh}(\overline{a}_{i,j}))$.

From now on assume without loss of generality that $p_+=1$ and $\Sigma_+=B_+$. We identify $cl(B_+)$ with the closed unit disk $\overline{\D}=\{|z|\leq 1\}$ by sending $+\infty$ to $1$ and $-\infty$ to $0$.

The Lagrangian boundary condition for $\overline{u}_i$, when projected to $D^2_{\rho_0}$ using balanced coordinates, descends to the boundary condition $w_+(e^{i\theta})\in \mathcal{R}_{\varphi(e^{i\theta})}$, where the map $\varphi: \bdry \D-\{1\} \to S^1=\R/2\pi \Z$ satisfies the following:
\begin{itemize}
\item $\displaystyle\lim_{\theta\to 0^+} \varphi(e^{i\theta})=\phi(\overline{a}_{i_0,j_0})+ \tfrac{\pi}{m}$ and $\displaystyle\lim_{\theta\to 0^-}\varphi(e^{i\theta})= \phi(\overline{\hh}(\overline{a}_{i_0,j_0}))=\phi(\overline{a}_{i_0,j_0})+ \tfrac{2\pi}{m}$ for some $(i_0,j_0)$;
\item $\varphi(e^{i\theta})\in(0,2\pi)$ is a nondecreasing function of $\theta\in(0,2\pi)$; and
\item $\varphi\circ i_0 = i_1\circ \varphi$, where $i_0: \bdry \D-\{1\}\stackrel\sim\to \bdry \D-\{1\}$ is the reflection across the $x$-axis and $i_1: S^1\stackrel\sim\to S^1$ is the reflection across $\mathcal{R}_{\phi_0}$ where $\phi_0=\phi(\overline{a}_{i_0,j_0})+\tfrac{3\pi}{2m}$.
\end{itemize}

\begin{lemma} \label{2012}
The holomorphic map $w_+: \overline{\D}\to \C\P^1$ satisfies the following:
\begin{enumerate}
\item[(i)] $w_+(1)=0$ and $w_+(e^{i\theta})\in \mathcal{R}_{\varphi(e^{i\theta})}$ for $e^{i\theta}\not=1$;
\item[(ii)] $w_+(0)\in \mathcal{R}_{\phi_0}$ and $w_+$ maps $\mathcal{R}_0\cup \mathcal{R}_\pi$ to $\mathcal{R}_{\phi_0}$;
\item[(iii)] $\op{Im}(w_+)\subset{\frak S} ( \phi=\phi(\overline{a}_{i_0,j_0})+\tfrac{\pi}{m}, \overline{\hh}(\overline{a}_{i_0,j_0}))$;
\item[(iv)] $w_+$ is a biholomorphism onto its image.
\end{enumerate}
Moreover, $w_+$ is the unique holomorphic map $\overline{\D}\to \C\P^1$ which satisfies (i), (iii) and (iv), up to multiplication by a positive real constant.
\end{lemma}

\begin{proof}
(i), (iii), and (iv) are immediate from the construction. To prove the uniqueness statement, suppose there exists $\widetilde w_+$ satisfying (i), (iii) and (iv). Then we consider ${w_+\over \widetilde w_+}$ as in Section~I.\ref{P1-subsection: involutions}.  It is easy to see that ${w_+\over\widetilde w_+}$ is a positive real constant on $\bdry \D$ and has no zeros or poles on $\D$, hence is a constant map. (ii) then follows from the uniqueness, Observation~I.\ref{P1-obs: involution}, and the fact that  $i_0$ and $i_1$ extend to involutions of $\D$ and $\C\P^1$. Here we are viewing $S^1$ as the unit circle in $\C\P^1$.
\end{proof}

The proof of Lemma~\ref{lemma 2012} now follows from the first part of Lemma~\ref{2012}(ii): Suppose $\overline{v}^\sharp_{0,b}$ is a cylinder from $\delta_0$ to $h$. Then by the choice of $h$ from Section~\ref{subsubsection: convention bambi} we have a contradiction, since the asymptotic eigenfunction of $\overline{v}^\sharp_{0,b}$ at the positive end $\delta_0$ is a constant $c\in \mathcal{R}_{\phi=-2\pi/m}$.
\end{proof}

\subsubsection{Preliminary restrictions}

\begin{lemma}  \label{eliminate fiber components 2}
If $m\gg 0$ and $\overline{u}_\infty\in \bdry_{\{+\infty\}}\mathcal{M}$, then the following hold:
\begin{enumerate}
\item $\overline{u}_\infty$ has no fiber components;
\item there is no level $\overline{v}_*$ such that $\overline{v}'_*\cap \overline{v}''_*\not=\varnothing$ and $\overline{v}'_*\cap \overline{v}''_*\subset int(\overline{v}'_*)$; and
\item $\overline{u}_\infty$ has no boundary point at $z_\infty$.
\end{enumerate}
\end{lemma}

\begin{proof}
First observe that ``not (1)'', ``not (2)'', and ``not (3)'' are mutually exclusive by considerations of $n^*$.

(1) Arguing by contradiction, suppose that $\overline{u}_\infty$ has a fiber component $\widetilde{v}:\dot F\to \overline{W}_*$. The fiber component $\widetilde{v}$ satisfies $n^*(\widetilde{v})\geq m$.

We first claim the following:
\begin{enumerate}
\item[(a)] The only ends of $\overline{v}_*^\sharp$ that limit to multiples of $z_\infty$ or $\delta_0$ are positive ends.
\item[(b)] All the positive ends $\mathcal{E}_+$ of $\overline{v}_{0,j}^\sharp$, $j=0,\dots,b$, that limit to $\delta_0^r$ satisfy $n^*(\mathcal{E}_+)=r$.
\item[(c)] All the positive ends of $\overline{v}_{1,j}^\sharp$, $j=0,\dots,a$, and $\overline{v}_{-1,j}^\sharp$, $j=1,\dots,c$, that limit to $z_\infty$ project to thin sectors under $\pi_{D^2_{\rho_0}}$.
\item[(d)] $p_{*_1}\leq p_{*_2}$ if $\overline{v}_{*_1}\preceq \overline{v}_{*_2}$.
\end{enumerate}
(a) Arguing by contradiction, let $\mathcal{E}_-$ be a negative end of some $\overline{v}_*^\sharp$ that limits to a multiple of $z_\infty$ or $\delta_0$. Then $n^*(\mathcal{E}_-)\gg 2g$ by Lemmas~\ref{intersezione}(1) and \ref{intersezione revisited}. Since $n^*(\widetilde{v})\geq m$, such a negative end $\mathcal{E}_-$ cannot exist by Equation~\eqref{sum of n part 2}. The arguments for (b) and (c) are similar and (d) is a consequence of (a)--(c).

Next we have the following contributions towards $I$:
\begin{enumerate}
\item[($\beta_1$)] $I(\overline{v}_+')=-p_{0,b+1}$.
\item[($\beta_2$)] $\cup_{j=1}^b\overline{v}_{0,j}^\sharp$ is a union of $p_{0,b+1}-p_{0,1}$ cylinders from $\delta_0$ to $h$ or $e$ and
    $$\sum_{j=1}^b I(\overline{v}_{0,j}^\sharp)\geq p_{0,b+1}-p_{0,1}.$$
\item[($\beta_3$)] $\overline{v}_-^\sharp$ is a union of $p_{0,1}-p_{0,0}$ sections from $\delta_0$ to $x_i$ or $x_i'$ and $$I(\overline{v}_-^\sharp)=p_{0,1}-p_{0,0}.$$
\item[($\beta_4$)] $\cup_{j=1}^c\overline{v}_{-1,j}^\sharp$ is a union of $p_{0,0}$ trivial strips and $$\sum_{j=1}^cI(\overline{v}_{-1,j}^\sharp)=p_{0,0}.$$
\item[($\beta_5$)] If the fiber $\widetilde{v}$ is a component of $\overline{v}_*$, then $\widetilde{v}$ and the intersection $\widetilde{v}\cap (\overline{v}_*-\widetilde{v})$ contribute $2g+2\geq 4$ towards $I$.
\item[($\beta_6$)] After removing any fiber components, all the levels $\not=\overline{v}_+$ have nonnegative ECH index, $I(\overline{v}_+'')\geq 0$, and $I(\overline{v}_+)\geq I(\overline{v}_+')+I(\overline{v}_+'')$.
\end{enumerate}
($\beta_1$)--($\beta_4$) are clear.

We now prove ($\beta_5$). Suppose that $\widetilde{v}$ is a component of $\overline{v}_{*_0}$. If $\overline{v}''_{*_0}$ has a boundary point at $z_\infty$, then a neighborhood of the boundary point contributes $n^*\gg 2g$. Since $n^*(\widetilde{v})\geq m$, we have a contradiction of Equation~\eqref{sum of n part 2}.
 Hence $\overline{v}''_{*_0}$ does not have a boundary point at $z_\infty$.

When $\widetilde{v}$ is an interior fiber, ($\beta_5$) follows from Equation~(I.\ref{P1-persimmon}).

Next suppose that $\widetilde{v}$ is a boundary fiber, i.e., $\widetilde{v}(\widetilde F)\subset\{p\}\times\overline{S}$, where $\widetilde F$ is the domain of $\widetilde v$ and $p\in \bdry B_{*_0}$. If $(p,z_\infty)\not \in\widetilde{v}(\bdry {\widetilde F})$, i.e., $\widetilde{v}(\bdry\widetilde F)$ consists of $2g$ slits along $\widehat{\bf a}$, for example, then the Fredholm index of $\widetilde{v}$ is:
$$\op{ind}(\widetilde{v})=-\chi(\widetilde F) +\mu_\tau(\bdry \widetilde F) +2c_1(\widetilde{v}^*T\overline{S},\tau)= -(2-4g)+0 +2 (2-2g)= 2,$$
where $\tau$ is a partially defined trivialization of $T\overline{S}$ along $\widehat{\bf a}$ which directs $T\widehat{\bf a}$.  Since there are $2g$ arcs of $\overline{u}_i$ that are pinched to points when taking the limit, $\widetilde{v}$ and the ``pinched points'' contribute a total of $2g+2$ towards $I$.

Now suppose that $(p,z_\infty)\in \widetilde{v}(\bdry\widetilde F)$.  Suppose that $\overline{v}''_{*_0}=\varnothing$ and that $*_0=(1,j)$ with $j\geq 1$. By considerations of $n^*$,
\begin{enumerate}
\item[(*)] all the components of the data $\overrightarrow{\mathcal{D}}_\pm$ of the positive and negative ends of $\overline{v}_{*_0}$ are of the form $(i,j)\to(i,j)$.
\end{enumerate}
Let $\tau_0$ be a groomed multivalued trivialization which is compatible with $\overrightarrow{\mathcal{D}}_\pm$ and let ${\frak c}_+={\frak c}_-\subset A_\varepsilon=[0,1]\times \bdry D^2_\varepsilon$ be the corresponding groomings.  Then a $\tau_0$-trivial representative $\check C$ of $\widetilde{v}\cup \overline{v}'_{*_0}$ satisfies
$$[\check C]=[[-1,1]\times{\frak c}]+[\overline{S}]\in \pi_2({\bf z}_+,{\bf z}_-,\tau_0),$$
where ${\bf z}_\pm=\{z_\infty^{2g}(\overrightarrow{\mathcal{D}}_\pm)\}$. ($\beta_5$) then follows from the calculation for a closed interior fiber $\widetilde{v}$ when $\overline{v}''_{*_0}=\varnothing$ and $*_0=(1,j)$ with $j\geq 1$.  The general case follows by incorporating considerations of the previous paragraph and is left to the reader.

 ($\beta_6$) follows from Lemma~\ref{nonnegative ECH indices 5} by observing that proof carries over when fibered components are removed from $\overline{u}_\infty$: Removing interior fibers and boundary fibers with $(p,z_\infty)\not \in\widetilde{v}(\bdry {\widetilde F})$ do not affect groomings.  If $\overline{u}_\infty$ has a boundary fiber with $(p,z_\infty)\in\widetilde{v}(\bdry {\widetilde F})$, then (*) holds, which also is sufficient.

Summing ($\beta_1$)--($\beta_6$), the total ECH index is $\geq 4$, which contradicts Equation \eqref{sum of I part 2}. Hence (1) follows.

\s
(2) Arguing by contradiction, suppose there exist sequences $m_l\to\infty$ and $\overline{u}_{li}\to \overline{u}_{l\infty}$, where $\overline{u}_{li}\in\mathcal{M}^{(m_l)}$, $\overline{u}_{l\infty}\in \bdry_{\{+\infty\}}\mathcal{M}^{(m_l)}$, and $\mathcal{M}^{(m_l)}$ is $\mathcal{M}$ with respect to $m_l$, such that each $\overline{u}_{l\infty}$ has some $\overline{v}_{l,*_0}$ such that $\overline{v}'_{l,*_0}\cap \overline{v}''_{l,*_0}\not=\varnothing$.  Unless indicated otherwise, we fix $l\gg 0$ and suppress $l$ from the notation.

The same argument as in (1) implies that (a)--(d) hold. 
Since $n^*(\overline{u}_i)=m+|\mathcal{I}|\leq m+2g$, there is only one intersection point of $\overline{v}'_{*_0}$ and $\overline{v}''_{*_0}$, which we denote by ${\frak r}=({\frak r}^b,z_\infty)$.

\s
(2A) Suppose that $*_0=+$. As in the proof of (1), the holomorphic building hanging from the positive end of $\overline{v}_+'$ has total ECH index $\geq 0$ --- this is obtained by adding all the ECH index contributions from ($\beta_1$)--($\beta_4$). We also have ($\beta_5'$) instead of ($\beta_5$):
\begin{enumerate}
\item[($\beta_5'$)] $\overline{v}'_{*_0}\cap \overline{v}''_{*_0}$ contributes $2\cdot m({\frak r})$ towards $I$, where $m({\frak r})\geq 1$ is the multiplicity of ${\frak r}$.
\end{enumerate}
Hence $\sum_{\overline{v}_*}I(\overline{v}_*)\geq 2 \cdot m({\frak r})$. This implies that $m({\frak r})=1$ and $p_+=\deg \overline{v}'_{+}=1$.
The sum of the ECH indices from ($\beta_1$)--($\beta_4$), ($\beta_5'$), ($\beta_6$), and $I(\overline{v}''_+)\geq 0$ is at least $+2$.

We claim that $p_-\geq 1$ by considerations of $n^*$. Indeed, if $\overline{v}_-'=\varnothing$, then there exist a point ${\frak q}\in int(F_-)$ and a sufficiently small neighborhood $N({\frak q})\subset \dot F_-$ of ${\frak q}$ such that $\overline{v}_-({\frak q})=\overline{\frak m}(+\infty)$ and $n^*(\overline{v}_-(N({\frak q})))\geq m$. This is a contradiction.

Since $p_+=1$ and $p_-\geq 1$, it follows that $p_{0,j}=1$ for all $j=0,\dots,b+1$ by (d). For the purposes of applying the rescaling argument, we may assume that $b=0$. By restricting $\overline{u}_{li}$ to a neighborhood of $\sigma_\infty^{\tau_{li}}$, taking the limit of a diagonal subsequence $\overline{u}_{li(l)}$, and applying the rescaling argument, we obtain a $2$-level holomorphic building $w_+\cup w_-$, where $w_\pm:cl(B_\pm) \to \C\P^1$ satisfy the following:
\begin{enumerate}
\item[(i)] $w_-(\bdry B_-)\subset \{\phi=0,\rho>0\}$;
\item[(ii)] $w_-(\overline{\frak m}^b(+\infty))=0$ and $w_-(+\infty)=\infty$;
\item[(iii)] $w_+(\bdry B_+)\subset \{\phi=0,\rho>0\}$;
\item[(iv)] $w_+(-\infty)=0$ and $w_+({\frak r}^b)=\infty$;
\item[(v)] $w_\pm|_{int(B_\pm)}$ is a biholomorphism onto its image.
\end{enumerate}
Then, by the Involution Lemma~I.\ref{P1-lemma: effect of involutions}, $w_-$ maps the marker $\dot{\mathcal{L}}_{3/2}(+\infty)$ to $\dot{\mathcal{R}}_\pi(\infty)$ and $w_+$ maps the ray $\mathcal{L}_{3/2}$ to the ray $\mathcal{R}_\pi$. This
gives rise to two constraints for $\overline{v}''_+$: ${\frak r}^b$ is restricted on $\mathcal{L}_{3/2}$ and the derivative of $\overline{v}''_+$ at ${\frak r}$ in the $D^2_{\rho_0}$-direction is also restricted.  Hence $I(\overline{v}''_+)\geq 2$ and the total ECH index is at least $4$, a contradiction.

\s
(2B) Suppose that $*_0=(1,j_0)$ for some $1\leq j_0\leq a$. Summing the ECH indices using ($\beta_1$)--($\beta_4$), ($\beta_5'$), ($\beta_6$), and $I(\overline{v}''_{*_0})\geq 1$, the total ECH index is at least $3$, a contradiction. However, we want to do slightly better so that the argument carries over in the case of Lemma~\ref{eliminate fiber components}: The rescaling argument with $m_l\to\infty$ gives $w_+: cl(B_+)\to \C\P^1$ satisfying the following:
\begin{enumerate}
\item[(i)] $w_+(\bdry B_+)\subset \{\phi=0,\rho>0\}$;
\item[(ii)] $w_+(-\infty)=0$ and $w_+(+\infty)=\infty$;
\item[(iii)] $w_+$ maps the marker $\dot{\mathcal{L}}_{3/2}(-\infty)$ to $\dot{\mathcal{R}}_\pi(0)$;
\item[(iv)] $w_+|_{int(B_+)}$ is a biholomorphism onto its image.
\end{enumerate}
By the Involution Lemmas, $w_+^{-1}$ maps $\dot{\mathcal{R}}_0(0)$ to $\dot{\mathcal{L}}_{3/2}(-\infty)$. This contradicts (iii).

\s
(2C) Suppose that $*_0=(0,j_0)$ for some $1\leq j_0\leq b$. We replace ($\beta_2$) by:
\begin{enumerate}
\item[($\beta'_2$)] If $j\geq j_0$, then each positive end of $\overline{v}_{0,j_0}^\sharp$ that limits to $\delta_0$ contributes $+1$ towards $I$.
\item[($\beta''_2$)] All the other components of $\overline{v}_{0,j}^\sharp$, $1\leq j<j_0$, are cylinders from $\delta_0$ to $h$ or $e$ with $I=1$ or $2$.
\end{enumerate}
($\beta'_2$) is a consequence of the argument of Lemma~\ref{lemma 2012}: there is an independent condition imposed on the asymptotic expansion of each positive end of $\overline{v}_{0,j}^\sharp$ with $j\geq j_0$ that limits to $\delta_0$. ($\beta''_2$) is clear.  Summing the ECH indices using ($\beta_1$), ($\beta_2'$),  ($\beta''_2$) ($\beta_3$), ($\beta_4$), ($\beta_5'$), ($\beta_6$), and $I(\overline{v}''_{*_0})\geq 1$, the total ECH index is at least $3$, a contradiction.

\s
(2D) Suppose that $*_0=-$. In a manner similar to (2C) we replace ($\beta_3$) by:
\begin{enumerate}
\item[($\beta_3'$)] Each positive end of $\overline{v}_-^\sharp$ that limits to $\delta_0$ and is not the end of a section from $\delta_0$ to $x_i$ or $x_i'$ contributes  $+1$ towards $I$.
\item[($\beta_3''$)] All the other components of $\overline{v}_-^\sharp$ are sections from $\delta_0$ to $x_i$ or $x_i'$ with $I=1$.
\end{enumerate}
Summing the ECH indices using ($\beta_1$), ($\beta_2$), ($\beta_3'$), ($\beta_3''$), ($\beta_4$), ($\beta_5'$), ($\beta_6$), and $I(\overline{v}''_-)\geq 0$, the total ECH index is at least $2$.

Suppose that ${\frak r}^b\not=\overline{\frak m}^b(+\infty)$. Then the rescaling argument gives $w_-: cl(B_-)\to \C\P^1$ which satisfies the following:
\begin{enumerate}
\item[(i)] $w_-(\bdry B_-)\subset \{\phi=0,\rho>0\}$;
\item[(ii)] $w_-(\overline{\frak m}^b(+\infty))=0$ and $w_-({\frak r}^b)=\infty$;
\item[(iii)] $w_-(+\infty)\subset \{\phi=0,\rho>0\}$; and
\item[(iv)] $w_-|_{int(B_-)}$ is a biholomorphism onto its image.
\end{enumerate}
Here (iii) follows from considering the continuation of $w_{-,m_l}$ to upper levels $w_{*,m_l}$ and $\pi_{*,m_l}$ (here $w_{*,m_l}$ is the limit of $\overline{u}_{li}$ for $m_l\gg 0$, without taking $m_l\to\infty$) and the fact that $w_{+,m_l}:\Sigma_+\to\C\P^1$ must map $\bdry \Sigma_+$ to a thin sector near $\{\phi=0,\rho>0\}$.

By the Involution Lemma~I.\ref{P1-lemma: effect of involutions} and (i)--(iv), ${\frak r}^b$ must lie on $\mathcal{L}_{1/2}\cup\mathcal{L}_{3/2}$ and the map $w_-^{-1}$ maps the marker $\dot{\mathcal{R}}_\pi(\infty)$ to the marker $\dot{\mathcal{L}}_{1/2}$ or $\dot{\mathcal{L}}_{3/2}$ at ${\frak r}^b$, as appropriate. This  gives rise to two constraints for $\overline{v}_-''$. Hence $I(\overline{v}''_-)\geq 2$ and the total ECH index is $\geq 4$, a contradiction.

Now suppose that ${\frak r}^b=\overline{\frak m}^b(+\infty)$. Since passing through $\overline{\frak m}^b(+\infty)$ is a codimension $2$ condition, we have $I(\overline{v}''_-)\geq 2$.  The total ECH index is $\geq 4$, a contradiction.

\s
(2E) Suppose that $*_0=(-1,j_0)$ for some $1\leq j_0\leq c$. Then ($\beta_1$)--($\beta_3$) hold and the rescaling argument with $m_l\to\infty$ gives $w_-: \Sigma_-\to \C\P^1$ and $\pi_-: \Sigma_-\to cl(B_-)$ such that:
\begin{enumerate}
\item[(i)] $w_-(\bdry\Sigma_-)\subset \{\phi=0,\rho>0\}$;
\item[(ii)] $w_-(z_0)=0$ for some $z_0\in \pi_-^{-1}(\overline{\frak m}^b(+\infty))$;
\item[(iii)] $w_-(z_1)=\infty$ for some $z_1\in \pi_-^{-1}(-\infty)$;
\item[(iv)] $w_-(\pi_-^{-1}(+\infty))\subset\{\phi=0,\rho>0\}$;
\item[(v)] $w_-|_{int(\Sigma_-)}$ is a biholomorphism onto its image.
\end{enumerate}
Here (iv) follows from the considerations similar to those of Lemma~\ref{lemma 2012}, together with (a)--(d).
By the Involution Lemmas, $\pi_-\circ w_-^{-1}$, where defined, maps the component of $\{\op{Im}(z)=0\}$ passing through $0$ to $\mathcal{L}_{3/2}$. This contradicts (iii).

\s
(3) Suppose there is a boundary point ${\frak r}\in \bdry F_{*_0}$ at $z_\infty$.  Then the neighborhood of $\overline{v}''_{*_0}({\frak r})$ contributes at least $k_0-1\gg 2g$ towards $n^*$ by Lemma~\ref{cherimoya}.  We remark that a priori there could be more than one boundary point at $z_\infty$.

\s
(3A) Suppose that $\overline{v}_-'=\varnothing$. Then there is a point ${\frak q}\in int(F_-)$ and a sufficiently small neighborhood $N({\frak q})\subset \dot F_-$ of ${\frak q}$ such that $\overline{v}_-({\frak q})=\overline{\frak m}(+\infty)$ and $n^*(\overline{v}_-(N({\frak q})))\geq m$. This is a contradiction.

\s
(3B) Suppose that $\overline{v}_-'\not=\varnothing$ and $\overline{v}'_{*_0}=\varnothing$, i.e., ${\frak r}$ is of type (P$_1$). Then there exist restrictions $\widetilde{u}_{i,-}$ and $\widetilde{u}_{i,*_0}$ of $\overline{u}_i$ such that $\widetilde{u}_{i,-}$ is close to $\overline{v}'_-$ after translation and passes through $\overline{\frak m}(\tau_i)$, $\widetilde{u}_{i,*_0}$ is close to $\overline{v}''_{*_0}$ after translation and nontrivially intersects $\sigma_\infty^{\tau_i}$, and the images of $\widetilde{u}_{i,-}$ and $\widetilde{u}_{i,*_0}$ are disjoint.  This is a contradiction since we have an excess of $n^*$.

\s
(3C) Suppose that $\overline{v}_-'\not=\varnothing$, $\overline{v}'_{*_0}\not =\varnothing$, and $*_0=(1,j)$, $j=0,\dots,a$.

We claim that (a)--(d) from the proof of (1) hold, where we are only considering levels $\overline{v}_*\prec \overline{v}_+$. Indeed, if (a)--(c) do not hold, then the sum of $n^*$ of the ends is $\geq m-2g$ by Lemmas~\ref{intersezione} and \ref{intersezione revisited}, a contradiction.  (The situation of $\overline{v}_{-1,j}^\sharp$ is not explicitly covered by Lemma~\ref{intersezione revisited}, but the same proof works.)   (d) is a consequence of (a)--(c). The claim implies that ($\beta_1$)--($\beta_4$) hold.

Next we claim that there must be a boundary point at $z_\infty$ on $\overline{v}_+$.  Arguing by contradiction, if there is no boundary point at $z_\infty$ on $\overline{v}_+$, then a rescaling argument similar to (2B) gives $w_+: \Sigma_+\to \C\P^1$ and $\pi_+:\Sigma_+\to cl(B_+)$ such that:
\begin{enumerate}
\item[(i)] $w_+(\bdry \Sigma_+)\subset \{\phi=0,\rho>0\}$;
\item[(ii)] $w_+(z_0)=0$ for some $z_0\in \pi_+^{-1}(-\infty)$;
\item[(iii)] $w_+(\pi_+^{-1}(-\infty))\subset \{\phi=0,\rho\geq 0\}$;
\item[(iv)] $w_+(z_1)=\infty$ for some $z_1\in \pi_+^{-1}(+\infty)$;
\item[(v)] $w_+|_{int(\Sigma_+)}$ is a biholomorphism onto its image.
\end{enumerate}
Here (iii) follows from observing that:
\begin{enumerate}
\item[($\beta'''_2$)] There is no component of $\overline{v}^\sharp_{0,j}$ which is a cylinder from $\delta_0$ to $e$.
\end{enumerate}
Indeed, ${\frak r}$ contributes at least $2$ towards $I$ by Lemma~\ref{nonnegative ECH indices 5 better}, $I(\overline{v}''_{*_0})\geq 1$ if $*_0\not=+$, and a component of $\overline{v}^\sharp_{0,j}$ which is a cylinder from $\delta_0$ to $e$ contributes $I=2$. Together with ($\beta_1$)--($\beta_4$), the total ECH index is at least $4$, if there is a cylinder from $\delta_0$ to $e$, a contradiction.

By the Involution Lemmas, $\pi_+\circ w_+^{-1}$, where defined, maps  $\dot{\mathcal{R}}_0(0)$ to $\dot{\mathcal{L}}_{3/2}(-\infty)$, a contradiction. Hence there must be a boundary point at $z_\infty$ on $\overline{v}_+$. A similar argument implies that a boundary point at $z_\infty$ must lie on $\mathcal{L}_{3/2}$ and project to a large sector.

By Lemma~\ref{nonnegative ECH indices 5 better}, the boundary points at $z_\infty$ contribute at least $2$ towards $I$. In addition, the constraint of lying in $\mathcal{L}_{3/2}$ contributes $1$ towards $I$ and the large sector condition contributes $1$ towards $I$. These add up to at least $I=4$, a contradiction.

\s
(3D) Suppose that $\overline{v}_-'\not=\varnothing$, $\overline{v}'_{*_0}\not =\varnothing$, and $*_0=(-1,j)$, $j=1,\dots,c+1$.

 We would like apply the argument from (3C) and (2E).  The slight complication is that we may have ends of $\overline{v}_{-1,j}^\sharp$ that limit to $z_\infty$ but map to non-thin sectors.  In particular, there may be components of $\overline{v}_{-1,j}^\sharp$ have have at least two ends that limit to $z_\infty$ but has $\op{ind}=1$ and it is not a priori clear that ($\beta_4$) holds.

We claim that:
\begin{enumerate}
\item[($\beta_4'$)] Each positive end of $\overline{v}_{-1,j}^\sharp$, $j=1,\dots,c$, that limits to $z_\infty$ contributes $+1$ towards $I$.
\end{enumerate}
For ease of notation assume that $c=1$ and $p_{0,0}=2$ during the proof of the claim. Applying the rescaling argument without taking $m\to \infty$, we obtain $w_-:\Sigma_-\to \C\P^1$ and $\pi_-: \Sigma_-\to cl(B_-)$ such that:
\begin{enumerate}
\item[(i)] $w_-(z_0)=0$ for some $z_0\in \pi_-^{-1}(\overline{\frak m}^b(+\infty))$;
\item[(ii)] $w_-$ maps the negative ends of $\Sigma_-$ to sectors ${\frak S}(\overline{b}_{k,l},\overline{\hh}(\overline{b}_{k',l'}))$.
\end{enumerate}
We now make crucial use of the fact that the sectors ${\frak S}(\overline{b}_{k,l},\overline{\hh}(\overline{b}_{k',l'}))$ corresponding to the negative ends of $\Sigma_-$ have different angles, provided the sectors are not both thin sectors; this follows from the definition of $\overline{\bf a}$ from Section~I.\ref{P1-coconut}. Since each thin sector corresponds to a thin strip of $\overline{v}_{-1,1}^\sharp$ which contributes $I=1$, let us assume that there are no thin sectors in (ii). We use coordinates $(s,t)\in(-\infty, c]\times[0,1]$ for the two negative ends $\mathcal{E}_{-,1}$, $\mathcal{E}_{-,2}$ of $\Sigma_-$  which agree with the coordinates $(s,t)$ on $B_-$ and write $w_-^i:= w_-|_{\mathcal{E}_{-,i}}$. Then $w_-^i(s,t)\approx c_i e^{\lambda_i (s+it)}$, where $\lambda_1>\lambda_2>0$ without loss of generality; $c_1,c_2\not=0$; and the approximation gets better as $s\to -\infty$. On the other hand, suppose that $\op{ind}(\overline{v}_{-1,1}^\sharp)=1$ and $\overline{v}_{-1,1}^\sharp$ is connected and has two positive ends $\mathcal{E}_{+,i}$, $i=1,2$, that limit to $z_\infty$. Here $\mathcal{E}_{+,i}$ is to be paired with $\mathcal{E}_{-,i}$.  We use coordinates $(s',t')\in [c',\infty)\times[0,1]$ for the positive ends $\mathcal{E}_{+,i}$ which agree with the coordinates on $B=\R\times[0,1]$.  Then $\mathcal{E}_{+,1}$ approaches $z_\infty$ much faster than $\mathcal{E}_{+,2}$ does, as $s'\to +\infty$. This is inconsistent with our description of $w_-$, when we change coordinates $s'=s+\tilde c$ for some $\tilde c$. Hence each end $\mathcal{E}_{+,i}$ should be allowed to move independently and each positive end of $\overline{v}^\sharp_{-1,1}$ that limits to $z_\infty$ contributes $+1$ towards $I$.

($\beta_4'$) replaces ($\beta_4$) and the rest of the argument is similar to those of (3C) and (2E).
\end{proof}

\subsubsection{The case $\overline{v}'_-\not=\varnothing$}

\begin{lemma}\label{pooh}
If $m\gg 0$, $\overline{u}_\infty\in \bdry_{\{+\infty\}}\mathcal{M}$, and $\overline{v}_-'\not=\varnothing$, then no negative end of $\overline{v}_{0,j}^\sharp$, $1\leq j\leq b$, is asymptotic to a multiple of $\delta_0$.
\end{lemma}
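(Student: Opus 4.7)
I would argue by contradiction, following the renormalization / Involution Lemma argument that eliminated Case~(1) of Lemma~\ref{hojicha} in the proof of Lemma~\ref{vancouver} (which in turn is patterned on Case~(2) of Theorem~I.\ref{P1-thm: complement}). Suppose that $\overline{v}_-' \neq \varnothing$, so $p_- := \deg \overline{v}_-' \geq 1$, and that some $\overline{v}_{0,j_0}^\sharp$ with $1 \leq j_0 \leq b$ has a negative end at $\delta_0^p$ for some $p \geq 1$.

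By Lemma~\ref{intersezione}(1), this negative end contributes at least $m-p$ to $n^*$. Combined with Equations~\eqref{sum of n part 2} and~\eqref{sum of I part 2}, Lemma~\ref{nonnegative ECH indices 5}, and Lemma~\ref{eliminate fiber components 2}, a bookkeeping argument parallel to the proof of Case~(1) of Lemma~\ref{hojicha} then forces $\overline{u}_\infty$ to contain (possibly decorated with the $z_\infty$-insertions coming from $\mathbf{z}$) a subbuilding in which $\overline{v}_-'$ is a degree $p_-$ branched cover of $\sigma_\infty^-$ whose positive end at $\delta_0^{p_-}$ matches (through connectors if necessary) the $\delta_0^p$-end of $\overline{v}_{0,j_0}^\sharp$, with $p_-$ thin strips of $\overline{v}_{-1,j}^\sharp$ sitting below. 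A diagonal sequence $\overline{u}_i \to \overline{u}_{i\infty}$ with $m_i \to \infty$ of such buildings is then available.

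Following Section~I.\ref{P1-subsection: rescaling}, I would truncate $\overline{u}_i$ to a thin $\rho$-neighborhood of the portion of $\sigma_\infty^{\tau_i}$ near $\overline{v}_-'$, project to $D_{\rho_0}$ via balanced coordinates, apply the ansatz of Equation~(I.\ref{P1-eqn: ansatz}), rescale by a positive real constant, and extract a subsequential limit. This yields a holomorphic map $w_- : \Sigma_- \to \C\P^1$ together with a degree $p_-$ branched cover $\pi_- : \Sigma_- \to cl(B_-)$ such that: $w_-(\bdry \Sigma_-) \subset \{\phi = 0,\ \rho > 0\}$; $w_-|_{int(\Sigma_-)}$ is a biholomorphism onto its image (by the near-saturation of $n^*$ forced in the previous step); $w_-(z_0) = 0$ for some $z_0 \in \pi_-^{-1}(\overline{\frak m}^b(+\infty))$; and $w_-$ sends the $\pi_-$-preimages of $+\infty \in cl(B_-)$ to the good radial ray $\mathcal{R}_\pi$. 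The last property uses the choice $\phi_h \approx -2\pi/m \to 0$ from Section~\ref{subsubsection: choice of hyperbolic orbit h}, which forces the limiting asymptotic eigenfunction at the $\delta_0$-end above to land arbitrarily close to $\mathcal{R}_\pi$.

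The main obstacle is then the application of the Involution Lemma~I.\ref{P1-lemma: effect of involutions}. By the biholomorphism property and the boundary condition on $w_-$, the map $f_- := \pi_- \circ w_-^{-1}$ extends to a holomorphic map between the compactifications of $w_-(int(\Sigma_-))$ and $cl(B_-)$ that is equivariant under the corresponding reflections, and the Involution Lemma forces $f_-$ to carry the fixed locus through $0$ onto $\mathcal{L}_{3/2} \cap cl(B_-)$. On the other hand, the $\pi_-$-preimages of $+\infty$ lie on the positive end of $B_-$ and hence off $\mathcal{L}_{3/2}$, while the marked point $\overline{\frak m}^b(+\infty) = (0, 3/2)$ lies on $\mathcal{L}_{3/2}$; this yields the desired contradiction and completes the proof. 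The only new feature compared to the argument in Lemma~\ref{vancouver} is the possible presence of the $z_\infty$-insertions from $\mathbf{z}$, which by Lemma~\ref{eliminate fiber components 2} do not obstruct the rescaling argument carried out near $\overline{v}_-'$.
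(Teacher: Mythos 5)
There is a genuine gap: your bookkeeping step imports a conclusion from the part-I setting that fails here. You assert that the presence of a negative end of $\overline{v}_{0,j_0}^\sharp$ at $\delta_0^p$ forces the Case~(1) picture of Lemma~\ref{hojicha}, i.e.\ that everything at and above the level $\overline{v}_{0,j_0}$ is free of branched covers of the sections at infinity. In Lemma~\ref{a or b}(a) that forcing came from an $n^*$ count: a nonempty $\overline{v}'_+$ would require a negative end of some $\overline{v}_{1,j}^\sharp$ at $z_\infty$, costing an additional $\geq m-p_+$ of intersection number, which is impossible alongside the $\geq m-p$ cost of the $\delta_0$-end. In the present setting the positive end is ${\bf z}=\{z_{\infty,i}\}_{i\in\mathcal{I}}\cup{\bf y}$ with $|\mathcal{I}|\geq 1$, so nonempty $\overline{v}'_{0,j}$ for $j\geq j_0$ and $\overline{v}'_+$ can connect the $\delta_0$-multiplicity directly up through the $\overline{v}'_{1,j}$ to the $z_\infty$-part of ${\bf z}$ at essentially no cost in $n^*$; this is precisely why the paper's proof must allow $p_{0,j_0},\dots,p_{0,b+1}>0$. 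That case is eliminated by a different mechanism which your proposal does not contain: $\overline{v}_{0,j_0}$ then fails the partition conditions at $\delta_0^{p_{0,j_0}+q_{0,j_0}}$ and the resulting braiding contributes an extra $2p_{0,j_0}$ to $I(\overline{v}_{0,j_0})$ (Equation~\eqref{A1}), while the higher-order asymptotics of Lemma~\ref{lemma 2012} force $\overline{v}_{0,j}^\sharp=\varnothing$ for $j>j_0$ and give each positive end of $\overline{v}_{0,j_0}^\sharp$ at $\delta_0$ two extra constraints; summing yields total ECH index $\geq 3$.

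In the remaining case $p_{0,j_0}=\dots=p_{0,b+1}=0$, which is the one you do treat, the rescaling/Involution route is indeed the paper's, but your concluding contradiction is misstated. The point $+\infty\in cl(B_-)$ is an endpoint of the half-line $\mathcal{L}_{3/2}=\{t=3/2\}$, so ``the $\pi_-$-preimages of $+\infty$ lie off $\mathcal{L}_{3/2}$'' is false; and the fact that $\overline{\frak m}^b(+\infty)$ lies on $\mathcal{L}_{3/2}$ with $w_-(z_0)=0$ is an \emph{input} to the Involution Lemma, not something it contradicts. What the Involution Lemma actually produces is that $f_-$ carries the marker $\dot{\mathcal{R}}_\pi(\infty)$ to $\dot{\mathcal{L}}_{3/2}(+\infty)$, i.e.\ it pins down, up to a positive real multiple, the asymptotic data that the negative end of $\overline{v}_{0,j_0}^\sharp$ at $\delta_0^{q_{0,j_0}}$ must match. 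This is a codimension $\geq 1$ condition, so $I(\overline{v}_{0,j_0}^\sharp)\geq 2$; combined with the $\geq q_{0,j_0}\geq 1$ contributed by the components of $\overline{v}_-^\sharp$ and the thin strips capping off the $\delta_0$-multiplicity from below, one again gets total ECH index $\geq 3$, contradicting $I=2$. The contradiction is an index excess obtained one level up, not a geometric impossibility at the $\overline{W}_-$ level.
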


\begin{proof}
Arguing by contradiction, suppose there exists some $\overline{v}_{0,j_0}^\sharp$, $1\leq j_0\leq b$, which has a negative end asymptotic to a multiple of $\delta_0$, say $\delta_0^{q_{0,j_0}}$. Since the negative end of $\overline{v}_{0,j_0}^\sharp$ contributes at least $m-q_{0,j_0}$ towards $n^*$, the following hold:
\begin{enumerate}
\item[(i)] at most one $\overline{v}_{0,j}^\sharp$, $j>0$, has a negative end asymptotic to a multiple of $\delta_0$;
\item[(ii)] only one negative end of $\overline{v}_{0,j_0}^\sharp$ limits to a multiple of $\delta_0$;
\item[(iii)] $p_{0,0}\leq p_{0,1}\leq \dots\leq p_{0,j_0-1}\leq p_{0,j_0}+q_{0,j_0}$ and $p_{0,j_0}\leq p_{0,j_0+1}\leq \dots \leq p_{0,b+1}$;
\item[(iv)] if $\mathcal{E}$ is a positive end of $\overline{v}_{0,j}^\sharp$, $j\geq 0$, that limits to $\delta_0^r$, then $n^*(\mathcal{E})=r$; and
\item[(v)] $\cup_{j=1}^{c}\overline{v}_{-1,j}^\sharp$ is a union of $p_{0,0}>0$ thin strips.
\end{enumerate}

By Lemma~\ref{nonnegative ECH indices 5 better}, all the components besides $\overline{v}'_+$ have nonnegative ECH index.  We have contributions ($\beta_1$), ($\beta_3$), ($\beta_4$), ($\beta_6$) as in Lemma~\ref{eliminate fiber components 2}, as well as the following variant of ($\beta_2$):
\begin{enumerate}
\item[($\widetilde\beta_2'$)] For $1\leq j<j_0$, $\overline{v}_{0,j}^\sharp$ is a union of cylinders from $\delta_0$ to $h$ or $e$ and
    $$\sum_{j=1}^{j_0-1} I(\overline{v}^\sharp_{0,j})\geq p_{0,j_0}+q_{0,j_0}-p_{0,1}.$$
\item[($\widetilde\beta_2''$)]  For $j_0+1\leq j\leq b$, $\overline{v}_{0,j}^\sharp$  is a union of cylinders from $\delta_0$ to $h$ or $e$ and
    $$\sum_{j=j_0+1}^{b} I(\overline{v}^\sharp_{0,j})\geq p_{0,b+1}-p_{0,j_0+1}.$$
\item[($\widetilde\beta_2'''$)] $I(\overline{v}^\sharp_{0,j_0})\geq p_{0,j_0+1}-p_{0,j_0}+1$.
\end{enumerate}
($\widetilde\beta_2'$) and ($\widetilde\beta_2''$) are immediate from $n^*(\overline{v}^\sharp_{0,j_0})\geq m-q_{0,j_0}$.  ($\widetilde\beta_2'''$) follows from the proof of Lemma~\ref{lemma 2012}, which implies that each positive end of $\overline{v}^\sharp_{0,j_0}$ that limits to $\delta_0$ additionally contributes one constraint.

Since $\overline{v}_{0,j_0}$ does not satisfy the partition condition at $\delta_0^{p_{0,j_0}+q_{0,j_0}}$, a straightforward calculation which takes into account the braiding near $\delta_0^{p_{0,j_0}+q_{0,j_0}}$ gives:
\begin{equation} \label{A1}
I(\overline{v}_{0,j_0})\geq I(\overline{v}'_{0,j_0})+I(\overline{v}''_{0,j_0})+2p_{0,j_0}.
\end{equation}

Summing the contributions of ($\beta_1$), ($\widetilde\beta'_2$)--($\widetilde\beta_2'''$), ($\beta_3$), ($\beta_4$), and Equation \eqref{A1}, we obtain:
\begin{equation} \label{nesbo}
\sum_{\overline{v}_*}I(\overline{v}_*)\geq q_{0,j_0} +2p_{0,j_0}+1.
\end{equation}
Here $q_{0,j_0}\geq 1$ by assumption. If $p_{0,j_0}>0$ or $p_{0,j_0+1}>0$, then $\sum_{\overline{v}_*}I(\overline{v}_*)\geq  4$, a contradiction. On the other hand, if $p_{0,j_0}=\dots=p_{0,b+1}=0$, then $I(\overline{v}_{0,j_0}^\sharp)\geq 2$ by the usual rescaling argument (cf.\ Cases (3)--(6) of Theorem~I.\ref{P1-thm: complement}).  In this case the right-hand side of Equation~\eqref{nesbo} can be increased by one and $\sum_{\overline{v}_*}I(\overline{v}_*)\geq 3$, a contradiction.
\end{proof}

\begin{lemma}\label{jeeves}
If $m\gg 0$, $\overline{u}_\infty\in \bdry_{\{+\infty\}}\mathcal{M}$, $\overline{v}_-'\not=\varnothing$, and $\overline{v}_+^\sharp$ has a negative end asymptotic to a multiple of $\delta_0$, then $\overline{u}_\infty\in A_2'$.
\end{lemma}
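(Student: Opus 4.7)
The strategy is to follow the structure of Case ($2_1$) of Lemmas~\ref{hojicha} and \ref{vancouver}, adapted to the present setting where the positive end of $\overline{u}_\infty$ is ${\bf z}=\{z_{\infty,i}\}_{i\in\mathcal{I}}\cup {\bf y}$ with $|\mathcal{I}|\geq 1$.  First, combining the hypothesis that $\overline{v}_+^\sharp$ has a negative end at $\delta_0^q$ (for some $q\geq 1$) with Lemma~\ref{pooh} (no $\overline{v}_{0,j}^\sharp$, $1\leq j\leq b$, has a negative end at $\delta_0$) and the $n^*$-budget of Equation~\eqref{sum of n part 2}, one forces the sub-building below $\overline{v}_+$ into a cascade of the standard shape, exactly as in items (4)--(6) of Lemma~\ref{sencha}: every component of $\overline{v}_{0,j}^\sharp$, $1\leq j\leq b$, is a cylinder from $\delta_0$ to $h$ or $e$; every component of $\overline{v}_-^\sharp$ is a section from $\delta_0$ to some $x_i$ or $x_i'$; every component of $\overline{v}_{-1,j}^\sharp$ is a thin strip from $z_\infty$ to some $x_i$ or $x_i'$; and the degrees satisfy $0<p_{0,0}\leq p_{0,1}\leq\cdots\leq p_{0,b+1}$.

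Second, I apply Lemma~\ref{nonnegative ECH indices 5 better} and tally the ECH index contributions $(\beta_1)$--$(\beta_6)$ exactly as in the proof of Lemma~\ref{eliminate fiber components 2}:
\begin{equation*}
2=\sum_{\overline{v}_*} I(\overline{v}_*)\geq -p_{0,b+1} + I(\overline{v}_+^\sharp)+I(\overline{v}_+^\flat)+(p_{0,b+1}+q-p_{0,1})+(p_{0,1}-p_{0,0})+p_{0,0},
\end{equation*}
which simplifies to $q+I(\overline{v}_+^\sharp)+I(\overline{v}_+^\flat)\leq 2$.  Since $q\geq 1$ and $I(\overline{v}_+^\sharp),I(\overline{v}_+^\flat)\geq 0$, we are left with either (A) $q=1$ and $I(\overline{v}_+^\sharp)+I(\overline{v}_+^\flat)\leq 1$, or (B) $q=2$ and $I(\overline{v}_+^\sharp)=I(\overline{v}_+^\flat)=0$.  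Case (B), as well as the subcase of (A) where $I(\overline{v}_+^\sharp)+I(\overline{v}_+^\flat)=0$, correspond in Lemma~\ref{hojicha} to Cases $(2_0)$ and (3)--(5); they are ruled out by the rescaling argument of Lemma~\ref{vancouver} applied near the negative $\delta_0$ end of $\overline{v}_+^\sharp$, since that argument is local near the bottom and is insensitive to the extra marked points $\{z_{\infty,i}\}_{i\in\mathcal{I}}$ at the positive end.

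Third, with $q=1$ and $I(\overline{v}_+^\sharp)+I(\overline{v}_+^\flat)=1$, equality holds throughout the index tally, so every intermediate cylinder and section has the minimum possible ECH index.  Lemma~\ref{lemma 2012} then eliminates any $\overline{v}_{0,j}^\sharp$-cylinder $\delta_0\to h$ (by the higher-order asymptotic expansion argument), and a rescaling/Involution-Lemma argument analogous to item $(\gamma)$ in the proof of Lemma~\ref{hojicha} eliminates cylinders $\delta_0\to e$; hence $b=0$ and $p_{0,1}=p_{0,b+1}$.  An analogous argument applied at the top of $\overline{v}_-$ forces $\overline{v}_-^\sharp=\varnothing$ unless $p_{0,0}=p_{0,1}$, so that the single $\delta_0$ at the top of $\overline{v}_-$ is paired with a single thin strip in $\overline{v}_{-1,j}^\sharp$ (with all other components of $\overline{v}_{-1,j}$ being trivial).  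Writing $\delta_0\gamma$ for the negative end of $\overline{v}_+$, we then have $\overline{v}_+\in \mathcal{M}_{\overline{J}_+^\Diamond}^{I=1,n^*=m+|\mathcal{I}|-1}({\bf z},\delta_0\gamma)$, $\overline{v}_-\in \mathcal{M}_{\overline{J}_-^\Diamond}^{I=0,n^*=0}(\delta_0\gamma,\{z_\infty\}\cup {\bf y}'')$ with $\overline{v}_-'=\sigma_\infty^-$, and the thin strip $\in \mathcal{M}_{\overline{J}}^{I=1,n^*=1}(\{z_\infty\}\cup {\bf y}'',{\bf y}')$, matching the definition of $A_2'$.

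The main technical obstacle is the asymptotic eigenfunction condition: showing that the normalized asymptotic eigenfunction of $\overline{v}_+$ at the negative $\delta_0$ end is of type $f_{\delta_0}$, i.e., that $f_{\delta_0}(3/2)$ lies on the good radial ray $\mathcal{R}_\pi$.  This follows from the enlarged class of bad radial rays introduced at the start of Step 1 of Section~\ref{subsection: proof of thm chain homotopy one}, together with the matching between the rescaling limit of $\overline{v}_+^\sharp$ near $\delta_0$ and the asymptotic data of the corresponding $\mathcal{M}^{I=0}_{(\overline{J}_+)_\infty}({\bf y},\delta_0^{l_i}\gamma')$-curves on which the bad radial rays are defined; the Involution Lemmas then force the marker $\dot{\mathcal{L}}_{3/2}(-\infty)$ to be sent to the marker at the value $f_{\delta_0}(3/2)$, completing the identification with $A_2'$.
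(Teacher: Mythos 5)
Your overall strategy (an ECH-index tally over the cascade hanging from the negative end of $\overline{v}_+$, followed by a rescaling argument to kill the low-index configurations) matches the paper's, but your index inequality is too weak at exactly the point where this lemma has content: it does not control $p_+=\deg(\overline{v}_+')$. In your tally the term $-p_{0,b+1}$ from $I(\overline{v}_+')$ cancels against the $+p_{0,b+1}$ hidden in the count $(p_{0,b+1}+q-p_{0,1})+(p_{0,1}-p_{0,0})+p_{0,0}$ of components below, so the inequality $q+I(\overline{v}_+^\sharp)+I(\overline{v}_+^\flat)\leq 2$ carries no information about $p_+$, and nothing in your Steps 2--3 rules out $\overline{v}_+'\neq\varnothing$: for instance $q=1$, $I(\overline{v}_+'')=1$, $p_+$ arbitrary, with $p_++1$ index-one components in the cascade, satisfies your inequality with equality but is not in $A_2'$. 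This cannot be dismissed by analogy with Lemma~\ref{a or b}(a): there, $\overline{v}_+'=\varnothing$ is forced by $n^*$ considerations because the positive ends of $\overline{v}_+'$ at $z_\infty$ would require a second contribution of order $m$ from negative ends of some $\overline{v}_{1,j}^\sharp$ above; but in the present setting the positive end of $\overline{u}_\infty$ is ${\bf z}=\{z_{\infty,i}\}_{i\in\mathcal{I}}\cup{\bf y}$ with $n^*=m+|\mathcal{I}|$, and the positive ends of $\overline{v}_+'$ at $z_\infty$ can connect to the $z_{\infty,i}$ through thin sectors at negligible cost in $n^*$. So $p_+>0$ is a live possibility here and must be excluded explicitly.

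The paper excludes it by using the sharper tally set up in the proof of Lemma~\ref{pooh}: since $\overline{v}_+=\overline{v}_+'\cup\overline{v}_+''$ fails the ECH partition conditions at $\delta_0^{p_++q_+}$, the braiding correction of Equation~\eqref{A1} adds $2p_{0,b+1}$ to $I(\overline{v}_+)$, so the contributions $(\beta_1)$, $(\beta_2')$--$(\beta_2'')$, $(\beta_3)$, $(\beta_4)$ and \eqref{A1} sum to $q_++p_+$ rather than your $q$; together with the extra $+1$ that the rescaling/asymptotic-eigenfunction argument attaches to the negative $\delta_0$ end of $\overline{v}_+^\sharp$, the case $p_+>0$ forces $\sum_{\overline{v}_*} I(\overline{v}_*)\geq 3$, and $p_+=0$ forces $q_+=1$. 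Your Steps 2 and 4 (eliminating $q=2$ and the $I=0$ subcase by rescaling, and pinning down the asymptotic eigenfunction $f_{\delta_0}$ via the enlarged class of bad radial rays) are consistent with what the paper does, but you need to insert the partition-condition correction, or some substitute argument, to close the $p_+>0$ case before the identification with $A_2'$ is complete.
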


Here $A_2'$ is as given in Step 4 of the proof of Theorem~\ref{thm: chain homotopy part i}.

\begin{proof}
This is similar to Lemma~\ref{pooh}. The contributions of ($\beta_1$), ($\widetilde\beta_2'$), ($\beta_3$), ($\beta_4$), and Equation \eqref{A1} with $j_0=b+1$ add up to $q_++2p_+$.

If $p_+>0$, then $\sum_{\overline{v}_*}I(\overline{v}_*)\geq q_+ + 2p_+ \geq 3$, a contradiction.

If $p_+=0$, then $q_++2p_+\geq 1$. By the usual rescaling argument, the negative end of $\overline{v}_+^\sharp$ that limits to $\delta_0$ additionally contributes $+1$ to $I$. Hence if $\sum_{\overline{v}_*}I(\overline{v}_*)=2$, then $q_+=1$.  This implies that $\overline{u}_\infty\in A_2'$.
\end{proof}

\begin{lemma} \label{piglet}
If $m\gg 0$, $\overline{u}_\infty\in \bdry_{\{+\infty\}}\mathcal{M}$, and $\overline{v}'_-\not=\varnothing$, then $\overline{u}_\infty\in A_2'$.
\end{lemma}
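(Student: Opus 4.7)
The plan is to reduce to Lemma~\ref{jeeves} by showing that, under the hypotheses, $\overline{v}_+^\sharp$ must have a negative end asymptotic to a multiple of $\delta_0$. So I will argue by contradiction: assume $\overline{v}_-'\not=\varnothing$ and that $\overline{v}_+^\sharp$ has no negative end at a multiple of $\delta_0$, and derive a contradiction via the rescaling/involution machinery already used to dispose of Case~($6_i$) of Lemma~\ref{hojicha}.

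First I would extract the structural consequences of the assumption. By Lemma~\ref{pooh}, no $\overline{v}_{0,j}^\sharp$ with $1\leq j\leq b$ has a negative end asymptotic to a multiple of $\delta_0$. Combined with the assumption on $\overline{v}_+^\sharp$, the positive end $\delta_0^{p_-}$ of $\overline{v}_-'$ can only be matched by negative ends of the branched covers $\overline{v}_{0,j}'$ and $\overline{v}_+'$ of the sections at infinity. This forces $p_-=p_{0,1}=\dots=p_{0,b+1}=p_+$, and in particular $p_+\geq 1$. By Lemma~\ref{eliminate fiber components 2}, $\overline{u}_\infty$ has no fiber components, no removable boundary punctures, and no interior intersection of $\overline{v}_*'$ with $\overline{v}_*''$. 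Hence the subbuilding $\overline{v}_-' \cup (\cup_j \overline{v}_{0,j}') \cup \overline{v}_+'$ of covers of the sections at infinity is the same kind of configuration which appears in Case~($6_i$) of Lemma~\ref{hojicha}, with the caveat that $\overline{v}_{1,j}^\sharp$ may now have positive ends at $z_\infty$ coming from $\mathcal{I}$.

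Next I would repeat the index bookkeeping from the proof of Lemma~\ref{hojicha} Case~($6_i$): $I(\overline{v}'_+)=-p_+$; the holomorphic building hanging from the negative end of $\overline{v}'_+$ contributes at least $p_+$ to $I$ via contributions ($\beta_2$), ($\beta_3$), ($\beta_4$); $I(\overline{v}_{1,j}^\sharp)\geq 1$ for each nontrivial upper level; some end or boundary puncture of type (P$_3$) must correspond to a large sector, contributing an extra $+1$; boundary punctures of type (P$_3$) contribute $\mathfrak{bp}$ via Lemma~\ref{nonnegative ECH indices 5 better}. Matching these contributions against $\sum_* I(\overline{v}_*)=2$ forces the same tight inventory as in Case~($6_i$): exactly one $\overline{v}_{1,j_0}^\sharp$ with $I=i\in\{1,2\}$, $\overline{v}_+^\sharp$ with $I=2-i$ and no negative end at any multiple of $\delta_0$, $p_+-p_{0,0}$ components of $\overline{v}_-^\sharp$ and $\overline{v}_{0,j}^\sharp$ from $\delta_0$ to $h$, $e$, $x_i$ or $x_i'$, and $p_{0,0}$ thin strips among the $\overline{v}_{-1,j}^\sharp$; no boundary punctures of type (P$_3$); and the planar-disk conclusion of Lemma~\ref{banff} applied to the surface $F$ obtained by gluing the domains of the $\overline{v}_*'$.

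Finally I would apply the rescaling procedure from the proof of Lemma~\ref{vancouver} Case~($6_i$) to the cover-of-sections-at-infinity piece. Projecting to $D_{\rho_0}$ in balanced coordinates, rescaling by positive reals, and letting $m\to\infty$, this produces a three-level holomorphic building $w_\infty = w_-\cup w_{0,1}\cup w_+$ with branched covers $\pi_*\colon \Sigma_*\to cl(B_*)$, where $\Sigma_\infty$ is a planar disk. Properties (i$_-$)--(iv$_-$) hold for $w_-$; properties (i$_{0,1}$)--(iv$_{0,1}$) hold for $w_{0,1}$, with the points $d_i\in\R^{\ge 0}$ recording the asymptotic eigenfunctions of the $\overline{v}_-^\sharp$ components; and $w_+$ satisfies (i$_+$)--(iv$_+$) with $w_+(\pi_+^{-1}(-\infty))\subset \R^{\geq 0}$ determined by the cylinders of $\overline{v}_{0,1}^\sharp$ from $\delta_0$ to $h$ (whose asymptotic rays satisfy $\mathcal{R}_{\phi_h}\to \mathcal{R}_0$ by the choice of $h$ in Section~\ref{subsubsection: choice of hyperbolic orbit h}) and the positive ends at $z_\infty$ from $\mathcal{I}$. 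Tracking asymptotic markers successively through $w_-$, $w_{0,1}$, $w_+$ via Involution Lemmas~I.\ref{P1-lemma: effect of involutions}--I.\ref{P1-lemma: effect 3} forces $f_+(\infty) = \mathcal{L}_{3/2}\cap \bdry B_+$, contradicting $w_+(\bdry\Sigma_+)\subset\{\phi=0,\,0<\rho\le\infty\}$. Once $\overline{v}_+^\sharp$ must have a negative end at a multiple of $\delta_0$, Lemma~\ref{jeeves} yields $\overline{u}_\infty\in A_2'$.

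The main obstacle is the last step: the presence of extra $z_\infty$ ends on $\overline{v}_+^\sharp$ coming from $\mathcal{I}$, together with possibly nontrivial data $\overrightarrow{\mathcal{D}}_+$ at the positive end of $\overline{v}_+'$, slightly alters the boundary/asymptotic conditions for $w_+$ compared to the clean Case~($6_i$); verifying that the planarity conclusion of Lemma~\ref{banff} still forces the compactification of $\Sigma_+$ to be a closed disk (so that Involution Lemma~I.\ref{P1-lemma: effect 3} applies) requires re-running the index calculation \eqref{ind} with the groomings produced by the new continuation argument of Section~\ref{continuation argument}, i.e.\ using the almost-alternating property of Lemma~\ref{almost alternate}. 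Once this planarity input is in place, the involution obstruction goes through verbatim.
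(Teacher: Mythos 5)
Your reduction to Lemma~\ref{jeeves} via the dichotomy ``either $\overline{v}_+^\sharp$ has a negative end at a multiple of $\delta_0$, or we are in a Case~($6_i$)-type configuration'' is too coarse, and this is where the proof has a genuine gap. Once negative ends of $\overline{v}_{0,j}^\sharp$ and $\overline{v}_+^\sharp$ at multiples of $\delta_0$ are excluded, the excess $n^*\approx m$ demanded by Equation~\eqref{sum of n part 2} need not be carried by negative ends of the upper levels $\overline{v}_{1,j}^\sharp$ at $z_\infty$. Two further possibilities remain, and both are substantive: (i) some \emph{lower} level $\overline{v}_{-1,j_0}^\sharp$ has a positive or negative end at $z_\infty$ with $n^*(\mathcal{E})>m/2$ (the paper disposes of this by the argument of (2E) in the proof of Lemma~\ref{eliminate fiber components 2}, an Involution Lemma argument applied to $w_-$ rather than $w_+$); and (ii) some $\overline{v}_{0,j_0}^\sharp$ has a \emph{positive} end $\mathcal{E}$ at $\delta_0^{c_2}$ with $n^*(\mathcal{E})=m+c_2$. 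Possibility (ii) is invisible in your setup precisely because your claim $p_-=p_{0,1}=\dots=p_+$ is false in general: positive ends of $\overline{v}_-^\sharp$ and $\overline{v}_{0,j}^\sharp$ at $\delta_0$ increase the covering degrees going up, so one only has $p_{0,0}\leq\dots\leq p_{0,b+1}$, and the curve carrying the excess $n^*$ can sit in a symplectization level rather than in $\overline{v}_{1,j}^\sharp$. Ruling (ii) out is not a routine repetition of Case~($6_i$): the paper needs a new argument combining the relative asymptotic decay rates of the ends $\mathcal{E}$ and $\mathcal{E}'$ (to force $c_3=0$) with an ECH index lower bound $I(\overline{v}_{0,j_0}^{\sharp_2})\geq 1+2c_2$ obtained from the adjunction inequality, the writhe/Conley--Zehnder bookkeeping at the non-partition-condition end $\delta_0^{c_2}$, and the codimension-two vanishing condition on the leading asymptotic eigenfunction.

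The part of your argument you do carry out — reducing the ``negative end of $\overline{v}_{1,j}^\sharp$ at $z_\infty$'' case to the rescaling/involution contradiction of Case~($6_i$) of Lemma~\ref{vancouver}, with the planarity input from Lemma~\ref{banff} re-verified via the almost-alternating groomings of Section~\ref{continuation argument} — matches the paper's treatment of its case (2), and your identification of the main obstacle there is accurate. But as written the proof establishes only one of the three mutually exclusive scenarios that must be excluded before Lemmas~\ref{pooh} and \ref{jeeves} can be invoked to conclude $\overline{u}_\infty\in A_2'$.
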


\begin{proof}
Consider $\overline{u}_\infty\in \bdry_{\{+\infty\}}\mathcal{M}$ such that $\overline{v}'_-\not=\varnothing$. By Lemma~\ref{eliminate fiber components 2}, there is no boundary point at $z_\infty$, no fiber component and no $\overline{v}_*''$ such that $\overline{v}'_*\cap\overline{v}''_* \not=\varnothing$. By Lemmas~\ref{pooh} and \ref{jeeves}, if $\overline{v}_{0,j_0}^\sharp$, $1\leq j_0\leq b+1$, has a negative end asymptotic to a multiple of $\delta_0$, then $\overline{u}_\infty\in A_2'$.

It remains to consider three possibilities, all of which satisfy $p_{0,0}\leq \dots\leq p_{0,b+1}$:
\begin{enumerate}
\item Some $\overline{v}^\sharp_{-1,j_0}$, $j_0=1,\dots,c+1$, has a positive or negative end $\mathcal{E}$ at $z_\infty$ such that $n^*(\mathcal{E})> m/2$.
\item Some $\overline{v}^\sharp_{1,j_0}$, $j_0=0,\dots,a$, has a positive or negative end $\mathcal{E}$ at $z_\infty$ such that $n^*(\mathcal{E})> m/2$.
\item Some $\overline{v}^\sharp_{0,j_0}$, $j_0=0,\dots,b$, has a positive end $\mathcal{E}$ at a multiple of $\delta_0$ such that $n^*(\mathcal{E})> m/2$.
\end{enumerate}
It is clear that (1), (2), and (3) are mutually exclusive.

\s
(1) is similar to (2E) of Lemma~\ref{eliminate fiber components 2} and (2) is similar to Case (6$_i$) of Lemma~\ref{vancouver}.

\s
(3) In this case, (a)--(c) in the proof of Lemma~\ref{eliminate fiber components 2} hold, with the exception of the level $\overline{v}_{0,j_0}$.
Observe that $\overline{v}_{0,j_0}'\not=\varnothing$ and $\overline{v}_{0,j_0}^\sharp$ consists of:
\begin{itemize}
\item cylinders from $\delta_0$ to $e$ or $h$ which contribute $I=1$ or $2$ each; and
\item a curve $\overline{v}_{0,j_0}^{\sharp_1}$ with one positive end $\mathcal{E}^\sharp_1$ which is asymptotic to $\delta_0^{c_1}$ and satisfies $n^*(\mathcal{E}^\sharp_1)=m+c_1$ and other positive ends which in total are asymptotic to $\delta_0^{c_2}$ and satisfy $n^*=c_2$.
\end{itemize}

We apply the rescaling argument with $m\gg 0$ fixed to obtain $w_{0,j_0+1}: \Sigma_{0,j_0+1}\to \C\P^1$ and $\pi_{0,j_0+1}:\Sigma_{0,j_0+1}\to cl(B_{0,j_0+1})$ with $B_{0,j_0+1}\simeq B'$ and $\Sigma_{0,j_0+1}$ connected, such that the following hold:
\begin{enumerate}
\item[(i)] $w_{0,j_0+1}(z_0)=0$ for some $z_0\in \pi^{-1}_{0,j_0+1}(-\infty)$;
\item[(ii)] $w_{0,j_0+1}(z_1)=\infty$ for some $z_1\not=z_0\in \pi^{-1}_{0,j_0+1}(-\infty)$;
\item[(iii)] $w_{0,j_0+1}$ is a biholomorphism onto its image; and
\item[(iv)] in particular, $w_{0,j_0+1}(z)\not= 0,\infty$ for $z\in \pi^{-1}_{0,j_0+1}(\pm\infty)-\{z_0,z_1\}$.
\end{enumerate}

We now compute the contributions to the ECH index from the ends of $\overline{v}'_{0,j_0} \cup \overline{v}_{0,j_0}^{\sharp}$ that limit to multiples of $\delta_0$ at the positive end. Here is the list of such ends:
\begin{itemize}
\item the union $\mathcal{E}'$ of positive ends of $\overline{v}'_{0,j_0}$;
\item $\mathcal{E}^\sharp_1$ satisfying $n^*(\mathcal{E}^\sharp_1)=m+c_1$;
\item the union $\mathcal{E}^\sharp_2$ of positive ends of $\overline{v}^\sharp_{0,j_0}$ that correspond to punctures of $w_{0,j_0+1}$;
\item the union $\mathcal{E}^\sharp_3$ of all other positive ends.
\end{itemize}
We use the formula:
\begin{equation}
\op{ind}(\overline{u}) +(\widetilde{\mu}_\tau(\overline{u})-\mu_\tau(\overline{u})-w_\tau(\overline{u}))\leq I(\overline{u}),
\end{equation}
where the notation is as in Section~I.\ref{P1-subsection: compactness PFH case} and the equation follows from the adjunction inequality and \cite[Lemma~4.20]{Hu2}.
The end $\mathcal{E}^\sharp_1$ has contributions $\widetilde{\mu}_\tau=c_1$, $\mu_\tau=1$, and $w_\tau=1-c_1$. Moreover, since the vanishing of the leading asymptotic eigenfunction corresponding to the end $\mathcal{E}^\sharp_1$ is a codimension two condition which contributes $2$ to the Fredholm index, the extra contribution to $I$ from $\mathcal{E}^\sharp_1$ is $c_1-1-(1-c_1)+2=2c_1$. The contributions to $I$ from $\mathcal{E}'$ and $\mathcal{E}^\sharp_2$, arising from a writhe computation, is $2(\deg \mathcal{E}'+\deg \mathcal{E}^\sharp_2)=2(p_{0,j_0} +\deg\mathcal{E}^\sharp_2)$.  Finally, the contributions to $I$ from $\mathcal{E}^\sharp_3$ is $\deg\mathcal{E}^\sharp_3$ by the argument of Lemma~\ref{lemma 2012}.

The total ECH index of all the levels is $\geq  2c_1+ 2p_{0,j_0} \geq4$, a contradiction.
\end{proof}

\subsubsection{The case $\overline{v}'_-=\varnothing$}

\begin{lemma}\label{eeyore}
If $m\gg 0$, then there is no $\overline{u}_\infty\in \bdry_{\{+\infty\}}\mathcal{M}$ such that $\overline{v}'_-=\varnothing$.
\end{lemma}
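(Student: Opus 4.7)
The plan is to argue by contradiction. Assume $\overline{u}_\infty\in \bdry_{\{+\infty\}}\mathcal{M}$ has $\overline{v}'_-=\varnothing$. By Lemma~\ref{eliminate fiber components 2}, $\overline{u}_\infty$ has no fiber components, no levels with $\overline{v}'_*\cap\overline{v}''_*\not=\varnothing$, and no boundary punctures removable at $z_\infty$. Since the point constraint $\overline{\frak m}(+\infty)=(\overline{\frak m}^b(+\infty),z_\infty)$ lies on $\sigma_\infty^-$ and must be in the image of $\overline{u}_\infty$, some irreducible component of $\overline{v}''_-$ passes through it; this component is necessarily part of $\overline{v}_-^\flat$, not a section from $\delta_0$ to $x_i$ or $x_i'$, since the latter does not meet $z_\infty$.

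I would first establish $n^*(\overline{v}_-)\geq m$ by repeating part (3A) of the proof of Lemma~\ref{eliminate fiber components 2}: in a neighborhood of the preimage of $\overline{\frak m}(+\infty)$, the pushoff $(\sigma_\infty^-)^{\dagger,+}$ wraps $m$ times around $\sigma_\infty^-$ (because $\overline{h}$ near $z_\infty$ is approximately rotation by $-2\pi/m$), so any transverse sheet of $\overline{v}_-^\flat$ through the point meets the pushoff in at least $m$ nearby points. Substituted into Equation~\eqref{sum of n part 2} this gives the tight budget $\sum_{\overline{v}_*\succ\overline{v}_-} n^*(\overline{v}_*)\leq |\mathcal{I}|\leq 2g$. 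The intersection-number estimates of Lemmas~\ref{intersezione}, \ref{intersezione revisited}, and \ref{cherimoya3} then force the $|\mathcal{I}|$ positive ends at $z_\infty$ at the top to be absorbed by a single branched cover $\overline{v}'_+$ of $\sigma_\infty^+$ of degree $p_+=|\mathcal{I}|$, with the intermediate $\overline{v}^\sharp_{0,j}$'s restricted to the form described in items~($\beta_2$)--($\beta_4$) of the proof of Lemma~\ref{eliminate fiber components 2}. Because $\overline{v}'_-=\varnothing$, the $\delta_0^{p_+}$ at the top of $\overline{v}_-$ is carried entirely by $p_+$ sections of $\overline{v}_-^\sharp$ from $\delta_0$ to $x_i/x_i'$.

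The decisive step is to raise the ECH-index count by one by means of the Involution Lemmas, in the spirit of part (3D) of Lemma~\ref{eliminate fiber components 2} and the proof of Lemma~\ref{piglet}(3). I would apply the rescaling argument to the component of $\overline{v}_-^\flat$ through $\overline{\frak m}(+\infty)$, producing a limit $w_-:\Sigma_-\to\C\P^1$ and a branched cover $\pi_-:\Sigma_-\to cl(B_-)$ with $w_-(z_0)=0$ for some $z_0\in\pi_-^{-1}(\overline{\frak m}^b(+\infty))$, $w_-(\bdry\Sigma_-)\subset\{\phi=0,\rho>0\}$, and---because of the $\delta_0$-ends at the top of $\overline{v}_-$ forced by the absence of $\overline{v}'_-$---$w_-(\pi_-^{-1}(+\infty))\subset\{\phi=0,\rho>0\}$. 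The Involution Lemma~I.\ref{P1-lemma: effect of involutions} then forces $\pi_-\circ w_-^{-1}$ to send the $\{\op{Im}(z)=0\}$-component through $\infty$ onto $\mathcal{L}_{3/2}$, which is an \emph{effective} codimension-$1$ condition on $\overline{v}_-^\flat$ (as in Remark~\ref{elmwood scone}) \emph{in addition} to the codimension-$2$ condition of passing through $\overline{\frak m}(+\infty)$. Together these give $I(\overline{v}_-^\flat)\geq 3$, and combined with $I(\overline{v}_-^\sharp)=|\mathcal{I}|$, $I(\overline{v}'_+)=-|\mathcal{I}|$, and the nonnegative contributions from the remaining upper levels (Lemma~\ref{nonnegative ECH indices 5 better}), I obtain $\sum_{\overline{v}_*}I(\overline{v}_*)\geq 3$, contradicting Equation~\eqref{sum of I part 2}.

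The main obstacle will be verifying the boundary condition $w_-(\pi_-^{-1}(+\infty))\subset\{\phi=0,\rho>0\}$, which requires tracking the relative asymptotic eigenfunctions at the $\delta_0$-ends of the $|\mathcal{I}|$ sections comprising $\overline{v}_-^\sharp$ (in the spirit of Lemma~\ref{lemma 2012}) and ensuring continuity of the rescaled limit across the two ECH ends. Once this bookkeeping is in place, the Involution Lemma delivers the extra codimension and the contradiction is immediate.
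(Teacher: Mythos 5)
Your setup is right (the reduction via Lemma~\ref{eliminate fiber components 2}, and the observation that the point constraint forces $n^*(\overline{v}''_-)\geq m$ and hence $I(\overline{v}''_-)\geq 2$), but the two steps that are supposed to produce the contradiction both have problems. First, you assert that the $|\mathcal{I}|$ positive ends at $z_\infty$ must be absorbed by a branched cover $\overline{v}'_+$ of degree $p_+=|\mathcal{I}|$. That is not forced: those ends can instead be consumed by components of $\overline{v}''_+$ that are sections of $\overline{W}_+$ from $z_\infty$ to $h$ or $e$, or by nontrivial upper levels $\overline{v}_{1,j}$, $j>0$, with $\overline{v}'_+=\varnothing$. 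These are exactly the cases the paper must treat separately when $p_{0,b+1}=0$ (a $z_\infty$-to-$h$ section is excluded by the choice of $h$, a $z_\infty$-to-$e$ section has $I\geq 1$, and a nontrivial $\overline{v}_{1,j}$ has $I\geq 1$; each pushes the total to $\geq 3$). Your proposal never addresses this branch.

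Second, and more seriously, your ``decisive step'' misapplies the rescaling/Involution Lemma machinery. The rescaled map $w_-:\Sigma_-\to\C\P^1$ with branched cover $\pi_-:\Sigma_-\to cl(B_-)$ is produced by restricting $\overline{u}_i$ to a neighborhood of $\sigma_\infty^{\tau_i}$ and taking the limit of the branched-cover part; it exists precisely because $\overline{v}'_-\not=\varnothing$ (as in (2D) and (3D) of Lemma~\ref{eliminate fiber components 2}, or in Lemma~\ref{piglet}). In your hypothesis $\overline{v}'_-=\varnothing$, so the component of $\overline{v}_-^\flat$ through $\overline{\frak m}(+\infty)$ merely crosses $\sigma_\infty^-$ transversally at one point; there is no cover of $cl(B_-)$ whose uniqueness up to a positive real constant would pin down a marker, and the Involution Lemma gives no extra codimension-one condition on $\overline{v}_-^\flat$. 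Thus you only get $I(\overline{v}_-^\flat)\geq 2$, which is consistent with $I=2$ and yields no contradiction by itself. The missing unit of index has to come from the upper part of the building: when $p_{0,b+1}>0$ it comes from the asymptotic constraints at the negative $\delta_0$-end of $\overline{v}'_+$ established in (the proof of) Lemma~\ref{lemma 2012} (which over-determine the cylinders or sections hanging below), and when $p_{0,b+1}=0$ it comes from the $z_\infty$-to-$e$ section or the nontrivial $\overline{v}_{1,j}$ as above.
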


\begin{proof}
Arguing by contradiction, suppose there exists $\overline{u}_\infty\in \bdry_{\{+\infty\}}\mathcal{M}$ such that $\overline{v}'_-=\varnothing$. By Lemma~\ref{eliminate fiber components 2}, there is no boundary point at $z_\infty$, no fiber component, and no $\overline{v}_*''$ such that $\overline{v}'_*\cap\overline{v}''_* \not=\varnothing$.

The point constraint gives $n^*(\overline{v}''_-)\geq m$ and $I(\overline{v}''_-)\geq 2$. This immediately implies that (a)--(d) in the proof of Lemma~\ref{eliminate fiber components 2} hold. In particular, ($\beta_2$) from Lemma~\ref{eliminate fiber components 2} holds.

If $p_{0,b+1}>0$, then the proof of Lemma~\ref{lemma 2012} gives us a contradiction. Hence $p_{0,b+1}=0$. If a positive end of $\overline{v}''_+$ limits to $z_\infty$, then a component $\widetilde{v}$ of $\overline{v}''_+$ must be a section of $\overline{W}_+$ from $z_\infty$ to $h$ or $e$. However, by the choice of $h$ from Section~\ref{subsubsection: convention bambi} and the proof of Lemma~I.\ref{P1-lemma: value of widetilde Phi}, sections of $\overline{W}_+$ from $z_\infty$ to $h$ cannot exist, leaving us with the possibility that $\widetilde{v}$ is a section from $z_\infty$ to $e$.  Hence $I(\widetilde{v})\geq 1$ and the total ECH index is $\geq 3$, a contradiction. On the other hand, if $\overline{v}''_+=\overline{v}_+^\flat$, then $I(\overline{v}_{1,j})\geq 1$ for some $j>0$, which is again a contradiction.
\end{proof}

\begin{proof}[Proof of Lemma~\ref{kyoho plus infty}]
Lemma~\ref{kyoho plus infty} follows from Lemmas~\ref{piglet} and \ref{eeyore}.
\end{proof}

\subsection{Degeneration at $-\infty$, part II}
\label{subsection: additional degenerations II}

Let $\overline{u}_\infty\in \bdry_{\{-\infty\}}\mathcal{M}$ be the limit of $\overline{u}_i\in \mathcal{M}_{\tau_i}$, where $\tau_i\to -\infty$.

\begin{lemma} \label{nonnegative ECH indices 6}
If $\overline{u}_\infty$ has no fiber components and $\overline{v}'_1=\varnothing$, then the ECH index of each level is nonnegative.
\end{lemma}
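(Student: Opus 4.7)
The plan is to adapt the argument of Lemma~\ref{nonnegative ECH indices 5} (which handled ends at $z_\infty$ in the $\tau \to +\infty$ degeneration) to the $\tau\to-\infty$ degeneration of Section~\ref{subsection: chain homotopy part 3}, where $\overline{W}_\tau$ breaks into $\overline{W}_{-\infty,1}\cup\overline{W}_{-\infty,2}$. The assumption $\overline{v}'_1=\varnothing$ is exactly what allows us to reuse Lemma~\ref{nonnegative ECH indices 3}(4) on the horizontal levels $\overline{v}_{L,j},\overline{v}_{R,j}$, and avoids the pathology that forced the tropical-geometry argument of Sections~\ref{truncation}--\ref{second case}.

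First I would treat the levels $\overline{v}_{T,j}$ and $\overline{v}_{B,j}$ mapping to $\R\times[0,1]\times\overline{S}$. These are handled exactly as in the proof of Lemma~\ref{nonnegative ECH indices 5}: the continuation argument of Section~\ref{continuation argument} and its analog in Case~3 of Section~\ref{subsection: chain homotopy part 3} produce, for each such level $\overline{v}_{*}$, a main cycle $\mathcal{Z}_{\mathrm{main}}$ winding once around $\R/2\pi\Z$ together with auxiliary cycles $\mathcal{Z}_{\mathrm{aux}}$ made of thin pairs coming from the data $\ar{\mathcal{D}}'_{\pm,*}$ at the positive end of $\overline{v}'_{*}$. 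Exactly as in Lemma~\ref{almost alternate}, an inductive traversal of the building (starting from the topmost or bottommost level that contains $\overline{v}'_*$ or $\overline{v}_*^\sharp$) shows that the pair $(P_{\pm,*,0},P_{\pm,*,1})$ of initial and terminal points on $A_\varepsilon$ is almost alternating. One then constructs the representatives $\check C_1,\check C_2,\check C_3$ as in Lemma~\ref{nonnegative ECH indices 5} and applies Lemmas~I.\ref{P1-calc of almost sum} and I.\ref{P1-calc of almost sum 2} to conclude $I(\overline{v}_*)\geq I(\overline{v}'_*)+I(\overline{v}''_*)\geq 0$; the final inequality uses the ordinary index inequality for $\overline{v}''_*$ and the vanishing $I(\check C_1)=I(\check C_2)=0$ computed via Lemma~I.\ref{P1-lemma: HF index of sections at infinity}. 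Boundary punctures of type~(P$_3$) contribute an extra $\mathfrak{bp}_*$, as in Lemma~\ref{nonnegative ECH indices 5 better}.

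For the horizontal levels $\overline{v}_{L,j}$ ($1\leq j\leq a$), $\overline{v}_{R,j}$ ($1\leq j\leq b+1$), and $\overline{v}_2$, I would run the horizontal continuation argument (Case~2 of Section~\ref{subsection: chain homotopy part 3}), now with the pair $(\overline{\bf b},\overline{\bf a})$ playing the role that $(\overline{\bf a},\overline{h}(\overline{\bf a}))$ played on the top. The hypothesis $\overline{v}'_1=\varnothing$ is the crucial input: it guarantees that there is no ``$\overline{v}'_1$-contribution'' leaking into $\mathcal{Z}_{\mathrm{aux}}$ between consecutive horizontal levels, so the inductive step of the analog of Lemma~\ref{almost alternate} goes through and the resulting pair on $A_\varepsilon$ is again almost alternating. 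The same $(\check C_1,\check C_2,\check C_3)$ construction and application of Lemmas~I.\ref{P1-calc of almost sum} and I.\ref{P1-calc of almost sum 2} then give $I(\overline{v}_*)\geq I(\overline{v}'_*)+I(\overline{v}''_*)\geq 0$, recovering and extending Lemma~\ref{nonnegative ECH indices 3}(4). The level $\overline{v}_1$ itself satisfies $\overline{v}'_1=\varnothing$ by hypothesis, so $I(\overline{v}_1)=I(\overline{v}''_1)\geq 0$ by the usual index inequality.

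The main obstacle I anticipate is verifying the almost-alternating property across the ``corners'' where horizontal and vertical stretching meet, i.e., when tracking the continuation through $\overline{v}_1$ or $\overline{v}_2$ from a top/bottom level into a horizontal level. Here one must be careful that the chord data passing through the corner is consistent with the inductive hypothesis and that the thin-pair bookkeeping of Lemma~\ref{almost alternate} remains valid; the assumption $\overline{v}'_1=\varnothing$ eliminates the only mechanism by which thin pairs could be created or destroyed at the corner, so the induction closes. Once this is verified, the conclusion follows by piecing together the three local contributions above.
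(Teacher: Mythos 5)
Your proposal is correct and follows essentially the same route as the paper, whose entire proof is the one-line citation ``similar to that of Lemma~\ref{nonnegative ECH indices 3} and uses the considerations of Lemma~\ref{nonnegative ECH indices 5 better}.'' You have simply unpacked that citation faithfully: the vertical levels get the almost-alternating/thin-pair machinery of Lemmas~\ref{almost alternate}--\ref{nonnegative ECH indices 5 better}, and the horizontal levels get the $(\overline{\bf b},\overline{\bf a})$ analog of Lemma~\ref{nonnegative ECH indices}, with $\overline{v}'_1=\varnothing$ being precisely the hypothesis that makes Lemma~\ref{nonnegative ECH indices 3}(4) applicable.
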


\begin{proof}
The proof is similar to that of Lemma~\ref{nonnegative ECH indices 3} and uses the considerations of Lemma~\ref{nonnegative ECH indices 5 better}.
\end{proof}

\subsubsection{The case $\overline{v}_2'=\varnothing$}

\begin{lemma} \label{owl5}
If $\overline{u}_\infty\in \bdry_{\{-\infty\}}\mathcal{M}$ and $\overline{v}'_2=\varnothing$, then:
\begin{enumerate}
\item $\overline{u}_\infty$ is a $2$-level building consisting of $\overline{v}_1$ with $I(\overline{v}_1)=0$, $\overline{v}_2$ with $I(\overline{v}_2)=2$, and $\overline{v}'_j=\varnothing$ for $j=1,2$;
\item the left and right ends of $\overline{v}_1$ ($=$ left and right ends of $\overline{v}_2$) do not limit to $z_\infty$;
\item $\overline{u}_\infty$ has no fiber component;
\item $\overline{u}_\infty$ has no boundary point at $z_\infty$.
\end{enumerate}
\end{lemma}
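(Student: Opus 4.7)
The plan is to first use the point constraint and the assumption $\overline{v}_2'=\varnothing$ to extract a large lower bound on $n^*(\overline{v}_2)$, and then use the total $n^*$ budget together with the continuation arguments of Sections~\ref{subsection: chain homotopy part 3} and \ref{subsection: additional degenerations I} to force every other level to be as trivial as possible. Since $\overline{\frak m}(-\infty)=(\overline{\frak m}^b(-\infty),z_\infty)\in\overline{W}_{-\infty,2}$ and $\overline{v}_2'=\varnothing$, some component of $\overline{v}_2''$ must pass through $(\overline{\frak m}^b(-\infty),z_\infty)$; whether this component is a fiber $\{\overline{\frak m}^b(-\infty)\}\times\overline{S}$ or a non-fiber multisection meeting the section at infinity in its interior, the $m$ parallel push-offs $(\sigma_\infty^{-\infty,2})^{\dagger,\pm}$ are intersected in at least $m$ points, so $n^*(\overline{v}_2)\geq m$. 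Since $n^*(\overline{u}_\infty)=m+|\mathcal{I}|$ with $|\mathcal{I}|\leq 2g$, this leaves at most $|\mathcal{I}|\leq 2g$ of $n^*$ budget for everything else.

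Next, I would rule out $\overline{v}_1'\neq\varnothing$. If $p_1:=\deg(\overline{v}_1')\geq 1$, then the top and bottom ends of $\overline{v}_1'$ limit to $z_\infty$ and, after propagating through the HF intermediate levels $\overline{v}_{L,*}$, $\overline{v}_{R,*}$ (which are forced by the $n^*$ budget to consist of trivial and thin strips, by the analogue of Lemma~\ref{sencha 3}(4)), they match left/right ends of $\overline{v}_2$. Because $\overline{v}_2'=\varnothing$, these matching ends belong to $\overline{v}_2^\sharp$, and the horizontal continuation argument underlying Lemma~\ref{intersezione three} yields an extra contribution of at least $m-2g$ to $n^*(\overline{v}_2)$, geometrically disjoint from (hence additive to) the contribution $m$ from the fiber/interior intersection at $\overline{\frak m}$. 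Thus $n^*(\overline{v}_2)\geq 2m-2g>m+|\mathcal{I}|$ for $m\gg 0$, a contradiction. A parallel application of the continuation argument (a mechanical adaptation of Lemmas~\ref{sencha 3} and \ref{eliminate fiber components 2} to the $-\infty$-degeneration with $|\mathcal{I}|\geq 1$) eliminates fiber components other than the one in $\overline{v}_2$, components of $\overline{v}''_*$ that intersect the interior of any $\sigma_\infty^*$, boundary punctures of types (P$_1$), (P$_2$), (P$_3$) removable at $z_\infty$, and any nonzero $\overline{v}^\sharp_*$ beyond the trivial strips automatically needed to carry the top $z_\infty$ punctures. In particular, if the left/right ends of $\overline{v}_2$ themselves were at $z_\infty$, the same continuation argument immediately overruns the $n^*$ budget, which proves (2).

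At this stage $\overline{v}_1'=\overline{v}_2'=\varnothing$, there are no removable boundary punctures, no fiber components other than possibly one in $\overline{v}_2$, and all $\overline{v}^\sharp_*=\varnothing$. The HF intermediate levels $\overline{v}_{L,*}$, $\overline{v}_{R,*}$, $\overline{v}_{T,*}$, $\overline{v}_{B,*}$ reduce to connectors, which collapse, so the building becomes $\overline{v}_1\cup\overline{v}_2$ with $\overline{v}_1$ a $\overline{W}_{-\infty,1}$-curve and $\overline{v}_2$ a $\overline{W}_{-\infty,2}$-curve; this establishes (1), (3), (4). Finally, Lemma~\ref{nonnegative ECH indices 6} applies (since $\overline{v}_1'=\varnothing$) to give $I(\overline{v}_*)\geq 0$ for both levels, and passing through $\overline{\frak m}(-\infty)$ is (effectively) a codimension-$2$ constraint on $\overline{v}_2$, hence $I(\overline{v}_2)\geq 2$. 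Combined with $I(\overline{u}_\infty)=2$, we conclude $I(\overline{v}_1)=0$ and $I(\overline{v}_2)=2$, completing (1).

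The main obstacle is the separation of contributions to $n^*(\overline{v}_2)$: one must argue that the fiber (or interior intersection) through $\overline{\frak m}(-\infty)$ and the $z_\infty$-end contribution produced by the continuation argument are geometrically disjoint, so that they genuinely add rather than share intersections with the pushed-off section at infinity. This is a local analysis of the components of $\overline{v}_2''$, using that the component realizing the point constraint (a fiber or a generic non-fiber multisection meeting $\sigma_\infty^{-\infty,2}$ at a single interior point) cannot simultaneously carry $z_\infty$-asymptotic ends of $\overline{v}_2^\sharp$, which live on different components with boundary on the Lagrangians $L^{-\infty,2}_{\overline{\bf a},+}\cup L^{-\infty,2}_{\overline{\bf b},-}$.
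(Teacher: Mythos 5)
Your overall strategy is the paper's: use the point constraint plus $\overline{v}_2'=\varnothing$ to get $n^*(\overline{v}_2)\geq m$, then overrun the budget $n^*=m+|\mathcal{I}|\leq m+2g$ with the $\gg 2g$ contribution of any end (or removable-puncture neighborhood) at $z_\infty$ to kill $\overline{v}_1'$, the $z_\infty$ left/right ends, and the removable punctures; then apply Lemma~\ref{nonnegative ECH indices 6} and the codimension-two point constraint to force $I(\overline{v}_1)=0$, $I(\overline{v}_2)=2$ and collapse the intermediate levels to connectors. Two remarks. First, your route to ruling out $\overline{v}_1'\neq\varnothing$ is more complicated than necessary: you propagate the $z_\infty$ ends all the way into $\overline{v}_2^\sharp$ and then have to worry about whether the resulting $\geq m-2g$ is additive with the $\geq m$ already carried by the component through $\overline{\frak m}(-\infty)$ (the "obstacle" you flag at the end). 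The paper sidesteps this entirely: the chain of $z_\infty$ ends emanating from $\overline{v}_1'$ must terminate at a right end of some $\overline{v}_{L,j}^\sharp$ (or of $\overline{v}_2^\sharp$), and a single such right end already satisfies $n^*(\mathcal{E}_R)\geq k_0-1\gg 2g$; since this contribution sits on a different piece of the curve from the neighborhood of ${\frak q}$ realizing the point constraint, no disjointness analysis is needed.

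Second, there is a genuine gap in your proof of conclusion (3). Your continuation/$n^*$ argument, as you yourself note, only eliminates fiber components "other than the one in $\overline{v}_2$" -- and it cannot do better, because a fiber of $\overline{v}_2$ through $\overline{\frak m}(-\infty)$ \emph{is} the component realizing the point constraint, so its $n^*=m$ is not additional to the $\geq m$ you already counted. You then assert (3) anyway, and your concluding index computation only claims $I(\overline{v}_2)\geq 2$ from the point constraint, which does not exclude the fiber. The missing step is the index jump: if $\overline{v}_2$ contains a fiber component, that fiber together with its $2g$ nodal intersections with the multisection part contributes $2g+2\geq 4$ to $I(\overline{v}_2)$ (as in the proof of Lemma~\ref{los angeles} and Lemma~I.\ref{P1-claim in proof}), contradicting $I(\overline{u}_\infty)=2$. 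This is exactly how the paper disposes of (3), and without it your argument does not close.
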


\begin{proof}
Suppose that $\overline{v}'_2=\varnothing$. First observe that
\begin{equation} \label{bi luo chun}
n^*(\overline{v}_2(N({\frak q})))\geq m,
\end{equation}
where ${\frak q}\in \dot F_2$ is the point such that $\overline{v}_2({\frak q})=\overline{\frak m}(-\infty)$.

We also have $I(\overline{v}_2)\geq 2$: If $\overline{u}_\infty$ has a fiber component $\widetilde{v}$, then $\widetilde{v}$ must pass through $\overline{\frak m}(-\infty)$ and $I(\overline{v}_2)\geq 2g+2\geq 4$ by an argument similar to that of Lemma~I.\ref{P1-claim in proof}. On the other hand, if $\overline{u}_\infty$ does not have a fiber component, then  $I(\overline{v}_2)\geq 2$, since passing through $\overline{\frak m}(-\infty)$ is a generic codimension two condition.

(1) We claim that $\overline{v}'_1=\varnothing$.  Arguing by contradiction, if $\overline{v}'_1\not=\varnothing$, then at least one of $\overline{v}_{L,j}^\sharp$, $j=0,\dots,a$, has a right end $\mathcal{E}_R$ that limits to $z_\infty$. On the other hand, the end $\mathcal{E}_R$ satisfies
\begin{equation}\label{dragonwell}
n^*(\mathcal{E}_R)\geq k_0-1\gg 2g,
\end{equation}
which contradicts Equation~\eqref{bi luo chun}. This proves the claim.

The claim and Lemma~\ref{nonnegative ECH indices 6} imply that each level of $\overline{u}_\infty$ has nonnegative ECH index. Hence $I(\overline{v}_2)=2$, $I(\overline{v}_1)=0$, and $a=b=c=d=0$ since a level $\overline{v}_{L,j}$, $j=1,\dots,a$, is not a connector if and only if $I(\overline{v}_{L,j})>0$ (and the same holds for $\overline{v}_{R,j}$, $j=1,\dots,b$, $\overline{v}_{B,j}$, $j=1,\dots,c$, and $\overline{v}_{T,j}$, $j=1,\dots,d$).

(2) If $\overline{v}_2$ has a right end $\mathcal{E}_R$ that limits to $z_\infty$, then $\mathcal{E}_R$ satisfies Equation~\eqref{dragonwell}, and we have a contradiction of Equation~\eqref{bi luo chun}. If $\overline{v}_2$ has a left end that limits to $z_\infty$, then $\overline{v}_1$ has a right end $\mathcal{E}_R$ that limits to $z_\infty$ and satisfies Equation~\eqref{dragonwell} (since $\overline{v}_1'=\varnothing$), which is again a contradiction.

(3) Since $I(\overline{v}_2)=2$, we cannot have a fiber component.

(4) A boundary point at $z_\infty$ contradicts Equation~\eqref{bi luo chun}.
\end{proof}

\subsubsection{The case $\overline{v}'_1=\varnothing$, $\overline{v}_2'\not=\varnothing$}

\begin{lemma} \label{alishan3}
If $m\gg 0$, then there is no $\overline{u}_\infty\in \bdry_{\{-\infty\}}\mathcal{M}$ such that $\overline{v}'_1=\varnothing$ and $\overline{v}_2'\not=\varnothing$.
\end{lemma}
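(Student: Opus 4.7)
The plan is to follow the strategy of Lemma~\ref{alishan} with modifications to accommodate the extra $z_\infty$ ends at the positive end of $\overline{u}_\infty$ that arise from $|\mathcal{I}|\geq 1$. Since $\overline{v}'_1=\varnothing$ but $\overline{v}'_2\neq\varnothing$, the branched-cover behavior $p_2=\deg(\overline{v}'_2)>0$ is confined to the bottom region $\overline{W}_{-\infty,2}$, while the top region $\overline{W}_{-\infty,1}$ and the vertical levels above it carry no branched covers of the section at infinity.

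First I would establish analogs of Lemmas~\ref{intersezione three}, \ref{nonnegative ECH indices 3}, and \ref{sencha 3} in the current setting. The continuation argument and grooming analysis now need to handle the almost-alternating property for pairs $(P_{*,0},P_{*,1})$, using the framework developed in Section~\ref{snowman} (in particular the analog of Lemma~\ref{almost alternate}) to account for the data $\overrightarrow{\mathcal{D}}$ at the positive ends limiting to $z_\infty$. These modifications yield: $p_2>0$; no fiber components, no components of $\overline{v}''_*$ intersecting the interior of a section at infinity, and no boundary punctures of type $(P_1)$ or $(P_2)$ that are removable at $z_\infty$; the ECH indices of all levels besides $\overline{v}_1$ are nonnegative; and each $\overline{v}_{L,j}$, $1\leq j\leq a$, and $\overline{v}_{R,j}$, $1\leq j\leq b+1$, consists of thin strips and trivial strips.

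Next I would apply the truncation procedure of Section~\ref{truncation} to produce $\widetilde{u}_i:\Sigma_i\to\overline{W}_{\tau_i}$ satisfying the analogs of (T1)--(T4), and observe that Lemma~\ref{otter2} still gives that $\Sigma_i$ is a disk with at least $2p_2$ boundary punctures. Following Section~\ref{orange2} I would then construct the tropical curve $\overline\Xi_i:\Gamma_i\to[-1,1]\times[0,d_i]$ using compact subsets $K_{ij}$, Lemma~\ref{compact sets}, and two interval contractions, obtaining properties (1)--(9) of Lemma~\ref{map} together with the independence from $i$ of Lemma~\ref{independence}. The crucial point is that property (6) still holds, namely $y\circ\overline\Xi_i(p)=d_i$ for all exterior vertices $p\in V_{\Gamma_i,e}$ and $\#V_{\Gamma_i,e}$ is even, since $\overline{v}'_1=\varnothing$ forces all exterior vertices to come from paired ends at $z_\infty$ of the horizontal and top/bottom $\overline{W}$-levels.

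The final step is to run the weight-function and Comparison Lemma argument exactly as in Lemma~\ref{alishan}. Choose $m$ so large that $K'/m\ll 1$, and pick an oriented path $\delta=(q_0\stackrel{e_0}\to\cdots\stackrel{e_{l-1}}\to q_l)$ with $\lambda_i'(e_j)\approx 1$ and $x\circ\overline\Xi_i(q_l)=\pm 1$. For each consecutive pair $q_{l_0},q_{l_0+1}$, the weight function $\mathcal{W}_i$ on $([-1,1]\times[0,d_i])\setminus\overline\Xi_i(\Gamma_i)$ cannot change by $\pm 1$ when crossing the single edge $e_{l_0}$ horizontally, because $\mathcal{W}_i(-1,y_0)=\mathcal{W}_i(1,y_0)=0$ by Lemma~\ref{map}(6); hence there must exist an auxiliary oriented path $\delta'=(p_1\stackrel{f_1}\to\cdots\stackrel{f_k}\to p_{k+1})$ with $p_1\preceq q_{l_0}\preceq q_{l_0+1}\preceq p_{k+1}$ and $0<\lambda_i'(f_j)\leq K/m$. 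The Comparison Lemma~\ref{comparison lemma} then gives $|x\circ\overline\Xi_i(q_{l_0+1})-x\circ\overline\Xi_i(q_{l_0})|\leq K'/m$, so telescoping along $\delta$ yields $|x\circ\overline\Xi_i(q_l)-x\circ\overline\Xi_i(q_0)|\leq lK'/m\ll 1$, contradicting $x\circ\overline\Xi_i(q_l)=\pm 1$ and $x\circ\overline\Xi_i(q_0)=0$. The main obstacle is verifying that the preliminary lemmas extend correctly in the presence of the extra $z_\infty$ ends; the almost-alternating framework of Section~\ref{snowman} is precisely what is needed for this.
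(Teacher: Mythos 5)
Your proposal is correct and follows essentially the same route as the paper: the paper's proof likewise establishes the analogs of Lemmas~\ref{intersezione three} and \ref{sencha 3} (using the $n^*$/$n^{*,alt}$ counts over the ends $\mathcal{E}_i$ at $z_\infty$ and the neighborhoods $\mathcal{E}'_i$ of type (P$_3$) punctures) and then observes that the tropical-curve/Comparison Lemma argument of Lemma~\ref{alishan} carries over without modification. Your additional remarks on the almost-alternating framework of Section~\ref{snowman} correctly identify the mechanism behind the paper's phrase ``by an almost identical argument.''
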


\begin{proof}
Arguing by contradiction, suppose there exists $\overline{u}_\infty\in \bdry_{\{-\infty\}}\mathcal{M}$ such that $\overline{v}'_1=\varnothing$ and $\overline{v}_2'\not=\varnothing$. The analog of Lemma~\ref{intersezione three} holds, i.e.,
$$\sum_{i=1}^q n^*(\mathcal{E}_i)+\sum_{i=1}^r n^*(\mathcal{E}'_i)=m,\quad \sum_{i=1}^q n^{*,alt}(\mathcal{E}_i)+\sum_{i=1}^r n^{*,alt}(\mathcal{E}'_i)\geq m-2g,$$
where $\mathcal{E}_i$, $i=1,\dots,q$, are the ends that limit to $z_\infty$ and $\mathcal{E}'_i$, $i=1,\dots,r$, are the neighborhoods of boundary points of type (P$_3$) of all the $\overline{v}^\sharp_{L,j}$, $j=0,\dots,a$, and $\overline{v}^\sharp_{R,j}$, $j=0,\dots,b$. The conclusion of Lemma~\ref{sencha 3} then holds by an almost identical argument.  The proof of Lemma~\ref{alishan} then carries over without modification.
\end{proof}

\subsubsection{The case $\overline{v}'_1\not=\varnothing$, $\overline{v}_2'\not=\varnothing$}

\begin{lemma}\label{alternative}
Suppose $\overline{v}'_1\not=\varnothing$ and $\overline{v}_2'\not=\varnothing$.  Let $\mathcal{E}_i$, $i=1,\dots,q$, be the ends of all the $\overline{v}_*^\sharp$ that limit to $z_\infty$ and let $\mathcal{E}'_i$, $i=1,\dots,r$, be the neighborhoods of the boundary points of type (P$_3$).  Then one of the following holds:
\begin{enumerate}
\item[(a)] $\sum_{i=1}^q n^*(\mathcal{E}_i)+\sum_{i=1}^r n^*(\mathcal{E}'_i)\geq m-2g$.
\item[(b)] Each end $\mathcal{E}_i$ is an end of $\overline{v}^\sharp_{B,j}$, $j=1,\dots,c$, or $\overline{v}_{T,j}^\sharp$, $j=0,\dots,d$, that projects to a thin sector of type ${\frak S}(\overline{a}_{i',j'},\overline{\hh}(\overline{a}_{i',j'}))$ or ${\frak S}(\overline{b}_{i',j'},\overline{\hh}(\overline{b}_{i',j'}))$. In particular, $\mathcal{E}_i$ is not a left or right end of $\overline{v}^\sharp_*$, where $*=(L,j)$, $j=0,\dots,a$, or $(R,j)$, $j=0,\dots,b$.
\end{enumerate}

\end{lemma}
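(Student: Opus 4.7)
The plan is to establish the dichotomy by a continuation argument modeled on those in Lemma~\ref{intersezione prime revisited} and Lemma~\ref{intersezione three}, with careful attention to which kinds of thin sectors can be projections of ends at $z_\infty$ from which levels of $\overline{u}_\infty$. First I would classify the projections $\pi_{D_{\rho_0}}(\mathcal{E}_i)$ according to the level $\overline{v}_*^\sharp$ containing $\mathcal{E}_i$. The left and right ends of the horizontal levels $\overline{v}^\sharp_{L,j}$ and $\overline{v}^\sharp_{R,j}$, as well as the left/right ends of $\overline{v}_1^\sharp$ and of $\overline{v}_2^\sharp$, all carry their two adjacent boundary arcs on curves of type $\overline{\bf a}$ (or $\overline{h}(\overline{\bf a})$) on one side and $\overline{\bf b}$ (or $\overline{h}(\overline{\bf b})$) on the other, by inspection of the Lagrangian boundary conditions $L^{-\infty,1}_*$ and $L^{-\infty,2}_*$. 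By Remark~\ref{comparing rates}, every such sector has angular width at least $\phi(\overline{a}_{i,j}) - \phi(\overline{a}_{i',j'})$ for some $(i,j)\not=(i',j')$, which decays strictly more slowly in $m$ than the widths of thin sectors of the form ${\frak S}(\overline{a}_{i',j'},\overline{h}(\overline{a}_{i',j'}))$ or ${\frak S}(\overline{b}_{i',j'},\overline{h}(\overline{b}_{i',j'}))$. Consequently, an end $\mathcal{E}_i$ that is a left/right end of a horizontal level (or of $\overline{v}_1^\sharp$, $\overline{v}_2^\sharp$) never projects to a thin sector of the two types listed in (b). Similarly, ends on the vertical levels $\overline{v}^\sharp_{T,j}$, $\overline{v}^\sharp_{B,j}$ that are not of the listed thin type are likewise non-thin in this quantitative sense.

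Next, assume we are not in case (b), so that either some end $\mathcal{E}_i$ projects to a non-thin sector or some $\mathcal{E}_i$ lies on a horizontal level or on $\overline{v}_1^\sharp$, $\overline{v}_2^\sharp$. Starting with such an $\mathcal{E}_i$, I would run the continuation argument on the component of $\bdry \overline{u}_i$ that limits to $\bdry\mathcal{E}_i$, tracking the image under $\pi_{D_{\rho_0}}$. The continuation now traverses the full boundary of $\overline{u}_\infty$, passing both through vertical levels $\overline{v}_{T,j}, \overline{v}_{B,j}$ and through horizontal levels $\overline{v}_{L,j}, \overline{v}_{R,j}$, as well as around the four sides of $\overline{v}_1$, but the trivial (parallel-transport) portions along the $L^*$ Lagrangians only correspond to the thin sectors ${\frak S}(\overline{a}_{i,j},\overline{h}(\overline{a}_{i,j}))$ and ${\frak S}(\overline{b}_{i,j}, \overline{h}(\overline{b}_{i,j}))$ by the classification in the first paragraph. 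Each subsequent nontrivial step in the continuation produces another end $\mathcal{E}_{i'}$ or a puncture neighborhood $\mathcal{E}'_{i'}$ whose $\pi_{D_{\rho_0}}$-projection is a sector adjacent to the previous one. Cycling once around $\bdry D_{\rho_0}=\R/2\pi\Z$, the union of these projections covers $D_{\rho_0}$ with at most $p_1+p_2\leq 2g$ thin sectors of the permitted types omitted, where $p_i=\deg\overline{v}'_i$; this yields
\[
\sum_{i=1}^q n^*(\mathcal{E}_i) + \sum_{i=1}^r n^*(\mathcal{E}'_i) \geq m-2g,
\]
which is (a).

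The main obstacle is the bookkeeping for the continuation across the four-fold boundary structure of $B_{-\infty,1}$ (whose Lagrangians cycle through $\overline{\bf a}$, $\overline{\bf b}$, $\overline{h}(\overline{\bf b})$, $\overline{h}(\overline{\bf a})$ on the four boundary pieces), together with how it extends into $\overline{W}_{-\infty,2}$ horizontally and into the stacked copies of $\overline{W}$ vertically. Concretely, one has to check that at each of the four corners of $B_{-\infty,1}$, and at each corner where a horizontal level meets a vertical level, the thin/non-thin dichotomy of the first paragraph is preserved as the continuation crosses from a boundary arc of one Lagrangian type to a boundary arc of another. I expect this to reduce to a finite case analysis entirely parallel to the almost-alternating/grooming analysis in the proof of Lemma~\ref{almost alternate}, with the three-speed scaling of Remark~\ref{comparing rates} guaranteeing at each step that the only sectors which can be ``skipped'' during the continuation are precisely the two permitted thin types.
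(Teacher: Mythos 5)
There is a genuine gap, and it sits exactly where the paper's proof has to do real work. Your first paragraph claims that a left or right end of a horizontal level ($\overline{v}^\sharp_{L,j}$, $\overline{v}^\sharp_{R,j}$, or $\overline{v}_1^\sharp$, $\overline{v}_2^\sharp$) always projects to a sector of angular width at least $\phi(\overline{a}_{i,j})-\phi(\overline{a}_{i',j'})$ for some $(i,j)\not=(i',j')$, hence is never thin. This is false: such an end has one boundary arc on some $\overline{b}_{i,j}$ and the other on some $\overline{a}_{i',j'}$, and nothing prevents $(i,j)=(i',j')$, in which case the projection is the sector ${\frak S}(\overline{b}_{i,j},\overline{a}_{i,j})$, whose width $\phi(\overline{a}_{i,j})-\phi(\overline{b}_{i,j})$ decays the \emph{fastest} by Remark~\ref{comparing rates}. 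This is precisely the third species of thin sector (explicitly named in the ``Terminology'' paragraph of Section~\ref{subsection: chain homotopy part 3}) that the statement of the lemma must exclude in case (b). Because of this, your second paragraph breaks down: if the end you start the continuation at is itself a thin left end of type ${\frak S}(\overline{b}_{i,j},\overline{a}_{i,j})$, the continuation can a priori traverse only thin sectors and never accumulate an angle close to $2\pi$, so you cannot conclude $\sum n^*(\mathcal{E}_i)+\sum n^*(\mathcal{E}'_i)\geq m-2g$.

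The missing idea is a counting argument rather than a width estimate. Left and right ends of the $\overline{v}^\sharp_*$ limiting to $z_\infty$ occur in equal number (they are matched up by the cycle produced by the continuation), and a \emph{right} end always projects to a sector of the form ${\frak S}(\overline{a}_{i,j},\overline{b}_{i',j'})$, which is the complement of a thin sector and hence wide. So the dichotomy is: either some end is non-thin --- possibly such a right end --- in which case you start the continuation there and obtain (a); or every end is thin, in which case there can be no right ends, hence no left ends either, hence no left/right ends at all, and every $\mathcal{E}_i$ is an end of a vertical level $\overline{v}^\sharp_{B,j}$ or $\overline{v}^\sharp_{T,j}$ projecting to one of the two permitted thin types, which is (b). Your proposal never rules out the configuration ``all ends thin but some of them of type ${\frak S}(\overline{b}_{i,j},\overline{a}_{i,j})$ on a horizontal level,'' and without the left/right pairing that configuration is not excluded by any estimate on sector widths.
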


\begin{proof}
We apply the continuation argument. If some $\mathcal{E}_i$, $i=1,\dots,q$, does not project to a thin sector (of type ${\frak S}(\overline{a}_{i',j'},\overline{\hh}(\overline{a}_{i',j'}))$, ${\frak S}(\overline{b}_{i',j'},\overline{\hh}(\overline{b}_{i',j'}))$, or  ${\frak S}(\overline{b}_{i',j'},\overline{a}_{i',j'})$), then the sectors $\pi_{D^2_{\rho_0}}(\mathcal{E}_i)$, $i=1,\dots,q$, and $\pi_{D^2_{\rho_0}}(\mathcal{E}'_i)$, $i=1,\dots,r$, will sweep out a neighborhood of $z_\infty$ with the exception of some thin sectors, implying (a).

On the other hand, suppose that all the ends $\mathcal{E}_i$, $i=1,\dots,r$, project to thin sectors. We claim that there are no thin sectors of type ${\frak S}(\overline{b}_{i',j'},\overline{a}_{i',j'})$. Indeed, the number of left and right ends of all the $\overline{v}_*^\sharp$ that limit to $z_\infty$ must be equal and the right ends cannot map to thin sectors. This gives (b).
\end{proof}

\begin{lemma}\label{eliminate fiber components 3}
If $\overline{u}_\infty\in \bdry_{\{-\infty\}}\mathcal{M}$, $\overline{v}'_1\not=\varnothing$, and $\overline{v}'_2\not=\varnothing$, then:
\begin{enumerate}
\item $\overline{u}_\infty$ has no fiber components;
\item if $\overline{v}''_*$ satisfies $\overline{v}'_*\cap \overline{v}''_*\not=\varnothing$ and $\overline{v}'_*\cap \overline{v}''_*\subset int(\overline{v}'_*)$, then $*=1$ and $p_*=1$;
\item $\overline{u}_\infty$ has no boundary point of type (P$_1$) or (P$_2$).
\end{enumerate}
\end{lemma}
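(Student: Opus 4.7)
The proof will follow the template of Lemma~\ref{eliminate fiber components 2}, adapted to the two-directional degeneration $\overline{W}_\tau\rightsquigarrow \overline{W}_{-\infty,1}\cup\overline{W}_{-\infty,2}$. The key tools are: Equations~\eqref{sum of n part 2} and \eqref{sum of I part 2}; the dichotomy of Lemma~\ref{alternative} (which substitutes for (a)--(d) in the proof of Lemma~\ref{eliminate fiber components 2}); the index bounds from Lemma~\ref{nonnegative ECH indices 6} together with its analog in the presence of (P$_3$) punctures (obtained by repeating the argument of Lemma~\ref{nonnegative ECH indices 5 better} using $\mathcal{Z}_{\mbox{\tiny main}}\cup \mathcal{Z}_{\mbox{\tiny aux}}$); and the rescaling argument combined with the Involution Lemmas of Section~I, applied with the tropical set-up of Section~\ref{truncation}--\ref{second case} to handle the simultaneous stretching in $t_2$ and $\log\rho$.

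For (1), suppose $\overline{u}_\infty$ has a fiber component $\widetilde v$, which satisfies $n^*(\widetilde v)\geq m$. If we are in case (a) of Lemma~\ref{alternative}, then the ends at $z_\infty$ already account for $\geq m-2g$ of the intersection count, and adding $\widetilde v$ violates Equation~\eqref{sum of n part 2} since $m\gg |\mathcal{I}|$. In case (b), the ends at $z_\infty$ contribute negligibly, so $\widetilde v$ must absorb essentially all of $n^*$. A calculation identical to ($\beta_5$) of Lemma~\ref{eliminate fiber components 2} shows that $\widetilde v$ together with its pinched intersection points contributes $\geq 2g+2\geq 4$ to $I$; the analog of Lemma~\ref{nonnegative ECH indices 5 better} (with $\overline{v}_1'$ playing the role of $\overline{v}'_+$) gives nonnegative contributions from all other levels, so $\sum I(\overline{v}_*)\geq 4$, contradicting Equation~\eqref{sum of I part 2}. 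For (3) the same mechanism eliminates boundary punctures of type (P$_1$): the neighborhood of such a puncture contributes $\geq m$ to $n^*$, incompatible with $\overline{v}_2'\neq\varnothing$ requiring its own point constraint; for type (P$_2$), Lemma~\ref{nonnegative ECH indices 5 better} gives a contribution of $2p_*\geq 2$ to $I$, and the rescaling argument combined with the Involution Lemma (applied on $cl(B_{-\infty,1})$ or $cl(B_{-\infty,2})$ as in (3C) of Lemma~\ref{eliminate fiber components 2}) forces the puncture to lie on a specified ray, which is \emph{effectively} a further codimension-one condition; together with the large-sector contribution this pushes $I\geq 4$.

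For (2), assume $\overline{v}'_*\cap \overline{v}''_*\not=\varnothing$ in $int(\overline{v}'_*)$, with multiplicity $m(\frak r)$ at the single intersection point $\frak r$; this contributes $2m(\frak r)\geq 2$ to $I$. The remaining nonnegative index contributions from other levels (using the Lemma~\ref{nonnegative ECH indices 6} analog) then force $m(\frak r)=1$ and all other $I$-contributions to be minimal. The rescaling argument with $m\to\infty$, exactly as in (2A)--(2E) of Lemma~\ref{eliminate fiber components 2}, produces holomorphic maps $w_*$ on $cl(B_{-\infty,1})$ or $cl(B_{-\infty,2})$, and the Involution Lemmas force the image of a distinguished point to lie on $\mathcal{L}_{(r(\tau)+1)/2}$, giving an additional \emph{effective} codimension-two constraint that rules out $*=2$, $*=(L,j)$, $*=(R,j)$, $*=(T,j)$, $*=(B,j)$ and the case $p_1\geq 2$. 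The main obstacle I expect is the final step: unlike in Lemma~\ref{eliminate fiber components 2}, where one stretches in a single $s$-direction, here one must verify the hypotheses of the Involution Lemma in the two-dimensional tropical limit of Section~\ref{orange2}, ensuring that the planarity and boundary-marker conditions established in Lemmas~\ref{otter2}--\ref{map} transfer to the rescaled limits $w_*$; once this is done the contradiction is mechanical.
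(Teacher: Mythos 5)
Your overall framework (reduce to case (b) of Lemma~\ref{alternative} by $n^*$ considerations, then derive a contradiction with $\sum I=2$) matches the paper, and your treatments of (1) and of the (P$_1$) case of (3) are essentially the paper's. Two loose points, though. First, you invoke ``the index bounds from Lemma~\ref{nonnegative ECH indices 6}'' and its analog, but that lemma assumes $\overline{v}'_1=\varnothing$, which is exactly the opposite of the present hypothesis; the nonnegativity of $I(\overline{v}_*)$ for $\overline{v}_*\neq\overline{v}_1$ has to be re-derived from the fact that, in case (b), all ends at $z_\infty$ project to thin sectors. Second, ``nonnegative contributions from all other levels'' is not literally true: $I(\overline{v}'_1)=-p_1$, and the paper makes the cancellation explicit by pairing it with $\sum_{j}I(\overline{v}^\sharp_{B,j})=p_1$ before adding the fiber's $2g+2\geq 4$.

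For (2) you take a genuinely different and much heavier route than the paper. The paper disposes of (2) by a pure ECH index count: if $*_0\neq 1$ then $I(\overline{v}''_{*_0})\geq 1$ and the intersection with $\overline{v}'_{*_0}$ contributes at least $2p_{*_0}\geq 2$, giving $\sum I\geq 3$; if $*_0=1$ and $p_1>1$ the intersection alone contributes $2p_1\geq 4$. No rescaling or Involution Lemma is needed, and in particular the difficulty you single out as ``the main obstacle'' --- verifying the Involution Lemma hypotheses in the two-directional tropical limit --- does not arise in this lemma at all. That verification is precisely the content of the \emph{next} lemma (Lemma~\ref{eliminate fiber components 3 part 2}), which eliminates the surviving case $*=1$, $p_1=1$ via the maps $w_1\cup w_2$ and the $x$-periodic weight-function argument. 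Your route could in principle be made to work, but it front-loads the hardest analysis of the section into a step where a two-line index count suffices, and as written it leaves that analysis unverified. Similarly, for (P$_2$) in (3) your extra effective codimension-one constraint from the Involution Lemma is unnecessary: $2p_{*_0}+1\geq 3>2$ already closes the case.
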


\begin{proof}
This is similar to the proof of Lemma~\ref{eliminate fiber components 2} and uses Equations~\eqref{sum of n part 2} and \eqref{sum of I part 2}.

First observe that if $\overline{u}_\infty$ has a fiber component, a boundary point of type (P$_1$) or (P$_2$), or some $\overline{v}'_*\cap\overline{v}''_*\not=\varnothing$ with $\overline{v}'_*\cap \overline{v}''_*\subset int(\overline{v}'_*)$, then we are in the situation of Lemma~\ref{alternative}(b). It is not hard to verify that $I(\overline{v}_*)\geq 0$ for all $*$ with the exception of $\overline{v}_1$.

(1) Suppose $\overline{u}_\infty$ has a fiber component $\widetilde{v}$.  Then we have the following contributions towards $I$:
\begin{enumerate}
\item[($\beta_1$)] $I(\overline{v}_1')=-p_1=-\deg (\overline{v}'_1)$.
\item[($\beta_2$)] $\sum_{j=1}^c I(\overline{v}^\sharp_{B,j})= p_1$.
\item[($\beta_3$)] If the fiber $\widetilde{v}$ is a component of $\overline{v}_*$, then $\widetilde{v}$ and the intersection $\widetilde{v}\cap (\overline{v}_*-\widetilde{v})$ contribute $2g+2\geq 4$ towards $I$.
\end{enumerate}
The argument is similar to that of Lemma~\ref{eliminate fiber components 2}(1).  This gives a total of $I>2$, which is a contradiction.

(2) Suppose $\overline{v}'_{*_0}\cap \overline{v}''_{*_0}\not=\varnothing$ and $\overline{v}'_{*_0}\cap \overline{v}''_{*_0}\subset int(\overline{v}'_{*_0})$. If $*_0\not=1$, then $I(\overline{v}''_{*_0})\geq 1$ and the intersection points contribute at least $2p_{*_0}\geq 2$. If $*_0=1$ and $p_{*_0}>1$, then $I(\overline{v}''_{*_0})\geq 0$ and the intersection points contribute at least $2p_{*_0}\geq 4$.  Combined with the ECH contributions from ($\beta_1$) and ($\beta_2$), we have a contradiction.

(3) If $\overline{u}_\infty$ has a boundary point of type (P$_1$), then argument from (3B) of Lemma~\ref{eliminate fiber components 2} implies an excess of $n^*$.  If $\overline{u}_\infty$ has a boundary point of type (P$_2$), then the analog of Lemma~\ref{nonnegative ECH indices 5 better} implies that the boundary points at $z_\infty$ contribute $2p_{*_0}\geq 2$; there is also a large sector which contributes an additional $+1$.   Combined with ($\beta_1$) and ($\beta_2$) we obtain a total of $I>2$, a contradiction.
\end{proof}

\begin{lemma} \label{b not possible} \label{eliminate fiber components 3 part 2}
If $m\gg 0$, $\overline{u}_\infty\in \bdry_{\{-\infty\}}\mathcal{M}$, $\overline{v}'_1\not=\varnothing$, and $\overline{v}'_2\not=\varnothing$, then there is no level $\overline{v}_*$ such that $\overline{v}'_*\cap \overline{v}''_*\not=\varnothing$ and $\overline{v}'_*\cap \overline{v}''_*\subset int(\overline{v}'_*)$.
\end{lemma}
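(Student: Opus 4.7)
By Lemma~\ref{eliminate fiber components 3}(2), any offending level must be $\overline{v}_1$ with $p_1=\deg(\overline{v}'_1)=1$, and the intersection consists of a single point ${\frak r}=({\frak r}^b,z_\infty)$ of multiplicity one lying in $int(\overline{v}'_1)$. The plan is to rule out this remaining case by a rescaling argument combined with the Involution Lemma, exactly in the spirit of (2A)--(2E) of Lemma~\ref{eliminate fiber components 2} and Lemma~\ref{vancouver2}, but now on the base $B_{-\infty,1}$.

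First I would do the ECH bookkeeping. Since $p_1=1$, the interior intersection contributes $2m({\frak r})p_1=2$ to $I(\overline{v}_1)$, while $I(\overline{v}'_1)=-1$, so $I(\overline{v}_1)\geq I(\overline{v}'_1)+I(\overline{v}''_1)+2\geq 1+I(\overline{v}''_1)$. The contribution from the thin strips hanging from the negative end of $\overline{v}'_1$ sums to at least $p_1=1$. Combined with the fact that $I(\overline{v}_2)\geq 0$ and every other level is nonnegative by the analog of Lemma~\ref{nonnegative ECH indices 6} (applied to the decomposition into $\overline{v}'_1$ plus the rest), Equation~\eqref{sum of I part 2} forces $I(\overline{v}''_1)=0$, $I(\overline{v}_2)=0$, and all other levels to be connectors. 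In particular, any extra constraint of ``effective codimension one'' on $\overline{v}''_1$ at ${\frak r}$ will increase $\sum I$ to at least $3$ and produce the desired contradiction.

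Next I would run the rescaling argument. Take sequences $m_l\to\infty$ and $\overline{u}_{li}\to\overline{u}_{l\infty}\in\bdry_{\{-\infty\}}\mathcal{M}^{(m_l)}$ in this case, extract a diagonal subsequence $\overline{u}_{li(l)}$, restrict to a small neighborhood of $\sigma_\infty^{\tau_{li(l)}}$ about $\overline{v}'_1$, project to $D_{\rho_0}$ via balanced coordinates, apply the ansatz of Equation~(I.\ref{P1-eqn: ansatz}), and rescale by positive real constants. Because $p_1=1$, the base of $\overline{v}'_1$ is identified with $cl(B_{-\infty,1})$ itself, and the limit is a single holomorphic map
$$w_1:\, cl(B_{-\infty,1})\;\longrightarrow\;\C\P^1,$$
which is a biholomorphism onto its image. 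By Remark~\ref{comparing rates} all four Lagrangian arcs $\overline{\bf a},\overline{\bf b},\overline{h}(\overline{\bf a}),\overline{h}(\overline{\bf b})$ collapse to the ray $\mathcal{R}_0=\{\phi=0,\rho>0\}$ in the limit $m\to\infty$, so $w_1(\bdry B_{-\infty,1})\subset \{\phi=0,\rho>0\}\cup\{\infty\}$. The interior intersection point with $\overline{v}''_1$ forces $w_1({\frak r}^b)=\infty$ for some ${\frak r}^b\in int(B_{-\infty,1})$, and the thin strip in $\cup_j\overline{v}^\sharp_{B,j}$ hanging from the bottom end of $\overline{v}'_1$ forces $w_1$ to send the bottom puncture $s\to-\infty$ of $B_{-\infty,1}$ to $0\in\C\P^1$ (the asymptotic eigenfunction of each thin strip is a constant on $\mathcal{R}_0$).

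The main obstacle, and the heart of the argument, is the Involution Lemma step. The base $cl(B_{-\infty,1})$ admits an anti-holomorphic involution $\iota_1$ which fixes the horizontal axis $\mathcal{L}_{1/2}=\{t=1/2\}$ (exchanging the left and right ends, and preserving the top and bottom ends), while $\C\P^1$ admits the anti-holomorphic involution $\iota_2$ which fixes $\R\cup\{\infty\}$. The boundary condition $w_1(\bdry B_{-\infty,1})\subset\R\cup\{\infty\}$, together with the normalizations $w_1(\mathrm{bottom})=0$ and the uniqueness argument of Lemma~\ref{2012}, lets us compare $w_1$ and $\iota_2\circ w_1\circ\iota_1$ as in the proof of Lemma~\ref{vancouver2}; they agree up to a positive real constant, and hence are equal. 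Consequently $w_1^{-1}(\infty)$ is $\iota_1$-invariant, forcing ${\frak r}^b\in \mathcal{L}_{1/2}$. Since $\overline{v}''_1$ is generic and ${\frak r}^b$ was a priori an interior point, this is \emph{effectively} a codimension one constraint in the sense of Remark~\ref{elmwood scone}, contributing at least $+1$ to $I(\overline{v}''_1)$. Combined with the earlier accounting this pushes $\sum I\geq 3$, contradicting Equation~\eqref{sum of I part 2} and completing the proof. The delicate point, which must be checked carefully, is that the normal decay rate of $\overline{v}''_1$ at ${\frak r}^b$ supplies a further constraint (so that the total ``effective codimension'' is genuinely $\geq 1$, not $0$, despite the fact that ${\frak r}^b$ is a single interior point rather than a boundary puncture); this is handled exactly as in case (2D) of Lemma~\ref{eliminate fiber components 2}, where an analogous interior/boundary constraint was shown to raise the ECH index by the requisite amount via the Involution Lemma.
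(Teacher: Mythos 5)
Your reduction to $\overline{v}_1$ with $p_1=1$ and a single multiplicity-one intersection point, and the ECH bookkeeping showing that $\sum I=2$ is exactly saturated so that any further positive contribution yields a contradiction, both match the paper (this is Lemma~\ref{eliminate fiber components 3}(2) together with ($\beta_1$), ($\beta_2$)). The gap is in how you produce that further contribution. The paper does not use an involution argument here: after the reduction (which also invokes Lemma~\ref{alternative}(b) to force $a=b=0$, $c=1$, $d=0$), the rescaling with $m\to\infty$ produces a \emph{two-level} building $w_1\cup w_2$ --- because $\overline{v}'_2\not=\varnothing$ is a hypothesis, the branched cover of the section at infinity extends around the $t$-circle through $\overline{W}_{-\infty,2}$, and $w_1$, $w_2$ live at different rescaling scales. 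The contradiction is then obtained by feeding this into the tropical-curve machinery of Section~\ref{second case} (the maps $\overline\Xi_i$, the weight function, and the Comparison Lemma as in Lemmas~\ref{alishan} and \ref{alishan2}), exploiting the disparity between the exponential growth rates of the left and right ends of $\overline{u}_i$ on $[-2,2]\times[1+L,r(\tau_i)-L]$ --- not by locating ${\frak r}^b$ on a symmetry axis.

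Your involution step is where the argument fails. First, because the limit is the two-level building $w_1\cup w_2$, the asymptotics of $w_1$ at its left ($t\to+\infty$) and right ($t\to-\infty$) ends are dictated by the matching with $w_2$ at a different scale; the paper derives $w_1(t=-\infty)=0$ and $w_1({\frak r})=\infty$, and since $w_1$ is a degree-one biholomorphism it cannot also vanish at the bottom end $s\to-\infty$ as you assert. These left/right asymptotics are precisely \emph{not} symmetric under the reflection $(s,t)\mapsto(s,1-t)$ --- the asymmetry of the decay rates on the two sides is the very phenomenon the paper's proof exploits --- so there is no reason to expect $w_1=\iota_2\circ w_1\circ\iota_1$. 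Second, the uniqueness-up-to-positive-real-constant you import from Lemma~\ref{2012} does not transfer: the quotient of two candidate maps with interior poles at a priori different points has a zero and a pole and is therefore not constant, and the paper's Involution Lemmas are only established for the one- and two-ended bases $cl(B_\pm)$, $cl(B')$, $cl(B_\tau)$, not for the four-ended $B_{-\infty,1}$. Without the symmetry, the conclusion ${\frak r}^b\in\mathcal{L}_{1/2}$ --- and hence the extra $+1$ toward $I$ --- is unsupported, and the contradiction does not materialize.
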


\begin{proof}
Arguing by contradiction, suppose that $\overline{v}'_{*_0}\cap \overline{v}''_{*_0}\not=\varnothing$ and $\overline{v}'_{*_0}\cap \overline{v}''_{*_0}\subset int(\overline{v}'_{*_0})$ for some $\overline{v}_{*_0}$. Then $*_0=1$ and $p_1=1$ by Lemma~\ref{eliminate fiber components 3}(2). By Lemma~\ref{alternative}(b), we have $p_{L,j}=1$ and $p_{R,j}=1$ for all $j$. Hence we may assume that $a=b=0$. Equation~\eqref{sum of I part 2} then implies that $c=1$ and $d=0$.

By the rescaling argument with $m\to \infty$, we obtain a $2$-level building $w_1\cup w_2$, where
$$ w_2: cl(B_{-\infty,2})\to \C\P^1, \quad  w_1: cl(B_{-\infty,1})\to \C\P^1,$$
and (i)--(viii), given below, hold. Here $cl(B_{-\infty,2})$ is obtained from $B_{-\infty,2}$ by adding the left and right points at infinity, denoted $t=\pm\infty$; similarly $cl(B_{-\infty,1})$ is obtained from $B_{-\infty,1}$ by adding $s=\pm\infty$ and $t=\pm\infty$.  The map $w_2$ satisfies the following:
\begin{enumerate}
\item[(i)] $w_2(\overline{\frak m}^b(-\infty))=0$;
\item[(ii)] $w_2(\bdry cl(B_{-\infty,2}))\subset \{\phi=0, \rho>0\}\cup\{\infty\}$;
\item[(iii)] $w_2(t=+\infty)=\infty$ and $w_2(t=-\infty)\subset \{\phi=0,\rho>0\}$ (or vice versa);
\item[(iv)] $w_2|_{int(B_{-\infty,2})}$ is a biholomorphism onto its image.
\end{enumerate}
The map $w_1$ satisfies the following:
\begin{enumerate}
\item[(v)] $w_1(t=-\infty)=0$ (or $w_1(t=+\infty)=0$; for simplicity assume the former);
\item[(vi)] $w_1(\bdry cl(B_{-\infty,1}))\subset \{\phi=0, \rho>0\}\cup\{\infty\}$;
\item[(vii)] $w_1({\frak r})=\infty$, where ${\frak r}\in \overline{v}'_1\cap \overline{v}''_1$;
\item[(viii)] $w_1|_{int(B_{-\infty,1})}$ is a biholomorphism onto its image.
\end{enumerate}

In view of the above, we consider the ``tropical curves''
$$\overline\Xi_i:\Gamma\to [-1,1]/(-1\sim 1)\times[0,d_i]$$ as in Section~\ref{second case}. The proof strategy of Lemmas~\ref{alishan} and \ref{alishan2} carry over to give us a contradiction; this is due to the disparity in the growth rates of the left and right ends of $\overline{u}_i$ when restricted to $[-2,2]\times [1+L,r(\tau_i)-L]$.
\end{proof}

The combination of Lemmas~\ref{eliminate fiber components 3} and \ref{eliminate fiber components 3 part 2} give the following, which is the analog of Lemma~\ref{sencha 3}(2),(3):

\begin{cor}\label{cor 1}
If $m\gg 0$, $\overline{u}_\infty\in \bdry_{\{-\infty\}}\mathcal{M}$, $\overline{v}'_1\not=\varnothing$, and $\overline{v}'_2\not=\varnothing$, then there is no fiber component, no boundary point of type (P$_1$) or (P$_2$), and no intersection point ${\frak r}\in \overline{v}'_*\cap \overline{v}''_*$ such that ${\frak r}\in\overline{v}'_*\cap \overline{v}''_*\subset int(\overline{v}'_*)$.
\end{cor}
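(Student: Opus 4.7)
The plan is straightforward: this corollary is stated precisely as an assembly of Lemma~\ref{eliminate fiber components 3} and Lemma~\ref{eliminate fiber components 3 part 2} (alias Lemma~\ref{b not possible}), so the ``proof'' is essentially bookkeeping rather than new argument. I would structure it in three short moves.

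First, invoking Lemma~\ref{eliminate fiber components 3}(1) and (3) under the hypotheses $\overline{u}_\infty\in\bdry_{\{-\infty\}}\mathcal{M}$, $\overline{v}_1'\neq\varnothing$, and $\overline{v}_2'\neq\varnothing$ yields immediately that $\overline{u}_\infty$ has no fiber components and no boundary punctures of type (P$_1$) or (P$_2$). These are two of the three conclusions of the corollary.

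Second, suppose towards a contradiction that there exists a level $\overline{v}_*$ together with an interior intersection point ${\frak r}\in\overline{v}'_*\cap\overline{v}''_*\subset int(\overline{v}'_*)$. Lemma~\ref{eliminate fiber components 3}(2) then forces $*=1$ and $p_1=\deg(\overline{v}'_1)=1$. However, Lemma~\ref{b not possible} (applied for $m\gg 0$) precisely rules out the existence of any such level, and in particular this restricted $*=1$, $p_1=1$ configuration. This contradiction eliminates the remaining case and yields the third conclusion.

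Combining these gives the corollary. There is no genuine obstacle here, since both components of the argument are already established; the only thing to verify is that Lemma~\ref{eliminate fiber components 3}(2) is logically weaker than Lemma~\ref{b not possible}, so that the latter subsumes the remaining case $(*,p_1)=(1,1)$ left open by the former. This is immediate from their statements: Lemma~\ref{eliminate fiber components 3}(2) restricts which $(*,p_*)$ can occur, while Lemma~\ref{b not possible} asserts that no such $*$ occurs at all.
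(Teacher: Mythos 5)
Your proposal is correct and matches the paper exactly: the paper introduces this corollary with the single sentence ``The combination of Lemmas~\ref{eliminate fiber components 3} and \ref{eliminate fiber components 3 part 2} give the following,'' which is precisely your assembly of Lemma~\ref{eliminate fiber components 3}(1),(3) for the first two conclusions and Lemma~\ref{b not possible} for the third. Your remark that Lemma~\ref{eliminate fiber components 3}(2) merely narrows the case to $(*,p_1)=(1,1)$, which Lemma~\ref{b not possible} then eliminates, is also exactly how the paper's two lemmas interact.
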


Before embarking on the proof of Lemma~\ref{alishan4}, we prove Lemma~\ref{every comp trivial strip}, which is the analog of Lemma~\ref{sencha 3}(4) and is a bit involved.

First we recall and modify some notation from Section~I.\ref{P1-subsection: modified indices at z infty}. We use the convention that $\star=(L,j)$, $j=1,\dots,a$, or $(R,j)$, $j=1,\dots,b+1$. The data $\ar{\mathcal{D}}_{\star,\pm}$ at $z_\infty$ for $\overline{v}_\star$ is given by a $p=p_{\star,\pm}$-tuple of matchings
\begin{equation} \label{matchings}
\{(i_1',j_1')\to (i_1,j_1), \dots,(i_p',j_p')\to (i_p,j_p)\},
\end{equation}
where $i_k,i_k'\in \{1,\dots,2g\}$, $j_k,j_k'\in\{0,1\}$ for $k=1,\dots,p$ and
$i_k\not=i_l$, $i_k'\not=i_l'$ for $k\not=l$. Here the subscript $+$ (resp.\ $-$) in $\ar{\mathcal{D}}_{\star,\pm}$ refers to the left (resp.\ right) end, $(i_k',j_k')$ corresponds to $\overline{b}_{i_k',j_k'}$, and $(i_k,j_k)$ corresponds to $\overline{a}_{i_k,j_k}$. We write $$\ar{\mathcal{D}}_{\star,\pm}=\ar{\mathcal{D}}'_{\star,\pm}\cup \ar{\mathcal{D}}''_{\star,\pm},$$ where $\ar{\mathcal{D}}'_{\star,\pm}$ and $\ar{\mathcal{D}}''_{\star,\pm}$ correspond to the ends of $\overline{v}'_\star$, $\overline{v}''_\star$, respectively. 

\begin{lemma} \label{every comp trivial strip}
If $\overline{u}_\infty\in \bdry_{\{-\infty\}}\mathcal{M}$, $\overline{v}'_1\not=\varnothing$ and $\overline{v}'_2\not=\varnothing$, then every component of $\overline{v}_\star^\sharp$, $\star=(L,j)$, $j=1,\dots,a$, or $(R,j)$, $j=1,\dots,b+1$,  is a thin strip.
\end{lemma}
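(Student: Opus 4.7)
The plan is to argue by contradiction, following the overall strategy of Lemma~\ref{sencha 3}(4), but with careful modifications to accommodate the presence of $\overline{v}_1'\neq \varnothing$. Suppose some component of $\overline{v}_{R,j_0}^\sharp$ (for some $1\leq j_0 \leq b+1$) is not a thin strip; the case of $\overline{v}_{L,j_0}$ is symmetric. Projecting via $\pi_{\overline{S}}:[-2,2]\times \R\times \overline{S}\to \overline{S}$, any non-thin-strip component of $\overline{v}_{R,j_0}^\sharp$ must contain in its image the complement of all the thin strips between the $\overline{b}_i$ and the $\overline{a}_i$, so
\[
n^*(\overline{v}_{R,j_0})=m, \qquad n^{*,alt}(\overline{v}_{R,j_0})\geq m-2g.
\]

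Next I would show we are forced into Lemma~\ref{alternative}(b). Otherwise we would have $\sum_i n^*(\mathcal{E}_i)+\sum_i n^*(\mathcal{E}'_i)\geq m-2g$, which combined with $n^*(\overline{v}_{R,j_0})=m$ exceeds $n^*(\overline{u}_i)=m+|\mathcal{I}|\leq m+2g$ for $m\gg 2g$. In case (b), every end of $\overline{v}_*^\sharp$ limiting to $z_\infty$ lies on some $\overline{v}_{T,j}^\sharp$ or $\overline{v}_{B,j}^\sharp$ and projects to a thin sector of type ${\frak S}(\overline{a}_{i,j},\overline{h}(\overline{a}_{i,j}))$ or ${\frak S}(\overline{b}_{i,j},\overline{h}(\overline{b}_{i,j}))$. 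In particular $\overline{v}_{R,j_0}$ has no left end limiting to $z_\infty$, so $\overline{v}'_{R,j_0}=\varnothing$, and by Corollary~\ref{cor 1} there are no type-(P$_1$) or (P$_2$) punctures; a standard continuation argument together with the non-triviality of the component produces either a large sector at the right end of $\overline{v}_{R,j_0}^\sharp$ or a type-(P$_3$) puncture projecting to a large sector. In either case, the modified index inequality from Lemma~\ref{nonnegative ECH indices 5 better} yields $I(\overline{v}_{R,j_0})\geq 2$.

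Combining this with $I(\overline{v}_1')=-p_1$ (from $\overline{v}_1'\neq\varnothing$) and the matching contribution $\sum_{\overline{v}_*\prec\overline{v}_1,\, *\neq 2}I(\overline{v}_*^\sharp)\geq p_1$ coming from the $p_1$ thin strips required to match the negative end of $\overline{v}_1'$, the only way to satisfy $\sum_{\overline{v}_*} I(\overline{v}_*)=2$ in Equation~\eqref{sum of I part 2} is to have every component of $\overline{v}_*^\sharp\neq \overline{v}_1$ lying to the right of $\overline{v}_{R,j_0}$ be a thin strip (contributing $I=1$). By the $n^*$ bookkeeping in the first step, the excess intersection number then forces $\overline{v}_1^\sharp$ to carry a left end at $z_\infty$ whose projection to $\overline{S}$ is a thin wedge of type ${\frak S}(\overline{b}_{i,j},\overline{a}_{i,j})$.

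The main obstacle is to rule out the existence of such a left end on $\overline{v}_1^\sharp$ in the presence of $\overline{v}_1'\neq \varnothing$. This should proceed, as in Lemma~\ref{sencha 3}(4), by consulting Figure~\ref{figure: aandb}: the local configuration of $\overline{a}_i, \overline{b}_i, \overline{h}(\overline{a}_i), \overline{h}(\overline{b}_i)$ near $z_\infty$ simply does not permit a holomorphic curve whose projection is a thin wedge of that type, since all four arcs enter $z_\infty$ in a prescribed cyclic order determined by our choices in Section~\ref{subsubsection: choice of hyperbolic orbit h}. The extra verification needed over the $\overline{v}_1'=\varnothing$ case is to check that the data $\overrightarrow{\mathcal{D}}_{1,\pm}$ on $\overline{v}_1'$ and the almost-alternating splitting from Definition~\ref{leopard} do not introduce new matching patterns at $z_\infty$ that would permit the forbidden wedge; this follows because $\mathcal{D}^{to}_{1,\pm}=\mathcal{D}^{from}_{1,\pm}$ for the data of $\overline{v}_1'$ (its initial and terminal arcs on $\overline{\bf a}$-type and $\overline{\bf b}$-type sides must pair off), so the thin wedges of type ${\frak S}(\overline{b}_{i,j},\overline{a}_{i,j})$ required for $\overline{v}_1^\sharp$ are independent of $\overline{v}_1'$ and their non-existence is the same as before.
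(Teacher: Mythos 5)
There is a genuine gap, and it sits at the two places where your argument departs from what the hypotheses actually allow. First, the reduction to Lemma~\ref{alternative}(b) does not work. If every end at $z_\infty$ lay on some $\overline{v}_{B,j}^\sharp$ or $\overline{v}_{T,j}^\sharp$, then $\overline{v}_{R,j_0}^\sharp$ would have no end at $z_\infty$ at all and hence would be empty, contradicting your standing assumption that it contains a non-thin-strip component; and your exclusion of alternative (a) double-counts, because the ends realizing the bound $\sum n^*(\mathcal{E}_i)\geq m-2g$ in (a) are precisely the left and right ends of $\overline{v}_{R,j_0}^\sharp$ itself, which are already part of $n^*(\overline{v}_{R,j_0})=m$, so no excess over $m+|\mathcal{I}|$ is produced. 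In particular your deduction that $\overline{v}'_{R,j_0}=\varnothing$ is unjustified: the case $\overline{v}'_{\star_0}\neq\varnothing$ must be treated, and the paper does so with a separate writhe/grooming computation (Claim~\ref{claim: summing ECH indices}) showing $I(\overline{v}_{\star_0})\geq I(\overline{v}'_{\star_0})+I(\overline{v}''_{\star_0})+c$ with $c=\mathfrak{bp}$ or $2p_{\star_0}$.

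Second, your index bookkeeping only reaches total ECH index $\geq 2$, which is consistent with Equation~\eqref{sum of I part 2} and yields no contradiction: $I(\overline{v}_{R,j_0})\geq 2$ together with $I(\overline{v}_1')=-p_1$ and $p_1$ thin strips of index $1$ sums to exactly $2$. To get past this you import the ending of Lemma~\ref{sencha 3}(4) (forcing a forbidden thin wedge of type ${\frak S}(\overline{b}_{i,j},\overline{a}_{i,j})$ on $\overline{v}_1^\sharp$), but that step rests on the nonnegativity of $I(\overline{v}_{L,j})$ and $I(\overline{v}_{R,j})$ from Lemma~\ref{nonnegative ECH indices 3}(4), which is available only when $\overline{v}'_1=\varnothing$ --- exactly the hypothesis that fails here, and exactly the reason the paper flags this case as requiring a substitute argument. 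The paper's actual contradiction is obtained entirely from the single level $\overline{v}_{\star_0}$: in the case $\overline{v}'_{\star_0}\neq\varnothing$ via Claim~\ref{claim: summing ECH indices}, and in the case $\overline{v}'_{\star_0}=\varnothing$ via the Fredholm-index computation of Claim~\ref{calc of I}, which gives $I(\overline{v}''_{\star_0})\geq 2+q_{\star_0}\geq 3$. Neither bound appears in your proposal, and without one of them the argument does not close.
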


\begin{proof}
Arguing by contradiction, suppose there exists $\overline{v}_{\star_0}^\sharp$ which is not a union of thin strips; we take $\star_0=(R,1)$ without loss of generality. Then the following hold:
\begin{enumerate}
\item[(a)] $n^*(\overline{v}_{\star_0}^\sharp)= m$ and $n^{*,alt}(\overline{v}_{\star_0}^\sharp)\geq m-2g$;
\item[(b)] each component $\widetilde{v}$ of $\overline{v}_{T,j}^\sharp$, $j=1,\dots,d$, or $\overline{v}_{B,j}^\sharp$, $j=1,\dots,c$, is a thin strip with $z_\infty$ at the positive end and $I(\widetilde{v})=1$;
\item[(c)] each component $\widetilde{v}$ of $\overline{v}_\star^\sharp$, $\star\not=\star_0$, is a thin strip with $z_\infty$ at the left end and $I(\widetilde{v})=1$;
\item[(d)] each component $\widetilde{v}$ of $\overline{v}_1^\sharp$ limits to $z_\infty$ only at the left end and the top end, the left and top ends project to thin sectors, $\deg(\widetilde{v})=1$, and $I(\widetilde{v})=1$.
\end{enumerate}
(a) follows from the argument of Lemma~\ref{sencha 3}(4)(a) and (b) and (c) follow immediately from (a). (d) The assertions about the ends of $\widetilde{v}$ follow from (a). This then implies that $\widetilde{v}$ projects to the domain bounded by $\overline{b}_i$, $\overline{\hh}(\overline{b}_i)$, and $\overline{\hh}(\overline{a}_i)$ in $\overline{S}$, in view of the positions of the arcs $\overline{b}_{i,j}$, $\overline{a}_{i,j}$, $\overline{\hh}(\overline{b}_{i,j})$, and $\overline{\hh}(\overline{a}_{i,j})$ from Figure~\ref{figure: aandb}.

We now analyze the ends of $\overline{v}_{\star_0}^\sharp$ at $z_\infty$ in more detail. Let $l_+$ and $l_-$ be the number of left and right ends of $\overline{v}_{\star_0}^\sharp$ at $z_\infty$ and let $\mathfrak{bp}$ be the number of boundary points of type (P$_3$) on $\overline{v}_{\star_0}$. In view of (c) and (d), the total number of left ends of $\overline{v}_*$ that limit to $z_\infty$ besides those of $\overline{v}_{\star_0}^\sharp$ is $l_--l_+$.  By (a slight modification of) the continuation method, we obtain a cycle $\mathcal{Z}$ consisting of $2l_-+\mathfrak{bp}$ chords, where $l_-$ of the chords are of type ${\frak S}(\overline{a}_{k,l},\overline{b}_{k',l'})$, $l_-$ of the chords are of type ${\frak S}(\overline{b}_{k,l},\overline{a}_{k',l'})$, $\mathfrak{bp}$ of the chords are of type ${\frak S}(\overline{a}_{k,l},\overline{a}_{k',l'})$ or ${\frak S}(\overline{b}_{k,l},\overline{b}_{k',l'})$. 

Let us write $p_*=\deg \overline{v}'_*$, $q_*=\deg \overline{v}^\sharp_*$, and $r_*=\deg\overline{v}^\flat_*$. We have $\op{ind}(\overline{v}_{\star_0}^\sharp)\geq 2$ since there is an end that limits to $z_\infty$ or a boundary point of type (P$_3$) that projects to a large sector.  Then $I(\overline{v}_{\star_0}^\sharp)\geq 2$ by the ECH index inequality (cf.\ Lemma~I.\ref{P1-index inequality for z infinity case}) and considerations of $\mathcal{Z}$.

\s
(1) Suppose that $\overline{v}'_{\star_0}\not=\varnothing$.    We claim the following:

\begin{claim} \label{claim: summing ECH indices}
$I(\overline{v}_{\star_0})\geq I(\overline{v}'_{\star_0})+I(\overline{v}''_{\star_0})+\delta_{\star_0},$ where $\delta_{\star_0}\geq 2$ if $\mathfrak{bp}>0$ and $\delta_{\star_0}=2p_{\star_0}$ if $\mathfrak{bp}=0$.
\end{claim}

\begin{proof}[Proof of Claim~\ref{claim: summing ECH indices}]
Each collection of boundary points of type (P$_3$) that map to the same point on the base contributes at least $+2$ towards $I$ by the argument from Lemma~\ref{nonnegative ECH indices 5 better}. The inequality for $\mathfrak{bp}>0$ then follows.

Next assume that $\mathfrak{bp}=0$.  Also assume that $l_+=l_-$, since otherwise there exist thin strips of $\overline{v}_*$ with $*\not= \star_0$, which contribute $>0$ to $I$, a contradiction. Let $(s,t_2)$ be coordinates on $[-2,2]\times[-1,1]$. Let
$$\check{C}'_{[-1,1]},\check{C}''_{[-1,1]}\subset [-2,2]\times[-1,1]\times \overline{S}$$ be representatives of $\overline{v}'_{\star_0}$ and $\overline{v}''_{\star_0}$ and let ${\frak c}_{\pm 1}'$, ${\frak c}_{\pm 1}''$ be groomings on $A_{\varepsilon/2}=\bdry D^2_{\varepsilon/2}\times[-2,2]$ and $A_\varepsilon=\bdry D^2_\varepsilon\times[-2,2]$ corresponding to  $\check{C}'_{[-1,1]}|_{t_2=\pm 1}$ and $\check{C}''_{[-1,1]}|_{t_2=\pm 1}$, such that the following hold:
\begin{itemize}
\item ${\frak c}_{+1}'={\frak c}_{-1}'$ and $w({\frak c}'_{\pm 1})=0$;
\item ${\frak c}''_{\pm 1}$ is obtained by intersecting $\pi_{[-2,2]\times\overline{S}}$-projections of the ends of $\overline{v}''_{\star_0}$ with $A_\varepsilon$.
\end{itemize}
Here $\pi_{[-2,2]\times\overline{S}}$ is the projection $[-2,2]\times\R\times\overline{S}\to [-2,2]\times\overline{S}$. We also remark that, for sign consistency with the computations in Section~I.\ref{P1-subsection: modified indices at z infty}, we use the standard orientations on $A_{\varepsilon/2}$ and $A_\varepsilon$ but view ${\frak c}'_{+1}$ and ${\frak c}''_{+1}$ to be at the {\em negative end} and ${\frak c}'_{-1}$ and ${\frak c}''_{-1}$ to be at the {\em positive end} during the proof of this claim. By the above description of $\mathcal{Z}$:
\begin{itemize}
\item $w({\frak c}''_{+1})=0$ or $-1$ and $w({\frak c}''_{-1})=0$ or $1$;
\item the endpoints of ${\frak c}''_{+1}$ and ${\frak c}''_{-1}$ agree and are alternating.
\end{itemize}

We now extend $\check C'_{[-1,1]},\check C''_{[-1,1]}$ by concatenating with
\begin{align*}
\check C'_{[1,2]},\check C''_{[1,2]}&\subset [-2,2]\times[1,2]\times\overline{S},\\
\check C'_{[-2,-1]},\check C''_{[-2,-1]}&\subset [-2,2]\times[-2,-1]\times\overline{S}
\end{align*}
such that the ends of $\check C'_{[-2,2]}\cup \check C''_{[-2,2]}$ are groomed and have zero winding number. Writing $\mathfrak{w}$ for the writhe, Lemmas~I.\ref{P1-calc of almost sum} and I.\ref{P1-calc of almost sum 2} imply that we have an additional contribution of at least
$$2p_{\star_0}=2(\mathfrak{w}({\frak c}'_{+1}\cup{\frak c}''_{+1})-\mathfrak{w}({\frak c}'_{-1}\cup{\frak c}''_{-1}))$$
towards $I$.
\end{proof}

By Claim~\ref{claim: summing ECH indices},  $I(\overline{v}_{\star_0})\geq 4$.  Since the ECH indices of the other levels add up to at least $0$ in view of (b), (c), and (d), we have a contradiction.

\s
(2) Suppose that $\overline{v}'_{\star_0}=\varnothing$. Then there are no boundary points of type (P$_3$) by definition. Since $I(\overline{v}_{\star_0})=I(\overline{v}''_{\star_0})\geq 2$, the only nontrivial levels besides $\overline{v}_{\star_0}$ and $\overline{v}_1$ are $\overline{v}_{B,j}$, $j=1,\dots,c$, which consist of thin strips and trivial strips. Let us write $l_{\star_0}:=l_+=l_-$.
We claim the following:

\begin{claim} \label{calc of I}
$I(\overline{v}_{\star_0}'')\geq 2+ l_{\star_0}$.
\end{claim}

\begin{proof}[Proof of Claim~\ref{calc of I}]
Let us first consider the case where:
\begin{enumerate}
\item $\pi_{\overline{S}}\circ \overline{v}_{\star_0}^\sharp|_{\dot F^\sharp_{\star_0}}$ is a diffeomorphism onto its image, which contains $\overline{S}-\overline{\bf a}-\overline{\bf b}$;
\item $\overline{v}_{\star_0}^\sharp$ limits to $z_\infty^{l_{\star_0}}\cup {\bf y}'$ to the left and to $z_\infty^{l_{\star_0}}\cup {\bf y}''$ to the right, where ${\bf y}'={\bf y}''$.
\end{enumerate}
If we choose a multivalued trivialization $\tau$ to be compatible with $\ar{\mathcal{D}}''_{\star_0,\pm}$ (cf.\ Section~I.\ref{P1-subsubsubsection: multivalued trivialization}), then
\begin{align*}
\op{ind}(\overline{v}_{\star_0}^\sharp) & = -\chi(F^\sharp_{\star_0}) + \deg \overline{v}^\sharp_{\star_0} +\mu_\tau(\overline{v}_{\star_0}^\sharp) +2c_1((\overline{v}_{\star_0}^\sharp)^*T\overline{S},\tau)\\
& = -(1-2g-(\deg \overline{v}^\sharp_{\star_0}-l_{\star_0})) + \deg \overline{v}^\sharp_{\star_0} +(l_{\star_0}+1) +2(1-2g)\\
& =  2-2g+2\deg \overline{v}^\sharp_{\star_0}.
\end{align*}
Hence $I(\overline{v}_{\star_0}^\sharp)\geq 2-2g+2\deg \overline{v}^\sharp_{\star_0}$. We also have $I(\overline{v}_{\star_0}^\flat)\geq 0$ and $\langle\overline{v}_{\star_0}^\sharp,\overline{v}_{\star_0}^\flat\rangle=\deg \overline{v}^\flat_{\star_0}$, where $\langle,\rangle$ is the algebraic intersection number.
Summing the contributions, we obtain:
\begin{align} \label{ari is sick today}
I(\overline{v}_{\star_0})&=I(\overline{v}_{\star_0}^\sharp)+I(\overline{v}_{\star_0}^\flat)+2\langle\overline{v}_{\star_0}^\sharp,\overline{v}_{\star_0}^\flat\rangle\\
\nonumber &\geq 2-2g+2\deg \overline{v}^\sharp_{\star_0}+2\deg \overline{v}^\flat_{\star_0} \geq 2 +\deg \overline{v}^\sharp_{\star_0}.
\end{align}

In general, the cases with smallest ECH indices occur when elements of ${\bf y}'$ are of type $x_{i1}^\#$ or $x_{i3}^\#$ and elements of ${\bf y}''$ are of type $x_{i2}^\#$. This has the effect of decreasing the lower bound in Equation~\eqref{ari is sick today} by the cardinality of ${\bf y}'$. Hence $I(\overline{v}_{\star_0})\geq 2+l_{\star_0}$.
\end{proof}

Since $l_{\star_0}\geq 1$, we have $I(\overline{v}_{\star_0})\geq 3$, which is a contradiction.
\end{proof}

\begin{lemma} \label{alishan4}
If $m\gg 0$, then there is no $\overline{u}_\infty\in \bdry_{\{-\infty\}}\mathcal{M}$ such that $\overline{v}'_1\not=\varnothing$ and $\overline{v}_2'\not=\varnothing$.
\end{lemma}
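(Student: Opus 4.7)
The plan is to reduce to the tropical-curve argument already established in Section~\ref{second case} for Lemma~\ref{alishan2}. Suppose for contradiction that such a $\overline{u}_\infty$ exists. By Corollary~\ref{cor 1} and Lemma~\ref{every comp trivial strip}, $\overline{u}_\infty$ has no fiber components, no boundary punctures of type (P$_1$) or (P$_2$), no interior intersections $\overline{v}'_*\cap\overline{v}''_*\subset int(\overline{v}'_*)$, and every component of $\overline{v}_\star^\sharp$ for $\star=(L,j)$ or $(R,j)$ is a thin strip. These are precisely the structural hypotheses underlying the tropical/interval-contraction construction in Section~\ref{second case}: the horizontal levels $\overline{v}_{L,j},\overline{v}_{R,j}$ behave uniformly (thin strips plus trivial strips), and the covering degrees $p_\star$ are uniformly bounded by $2g$ independently of $m$.

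First I would construct, exactly as in Section~\ref{second case}, the truncation $\widetilde{u}_i:\Sigma_i\to\overline{W}_{\tau_i}$, the compact subsets $K_{ij}$ (including the type-$\overline{v}_1$ piece around $\{|t_1-1/2|\le L\}$, which is nonempty since $\overline{v}'_1\neq\varnothing$), and the interval-contracted map
\[
\overline\Xi_i:\Gamma\to ([-1,1]/(-1\sim 1))\times [0,d_i].
\]
The periodicity in the $x$-direction comes from contracting the type-$\overline{v}_1$ compact sets to points, so the weight function
\[
\mathcal{W}_i:\bigl(([-1,1]/\sim)\times[0,d_i]\bigr)-\overline\Xi_i(\Gamma)\to\Z^{\geq 0}
\]
is well-defined and horizontally periodic. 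Lemmas~\ref{compact sets}, \ref{map} and \ref{independence} carry over verbatim, since none of them uses the particular form of the positive end at $+\infty$; the only bookkeeping change is that the exterior vertices $V_{\Gamma,e}$ now correspond to ends of $\overline{v}^\sharp_{L,j}$ and $\overline{v}^\sharp_{R,j}$ at $z_\infty$ (of which there are finitely many, controlled by $2g+|\mathcal{I}|$ independently of $m$).

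Next I would run the contradiction argument of Lemma~\ref{alishan2}. Choose the vertex $q_0\in V_{\Gamma,i}$ with $\overline\Xi_i(q_0)=(0,0)$ and a maximal oriented path $\delta=(q_0\stackrel{e_0}{\to}\cdots\stackrel{e_{l-1}}{\to}q_l)$ whose edges all satisfy $\lambda'_i(e_j)\approx 1$, terminating either in an exterior vertex of $V'_{\Gamma,e}$ (so $x\circ\overline\Xi_i(q_l)=\pm 1$) or in a vertex of $V_{\Gamma,i'}$ lying strictly above any exterior vertex. For each edge $e_{l_0}$ of $\delta$, horizontal periodicity of $\mathcal{W}_i$ forces the existence of a second oriented path $\delta'=(p_1\stackrel{f_1}{\to}\cdots\stackrel{f_k}{\to}p_{k+1})$ with $p_1\preceq q_{l_0}\preceq q_{l_0+1}\preceq p_{k+1}$ and $0<\lambda'_i(f_j)\leq K/m$; otherwise the weight function would have different values on the two sides of $\overline\Xi_i(e_{l_0})$ at a common $y$-level, contradicting $x$-periodicity. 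The Comparison Lemma~\ref{comparison lemma} then yields $|x\circ\overline\Xi_i(q_{l_0+1})-x\circ\overline\Xi_i(q_{l_0})|\leq K'/m$, and summing over $l_0=0,\dots,l-1$ gives $|x\circ\overline\Xi_i(q_l)-x\circ\overline\Xi_i(q_0)|\leq lK'/m$, which for $m\gg 0$ contradicts Lemma~\ref{map}(7) (when $q_l\in V'_{\Gamma,e}$) or the fact that the endpoint in $V_{\Gamma,i'}$ lies macroscopically far above $q_0$. Boundary punctures of type (P$_3$) are handled exactly as in Lemma~\ref{alishan2}: by Lemma~\ref{sencha 3}(4) (whose analog holds here via Lemma~\ref{every comp trivial strip}) such punctures occur only on $\overline{v}_1$ or $\overline{v}_2$, and the maximal path $\delta$ then terminates at a point of strictly greater height than the endpoints of any competing path, which again forces a contradiction.

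The main obstacle is verifying that the tropical machinery of Section~\ref{second case} genuinely does not see the difference between the positive end ${\bf y}\in\mathcal{S}_{{\bf a},h({\bf a})}$ (as in Lemma~\ref{alishan2}) and the positive end ${\bf z}=\{z_{\infty,i}\}_{i\in\mathcal{I}}\cup{\bf y}$ considered here. This reduces to checking that the uniform degree bounds on $p_\star$, the thin-strip structure of $\overline{v}_\star^\sharp$ from Lemma~\ref{every comp trivial strip}, and the Comparison Lemma all hold with constants $K,K'$ independent of $m$ and of $|\mathcal{I}|\leq 2g$. This is precisely what the preceding lemmas in Section~\ref{subsection: additional degenerations II} have been set up to provide, so the argument of Lemma~\ref{alishan2} transfers without essential modification.
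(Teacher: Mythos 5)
Your proposal is correct and follows essentially the same route as the paper: the paper's proof simply observes that Corollary~\ref{cor 1} and Lemma~\ref{every comp trivial strip} yield the analog of Lemma~\ref{sencha 3}, after which the tropical/interval-contraction argument of Lemma~\ref{alishan2} carries over unchanged. You have merely unpacked the details of that transfer (the weight-function periodicity, the Comparison Lemma step, and the (P$_3$) puncture case), all of which match the intended argument.
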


\begin{proof}
Corollary~\ref{cor 1} and Lemma~\ref{every comp trivial strip} imply the analog of Lemma~\ref{sencha 3}. The proof of Lemma~\ref{alishan2} then carries over with no change.
\end{proof}

\begin{proof}[Proof of Lemma~\ref{kyoho minus infty}]
Suppose $\overline{u}_\infty\in \bdry_{\{-\infty\}}\mathcal{M}$. If $\overline{v}'_2=\varnothing$, then we are in the situation of Lemma~\ref{kyoho minus infty} by Lemma~\ref{owl5}. On the other hand, Lemmas~\ref{alishan3} and \ref{alishan4} imply that $\overline{v}'_2=\varnothing$.
\end{proof}

\subsection{Breaking in the middle, part II}
\label{subsection: additional degenerations III}

Let $\overline{u}_\infty\in \bdry_{(-\infty,+\infty)}\mathcal{M}$ be the limit of $\overline{u}_i\in \mathcal{M}_{\tau_i}$, where $\tau_i\to T'$.

\begin{lemma} \label{nonnegative ECH indices 4}
If  $\overline{u}_\infty$ has no fiber components, then the ECH index of each level $\overline{v}_*\not= \overline{v}_0$ is nonnegative and the only components of $\overline{u}_\infty$ which have negative ECH index are the following:
\begin{itemize}
\item branched covers of $\sigma_\infty^{T'}$; and
\item at most one component $\widetilde{v}$ of $\overline{v}_0''$ with $I(\widetilde{v})=-1$.
\end{itemize}
\end{lemma}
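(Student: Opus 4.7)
The plan is to combine the arguments of Lemma~\ref{nonnegative ECH indices 2} (which is the corresponding statement when the positive end has no $z_\infty$) and Lemma~\ref{nonnegative ECH indices 5} (which handles $z_\infty$ ends in the degeneration at $+\infty$). The statement to prove has two ``defects'' compared to the totally positive case: the allowed branched covers of $\sigma_\infty^{T'}$ (an essentially homological/index-computation phenomenon) and the single $\op{ind}=-1$ middle component allowed by Lemma~\ref{lemma: codimension one}(1) when $T'\in\mathcal{T}_1$.

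First I would set up the continuation argument of Section~\ref{continuation argument} applied to $\overline{u}_\infty=(\overline{v}_{-1,1}\cup\dots\cup\overline{v}_{-1,c})\cup\overline{v}_0\cup(\overline{v}_{1,1}\cup\dots\cup\overline{v}_{1,a})$, producing a main cycle $\mathcal{Z}_{\mbox{\tiny main}}$ on $\bdry D_\varepsilon$ that winds once around $\R/2\pi\Z$, together with a union of auxiliary cycles $\mathcal{Z}_{\mbox{\tiny aux}}$ of thin pairs coming from the data $\overrightarrow{\mathcal{D}}$ at the positive end ${\bf z}=\{z_{\infty,i}\}_{i\in\mathcal I}\cup{\bf y}$. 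As in Lemma~\ref{almost alternate}, I would then show inductively, level by level starting from the top, that at each pair $\ast=(\pm 1,j)$ the sets $(P_{\ast,0},P_{\ast,1})$ of initial and terminal points of ${\frak c}_{\ast,0}\cup{\frak c}_{\ast,1}$ on $A_\varepsilon$ are almost alternating along $(0,2\pi)$ in the sense of Definition~\ref{leopard}.

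Next I would apply the refined index inequality of Lemma~I.\ref{P1-index inequality for z infinity case}, exactly as in Lemma~\ref{nonnegative ECH indices 5} (and with the $(P_3)$-puncture refinement of Lemma~\ref{nonnegative ECH indices 5 better}), to conclude
\begin{equation*}
I(\overline{v}_{\pm 1,j})\geq I(\overline{v}'_{\pm 1,j})+I(\overline{v}''_{\pm 1,j})+\mathfrak{bp}_{\pm 1,j},
\end{equation*}
with $I(\overline{v}''_{\pm 1,j})\geq 0$ by the ECH index inequality combined with regularity of $\overline{J}$ and with each $(P_3)$-puncture and large-sector contribution accounted for. For the middle level $\overline{v}_0$ the same argument shows that all components of $\overline{v}_0''$ satisfy $I\geq 0$, with the single exception allowed by Lemma~\ref{lemma: codimension one}(1): when $T'\in\mathcal{T}_1$ there is at most one simply-covered, irreducible component $\widetilde v\in\mathcal{M}^{\dagger,s,irr,\op{ind}=-1}_{\overline{J}^\Diamond_{T'}(\varepsilon,\delta,\mathfrak{p}(T'))}({\bf z},{\bf z}')$, which by the index inequality can have $I(\widetilde v)=-1$. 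Branched covers of $\sigma_\infty^{T'}$ are negative by Equation~\eqref{index for sigma infty tau}, and this accounts for the only remaining negative contributions.

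The main obstacle I expect is the verification that the almost alternating structure of Lemma~\ref{almost alternate} propagates correctly through the middle level $\overline{v}_0$ in the presence of the $\op{ind}=-1$ component $\widetilde v$. The inductive step matches the positive points of $P''_{+,\ast,i}$ into thin pairs of $\mathcal{Z}_{\mbox{\tiny aux}}$ or into neighbouring pairs in $\mathcal{Z}_{\mbox{\tiny main}}$, and this matching is potentially disturbed when $\widetilde v$ carries nontrivial $z_\infty$ ends. The fix is that, since $\widetilde v$ is simply covered and regular with $\op{ind}(\widetilde v)=-1$, its endpoint sets at $z_\infty$ contribute no extra groomings beyond what is already recorded in $\ar{\mathcal{D}}$ on $\overline{v}_0'$; hence the inductive step of Lemma~\ref{almost alternate} still closes up and yields an almost alternating pair at every upper level. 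The remaining bookkeeping, including the handling of (P$_3$) punctures, is identical to the combined arguments of Lemmas~\ref{nonnegative ECH indices 5} and \ref{nonnegative ECH indices 2}.
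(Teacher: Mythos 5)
Your proposal is correct and follows essentially the same route as the paper, whose proof of this lemma is simply a citation of the arguments of Lemmas~\ref{nonnegative ECH indices 5} and \ref{nonnegative ECH indices 5 better} (the continuation argument, the almost-alternating property, and the refined index inequality), with the $T'\in\mathcal{T}_1$ exception handled exactly as in Lemma~\ref{nonnegative ECH indices 2}. Your expanded discussion of how the almost-alternating structure passes through the middle level, including the $\op{ind}=-1$ component, is a faithful elaboration of what the paper leaves implicit.
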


\begin{proof}
Similar to the proofs of Lemmas~\ref{nonnegative ECH indices 5} and ~\ref{nonnegative ECH indices 5 better}.
\end{proof}

The following lemma is analogous to Lemma~\ref{eliminate fiber components 2}.

\begin{lemma}  \label{eliminate fiber components}
If $m\gg 0$ and $\overline{u}_\infty\in \bdry_{(-\infty,+\infty)}\mathcal{M}$, then the following hold:
\begin{enumerate}
\item $\overline{u}_\infty$ has no fiber components;
\item there is no level $\overline{v}_*$ such that $\overline{v}'_*\cap \overline{v}''_*\not=\varnothing$ and $\overline{v}'_*\cap \overline{v}''_*\subset int(\overline{v}'_*)$; and
\item $\overline{u}_\infty$ has no boundary point at $z_\infty$.
\end{enumerate}
\end{lemma}

\begin{proof}
(1) Arguing by contradiction, suppose that $\overline{u}_\infty$ has a fiber component $\widetilde{v}$. Then $n^*(\widetilde{v})\geq m$ and $\widetilde{v}$ can be eliminated by arguing as in Lemma~\ref{eliminate fiber components 2}(1), in view of the following contributions towards $I$:
\begin{enumerate}
\item[($\gamma_1$)] $I(\overline{v}_0')=-p_0$.
\item[($\gamma_2$)] $\cup_{j=1}^c\overline{v}_{-1,j}^\sharp$ is a union of $p_0$ trivial strips and $\sum_{j=1}^cI(\overline{v}_{-1,j}^\sharp)=p_0$.
\item[($\gamma_3$)] If $\widetilde{v}$ is a component of $\overline{v}_*$, then $\widetilde{v}$ and the intersection $\widetilde{v}\cap (\overline{v}_*-\widetilde v)$ contribute $2g+2\geq 4$ towards $I$.
\item[($\gamma_4$)] After removing any fiber components $\widetilde{v}$, all the levels $\not=\overline{v}_0$ have nonnegative ECH index and $I(\overline{v}_0'')\geq -1$ by Lemma~\ref{nonnegative ECH indices 4}.
\end{enumerate}
The sum of the above ECH indices is at least $3$, a contradiction.

(2), (3) Similar to (2) and (3) of Lemma~\ref{eliminate fiber components 2}. Observe that the sum of ECH indices of the levels was at least $4$ in all the cases of Lemma~\ref{eliminate fiber components 2} that were not ruled out by other means; in the present case the sum will be at least $3$. (Note that the ECH indices added up to only $3$ in (2C) of Lemma~\ref{eliminate fiber components 2}, but we do not have levels $\overline{v}_{0,j}$, $j=1,\dots,b$, in the current situation.)
\end{proof}

\begin{lemma} \label{owl4}
If $\overline{u}_\infty\in \bdry_{(-\infty,+\infty)}\mathcal{M}$ and $\overline{v}'_0=\varnothing$, then $\overline{u}_\infty$ is one of the following $2$-level buildings:
\begin{enumerate}
\item $\overline{v}_{1,1}$ with $I=1$ from ${\bf z}$ to some ${\bf z}'$ consisting of
\begin{enumerate}
\item[(i)] one thin strip and trivial strips or
\item[(ii)] one nontrivial component of $\overline{v}_{1,1}''$ with image in $W$ and trivial strips;
\end{enumerate}
and $\overline{v}_0=\overline{v}''_0$ with $I=1$ and $n^*\leq m+|\mathcal{I}|$ from ${\bf z}'$ to ${\bf y}'$.
\item $\overline{v}_0=\overline{v}''_0$ with $I=1$ and $n^*\leq m+|\mathcal{I}|$ from ${\bf z}$ to some ${\bf y}''$; and $\overline{v}_{-1,1}$ with $I=1$ from ${\bf y}''$ to ${\bf y}'$.
\end{enumerate}
\end{lemma}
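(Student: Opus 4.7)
The plan is to argue in parallel with Lemma~\ref{los angeles2}, combining the hypothesis $\overline{v}_0'=\varnothing$ with the general restrictions from Lemmas~\ref{eliminate fiber components} and \ref{nonnegative ECH indices 4} and then using the point constraint together with Equation~\eqref{sum of I part 2} to force the $2$-level structure.

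First I would apply Lemma~\ref{eliminate fiber components} to eliminate fiber components, boundary punctures removable at $z_\infty$, and interior intersections of $\overline{v}_*'$ with $\overline{v}_*''$. Together with Lemma~\ref{nonnegative ECH indices 4} and the hypothesis $\overline{v}_0'=\varnothing$ (which kills all branched covers of $\sigma_\infty^{T'}$), this yields $I(\overline{v}_*)\ge 0$ for every level $*\neq 0$, while the only possibly negative-index component inside $\overline{v}_0$ is the distinguished $I=-1$ piece from Lemma~\ref{lemma: codimension one}(1), which exists only when $T'\in\mathcal{T}_1$ and which does not meet $\overline{\frak m}(T')$.

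Next I would invoke the point constraint. Since no fiber component exists, the marked point $\overline{\frak m}(T')$ is carried by a non-fiber, non-section-at-infinity piece $\widetilde v_0\subset \overline{v}_0''$. If $T'\in\mathcal{T}_2$, then $\widetilde v_0$ is the distinguished $\op{ind}=1$ curve of Lemma~\ref{lemma: codimension one}(2) and $I(\widetilde v_0)=1$; otherwise $\widetilde v_0$ lies in a generically regular stratum and the codimension-$2$ point condition forces $I(\widetilde v_0)\ge 2$. Combining these bounds with Equation~\eqref{sum of I part 2} and the nonnegativity of the remaining components' ECH indices forces $T'\in\mathcal{T}_2$, $I(\widetilde v_0)=I(\overline{v}_0)=1$, and exactly one additional level of $\overline{u}_\infty$ carries $I=1$; all remaining $\overline{v}_{\pm 1,j}$ are connectors that absorb into $\R$-translation.

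Finally I would identify the single nontrivial additional level. If it lies below $\overline{v}_0$ we take $c=1$, $a=0$, and $\overline{v}_{-1,1}$ is an $I=1$ $\overline{W}$-curve between tuples in $\mathcal{S}_{{\bf b},h({\bf b})}$, giving case (2). If it lies above we take $a=1$, $c=0$, and $\overline{v}_{1,1}\colon{\bf z}\to{\bf z}'$ is an $I=1$ multisection of $\overline{W}$; since each nontrivial component contributes $I\ge 1$, exactly one component is nontrivial and the rest are trivial strips. In the $\sharp$ case, the characterization used in the proof of Lemma~\ref{every comp trivial strip} forces the nontrivial piece to be a thin strip from some $z_{\infty,i}$ to $x_i^j$, giving (i); in the $\flat$ case, $\overline{v}_{1,1}^\sharp=\varnothing$ and the unique nontrivial $\flat$-component has image in $W$, giving (ii). The two alternatives are mutually exclusive, since coexistence would push $I(\overline{v}_{1,1})\ge 2$. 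The hard part will be to verify that the point-constraint bound $I(\widetilde v_0)\ge 1$ is not corrupted by $z_\infty$-ends carried by $\overline{v}_0^\sharp$ coming from ${\bf z}$: one must check via the writhe/grooming bookkeeping of Lemma~\ref{nonnegative ECH indices 5 better} that these $z_\infty$-ends contribute nonnegatively to $I(\overline{v}_0)$, so that no combination of $\sharp$-ends artificially reduces the ECH index below the count prescribed above.
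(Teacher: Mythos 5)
Your overall strategy coincides with the paper's: with $\overline{v}'_0=\varnothing$ the point constraint gives $I(\overline{v}_0)\geq 1$, Lemma~\ref{nonnegative ECH indices 4} gives nonnegativity of the remaining levels, and Equation~\eqref{sum of I part 2} then forces a two-level building with $I(\overline{v}_0)=I(\overline{v}_{\pm 1,1})=1$; your identification of cases (1)(i), (1)(ii) and (2) is also the intended one. There is, however, one concrete error: the claim that the index count ``forces $T'\in\mathcal{T}_2$'' and hence $I(\widetilde v_0)=I(\overline{v}_0)=1$. This contradicts your own first paragraph. When $T'\in\mathcal{T}_1$, $\overline{v}''_0$ may contain the distinguished $\op{ind}=-1$, $n^*=0$ component of Lemma~\ref{lemma: codimension one}(1) (which does not pass through $\overline{\frak m}(T')$) together with a component through the marked point of index $2$, so that $I(\overline{v}_0)=2+(-1)=1$ and the total is still $2$. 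The parallel Lemma~\ref{los angeles2} explicitly records both alternatives, and the statement of Lemma~\ref{owl4} does not exclude the $\mathcal{T}_1$ configuration; as written your argument would wrongly discard these buildings, which matters when they are later counted as boundary points of $\mathcal{M}$. The repair is immediate (in either alternative $I(\overline{v}_0)=1$ and the rest of your argument is unchanged), but the step as stated is false.

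On the issue you defer to the last paragraph: no writhe or grooming bookkeeping is needed. Since $\overline{v}'_0=\varnothing$ and $\overline{\frak m}(T')$ has fiber coordinate $z_\infty$, the neighborhood of the point of $\overline{v}''_0$ mapping to $\overline{\frak m}(T')$ already contributes $n^*\geq m$, whence $n^*(\overline{v}_{\pm 1,1})\leq |\mathcal{I}|$. This at once rules out any negative end of a $\overline{v}^\sharp_*$ at $z_\infty$ (such an end would contribute $n^*\geq k_0-1\gg 2g$) and forces each positive end at $z_\infty$ to satisfy $n^*=1$, i.e., to project to a thin sector. That is exactly what makes the nontrivial component in case (1)(i) a thin strip, and it removes the possibility of $\sharp$-ends corrupting $I(\overline{v}_0)$ without any appeal to Lemma~\ref{nonnegative ECH indices 5 better}.
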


\begin{proof}
Suppose that $\overline{v}'_0=\varnothing$. Since passing through $\overline{\frak m}(T')$ is a codimension two condition, we have $I(\overline{v}_0)\geq 1$. Hence there can be only one other nontrivial level --- either $\overline{v}_{1,1}$ or $\overline{v}_{-1,1}$ --- and $I(\overline{v}_{\pm 1,1})\geq 1$. By Lemma~\ref{nonnegative ECH indices 4}, $I(\overline{v}_0)=1$ and $I(\overline{v}_{\pm 1,1})=1$. Moreover, $n^*(\overline{v}_{\pm 1,1})\leq  |\mathcal{I}|$ since $n^*(\overline{v}_0)\geq m$. This means that there are no components $\overline{v}_*^\sharp$ that limit to $z_\infty$ at the negative end and all the positive ends $\mathcal{E}_+$ that limit to $z_\infty$ satisfy $n^*(\mathcal{E}_+)=1$. The lemma follows.
\end{proof}

\begin{lemma} \label{owl3}
For each interval $[-T,T]$, there exists $m\gg 0$ such that there is no sequence of curves $\overline{u}_i\in \mathcal{M}_{\tau_i}$, $\tau_i\to T'\in[-T,T]$, that limits to $\overline{u}_\infty$ for which $\overline{v}_0'\not =\varnothing$.
\end{lemma}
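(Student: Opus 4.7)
The plan is to argue by contradiction, closely following the template of Lemma~\ref{vancouver2} but with modifications to accommodate the $z_\infty$ ends coming from ${\bf z}=\{z_{\infty,i}\}_{i\in\mathcal{I}}\cup{\bf y}$. Assume there exist sequences $m_l\to\infty$, $\varepsilon_l,\delta_l\to 0$, $T_l'\in[-T,T]$, and $\overline{u}_{l,i}\to\overline{u}_{l,\infty}\in\bdry_{\{T_l'\}}\mathcal{M}^{(m_l)}$ with $\overline{v}'_{l,0}\not=\varnothing$, where $\mathcal{M}^{(m_l)}$ denotes $\mathcal{M}$ with respect to $m_l$. Pass to a diagonal subsequence $\overline{u}_l:=\overline{u}_{l,i(l)}$, arrange that $T_l'\to T'\in[-T,T]$, and keep the notation $\overline{u}_\infty$ for the limit.

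First I would carry out the ECH-index bookkeeping. By Lemma~\ref{eliminate fiber components}, $\overline{u}_\infty$ has no fiber components, no interior intersections $\overline{v}'_*\cap\overline{v}''_*\subset\op{int}(\overline{v}'_*)$, and no removable boundary punctures. Using Lemma~\ref{nonnegative ECH indices 4} and the intersection bounds of Lemmas~\ref{intersezione} and \ref{intersezione revisited}, the only components of $\overline{u}_\infty$ with negative ECH index are the branched cover $\overline{v}_0'$ of $\sigma_\infty^{T'}$ (with $I=-p_0$) and possibly one component of $\overline{v}_0''$ with $I=-1$. Summing contributions analogous to $(\gamma_1)$--$(\gamma_4)$ in the proof of Lemma~\ref{eliminate fiber components} --- namely $I(\overline{v}_0')=-p_0$, $\sum_{j=1}^cI(\overline{v}^\sharp_{-1,j})=p_0$, and nonnegative contributions from the remaining levels --- then forces $\cup_{j=1}^c \overline{v}_{-1,j}^\sharp$ to consist of $p_0$ thin strips and leaves a narrow window for $\overline{v}_0$. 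An argument parallel to Section~\ref{snowman} using the ``almost alternating'' pairs $(P_{*,0},P_{*,1})$ (Definition~\ref{leopard}, Lemma~\ref{almost alternate}) ensures that the ends of $\overline{v}_{1,j}^\sharp$, $j>0$, at $z_\infty$ coming from ${\bf z}$ do not help resolve this shortfall.

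Next I would run the rescaling argument of Lemma~\ref{vancouver2}. Restricting $\overline{u}_l$ to a neighborhood of $\sigma_\infty^{\tau_l}$ for $\tau_l\to T'$, projecting to $D_{\rho_0}$ using balanced coordinates, applying the ansatz from Equation~(I.\ref{P1-eqn: ansatz}), rescaling by a positive real constant, and letting $m_l\to\infty$, I obtain a holomorphic map $w_0:\Sigma_0\to\C\P^1$ together with a branched cover $\pi_0:\Sigma_0\to cl(B_{T'})$. When $\overline{v}_0$ carries no boundary puncture of type (P$_3$), the map $w_0$ satisfies:
\begin{enumerate}
\item[(i)] $w_0(\bdry\Sigma_0)\subset\{\phi=0,\rho>0\}\cup\{\infty\}$;
\item[(ii)] $w_0(z_0)=\infty$ for some $z_0\in\pi_0^{-1}(+\infty)$, determined by the data $\overrightarrow{\mathcal{D}}$ at the $z_\infty$ ends from ${\bf z}$;
\item[(iii)] $w_0(z_1)=0$ for some $z_1\in\pi_0^{-1}(\overline{\frak m}^b(T'))$;
\item[(iv)] $w_0|_{\op{int}(\Sigma_0)}$ is a biholomorphism onto its image.
\end{enumerate}
Extending $f_0=\pi_0\circ w_0^{-1}$ to a compactification $\overline\Sigma_1\to cl(B_{T'})$ and applying the Involution Lemma~I.\ref{P1-lemma: effect 4} as in the proof of Lemma~\ref{vancouver2}, I deduce that $f_0$ sends the point corresponding to $\infty\in\C\P^1$ to $\mathcal{L}_{(r(T')+1)/2}\cap\bdry cl(B_{T'})$. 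This contradicts (ii), since $+\infty$ is not on $\mathcal{L}_{(r(T')+1)/2}\cap\bdry cl(B_{T'})$.

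The case when $\overline{v}_0$ carries a boundary puncture of type (P$_3$) is handled as in the second half of Lemma~\ref{vancouver2}: the index count in Lemma~\ref{nonnegative ECH indices 5 better} allows at most one such puncture, mapping to some ${\frak q}\in\bdry cl(B_{T'})$, and the rescaled $w_0$ satisfies (ii) with $z_0\in\pi_0^{-1}({\frak q})$ instead. The Involution Lemma then forces ${\frak q}\in\mathcal{L}_{(r(T')+1)/2}\cap\bdry cl(B_{T'})$, which is \emph{effectively} a codimension-one condition in the sense of Remark~\ref{elmwood scone}. Combined with the $+2$ contribution of the puncture, the $+1$ from a necessarily large sector, and at least $+1$ from some $\overline{v}_{1,j}$ with $j>0$, the total ECH index exceeds $2$, a contradiction. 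The main obstacle will be the bookkeeping in the first case: one must verify that the rescaled limit $w_0$ actually satisfies (i)--(iv) cleanly despite the presence of multiple $z_\infty$ positive ends from ${\bf z}$, which requires a careful continuation argument in the spirit of Section~\ref{continuation argument} to control the asymptotic behavior and ensure that no ``escape to infinity'' or unexpected bubbling contaminates the rescaling.
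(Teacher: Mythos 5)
Your proposal is correct and takes essentially the same route as the paper: the paper's proof of this lemma is simply the remark that it "is similar to Lemma~\ref{vancouver2}," i.e., the rescaling of the truncation near $\sigma_\infty^{T'}$ followed by the Involution Lemma to force $w_0^{-1}(\infty)$ onto $\mathcal{L}_{(r(T')+1)/2}\cap\bdry cl(B_{T'})$, with the separate index count for a type (P$_3$) puncture. Your additional bookkeeping via Lemmas~\ref{eliminate fiber components} and \ref{nonnegative ECH indices 4}, and your flagged concern about verifying condition (ii) when some positive ends of $\overline{v}_0'$ connect trivially to the $z_{\infty,i}$ of ${\bf z}$, are exactly the modifications the paper leaves to the reader.
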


\begin{proof}
This is similar to Lemma~\ref{vancouver2} and will be omitted.
\end{proof}

\begin{proof}[Proof of Lemma~\ref{kyoho middle}]
This is argued in the same way as Lemma~\ref{cherries3}. Suppose $\overline{u}_\infty\in \bdry_{(-\infty,+\infty)}\mathcal{M}$. If $\overline{v}'_0=\varnothing$, then Lemma~\ref{owl4} implies that $\overline{u}_\infty$ is as described in Lemma~\ref{kyoho middle}. On the other hand, by Lemma~\ref{owl3}, for any $T>0$ there exists $m\gg 0$ such that $\overline{v}_0'=\varnothing$ in $\overline{u}_\infty\in \bdry_{[-T,+T]}\mathcal{M}$. The possibilities where $m_i\to \infty$, $T_i\to\infty$, $\overline{u}_{ij}\to \overline{u}_{i\infty}\in \bdry_{\{\pm T_i\}} \mathcal{M}$ are treated in the same way as in Lemmas~\ref{eeyore}, \ref{alishan3}, and \ref{alishan4}.
\end{proof}

\section{Homotopy of cobordisms II}
\label{section: homotopy of cobordisms II}

In this section we consider the homotopy of cobordisms corresponding to $\Phi\circ\Psi$. Many of the constructions for $\overline{W}^\mp_\tau$ are similar to those of $\overline{W}^\pm_\tau$ with minor modifications.

We first give a brief description of $\overline{W}_\tau=\overline{W}^\mp_\tau$. (If $\mp$ is understood, at it will be in the rest of this section, then it will be omitted.) The base $B_\tau$ of $\overline{W}_\tau$ is biholomorphic to an infinite cylinder with a disk removed.  As $\tau\to +\infty$, $\overline{W}_\tau$ degenerates to the stacking of $\overline{W}_-$ ``on top of''
$\overline{W}_+$.  On the other hand, as $\tau\to -\infty$, $\overline{W}_\tau$ degenerates to $\overline{W}_{-\infty}$, whose base $B_{-\infty}$ is given by:
\begin{equation} \label{eqn: base B sub E}
B_{-\infty}= \left((\R\times (\R/2\Z))\sqcup E\right)/\sim,
\end{equation}
where $E=\{|z|\leq 1\}\subset \C$ and $\sim$ identifies $(0,{3\over 2})\in \R\times(\R/2\Z)$ with $0\in E$.

\subsection{Construction of the homotopy of cobordisms for $\Phi\circ \Psi$}

\subsubsection{Definition of the family $\overline{W}_\tau$}

Let $l\in(0,\infty)$ and $r\in(0,1]$.  Consider the fibration
$$\pi: \R\times \overline{N} \to \R\times(\R/2\Z),$$
where $\overline{N}$ is viewed as $(\overline{S}\times[0,2])/(x,2)\sim (\overline{\hh}(x),0)$ and $(s,t)$ are coordinates on $\R\times (\R/2\Z)$. We define $\overline{W}_{l,r}=\pi^{-1}(B_{l,r})$, where the base $B_{l,r}$ is obtained by smoothing the corners of
$$(\R\times (\R/2\Z))- ((-l/2,l/2)\times ((3-r)/2,(3+r)/2)).$$

Next choose a function
\begin{equation} \label{l and r part 2}
\eta = (l,r): \R\to(0,\infty)\times(0,1],
\end{equation}
which is obtained by smoothing
$$\eta_0(\tau)= \left\{
\begin{array}{ll}
(\tau+1,1), & \mbox{for } \tau\geq 0;\\
(e^\tau,e^\tau), & \mbox{for } \tau\leq 0;
\end{array}\right.$$
near $\tau=0$.

We then define $\overline{W}_\tau= \overline{W}_{\eta(\tau)}$ and $B_\tau= B_{\eta(\tau)}$. Let $\pi_{B_\tau}: \overline{W}_\tau\to B_\tau$ be the projection along $\{(s,t)\}\times\overline{S}$.

As $\tau\to +\infty$, the cobordism $\overline{W}_\tau$ approaches the concatenation of $\overline{W}_-$ and $\overline{W}_+$. See Figure~\ref{figure: base degeneration 2bis}. On the other hand, as $\tau\to -\infty$, the cobordism $\overline{W}_\tau$ can be viewed as degenerating to $\overline{W}_{-\infty}=(\overline{W}_{-\infty,1}\sqcup \overline{W}_{-\infty,2})/\sim$, which we describe now:
Consider the base $B_{-\infty}=(B_{-\infty,1}\sqcup B_{-\infty,2})/\sim$, where $B_{-\infty,1}= \R\times(\R/2\Z)$, $B_{-\infty,2}=E=\{|z|\leq 1\}$, and $\sim$ identifies $(0,{3\over 2})\in B_{-\infty,1}$ with $0\in B_{-\infty,2}$. We also identify the asymptotic markers $\bdry_s$ at $(0,{3\over 2})$ and $\{x=0,y>0\}$ at $0$. We then set $\overline{W}_{-\infty,1}= \pi^{-1}(B_{-\infty,1})$ and $\overline{W}_{-\infty,2}=B_{-\infty,2}\times \overline{S}$, and identify $(0,{3\over 2},x)\sim (0,x)$, where $(0,{3\over 2},x)\in \overline{W}_{-\infty,1}$ and $(0,x)\in \overline{W}_{-\infty,2}$.  We write $\pi_{B_{-\infty,i}}: \overline{W}_{-\infty,i}\to B_{-\infty,i}$ for the projection along $\overline{S}$.

\begin{figure}[ht]
\begin{overpic}[width=12cm]{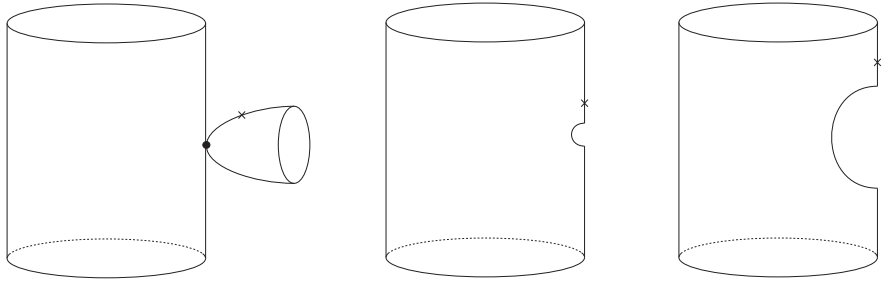}
\end{overpic}
\caption{The bases of the family $\overline{W}^\mp_\tau$. The leftmost diagram represents $B_{-\infty}$, i.e., $\tau=-\infty$, and the parameter $\tau$ increases as we go to the right. The location of $\overline{\frak m}^b(\tau)$ is indicated by $\times$. }
\label{figure: base degeneration 2bis}
\end{figure}

\subsubsection{Marked points}

We choose a $1$-parameter family of marked points
$$\overline{\frak m}(\tau)=(\overline{\frak m}^b(\tau), \overline{\frak m}^f(\tau))=(\overline{\frak m}^b(\tau),z_\infty)\in \overline{W}_\tau$$ for $\tau\in\R$, such that the following hold:
\begin{enumerate}
\item[(i)] $\overline{\frak m}^b(\tau)$ is on the segment $\{s>{l(\tau)\over 2}, t={3\over 2}\}$;
\item[(ii)] as $\tau\to +\infty$, $\overline{\frak m}(\tau)$ limits to $\overline{\frak m}(+\infty)=(\overline{\frak m}^b(+\infty),\overline{\frak m}^f(+\infty))$, where $\overline{\frak m}^b(+\infty)=(0,{3\over 2})\in B_-$ and $\overline{\frak m}^f(+\infty)=z_\infty$;
\item[(iii)] as $\tau\to -\infty$, $\overline{\frak m}(\tau)$ limits to $\overline{\frak m}(-\infty)=(\overline{\frak m}^b(-\infty),\overline{\frak m}^f(-\infty))$, where $\overline{\frak m}^b(-\infty)={i\over 2}\in B_{-\infty,2}$ and $\overline{\frak m}^f(-\infty)=z_\infty$.
\end{enumerate}

\subsubsection{Stable Hamiltonian structures and symplectic forms}

We first consider $\overline{W}_\tau$. The stable Hamiltonian structure on $\overline{N}=(\overline{S}\times[0,2])/\sim$ is obtained from $(dt,\overline\omega)$ on $\overline{S}\times[0,2]$ by passing to the quotient. The $2$-plane field is $\xi_\tau=T\overline{S}$ and the Hamiltonian vector field is $R_\tau=\bdry_t$. The symplectic form $\overline{\Omega}_\tau$ is the restriction to $\overline{W}_\tau$ of $ds\wedge dt+ \overline\omega$, defined on
$\R\times\overline{S}\times[0,2]$ and sent to the quotient $\R\times\overline{N}$.

Next we consider $\overline{W}_{-\infty}$. For $\overline{W}_{-\infty,1}=\overline{N}$ the Hamiltonian structure is $(dt,\overline\omega)$ and the symplectic form is $\overline{\Omega}_{-\infty,1}=ds\wedge dt+\overline\omega$.  The symplectic form on $\overline{W}_{-\infty,2}=B_{-\infty,2}\times\overline{S}$ is a product symplectic form $\overline{\Omega}_{-\infty,2}=\omega_{-\infty,2}\oplus \overline\omega$, where $\omega_{-\infty,2}$ is the standard area form on the unit disk.

\subsection{Holomorphic curves and moduli spaces}

\subsubsection{Lagrangian boundary conditions}

Consider the cobordism $(\overline{W}_\tau,\overline{\Omega}_\tau)$. We place a copy of $\overline{\mathbf{a}}$ on $\pi^{-1}_{B_\tau}({l(\tau)\over 2},{3\over 2})$ and parallel transport along the vertical boundary $\bdry_v\overline{W}_\tau:= \pi^{-1}(\bdry B_\tau)$ using the symplectic connection $\overline{\Omega}_\tau$ to obtain $L_{\overline{\mathbf{a}}}^\tau$.  Similarly, we place a copy of $\overline{\mathbf{a}}$ over $1\in B_{-\infty,2}$ and parallel transport along $\bdry B_{-\infty,2}$ to obtain $L_{\overline{\mathbf{a}}}^{-\infty,2}=\bdry B_{-\infty,2}\times \overline{\mathbf{a}}$. The Lagrangian submanifolds $L^\tau_{\widehat{\bf a}}$, $L^\tau_{\widehat{a}_i}$, $L^\tau_{\overline{a}_i}$ are defined similarly.

\subsubsection{Almost complex structures}

\begin{defn} \label{defn: ac str for mp} $\mbox{}$
\begin{enumerate}
\item An almost complex structure $\overline{J}_\tau$ on $\overline{W}_\tau$ is {\em admissible} if $\overline{J}_\tau$ is the restriction of some $\overline{J'}\in \mathcal{J}_{\overline{W'}}$. In this case $\overline{J}_\tau$ is said to be {\em compatible} with $\overline{J'}$.
\item An almost complex structure $\overline{J}_{-\infty,1}$ on $\R\times \overline{N}$ is {\em admissible} if $\overline{J}_{-\infty,1}\in \mathcal{J}_{\overline{W'}}$.
\item An almost complex structure $\overline{J}_{-\infty,2}$ on $B_{-\infty,2}\times \overline{S}$ is {\em admissible} if it is a product complex structure.
\end{enumerate}
The space of admissible $\overline{J}_\tau$, $\overline{J}_{-\infty,1}$ and $\overline{J}_{-\infty,2}$ will be denoted by $\mathcal{J}_{\overline{W}_\tau}$, $\mathcal{J}_{\overline{W}_{-\infty,1}}$ and  $\mathcal{J}_{\overline{W}_{-\infty,2}}$, respectively.
\end{defn}

\begin{defn}
A family $\{\overline{J}_\tau\in \mathcal{\overline{J}}_\tau\}_{\tau\in\R}$ of almost complex structures is {\em admissible} if there exist $\overline{J'}\in \mathcal{J}_{\overline{W'}}$, $\overline{J}\in \mathcal{J}_{\overline{W}}$, $\overline{J}_+\in \mathcal{J}_{\overline{W}_+}$, $\overline{J}_-\in \mathcal{J}_{\overline{W}_-}$, $\overline{J}_{-\infty,1}\in\mathcal{J}_{\overline{W}_{-\infty,1}}$ and $\overline{J}_{-\infty,2}\in \mathcal{J}_{\overline{W}_{-\infty,2}}$ such that the following hold:
\begin{enumerate}
\item $\overline{J}_\tau$ converges to $(\overline{J}_{-\infty,1}, \overline{J}_{-\infty,2})$ as $\tau\to -\infty$;
\item $\overline{J}_\tau$ converges to $(\overline{J}_+,\overline{J}_-)$ as $\tau\to +\infty$;
\item $\overline{J}_+$ and $\overline{J}_-$ are compatible with $\overline{J'}$ and $\overline{J}$; and
\item $\overline{J}_\tau$ is compatible with $\overline{J'}=\overline{J}_{-\infty,1}$.
\end{enumerate}
The space of all admissible $\{\overline{J}_\tau\in \mathcal{\overline{J}}_\tau\}_{\tau\in\R}$ will be denoted by $\overline{\mathcal{I}}$.

\end{defn}

\subsubsection{Notation and conventions}

We will be using the notation and conventions from Section~\ref{subsubsection: convention bambi}, with the exception of intersection numbers $n^*(\overline{u})$.

\s\n
{\em Intersection numbers.} Let $\delta_{\rho_0,\phi_0}$ be a closed orbit of the Hamiltonian vector field $\bdry_t$ which lies on the torus $\{\rho=\rho_0\}\subset \overline{S}\times[0,2]/\sim$ for $\rho_0>0$ sufficiently small and which passes through the point $(t,\phi)=(0,\phi_0)$. 
We assume additionally that $\delta_{\rho_0,\phi_0}$ does not intersect the projections of the Lagrangians of $\overline{W}_\tau$, $\overline{W}_+$ and $\overline{W}_-$ to $\overline{N}$.

We then write $(\sigma_\infty^*)^\dagger$ for the restriction of $\R\times\delta_{\rho_0,\phi_0}$ to $\overline{W}_*$, where $*=\varnothing, ', \tau,+$, $-$ or $(-\infty,1)$. For $\overline{W}_{-\infty,2}$ we write
$$(\sigma_\infty^{-\infty,2})^\dagger=B_{-\infty,2}\times \{ \rho=\rho_0, \phi=\phi_0+3\pi/ 2m+ 2\pi k/m, k\in \Z\}.$$
Finally we define $n^*(\overline{u})= \langle \overline{u},(\sigma_\infty^*)^\dagger\rangle$,
where $*=\varnothing, ', \tau,+$, $-$ or $(-\infty,1)$.

\subsubsection{Holomorphic maps to $\overline{W}_\tau$}

Let $(F,j)$ be a compact Riemann surface, possibly disconnected, with $2g$ boundary components and two sets of interior punctures $\mathbf{p}^+= \{p_1^+,\dots,p_{k_+}^+\}$ and $\mathbf{p}^-= \{p_1^-,\dots,p_{k_-}^-\}$ such that each component of $F$ has nonempty boundary, at least one puncture from $\mathbf{p}^+$, and at least one puncture from $\mathbf{p}^-$. We write $\dot F= F-\mathbf{p}^+-\mathbf{p}^-$.

\begin{defn} \label{defn: W tau curves in mp}
Let $\overline{J}_\tau\in \mathcal{J}_{\overline{W}_\tau}$ and let $\bs\gamma=\prod \gamma_i^{m_i},\bs\gamma'=\prod (\gamma'_i)^{m_i'}\in \widehat{\mathcal{O}}_k$.

A {\em degree $k$ multisection of $(\overline{W}_\tau,\overline{J}_\tau)$ from $\bs\gamma$ to $\bs\gamma'$} is a holomorphic map
$$\overline{u}: (\dot F,j)\to (\overline{W}_\tau, \overline{J}_\tau)$$
which is a degree $k$ multisection of $\pi_{B_\tau}:\overline{W}_\tau\to B_\tau$ and which additionally satisfies the following:
\begin{enumerate}
\item $\overline{u}(\bdry F )\subset L^\tau_{\widehat{\bf a}}$;
\item $\overline{u}$ maps each connected component of $\bdry F$ to a different $L_{\widehat{a}_i}^\tau$;
\item $\displaystyle\lim_{w\to p_i^+} \pi_{\R}\circ \overline{u}(w) =+\infty$ and $\displaystyle\lim_{w\to p_i^-} \pi_{\R}\circ \overline{u}(w)=-\infty$;
\item $\overline{u}$ converges to a cylinder over a multiple of some $\gamma_j$ near each puncture $p_i^+$ so that the total multiplicity of $\gamma_j$ over all the $p_i^+$'s is $m_j$ (and similarly for $p_i^-$).
\end{enumerate}
Here $\pi_{\R}: \overline{W}_\tau\to \R$ is the projection to the $s$-coordinate.

A {\em $(\overline{W}_\tau,\overline{J}_\tau)$-curve from $\bs\gamma$ to $\bs\gamma'$} is a degree $2g$ multisection of $(\overline{W}_\tau,\overline{J}_\tau)$ satisfying $n^*(\overline{u})=m$.
\end{defn}

Let $\mathcal{M}_{\overline{J}_\tau}(\bs\gamma,\bs\gamma')$ be the moduli space of degree $k$ multisections of $(\overline{W}_\tau,\overline{J}_\tau)$ from $\bs\gamma$ to $\bs\gamma'$ and let $\mathcal{M}_{\overline{J}_\tau}(\bs\gamma,\bs\gamma';\overline{\frak m}(\tau))\subset \mathcal{M}_{\overline{J}_\tau}(\bs\gamma,\bs\gamma')$ be the subset of curves that pass through $\overline{\frak m}(\tau)$. We write
$$\mathcal{M}_{\{\overline{J}_\tau\}}(\bs\gamma,\bs\gamma')=\coprod_{\tau\in\R} \mathcal{M}_{\overline{J}_\tau}(\bs\gamma,\bs\gamma'),$$
$$\mathcal{M}_{\{\overline{J}_\tau\}}(\bs\gamma,\bs\gamma';\overline{\frak m})=\coprod_{\tau\in\R}\mathcal{M}_{\overline{J}_\tau}(\bs\gamma,\bs\gamma';\overline{\frak m}(\tau)).$$

\subsubsection{Holomorphic maps to $\overline{W}_{-\infty}$}

We now describe the curves in $\overline{W}_{-\infty,i}$.

\begin{defn}\label{defn: curves in overline W -infty 1}
Let $\overline{J}_{-\infty,1}\in\mathcal{J}_{\overline{W}_{-\infty,1}}$ and let $\bs\gamma=\prod \gamma_i^{m_i},\bs\gamma'=\prod (\gamma'_i)^{m_i'}\in \widehat{\mathcal{O}}_k$.  A {\em degree $k$ multisection of $(\overline{W}_{-\infty,1},\overline{J}_{-\infty,1})$ from $\bs\gamma$ to $\bs\gamma'$} is a holomorphic map
$$\overline{u}:(\dot F,j)\to (\overline{W}_{-\infty,1},\overline{J}_{-\infty,1})$$
which is a degree $k$ multisection of $\pi_{B_{-\infty,1}}:\overline{W}_{-\infty,1}\to B_{-\infty,1}$ and which is asymptotic to $\bs\gamma$ and $\bs\gamma'$ at the positive and negative ends. Here $(\dot F,j)$ is a punctured Riemann surface. A {\em $(\overline{W}_{-\infty,1},\overline{J}_{-\infty,1})$-curve from $\bs\gamma$ to $\bs\gamma'$} is a degree $2g$ multisection of $(\overline{W}_{-\infty,1},\overline{J}_{-\infty,1})$ satisfying $n^*(\overline{u})=0$.
\end{defn}

\begin{defn} \label{defn: curves in E times overline S}
Let $\overline{J}_{-\infty,2}\in\mathcal{J}_{\overline{W}_{-\infty,2}}$. A {\em degree $k$ multisection of $(\overline{W}_{-\infty,2},\overline{J}_{-\infty,2})$} is holomorphic map
$$\overline{u}: (F,j)\to (\overline{W}_{-\infty,2},\overline{J}_{-\infty,2})$$
which is a degree $k$ multisection of $\pi_{B_{-\infty,2}}:\overline{W}_{-\infty,2}\to B_{-\infty,2}$ and which additionally satisfies the following:
\begin{enumerate}
\item $(F,j)$ is a compact Riemann surface with $k$ boundary components;
\item $\overline{u}$ maps each component of $\bdry F$ to a different $L^{-\infty,2}_{\widehat{a}_i}$.
\end{enumerate}
A {\em $(\overline{W}_{-\infty,2},\overline{J}_{-\infty,2})$-curve} is a degree $2g$ multisection of $(\overline{W}_{-\infty,2},\overline{J}_{-\infty,2})$ satisfying $n^*(\overline{u})=m$. A {\em degenerate $(\overline{W}_{-\infty,2},\overline{J}_{-\infty,2})$-curve} consists of $2g$ copies of $B_{-\infty,2}\times\{pt\}$ and a singular fiber $\{0\}\times \overline{S}$.
\end{defn}

Let $\mathcal{M}_{\overline{J}_{-\infty,1}}(\bs\gamma,\bs\gamma')$ be the moduli space of degree $k$ multisections of $(\overline{W}_{-\infty,1}$, $\overline{J}_{-\infty,1})$ from $\bs\gamma$ to $\bs\gamma'$ and let $\mathcal{M}_{\overline{J}_{-\infty,2}}$ be the moduli space of degree $k$ multisections of $(\overline{W}_{-\infty,2},\overline{J}_{-\infty,2})$.

Let ${\frak z}$ be a formal product $\zeta_0^{r_0} \zeta_1^{r_1}\dots \zeta_l^{r_l}$, where $r_0,\dots,r_l\in\Z^{\geq 0}$, $r_0+\dots+r_l=k$,  $\zeta_0=z_\infty$, and $\zeta_1,\dots,\zeta_l\in \overline{S}-\{z_\infty\}$.  We write ${\frak Z}$ for the set of all formal products ${\frak z}$ on $\overline{S}$.
A degree $k$ multisection $\overline{u}\in \mathcal{M}_{\overline{J}_{-\infty,1}}(\bs\gamma,\bs\gamma')$ (resp.\ $\mathcal{M}_{\overline{J}_{-\infty,2}}$) {\em passes through ${\frak z}$} if it passes through $((0,{3\over 2}),\zeta_i)$ (resp.\ $(0,\zeta_i)$), $i=0,\dots,l$, with multiplicities $r_i$. Here the multiplicity $r_i$ is the total degree of the projection $\pi_{B_{-\infty,i}}\circ \overline{u}$, restricted to neighborhoods of points in the domain $\dot F$ (resp.\ $F$) which map to $((0,{3\over 2}),\zeta_i)$ (resp.\ $(0,\zeta_i)$) under $\overline{u}$.

Let
$$\mathcal{M}_{\overline{J}_{-\infty,1}}(\bs\gamma,\bs\gamma',{\frak z})\subset \mathcal{M}_{\overline{J}_{-\infty,1}}(\bs\gamma,\bs\gamma'),\quad\mathcal{M}_{\overline{J}_{-\infty,2}}({\frak z})\subset \mathcal{M}_{\overline{J}_{-\infty,2}}$$ be subsets of curves that pass through ${\frak z}$.
Also let $$\mathcal{M}_{\overline{J}_{-\infty,2}}({\frak z},\overline{\frak m}(-\infty))\subset\mathcal{M}_{\overline{J}_{-\infty,2}}({\frak z})$$ be the subset of curves that pass through $\overline{\frak m}(-\infty)$.

\subsubsection{Indices}
\label{subsubsection: indices part 2}

We now briefly discuss the Fredholm index $\op{ind}(\overline{u})$ and the ECH index $I(\overline{u})$ of a $\overline{W}_\tau$-curve $\overline{u}: \dot F \to \overline{W}_\tau$ from $\bs\gamma$ to $\bs\gamma'$. This is similar to Section~\ref{subsubsection: indices part 1}.

Let $\check{\overline{W}}_\tau=\overline{W}_\tau-\{s>{l(\tau)\over 2}+1\}-\{s< -{l(\tau)\over 2}-1\}$, where $l(\tau)$ is given in Equation~\eqref{l and r part 2}. Let $\check{\overline{u}}: \check F\to \check{\overline{W}}_\tau$ be the compactification of $\overline{u}$, where $\check F$ is obtained by performing a real blow-up of $F$ at its boundary punctures.

The trivialization $\tau^\star$ of $T\overline{S}$ along $L^\tau_{\hat{\bf a}}$ is defined as in Section~I.\ref{P1-subsubsection: Fredholm index second version}: We pick a point $p\in \bdry B_\tau$, define $\tau^\star$ of $T\overline{S}|_{\pi^{-1}_{B_\tau}(p)}$ along $\widehat{\bf a}$ by choosing a nonsingular tangent vector field along $\widehat{\bf a}$, and then parallel transport $\tau^\star$ along $\bdry \overline{W}_\tau$. We also choose $\tau^\star$ along $\bs\gamma$ and $\bs\gamma'$.

Let $Q_{\tau^\star}(\check{\overline u})$ be the relative intersection form given by normalizing $\check{\overline u}$ near $s=\pm({l(\tau)\over 2}+1)$ as in \cite[Definition 2.4]{Hu1} and intersecting $\check{\overline u}$ and a pushoff of $\check{\overline{u}}$.  Here the pushoff along $\bdry \check{\overline{W}}_\tau$ is in the direction of $J\tau^\star$. Then
\begin{equation}
I({\overline u})=c_1(\check{\overline u}^*T\check{\overline{W}}_\tau,(\tau^\star,\bdry_t))
+Q_{\tau^\star}(\check{\overline u})+ \widetilde\mu_{\tau^\star}(\bs\gamma)-\widetilde\mu_{\tau^\star}(\bs\gamma')-2g,
\end{equation}
\begin{equation}
\op{ind}(\overline{u})=-\chi(\dot F)+ 2c_1(\check{\overline u}^*T\check{\overline{W}}_\tau,(\tau^\star,\bdry_t))+
\mu_{\tau^\star}(\bs\gamma)-\mu_{\tau^\star}(\bs\gamma')-2g,
\end{equation}
and the index inequality holds as usual:
\begin{equation}
\op{ind}(\overline{u})+2\delta(\overline{u})\leq I(\overline{u}),
\end{equation}
where $\delta(\overline{u})\geq 0$ and equals zero if and only if $\overline{u}$ is an embedding. We also have
\begin{equation} \label{index for sigma infty tau part 2}
I(\sigma_\infty^\tau)=\op{ind}(\sigma_\infty^\tau)=-1.
\end{equation}

The Fredholm and ECH indices for $\overline{W}_{-\infty,i}$-curves can be defined and computed similarly.

\begin{rmk}
The Fredholm and ECH indices for $\overline{W}_\tau$ and $\overline{W}_{-\infty,i}$ do not take into account the point constraint $\overline{\frak m}(\tau)$ and the condition ``passing through $\overline{\frak m}(\tau)$'' is a codimension $2$ condition.  Moreover, the indices for $\overline{W}_{-\infty,i}$, $i=1,2$, do not take into account the constraints ${\frak z}$.
\end{rmk}

We now have the following:

\begin{lemma} \label{lemma: curves in overline W minus infty 2}
Let $\overline{u}$ be a $\overline{W}_{-\infty,2}$-curve. Then
$$[\overline{u}]=2g[B_{-\infty,2}\times\{pt\}]+[\{pt\}\times \overline{S}]\in H_2(\overline{W}_{-\infty,2},\bdry B_{-\infty,2}\times\overline{S}),$$
$\overline{u}$ consists of $0\leq k<2g$ copies of $B_{-\infty,2}\times\{pt\}$ together with an irreducible component in the class $(2g-k)[B_{-\infty,2}\times\{pt\}]+[\{pt\}\times\overline{S}]$, and $I(\overline{u})=4g+2$.
\end{lemma}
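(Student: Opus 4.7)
The first claim identifies the homology class $[\overline{u}]\in H_2(\overline{W}_{-\infty,2},\bdry B_{-\infty,2}\times\overline{S})$. Since $\overline{u}$ is a degree $2g$ multisection of $\pi_{B_{-\infty,2}}$, the coefficient of $[B_{-\infty,2}\times\{pt\}]$ equals $2g$. For the coefficient $c$ of $[\{pt\}\times\overline{S}]$, I would use that $(\sigma_\infty^{-\infty,2})^\dagger$ is $m$ disjoint copies $B_{-\infty,2}\times\{q_k\}$, each of which pairs with $[\overline{u}]$ to give $c$ by homological invariance. Hence $n^*(\overline{u})=mc=m$ forces $c=1$.

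Next I would analyze the irreducible components of $\overline{u}$. Because $\overline{J}_{-\infty,2}$ is a product complex structure, both projections $\pi_{B_{-\infty,2}}\circ \overline{u}$ and $\pi_{\overline{S}}\circ \overline{u}$ are holomorphic. Classify each irreducible component $D$: either $\pi_{B_{-\infty,2}}\circ D$ is constant (a fiber component, contained in $\{p\}\times\overline{S}$) or it is a branched cover of $B_{-\infty,2}$ of some degree $d\geq 1$. In the latter case, if $\pi_{\overline{S}}\circ D$ is constant, then the lift $D\to B_{-\infty,2}\times\{p\}$ is a branched cover of a disk by a connected Riemann surface, hence trivial, so $D=B_{-\infty,2}\times\{p\}$ is a trivial section with $p\in\widehat{a}_i$ (forced by the Lagrangian boundary condition). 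Otherwise $\pi_{\overline{S}}\circ D$ is non-constant.

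The key observation is that, setting $\delta(D):=\langle D,B_{-\infty,2}\times\{q\}\rangle$, we have $\delta(D)\geq 0$ by positivity of intersection, with $\delta(D)=0$ precisely when $\pi_{\overline{S}}\circ D$ is not surjective, and $\delta(D)\geq 1$ when it is surjective. Since $\sum_D\delta(D)=c=1$, exactly one component $D_0$ has $\pi_{\overline{S}}\circ D_0$ surjective onto $\overline{S}$, with $\delta(D_0)=1$. To rule out non-fiber components with non-constant but non-surjective $\pi_{\overline{S}}$-projection, I would apply Riemann--Hurwitz to both projections together with the Lagrangian boundary condition (each boundary circle covers a full $\widehat{a}_i$ with positive degree, and combined with the degree count $\sum d_j=2g$ and $\delta(D_0)=1$, the only possibility is one $\overline{S}$-surjective component of degree $2g-k$ together with $k$ trivial sections). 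The case $k=2g$ produces the degenerate $\overline{W}_{-\infty,2}$-curve of Definition~\ref{defn: curves in E times overline S}, excluded by hypothesis; hence $0\leq k<2g$ and $D_0$ represents $(2g-k)[B_{-\infty,2}\times\{pt\}]+[\{pt\}\times\overline{S}]$.

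Finally, since $I(\overline{u})$ depends only on the homology class, I would compute it on the convenient representative consisting of $2g$ trivial sections together with the fiber $\{0\}\times\overline{S}$. As in Lemma~\ref{lemma: moduli prime for D regular and dim 4g plus 2}, each trivial section has $\op{ind}(B_{-\infty,2}\times\{p_i\})=-\chi(D^2)+\mu(\{pt\}\times B_{-\infty,2})=-1+2=1$, and the fiber has $\op{ind}(\{pt\}\times\overline{S})=-\chi(\overline{S})+2c_1(T\overline{S})=(2-2g)+0=2-2g$. The sections meet the fiber in $2g$ transverse points, so by the additivity formula used in Lemma~\ref{lemma: ECH for W minus infinity 2},
\begin{equation*}
I(\overline{u})=2g\cdot 1+(2-2g)+2(2g)=4g+2.
\end{equation*}

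The main obstacle is the classification step: ruling out non-fiber irreducible components whose $\overline{S}$-projection is non-constant but non-surjective. The rest is bookkeeping with intersection numbers and a routine index computation parallel to the one carried out for $W_{-\infty,2}$ in Lemma~\ref{lemma: ECH for W minus infinity 2}.
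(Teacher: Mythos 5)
Your proposal follows essentially the same route as the paper's (very terse) proof: use the product complex structure so that both projections are holomorphic, classify irreducible components by whether $\pi_{\overline{S}}\circ\overline{u}$ is constant or onto, fix the fiber coefficient by pairing $n^*(\overline{u})=m$ against the $m$ parallel copies of $B_{-\infty,2}\times\{pt\}$ that make up $(\sigma_\infty^{-\infty,2})^\dagger$, and compute $I$ additively on the representative consisting of $2g$ disk sections plus one fiber, exactly parallel to the computation in Lemma~\ref{lemma: ECH for W minus infinity 2}. The homology identification and the index count $2g\cdot 1+(2-2g)+2(2g)=4g+2$ are correct.

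The one step you single out as the main obstacle is, however, not closed by the tool you name. Riemann--Hurwitz constrains Euler characteristics and branching, but says nothing about the image of the map, so it cannot rule out a component whose $\overline{S}$-projection is non-constant yet not surjective. The argument that works --- and is what the paper implicitly invokes when it asserts that $\pi_{\overline{S}}\circ\overline{u}$ "either maps to a point on some $\widehat{a}_i$ or to all of $\overline{S}$" --- is the open mapping theorem: if $\pi_{\overline{S}}\circ\overline{u}|_D$ is non-constant, then the image of $int(F_D)$ is open, the full image is compact, and its frontier lies in $\pi_{\overline{S}}(\overline{u}(\bdry F_D))\subset\widehat{\bf a}$; since $\overline{S}\setminus\widehat{\bf a}$ is connected, the image is all of $\overline{S}$. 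Your positivity-of-intersections count $\sum_D\delta(D)=1$ then correctly pins down that exactly one component is $\overline{S}$-surjective and meets a generic fiber once. Two minor points: the intermediate values in your fiber-index line are garbled ($-\chi(\overline{S})=2g-2$ and $2c_1(T\overline{S})=4-4g$, which sum to the correct $2-2g$); and "a branched cover of a disk by a connected Riemann surface is trivial" is false as stated ($z\mapsto z^2$) --- what you actually need there is that such a component has image $B_{-\infty,2}\times\{p\}$ with $p\in\widehat{\bf a}$, which is all the lemma (and its application in Lemma~\ref{los angeles 5}) uses, and is the level of precision the paper itself adopts.
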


\begin{proof}
Let $\pi_{\overline{S}}: \overline{W}_{-\infty,2}\to \overline{S}$ be the projection along $B_{-\infty,2}$.  Since $\overline{J}_{-\infty,2}$ is a split complex structure, $\pi_{\overline{S}}$ is a holomorphic map.  Hence if $\overline{u}$ is a $\overline{W}_{-\infty,2}$-curve, then $\pi_{\overline{S}}\circ \overline{u}$ either maps to a point on some $\widehat{a}_i$ or to all of $\overline{S}$.  The lemma follows by listing all the possibilities.
\end{proof}

We also have the following, which is stated without proof.

\begin{lemma} \label{lemma: curves in overline W minus infty 3}
Let $\overline{u}$ be a degree $2g$ multisection of $(\overline{W}_{-\infty,2},\overline{J}_{-\infty,2})$ satisfying $n^*(\overline{u})=0$. Then $\overline{u}=\overline{u}'\cup\overline{u}''$ consists of $\overline{u}'$ which is a degree $k$ cover of $\sigma_\infty^{-\infty,2}$ and $\overline{u}''$ which is the union of $2g-k$ copies of $B_{-\infty,2}\times\{pt\}$, and $I(\overline{u})=2g$.
\end{lemma}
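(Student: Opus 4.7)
\medskip

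The plan is to mimic the proof of Lemma~\ref{lemma: curves in overline W minus infty 2} by exploiting the fact that the product complex structure $\overline{J}_{-\infty,2}$ makes the projection $\pi_{\overline{S}}: \overline{W}_{-\infty,2}=B_{-\infty,2}\times \overline{S}\to \overline{S}$ holomorphic, together with positivity of intersections forced by the constraint $n^*(\overline{u})=0$. First I would note that $(\sigma_\infty^{-\infty,2})^\dagger$ is a disjoint union of $m$ parallel disks $B_{-\infty,2}\times\{pt_k\}$, so positivity of intersections gives $\langle \overline{u},B_{-\infty,2}\times\{pt_k\}\rangle\geq 0$ for each $k$; since these sum to $n^*(\overline{u})=0$, every term vanishes. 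Because each $pt_k$ lies in the same connected component of $\overline{S}-\overline{\bf a}$, the relative homology class $[B_{-\infty,2}\times\{q\}]$ is independent of $q\in \overline{S}-\overline{\bf a}$, and hence $\langle \overline{u},B_{-\infty,2}\times\{q\}\rangle=0$ for all such $q$.

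Next I would argue that for every irreducible component $C$ of $\overline{u}$ the composite $\pi_{\overline{S}}\circ \overline{u}|_C:C\to \overline{S}$ is constant. If not, the open mapping theorem forces its image to contain an open subset of $\overline{S}-\overline{\bf a}$; choosing a regular value $q$ in this open set, the preimages of $q$ under $\pi_{\overline{S}}\circ \overline{u}|_C$ produce geometric (and hence, by positivity, positive) intersections of $C$ with $B_{-\infty,2}\times\{q\}$, so $\langle C,B_{-\infty,2}\times\{q\}\rangle>0$. Since $\sum_C \langle C,B_{-\infty,2}\times\{q\}\rangle=0$ and every summand is nonnegative, this is a contradiction.

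Once constancy is established, each component $C$ falls into one of two types. If the constant value is $z_\infty$, then $C$ maps into $\sigma_\infty^{-\infty,2}$ and is a holomorphic branched cover of the disk $B_{-\infty,2}\times\{z_\infty\}$; summing over such components gives $\overline{u}'$, a branched cover of $\sigma_\infty^{-\infty,2}$ of some degree $k$ with $0\leq k\leq 2g$. Otherwise the constant value $pt$ must lie on some $\widehat{a}_i$, so that $\overline{u}(\bdry C)\subset L^{-\infty,2}_{\widehat{a}_i}$; in that case $C$ is a holomorphic map from a compact Riemann surface with boundary into the disk $B_{-\infty,2}\times\{pt\}$. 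Since each component of $\bdry F$ is required to map to a \emph{different} $L^{-\infty,2}_{\widehat{a}_i}$ and the total multisection degree is $2g$, no such component can branch-cover its target nontrivially: each contributing component of $\overline{u}''$ must be a single copy of $B_{-\infty,2}\times\{pt\}$ with $pt\in \widehat{a}_i$, and exactly $2g-k$ of them appear.

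It remains to compute $I(\overline{u})=2g$. Comparing with Lemma~\ref{lemma: curves in overline W minus infty 2}, the homology class of $\overline{u}$ differs from that of a $\overline{W}_{-\infty,2}$-curve by the fiber class $[\{pt\}\times \overline{S}]$. A straightforward computation using the formulas of Section~\ref{subsubsection: indices part 2} shows that adding a fiber changes $I$ by $2\langle 2g[B_{-\infty,2}\times\{pt\}],[\{pt\}\times\overline{S}]\rangle+c_1(T\overline{S})\cdot[\overline{S}]= 4g+(2-2g)=2g+2$, and indeed $(4g+2)-(2g+2)=2g$. The main technical point is Step~2, i.e.,\ ruling out non-constant $\pi_{\overline{S}}$-projections; once positivity of intersections is carefully combined with the homological vanishing provided by $n^*=0$, the decomposition and the index calculation follow immediately.
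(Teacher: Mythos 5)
Your argument is correct, and it is the natural one: the paper states Lemma~\ref{lemma: curves in overline W minus infty 3} without proof, and your strategy --- holomorphicity of $\pi_{\overline{S}}$ for the split $\overline{J}_{-\infty,2}$ plus positivity of intersections, with $n^*=0$ replacing the homological constraint used for the companion Lemma~\ref{lemma: curves in overline W minus infty 2} --- is exactly the intended adaptation of that lemma's proof.

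The one step whose justification is thinner than it should be is the claim that no component of $\overline{u}''$ multiply covers its target disk. The condition that distinct boundary components of $F$ map to distinct $L^{-\infty,2}_{\widehat{a}_i}$ only gives each component of $\overline{u}''$ a \emph{connected} boundary, and a degree $d>1$ branched cover of $D^2$ with connected boundary certainly exists (e.g.\ $z\mapsto z^d$), so ``different Lagrangians plus total degree $2g$'' does not by itself rule this out. What closes the argument is a boundary count: if $\overline{u}'$ has degree $k$ then its domain has at most $k$ boundary circles (the boundary restriction is a degree $k$ cover of $\partial B_{-\infty,2}$), so the $r$ components of $\overline{u}''$ satisfy $r=2g-\#\partial(\operatorname{dom}\overline{u}')\geq 2g-k=\sum_j d_j\geq r$, forcing every $d_j=1$. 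With that observation inserted, the decomposition and your index computation (adding the fiber class changes $I$ by $2g+2$, consistent with $4g+2$ from Lemma~\ref{lemma: curves in overline W minus infty 2}) go through.
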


\subsubsection{Regularity}

\begin{defn} \label{defn: regularity for W tau part 2}
The family $\{\overline{J}_\tau\}_{\tau\in\R}\in\overline{\mathcal{I}}$ is {\em regular} if:
\begin{enumerate}
\item $\mathcal{M}^{\dagger}_{\{\overline{J}_\tau\}}(\delta_0^p\bs\gamma,\delta_0^q\bs\gamma')$ is transversely cut out for all $\delta_0^p\bs\gamma,\delta_0^q\bs\gamma'\in \widehat{\mathcal{O}}_k$, $k\leq 2g$;
\item the restriction $\overline{J'}$ of $\overline{J}_\tau$ to the positive and negative ends (the restriction is independent of $\tau$) is regular; and
\item $\overline{J}_-$ and $\overline{J}_+$ in the limit $\tau\to+\infty$ are regular.
\end{enumerate}
\end{defn}

Let $\overline{\mathcal{I}}^{reg}$ be the space of regular $\{\overline{J}_\tau\}\in \overline{\mathcal{I}}$.  As usual we have:

\begin{lemma}
The generic $\{\overline{J}_\tau\}\in \overline{\mathcal{I}}$ is regular.
\end{lemma}

We also introduce the perturbations of $\{\overline{J}_\tau\}$ to ensure that passing through $\overline{\frak m}(\tau)$ is a generic condition. Let $\frak p(\tau)\in B_\tau$ be a family of points such that:
$$ \lim_{\tau\to +\infty} \frak p (\tau) =\frak p (+\infty) \in B_-, \quad \lim_{\tau\to -\infty} \frak p(\tau)= \frak p(-\infty)\in B_{-\infty,1}$$
and ${\frak p}(\tau)\not= \overline{\frak m}^b(\tau)$ for all $\tau\in [-\infty,\infty]$. We then define the families $\{U_\tau=U_{\varepsilon,\delta,{\frak p}(\tau)}\}$, $\{K_\tau=K_{{\frak p}(\tau),\delta}\}$, and $\{\overline{J}_\tau^\Diamond=\overline{J}_\tau^\Diamond(\varepsilon,\delta,{\frak p}(\tau))\}$ as in Section~\ref{subsubsection: regularity tomodachi}. Also, the modifier $\{K_\tau\}$ means ``passing through $K_\tau$ for an appropriate $\tau$.''

\begin{defn}
The family $\{\overline{J}_\tau^\Diamond\}$ is {\em $\{K_\tau\}$-regular with respect to $\overline{\frak m}$} if the moduli spaces $\mathcal{M}^{\dagger,\{K_\tau\}}_{\{\overline{J}_\tau\}}(\delta_0^p\bs\gamma,\delta_0^q\bs\gamma';\overline{\frak m})$ are transversely cut out.
\end{defn}

The following lemmas are analogous to Lemmas~\ref{lemma: regularity of W tau family} and \ref{lemma: codimension one}:

\begin{lemma}
\label{lemma: regularity of W tau family part 2}
A generic $\{\overline{J}_\tau^\Diamond\}_{\tau\in\R}$ is $\{K_\tau\}$-regular with respect to $\overline{\frak m}$.
\end{lemma}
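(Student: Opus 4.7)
The argument follows the standard scheme of \cite[Theorem~3.1.7]{MS}, adapted as in Proposition~I.\ref{P1-prop: J+ and J- regular} and Lemma~\ref{lemma: regularity of W tau family}. I plan to proceed in three steps.

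First, I would equip the space of admissible perturbations with a Banach manifold structure. Fix a regular family $\{\overline{J}_\tau\} \in \overline{\mathcal{I}}^{reg}$ and, following Floer's $C^\epsilon$-trick, let $\mathcal{P}$ be the separable Banach manifold of families $\{\overline{J}^\Diamond_\tau\}_{\tau\in\R}$ which agree with $\overline{J}_\tau$ outside $U_\tau$, are smooth and tame with respect to $\overline{\Omega}_\tau$, and preserve the fibred and product-type normalizations near $\{\rho \leq \varepsilon\}$ dictated by Definition~\ref{defn: ac str for mp} (this is automatic since $U_\tau$ is disjoint from $\{\rho < 2\delta\}$ by construction). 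For each quadruple $(\delta_0^p\gamma,\delta_0^q\gamma')$ I then form the universal moduli space
$$\mathcal{M}^{univ} = \left\{\bigl(\tau,\overline{u},\{\overline{J}^\Diamond\}\bigr) \;\middle|\; \{\overline{J}^\Diamond\}\in\mathcal{P},\ \overline{u}\in \mathcal{M}^{\dagger,\{K_\tau\}}_{\overline{J}^\Diamond_\tau}(\delta_0^p\gamma,\delta_0^q\gamma';\overline{\frak m}(\tau))\right\}.$$

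Second, I would show that $\mathcal{M}^{univ}$ is a smooth separable Banach manifold. Concretely, at a point $(\tau,\overline{u},\{\overline{J}^\Diamond\})$ I need the total linearized operator
$$D^{univ} = D_{\overline{u}} \oplus D_{\{\overline{J}^\Diamond\}} \oplus D_\tau \oplus \operatorname{ev}_{\overline{\frak m}(\tau)}$$
to be surjective onto the appropriate Sobolev completion together with the point-constraint fibre. The key geometric input is that $\overline{u}$ is required to meet $K_\tau \subset U_\tau$ while $\overline{\frak m}(\tau) \notin U_\tau$. Because of the $\dagger$-condition, no component of $\overline{u}$ is a branched cover of $\sigma^\tau_\infty$, so every irreducible component has an underlying somewhere-injective curve; that underlying curve either has an open subset of injective points inside $U_\tau$ (in which case the classical pointwise perturbation argument of \cite[Proposition~3.4.2]{MS} yields surjectivity of $D_{\{\overline{J}^\Diamond\}}$ into the cokernel of $D_{\overline{u}}$) or it has image disjoint from $U_\tau$, in which case it is unperturbed and its transverse cut-out follows from the standing regularity of $\{\overline{J}_\tau\}$ on $\overline{W}_\tau - U_\tau$. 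The point constraint $\operatorname{ev}_{\overline{\frak m}(\tau)}$ is evaluated at a location where no perturbation acts, so its surjectivity is ensured by combining the $\tau$-derivative $D_\tau$ with the unperturbed regularity of $\{\overline{J}_\tau\}$; the disjointness $\overline{\frak m}(\tau)\notin U_\tau$ guarantees that these two sources of transversality are independent.

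Third, I would apply the Sard-Smale theorem to the projection $\pi:\mathcal{M}^{univ}\to\mathcal{P}$, which is Fredholm by the preceding step, to conclude that the set of regular values is residual; taking a countable intersection over the (countably many) choices of $(p,q,\gamma,\gamma')$ produces the required generic family. The main obstacle in making the above rigorous is verifying surjectivity of $D^{univ}$ at nodal or multiply-covered configurations, since in principle a whole component of $\overline{u}$ could live inside $\overline{W}_\tau - U_\tau$ while another component passes through $K_\tau$; handling this requires knowing that the unperturbed regularity of $\{\overline{J}_\tau\}\in\overline{\mathcal{I}}^{reg}$ already cuts out the first kind of component transversely, and that the gluing fibre product with the second kind (which is transverse by the perturbation argument) remains transverse. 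As in Proposition~I.\ref{P1-prop: J+ and J- regular}, this is a routine but slightly tedious decomposition along nodal levels, and I expect no new difficulty compared with the $\overline{W}^\pm_\tau$ case already treated in Lemma~\ref{lemma: regularity of W tau family}.
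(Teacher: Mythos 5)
Your proposal is correct in outline and follows essentially the same route as the paper, which gives no details here: the lemma is stated as the analogue of Lemma~\ref{lemma: regularity of W tau family}, whose entire proof is a one-sentence reference to \cite[Theorem~3.1.7]{MS} with the modifications of Proposition~I.\ref{P1-prop: J+ and J- regular}. Your universal-moduli-space/Sard--Smale scheme, the use of the $\dagger$ condition to exclude covers of $\sigma_\infty^\tau$, and the observation that the perturbation lives on $U_\tau$ while $\overline{\frak m}(\tau)\notin U_\tau$ are exactly the intended ingredients.

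One point of your second step is misattributed, though. The surjectivity of the evaluation map at $\overline{\frak m}(\tau)$ does not come from ``$D_\tau$ combined with the unperturbed regularity of $\{\overline{J}_\tau\}$'': the fiber coordinate of $\overline{\frak m}(\tau)$ is $z_\infty$, all admissible almost complex structures are frozen to the standard one near $\{\rho\leq\varepsilon\}$, and the section at infinity is holomorphic for all of them, so the unperturbed evaluation map need not be transverse to this nongeneric point --- this is precisely why the $\Diamond$-perturbation and the $\{K_\tau\}$ modifier exist at all. The correct mechanism is the standard submersion property of evaluation maps on universal moduli spaces (\cite[Theorem~3.4.1]{MS}): a variation $\xi_0$ of the section supported near the marked point realizes any tangent vector at $\overline{\frak m}(\tau)$, and the correction term is absorbed using $Y$ supported at injective points in $U_\tau$, which exist because the curve is required to meet $K_\tau$. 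This also exposes the one configuration your dichotomy does not cover: a component meeting $\overline{\frak m}(\tau)$ but disjoint from $U_\tau$, while a different component carries the $K_\tau$ condition; for such a component neither the perturbation nor the unperturbed regularity controls the point constraint, and this case has to be excluded (or accounted for) by the subsequent degeneration analysis rather than by the Sard--Smale argument itself.
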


\begin{lemma} \label{lemma: codimension one part 2}
If $\{\overline{J}_\tau\}$ is a generic family, then for $\varepsilon,\delta>0$ sufficiently small, there exist a generic family $\{\overline{J}_\tau^\Diamond(\varepsilon,\delta,{\frak p}(\tau))\}$ which is $\{K_{{\frak p}(\tau),\delta}\}$-regular with respect to $\overline{\frak m}$ and disjoint finite subsets $\mathcal{T}_1,\mathcal{T}_2\subset \R$ with the following properties:
\begin{enumerate}
\item $\tau\in \mathcal{T}_1$ if and only if there exists $\overline v_\tau\in \mathcal{M}_{\overline{J}_\tau}^{\dagger,s,irr,\op{ind}=-1} (\delta_0^p\bs\gamma,\delta_0^q\bs\gamma')$ for some $\delta_0^p\bs\gamma$ and $\delta^q\bs\gamma'$.
\item $\tau\in \mathcal{T}_2$ if and only if there exists $\overline v_\tau\in \mathcal{M}^{\dagger,s,irr,\{K_\tau\},\op{ind}=1}_{\overline{J}_\tau^\Diamond(\varepsilon,\delta,{\frak p}(\tau))} (\delta_0^p\bs\gamma,\delta_0^q\bs\gamma'; \overline{\frak m})$ for some $\delta_0^p\bs\gamma$ and $\delta^q\bs\gamma'$.
\end{enumerate}
Moreover, for each $\tau\in\mathcal{T}_i$ there is a unique such irreducible curve $\overline v_\tau$.
\end{lemma}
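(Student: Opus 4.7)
The proof plan is to adapt the strategy of Lemma~\ref{lemma: codimension one} from the $\overline{W}^\pm_\tau$ setting to the $\overline{W}^\mp_\tau$ setting. The key observation is that, by standard transversality theory (cf.\ Proposition~I.\ref{P1-prop: J+ and J- regular} and \cite[Theorem~3.1.7]{MS}) applied to the universal moduli space parametrized by $\tau\in\R$, for a generic family $\{\overline{J}_\tau\}\in \overline{\mathcal{I}}$, the parametrized moduli spaces $\mathcal{M}^{\dagger,s,irr}_{\{\overline{J}_\tau\}}(\delta_0^p\gamma,\delta_0^q\gamma')$ are transversely cut out manifolds of dimension $\op{ind}(\overline{u})+1$. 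In particular, the parametrized moduli space with $\op{ind}=-1$ is zero-dimensional, and the parametrized moduli space with $\op{ind}=1$ satisfying the codimension-$2$ constraint of passing through $\overline{\frak m}(\tau)$ is also zero-dimensional (after generically perturbing to $\overline{J}_\tau^\Diamond$ in a neighborhood of ${\frak p}(\tau)$).

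First I would verify that these zero-dimensional spaces are finite. This requires an SFT compactness argument applied to the one-parameter family $\coprod_\tau \mathcal{M}^{\dagger,s,irr}_{\overline{J}_\tau}$, together with control on the behavior as $\tau\to\pm\infty$. As $\tau\to+\infty$, $\overline{W}_\tau$ degenerates to the stacking of $\overline{W}_-$ on top of $\overline{W}_+$, and the limit curves split as compositions of $\overline{W}_\pm$-curves; since $\overline{J}_\pm$ are regular, curves of $\op{ind}=-1$ in the limit must be (restrictions of) sections at infinity or their branched covers, which can be excluded in the ``simply-covered, irreducible, $\dagger$'' setting. As $\tau\to-\infty$, $\overline{W}_\tau$ degenerates to $\overline{W}_{-\infty,1}\cup\overline{W}_{-\infty,2}$; using Lemmas~\ref{lemma: curves in overline W minus infty 2} and \ref{lemma: curves in overline W minus infty 3} to constrain the ECH indices of $\overline{W}_{-\infty,2}$-pieces, one similarly rules out $\op{ind}=-1$ simply-covered irreducible limits. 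Hence $\mathcal{T}_1$ is finite. A parallel argument using the compactness analysis of Section~\ref{subsection: additional degenerations III} shows that $\mathcal{T}_2$ is finite.

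Next I would establish disjointness $\mathcal{T}_1\cap\mathcal{T}_2=\varnothing$ and uniqueness of the curve $\overline{v}_\tau$ at each $\tau\in\mathcal{T}_i$. Both are generic conditions in the one-parameter family: having two distinct simply-covered irreducible curves of the prescribed type at the same $\tau$ value is codimension at least $2$ in the universal setting, as is the simultaneous occurrence of an $\op{ind}=-1$ curve and an $\op{ind}=1$ curve through $\overline{\frak m}(\tau)$. These can be avoided by a further generic perturbation within $\overline{\mathcal{I}}$ (respectively, within the $\Diamond$-perturbations of Definition~\ref{defn: diamond}). The perturbation $\overline{J}_\tau\leadsto \overline{J}_\tau^\Diamond$ is supported on $U_{\varepsilon,\delta,{\frak p}(\tau)}$, which is disjoint from a neighborhood of $\overline{\frak m}(\tau)$; crucially, $\varepsilon,\delta$ can be chosen so small that no curve in $\mathcal{M}^{\dagger,s,irr,\op{ind}=-1}_{\overline{J}_\tau}$ passes through $U_{\varepsilon,\delta,{\frak p}(\tau)}$, so the count of $\mathcal{T}_1$ curves is unaffected by the $\Diamond$-perturbation.

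The main obstacle I expect is ensuring, in the compactness analysis at $\tau\to-\infty$, that no sequence of simply-covered irreducible curves with $\op{ind}=-1$ (or $\op{ind}=1$ through $\overline{\frak m}$) escapes to $-\infty$ with a limit that includes a multiply-covered $\overline{W}_{-\infty,2}$ component; this relies on the specific structure of $\overline{W}_{-\infty,2}$-curves given by Lemma~\ref{lemma: curves in overline W minus infty 2}, and on checking that the modifier $\dagger$ rules out branched covers of $\sigma_\infty^{-\infty,2}$ appearing in a way that would destroy uniqueness. Once this is verified, together with a parallel check at $\tau\to+\infty$ using the regularity of $\overline{J}_\pm$, the finiteness, disjointness and uniqueness statements follow routinely, and the proof is complete.
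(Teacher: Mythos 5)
Your proposal is correct and is exactly the ``standard regularity argument'' that the paper itself invokes without writing out: the paper states this lemma only as ``analogous to'' Lemma~\ref{lemma: codimension one}, which in turn is asserted without proof, and your outline (parametrized transversality giving $\op{ind}+1$-dimensional universal moduli spaces, SFT compactness at $\tau\to\pm\infty$ using regularity of $\overline{J}_\pm$ and the structure of $\overline{W}_{-\infty,i}$-curves, and codimension counting for disjointness and uniqueness) is the intended content. The only caveat is that in part (2) the point constraint $\overline{\frak m}(\tau)$ must be imposed for the dimension count to close up to zero, as you correctly assume and as the later application in Lemma~\ref{los angeles 6} confirms.
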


\subsection{Proof of the other half of Theorem~\ref{thm: isomorphism}}
\label{subsection: proof of thm chain homotopy two}

In this subsection we prove the ``other half'' of Theorem~\ref{thm: isomorphism}, i.e., $\Phi\circ\Psi=id$ on the level of homology,  assuming the results of Sections~\ref{subsection: chain homotopy part 4}--\ref{subsection: chain homotopy part 6}.

We make the following simplifying assumption, which is possible by Theorem~I.\ref{P1-thm: elimination}.
\begin{enumerate}
\item[($\dagger\dagger$)] All the elliptic orbits of the stable Hamiltonian vector field corresponding to $h:S\stackrel\sim\to S$ that intersect $S\times\{0\}$ at most $2g$ times have linearized first return maps which are $-\varepsilon$-rotations with $\varepsilon>0$ small.
\end{enumerate}

\begin{thm} \label{thm: chain homotopy ii}
Suppose ($\dagger\dagger$) holds. If $m\gg 0$, then there exists a chain homotopy
$$K: PFC_{2g}(N)\to PFC_{2g}(N),$$
such that the following holds:
\begin{equation} \label{eqn: chain homotopy part ii}
\bdry_{ECH}\circ K + K\circ \bdry_{ECH} = \Phi\circ \Psi + id.
\end{equation}
\end{thm}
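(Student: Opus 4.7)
The plan is to mimic the strategy of Theorem~\ref{thm: chain homotopy part i}, but now using the family $\overline{W}_\tau=\overline{W}^{\mp}_\tau$. Define the candidate chain homotopy by
$$\langle K(\gamma),\gamma'\rangle = \#\mathcal{M}^{I=1,n^*=m}_{\{\overline{J}_\tau^\Diamond(\varepsilon,\delta,{\frak p}(\tau))\}}(\gamma,\gamma';\overline{\frak m})\ \bmod 2,$$
and derive \eqref{eqn: chain homotopy part ii} by identifying the four pieces of the boundary of the one-dimensional moduli space
$$\mathcal{M}=\mathcal{M}^{I=2,n^*=m}_{\{\overline{J}_\tau^\Diamond(\varepsilon,\delta,{\frak p}(\tau))\}}(\gamma,\gamma';\overline{\frak m}),$$
namely $\bdry_{\{+\infty\}}\mathcal{M}$, $\bdry_{(-\infty,+\infty)}\mathcal{M}$, and $\bdry_{\{-\infty\}}\mathcal{M}$, with $\Phi\circ\Psi$, $\bdry_{ECH}\circ K+K\circ\bdry_{ECH}$, and $id$, respectively. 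Using assumption ($\dagger\dagger$) to suppress small-period elliptic orbits, fix $m\gg 0$, ${\frak p}(\tau)$, and a $\{K_{{\frak p}(\tau),\delta}\}$-regular family $\{\overline{J}_\tau^\Diamond\}$ as provided by Lemmas~\ref{lemma: regularity of W tau family part 2} and \ref{lemma: codimension one part 2}.

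First I would analyze $\bdry_{\{+\infty\}}\mathcal{M}$. As $\tau\to +\infty$ the base $B_\tau$ pinches horizontally and $\overline{W}_\tau$ degenerates to $\overline{W}_-$ stacked over $\overline{W}_+$ along an ECH-type end. The continuation method, the bounds on ECH indices, and the rescaling/Involution Lemma argument of Section~\ref{subsection: chain homotopy part 1} adapt verbatim (both ends now being PFH ends simplifies the boundary-puncture bookkeeping: there are no type (P$_1$)--(P$_3$) punctures to track); the only surviving configurations are pairs
$$\bdry_{\{+\infty\}}\mathcal{M}\subset\coprod_{\gamma''\in\widehat{\mathcal O}_{2g}}\mathcal{M}^{I=1,n^*=m}_{\overline{J}_-^\Diamond}(\gamma,\gamma'';\overline{\frak m}(+\infty))\times\mathcal{M}^{I=1,n^*=m}_{\overline{J}_+^\Diamond}(\gamma'',\gamma'),$$
plus possible connector and thin-strip levels. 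Obstruction-bundle gluing of Hutchings--Taubes (cf.\ Section~I.\ref{P1-subsection: gluing for Phi}) converts these pairs into the term $\Phi\circ\Psi$ of \eqref{eqn: chain homotopy part ii}.

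Next I would treat $\bdry_{(-\infty,+\infty)}\mathcal{M}$ by the argument of Lemma~\ref{cherries3}: the generic breaking at $\tau=T'\in(-\infty,\infty)$ is a two-level building whose levels either consist of an $I=1$ ECH differential cylinder over $\gamma\to\gamma''$ together with an $I=1$ $\overline{W}_{T'}$-curve from $\gamma''$ to $\gamma'$ through $\overline{\frak m}(T')$, or the same configuration with the order reversed. The usual rescaling/Involution argument of Lemma~\ref{vancouver2} eliminates bad degenerations with $\overline{v}_0'\neq\varnothing$, giving exactly the term $\bdry_{ECH}\circ K+K\circ\bdry_{ECH}$.

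The most delicate step is $\bdry_{\{-\infty\}}\mathcal{M}$, where $\overline{W}_\tau$ degenerates to $\overline{W}_{-\infty,1}\cup\overline{W}_{-\infty,2}$ with $\overline{W}_{-\infty,1}=\R\times\overline N$ and $\overline{W}_{-\infty,2}=E\times\overline S$; this is the situation modelled by the second Gromov--Witten calculation. Using Lemmas~\ref{lemma: curves in overline W minus infty 2} and \ref{lemma: curves in overline W minus infty 3} to control $I$ on $\overline{W}_{-\infty,2}$, the analogues for the present setup of the intersection/ECH-index arguments of Sections~\ref{subsection: chain homotopy part 3} and \ref{subsection: additional degenerations II} (and the almost-alternating analysis of Section~\ref{snowman}), I would show that the only surviving configurations are pairs
$$(\overline v_1,\overline v_2)\in\mathcal{M}^{I=0,n^*=0}_{\overline{J}_{-\infty,1}}(\gamma,\gamma')\times\mathcal{M}^{I=2,n^*=m}_{\overline{J}_{-\infty,2}}(\overline{\frak m}(-\infty)),$$
where $\overline v_1$ is necessarily a disjoint union of $2g$ trivial cylinders (forcing $\gamma=\gamma'$) and $\overline v_2$ is a $\overline{W}_{-\infty,2}$-curve through $\overline{\frak m}(-\infty)$ with the prescribed incidence at $z_\infty$. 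The count of such $\overline v_2$ is, after translating the incidence data, exactly the relative Gromov--Witten invariant $G_2(\overline S)$, which equals $1$ by Theorem~\ref{thm: calc of G sub 2}. Standard gluing of such pairs therefore produces the term $id$, completing the identification of $\bdry\mathcal{M}$ with the right-hand side of \eqref{eqn: chain homotopy part ii}.

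The hard part will be the $\tau\to -\infty$ analysis: one must rule out (i) degenerate $(\overline{W}_{-\infty,2},\overline{J}_{-\infty,2})$-curves (cf.\ Definition~\ref{defn: curves in E times overline S}), (ii) configurations in which either factor branch-covers its section at infinity, and (iii) configurations with extra bubbling off $\{z_\infty\}\times\overline S$; these are handled by a continuation/ECH-index argument simultaneously stretching in two directions, in the spirit of the tropical analysis of Sections~\ref{truncation}--\ref{second case}, combined with the asymptotic eigenfunction argument of Lemma~\ref{lemma 2012}. Hypothesis ($\dagger\dagger$) is essential here to ensure that the only allowed partitions at elliptic orbits are the trivial ones, so that $\overline v_1$ is forced to consist of trivial cylinders; without it the list of surviving configurations on the $\R\times\overline{N}$ side would be much longer and the matching with $id$ would fail.
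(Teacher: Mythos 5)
Your overall strategy (define $K$ by the $I=1$, $n^*=m$ count through $\overline{\frak m}$ and read off the identity from $\bdry\mathcal{M}$ for the $I=2$ moduli space over the family $\overline{W}^\mp_\tau$) is the paper's, and your treatment of the interior breaking matches Lemma~\ref{cherries new3}. But there are two genuine gaps. First, the $\tau\to+\infty$ analysis is misidentified: for the family $\overline{W}^\mp_\tau$ the degeneration is the stacking of $\overline{W}_-$ on top of $\overline{W}_+$ along the \emph{HF-type} end, so the intermediate asymptotic data is a Heegaard Floer generator ${\bf y}\in\mathcal{S}_{{\bf a},h({\bf a})}$, not an orbit set $\gamma''\in\widehat{\mathcal{O}}_{2g}$, and the ECH index splits as $I(\overline{v}_-)=2$ (the level carrying the point constraint) and $I(\overline{v}_+)=0$, not $1+1$ (Lemma~\ref{cherries}). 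This is not cosmetic: $\Phi\circ\Psi$ factors through $\widehat{CF}(S,{\bf a},h({\bf a}))$, with $\Psi$ counting $I=2$ curves in $\overline{W}_-$ through $\overline{\frak m}$ and $\Phi$ counting $I=0$ curves in $W_+$, so your proposed boundary stratum cannot be glued to produce the term $\Phi\circ\Psi$ at all; obstruction-bundle gluing along branched covers of $\R\times\delta_0$ is also not what is needed here.

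Second, at $\tau\to-\infty$ you cannot rule out the configurations in which one factor branch-covers its section at infinity; they genuinely occur as a second stratum $A_3$ of $\bdry_{\{-\infty\}}\mathcal{M}$ (Lemma~\ref{cherries2}), consisting of $\overline{v}_1$ with $I=2g+2$, $n^*=m$ passing through $z_\infty$ with multiplicity one, paired with $\overline{v}_2$ which is a cover of $\sigma_\infty^{-\infty,2}$ union constant sections (Lemma~\ref{lemma: curves in overline W minus infty 3}). The identity \eqref{eqn: chain homotopy part ii} only holds because these glued configurations cancel in pairs mod $2$, which requires a separate evaluation-map/degree argument (Lemma~\ref{A3}) exploiting the two choices $\overline{a}_{k,0}$, $\overline{a}_{k,1}$ of boundary arc near $z_\infty$; omitting this stratum leaves the boundary count off by an uncontrolled term. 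Relatedly, in the stratum producing $id$ the level $\overline{v}_1$ is an $I=0$ curve in $\R\times\overline{N}$ that may contain multiply covered branched covers of trivial cylinders over elliptic orbits (with a branch point forced at $(0,\tfrac32)$), so one still needs the automatic-transversality argument of Lemma~\ref{A2} to glue; ($\dagger\dagger$) controls the partition conditions there but does not reduce $\overline{v}_1$ to a union of trivial cylinders.
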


\begin{proof}
Suppose $m\gg 0$. Fix ${\frak p}(\tau)$ and choose $\{\overline{J}_\tau\}\in \overline{\mathcal{I}}^{reg}$ satisfying Lemma~\ref{lemma: codimension one part 2}. For sufficiently small $\varepsilon,\delta>0$ (which depend on the choices of $m$ and $\{\overline{J}_\tau\}$), there exists $\{\overline{J}_\tau^\Diamond(\varepsilon,\delta,{\frak p}(\tau))\}$ so that Lemma~\ref{lemma: codimension one part 2} holds.

Fix $\bs\gamma,\bs\gamma'\in \widehat{\mathcal{O}}_{2g}$ and abbreviate $$\mathcal{M}=\mathcal{M}^{I=2,n^*=m}_{\{\overline{J}_\tau^\Diamond(\varepsilon,\delta,{\frak p}(\tau))\}}(\bs\gamma,\bs\gamma';\overline{\frak m}), ~\mathcal{M}^{\{K_{{\frak p}(\tau),\delta}\}}=\mathcal{M}^{I=2,n^*=m,\{K_{{\frak p}(\tau),\delta}\}}_{\{\overline{J}_\tau^\Diamond(\varepsilon,\delta,{\frak p}(\tau))\}}(\bs\gamma,\bs\gamma';\overline{\frak m}).$$
Let $\overline{\mathcal{M}}$ be the SFT compactification of $\mathcal{M}$ and let $\bdry \mathcal{M}= \overline{\mathcal{M}}-\mathcal{M}$ be the boundary of $\mathcal{M}$. If $U\subset [-\infty,+\infty]$, then we write $\bdry_U\mathcal{M}$ for the set of $\overline{u}_\infty\in  \bdry M$ where $\overline{u}_\infty$ is a building which corresponds to some $\tau\in U$.  By Lemma~\ref{lemma: regularity of W tau family part 2}, we may take $\mathcal{M}^{\{K_{{\frak p}(\tau),\delta}\}}$ to be regular.

{\em In view of the considerations from Claim~\ref{elmwood}, we assume that all of $\mathcal{M}$ is regular.}

\s\n {\em Step 1 (Breaking at $+\infty$).}  The following is proved in Section~\ref{subsection: chain homotopy part 4}.

\begin{lemma} \label{cherries}
$\bdry_{\{+\infty\}}\mathcal{M}\subset A_1$, where
$$A_1=\coprod_{{\bf y}\in\mathcal{S}_{{\bf a},\hh({\bf a})}}\left(\mathcal{M}^{I=2,n^*=m}_{\overline{J}_-^\Diamond(\varepsilon,\delta,{\frak p}(+\infty))}(\bs\gamma,{\bf y};\overline{\frak m}(+\infty))\times \mathcal{M}_{J_+}^{I=0}({\bf y},\bs\gamma')   \right).$$
\end{lemma}

Gluing the pairs in $A_1$ accounts for the term $\Phi\circ\Psi$ in Equation~\eqref{eqn: chain homotopy part ii}.

\s\n {\em Step 2 (Breaking at $-\infty$).} The following lemmas are proved in Section~\ref{subsection: chain homotopy part 6}.

\begin{lemma} \label{cherries2}
$\bdry_{\{-\infty\}}\mathcal{M}\subset A_2\sqcup A_3$, where
$$A_2=\coprod_{\bs\gamma\in \widehat{\mathcal{O}}_{2g},{\frak z}\in{\frak Z},r_0=0} \left( \mathcal{M}^{I=0,n^*=0}_{\overline{J}_{-\infty,1}^\Diamond(\varepsilon,\delta,{\frak p}(-\infty))}(\bs\gamma,\bs\gamma,{\frak z})\times \mathcal{M}_{\overline{J}_{-\infty,2}}^{I=4g+2,n^*=m}({\frak z};\overline{\frak m}(-\infty))\right);$$
$$A_3=\coprod_{\bs\gamma,\bs\gamma'\in \widehat{\mathcal{O}}_{2g},\frak z\in {\frak Z}, r_0=1}\left(\mathcal{M}^{I=2g+2,n^*=m}_{\overline{J}_{-\infty,1}^\Diamond(\varepsilon,\delta,{\frak p}(-\infty))}(\bs\gamma,\bs\gamma',{\frak z})\times \mathcal{M}_{\overline{J}_{-\infty,2}}^{I=2g,n^*=0}({\frak z})   \right).$$
\end{lemma}

Note that if $\overline{v}_2\in \mathcal{M}_{\overline{J}_{-\infty,2}}^{I=2g,n^*=0}({\frak z})$, then $\overline{v}_2= B_{-\infty,2}\times {\frak z}$.

\begin{lemma} \label{A2}
Gluing the pairs in $A_2$ accounts for the term $id$ in Equation~\eqref{eqn: chain homotopy part ii}.
\end{lemma}

\begin{lemma}\label{A3}
Gluing the pairs in $A_3$ gives a total of $0$ mod $2$.
\end{lemma}

\s\n {\em Step 3 (Breaking in the middle).} The following is proved in Section~\ref{subsection: chain homotopy part 5}.

\begin{lemma} \label{cherries new3}
$\bdry_{(-\infty,+\infty)}\mathcal{M}\subset A_4\sqcup A_5$, where
$$A_4=\coprod_{\bs\gamma''\in \widehat{\mathcal{O}}_{2g}}\left(\mathcal{M}^{I=1,n^*=m}_{\{\overline{J}_\tau^\Diamond(\varepsilon,\delta,{\frak p}(\tau))\}}(\bs\gamma,\bs\gamma'';\overline{\frak m})\times \mathcal{M}_{J'}^{I=1}(\bs\gamma'',\bs\gamma')   \right);$$
$$A_5=\coprod_{\bs\gamma''\in \widehat{\mathcal{O}}_{2g}}\left(  \mathcal{M}_{J'}^{I=1}(\bs\gamma,\bs\gamma'')  \times\mathcal{M}^{I=1,n^*=m}_{\{\overline{J}_\tau^\Diamond(\varepsilon,\delta,{\frak p}(\tau))\}}(\bs\gamma'',\bs\gamma';\overline{\frak m})  \right);$$
\begin{align*}
A_6=&\coprod_{\bs\gamma'', \bs\gamma'''\in \widehat{\mathcal{O}}_{2g-1} } \left(  \mathcal{M}_{J'}^{I=2, n^*=m-1,f_{\delta_0}}(\bs\gamma,\delta_0\bs\gamma'')  \times\mathcal{M}^{I=-2,n^*=0}_{\{\overline{J}_\tau^\Diamond(\varepsilon,\delta,{\frak p}(\tau))\}}(\delta_0\bs\gamma'',\delta_0\bs\gamma''')\right.\\
& \qquad\qquad
\times \left. \mathcal{M}^{I=2, n^*=1}_{J'}(\delta_0\bs\gamma''', e\bs\gamma''') \right);
\end{align*}
\begin{align*}
A_7=&\coprod_{\bs\gamma'', \bs\gamma'''\in \widehat{\mathcal{O}}_{2g-2} } \left(  \mathcal{M}_{J'}^{I=2, n^*=m-2,f_{\delta_0}}(\bs\gamma,\delta_0^2\bs\gamma'')  \times\mathcal{M}^{I=-3,n^*=0}_{\{\overline{J}_\tau^\Diamond(\varepsilon,\delta,{\frak p}(\tau))\}}(\delta_0^2\bs\gamma'',\delta_0^2\bs\gamma''')\right.\\
& \qquad\qquad
\times \left. \mathcal{M}^{I=3, n^*=2}_{J'}(\delta_0^2\bs\gamma''', eh\bs\gamma''') \right).
\end{align*}
\end{lemma}

Gluing the pairs in $A_4$ and $A_5$ accounts for the terms $\bdry_{ECH}\circ K$ and $K\circ \bdry_{ECH}$ in Equation~\eqref{eqn: chain homotopy part ii}.

\begin{lemma}\label{A6 A7}
Gluing the triples in $A_6$ and $A_7$ gives a total of $0$ mod $2$.
\end{lemma}

\s
This completes the proof of Theorem~\ref{thm: chain homotopy ii}, modulo the results from Sections~\ref{subsection: chain homotopy part 4}--\ref{subsection: chain homotopy part 5}.
\end{proof}

\subsection{Degeneration at $+\infty$}
\label{subsection: chain homotopy part 4}

In this subsection we study the limit of holomorphic maps to $\overline{W}_\tau$ as $\tau\to +\infty$, i.e., when $\overline{W}_\tau$ degenerates into a concatenation of $\overline{W}_-$ with $\overline{W}_+$ along the HF-type end.  This will prove Lemma~\ref{cherries}.

We assume that $m\gg 0$; $\varepsilon,\delta>0$ are sufficiently small; and $\{\overline{J}_\tau\}\in \overline{\mathcal{I}}^{reg}$ and $\{\overline{J}_\tau^\Diamond(\varepsilon,\delta,{\frak p}(\tau))\}$ satisfy Lemma~\ref{lemma: codimension one part 2}. Fix $\bs\gamma,\bs\gamma'\in \widehat{\mathcal{O}}_{2g}$ and let
$$\mathcal{M}=\mathcal{M}^{I=2,n^*=m}_{\{\overline{J}_\tau^\Diamond(\varepsilon,\delta,{\frak p}(\tau))\}}(\bs\gamma,\bs\gamma';\overline{\frak m}),\quad\mathcal{M}_\tau= \mathcal{M}^{I=2,n^*=m}_{\overline{J}_\tau^\Diamond(\varepsilon,\delta,{\frak p}(\tau))}(\bs\gamma,\bs\gamma';\overline{\frak m}).$$
We will analyze $\bdry_{\{+\infty\}}\mathcal{M}$.

Let $\overline{u}_i$, $i\in \N$, be a sequence of curves in $\mathcal{M}$ such that $\overline{u}_i\in\mathcal{M}_{\tau_i}$ and $\displaystyle\lim_{i\to\infty} \tau_i=+\infty$, and let
$$\overline{u}_\infty = (\overline{v}_{-1,1}\cup\dots \cup \overline{v}_{-1,c}) \cup \overline{v}_+\cup (\overline{v}_{0,1}\cup\dots\cup \overline{v}_{0,b})\cup \overline{v}_-\cup (\overline{v}_{1,1}\cup\dots\cup \overline{v}_{1,a})$$
be the limit holomorphic building in order from bottom to top, where each $\overline{v}_*$ is an SFT level, $\overline{v}_{-1,j}$, $j=1,\dots,c$, maps to $\overline{W'}$; $\overline{v}_+$ maps to $\overline{W}_+$; $\overline{v}_{0,j}$, $j=1,\dots,b$, maps to $\overline{W}$; $\overline{v}_-$ maps to $\overline{W}_-$; and $\overline{v}_{1,j}$, $j=1,\dots,a$ maps to $\overline{W'}$. Here we are allowing the possibility that $a$, $b$, or $c=0$. For notational convenience, sometimes we refer to $\overline{v}_+$ as $\overline{v}_{-1,c+1}$ or $\overline{v}_{0,0}$ and $\overline{v}_-$ as $\overline{v}_{0,b+1}$ or $\overline{v}_{1,0}$.

As before, we have the following constraints:
\begin{equation}\label{sum of n part 3}
n^*(\overline{u}_i)=\sum_{\overline{v}_*}n^*(\overline{v}_*)=m;
\end{equation}
\begin{equation} \label{sum of I part 3}
I(\overline{u}_i)=\sum_{\overline{v}_*}I(\overline{v}_*)=2,
\end{equation}
where the summations are over all the levels $\overline{v}_*$ of $\overline{u}_\infty$.

The following is the analog of Lemma~\ref{los angeles}, with a similar proof (omitted):

\begin{lemma} \label{los angeles 4}
If $\overline{v}'_*\cup\overline{v}^\sharp_*=\varnothing$ for all levels $\overline{v}_*$ of $\overline{u}_\infty$, then $a=b=c=0$; $I(\overline{v}_+)=0$; $I(\overline{v}_-)=2$; $\overline{v}_+$ is a $W_+$-curve; and $\overline{v}_-$ is a $\overline{W}_-$-curve.
\end{lemma}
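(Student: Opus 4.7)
The plan is to mirror the proof of Lemma~\ref{los angeles} essentially verbatim, with the only substantive change being a relabelling reflecting the fact that in $\overline{W}^\mp_\tau$ the component $\overline{W}_-$ sits \emph{above} $\overline{W}_+$ (rather than below, as in $\overline{W}^\pm_\tau$). In particular, the marked point $\overline{\mathfrak{m}}(+\infty)\in B_-$ still forces $\overline{v}_-$ to carry the point constraint, and $\overline{v}_+$ still carries the role of the $W_+$-curve.

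First, I would exploit the point constraint to control $n^*$. Under the assumption $\overline{v}'_*\cup\overline{v}^\sharp_*=\varnothing$ for every level, there exists a unique $\mathfrak{q}\in\operatorname{int}(F_-)$ with $\overline{v}_-(\mathfrak{q})=\overline{\mathfrak{m}}(+\infty)$, and for any sufficiently small neighborhood $N(\mathfrak{q})$ we have $n^*(\overline{v}_-(N(\mathfrak{q})))\geq m$ by positivity of intersection with $(\sigma_\infty^-)^\dagger$ near $z_\infty$. Combined with Equation~\eqref{sum of n part 3} and the analog of Lemma~\ref{cherimoya} (a boundary puncture removable at $z_\infty$ would contribute at least $k_0-1\gg 2g$ to $n^*$), this forces: (i) no boundary punctures removable at $z_\infty$ on any level, (ii) $n^*(\overline{v}_*)=0$ for every $*\ne -$, and (iii) any non-ghost fiber component must be a component of $\overline{v}_-$ passing through $\overline{\mathfrak{m}}(+\infty)$.

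Next I would collect lower bounds on ECH indices, using Equation~\eqref{sum of I part 3}. Each intermediate symplectization level $\overline{v}_{-1,j}$ ($1\le j\le c$), $\overline{v}_{0,j}$ ($1\le j\le b$), and $\overline{v}_{1,j}$ ($1\le j\le a$) satisfies $I(\overline{v}_{\cdot,j})\ge 0$, with equality iff it is a (union of) connectors; for the PFH levels in $\overline{W'}$ this is \cite[Prop.~7.15(a)]{HT1} combined with regularity of the restriction $\overline{J'}$, and for the HF level in $\overline{W}$ this is the usual index inequality plus regularity of $\overline{J}$. Since $\overline{v}'_+\cup\overline{v}_+^\sharp=\varnothing$, the level $\overline{v}_+$ is an honest $W_+$-curve, and regularity of $\overline{J}_+$ together with Lemma~I.\ref{P1-thm: index inequality for W+ and W-} gives $I(\overline{v}_+)\ge 0$. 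Finally, $\overline{v}_-$ is either an honest $\overline{W}_-$-curve or a degenerate one (containing a fiber component from (iii)); in the first case $I(\overline{v}_-)\ge 2$ from the (codimension $2$) point constraint and the index inequality, while in the second case the calculation in the proof of Lemma~I.\ref{P1-claim in proof} gives $I(\overline{v}_-)\ge 2g+2\ge 4$.

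Combining these bounds with $\sum_* I(\overline{v}_*)=2$ forces $I(\overline{v}_+)=0$, $I(\overline{v}_-)=2$, and every other level to be a (collection of) connectors; after discarding connectors we conclude $a=b=c=0$, that $\overline{v}_+$ is a $W_+$-curve, and that $\overline{v}_-$ is a non-degenerate $\overline{W}_-$-curve through $\overline{\mathfrak{m}}(+\infty)$. The main obstacle in the proof is ruling out the degenerate $\overline{W}_-$-curve, since the extra fiber component is allowed \emph{a priori} by (iii); this is resolved exactly as in Lemma~\ref{los angeles} by the index excess of the degenerate case, which alone exceeds the global budget of $I=2$.
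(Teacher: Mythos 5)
Your proof is correct and is exactly the argument the paper intends: it states Lemma~\ref{los angeles 4} as "the analog of Lemma~\ref{los angeles}, with a similar proof (omitted)," and your write-up is a faithful transcription of that proof with the roles of the levels relabelled for the $\overline{W}^\mp_\tau$ family (point constraint and fiber components confined to $\overline{v}_-$, index excess $\geq 2g+2$ killing the degenerate case, and the $I$-budget of $2$ forcing all remaining levels to be trivial).
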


\begin{lemma}\label{sencha alt}
If $\overline{v}'_*\cup\overline{v}^\sharp_*\not=\varnothing$ for some level $\overline{v}_*$ of $\overline{u}_\infty$, then:
\begin{enumerate}
\item $p_-=\deg(\overline{v}'_-)>0$;
\item some $\overline{v}_{1,j_0}$, $j_0>0$, has a negative end $\mathcal{E}_-$ that limits to $\delta_0^p$ for some $p>0$ and satisfies $n^*(\mathcal{E}_-)\geq m-p$;
\item $\overline{u}_\infty$ has no boundary point at $z_\infty$;
\item $\overline{u}_\infty$ has no fiber components and no components of $\overline{v}''_*$ that intersect the interior of a section at infinity;
\item each component of $\overline{v}^\sharp_{0,j}$, $1\leq j\leq b$, is a thin strip;
\item each component of $\overline{v}^\sharp_+$ is an $n^*=1$, $I=0$ or $1$ section from $z_\infty$ to $h$ or $e$ which is contained in $\overline{W}_+-W_+$;
\item each component of $\overline{v}^\sharp_{-1,j}$, $0\leq j\leq c$, is an $n^*=1$, $I=1$ or $2$ cylinder from $\delta_0$ to $h$ or $e$ which is contained in $\R\times(\overline{N}-N)$;
\item $h$ appears at most once at the negative end of $\overline{v}^\sharp_{-1,1}$.
\end{enumerate}
\end{lemma}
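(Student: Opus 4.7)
The plan is to follow the template of Lemma~\ref{sencha}, using the $n^*$-constraint \eqref{sum of n part 3} combined with the appropriate analogs of Lemmas~\ref{intersezione}, \ref{intersezione prime}, and \ref{cherimoya} for the cobordism $\overline{W}^\mp_\tau$. First I would verify that these intersection-number bounds carry over verbatim: any negative end of $\overline{v}_*''$ at $\delta_0^p$ contributes $\geq m-p$ to $n^*$, any positive end at $\delta_0^p$ contributes $\geq p$, any boundary puncture removable at $z_\infty$ contributes $\geq k_0-1\gg 2g$, and any nonfiber component of $\overline{v}_*''$ meeting the interior of a section at infinity contributes $\geq m$. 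The continuation argument for a collection of $z_\infty$-ends at a common level carries over because the top portion of $\overline{W}^\mp_\tau$ is identical in structure to the $+\infty$ degeneration of Section~\ref{subsection: chain homotopy part 1} (only the roles of the HF- and ECH-type ends are interchanged).

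For (1), I would argue by contradiction. If $\overline{v}_-'=\varnothing$, then $\overline{v}_-''$ passes through $\overline{\frak m}(+\infty)$, so a neighborhood of the preimage of $\overline{\frak m}(+\infty)$ already contributes $n^*\geq m$. By hypothesis $\overline{v}'_*\cup\overline{v}_*^\sharp\not=\varnothing$ at some level, which by the analog of Lemma~\ref{intersezione prime} forces an additional end at a multiple of $z_\infty$ or $\delta_0$ above $\overline{v}_-$, contributing at least $k_0-1\gg 2g$ or $\geq 1$ to $n^*$, violating \eqref{sum of n part 3} since $m\gg 0$. For (2), given $p_-:=\deg\overline{v}_-'>0$, the positive end of $\overline{v}_-'$ is asymptotic to $\delta_0^{p_-}$. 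Everything above $\overline{v}_-$ in this limit building maps into $\overline{W}'=\R\times\overline{N}$, and the ultimate asymptote at the top of the whole building is $\gamma\in\widehat{\mathcal{O}}_{2g}$, which by our convention does not involve $\delta_0$. Hence the $\delta_0^{p_-}$-cover must be capped off by some $\overline{v}_{1,j_0}^\sharp$, $j_0>0$, whose negative end $\mathcal{E}_-$ limits to $\delta_0^p$ for some $p>0$; Lemma~\ref{intersezione}(1) then gives $n^*(\mathcal{E}_-)\geq m-p$.

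Now (2) eats up almost the entire $n^*$-budget. Parts (3) and (4) follow immediately: a removable puncture at $z_\infty$ or a fiber component would add another $\geq k_0-1\gg 2g$ or $\geq m$ to $n^*$, pushing the total beyond $m$. For (5)--(7), the remaining $n^*$-budget after accounting for $\mathcal{E}_-$ and the $\overline{\frak m}(+\infty)$-intersection is at most roughly $p+p_-$, which forces each nontrivial $\overline{v}_{0,j}^\sharp$-component to have $n^*=0$ (a thin strip), each $\overline{v}_+^\sharp$-component to have $n^*=1$ (a section from $z_\infty$ to $h$ or $e$ contained in $\overline{W}_+-W_+$), and each $\overline{v}_{-1,j}^\sharp$-component to have $n^*=1$ (a cylinder from $\delta_0$ to $h$ or $e$ in $\R\times(\overline{N}-N)$). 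The Fredholm and ECH indices of these specific curves are then pinned down by standard calculations (of the type in Lemmas~\ref{lemma: regularity and dimension} and I.\ref{P1-lemma: HF index of sections at infinity}): sections in $\overline{W}_+-W_+$ from $z_\infty$ to $h$ or $e$ have $I=0,1$, and cylinders from $\delta_0$ to $h$ or $e$ have $I=1,2$.

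Finally, for (8), I would invoke the genericity of the hyperbolic orbit $h$ from Section~\ref{subsubsection: choice of hyperbolic orbit h}: only one holomorphic cylinder $Z_h$ from $\delta_0$ to $h$ exists (modulo $\R$-translation), so at a single level $\overline{v}_{-1,1}$ only one such cylinder may appear; any two copies would force coinciding asymptotic markers, violating genericity. The main obstacle is setting up the intersection-number accounting cleanly in (1)--(2) so that the analog of Lemma~\ref{intersezione prime} applies at the $+\infty$-end of $\overline{W}^\mp_\tau$; once that is in place, steps (3)--(8) are essentially bookkeeping parallel to Lemma~\ref{sencha}.
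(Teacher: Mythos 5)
Your proposal follows essentially the same route as the paper: everything is driven by the constraint $n^*(\overline{u}_i)=m$, one first shows $\overline{v}_-'\neq\varnothing$ because otherwise the point constraint $\overline{\frak m}(+\infty)$ alone forces $n^*(\overline{v}_-)\geq m$ while the hypothesis produces an additional positive contribution elsewhere, then deduces the negative end at $\delta_0^p$ from the positive end of the branched cover of $\sigma_\infty^-$ together with the fact that $\gamma$ contains no $\delta_0$, and finally uses the exhausted $n^*$-budget to pin down (3)--(7). This is exactly the paper's argument (which sets up the dichotomy between a $\delta_0$-end and a $z_\infty$-end and rules out the latter).

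The one place where your reasoning diverges from the paper and is not correct as stated is item (8). Uniqueness of the cylinder from $\delta_0$ to $h$ modulo $\R$-translation does not prevent two components of the single level $\overline{v}_{-1,1}$ from both being translates of that cylinder (the $\R$-action is quotiented only for the level as a whole, not componentwise), and ``coinciding asymptotic markers'' is not a genericity violation here. The correct reason, which is what the paper means by ``the definition of the ECH differential,'' is that $\overline{v}_{-1,1}$ is the bottom level, so its negative end is the orbit set $\gamma'\in\widehat{\mathcal{O}}_{2g}$, an admissible ECH/PFH generator in which the hyperbolic orbit $h$ may appear with multiplicity at most one.
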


\begin{proof}
The proof is based on Equation~\eqref{sum of n part 3}.  First observe that:
\begin{enumerate}
\item[(*)] either there is a negative end $\mathcal{E}_-$ that limits to $\delta_0^p$ for some $p>0$ and satisfies $n^*(\mathcal{E}_-)\geq m-p$ by Lemma~\ref{intersezione}; or
\item[(**)] there is a negative end $\mathcal{E}_-$ that limits to $z_\infty$ and the sum of $n^*(\mathcal{E}_i)$ over all the ends $\mathcal{E}_i$ that limit to $z_\infty$ and $n^*(\mathcal{E}_i')$ over all the neighborhoods of the boundary points at $z_\infty$ is $\geq m-2g$.
\end{enumerate}

(1) If (*) or (**) holds, then $\overline{v}'_-\not=\varnothing$, since otherwise the neighborhood of $\overline{\frak m}(+\infty)$ contributes $m$ towards $n^*(\overline{v}_-)$.

(2) is a consequence of (1) and is a subcase of (*). In particular (**) does not hold. (3)--(7) follow from (2).  (8) follows from the definition of the ECH differential.
\end{proof}

\begin{lemma} \label{eliminate end h}
If $\overline{v}'_*\cup\overline{v}^\sharp_*\not=\varnothing$ for some level $\overline{v}_*$, then $\overline{u}_\infty$ cannot have the following subbuildings:
\begin{enumerate}
\item a degree one component of $\overline{v}^\sharp_+$ from $z_\infty$ to $h$; and
\item $\overline{v}'_+$ which has degree $p_+$ and $\overline{v}^\sharp_{-1,1}$ which is a union of $p_+$ cylinders from $\delta_0$ to $h$.
\end{enumerate}
\end{lemma}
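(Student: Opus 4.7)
The plan is to eliminate both configurations by the rescaling and Involution Lemma technique developed in the proof of Lemma~\ref{lemma 2012}, adapted to the opposite stacking in $\overline{W}^\mp_\tau$: as $\tau\to+\infty$, the cobordism $\overline{W}_+$ now sits at the bottom, and the ECH end of $\overline{v}'_+$ (or $\overline{v}^\sharp_+$) limits to $\delta_0$ just above the hyperbolic-orbit cylinders of $\overline{v}_{-1,1}$. In both cases, I would start with a sequence $\overline{u}_i\in\mathcal{M}_{\tau_i}$, truncate near $\sigma_\infty^{\tau_i}$, project to $D_{\rho_0}$ using balanced coordinates, apply the ansatz used throughout Section~I, rescale by positive real constants, and take the SFT limit $m_l\to\infty$ along a diagonal subsequence to obtain a holomorphic building whose level over $\overline{W}_+$ is a map $w_+$ of the precise type analyzed by Lemma~\ref{2012}.

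For Case~(1), the single degree one component of $\overline{v}^\sharp_+$ has domain biholomorphic to a half-cylinder, and Lemma~\ref{sencha alt}(6) ensures this component lies in $\overline{W}_+-W_+$, so the projection to $D_{\rho_0}$ is well-defined. After rescaling we obtain $w_+:\overline{\D}\to\C\P^1$ satisfying conditions~(i), (iii), (iv) of Lemma~\ref{2012}, with $1\in\bdry\overline{\D}$ corresponding to the positive end at $z_\infty$ and $0\in\overline{\D}$ corresponding to the negative end at $h$. By the uniqueness part of Lemma~\ref{2012}, $w_+(0)\in\mathcal{R}_{\phi_0}$, where $\phi_0$ is the bisector of $\phi(\overline{a}_{i_0,j_0})$ and $\phi(\overline{h}(\overline{a}_{i_0,j_0}))$. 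However, the asymptotic eigenfunction of $\overline{v}^\sharp_+$ at $h$ is an arbitrarily small perturbation of a constant in $\mathcal{R}_{\phi_h}$, and the choice of $h$ in Section~\ref{subsubsection: choice of hyperbolic orbit h} gives $\phi_h\approx -2\pi/m$, which lies outside the small positive interval containing $\phi_0$. This contradicts Lemma~\ref{2012}(ii).

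For Case~(2), Lemma~\ref{sencha alt}(5)--(7) implies that the levels $\overline{v}_*\succeq\overline{v}_+$ contribute only thin strips and sections from $z_\infty$ to $h$ or $e$, so the rescaling argument of Lemma~\ref{lemma 2012} applies to $\overline{v}'_+$. Reducing to $p_+=1$ as in the proof of that lemma, the same first-order Lemma~\ref{2012} obstruction eliminates all $\overline{v}^\sharp_{-1,1}$ cylinders from $\delta_0$ to $h$, exactly as in Case~(1). For cylinders from $\delta_0$ to $e$, the leading asymptotic eigenfunction at $\delta_0$ may point in $\mathcal{R}_{\phi_0}$ and the first-order constraint is automatically satisfied, so one must pass to the second-order term in the expansion $c_0+c_1 e^{-s-\pi it}$ exactly as in the final paragraph of the proof of Lemma~\ref{lemma 2012}: Lemma~\ref{2012}(ii) determines $c_1$ up to a positive real multiple, yielding an additional codimension one condition. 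Each such cylinder therefore has $I\geq 3$, and summing with the remaining contributions $I(\overline{v}'_+)=-p_+$, the other $p_+-1$ cylinders, and the descending thin/trivial strips gives $\sum_{\overline{v}_*}I(\overline{v}_*)\geq 3$, contradicting Equation~\eqref{sum of I part 3}.

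The main obstacle will be verifying that the inverted stacking in $\overline{W}^\mp_\tau$ does not alter the hypotheses of the Involution Lemmas: the negative end of $w_+$ now corresponds to the ECH orbit $\delta_0$ attached to cylinders \emph{below} rather than to a positive end attached to cylinders \emph{above}, so one must check that the symmetry relative to $\mathcal{R}_{\phi_0}$ is orientation-compatible with this reversal. Once this compatibility is confirmed, the contradictions follow exactly as in Lemma~\ref{lemma 2012}, and the genericity of $\phi_h$ and the higher-order transversality at $e$ do the rest.
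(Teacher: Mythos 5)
Your proposal is correct and follows essentially the same route as the paper: case (1) reduces to the non-existence of sections from $z_\infty$ to $h$ forced by the positioning of $\phi_h$ (which the paper simply cites from Lemma~I on the value of $\widetilde\Phi$), and case (2) is exactly the paper's appeal to Lemma~\ref{lemma 2012}, with the first-order eigenfunction obstruction at $h$ and the second-order expansion at $e$. One small correction: the rescaling here is the SFT limit in $\tau_i\to+\infty$ at \emph{fixed} $m\gg 0$ (which is also what makes the comparison $\phi_h\approx-2\pi/m$ versus $\phi_0>0$ clean), not a diagonal limit $m_l\to\infty$.
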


\begin{proof}
This is due to the positioning of $h$, given in Section~\ref{subsubsection: convention bambi}.  (1) was proved in Lemma~I.\ref{P1-lemma: value of widetilde Phi}. (2) is due to Lemma~\ref{lemma 2012}: the usual rescaling procedure with fixed $m\gg 0$, together with Lemma~\ref{sencha alt}, gives rise to an SFT limit $w_+: \Sigma_+\to \C\P^1$, $\pi_+:\Sigma_+\to cl(B_+)$, where $\Sigma_+$ consists of $p_+$ copies of $cl(B_+)$ (and hence $\pi_+$ is a trivial branched cover) and the restriction of $w_+$ to each component of $\Sigma_+$ satisfies the conditions of Lemma~\ref{2012}.
\end{proof}

\begin{lemma} \label{options}
If $\overline{v}'_*\cup\overline{v}^\sharp_*\not=\varnothing$ for some level $\overline{v}_*$, then $\overline{v}_-'\not=\varnothing$ and $\overline{u}_\infty$ contains one of the following subbuildings:
\begin{enumerate}
\item A $3$-level building consisting of $\overline{v}_{1,1}^\sharp$ with $I=1$ and a negative end $\delta_0\bs\gamma'$; $\overline{v}_-'=\sigma_\infty^-$; and a thin strip of $\overline{v}^\sharp_{0,1}$.
\item[(2)] A $3$-level building consisting of $\overline{v}_{1,1}^\sharp$ with $I=1$ and a negative end $\delta_0\bs\gamma'$; $\overline{v}_-'=\sigma_\infty^-$; and a component of $\overline{v}_+^\sharp$ with $I=1$ from $z_\infty$ to $e$.
\item[(3)] A $4$-level building consisting of $\overline{v}_{1,1}^\sharp$ with $I=1$ and a negative end $\delta_0\bs\gamma'$; $\overline{v}_-'=\sigma_\infty^-$; $\overline{v}_+'=\sigma_\infty^+$; and a cylinder component of  $\overline{v}^\sharp_{-1,1}$ from $\delta_0$ to $e$.
\end{enumerate}
Here we are omitting levels which are connectors.
\end{lemma}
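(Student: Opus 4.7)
The plan is to mirror the proof of Lemma~\ref{hojicha}: combine the $n^*$-constraint (Equation~(\ref{sum of n part 3})) with the ECH-index constraint (Equation~(\ref{sum of I part 3})) and use Lemmas~\ref{sencha alt} and \ref{eliminate end h} to eliminate all but the two listed configurations. Since in the present setup $\overline{W}_-$ sits above $\overline{W}_+$, the roles of $\overline{v}_+$ and $\overline{v}_-$ are swapped compared with Lemma~\ref{hojicha}, which simplifies the combinatorics: only two cases survive rather than the six of Lemma~\ref{hojicha}.

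First I would set $p_- := \deg(\overline{v}_-')$ and $p_+ := \deg(\overline{v}_+')$. Lemma~\ref{sencha alt}(1) gives $p_- \geq 1$, and (2) produces a distinguished component $\overline{v}_{1,j_0}^\sharp$ whose negative end $\mathcal{E}_-$ at $\delta_0^p$ satisfies $n^*(\mathcal{E}_-) \geq m-p$. After showing by an $n^*$-budget argument (analogous to the proofs of Lemma~\ref{sencha alt}(4)--(7)) that $\overline{v}_-^\sharp = \varnothing$, ECH orbit matching at the interface of $\overline{v}_-$ and $\overline{v}_{1,j_0}$ yields $p = p_-$. I would then tabulate the ECH index contributions: $I(\overline{v}_+') = -p_+$ and $I(\overline{v}_-') = 0$ by Equations~(\ref{index for sigma infty plus})--(\ref{index for sigma infty minus}); $I(\overline{v}'_*) = 0$ for branched covers of $\sigma_\infty$ or $\sigma_\infty'$ satisfying the partition condition (by the regularity of $\overline{J'}$ and \cite[Proposition~7.15(a)]{HT1}); each thin strip in $\overline{v}^\sharp_{0,j}$ contributes $+1$; each section $z_\infty \to h$ in $\overline{v}^\sharp_+$ contributes $0$ and each $z_\infty \to e$ contributes $+1$; each cylinder $\delta_0 \to h$ (resp.\ $\delta_0 \to e$) in $\overline{v}^\sharp_{-1,j}$ contributes $+1$ (resp.\ $+1$ or $+2$); and $I(\overline{v}^\sharp_{1,j_0}) \geq 1$.

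The $\delta_0$-matching at the interface between $\overline{v}_+$ and $\overline{v}_{-1,1}$ gives $p_+ = p_{-1,1} + \#\overline{v}^\sharp_{-1,1}$. Suppose $p_+ \geq 1$. By Lemma~\ref{eliminate end h}(2) the configuration $(p_{-1,1}, \#\overline{v}^\sharp_{-1,1}) = (0, p_+)$ is excluded, so $p_{-1,1} \geq 1$; summing the minimal index contributions (including the $\overline{v}_{-1,1}^\flat$-pieces required to absorb the $h$/$e$ ends created by $\overline{v}_+^\sharp$) then exceeds $2$, contradicting Equation~(\ref{sum of I part 3}). Hence $p_+ = 0$, which in turn forces $\overline{v}_+' = \varnothing$ and $\overline{v}^\sharp_{-1,j} = \varnothing$ for all $j$. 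The remaining index equation reads
\[ 2 = (\#\text{thin strips}) + (\text{index from } \overline{v}^\sharp_+) + I(\overline{v}^\sharp_{1,j_0}). \]
Since $I(\overline{v}^\sharp_{1,j_0}) \geq 1$, only one additional unit of index is available: either a single thin strip (giving Case~(1)) or a single $z_\infty \to e$ section in $\overline{v}^\sharp_+$ (giving Case~(2); the $z_\infty \to h$ alternative is excluded by Lemma~\ref{eliminate end h}(1)). In both cases $p = p_- = 1$ and $\overline{v}_-' = \sigma_\infty^-$, as required.

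The main obstacle will be the elimination of $\overline{v}_-^\sharp$ in the first step. The naive $n^*$-budget does not immediately exclude components of $\overline{v}_-^\sharp$ asymptotic to $z_\infty$ or $\delta_0$ with small $n^*$, and ruling these out requires combining the asymptotic involution analysis of Lemma~\ref{lemma 2012} with the stretched rescaling arguments of Cases~(2D)--(2E) in Lemma~\ref{eliminate fiber components 2}. Once this elimination is in place and the partition condition for branched covers of $\sigma_\infty'$ is verified using the regularity of $\overline{J'}$, the case analysis above closes out the proof.
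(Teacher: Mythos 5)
Your proposal is correct and follows the paper's own (one-sentence) proof exactly: the paper derives the lemma from Lemmas~\ref{sencha alt} and \ref{eliminate end h} together with the constraints \eqref{sum of n part 3} and \eqref{sum of I part 3}, which is precisely the bookkeeping you carry out. The only remark is that your flagged "main obstacle" is lighter than you fear: components of $\overline{v}_-^\sharp$ (sections from $\delta_0$ to an intersection point, each with $I=1$) need not be excluded up front by rescaling — carrying their number $q$ through the final index count, together with $I(\overline{v}_{1,j_0}^\sharp)\geq 1$ and the cost $\geq 1$ of absorbing each $z_\infty$ emitted by $\overline{v}_-'$, already forces $q=0$ and $p_-=1$.
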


\begin{proof}
The lemma is a consequence of Lemmas~\ref{sencha alt} and \ref{eliminate end h}, subject to the conditions given by Equations~\eqref{sum of n part 3} and \eqref{sum of I part 3}.
\end{proof}

\begin{lemma} \label{apricot}
If $m\gg 0$, then there is no $\overline{u}_\infty\in \bdry_{\{+\infty\}} \mathcal{M}$ such that $\overline{v}'_*\cup\overline{v}^\sharp_*\not=\varnothing$ for some level $\overline{v}_*$.
\end{lemma}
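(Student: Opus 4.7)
The plan is to argue by contradiction, mirroring the strategy of Lemma~\ref{vancouver}, and especially its Case~(6$_i$) analysis. Suppose there exist sequences $m_l\to\infty$, $\varepsilon_l,\delta_l\to 0$, and $\overline{u}_{lj}\to \overline{u}_{l\infty}$, with $\overline{u}_{lj}\in \mathcal{M}^{(m_l)}$ and $\overline{u}_{l\infty}\in \bdry_{\{+\infty\}}\mathcal{M}^{(m_l)}$ such that each $\overline{u}_{l\infty}$ contains one of the subbuildings~(1) or~(2) listed in Lemma~\ref{options}. Choose a diagonal subsequence $\overline{u}_l := \overline{u}_{l,j(l)}$. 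Because $\overline{v}_-'=\sigma_\infty^-$ is a degree $1$ section in both cases and some $\overline{v}_{1,1}^\sharp$ has a single negative end at $\delta_0\gamma'$, Lemma~\ref{sencha alt} together with Lemma~\ref{eliminate end h} forces the branched-cover degrees of each $\overline{v}_*'$ to be at most~$1$; in particular, the renormalized limit obtained by restricting $\overline{u}_l$ to a neighborhood of $\sigma_\infty^{\tau_l}$ and applying the rescaling ansatz (I.\ref{P1-eqn: ansatz}) yields a nontrivial holomorphic building $w_\infty$ of degree~$1$ whose graph is a connected planar domain (the analog of Lemma~\ref{banff}).

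Next I would read off the constraints on each level of $w_\infty$ using the Involution Lemmas~I.\ref{P1-lemma: effect of involutions}--I.\ref{P1-lemma: effect 3}, propagated level-by-level through the building. Starting from the bottom thin-strip level and moving upward through $\overline{v}_+^\sharp$, $\overline{v}_0^\flat$-connectors, and $\overline{v}_-'=\sigma_\infty^-$, the asymptotic markers $\dot{\mathcal{R}}_\pi$ at $0$ or $\infty\in\C\P^1$ get matched with markers $\dot{\mathcal{L}}_{(r(\tau)+1)/2}$ on the bases $cl(B_+)$, $cl(B_{0,j})$, and $cl(B_-)$, exactly as in Lemma~\ref{vancouver}. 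The effect is that when one reaches the topmost level, corresponding to $\overline{v}_{1,1}^\sharp$ living in $\R\times\overline{N}$, the limit map $w_{1,1}$ is forced to send a particular point of $\pi_{1,1}^{-1}(-\infty)$ to a prescribed location on $\bdry D_{\rho_0}$, while the position of the negative end at $\delta_0\gamma'$, together with the choice of $h$ from Section~\ref{subsubsection: choice of hyperbolic orbit h}, sends that same point elsewhere.

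For Case~(1) (where $\overline{v}_+^\sharp=\varnothing$) this boundary identification contradicts the fact that $\pi_{-1,1}\circ w_{-1,1}^{-1}$ must map $\dot{\mathcal{R}}_\pi(0)$ to $\dot{\mathcal{L}}_{3/2}$, since there are no branch points or extra levels to absorb the discrepancy. For Case~(2), the extra $I=1$ component of $\overline{v}_+^\sharp$ from $z_\infty$ to $e$ produces an additional point $w_+(z_0)=0$ in the rescaled picture, and the Involution Lemma~I.\ref{P1-lemma: effect 3} again forces $f_+(\infty)$ to lie on $\mathcal{L}_{(r(\tau)+1)/2}\cap\bdry cl(B_+)$, which conflicts with the prescription $\op{Im}(w_+)\subset\{\phi=0,\rho>0\}$ from the Lagrangian boundary condition. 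Either way we obtain the required contradiction.

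The main obstacle is the careful bookkeeping of asymptotic markers as they pass through each level of the building: the rescaling argument must be carried out in the presence of a thin strip level (in $\overline{v}_{-1,j}$), possibly several connector levels in $\overline{v}_{0,j}$ and $\overline{v}_{1,j}$, and the nontrivial level $\overline{v}_{1,1}^\sharp$, and the Involution Lemmas must be applied so that the prescribed images of $0$, $\infty$, and the relevant markers on $\C\P^1$ propagate consistently to the base $cl(B_+)\cup\bigcup_j cl(B_{0,j})\cup cl(B_-)\cup cl(B_{1,1})$. The key simplification compared to Lemma~\ref{vancouver} is that here $\deg(\overline{v}_\pm')\leq 1$, so the planarity argument of Lemma~\ref{banff} becomes essentially automatic and the reduction is cleaner; the main new input is simply that $h$ has been chosen so that $\mathcal{R}_{\phi_h}$ lies arbitrarily close to $\mathcal{R}_{-2\pi/m}$, which rules out the level reaching its prescribed marker through the $\delta_0\to h$ end.
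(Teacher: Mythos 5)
Your proposal takes essentially the same route as the paper: the paper's own proof of this lemma is a one-line reference asserting that Cases (1) and (2) of Lemma~\ref{options} are eliminated "as in Theorem~I.\ref{P1-thm: complement}," which is exactly the rescaling-plus-Involution-Lemma mechanism you describe (compare Case (6$_i$) of Lemma~\ref{vancouver} and Lemma~\ref{lemma 2012}). One minor bookkeeping caveat: in this section the levels are ordered with $\overline{v}_+$ \emph{below} $\overline{v}_-$, so in Case (1) the thin strip lives in a $\overline{v}_{0,j}$-level between them rather than at the bottom, and the contradiction there is carried by the thin strip and the negative end of $\overline{v}_{1,1}^\sharp$ at $\delta_0$ rather than by a $\overline{v}_{-1,1}$-level, but this does not affect the substance of the argument.
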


\begin{proof}
The proofs  to eliminate Cases (1)--(3) of Lemma~\ref{options} are similar to those of Theorem~I.\ref{P1-thm: complement} and will be omitted.
\end{proof}

\begin{proof}[Proof of Lemma~\ref{cherries}]
This is a combination of Lemmas~\ref{los angeles 4} and \ref{apricot}.
\end{proof}

\subsection{Degeneration at $-\infty$}
\label{subsection: chain homotopy part 6}

In this subsection we study the limit of holomorphic maps to $\overline{W}_\tau$ as $\tau \to -\infty$.  This will prove Lemma~\ref{cherries2}.

We assume that $m\gg 0$; $\varepsilon,\delta>0$ are sufficiently small; and $\{\overline{J}_\tau\}\in \overline{\mathcal{I}}^{reg}$ and $\{\overline{J}_\tau^\Diamond(\varepsilon,\delta,{\frak p}(\tau))\}$ satisfy Lemma~\ref{lemma: codimension one part 2}. Fix $\bs\gamma,\bs\gamma'\in \widehat{\mathcal{O}}_{2g}$ and let
$$\mathcal{M}=\mathcal{M}^{I=2,n^*=m}_{\{\overline{J}_\tau^\Diamond(\varepsilon,\delta,{\frak p}(\tau))\}}(\bs\gamma,\bs\gamma';\overline{\frak m}),\quad\mathcal{M}_\tau= \mathcal{M}^{I=2,n^*=m}_{\overline{J}_\tau^\Diamond(\varepsilon,\delta,{\frak p}(\tau))}(\bs\gamma,\bs\gamma';\overline{\frak m}).$$
We will analyze $\bdry_{\{-\infty\}}\mathcal{M}$.

Let $\overline{u}_i$, $i\in \N$, be a sequence of curves in $\mathcal{M}$ such that $\overline{u}_i\in\mathcal{M}_{\tau_i}$ and $\displaystyle\lim_{i\to\infty} \tau_i=-\infty$, and let
$$\overline{u}_\infty = (\overline{v}_{-1,1}\cup\dots \cup \overline{v}_{-1,c}) \cup \overline{v}_{1}\cup \overline{v}_{2}\cup (\overline{v}_{1,1}\cup\dots\cup \overline{v}_{1,a})$$
be the limit holomorphic building in order from bottom to top, where each $\overline{v}_*$ is an SFT level, $\overline{v}_{-1,j}$, $j=1,\dots,c$, maps to $\overline{W'}$; $\overline{v}_{j}$ maps to $\overline{W}_{-\infty,j}$; and $\overline{v}_{1,j}$, $j=1,\dots,a$ maps to $\overline{W'}$. Sometimes we refer to $\overline{v}_{1}$ as $\overline{v}_{-1,c+1}$ or $\overline{v}_{1,0}$.

We write $\overline{v}_*=\overline{v}'_*\cup \overline{v}''_*$, $\overline{v}''_*= \overline{v}^\sharp_*\cup\overline{v}^\flat_*$, where:
\begin{itemize}
\item $\overline{v}'_*$ is the union of branched covers of a section at infinity;
\item $\overline{v}^\sharp_*$ is the union of components that are not in $\overline{v}'_*$ and are asymptotic to some multiple of $\delta_0$ or pass through ${\frak z}=z_\infty^{r_0}\zeta_1^{r_1}\cdots\zeta_l^{r_l}$ with $r_0>0$; and
\item $\overline{v}^\flat_*$ is the union of the remaining components of $\overline{v}_*$.
\end{itemize}

\begin{rmk}
Strictly speaking, in the limit $\tau\to-\infty$, there exist levels between $\overline{v}_1$ and $\overline{v}_2$, i.e., levels that map to $\R\times S^1\times \overline{S}$. By considerations of $n^*$, these levels are connectors (i.e., map to $\R\times S^1\times\{pt\}$) and will be ignored until we consider gluing.
\end{rmk}

The following are the analogs of Lemmas~\ref{los angeles 4} and \ref{sencha alt}:

\begin{lemma} \label{los angeles 5}
If $\overline{v}'_*\cup\overline{v}^\sharp_*=\varnothing$ for all levels $\overline{v}_*$ of $\overline{u}_\infty$, then $a=c=0$; $I(\overline{v}_1)=0$; $I(\overline{v}_2)=4g+2$; and $\overline{v}_2$ is a $\overline{W}_2$-curve.
\end{lemma}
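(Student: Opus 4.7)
The hypothesis forces every surviving irreducible component to be of type $\overline{v}^\flat_*$: not a branched cover of any $\sigma_\infty^*$, not asymptotic to any multiple of $\delta_0$, and (for components of $\overline{v}_2$) not passing through $(0,z_\infty)\in \overline{W}_{-\infty,2}$ — in particular $r_0=0$ in the matching data ${\frak z}$. The plan is to mirror the proof of Lemma~\ref{los angeles 4}: first use regularity of $\overline{J'}$ to bound the indices of the ``trivial'' levels from below, then use the point constraint together with $n^*=m$ to identify $\overline{v}_2$ as a $\overline{W}_{-\infty,2}$-curve via Lemma~\ref{lemma: curves in overline W minus infty 2}, and finally pin down the remaining indices via the additivity of $I$ along the degeneration.

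First I would handle the levels living in $\overline{W'}=\overline{W}_{-\infty,1}$ — namely $\overline{v}_{-1,j}$ for $j=1,\dots,c$, $\overline{v}_{1,j}$ for $j=1,\dots,a$, and $\overline{v}_1$. Since none of these carries a $\delta_0$-asymptote or a cover of $\sigma_\infty^*$, all their asymptotes lie in $\widehat{\mathcal{O}}$; regularity of $\overline{J'}\in\overline{\mathcal{J'}}$ together with the ECH index inequality then gives $I(\overline{v}_*)\geq 0$, with equality iff $\overline{v}_*$ is a union of trivial cylinders (a connector). Next, the marked-point constraint $\overline{u}_i\ni\overline{\frak m}(\tau_i)$ degenerates to $\overline{v}_2\ni\overline{\frak m}(-\infty)=(i/2,z_\infty)$, so some component of $\overline{v}_2$ meets $\{z_\infty\}$-fiber at $i/2$. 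Because the other levels carry no contribution to $n^*$ (no cover of a section at infinity, no $\delta_0$- or $z_\infty$-asymptote, no matching at $(0,z_\infty)$), the equality $n^*(\overline{u}_i)=\sum_* n^*(\overline{v}_*)=m$ forces $n^*(\overline{v}_2)=m$, so $\overline{v}_2$ is a $\overline{W}_{-\infty,2}$-curve in the sense of Definition~\ref{defn: curves in E times overline S}, and hence $I(\overline{v}_2)=4g+2$ by Lemma~\ref{lemma: curves in overline W minus infty 2}.

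The final step is to balance the total index. Writing $\mathcal{N}$ for the correction at the node $(0,3/2)\sim 0\in E$ that matches the two $2g$-sheeted multisections $\overline{v}_1$ and $\overline{v}_2$ along $2g$ intersection points, the index-additivity relation along the family gives
$$2=I(\overline{u}_i)=I(\overline{v}_1)+I(\overline{v}_2)+\sum_{j=1}^{a} I(\overline{v}_{1,j})+\sum_{j=1}^{c} I(\overline{v}_{-1,j})-\mathcal{N}.$$
With $I(\overline{v}_2)=4g+2$, $\mathcal{N}=4g$, and the bounds of the first step, this forces $I(\overline{v}_1)=0$ and $I(\overline{v}_{\pm 1,j})=0$ for all $j$; by regularity, each $\overline{v}_{\pm 1,j}$ is then a connector, which in our conventions is absorbed into $\overline{v}_1$, giving $a=c=0$. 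The principal obstacle is verifying that the node contribution equals exactly $4g$ (rather than appealing to a general principle) — this is the same matched-multisection calculation that underlies the top-generator identification in Lemma~\ref{lemma: ECH for W minus infinity 2}, and carrying it over to the present setting is essentially a bookkeeping exercise modeled on Section~\ref{subsubsection: indices part 2}.
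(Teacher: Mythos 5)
Your overall route is the same as the paper's: use $n^*$ considerations to force all of the $n^*$ onto $\overline{v}_2$, invoke Lemma~\ref{lemma: curves in overline W minus infty 2} to get $I(\overline{v}_2)=4g+2$, account for the $2g$ codimension-two matching conditions between $\overline{v}_1$ and $\overline{v}_2$ (contributing $4g$, exactly your $\mathcal{N}$), and then use regularity/nonnegativity of the remaining levels to conclude $a=c=0$ and $I(\overline{v}_1)=0$. That part of the bookkeeping is fine, and your worry about $\mathcal{N}=4g$ is not where the difficulty lies.

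The genuine gap is that you never rule out the \emph{degenerate} $\overline{W}_{-\infty,2}$-curve of Definition~\ref{defn: curves in E times overline S}: the configuration consisting of $2g$ horizontal disks $B_{-\infty,2}\times\{q_l\}$, $q_l\in\widehat{a}_l$, together with a singular fiber $\{pt\}\times\overline{S}$. This configuration is compatible with your hypotheses as you have used them: the fiber is not a branched cover of $\sigma_\infty^{-\infty,2}$, is not asymptotic to $\delta_0$, and its base projection has degree zero at $z_\infty$, so it does not belong to $\overline{v}_2^\sharp$; moreover it contributes exactly $m$ to $n^*$ and can absorb the point constraint $\overline{\frak m}(-\infty)$. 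So the step ``$n^*(\overline{v}_2)=m$, hence $\overline{v}_2$ is a $\overline{W}_{-\infty,2}$-curve in the sense of Definition~\ref{defn: curves in E times overline S}'' is exactly where the argument fails as written, and with a fiber component present the index of $\overline{v}_2$ is not computed by Lemma~\ref{lemma: curves in overline W minus infty 2}. The paper excludes this case by a separate argument at the end: each non-fiber component of a degenerate curve is $B_{-\infty,2}\times\{q_l\}$ with $q_l\in\widehat{a}_l$ (Fredholm index one), and since one may assume by genericity that the orbit sets in $\widehat{\mathcal{O}}_{2g}$ are disjoint from $\overline{\bf a}\times\{1\}\subset\overline{S}\times\{1\}$, the matching condition between $\overline{v}_1$ and such a $\overline{v}_2$ cannot be satisfied. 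You need some version of this (or an index count showing the degenerate configuration overshoots the total budget) to complete the proof. A minor secondary point: $I(\overline{v}_1)=0$ does not force $\overline{v}_1$ to consist of trivial cylinders, only of branched covers of trivial cylinders with possibly empty branch locus; this does not affect $a=c=0$ but matters for the subsequent gluing in Lemma~\ref{A2}.
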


\begin{proof}
Suppose $\overline{v}'_*\cup\overline{v}^\sharp_*=\varnothing$ for all levels $\overline{v}_*$ of $\overline{u}_\infty$.  The following are immediate from considerations of $n^*$:
\begin{enumerate}
\item $\overline{v}_2$ is a $\overline{W}_2$-curve or a degenerate $\overline{W}_2$-curve; and
\item $\overline{v}_*=\overline{v}''_*$ and $n^*(\overline{v}_*)=0$ for $*=(-1,j)$, $1$, and $(1,j)$.
\end{enumerate}
(1) implies that $I(\overline{v}_2)=4g+2$ by Lemma~\ref{lemma: curves in overline W minus infty 2}. Since there are $2g$ codimension two gluing conditions between $\overline{v}_1$ and $\overline{v}_2$, it follows that
\begin{equation}
\sum_{*} I(\overline{v}_*)=4g+2,
\end{equation}
where the summation is over all the levels $*$. Hence $a=c=0$, $I(\overline{v}_1)=0$,  and each component $\widetilde{v}$ of $\overline{v}_1$ is a branched cover of a trivial cylinder with possibly empty branch locus. (Here we are assuming without loss of generality that the almost complex structure on $\overline{W}_1$ is $\overline{J}_1$.)

We eliminate degenerate $\overline{W}_2$-curves as follows: Observe that each non-fiber component of a degenerate $\overline{W}_2$-curve is of the type $B_{-\infty,2}\times \{q_l\}$ for some $q_l\in \widehat{a}_l$ and has Fredholm index one. Since we may assume that $\widehat{\mathcal{O}}_{2g}$ is disjoint from $\overline{\bf a}\times\{1\}\subset \overline{S}\times \{1\}$ by genericity, the gluing condition for $\overline{v}_1$ and $\overline{v}_2$ is not satisfied.
\end{proof}

\begin{lemma}\label{sencha alt 3}
If $\overline{v}'_*\cup\overline{v}^\sharp_*\not=\varnothing$ for some level $\overline{v}_*$ of $\overline{u}_\infty$, then:
\begin{enumerate}
\item $p_2=\deg(\overline{v}'_2)>0$;
\item $\overline{u}_\infty$ has no boundary point at $z_\infty$;
\item $\overline{u}_\infty$ has no fiber components and no components of $\overline{v}''_*$ that intersect the interior of a section at infinity; and
\item $\overline{v}''_2$ is the union of $2g-p_2$ copies of $B_{-\infty,2}\times\{pt\}$, and $I(\overline{v}_2)=2g$.
\end{enumerate}
\end{lemma}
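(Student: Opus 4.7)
The plan is to prove Lemma~\ref{sencha alt 3} in close analogy with Lemma~\ref{sencha alt}, using Equation~\eqref{sum of n part 3} together with the intersection bounds of Lemma~\ref{intersezione} and Lemma~\ref{cherimoya}, and using the matching condition at the node $(0,3/2)\in B_{-\infty,1}$ identified with $0\in B_{-\infty,2}$. A useful preliminary observation is that $\overline{\frak m}(-\infty)=(i/2,z_\infty)$ lies on the section at infinity $\sigma_\infty^{-\infty,2}$, so if $\overline{v}'_2=\varnothing$ then $\overline{v}''_2$ is forced through $(i/2,z_\infty)$ and a neighborhood of the preimage contributes at least $m$ to $n^*(\overline{v}_2)$ via the intersection with $(\sigma_\infty^{-\infty,2})^\dagger$.

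For (1), I would argue by contradiction. If $p_2=0$, then by the observation above $n^*(\overline{v}_2)\ge m$, and Equation~\eqref{sum of n part 3} forces $n^*(\overline{v}_*)=0$ for every $*\ne 2$. Lemma~\ref{intersezione} then rules out any end of any $\overline{v}^\sharp_*$ limiting to a multiple of $\delta_0$; the continuation argument of Section~\ref{orange1} (applied to positive and negative ends at $z_\infty$) rules out ends at $z_\infty$ as well; and the matching at the node forces $\overline{v}^\sharp_2=\varnothing$ since $p_2=0$ means no multiplicity at $(i/2,z_\infty)$ to carry along $\overline{v}_1$. Hence $\overline{v}'_*\cup\overline{v}^\sharp_*=\varnothing$ for every $*$, contradicting the hypothesis.

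For (2) and (3), knowing $p_2>0$, the node-matching now requires multiplicity $p_2$ of the building at $(0,3/2,z_\infty)\in \overline{W}_{-\infty,1}$, so combining Lemma~\ref{intersezione} with the continuation argument gives a contribution to $\sum_* n^*(\overline{v}_*)$ of at least $m-p_2$ from ends of the building at or above $\overline{v}_1$ (this plays the role of Lemma~\ref{intersezione prime revisited} in the current setting). On top of this, a boundary puncture removable at $z_\infty$ would add $\ge k_0-1\gg 2g$ by Lemma~\ref{cherimoya}; a fiber component would add $\ge m$; and a component of $\overline{v}''_*$ meeting the interior of a section at infinity would add $\ge m$. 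Any one of these additions pushes the total past $m$, contradicting Equation~\eqref{sum of n part 3}.

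For (4), with $p_2>0$ and (3) in hand, $\overline{v}''_2$ does not meet the interior of $\sigma_\infty^{-\infty,2}$, and since $\overline{J}_{-\infty,2}$ is a product complex structure the projection $\pi_{\overline{S}}\circ\overline{v}''_2$ is holomorphic; each of its irreducible components either maps to a point of $\overline{S}-\{z_\infty\}$ (giving a sheet $B_{-\infty,2}\times\{pt\}$) or surjects onto $\overline{S}$ (a fiber). The absence of fiber components forces $\overline{v}''_2$ to be a disjoint union of $2g-p_2$ copies of $B_{-\infty,2}\times\{pt\}$; the ECH index calculation of Lemma~\ref{lemma: curves in overline W minus infty 3} then yields $I(\overline{v}_2)=2g$. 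The main obstacle will be to justify cleanly the adapted continuation/writhe estimate in Section~\ref{orange1} used in (1) and (2) for the $(\widehat{\bf a})$--Lagrangian, closed-Reeb-orbit setting of Section~\ref{section: homotopy of cobordisms II}, and to make the node-matching bound $\ge m-p_2$ rigorous; once these are in place the remaining steps are essentially bookkeeping with Equation~\eqref{sum of n part 3}.
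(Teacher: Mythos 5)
Your proposal is correct and follows essentially the same route as the paper, which disposes of the lemma in one line: items (1)--(3) are obtained exactly as in Lemma~\ref{sencha alt} via the $n^*$-accounting of Equation~\eqref{sum of n part 3} together with Lemmas~\ref{intersezione} and \ref{cherimoya} and the continuation argument, and item (4) is read off from Lemma~\ref{lemma: curves in overline W minus infty 3}. The details you flag as remaining (the adapted continuation estimate and the node-matching bound) are likewise left implicit in the paper's proof, so your write-up is, if anything, more explicit than the original.
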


\begin{proof}
(1)--(3) are analogs of Lemma~\ref{sencha alt} and (4) is a consequence of Lemma \ref{lemma: curves in overline W minus infty 3}.
\end{proof}

The following is the analog of Lemma~\ref{options}.

\begin{lemma} \label{options minus infty}
If $\overline{v}'_*\cup\overline{v}^\sharp_*\not =\varnothing$ for some level $\overline{v}_*$, then $\overline{v}_{2}'\not=\varnothing$ and $\overline{v}_2''$ is a union of components $B_{-\infty,2}\times \{pt\}$, and $\overline{u}_\infty$ contains one of the following subbuildings:
\begin{enumerate}
\item[(1$_i$)] A $2$-level building consisting of $\overline{v}_{1}^\sharp$ with $I=i+(\deg (\overline{v}_1^\sharp)-1)$, $i=2,3$, which passes through ${\frak z}$ with $r_0=1$; and $\overline{v}_{2}'=\sigma_\infty^{-\infty,2}$ with $I=1$.
\item[(2)] A $2$-level building consisting of $\overline{v}_{1}^\sharp$ with $I=4+(\deg(\overline{v}_1^\sharp)-2)$ which passes through ${\frak z}$ with $r_0=2$; and $\overline{v}_{2}'$ with $I=2$ which is a branched double cover of $\sigma_\infty^{-\infty,2}$.
\item[(3$_i$)] A $4$-level building consisting of $\overline{v}_{1,1}^\sharp$ with $I=1$ or $2$ and a negative end at $\delta_0$; $\overline{v}_{1}'=\sigma_\infty^{-\infty,1}$; $\overline{v}_{2}'=\sigma_\infty^{-\infty,2}$; and a cylinder of $\overline{v}_{-1,1}^\sharp$ from $\delta_0$ to $h$ or $e$ with $I=1$ or $2$.
\item[(4)] A $4$-level building consisting of $\overline{v}_{1,1}^\sharp$ with $I=1$ and a negative end at $\delta_0^2$;  $\overline{v}_{1}'=\sigma_\infty^{-\infty,1}$; a cylinder of $\overline{v}_{1}^\sharp$ from $\delta_0$ to $h$ or $e$ with $I=1,2$; $\overline{v}_{2}'=\sigma_\infty^{-\infty,2}$; a cylinder of $\overline{v}_{-1,1}^\sharp$ from $\delta_0$ to $h$ with $I=1$; and a cylinder of $\overline{v}_{-1,1}''$ from $h$ or $e$ to $e$ with $I=1,0$.
\item[(5)] A $4$-level building consisting of $\overline{v}_{1,1}^\sharp$ with $I=1$ and a negative end at $\delta_0^2$; $\overline{v}_{1}'$ with $I=0$ which is a branched double cover of $\sigma_\infty^{-\infty,1}$; $\overline{v}_{2}'$ with $I=2$ which is a branched double cover of $\sigma_\infty^{-\infty,2}$; and two cylinder components of $\cup_{j=1}^c\overline{v}^\sharp_{-1,j}$ from $\delta_0$ to $h$ or $e$, each with $I=1$ or $2$.
\item[(6)] A $5$-level building consisting of $\overline{v}_{1,2}^\sharp$ with $I=1$ and a negative end at $\delta_0^2$; $\overline{v}_{1,1}'=\R\times\delta_0$; a component of $\overline{v}_{1,1}^\sharp$ with $I=1$ which is a cylinder from $\delta_0$ to $h$; $\overline{v}_{1}'=\sigma_\infty^{-\infty,1}$; $\overline{v}_{2}'=\sigma_\infty^{-\infty,2}$; and a cylinder of $\overline{v}_{-1,1}^\sharp$ from $\delta_0$ to $h$ with $I=1$.
\end{enumerate}
We are omitting levels which are branched covers of trivial cylinders. Moreover, each gluing condition reduces the sum of ECH indices by $2$.
\end{lemma}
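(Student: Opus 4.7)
The plan is to structure the argument around the constraint equations \eqref{sum of n part 3} and \eqref{sum of I part 3}, augmented by the preliminary restrictions from Lemma~\ref{sencha alt 3} and the adaptation of Lemmas~\ref{intersezione} and \ref{eliminate end h} to the current setting. By Lemma~\ref{sencha alt 3}(1) we have $p_2=\deg(\overline{v}_2')>0$, and by Lemma~\ref{sencha alt 3}(4) together with Lemma~\ref{lemma: curves in overline W minus infty 3} we have $I(\overline{v}_2)=2g$ with $\overline{v}_2''$ a union of $2g-p_2$ trivial sections $B_{-\infty,2}\times\{pt\}$. The gluing between $\overline{v}_1$ and $\overline{v}_2$ takes place over the point $(0,{3\over 2})\in B_{-\infty,1}\leftrightarrow 0\in B_{-\infty,2}$: each marked point of the datum ${\frak z}$ and each fiber component of $\overline{v}_2''$ imposes a codimension-two matching constraint, which accounts for the claim that each gluing condition reduces $\sum_* I(\overline{v}_*)$ by $2$ relative to the effective building index of $2$.

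Next I would separate into two main cases based on whether $\overline{v}_{1,j}^\sharp$ carries any negative end at a multiple of $\delta_0$ for some $j>0$. In \emph{Case A}, no such $\delta_0$-end exists, so the entire ``descent'' from the upper level is funneled through the matching at ${\frak z}$. Here I use the continuation/rescaling machinery, combined with Lemma~\ref{intersezione}, to force $r_0=\deg\overline{v}_1^\sharp\big|_{z_\infty}$ to equal either $1$ or $2$, giving Cases (1$_i$) and (2); the choice $I=2$ vs.\ $I=3$ in Case (1$_i$) corresponds to the two minimal ways to distribute the index excess once the gluing penalty is accounted for. In \emph{Case B}, some $\overline{v}_{1,j_0}^\sharp$ has a negative end at $\delta_0^p$; by the bound $n^*(\mathcal{E}_-)\geq m-p$ from Lemma~\ref{intersezione} and the total $n^*$ budget $m$, only $p=1$ or $p=2$ is possible, and there can be at most one such occurrence across all $j>0$. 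The descending cylinder(s) must then pass through $\R\times\overline{N}$ to $\delta_0$ at the top of $\overline{v}_1$ and down through $\overline{v}_{-1,j}^\sharp$; the $\delta_0$-cylinder possibilities are $\delta_0\to h$ or $\delta_0\to e$, with Lemma~\ref{eliminate end h} (and its extension ruling out degree-one sections ending on $h$ at the positive puncture $z_\infty$) eliminating the offending configurations, and Lemma~\ref{options} and Lemma~\ref{lemma 2012} controlling the rigid pieces. Distributing the $p=1$ or $p=2$ quanta over the levels available produces exactly the subbuildings (3$_i$), (4), (5), and (6), where (6) is the one configuration in which a further $\delta_0$-cylinder splits off inside $\overline{W'}$ below $\overline{v}_{1,j_0}$, forcing an additional intermediate level.

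In each case the accounting proceeds by writing $\sum I(\overline{v}_*)=I(\overline{v}_2)+\sum_{*\neq 2}I(\overline{v}_*)=2g + (\text{contribution of upper levels and } \overline{v}_{-1,j})$, and then observing that $2g$ codimension-two matching constraints distributed among ${\frak z}$ and the fiber components of $\overline{v}_2''$ convert the index surplus into the required total $I(\overline{u}_\infty)=2$; this is where the phrase ``each gluing condition reduces the sum of ECH indices by~$2$'' is made precise. The main obstacle I anticipate is the bookkeeping for case (6): one has to check that no other multi-level descents are consistent with both the $n^*$-bound from Lemma~\ref{intersezione} and the positivity of ECH indices for non-trivial levels (analog of Lemma~\ref{nonnegative ECH indices 4}), and in particular one has to rule out configurations where a branched cover $\overline{v}_1'$ of higher degree absorbs part of the descent without producing a rigid index-one $\delta_0$-cylinder --- this is precisely the content forced by the adaptation of Lemma~\ref{eliminate end h}(2) via Lemma~\ref{lemma 2012}, which excludes the analogous ``parallel cylinders to $h$'' scenario in the present symmetric setup.
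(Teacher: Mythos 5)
Your overall architecture does match the paper's: the dichotomy you draw (whether some upper level carries a negative end at a multiple of $\delta_0$) is, once the $n^*$ bookkeeping is in place, equivalent to the paper's dichotomy $\overline{v}'_1=\varnothing$ versus $\overline{v}'_1\not=\varnothing$, and your ``$2g$ codimension-two matching conditions at the node, each worth $-2$'' accounting is exactly the right frame. The genuine gap lies in how you obtain the two key quantitative bounds. In your Case A you claim that the continuation/rescaling machinery together with Lemma~\ref{intersezione} forces $r_0\in\{1,2\}$. Those tools control ends asymptotic to $z_\infty$ or $\delta_0$ and their $n^*$ contributions; they say nothing about the multiplicity with which $\overline{v}_1^\sharp$ passes through the \emph{interior} node $((0,3/2),z_\infty)$. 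The bound is an index bound: a $k$-fold branch point of $\pi_{B_{-\infty,1}}\circ\overline{v}_1^\sharp$ at $(0,3/2)$ lying over $z_\infty$ contributes $2k$ to $\op{ind}$, each of the remaining $2g-k$ intersections with the singular fiber $\{(0,3/2)\}\times\overline{S}$ contributes $+1$, so $I(\overline{v}_1)\geq k+2g$; adding $I(\overline{v}_2)=2g$ and subtracting the $4g$ coming from the gluing conditions leaves a total $\geq k$, whence $k\leq 2$. Without this local computation the restriction to $r_0=1,2$ (and the index labels $i=2,3$ in (1$_i$)) is unjustified.

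Similarly, in your Case B the inequality $n^*(\mathcal{E}_-)\geq m-p$ together with the total budget $m$ only constrains the \emph{rest} of the building to contribute at most $p$ to $n^*$; it places no upper bound on $p$ itself. (Compare Lemma~\ref{options2}, where the same $n^*$ estimate holds but $p=3$ occurs, because the index budget there is different.) Here $p\leq 2$ again comes from the index ledger: $I(\overline{v}'_1)=0$, $I(\overline{v}'_2)=p_2$ against a $-2p_2$ gluing penalty, each descending cylinder from $\delta_0$ to $h$ or $e$ costs $I\geq 1$, and the top level costs $I\geq 1$, so the total of $2$ caps the multiplicity and drives the enumeration of (3$_i$)--(6). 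Finally, Lemma~\ref{eliminate end h} and Lemma~\ref{lemma 2012} belong to the subsequent elimination step (Lemma~\ref{apricot minus infty}), not to the enumeration; the only constraint of that flavor used at this stage is that $h$ may appear at most once at the negative end of $\overline{v}_{-1,1}$, which comes from the definition of the ECH differential.
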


See Figure~\ref{figure: graphs7ver2}.

\begin{figure}[ht]
\begin{center}
\psfragscanon
\psfrag{0}{\small $0$}
\psfrag{1}{\small $1$}
\psfrag{2}{\small $2$}
\psfrag{4}{\small $4$}
\psfrag{f}{\small $I=1,2$}
\psfrag{g}{\small $I=2,3$}
\psfrag{G}{\small $-4$}
\psfrag{H}{\small $-2$}
\psfrag{e}{\small $e$}
\psfrag{h}{\small $h$}
\psfrag{i}{\small $h,e$}
\psfrag{j}{\small $1,2$}

\psfrag{A}{(1$_i$)}
\psfrag{B}{(2)}
\psfrag{C}{(3$_i$)}
\psfrag{D}{(4)}
\psfrag{5}{(5)}
\psfrag{6}{(6)}
\psfrag{7}{(7)}

\psfrag{v}{\small $\overline{v}_1$}
\psfrag{w}{\small $\overline{v}_2$}
\includegraphics[width=10cm]{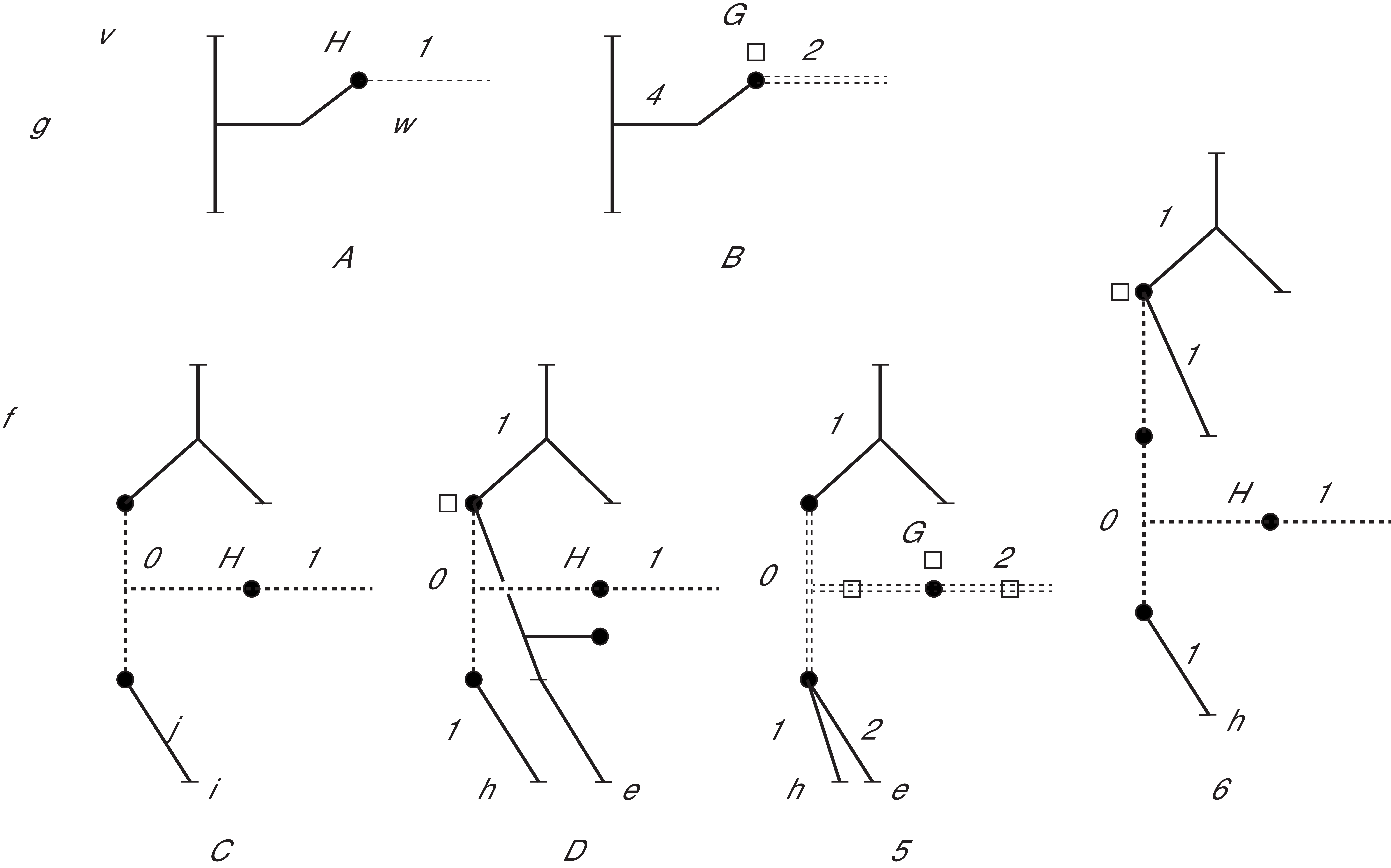}
\end{center}
\caption{Schematic diagrams for the possible types of degenerations. The dotted lines indicate the section at infinity. The quantities $-2$ and $-4$ along the intersections of $\overline{v}_1$ and $\overline{v}_2$ indicate the reduction of $I$ due the gluing conditions. For simplicity we are assuming that $\deg \overline{v}_1^\sharp=1$ in (1$_i$) and $\deg \overline{v}^\sharp_1=2$ in (2).}
\label{figure: graphs7ver2}
\end{figure}

\begin{proof}
The proof is similar to that of Lemma~\ref{options} and we only give a sketch.

We have $\overline{v}'_2\not=\varnothing$ by Lemma~\ref{sencha alt 3}(1). By Lemma~\ref{sencha alt 3}(4), $\overline{v}_{2}''$ is a union of components $B_{-\infty,2}\times\{q_l\}$, $q_l\in \widehat{a}_l$, and $I(\overline{v}_2)=2g$.

First suppose that $\overline{v}_1'=\varnothing$. Then $\overline{v}_1^\sharp$ passes through $((0,{3\over 2}),z_\infty)$ and $\pi_{B_{-\infty,1}}\circ \overline{v}_1^\sharp$ has a $k$-fold branch point at $(0,{3\over 2})$ for some $k>0$, where $\pi_{B_{-\infty,1}}: \overline{W}_{-\infty,1}\to B_{-\infty,1}$ is the projection along $\overline{S}$. The $k$-fold branch point at $((0,{3\over 2}),z_\infty)$ contributes $2k$ towards  $\op{ind}(\overline{v}_1^\sharp)$ and each of the remaining $2g-k$ intersection points with $\{(0,{3\over 2})\}\times \overline{S}$ passes through some $\widehat{a}_l$ and contributes $+1$ towards $\op{ind}(\overline{v}_1^\sharp)$.  Hence $I(\overline{v}_1)\geq \op{ind}(\overline{v}_1)\geq 2k+(2g-k)$. Since the gluing conditions contribute $-4g$, the total ECH contribution of $I(\overline{v}_1)$, $I(\overline{v}_2)$, and the gluing conditions is $\geq k$.  This implies that $k\leq 2$, giving us (1$_i$), $i=2,3$, or (2).

Next suppose that $\overline{v}_1'\not=\varnothing$. Then $I(\overline{v}_1')=0$, $I(\overline{v}_2')=k$ for some $k>0$, and the gluing conditions give $-2k$. Hence (3$_i$), $i=1,2$, and (4)--(6) follow from enumerating all the possibilities, subject to the condition that $h$ appears only once at the negative end of $\overline{v}_{-1,1}$.
\end{proof}

\begin{lemma} \label{apricot minus infty}
If $m\gg 0$, $\overline{u}_\infty\in \bdry_{\{-\infty\}} \mathcal{M}$, and $\overline{v}'_*\cup \overline{v}^\sharp_*\not=\varnothing$ for some level $\overline{v}_*$, then the only possibility is Case (1$_3$).
\end{lemma}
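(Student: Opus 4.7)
The plan is to eliminate Cases (1$_2$), (2), (3$_i$), (4), (5), (6) of Lemma~\ref{options minus infty} one by one using the rescaling and Involution Lemma arguments from Section~I.\ref{P1-subsection: rescaling}--\ref{P1-subsection: theorem complement}, parallel to the proof of Lemma~\ref{apricot} for the $+\infty$ degeneration. For each case, I take a sequence of curves $\overline{u}_i \in \mathcal{M}_{\tau_i}$ with $\tau_i\to-\infty$ converging to a building of the given type, restrict to a neighborhood of $\sigma_\infty^{\tau_i}$, project to $D_{\rho_0}$ using balanced coordinates, apply the ansatz from Equation~(I.\ref{P1-eqn: ansatz}), rescale by positive real constants, and pass to an SFT limit $w:\Sigma\to\C\P^1$ together with the branched cover $\pi:\Sigma\to cl(B_{-\infty,i})$ or $cl(B')$.

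For Case (1$_2$), the rescaling gives $w_2:cl(B_{-\infty,2})\to\C\P^1$ with $w_2({i/2})=0$ (corresponding to $\overline{\frak m}^b(-\infty)$) and $w_2(0)=\infty$ (corresponding to ${\frak z}$), and boundary condition on $\{\phi=0,\rho>0\}$; by the Involution Lemma~I.\ref{P1-lemma: effect of involutions} the marker at $\infty$ must map to a specific asymptotic marker at $0\in B_{-\infty,2}$, and this is \emph{effectively} a codimension one condition forcing $\op{ind}(\overline{v}_1^\sharp)\geq 3$, so that $I(\overline{v}_1^\sharp)\geq 3+(\deg(\overline{v}_1^\sharp)-1)$, a contradiction. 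For Case (2), the double branching at $z_\infty^2$ in $\overline{v}_2'$ is a codimension two condition in the moduli space, and combined with the involution constraint on $\overline{v}_1^\sharp$ produces $I\geq 5$, contradicting $I=4$. The main obstacle here will be carefully verifying the effective codimension counts in the presence of double branching at the node.

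For Cases (3$_i$), (4), and (5), the components of $\overline{v}_{-1,1}^\sharp$ which are cylinders from $\delta_0$ to $h$ are eliminated by the choice of $h$ from Section~\ref{subsubsection: choice of hyperbolic orbit h}: the asymptotic eigenfunction at $\delta_0$ is constant with value in $\mathcal{R}_{\phi_h}$ where $\phi_h \approx -2\pi/m$, whereas the Involution Lemma applied to the rescaled $w_1:cl(B_{-\infty,1})\to\C\P^1$ forces the corresponding marker to lie on $\mathcal{R}_\pi$, a contradiction analogous to Lemma~\ref{lemma 2012}. Cylinders from $\delta_0$ to $e$ are eliminated by looking at the next higher-order term in the asymptotic expansion $c_0+c_1 e^{-s-\pi it}$ (exactly as in the last paragraph of the proof of Lemma~\ref{lemma 2012}): the first involution constraint determines $c_0$ up to a positive constant and the second determines $c_1$ up to a positive constant, giving two independent conditions that force $I(\overline{v}_{-1,1}^\sharp)\geq 3$ and hence excess total index. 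Case (6) is eliminated in the same way since its component $\overline{v}_{1,1}^\sharp$ is a cylinder from $\delta_0$ to $h$.

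Case (1$_3$), by contrast, is compatible with all the involution constraints: there the single level $\overline{v}_1^\sharp$ has $I=3+(\deg(\overline{v}_1^\sharp)-1)$, which is exactly the ECH index needed once the $2g$ gluing conditions between $\overline{v}_1$ and $\overline{v}_2$ (each contributing $-2$ to the sum of indices) and the pair $I(\overline{v}_2')=1$ are taken into account, and the required involution/asymptotic marker conditions match up. This is the boundary contribution that will constitute $A_3$ in Lemma~\ref{cherries2}. The main obstacle throughout will be the bookkeeping of effective codimensions in the presence of multiple branching and of the $r_0$-fold tangency to the section at infinity; otherwise each case reduces to a direct transcription of the rescaling arguments already established in Lemmas~\ref{vancouver}, \ref{eliminate fiber components 2}, and \ref{apricot}.
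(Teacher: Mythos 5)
Your proposal is correct and follows essentially the same route as the paper: rescale near the section at infinity, use the uniqueness (up to a positive real constant) of the limit maps $w_2$ and $w_1$ to extract asymptotic-marker constraints via the Involution Lemmas, and kill the remaining cases with the eigenfunction/next-order-term conditions, leaving only Case (1$_3$). The only minor difference is that for Cases (3$_i$)--(6) you split into $h$/$e$ subcases à la Lemma~\ref{lemma 2012}, whereas the paper uniformly counts one constraint on $\overline{v}_{1,1}^\sharp$ and two on $\overline{v}_{-1,1}^\sharp$ coming from the determined eigenfunction and its higher-order correction, which avoids having to pin down the exact radial ray containing $w_1(-\infty)$.
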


\begin{proof}
{\em Cases (1$_2$) and (2).} We will treat Case (1$_2$); Case (2) is similar. We apply the usual rescaling argument with $m\gg 0$ fixed to obtain a function $w_{2}: B_{-\infty,2}\to \C\P^1$ satisfying the following:
\begin{enumerate}
\item[(i$_2$)] $w_{2}(0)=\infty$;
\item[(ii$_2$)] $w_{2}({i\over 2})=0$;
\item[(iii$_2$)] $w_{2}(\bdry B_{-\infty,2})\subset \R^+\cdot e^{i\phi(\overline{a}_{k,l})}$ for some $(k,l)$;
\item[(iv$_2$)] $w_{2}$ is a biholomorphism away from $\bdry B_{-\infty,2}$.
\end{enumerate}
We now observe that $w_{2}$ is uniquely determined by (i$_2$)--(iv$_2$), up to multiplication by a positive real constant.
This implies that:
\begin{enumerate}
\item[(v$_2$)] $w_{2}$ maps the asymptotic marker $\bdry_y$ at $0\in B_{-\infty,2}$ to the asymptotic marker $\dot{\mathcal{R}}_{\pi+\phi(\overline{a}_{k,l})}(\infty)$.
\end{enumerate}
Here $z=x+iy$ is the complex coordinate on $B_{-\infty,2}$.
(v$_2$) translates into an asymptotic condition for $\overline{v}_1^\sharp$ at $((0,{3\over 2}),z_\infty)$. Hence $I(\overline{v}_1^\sharp)$ is at least $3$.

\s\n
{\em Cases (3$_i$), (4)--(6).} We will treat Case (3$_i$). The rescaling argument gives $w_{2}\cup w_{1}$, where $w_{2}$ is as before and $w_{1}: cl(B_{-\infty,1})\to \C\P^1$ satisfies the following:
\begin{enumerate}
\item[(i$_1$)] $w_{1}(0,{3\over 2})=0$ and $w_{1}(+\infty)=+\infty$;
\item[(ii$_1$)] $w_1$ is a biholomorphism.
\end{enumerate}
(iv$_2$) implies the following asymptotic condition for $w_1$:
\begin{enumerate}
\item[(iii$_1$)] $w_{1}$ maps the marker $\bdry_s$ at $(0,{3\over 2})$ to the marker $\dot{\mathcal{R}}_{\pi+\phi(\overline{a}_{k,l})}(0)$.
\end{enumerate}
Hence $w_1$ is uniquely determined by (i$_1$)--(iii$_1$) up to multiplication by a positive real constant.

As a consequence of the uniqueness of $w_{1}$ up to multiplication by a positive real constant, the following are uniquely determined:
\begin{enumerate}
\item[(a)] the asymptotic eigenfunction of $\overline{v}_{1,1}^\sharp$ at the negative end $\delta_0$;
\item[(b)] the asymptotic eigenfunction of $\overline{v}_{-1,1}^\sharp$ at the positive end $\delta_0$;
\end{enumerate}
(a) is determined by the image of the asymptotic marker $\dot{\mathcal{L}}_{3/2}(+\infty)$ at $+\infty\in cl(B_{-\infty,1})$ and (b) by the radial ray that contains $w_{1}(-\infty)$. 
(a) and (b) give rise to one constraint each on $\overline{v}_{1,1}^\sharp$ and $\overline{v}_{-1,1}^\sharp$.  Hence $I(\overline{v}_{1,1}^\sharp)\geq 2$ and $I(\overline{v}_{-1,1}^\sharp)\geq 2$, which is a contradiction.
\end{proof}

\begin{proof}[Proof of Lemma~\ref{cherries2}]
This is a combination of Lemmas~\ref{los angeles 5} and \ref{apricot minus infty}.
\end{proof}

\begin{proof}[Proof of Lemma~\ref{A2}]
We first claim that the mod $2$ count of
$$\mathcal{M}_{\overline{J}_{-\infty,2}}^{I=4g+2,n^*=m}({\frak z};\overline{\frak m}(-\infty))$$
is $1$ when ${\frak z}$ is generic and $r_0=0$. This is proved by reducing to the calculation of Theorem~\ref{thm: calc of G sub 2} as follows: Degenerate $B_{-\infty,2}$ into a sphere $B_{-\infty,21}$ and a disk $B_{-\infty,22}$ which are identified at one point and degenerate $\overline{W}_{-\infty,2}=B_{-\infty,2}\times \overline{S}$ into $(B_{-\infty,21}\times\overline{S})\cup (B_{-\infty,22}\times\overline{S})$. We assume that the marked point is in $B_{-\infty,21}\times\overline{S}$. Then a curve $$\overline{v}_2\in \mathcal{M}_{\overline{J}_{-\infty,2}}^{I=4g+2,n^*=m}({\frak z};\overline{\frak m}(-\infty))$$
degenerates into a pair $(\overline{v}_{21},\overline{v}_{22})$, where $\overline{v}_{2i}$, $i=1,2$, maps to $B_{-\infty,2i}\times\overline{S}$ and $\overline{v}_{22}$ is a union of constant sections $B_{-\infty,22}\times\{q_l\}$, where $q_l\in \widehat{a}_l$.  Hence $\overline{v}_{21}$ is a curve in $B_{-\infty,21}\times\overline{S}$ with exactly the same type of constraints as in Theorem~\ref{thm: calc of G sub 2}. This implies the claim.

In the rest of the proof we discuss how to glue pairs $(\overline{v}_1,\overline{v}_2)\in A_2$. For simplicity we work with $\overline{J}_{-\infty,1}$ instead of $\overline{J}_{-\infty,1}^\Diamond(\varepsilon,\delta,{\frak p}(-\infty)))$. Since $I(\overline{v}_1)=0$, each component $\widetilde{v}$ of $\overline{v}_1$ is a branched cover of a trivial cylinder with possibly empty branch locus. If $\widetilde{v}$ is simple, then it glues to $\overline{v}_2$ in the usual manner. On the other hand, if $\widetilde{v}$ is multiply covered, then it is the cylinder over an elliptic orbit which is the same type as $e$ by ($\dagger\dagger$). Suppose $\widetilde{v}$ covers $\R\times e$ with multiplicity $k$. By the partition conditions, the incoming partition is $(1,\dots,1)$ and the outgoing partition is  $(k)$. Recall that the outgoing partition condition applies to the positive end of $\widetilde{v}$ and the incoming partition condition applies to the negative end of $\widetilde{v}$; this is the same partition condition as that of $\overline{u}_i$ for $i\gg 0$ where $\overline{u}_i\to \overline{u}_\infty$. Hence $\widetilde{v}$ has  $1$ positive end, $k$ negative ends, and ${\frak b}\geq k-1$ branch points (in the sense of Definition~\ref{branch points}). For consistency with $\overline{v}_2$, we must have a branch point at $(0,{3\over 2})$ which contributes $k-1$ towards ${\frak b}$.

Next we define $\op{ind}'(\overline{v}_1)$ as the Fredholm index when:
\begin{itemize}
\item $\overline{v}_1$ is viewed as a curve in the fibration $$\overline{W}_{-\infty,1}-(\{(0,3/2)\}\times\overline{S})\to B_{-\infty,1}-\{(0,3/2)\};$$
\item $\{(0,{3\over 2})\}\times\overline{S}$ is in one-to-one correspondence with a Morse-Bott family of orbits, viewed as orbits at the {\em positive end of $\overline{v}_1$;}
\item after perturbing the Morse-Bott family, the ends of $\overline{v}_1$ corresponding to $\zeta\in \{(0,{3\over 2})\}\times\overline{S}$ are viewed as limiting to elliptic orbits with Conley-Zehnder index $1$ with respect to the framing coming from the Morse-Bott fibration.
\end{itemize}
By the usual Fredholm index calculation, we obtain $\op{ind}'(\widetilde{v})\geq 0$, with equality if and only if ${\frak b}=k-1$. We can similarly define $\op{ind}'$ for $\overline{v}_2$. Then $\op{ind}'(\overline{v}_2)=2$. When we use $\op{ind}'$ instead of $\op{ind}$, the gluing conditions  between $\overline{v}_1$ and $\overline{v}_2$ are $0$-dimensional instead of $2g$-dimensional. This implies that $\op{ind}'(\widetilde{v})=0$, ${\frak b}=k-1$, and $\widetilde{v}$ has no branch points besides $(0,{3\over 2})$.

Finally we discuss the automatic transversality of $\widetilde{v}$. Recall from \cite[Theorem 1]{We3} that automatic transversality holds if
\begin{equation} \label{mr wendl}
\op{ind}'(\widetilde{v})\geq 2g+\#\Gamma_0-1,
\end{equation}
where $g$ is the genus of $\widetilde{v}$ and $\#\Gamma_0$ is the number of punctures (of $\widetilde{v}$ with $(0,{3\over 2})$ removed) with even Conley-Zehnder index. Since the right-hand side of Equation~\eqref{mr wendl} is equal to $-1$, automatic transversality holds and $\widetilde{v}$ glues in the usual manner (without any concerns of inserting branch-covered cylinders) to $\overline{v}_2$.  This implies the lemma.
\end{proof}

\begin{proof}[Sketch of proof of Lemma~\ref{A3}]
We use some considerations of Section~I.\ref{P1-subsection: gluing for psi}. For simplicity we assume we are gluing degree one curves. Fix $\bs\gamma,\bs\gamma'\in \widehat{\mathcal{O}}_{1}$, $k\in\{1,\dots,2g\}$, and $l\in\{0,1\}$.  We consider the gluing parameter space
$${\frak P}_{k,l}:= \coprod_{\frak z_0\in \overline{{\frak u}}_{k,l} } \left((-\infty,-r]\times \mathcal{M}_1({\frak z}_0)\times\mathcal{M}_2({\frak z}_0) \right),$$
where ${\frak u}_{k,l}\subset \vec{a}_{k,l}$ is a small open interval containing $z_\infty$,
\begin{align*}
\mathcal{M}_1({\frak z}_0)&:= \mathcal{M}^{I=3,n^*=m}_{\overline{J}_{-\infty,1}^\Diamond(\varepsilon,\delta,{\frak p}(-\infty))}(\bs\gamma,\bs\gamma',{\frak z}_0),\\
\mathcal{M}_2({\frak z}_0)&:= \mathcal{M}_{\overline{J}_{-\infty,2}}^{I=1,n^*=0,ext}({\frak z}_0),
\end{align*}
$ext$ means the boundary of the holomorphic curve is mapped to $L_{\overline{a}_k\cup \vec{a}_{k,l}}^{-\infty,2}$, and $(-\infty,-r]$ is viewed as a subset of $(-\infty,\infty)$ with parameter $\tau$. For each ${\frak z}_0\in {\frak u}_{k,l}$ there is a covering map
$$\pi_{{\frak z}_0}:\mathcal{M}_1({\frak z}_0)\to S^1=\R/2\pi\Z$$
which maps $\overline{v}_1$ to its asymptotic marker at ${\frak z}_0$ and is continuous in ${\frak z}_0$. Note that $\mathcal{M}_2({\frak z}_0)$ consists of a single point $B_{-\infty,2}\times\{{\frak z}_0\}$.

Let $$ G_{k,l}: {\frak P}_{k,l}\to \coprod_{\tau\in(-\infty,r]}\mathcal{M}^{I=2,n^*=m,ext}_{\{\overline{J}_\tau^\Diamond(\varepsilon,\delta,{\frak p}(\tau))\}}(\bs\gamma,\bs\gamma'),\quad {\frak d}=(\tau,\overline{v}_1,\overline{v}_2)\mapsto \overline{u}({\frak d})$$
be the gluing map, defined as usual. Also let ${\frak P}_{k,l}'\subset {\frak P}_{k,l}$ be the subset of gluing parameters ${\frak d}$ such that $\displaystyle G_{k,l}({\frak d})\in\coprod_{\tau\in(-\infty,r]}\mathcal{M}^{I=2,n^*=m}_{\{\overline{J}_\tau^\Diamond(\varepsilon,\delta,{\frak p}(\tau))\}}(\bs\gamma,\bs\gamma')$.

Let $\widetilde{\mathcal{N}}$ be the space of holomorphic maps $w_{2}: B_{-\infty,2}=\overline{\D}\to \C\P^1$ satisfying (i$_2$) and (iv$_2$) of Lemma~\ref{apricot minus infty} and
\begin{enumerate}
\item[(iii$'_2$)] $w_{2}(\bdry B_{-\infty,2})\subset \R \cdot e^{i\phi(\overline{a}_{k,l})}$ for some $(k,l)$.
\end{enumerate}
For simplicity we assume that $\phi(\overline{a}_{k,l})=0$. We leave it to the reader to verify the following:

\begin{claim}
$\widetilde{\mathcal{N}}=\{ w_2(z)= az+ b+{\overline{a}\over z}~|~ a\in \C^\times, b\in\R\}$. Hence $\dim \widetilde{\mathcal{N}}=3$ and $\widetilde{\mathcal{N}}$ admits an $\R^\times$-action which is multiplication by $c\in \R^\times$.
\end{claim}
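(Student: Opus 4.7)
The plan is to apply the Schwarz reflection principle to identify $\widetilde{\mathcal{N}}$ with a specific family of Laurent polynomials and then to verify the biholomorphism condition (iv$_2$) directly.

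First, let $w_2 \in \widetilde{\mathcal{N}}$. Since $w_2(\bdry \D) \subset \R \cup \{\infty\} \subset \C\P^1$ and $w_2$ extends continuously to $\overline{\D}$, the Schwarz reflection principle allows us to extend $w_2$ to a meromorphic function $\widetilde{w}_2 : \C\P^1 \to \C\P^1$ by setting $\widetilde{w}_2(z) = \overline{w_2(1/\overline{z})}$ for $|z| > 1$. Because $w_2$ is a biholomorphism on $int(\D)$ onto its image and $w_2(0) = \infty$, the pole of $w_2$ at $0$ must be simple. By the reflection, the pole of $\widetilde{w}_2$ at $\infty$ is also simple, and $\widetilde{w}_2$ is holomorphic away from $\{0,\infty\}$. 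Therefore $\widetilde{w}_2(z) = az + b + c/z$ for some constants $a,b,c \in \C$.

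Next, the reality condition $w_2(e^{i\theta}) \in \R$ for all $\theta$ applied to
\[
w_2(e^{i\theta}) = a e^{i\theta} + b + c e^{-i\theta} = (a+c)\cos\theta + i(a-c)\sin\theta + b
\]
forces $b \in \R$, $\op{Im}(a+c) = 0$, and $\op{Re}(a-c) = 0$; these together give $c = \overline{a}$. The condition that $w_2$ actually has a pole at $0$ (required by (i$_2$)) gives $a \neq 0$. Thus every $w_2 \in \widetilde{\mathcal{N}}$ has the form $w_2(z) = az + b + \overline{a}/z$ with $a \in \C^\times$ and $b \in \R$.

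It remains to verify that every such $w_2$ satisfies the biholomorphism condition (iv$_2$). For injectivity, if $w_2(z_1) = w_2(z_2)$ with $z_1, z_2 \in int(\D)$, then $a(z_1 - z_2) = \overline{a}(z_1 - z_2)/(z_1 z_2)$, which forces either $z_1 = z_2$ or $|z_1 z_2| = |\overline{a}/a| = 1$; the latter contradicts $z_1, z_2 \in int(\D)$. For immersion, $w_2'(z) = a - \overline{a}/z^2$ vanishes only when $|z|^2 = 1$, so $w_2$ has no critical points in $int(\D)$. Hence $w_2|_{int(\D)}$ is a biholomorphism onto its image, confirming (iv$_2$).

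Finally, $\widetilde{\mathcal{N}}$ is parametrized by $(a,b) \in \C^\times \times \R$, so $\dim_{\R}\widetilde{\mathcal{N}} = 3$. The $\R^\times$-action by $w_2 \mapsto c w_2$ corresponds to $(a,b) \mapsto (ca, cb)$, which is well-defined and preserves the form of elements of $\widetilde{\mathcal{N}}$, completing the proof. No step presents a serious obstacle; the only point that requires care is verifying that the pole of $w_2$ at $0$ is simple, which follows immediately from (iv$_2$).
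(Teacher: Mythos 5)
Your proof is correct and complete; the paper itself leaves this claim as an exercise ("We leave it to the reader to verify"), and the Schwarz reflection argument you give is exactly the intended route. All the key checks are in place: the simplicity of the pole at $0$ from (iv$_2$), the reality condition forcing $b\in\R$ and $c=\overline{a}$, and the converse verification that every $az+b+\overline{a}/z$ with $a\in\C^\times$, $b\in\R$ is injective and immersed on $int(\D)$ because both failures would require $|z_1z_2|=1$ or $|z|^2=1$.
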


For $\tau_0\ll 0$, the restrictions of $\overline{u}({\frak d})$ to a neighborhood of the section at infinity are approximated by elements of $\widetilde{\mathcal{N}}$.  More precisely, for $\tau_0\ll 0$ we define:
$$g_{\tau_0,k,l}: {\frak P}_{k,l}\cap \{\tau=\tau_0\} \to \widetilde{\mathcal{N}},$$
as follows: Given ${\frak d}\in {\frak P}_{k,l}\cap \{\tau=\tau_0\} $, restrict $G_{k,l}({\frak d})$ to a neighborhood of the section at infinity so that the domain of $G_{k,l}({\frak d})$ is $\overline{\D}^\varepsilon:=\{ \varepsilon \leq |z|\leq 1\}\subset \overline{\D}$ for $\varepsilon>0$ small.  Let us write $\pi_{\overline{S}}\circ G_{k,l}({\frak d})|_{\overline{\D}^\varepsilon}$ as a Laurent series $\sum_{i=-\infty}^\infty c_i({\frak d}) z^i$. Then we set
$$g_{\tau_0,k,l}({\frak d})=\overline{c_{-1}({\frak d})}\cdot z + \op{Re}(c_0({\frak d})) + {c_{-1}({\frak d})\over z}.$$
The definition makes sense in view of the following (proof omitted):

\begin{claim} \label{haruchan} $\mbox{}$
\begin{enumerate}
\item $\displaystyle \lim_{\tau\to-\infty} {c_i({\frak d})\over c_{-1} ({\frak d})}=0$ if $i>1$ or $i<-1$ and $\displaystyle \lim_{\tau\to-\infty} {\overline{c_1({\frak d})}\over c_{-1} ({\frak d})}=1$, where the convergence is uniform in $\displaystyle\coprod_{\frak z_0\in \overline{{\frak u}}_{k,l} } \left(\mathcal{M}_1({\frak z}_0)\times\mathcal{M}_2({\frak z}_0) \right)$.
\item If $\op{Im}(\overline{v}_2)=\{{\frak z}_0\}\not=\{z_\infty\}$, then
$$\displaystyle\lim_{\tau_0\to -\infty}{c_{-1}({\frak d})\over c_0({\frak d})}= 0 ~~\mbox{ and } ~~\displaystyle\lim_{\tau_0\to -\infty}{\op{Im}(c_0({\frak d}))\over \op{Re}(c_0({\frak d}))}= 0,$$
where the convergence is uniform in $\mathcal{M}_1({\frak z}_0)$.
\end{enumerate}
\end{claim}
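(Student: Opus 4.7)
The plan is to make the pre-glued approximation $G^{pre}_{k,l}({\frak d})$ explicit enough to read off its Laurent coefficients directly from the asymptotic data of $\overline v_1$ and $\overline v_2$, then show that the Newton correction to the genuine solution $G_{k,l}({\frak d})$ is exponentially suppressed as $\tau\to-\infty$ so the coefficients of $G_{k,l}({\frak d})$ agree with those of $G^{pre}_{k,l}({\frak d})$ to the order needed for the limits in (1) and (2). Throughout I work in balanced coordinates near $z_\infty$ and assume without loss of generality that $\phi(\overline a_{k,l})=0$, so that the projection via $\pi_{\overline S}$ of the Lagrangian $L^\tau_{\overline a_k\cup \vec a_{k,l}}$, intersected with a neighborhood of $z_\infty$, is $\R^+\subset \C$. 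In particular the Lagrangian boundary condition on $|z|=1$ reads $f(z)\in\R$.

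Near its constraint point $((0,3/2),{\frak z}_0)$, each $\overline v_1\in \mathcal M_1({\frak z}_0)$ gives a holomorphic germ
\[
\pi_{\overline S}\circ\overline v_1(\zeta)={\frak z}_0+\sum_{k\ge 1}\alpha_k(\overline v_1)\,\zeta^k,
\]
where regularity of $\mathcal M_1({\frak z}_0)$ and transversality of $\overline v_1$ to the fiber above $(0,3/2)$ force $\alpha_1(\overline v_1)\ne 0$, and the $\alpha_k$ are uniformly bounded and $\alpha_1$ uniformly bounded away from zero as $\overline v_1$ varies over the compact moduli space $\mathcal M_1({\frak z}_0)$. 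The standard pre-gluing identifies $\zeta$ with $e^\tau/z$ on a shrinking annular neighborhood of $z=0$ inside the source of $\overline v_2$; the contribution of $\overline v_2$ under $\pi_{\overline S}$ is the constant ${\frak z}_0$, while the contribution of $\overline v_1$ enters through its rescaled germ, so on $\overline\D^\varepsilon$ one obtains
\[
\pi_{\overline S}\circ G^{pre}_{k,l}({\frak d})(z)={\frak z}_0+\sum_{k\ge 1}\bigl(\alpha_k(\overline v_1)\,e^{k\tau}\,z^{-k}+\overline{\alpha_k(\overline v_1)}\,e^{k\tau}\,z^k\bigr)+\rho^{pre}(z,\tau),
\]
where the positive-power terms are added so as to enforce the boundary condition $f\in\R$ on $|z|=1$ (equivalently, they are what Schwarz reflection across $|z|=1$ dictates), and the cut-off error satisfies $\|\rho^{pre}\|_{C^k}=O(e^{(1+\eta)\tau})$ uniformly in $\overline v_1$ for some $\eta>0$.

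Regularity of $\mathcal M_1({\frak z}_0)$ and $\mathcal M_2({\frak z}_0)$ (the latter being a single regular point), together with the standard Newton iteration for gluing (as in \cite[Prop.~A.1]{Li} and the discussion of Section~I.\ref{P1-subsection: gluing for Phi}), produces a correction $\psi({\frak d})$ with $\|\psi({\frak d})\|_{C^1}=O(e^{(1+\eta)\tau})$, uniformly in ${\frak d}$. The Lagrangian boundary condition is preserved by the iteration, so the identity $c_k({\frak d})=\overline{c_{-k}({\frak d})}$ holds exactly for all $k$. Combining the pre-glued expansion with the Newton correction, the Laurent coefficients of $G_{k,l}({\frak d})$ satisfy
\[
c_0({\frak d})={\frak z}_0+O(e^{(1+\eta)\tau}),\qquad c_{-k}({\frak d})=\alpha_k(\overline v_1)\,e^{k\tau}+O(e^{(k+\eta)\tau})\quad(k\ge 1),
\]
with $c_k({\frak d})=\overline{c_{-k}({\frak d})}$, uniformly in ${\frak d}$ over the compact parameter set.

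The claim now follows by inspection. For part (1), $c_{-1}({\frak d})=\alpha_1(\overline v_1)e^\tau(1+O(e^{\eta\tau}))$ is nonzero for $\tau\ll 0$, so for $k\ge 2$ one has $c_{\pm k}({\frak d})/c_{-1}({\frak d})=O(e^{(k-1)\tau})\to 0$, and $\overline{c_1({\frak d})}/c_{-1}({\frak d})=1+O(e^{\eta\tau})\to 1$. For part (2), if ${\frak z}_0\ne z_\infty$ then ${\frak z}_0\in \overline{\frak u}_{k,l}\cap (\R^+-\{0\})$, so $c_0({\frak d})\to{\frak z}_0\ne 0$, giving $c_{-1}({\frak d})/c_0({\frak d})=O(e^\tau)\to 0$ and $\op{Im}(c_0({\frak d}))/\op{Re}(c_0({\frak d}))=O(e^{(1+\eta)\tau})\to 0$. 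The main obstacle in carrying out this plan is justifying the uniform exponential decay of the Newton correction $\psi({\frak d})$ and the a priori bound on $\rho^{pre}$ in a suitable weighted Sobolev space; this requires uniform invertibility of the linearized $\bar\partial$-operator as $\tau\to-\infty$, which follows from the assumed regularity of the broken configuration $(\overline v_1,\overline v_2)$ but is technical to set up. Once this is in place, every other step is a direct computation and the uniformity assertions follow from the uniform control on $\alpha_k(\overline v_1)$ over $\mathcal M_1({\frak z}_0)$.
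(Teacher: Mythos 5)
Your argument is correct and is precisely the computation the paper has in mind: the paper omits the proof of this Claim, but the surrounding construction (the covering map $\pi_{{\frak z}_0}$ defined via the asymptotic marker of $\overline v_1$ at ${\frak z}_0$ --- i.e., the argument of your $\alpha_1$ --- and the reflection-symmetric model $\overline{c_{-1}({\frak d})}\cdot z+\op{Re}(c_0({\frak d}))+c_{-1}({\frak d})/z$) is built on exactly the expansion you derive from the plumbing coordinate $\zeta\sim e^\tau/z$ together with Schwarz reflection across $|z|=1$. The one point you should state more carefully is that $\alpha_1\ne 0$ does not follow from transversality to the fiber (which is automatic for a section); it is a genericity statement about the $1$-jet of $\overline v_1$ at the marked point, which the paper implicitly assumes in order for $\pi_{{\frak z}_0}$ to be defined and which is needed both for the uniform lower bound on $|c_{-1}({\frak d})|$ and for the decay $c_{\pm i}/c_{-1}\to 0$, $|i|\geq 2$.
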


Now define the evaluation map:
$$ev_{\tau_0,k,l}: {\frak P}_{k,l}\cap \{\tau=\tau_0\} \to \C,$$
which sends ${\frak d}$ to $g_{\tau_0,k,l}({\frak d}) ({i\over 2})$. Using Claim~\ref{haruchan} we can verify that
the local degree of $(ev_{\tau_0,k,l})|_{{\frak P}'_{k,l}\cap \{\tau=\tau_0\}}$ near $0\in \C$ is $\deg \pi_{{\frak z}_0}$. We will informally say that ``$\mathcal{M}$ has $\deg \pi_{{\frak z}_0}$ ends near ${\frak P}'_{k,l}$''.

Finally observe that an element of $\mathcal{M}_2(z_\infty)$ can be viewed as a map to $\overline{a}_{i,0}$ or to $\overline{a}_{i,1}$. Hence $\mathcal{M}$ has $\deg \pi_{{\frak z}_0}$ ends near ${\frak P}'_{k,l}$ and $\deg \pi_{{\frak z}_0}$ ends near ${\frak P}'_{k,1-l}$ for a total of $2\cdot \deg \pi_{{\frak z}_0}$ ends mod $2$. This proves Lemma~\ref{A3}.
\end{proof}

\subsection{Breaking in the middle}
\label{subsection: chain homotopy part 5}

In this subsection we study the limit of holomorphic maps to $\overline{W}_\tau$ as $\tau \to T'$ for some $T'\in(-\infty,\infty)$.  This will prove Lemma~\ref{cherries new3}.

We assume that $m\gg 0$; $\varepsilon,\delta>0$ are sufficiently small; and $\{\overline{J}_\tau\}\in \overline{\mathcal{I}}^{reg}$ and $\{\overline{J}_\tau^\Diamond(\varepsilon,\delta,{\frak p}(\tau))\}$ satisfy Lemma~\ref{lemma: codimension one part 2}. Fix $\bs\gamma,\bs\gamma'\in \widehat{\mathcal{O}}_{2g}$ and let
$$\mathcal{M}=\mathcal{M}^{I=2,n^*=m}_{\{\overline{J}_\tau^\Diamond(\varepsilon,\delta,{\frak p}(\tau))\}}(\bs\gamma,\bs\gamma';\overline{\frak m}),\quad\mathcal{M}_\tau= \mathcal{M}^{I=2,n^*=m}_{\overline{J}_\tau^\Diamond(\varepsilon,\delta,{\frak p}(\tau))}(\bs\gamma,\bs\gamma';\overline{\frak m}).$$
We will analyze $\bdry_{(-\infty,\infty)}\mathcal{M}$.

Let $\overline{u}_i$, $i\in \N$, be a sequence of curves in $\mathcal{M}$ such that $\overline{u}_i\in\mathcal{M}_{\tau_i}$ and $\displaystyle\lim_{i\to\infty} \tau_i=T'$, and let
$$\overline{u}_\infty= (\overline{v}_{-1,1}\cup\dots\cup \overline{v}_{-1,c})\cup \overline{v}_0 \cup (\overline{v}_{1,1}\cup\dots\cup\overline{v}_{1,a})$$
be the limit holomorphic building in order from bottom to top, where each $\overline{v}_*$ is an SFT-type level, $\overline{v}_{-1,j}$ and $\overline{v}_{1,j}$ map to $\overline{W'}$ and $\overline{v}_0$ maps to $\overline{W}_{T'}$. Sometimes we refer to $\overline{v}_0$ as $\overline{v}_{-1,c+1}$ or $\overline{v}_{1,0}$.

The following is the analog of Lemma~\ref{los angeles 4}, and is stated without proof.

\begin{lemma} \label{los angeles 6}
If $\overline{v}'_*\cup\overline{v}^\sharp_*=\varnothing$ for all levels $\overline{v}_*$ of $\overline{u}_\infty$, then $\overline{u}_\infty$ is one of the following:
\begin{enumerate}
\item $a=0$, $c=1$; $\overline{v}_0$ is a $\overline{W}_{T'}$-curve with $I=1$ which passes through $\overline{\frak m}(T')$; and $\overline{v}_{-1,1}$ is a $W'$-curve with $I=1$; or
\item $a=1$, $c=0$; $\overline{v}_{1,1}$ is a $W'$-curve with $I=1$; and $\overline{v}_0$ is a $\overline{W}_{T'}$-curve with $I=1$ which passes through $\overline{\frak m}(T')$.
\end{enumerate}
Here either $T'\in \mathcal{T}_2$ and a component of $\overline{v}_0$ is in
$$\mathcal{M}_{\overline{J}^\Diamond_{T'}(\varepsilon,\delta,{\frak p}(T'))}^{\dagger,s,irr,\op{ind}=1,n^*=m}(\bs\gamma_1,\bs\gamma_2;\overline{\frak m}(T'))$$ from Lemma~\ref{lemma: codimension one part 2}(2), for some $\bs\gamma_1$, $\bs\gamma_2$; or $T'\in \mathcal{T}_1$ and a component of $\overline{v}_0$ does not pass through $\overline{\frak m}(T')$ but is in
$$\mathcal{M}_{\overline{J}^\Diamond_{T'}(\varepsilon,\delta,{\frak p}(T'))}^{\dagger,s,irr,\op{ind}=-1,n^*=0}(\bs\gamma_1,\bs\gamma_2)$$ from Lemma~\ref{lemma: codimension one part 2}(1).
\end{lemma}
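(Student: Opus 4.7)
The plan is to mimic the proof of Lemma~\ref{los angeles2} with the evident notational substitutions: replace the arcs $\overline{\bf a}, \overline{\bf b}, h(\overline{\bf a}), h(\overline{\bf b})$ and their HF-type ends by the PFH-type orbits $\gamma,\gamma',\gamma''$ at the positive/negative ends, and use the index calculations of Section~\ref{subsubsection: indices part 2} (in particular Equation~\eqref{index for sigma infty tau part 2}) in place of those in Section~\ref{subsubsection: indices part 1}. First I would establish a direct analog of Lemma~\ref{nonnegative ECH indices 2} in the current $\overline{W}_\tau=\overline{W}^\mp_\tau$ setting: with fiber components removed and no boundary punctures removable at $z_\infty$, the ECH index of every level $\overline{v}_{\pm 1, j}\neq \overline{v}_0$ is nonnegative (by regularity of $\overline{J'}$ together with the index inequality), while the only components of negative ECH index are branched covers of $\sigma_\infty^{T'}$ (which are absent here since $\overline{v}'_* = \varnothing$) and at most one component $\widetilde{v}\subset \overline{v}_0''$ with $I(\widetilde{v}) = -1$, coming from the isolated parameters $\mathcal{T}_1$ of Lemma~\ref{lemma: codimension one part 2}(1).

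Next I would combine the constraint $\sum_{\overline{v}_*} I(\overline{v}_*) = 2$ with the point constraint $\overline{\frak m}(T')$. Since passing through $\overline{\frak m}(T')$ is effectively a codimension-two condition (in the sense of Remark~\ref{elmwood scone}), any component of $\overline{v}_0$ through $\overline{\frak m}(T')$ that is not the distinguished $I=-1$ curve of Lemma~\ref{lemma: codimension one part 2}(1) contributes $I \geq 2$; together with nonnegativity of all other levels this forces $a+c\leq 0$, i.e.\ $a=c=0$, contradicting the fact that we are in $\bdry_{(-\infty,\infty)}\mathcal{M}$ rather than $\mathcal{M}_{T'}$ itself. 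Consequently the building must contain an $I=-1$ irreducible component $\widetilde v\subset \overline{v}_0''$ drawn from the finite set of Lemma~\ref{lemma: codimension one part 2}(1) (which does not pass through $\overline{\frak m}(T')$), together with either an $I=1$ curve $\widetilde v' \subset \overline{v}_0''$ through $\overline{\frak m}(T')$ from Lemma~\ref{lemma: codimension one part 2}(2) (at isolated $T'\in\mathcal{T}_2$), yielding a $\overline{W}_{T'}$-curve of index one. The remaining ECH index $+1$ must live on exactly one side, giving either $a=1,c=0$ or $a=0,c=1$, and the corresponding level is a $W'$-curve with $I=1$ by regularity of $\overline{J'}$ and the usual rigidity of index-one curves in $\overline{W}' = \R\times \overline{N}$.

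The final step is to verify that the simplifying hypothesis $\overline{v}'_*\cup \overline{v}^\sharp_*=\varnothing$ really prevents bubbling or fibration degenerations at $z_\infty$ or $\delta_0$: since $\overline{v}^\sharp_*=\varnothing$, no end of $\overline{v}_*$ limits to a multiple of $\delta_0$ or $z_\infty$, and $n^*(\overline{v}_*)$ redistributes in the straightforward way. In particular $n^*(\overline{v}_0)=m$ (because it carries $\overline{\frak m}(T')$), while $n^*(\overline{v}_{\pm 1,j})=0$, which matches the $n^*$-decorations in the statement. The existence of such buildings for $T'\in\mathcal{T}_1\cup\mathcal{T}_2$ follows directly from Lemma~\ref{lemma: codimension one part 2}.

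The main obstacle is the same one encountered in Lemma~\ref{los angeles2}: one has to argue carefully that no ``hidden'' codimension-one degeneration occurs in which a positive Fredholm index curve picks up an index defect from the presence of the marked point. This is handled exactly as in Remark~\ref{elmwood scone}: either the relevant component of $\overline{v}_0$ passes through $K_{{\frak p}(T'),\delta}$ (where $\overline{J}_{T'}^\Diamond$ has been perturbed to be generic) and the transverse-cut-out property gives the correct dimension count, or else sending $\varepsilon,\delta\to 0$ reduces us inductively to a building of strictly larger $\deg(\overline{v}_0')$, which is already excluded by the hypothesis $\overline{v}'_*=\varnothing$. No new analysis beyond that of Lemma~\ref{los angeles2} is required.
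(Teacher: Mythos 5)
Your overall strategy---transplanting the proof of Lemmas~\ref{los angeles} and \ref{los angeles2} to the $\overline{W}^\mp_\tau$ setting, using the point constraint to force $n^*(\overline{v}_0)\geq m$ and hence kill all other sources of $n^*$, invoking nonnegativity of the ECH index on the symplectization levels, and accounting for the exceptional moduli spaces of Lemma~\ref{lemma: codimension one part 2}---is exactly what the paper intends: the lemma is stated without proof precisely because the argument is the analog of those earlier ones.

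However, your case analysis in the second paragraph does not close as written. The sets $\mathcal{T}_1$ and $\mathcal{T}_2$ are \emph{disjoint} by Lemma~\ref{lemma: codimension one part 2}, yet you assert that the building ``must contain an $I=-1$ irreducible component \dots together with \dots an $I=1$ curve \dots through $\overline{\frak m}(T')$ \dots at isolated $T'\in\mathcal{T}_2$.'' That configuration is impossible, and even if it occurred it would give $I(\overline{v}_0)=0$, forcing $a+c=2$ and contradicting the conclusion $a+c=1$ you then state. The correct dichotomy is: either (i) $T'\in\mathcal{T}_2$, the component of $\overline{v}_0$ through $\overline{\frak m}(T')$ is the exceptional $\op{ind}=1$, $n^*=m$ curve, there is no $I=-1$ component, and $I(\overline{v}_0)=1$; or (ii) $T'\in\mathcal{T}_1$, and $\overline{v}_0$ consists of the exceptional $\op{ind}=-1$, $n^*=0$ component (which cannot pass through $\overline{\frak m}(T')$, since that would force $n^*\geq m$ on it) \emph{together with a component of $I=2$ passing through $\overline{\frak m}(T')$}, again totaling $I(\overline{v}_0)=1$. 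You never account for this $I=2$ partner in the $\mathcal{T}_1$ case, which is where the index bookkeeping actually balances. A smaller omission: you only prove index nonnegativity ``with fiber components removed'' but never exclude fiber components; the quick argument is that a fiber component contributes $n^*=m$, so by the $n^*$ count it would have to be the component carrying $\overline{\frak m}(T')$, and then $I(\overline{v}_0)\geq 2g+2\geq 4$ as in the proof of Lemma~\ref{los angeles}, which is too large.
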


The following is the analog of Lemma~\ref{sencha alt} and is stated without proof.

\begin{lemma}\label{sencha alt 2}
If $\overline{v}'_*\cup\overline{v}^\sharp_*\not=\varnothing$ for some level $\overline{v}_*$ of $\overline{u}_\infty$, then:
\begin{enumerate}
\item $p_0=\deg(\overline{v}'_0)>0$;
\item some $\overline{v}_{1,j_0}$, $j_0>0$, has a negative end $\mathcal{E}_-$ that limits to $\delta_0^p$ for some $p>0$ and satisfies $n^*(\mathcal{E}_-)\geq m-p$;
\item $\overline{u}_\infty$ has no boundary point at $z_\infty$;
\item $\overline{u}_\infty$ has no fiber components and no components of $\overline{v}''_*$ that intersect the interior of a section at infinity;
\item each component of $\overline{v}^\sharp_{-1,j}$, $0\leq j\leq c$, is an $n^*=1$, $I=1$ or $2$ cylinder from $\delta_0$ to $h$ or $e$ which is contained in $\R\times(\overline{N}-N)$;
\item $h$ appears at most once at the negative end of $\overline{v}^\sharp_{-1,1}$.
\end{enumerate}
\end{lemma}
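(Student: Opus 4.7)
The plan is to mirror the proof of Lemma~\ref{sencha alt} essentially verbatim, with $\overline{v}_0 \in \overline{W}_{T'}$ playing the role of the central marked level previously played by $\overline{v}_- \in \overline{W}_-$. Structurally the two situations are identical: a single level carries the marked point $\overline{\frak m}$ and absorbs essentially the entire $n^*$-budget, while the flanking levels lie in the symplectization $\overline{W'}$. The key inputs are Equations~\eqref{sum of n part 3} and~\eqref{sum of I part 3}, Lemma~\ref{intersezione}, and the analog of Lemma~\ref{cherimoya} which forces $n^* \geq k_0 - 1 \gg 2g$ at any boundary puncture removable at $z_\infty$.

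I would first show that if $\overline{v}'_* \cup \overline{v}^\sharp_* \neq \varnothing$ for some level, then by Lemma~\ref{intersezione}(1) either $(*)$ some $\overline{v}^\sharp_{1,j_0}$ with $j_0 > 0$ has a negative end $\mathcal{E}_-$ at $\delta_0^p$ satisfying $n^*(\mathcal{E}_-) \geq m - p$, or $(**)$ a negative end limits to $z_\infty$ with combined $n^*$-contribution $\geq m - 2g$ across all $z_\infty$-ends and removable punctures. For item~(1), if $\overline{v}'_0 = \varnothing$ the fiber of $\overline{v}''_0$ through $\overline{\frak m}(T')$ contributes $n^* \geq m$, which combined with $(*)$ or $(**)$ violates Equation~\eqref{sum of n part 3}. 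Item~(2) then restates $(*)$: once $\overline{v}'_0 \neq \varnothing$, its top end $\delta_0^{p_0}$ must be matched by a $\delta_0$-end of some $\overline{v}^\sharp_{1,j_0}$, and $(**)$ is ruled out because a $z_\infty$-end would overshoot the remaining $n^*$-budget. Items~(3)--(4) then follow because the budget outside $\overline{v}_0$ is only $O(2g)$: a removable puncture at $z_\infty$ would cost $\geq k_0 - 1 \gg 2g$, while a fiber component or a component of $\overline{v}''_*$ crossing the interior of a section at infinity would each cost $\geq m$. Items~(5)--(6) are local statements in $\R \times \overline{N}$: each component of $\overline{v}^\sharp_{-1,j}$ must carry one of the $\delta_0$-ends matching the bottom of $\overline{v}'_0$, so $n^* = 1$; by the ECH index formula together with the choice of $h$ from Section~\ref{subsubsection: choice of hyperbolic orbit h}, each such component is an $I \in \{1,2\}$ cylinder from $\delta_0$ to $h$ or $e$ contained in $\R \times (\overline{N} - N)$. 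Item~(6) holds because $h$ is hyperbolic and therefore has multiplicity at most one in the PFH generator $\gamma'$.

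The main subtlety I anticipate is ruling out $(**)$ in item~(2): while the $n^*$-heuristic is clear, a careful derivation requires that the matching of ends at $\delta_0^{p_0}$ with ends of $\overline{v}^\sharp_{1,j_0}$ above be forced and not split across a $z_\infty$-end --- exactly the (terse) step in the original Lemma~\ref{sencha alt}. The same level of care should suffice here, since the relevant asymptotic matching is unchanged when we replace the $\overline{W}_- \cup \overline{W}_+$ degeneration with a single $\overline{W}_{T'}$ sitting between the symplectization levels.
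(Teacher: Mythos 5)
Your proposal is correct and follows exactly the route the paper intends: Lemma~\ref{sencha alt 2} is stated in the paper without proof, explicitly as ``the analog of Lemma~\ref{sencha alt}'', and your argument is a faithful adaptation of the proof of Lemma~\ref{sencha alt} --- the dichotomy $(*)/(**)$ from Lemma~\ref{intersezione}, the $n^*\geq m$ contribution near $\overline{\frak m}(T')$ when $\overline{v}'_0=\varnothing$, and the resulting exclusion of $(**)$, removable punctures, and fiber components by the $n^*$-budget of Equation~\eqref{sum of n part 3}. The simplification you note (a single $\overline{W}_{T'}$ level flanked by symplectization levels, so the items of Lemma~\ref{sencha alt} concerning $\overline{W}$- and $\overline{W}_+$-levels drop out) is exactly why the lemma here has fewer clauses, and your treatment of (5)--(6) via the matching of the $\delta_0^{p_0}$ ends of $\overline{v}'_0$ and the multiplicity-one constraint on the hyperbolic orbit $h$ agrees with the paper's reasoning for items (7)--(8) of Lemma~\ref{sencha alt}.
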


The following is the analog of Lemma~\ref{options} and is stated without proof.

\begin{lemma} \label{options2}
If $\overline{v}'_*\cup\overline{v}^\sharp_*\not=\varnothing$ for some level $\overline{v}_*$, then $\overline{v}_0'\not=\varnothing$ and $\overline{u}_\infty$ contains one of the following subbuildings, subject to two conditions:
\begin{itemize}
\item the sum of the ECH indices of the components of the subbuildings is at most $3$;
\item the total multiplicity of $h$ at the negative end of $\overline{v}_{-1,1}^\sharp$ is at most $1$.
\end{itemize}
\begin{enumerate}
\item[($1_i$)] A $3$-level building consisting of $\overline{v}_{1,1}^\sharp$ with $I=i$, $i=1,2$, and a negative end $\delta_0\bs\gamma'$; $\overline{v}_0'=\sigma_\infty^\tau$; and a cylinder component of $\overline{v}_{-1,1}^\sharp$ with $I=1$ or $2$ from $\delta_0$ to $h$ or $e$.
\item[($2_i$)] A $4$-level building consisting of $\overline{v}_{1,2}^\sharp$ with $I=i$, $i=1,2$, and a negative end $\delta_0^2\bs\gamma'$; $\overline{v}_{1,1}'=\R\times \delta_0$; a cylinder component of $\overline{v}_{1,1}^\sharp$ with $I=1$ or $2$ from $\delta_0$ to $h$ or $e$;
    $\overline{v}_0'=\sigma_\infty^\tau$; and a cylinder component of $\overline{v}_{-1,1}^\sharp$ with $I=1$ or $2$ from $\delta_0$ to $h$ or $e$.
\item[($3_i$)] A $3$-level building consisting of $\overline{v}_{1,1}^\sharp$ with $I=i$, $i=1,2$, and a negative end $\delta_0^2\bs\gamma'$;
    $\overline{v}_0'=\sigma_\infty^\tau$; a component of $\overline{v}_0^\sharp$ with $I=0$ or $1$ from $\delta_0$ to $h$ or $e$; and a cylinder component of $\overline{v}_{-1,1}^\sharp$ with $I=1$ or $2$ from $\delta_0$ to $h$ or $e$.
\item[($4_i$)] A $3$-level building consisting of $\overline{v}_{1,1}^\sharp$ with $I=i$, $i=1,2$, and a negative end $\delta_0^2\bs\gamma'$; $\overline{v}_0'$ with $I=-2$ which is a degree $2$ branched cover of $\sigma_\infty^\tau$; and two cylinder components of $\cup_{j=1}^c\overline{v}^\sharp_{-1,j}$ from $\delta_0$ to $h$ or $e$, each with $I=1$ or $2$.
\item[(5)] A $5$-level building consisting of $\overline{v}_{1,3}^\sharp$ with $I=1$ and a negative end $\delta_0^3\bs\gamma'$; $\overline{v}_{1,2}'$ which is a degree $2$ branched cover of $\R\times\delta_0$; a cylinder component of $\overline{v}_{1,2}^\sharp$ with $I=1$ from $\delta_0$ to $h$;
    $\overline{v}_{1,1}'=\R\times\delta_0$; a cylinder component of $\overline{v}_{1,1}^\sharp$ with $I=1$ from $\delta_0$ to $h$;
    $\overline{v}_0'=\sigma_\infty^\tau$; and a cylinder component of $\overline{v}_{-1,1}^\sharp$ with $I=1$ from $\delta_0$ to $h$.
\item[(6)] A $4$-level building consisting of $\overline{v}_{1,2}^\sharp$ with $I=1$ and a negative end $\delta_0^3\bs\gamma'$;
    $\overline{v}_{1,1}'=\R\times\delta_0$; two cylinder components of $\overline{v}_{1,1}^\sharp$ from $\delta_0$ to $h$, each with $I=1$;
    $\overline{v}_0'=\sigma_\infty^\tau$; and a cylinder component of $\overline{v}_{-1,1}^\sharp$ with $I=1$ from $\delta_0$ to $h$.
\item[(7)] A $4$-level building consisting of $\overline{v}_{1,2}^\sharp$ with $I=1$ and a negative end $\delta_0^3\bs\gamma'$; $\overline{v}_{1,1}'$ which is a degree $2$ branched cover of $\R\times\delta_0$; a cylinder component of $\overline{v}_{1,1}^\sharp$ with $I=1$ or $2$ from $\delta_0$ to $h$ or $e$; $\overline{v}_0'=\sigma_\infty^\tau$; a component of $\overline{v}_0^\sharp$ with $I=0$ or $1$ from $\delta_0$ to $h$ or $e$; and a cylinder component of $\overline{v}_{-1,1}^\sharp$ with $I=1$ or $2$ from $\delta_0$ to $h$ or $e$.
\item[(8)] A $4$-level building consisting of $\overline{v}_{1,2}^\sharp$ with $I=1$ and a negative end $\delta_0^3\bs\gamma'$; $\overline{v}_{1,1}'$ which is a degree $2$ branched cover of $\R\times\delta_0$; a cylinder component of $\overline{v}_{1,1}^\sharp$ with $I=1$ from $\delta_0$ to $h$;
    $\overline{v}_0'$ with $I=-2$ which is a degree $2$ branched cover of $\sigma_\infty^\tau$; and two cylinder components of $\cup_{j=1}^c\overline{v}^\sharp_{-1,j}$ from $\delta_0$ to $h$ or $e$, each with $I=1$ or $2$.
\item[(9)] A $3$-level building consisting of $\overline{v}_{1,1}^\sharp$ with $I=1$ and a negative end $\delta_0^3\bs\gamma'$; $\overline{v}_0'=\sigma_\infty^\tau$; two components of $\overline{v}_0^\sharp$ from $\delta_0$ to $h$ or $e$, each with $I=0$ or $1$; and a cylinder component of $\overline{v}_{-1,1}^\sharp$ with $I=1$ or $2$ from $\delta_0$ to $h$ or $e$.
\item[(10)] A $3$-level building consisting of $\overline{v}_{1,1}^\sharp$ with $I=1$ and a negative end $\delta_0^3\bs\gamma'$; $\overline{v}_0'$ with $I=-2$ which is a degree $2$ branched cover of $\sigma_\infty^\tau$; a component of $\overline{v}_0^\sharp$ with $I=0$ or $1$ from $\delta_0$ to $h$ or $e$; and two cylinder components of $\cup_{j=1}^c\overline{v}^\sharp_{-1,j}$ from $\delta_0$ to $h$ or $e$, each with $I=1$ or $2$.
\item[(11)] A $3$-level building consisting of $\overline{v}_{1,1}^\sharp$ with $I=1$ and a negative end $\delta_0^3\bs\gamma'$; $\overline{v}_0'$ with $I=-3$ which is a degree $3$ branched cover of $\sigma_\infty^\tau$; and three cylinder components of $\cup_{j=1}^c\overline{v}^\sharp_{-1,j}$ from $\delta_0$ to $h$ or $e$, each with $I=1$ or $2$.
\end{enumerate}
We are omitting levels which are connectors.
\end{lemma}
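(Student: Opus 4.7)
The plan is to mimic the strategy of Lemma~\ref{options} (and its cousins Lemma~\ref{hojicha}), but now with the extra freedom that one component of $\overline{v}_0''$ is allowed to carry ECH index $-1$ (when $T'\in\mathcal{T}_1$), so the effective ECH budget below the level $\overline{v}_0'$ is $3$ instead of $2$. The analog of Equations~\eqref{sum of n part 3} and \eqref{sum of I part 3} reads
\begin{equation*}
\sum_{\overline{v}_*}n^*(\overline{v}_*)=m, \qquad \sum_{\overline{v}_*}I(\overline{v}_*)=2,
\end{equation*}
and these, combined with Lemma~\ref{sencha alt 2}, are the governing constraints. First I would invoke Lemma~\ref{sencha alt 2} to conclude $p_0=\deg(\overline{v}_0')>0$, to discard fiber components and removable punctures, and to force each component of $\overline{v}^\sharp_{-1,j}$ to be an $n^*=1$ cylinder from $\delta_0$ to $h$ or $e$ with $I\in\{1,2\}$, subject to the constraint that $h$ appears at most once in the negative end of $\overline{v}^\sharp_{-1,1}$ (which is part of the definition of the ECH differential).

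Next I would track multiplicities. Writing $p_{1,j}=\deg\overline{v}'_{1,j}$, $j=1,\dots,a$, and $p_{-1,j}=\deg\overline{v}'_{-1,j}$, $j=1,\dots,c$, the positivity of intersection with the section at infinity forces these sequences to be non-increasing as one moves away from $\overline{v}_0$, and Lemma~\ref{sencha alt 2}(2) forces some $\overline{v}^\sharp_{1,j_0}$ to have a negative end at $\delta_0^p$ with $p\geq 1$. The branched cover $\overline{v}_0'$ has $I(\overline{v}_0')=-p_0$, while each cylinder in $\overline{v}^\sharp_{-1,j}$, in $\overline{v}^\sharp_0$ and in the intermediate $\overline{v}^\sharp_{1,j}$ contributes at least $1$ to the ECH budget, and the top-level $\overline{v}^\sharp_{1,a}$ (which carries the $\delta_0^p$ end to $\gamma'$) contributes at least $1$. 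After accounting for the Morse-Bott correction around $\delta_0$ (which is absorbed into the $\delta_0\to h,e$ cylinder count by standard partition considerations as in Lemma~\ref{hojicha}), the contribution of the entire building hanging below $\overline{v}^\sharp_{1,j_0}$ is at least $p$. Thus $I(\overline{v}^\sharp_{1,j_0})+p\leq 3$, which forces $p\in\{1,2,3\}$ and in turn bounds the depth $a-j_0$ of the chain of branched covers of trivial cylinders needed to deliver multiplicity $p$ from the top.

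With $p$ and the distribution of the degree through the levels both constrained, the enumeration proceeds mechanically. The case $p=1$ gives Cases~($1_i$); $p=2$ splits according to whether the multiplicity $2$ appears as a branched cover of $\R\times\delta_0$ on some $\overline{v}_{1,j}'$ (Case~($2_i$)), or is absorbed immediately into $\overline{v}_0^\sharp$ (Case~($3_i$)), or is split into two cylinders below $\overline{v}_0'$ (Case~($4_i$)); and $p=3$ gives Cases~(5)--(11), parameterized by where the multiplicity reductions $3\to 2\to 1$ occur and by the distribution of the resulting $\delta_0$-ends between $\overline{v}_0^\sharp$ and $\cup_j\overline{v}^\sharp_{-1,j}$. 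The constraint that $h$ appears at most once at the negative end of $\overline{v}^\sharp_{-1,1}$ eliminates configurations where multiple cylinders land on $h$ simultaneously, restricting e.g.\ Cases~(5), (6) to their precise stated form.

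The main obstacle, as usual in these enumeration arguments, is not the case-splitting itself but verifying that nothing is missed and that no extraneous configuration sneaks in. In particular, one must check that $\overline{v}_0^\sharp$ cannot contain a component with negative end at $z_\infty$ (this would, by Lemma~\ref{intersezione} and continuation, force an $n^*$-contribution contradicting the total budget $m$), and that no boundary puncture of type (P$_3$) appears on $\overline{v}_0$ (which would spend additional ECH index by Lemma~\ref{nonnegative ECH indices 5 better} and leave no budget for the top-level component). Both verifications follow the now-familiar templates of Section~\ref{subsection: chain homotopy part 1} and Section~\ref{subsection: additional degenerations III}, adapted to the current almost complex structure family $\{\overline{J}_\tau\}$ on $\overline{W}^\mp_\tau$, and so should present no new analytic difficulty.
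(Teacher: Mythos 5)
Your overall strategy is the right one, and it matches what the paper intends: Lemma~\ref{options2} is stated without proof as ``the analog of Lemma~\ref{options}'', whose proof is exactly the combination of the structural lemma (here Lemma~\ref{sencha alt 2}) with the constraints $\sum n^*=m$ and $\sum I=2$, followed by an enumeration. Your reading of the budget (total $I=2$, with one possible $I=-1$ component of $\overline{v}_0''$ at $T'\in\mathcal{T}_1$ allowing the displayed components to sum to $3$) is also correct.

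However, your ECH index accounting contains a genuine error that, taken literally, excludes Cases~(5)--(11). You assert that every cylinder from $\delta_0$ to $h$ or $e$ --- including those in $\overline{v}_0^\sharp$ --- contributes at least $1$ to the index, and hence that the building hanging below $\overline{v}^\sharp_{1,j_0}$ contributes at least $p$; combined with $I(\overline{v}^\sharp_{1,j_0})\geq 1$ this gives $1+p\leq 3$, i.e.\ $p\leq 2$, not $p\in\{1,2,3\}$ as you conclude. The point you are missing is that $\overline{W}_{T'}$ carries no $\R$-translation symmetry, so a rigid cylinder of $\overline{v}_0^\sharp$ from $\delta_0$ to $h$ has $I=0$ (this is why Cases~($3_i$), (7), (9), (10) list components of $\overline{v}_0^\sharp$ with ``$I=0$ or $1$''), and that routing one unit of $\delta_0$-multiplicity through $\overline{v}_0'$ to an $I=1$ cylinder of $\overline{v}_{-1,1}^\sharp$ ending at $h$ also has zero net cost ($-1+1=0$). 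Since $h$ is hyperbolic it can appear with total multiplicity at most $1$ in $\gamma'$, so exactly one unit of multiplicity may be absorbed for free; every remaining unit costs at least $1$ (to $e$ via $\overline{v}_0^\sharp$, or via $\overline{v}_0'$ followed by an $I\geq 1$ or $I\geq 2$ cylinder below). The correct bound is therefore $\sum I\geq I(\overline{v}^\sharp_{1,j_0})+(p-1)\geq p$, which yields $p\leq 3$ and simultaneously explains why the $p=3$ cases force almost every absorbing cylinder to end at $h$ with minimal index. Without this correction your enumeration is both inconsistent with your own inequality and incapable of producing the $\delta_0^3$ cases; with it, the case analysis you sketch does go through.
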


See Figure~\ref{figure: graphs6}.  We will write ($1_{i,e}$), ($1_{i,h}$), etc.\ to indicate that we are in Case ($1_i$) and the negative ends of the lowest level are $e$, $h$, etc.

\begin{figure}[ht]
\vskip.2in
\begin{overpic}[width=12cm]{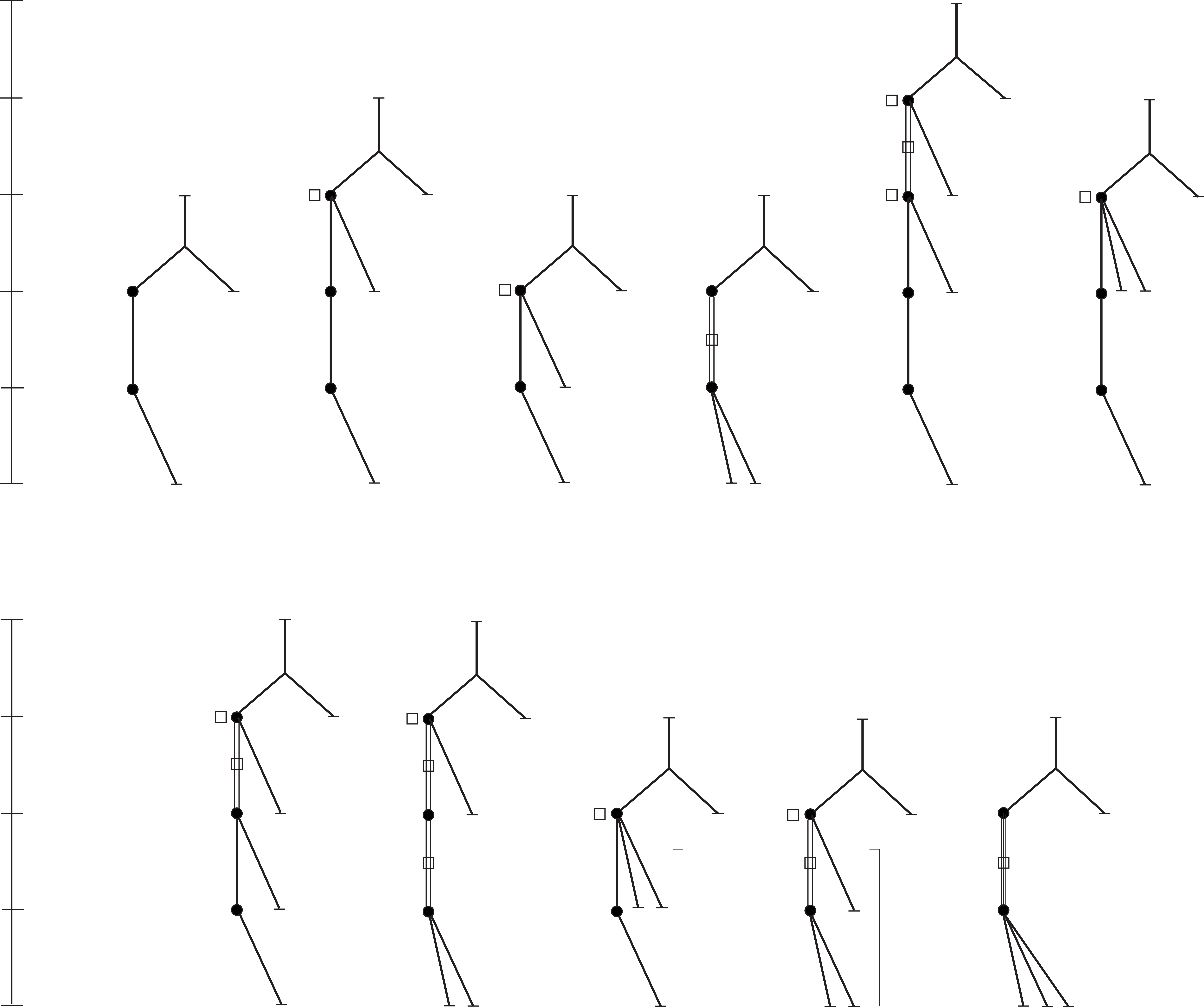}

\put(-7,79){\tiny $\R\times \overline{N}$} \put(-7,71){\tiny $\R\times \overline{N}$} \put(-7,62.8){\tiny $\R\times \overline{N}$} \put(-4,54.75){\tiny $\overline{W}_\tau$} \put(-7,46.75){\tiny $\R\times \overline{N}$}

\put(-7,27.3) {\tiny $\R\times \overline{N}$} \put(-7,19.2){\tiny $\R\times \overline{N}$} \put(-4,11.5){\tiny $\overline{W}_\tau$} \put(-7,3.2){\tiny $\R\times\overline{N}$}

\put(12.6,38){ (1$_i$)} \put(29.5,38) {(2$_i$)} \put(46,38){(3$_i$)} \put(61.6,38){(4$_i$)} \put(77.5,38){(5)} \put(93.7,38){(6)}

\put(7.15,64){\tiny $i=1,2$} \put(7,55){\tiny $-1$} \put(13.8,47){\tiny $1,2$} \put(15.7,42.5) {\tiny $h,e$} 

\put(23,72){\tiny $i=1,2$} \put(25.3,63){\tiny $0$} \put(30,63){\tiny $1,2$} \put(23.3,55){\tiny $-1$} \put(30.33,47){\tiny $1,2$} \put(32.3,42.5){\tiny $h,e$} 

\put(39,64){\tiny $i=1,2$} \put(39.1,55){\tiny $-1$} \put(46.12,55){\tiny $0,1$} \put(46,47){\tiny $1,2$} \put(48,42.5){\tiny $h,e$} 

\put(55,64){\tiny $i=1,2$} \put(54.83,55){\tiny $-2$} \put(56,42.5){\tiny $h,e$} \put(63.9,42.5){\tiny $e$} \put(55.5,47){\tiny $1,2$} \put(62.5,47){\tiny $2$} 

\put(77,79){\tiny $1$} \put(72.8,71){\tiny $0$}  \put(78,71) {\tiny $1$} \put(73.2,63) {\tiny $0$} \put(78,63){\tiny $1$} \put(71.75,55){\tiny $-1$} \put(78.13,47){\tiny $1$} \put(80.5,42.5){\tiny $h$} 

\put(92.9,71){\tiny $1$} \put(89.15,63) {\tiny $0$} \put(94.5,63){\tiny $1,1$} \put(87.88,55){\tiny $-1$} \put(94.14,47){\tiny $1$} \put(96.42,42.5){\tiny $h$} 

\put(22,-4){(7)} \put(37.55,-4){(8)} \put(53.6,-4){(9)} \put(69.2,-4){(10)} \put(85.7,-4){(11)}

\put(21,28){\tiny $1$} \put(17.15,20){\tiny $0$} \put(22.5,20){\tiny $1,2$} \put(15.5,12){\tiny $-1$} \put(22.5,12){\tiny $0,1$} \put(22.5,4){\tiny $1,2$} \put(25,-.5){\tiny $h,e$} 

\put(37,28){\tiny $1$} \put(33,20){\tiny $0$} \put(38,20){\tiny $1$} \put(31.4,12){\tiny $-2$} \put(31.3,4){\tiny $1,2$} \put(32.3,-.5){\tiny $h,e$} \put(38.65,4){\tiny $2$} \put(40.3,-.5){\tiny $e$} 

\put(52.8,20){\tiny $1$} \put(58,4.7){\tiny $\mbox{total}=2$} 

\put(69,20){\tiny $1$}\put(74,4.7){\tiny $\mbox{total}=2$}  

\put(85,20){\tiny $1$} \put(79.25,11.8){\tiny $-3$} \put(87,4.7){\tiny $\mbox{total}=5$} 

\end{overpic}
\vskip.2in
\caption{Schematic diagrams for the possible types of degenerations. For simplicity we have drawn only one level $\overline{v}_{-1,1}$ to indicate the cylindrical components of $\cup_{j=1}^c\overline{v}^\sharp_{-1,j}$ from $\delta_0$ to $h$ or $e$.}
\label{figure: graphs6}
\end{figure}

\begin{lemma} \label{apricot 2}
If $m\gg 0$, $\overline{u}_\infty\in \bdry_{(-\infty,+\infty)} \mathcal{M}$, and $\overline{v}'_*\cup\overline{v}^\sharp_*\not=\varnothing$ for some level $\overline{v}_*$, then the only possibilities are (1$_2$) with a cylinder component of $\overline{v}^\sharp_{-1,1}$ from $\delta_0$ to $e$ and (4$_2$) with two cylinder components of $\overline{v}^\sharp_{-1,1}$, one from $\delta_0$ to $h$ and another from $\delta_0$ to $e$.
\end{lemma}

\begin{proof}
{\em Cases (1$_i$), (2$_i$), (3$_i$), (5), (6), (7) and (9).}  We will treat Case (1$_i$); the rest of the cases are similar and can be eliminated. The key observation here is that $\deg (\overline{v}'_0)=1$ and $\overline{v}^\sharp_{-1,1}$ is a cylinder from $\delta_0$ to $h$ or $e$. Applying the usual rescaling argument with $m\gg 0$ fixed, we obtain a holomorphic map $w_0:cl(B_{T'})\to \C\P^1$ which satisfies the following:
\begin{enumerate}
\item[(i)] $w_0(+\infty)=\infty$ and $w_0(\overline{\frak m}^b(T'))=0$;
\item[(ii)] $w_0(s,t)\in int(\mathcal{R}_{\eta(t)})$ for all $(s,t)\in \bdry B_{T'}$;
\item[(iii)] $\deg(w_0)=1$ away from ${\frak S}(\overline{a}_{i_1,j_1},\overline{\hh}(\overline{a}_{i_1,j_1}))$ for some $(i_1,j_1)$.
\end{enumerate}
Here  $\eta(t)=\phi_0+{\pi\over m}(t-1)$, where $\phi_0$ is the $\phi$-coordinate of $\overline{a}_{i_1,j_1}$. (Recall that we are projecting to $\pi_{D^2_{\rho_0}}$ using balanced coordinates.)

We now observe that $w_0: cl(B_{T'})\to \C\P^1$ is uniquely determined by (i)--(iii), up to multiplication by a positive real constant; this is argued in the same way as in Lemma~\ref{2012}. Using the same method as in Case (3$_i$) of Lemma~\ref{apricot minus infty}, we obtain that $I(\overline{v}_{1,1}^\sharp)\geq 2$ and $I(\overline{v}_{-1,1}^\sharp)\geq 2$. Hence the only possibility is Case (1$_2$) with a cylinder component of $\overline{v}^\sharp_{-1,1}$ from $\delta_0$ to $e$.

\s\n
{\em Cases (4$_i$), (8), (10), and (11).} We will treat Case (4$_i$).  In this case we first apply the rescaling argument with $m\to\infty$ to obtain a holomorphic map $w_0: \Sigma_0\to \C\P^1$ and a branched double cover $\pi_0: \Sigma_0\to cl(B_{T'})$ such that:
\begin{enumerate}
\item[(i)] $w_0(z_0)=\infty$, where $\pi_0^{-1}(+\infty)=\{z_0\}$;
\item[(ii)] $w_0(z_1)=0$ for some $z_1\in \pi_0^{-1}(\overline{\frak m}^b(T'))$;
\item[(iii)] $w_0(\pi_0^{-1}(\bdry B_{T'}))\subset \{\phi=0, \rho>0\}$;
\item[(iv)] $w_0|_{int(\Sigma_0)}$ is a biholomorphism onto its image $\C\P^1-([a_1,a_2]\cup [a_3,a_4])$ with $0<a_1<a_2\leq a_3<a_4$.
\end{enumerate}
Here an interval $[a,b]$ stands for $\{\phi=0, a\leq \rho\leq b\}$.

By the Involution Lemma~I.\ref{P1-lemma: effect of involutions},
\begin{enumerate}
\item[(v)]$\pi_0\circ w_0^{-1}$ maps both $(-\infty,a_1]$ and $[a_4,\infty)$ to $\mathcal{L}_{3/2}\cap \{s\geq \tfrac{l(T')}{2}\}$ and $[a_2,a_3]$ to $\mathcal{L}_{1/2}\cup (\mathcal{L}_{3/2}\cap \{s\leq \tfrac{-l(T')}{2}\})$.
\end{enumerate}
In particular this constrains the asymptotic behavior of $\overline{v}_{1,1}^\sharp$ and hence $I(\overline{v}_{1,1}^\sharp)\geq 2$.  Hence the only possibility is Case  (4$_2$) with two cylinder components of $\overline{v}^\sharp_{-1,1}$, one from $\delta_0$ to $h$ and another from $\delta_0$ to $e$.
\end{proof}

\begin{proof}[Proof of Lemma~\ref{cherries new3}]
This is a combination of Lemmas~\ref{los angeles 6} and \ref{apricot 2}.
\end{proof}

\begin{proof}[Proof of Lemma~\ref{A6 A7}]
Gluing triples in $A_6$, namely Case (1$_2$) with a cylinder component of $\overline{v}^\sharp_{-1,1}$ from $\delta_0$ to $e$, is similar to the gluing problem treated in Theorem~I.\ref{P1-thm: transversality of ev map}.  The proof will be omitted, but we note that the gluing occurs in pairs $(i_1,0)$ and $(i_1,1)$, where the section at infinity $\sigma_\infty^\tau$ is viewed as having boundary mapping to $L^\tau_{\overline{a}_{i_1,0}}$ or $L^\tau_{\overline{a}_{i_1,1}}$.  Hence the count corresponding to $A_6$ is $0$ mod $2$.

Next we treat $A_7$, namely Case (4$_2$) with two cylinder components of $\overline{v}^\sharp_{-1,1}$, one from $\delta_0$ to $h$ and another from $\delta_0$ to $e$.   Consider the moduli space $\mathcal{M}_{\infty}$ of pairs $(w_0,\pi_0)$ consisting of a holomorphic map $w_0: \Sigma_0\to \C\P^1$ and a branched double cover $\pi_0:\Sigma_0\to cl(B_{T'})$ satisfying (i)--(iv) in Case (4$_2$) of Lemma~\ref{apricot 2} for some $a_1,\dots,a_4$. It is left to the reader to verify that:
\begin{itemize}
\item $\dim \mathcal{M}_{\infty}=2$ and that the two parameters are given by multiplication of $w_0$ by a positive real constant and moving a branched point of $w_0$ along $\mathcal{L}_{1/2}\cup (\mathcal{L}_{3/2}\cap \{s\leq \tfrac{-l(T')}{2}\})$.
\item either $w_0(\pi_0^{-1}(-\infty))\subset [a_2,a_3]$ or is on a line orthogonal to $[a_2,a_3]$ at $a^*\in[a_2,a_3]$, which maps to a branch point of $\pi_0$.
\end{itemize}
Next let $\widetilde{\mathcal{M}}_\infty$ be the set of $(w_0,\pi_0)\in \mathcal{M}_\infty$ together with a labeling of the points of $\pi^{-1}(-\infty)$; in other words, $\pi^{-1}(-\infty)$ is viewed as an ordered pair of points, sometimes with duplicates.  The forgetful map $\widetilde{\mathcal{M}}_\infty\to \mathcal{M}_\infty$ is generically a double cover.  Then consider the following evaluation map:
$$ev_\infty: \widetilde{\mathcal{M}}_\infty/\R^+\to \R^2,$$
$$ (w_0,\pi_0)\mapsto \op{arg}(w_0(\pi^{-1}(-\infty))).$$
Here $\widetilde{\mathcal{M}}_\infty/\R^+$ is the quotient of $\widetilde{\mathcal{M}}_\infty$ by the $\R^+$-action given by multiplication of $w_0$ by a positive real constant, and $\op{arg}$ refers to projection to the $\phi$-coordinate. Let $Z=\{x_1=\phi_0\}\subset \R^2$. Then $ev_\infty$ has intersection number $1$ (mod $2$) with $Z$.

Next we apply the rescaling argument again with $m\gg 0$ fixed to obtain a holomorphic map $w_0: \Sigma_0\to \C\P^1$ and a branched double cover $\pi_0: \Sigma_0\to cl(B_{T'})$ such that:
\begin{enumerate}
\item[(i)] $w_0(z_0)=\infty$ for some $z_0\in \pi_0^{-1}(+\infty)$;
\item[(ii)] $w_0(z_1)=0$ for some $z_1\in \pi_0^{-1}(\overline{\frak m}^b(T'))$;
\item[(iii)] $w_0(\pi_0^{-1}(s,t))\in int(\mathcal{R}_{\eta_1(t)}\cup \mathcal{R}_{\eta_2(t)})$ for all $(s,t)\in \bdry B_{T'}$;
\item[(iv)] $\deg(w_0)=1$ away from ${\frak S}(\overline{a}_{i_k,j_k},\overline{\hh}(\overline{a}_{i_k,j_k}))$, $k=1,2$, for some $(i_k,j_k)$.
\end{enumerate}
Here $\eta_k(t)=\phi_k+{\pi\over m}(t-1)$, $k=1,2$, where $\phi_k$ is the $\phi$-coordinate of $\overline{a}_{i_k,j_k}$. If we analogously define $\widetilde{\mathcal{M}}_m$ and the map
$$ev_m:\widetilde{\mathcal{M}}_m/\R^+\to \R^2,$$
then the intersection number of $ev_m$ with $Z$ is also $1$ (mod $2$). The well-definition of the intersection number is left to the reader. We simply note that the required properness of $ev_m$ is a consequence of Gromov compactness.

The gluing of triples in $A_7$ is similar to the gluing problem treated in Theorem~I.\ref{P1-thm: transversality of ev map}.  Again we note that the branched double cover of $\sigma_\infty^\tau$ can be viewed as having boundary mapping to $L^\tau_{\overline{a}_{i_k,0}}$ or $L^\tau_{\overline{a}_{i_k,1}}$ for $k=1,2$. Hence the count corresponding to $A_7$ is $0$ mod $2$.
\end{proof}

\section{Stabilization}
\label{section: stabilization}

The goal of this section is to prove Theorem~\ref{thm: stabilization}.
Let $N=N_{(S,\hh)}$ be the mapping torus of $(S,\hh)$, where $S$ is a bordered surface of genus $g$ with connected boundary and $\hh:(S,\omega)\stackrel\sim\to (S,\omega)$ is a symplectomorphism which has zero flux and restricts to the identity on $\bdry S$.

The strategy of the proof is to apply two positive stabilizations to $(S,\hh)$ --- corresponding to the connected sum with a trefoil knot --- to obtain $(S',\hh')$, where $S'$ has connected boundary and genus $g+1$. We then compare $\Phi_{(S,\hh)}$ and $\Phi_{(S',\hh')}$, which both induce isomorphisms on the level of homology. Here $\Phi_{(S,\hh)}$ is the $\Phi$ map for $(S,\hh)$.

\subsection{The setup}

Let $T$ be a genus one surface with connected boundary and let $\eta_0$, $\eta_1$ be two essential simple closed curves on $T$ which intersect transversely in one point.  A positive Dehn twist along a closed curve $\eta$ will be denoted by $\tau_\eta$. Let $\hh_T:T\stackrel\sim\to T$ be the first return map of a Reeb vector field on the mapping torus
$$N_T=N_{(T,\hh_T)}=(T\times[0,1])/((x,1)\sim(\hh_T(x),0))$$
so that the following hold:
\begin{enumerate}
\item $\hh_T$ is isotopic to $\tau_{\eta_0} \circ \tau_{\eta_1}$ relative to the boundary;
\item all the Reeb orbits in the interior of $N_T$ which intersect $T\times\{0\}$ at most $2g+2$ times are nondegenerate;
\item $\hh_T|_{\bdry T}=id$ and $\bdry N_T$ is foliated by a negative Morse-Bott family of slope $\infty$.
\end{enumerate}
In view of the discussion in Section~I.\ref{P1-section: periodic Floer homology}, $\hh_T$ will be viewed interchangeably as (i) the first return map of a Reeb vector field or (ii) the time-1 map of a stable Hamiltonian vector field with zero flux.

Now let $S'$ be the boundary connected sum of $S$ and $T$.  More precisely, if $P$ is a pair-of-pants with boundary $\bdry P= \bdry_1 P\sqcup \bdry_2 P\sqcup \bdry_3 P$, then $S'$ is obtained from $S\sqcup T\sqcup P$ by identifying $\bdry S\simeq -\bdry_1 P$ and $\bdry T\simeq -\bdry_2 P$.  Observe that $\bdry S'=\bdry_3 P$ is connected and $g(S')=g+1$. See Figure~\ref{fig: sprime}.

We now define a symplectomorphism $\hh':(S',\omega')\stackrel\sim\to (S',\omega')$ with zero flux (with respect to some $\omega'$) as follows:  First set $\hh'|_{S}=\hh$ and $\hh'|_T=\hh_T$. Then define $\hh'|_P$ as the first return map of a Reeb flow $R_\alpha$ on
$$N_P=N_{(P,id)}=(P\times[0,1])/((x,1)\sim(x,0)).$$
The contact form $\alpha$ is given by $f_0dt+\beta$, where $f_0$ is a function and $\beta$ is a $1$-form on $P$, and both $f_0$ and $\beta$ do not depend on $t$. We choose a Morse-Bott function $f_0:P\to \R$ which satisfies the following:
\begin{enumerate}
\item $f_0$ is $C^k$-close to $1$ for $k\gg 0$;
\item $f_0$ attains its minimum along the Morse-Bott family $\bdry P$;
\item the critical points of $f_0$ in $int(P)$ are isolated and consist of the maximum $e_P$ and two saddles $h_{1P}$, $h_{2P}$.
\end{enumerate}
The Reeb orbits corresponding to $e_P$, $h_{1P}$, $h_{2P}$ will also be denoted by $e_P$, $h_{1P}$, $h_{2P}$.

Let $\mathcal{N}_i$, $i=1,2,3$, be the negative Morse-Bott family of Reeb orbits corresponding to $\bdry_i P$. If $f_0$ is $C^k$-close to $1$ for $k\gg 0$, then the only orbits that intersect $P\times\{0\}$ at most $2g+2$ times are: $e_P$, $h_{iP}$ and the orbits of $\mathcal{N}_i$. We pick two orbits in each $\mathcal{N}_i$ and label them $e_i$, $h_i$; they will become elliptic and hyperbolic when the Morse-Bott function $f_0$ is perturbed into a Morse function, which we call $f$. For convenience we write $N_S$ for the mapping torus of $(S,\hh)$, $N_{S'}$ for the mapping torus of $(S',\hh')$, etc.

\begin{figure}[ht]
\begin{overpic}[width=7cm]{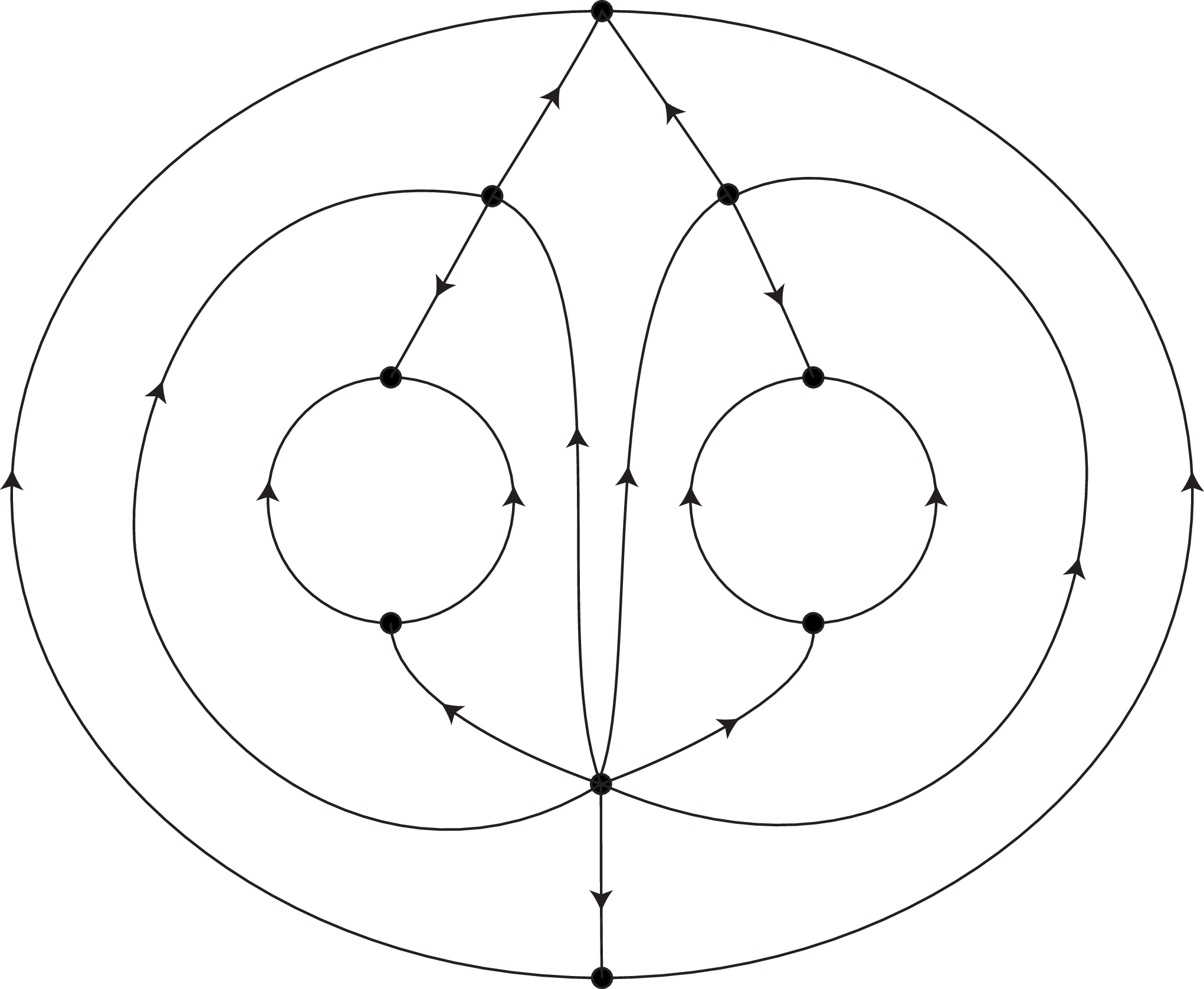}
\put(28,52.3){\tiny $e_1$} \put(68.5,52.3){\tiny $e_2$}
\put(49,83.5){\tiny $e_3$} \put(44.5,11.2){\tiny $e_P$}
\put(35,68.3){\tiny $h_{1P}$} \put(59.7,68.3){\tiny $h_{2P}$}
\put(45,2.8){\tiny $h_3$} \put(28,26.7){\tiny $h_1$}
\put(68,26.7){\tiny $h_2$} \put(30,38){$S$} \put(66,38){$T$}
\end{overpic}
\caption{The page $S'$. The gradient trajectories of the Morse function $f$ are given.}
\label{fig: sprime}
\end{figure}

Since $(S',\hh')$ is obtained from $(S,\hh)$ by applying two positive stabilizations, the corresponding contact structures $\xi_{(S,\hh)}$ and $\xi_{(S',\hh')}$ are isomorphic. As a special case, observe that if $(S,\hh)=(D^2,id)$, then $(S',\hh')=(T,\hh_T)$ and $(T,\hh_T)$ is an open book decomposition for the standard tight contact structure on $S^3$.

\subsection{Morse-Bott theory}

Let $J$ be an adapted almost complex structure on $\R\times N_{S'}$ and let $\pi: \R\times N_{S'}\to N_{S'}$ be the projection onto the second factor. Also let $u$ be a finite energy Morse-Bott building in $\R\times N_{S'}$ from $\bs\gamma$ to $\bs\gamma'$, where both orbit sets intersect $S'\times\{0\}$ in $2g+2$ points.

The following lemma partitions the irreducible components of $u$ into three regions.  When $u$ is a Morse-Bott building, then we say that $u$ is ``irreducible'' if the holomorphic curve, obtained by perturbing the Morse-Bott contact form to a nondegenerate one and correspondingly gluing up the levels of the Morse-Bott building, is irreducible.

\begin{lemma}
\label{lemma: cloistering}
Every irreducible component of the Morse-Bott building $u$ in $\R\times N_{S'}$ has image in one of $\R\times N_S$, $\R\times N_T$, or $\R\times N_{P}$.
\end{lemma}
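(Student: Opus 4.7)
The plan is to use positivity of intersections with the foliation of $\R\times T_i$ (for $i=1,2$) by $J$-holomorphic cylinders. The separating tori $T_1 = \partial_1 P\times S^1$ and $T_2 = \partial_2 P\times S^1$ in $N_{S'}$ are each foliated by the Morse-Bott family $\mathcal{N}_i$ of closed Reeb orbits that sits on $\bdry N_P$. For any adapted almost complex structure $J$, each orbit $\gamma\subset T_i$ gives an embedded $J$-holomorphic cylinder $C_\gamma = \R\times\gamma$, and as $\gamma$ varies in $\mathcal{N}_i$ these cylinders foliate $\R\times T_i$. This is the key geometric input.

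Assume for contradiction that some irreducible component $\tilde u\colon \Sigma\to \R\times N_{S'}$ of the building has image neither in $\R\times N_S$, nor in $\R\times N_T$, nor in $\R\times N_P$. Then the image of $\tilde u$ meets the interior of at least two of the three pieces and so crosses $\R\times T_1$ or $\R\times T_2$; say $T=T_1$. Either $\tilde u$ equals some $C_\gamma$ with $\gamma\in \mathcal{N}_1$, in which case $\tilde u\subset \R\times T_1\subset \R\times N_P$ (contradicting our assumption), or else $\tilde u$ meets each $C_\gamma$ in isolated points that by positivity of intersections each contribute positively to $\tilde u\cdot C_\gamma$. Consequently $\tilde u\cdot C_\gamma>0$ for generic $\gamma\in\mathcal{N}_1$. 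On the other hand, the intersection number $\tilde u\cdot C_\gamma$ is homological: it is determined by the asymptotic data of $\tilde u$ at its punctures together with the relative homology class. Since the orbit sets $\gamma,\gamma'$ intersect $S'\times\{0\}$ in exactly $2g+2$ points that are disjoint from the separating curves $\bdry_i P$, no end of the full Morse-Bott building $u$ lies on $\mathcal{N}_1$; combining this with the negative Morse-Bott ``slope $\infty$'' structure of the family on $T_1$ (which fixes the sign conventions in the linking calculation), one computes $\tilde u\cdot C_\gamma=0$ for generic $\gamma$, contradicting positivity. Iterating the same argument with $T=T_2$, every irreducible component is confined to one of $\R\times N_S$, $\R\times N_T$, or $\R\times N_P$.

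The main obstacle is the careful computation of $\tilde u\cdot C_\gamma$ when some end of $\tilde u$ itself lies on the Morse-Bott family $\mathcal{N}_1$, where the intersection is governed by the asymptotic eigenfunction of $\tilde u$ at that end rather than by a transverse geometric crossing. The cleanest route is to perturb the Morse-Bott contact form near $T_1$ so that $\mathcal{N}_1$ breaks into one elliptic and one hyperbolic orbit, run the positivity argument for the perturbed holomorphic curves (where all asymptotic orbits are nondegenerate and the intersection calculation reduces to counting signed transverse crossings), and then take the SFT/cascade limit back to the original Morse-Bott building. Handling this perturbation-and-limit cleanly, and accounting for gradient-trajectory cascades along $\mathcal{N}_i$ that can appear in the limit, is the bulk of the technical work.
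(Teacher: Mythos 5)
Your overall strategy---positivity of intersections with a foliation near the separating tori---is in the right spirit, but two steps as written do not hold up. First, the assumption that no end of the building lies on $\mathcal{N}_1$ or $\mathcal{N}_2$ is false in exactly the cases that matter: the orbits $e_i,h_i$ are chosen from the Morse--Bott families $\mathcal{N}_i$ sitting on the separating tori $\bdry_i P\times S^1$, and they do occur in the orbit sets $\gamma,\gamma'$ (the stabilization map is $j(\gamma)=e_1^2\gamma$). So the intersection number $\tilde u\cdot C_\beta$ with a trivial cylinder $C_\beta=\R\times\beta$, $\beta\in\mathcal{N}_i$, is not a clean homological count: intersection points can escape into the ends, and the asymptotic analysis you defer to the last paragraph is not a technical afterthought but the main case. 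Second, even when no end lies on $\mathcal{N}_i$, the assertion ``one computes $\tilde u\cdot C_\beta=0$'' is essentially the content of the lemma and is not justified: that intersection number is determined by the relative homology class of $\tilde u$ in $H_2(N_{S'};\gamma,\gamma')$, and nothing in your argument constrains that class. You need an input that rules out relative classes pairing nontrivially with $[\beta]$.

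The paper supplies that input by a different mechanism. It takes a torus $T_i^\varepsilon$ pushed slightly into $P$, foliated by (not necessarily closed) Reeb trajectories, and considers the trace $\delta_i=\pi(u(\dot F))\cap T_i^\varepsilon$. By $\op{wind}_\pi$ and positivity of intersections, $\delta_i$ is positively transverse to the Reeb vector field away from finitely many points, so if $[\delta_i]$ is not a multiple of the fiber class $[\{pt\}\times S^1]$ then $\Pi(\delta_i)$ wraps a strictly positive number of times around $\bdry_i P_\varepsilon$. Since $[\bdry_1P]+[\bdry_2P]+[\bdry_3P]=0$ in $H_1(P)$, this would force $\Pi(\pi(u(\dot F))\cap\bdry N_{S'})$ to represent a nonzero multiple of $[\bdry_3P]$, which is impossible because $u$ meets $\bdry N_{S'}$ only in Morse--Bott ends along $\mathcal{N}_3$, which project to points of $P$. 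Hence $[\delta_i]=l[\{pt\}\times S^1]$, i.e.\ $u$ does not cross $T_i$ but at most limits to $\mathcal{N}_i$ in negative ends, and confinement follows. This global pair-of-pants argument is the ingredient your proposal is missing; without it the vanishing of the intersection number is assumed rather than proved.
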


\begin{proof}
The lemma is an application of the winding number $\op{wind}_\pi$ of \cite{HWZ1} and the positivity of intersections. (For example, see \cite[Lemma~5.5.1]{CGH-0}.)

Suppose without loss of generality that $u: \dot F\to \R\times N_{S'}$ is a single-level Morse-Bott building and that $u(\dot F)$ nontrivially intersects $\R\times N_P$; the case of a multiple-level Morse-Bott building only differs in notation. If $u$ corresponds to a gradient trajectory from $h_i$ to $e_i$, then the lemma holds. Otherwise let $P_\varepsilon\subset P$ be a slight retraction of $P$ so that $\bdry P_\varepsilon= \bdry_1^\varepsilon P \sqcup \bdry_2^\varepsilon P\sqcup \bdry_3 P$.  Let $\Pi: N_P=P\times S^1\to P$ be the projection onto the first factor, let $T_i^\varepsilon= \Pi^{-1}(\bdry_i^\varepsilon P)$, $i=1,2$, and let $N_{P_\varepsilon}=\Pi^{-1}(P_\varepsilon)$. We may assume that $T_i^\varepsilon$ is foliated by (not necessarily closed) Reeb orbits.  We consider the intersection $\delta_i=\pi(u(\dot F))\cap T_i^\varepsilon$, where $\delta_i$ is given the boundary orientation of $\pi(u(\dot F))\cap N_{P_\varepsilon}$. The curve $\delta_i$ is transverse to the Reeb vector field away from a finite number of points by \cite[Lemma~5.5.1]{CGH-0}.  If $\delta_i$ is not homologous to a multiple of $\{pt\}\times S^1$ in $H_1(T_i^\varepsilon)$, then $\Pi(\delta_i)$ is in the homology class $k[\bdry_i P_\varepsilon]\in H_1(P_\varepsilon)$, $k>0$, by the positivity of intersections in dimension four.  This implies that $\Pi(\pi(u(\dot F))\cap \bdry N_{S'})$ is in the class $k[\bdry_3 P]\in H_1(P)$ for $k>0$.  Since this is a contradiction, we must have $[\delta_i]=l[\{pt\}\times S^1]\in H_1(T_i^\varepsilon)$, for some $l\geq 0$. This implies that $u$ has negative ends along $\mathcal{N}_i$, $i=1,2$. Hence $u$ has image in $\R\times N_P$.
\end{proof}

We now use Morse-Bott theory (cf.\ Bourgeois~\cite{Bo1,Bo2}) to analyze holomorphic curves on $\R\times N_P$. In particular, we consider a perturbation of the Morse-Bott family of orbits on $N_P$, perturbed by the Morse function $f:P\to \R$ as described above.

\begin{lemma}
\label{lemma: list of ECH index one curves in P}
Let $f:P\to \R$ be $C^k$-close to $1$ for $k\gg 0$. Then there is an adapted almost complex structure $J$ on $\R\times N_P$ such that every $I_{ECH}=1$ finite energy $J$-holomorphic curve $u$ whose image is in $\R\times N_P$, is a simply-covered cylinder which corresponds to a gradient flow line between critical points of $f$ of adjacent index. The complete list is as follows:
\begin{enumerate}
\item one cylinder from $e_P$ to $h_i$, $i=1,2,3$, and two cylinders from $e_P$ to $h_{iP}$, $i=1,2$;
\item two cylinders from $h_i$ to $e_i$, $i=1,2,3$;
\item one cylinder from $h_{1P}$ to $e_1$ and one cylinder from $h_{1P}$ to $e_3$;
\item one cylinder from $h_{2P}$ to $e_2$ and one cylinder from $h_{2P}$ to $e_3$.
\end{enumerate}
Moreover, all the $I_{ECH}=1$ curves above are regular.
\end{lemma}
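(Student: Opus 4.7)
The plan is to apply Morse-Bott perturbation theory in the style of Bourgeois~\cite{Bo1,Bo2} to reduce the enumeration of $I_{ECH}=1$ curves in $\R\times N_P$ to a finite count of gradient trajectories of $f$ on $P$. First I would set up the contact form on $N_P=P\times S^1$ so that, before perturbation, it is $S^1$-invariant and Morse-Bott: every fiber $\{p\}\times S^1$ is a Reeb orbit, and the action functional is critical exactly along the three isolated interior orbits $e_P,h_{1P},h_{2P}$ together with the three Morse-Bott tori $\mathcal N_i=\bdry_iP\times S^1$. The nondegenerate perturbation is obtained from $f_0\leadsto f$ together with an auxiliary Morse function on each $\bdry_iP$ having critical points below $e_i$ and $h_i$. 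The adapted $J$ is chosen as a small perturbation of an $S^1$-invariant $J_0$ on $\R\times N_P$.

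By Bourgeois's Morse-Bott correspondence, every $J$-holomorphic curve of bounded action between nondegenerate orbits corresponds in the Morse-Bott limit to a \emph{cascade}: a finite chain of $J_0$-holomorphic curves in $\R\times N_P$ joined by gradient flow segments of $f$ along the Morse-Bott manifolds. The $S^1$-invariance of $J_0$ makes the projection $\R\times N_P\to P$ holomorphic, so every level of the cascade is either a trivial cylinder over a fiber or has positive-dimensional image in $P$. The second possibility is excluded at $I_{ECH}=1$: a nonconstant projection to $P$ forces a nontrivial relative homology class and, via the ECH index inequality together with positivity of intersection with trivial-cylinder divisors $\R\times\{p\}\times S^1$ over generic $p\in int(P)$, yields $I_{ECH}\geq 2$. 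Hence every cascade collapses to a single (possibly broken) gradient trajectory of $f$ on $P$.

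Next I would compute the ECH index of a trivial-cylinder cascade between $x$ and $y$. With the obvious product trivialization, $c_\tau$, $Q_\tau$, and the writhe term all vanish, while the Morse-Bott Conley-Zehnder computation of Bourgeois gives $I_{ECH}=\mathrm{ind}(x)-\mathrm{ind}(y)$, with $\mathrm{ind}$ the total Morse index (auxiliary index on each $\bdry_iP$ included). So $I_{ECH}=1$ forces an unbroken gradient trajectory between critical points of adjacent index, and one then reads off the list directly from Morse theory on the pair of pants $P$: two trajectories from $e_P$ to each interior saddle $h_{iP}$ (two branches of the stable manifold); one trajectory from each $h_{iP}$ to each of its two adjacent boundary components, landing at the boundary elliptic $e_j$; two trajectories from $h_i$ to $e_i$ along the two half-arcs of $\bdry_iP$; and one cascade from $e_P$ directly to each boundary hyperbolic $h_i$, given by the unique gradient line of $f$ in $int(P)$ whose limit on $\bdry_iP$ lies on the stable manifold of $h_i$. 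Cases (1)--(4) of the lemma match these counts on the nose.

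Regularity is then automatic by Wendl's criterion~\cite{We3}: each cylinder has genus zero, and its Fredholm index (equal to $I_{ECH}=1$) exceeds $2g+\#\Gamma_0-1=-1$. The main obstacle will be controlling cascade configurations near $\bdry N_P$: because $\bdry N_P$ sits inside the negative Morse-Bott torus $\bdry N_{S'}$ and is glued to $N_S$ and $N_T$, one must rule out extra cascade sublevels and auxiliary branched covers of fiber cylinders that a priori could also realize $I_{ECH}=1$. This is controlled by the $C^k$-closeness hypothesis, which makes the Morse-Bott perturbation arbitrarily small and forces the asymptotic behavior near $\mathcal N_i$ to match that of the asserted trivial-cylinder cascades, and by the positivity-of-intersections argument above, which kills any non-fibered component.
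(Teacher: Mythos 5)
Your overall strategy (degenerate to the $S^1$-invariant Morse--Bott picture, reduce to Morse theory of $f$ on $P$, and read off the adjacent-index gradient trajectories) is close in spirit to how the paper \emph{produces} the cylinders: the paper also invokes Morse--Bott theory to obtain, for a suitable $J$, a bijection between gradient trajectories $\delta$ of $f$ and holomorphic cylinders $Z_\delta$ projecting onto $\op{Im}(\delta)$, which together with the trivial cylinders form a finite energy foliation of $\R\times N_P$; the list (1)--(4) and the appeal to Wendl's automatic transversality are then the same in both arguments. (Minor point: each listed cylinder has exactly one hyperbolic end, so Wendl's threshold is $2g+\#\Gamma_0-1=0$, not $-1$; the inequality $\op{ind}=1\geq 0$ still holds.)

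The genuine gap is in the classification step, i.e.\ in establishing that \emph{every} $I=1$ finite energy curve with image in $\R\times N_P$ is one of these simply covered cylinders. You delegate this to ``Bourgeois's Morse--Bott correspondence,'' but that correspondence is a compactness-plus-gluing statement for transversely cut out moduli of somewhere-injective curves; it does not by itself rule out multiply covered curves or curves with many ends on the Morse--Bott tori $\mathcal{N}_i$, which are precisely the dangerous objects here (branched covers of trivial cylinders or of the $Z_\delta$ could a priori have $I=1$). Your closing sentence acknowledges these but offers only ``$C^k$-closeness'' as a reason to discard them, which is not an argument. The paper closes this gap in two concrete steps: first, an intersection-theoretic argument using the foliation --- winding-number/positivity control of $\pi(u(\dot F))$ on $\bdry N(e_P)$ and $\bdry N(e_i)$ to pin down the asymptotics, then translating a leaf $Z_{s\delta}$ and using homological invariance of $\langle u(\dot F),Z_{s\delta}\rangle$ --- shows that any finite energy curve in $\R\times N_P$ is a multiple cover of some leaf $Z_\delta$; second, \cite[Proposition~7.15]{HT1} shows a $d$-fold cover with $d>1$ of an $I=1$ leaf has $I>1$, forcing $d=1$. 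Without these (or equivalent) inputs the word ``every'' in the statement is not established. Separately, your justification for discarding cascade levels with nonconstant projection to $P$ (``$I_{ECH}\geq 2$'') is backwards as written --- the curves being enumerated project \emph{onto} gradient trajectories, hence nonconstantly; the correct point is that in the degenerate $S^1$-invariant limit every finite energy level projects to a constant, because a holomorphic map from a closed Riemann surface to the open surface $P$ is constant.
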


See Figure~\ref{fig: sprime} for the gradient trajectories of $f$ which correspond to the above holomorphic cylinders.

\begin{proof}
The proof is very similar to that of \cite[Lemma~9.2.2]{CGH-0}.  We use the following fact, which can be proved using Morse-Bott theory or by a direct computation:

\s\n {\bf Fact.} There is an adapted almost complex structure $J$ on $\R\times N_P$ such that there is a one-to-one correspondence between (parametrized) gradient trajectories $\delta:\R\to P$ of $f$ and finite energy $J$-holomorphic cylinders $Z_\delta$ in $\R\times N_P$ which intersect each $\{(s,t)\}\times P$ exactly once and project to $\op{Im}(\delta)$ under the projection $\Pi\circ \pi:\R\times N_P\to P$,  $(s,x,t)\mapsto x$.  Moreover, the cylinders $Z_\delta$, together with the trivial cylinders over the orbits corresponding to the Morse critical points, give a finite energy foliation of $\R\times N_P$.

\s Fix $f$ and $J$ as above. We will use the notation $s\delta:\R\to P$ for the translation $(s\delta)(\tau)=\delta(\tau+s)$.

Let $u:\dot F\to \R\times N_P$ be a finite energy $J$-holomorphic curve. Let $D_\varepsilon\subset P$ be an arbitrarily small disk centered at the point $e_P$ and let $N(e_P)=\Pi^{-1}(D_\varepsilon)$ be a solid torus neighborhood of the orbit $e_P$.  We assume that $\bdry N(e_P)$ is foliated by (not necessarily closed) orbits of the Reeb vector field. We identify $\bdry N(e_P)\simeq \R^2/ \Z^2$ so that the meridian has slope zero and a fiber $\{pt\}\times S^1$ has slope $\infty$.  Consider $\eta=\pi(u(\dot F))\cap \bdry N(e_P)$, where $\eta$ is given the boundary orientation of $\pi(u(\dot F))\cap \Pi^{-1}(P-int(D_\varepsilon))$. If the projection $[\Pi(\eta)]\in H_1(P-int(D_\varepsilon))$ is nonzero, then $[\Pi(\eta)]=-k[\bdry D_\varepsilon]$, $k>0$, by the positivity of intersections. This is a contradiction as in the proof of Lemma~\ref{lemma: cloistering}. Hence $[\eta]= l[\{pt\}\times S^1]\in H_1(\bdry N(e_P))$ for some $l\geq 0$, i.e., has slope $\infty$.  In other words, $u$ cannot intersect $\R\times e_P$ and can only have $e_P$ at the positive end. 
Similar considerations hold for $N(e_i)$, $i=1,2,3$, where $D_{i,\varepsilon}\subset P$ is a half-disk centered at $e_i$ and $N(e_i)=\Pi^{-1}(D_{i,\varepsilon})$.

We now claim that $u$ is some multiple cover of some $Z_\delta$ with multiplicity $\geq 1$. Arguing by contradiction, suppose $u$ does not multiply cover any $Z_\delta$. Let us first consider the case where $\Pi\circ \pi(u(\dot F))$ does not equal $\op{Im}(\delta)$ for any $\delta$. Then there is some $Z_\delta$ from $e_P$ to some $e_i$ such that the intersection $u(\dot F)\cap Z_\delta$ is nonempty; moreover, in view of the asymptotics on $N(e_P)$ and $N(e_i)$, we may assume that $K=\pi(u(\dot F)\cap Z_\delta)$ is compact.  This implies that $u(\dot F)$ and $Z_{s\delta}$ do not intersect for sufficiently large $s$.  On the other hand, since the intersection pairing $\langle u(\dot F), Z_{s\delta}\rangle$ is a homological quantity and does not depend on $s$ due to the asymptotics, it follows that $K=\varnothing$, which is a contradiction. This implies that $\Pi\circ \pi(u(\dot F))=\op{Im}(\delta)$ for some $\delta$.  Now $\R\times \Pi^{-1}(\delta)$ is a $3$-manifold which is foliated by $Z_{s\delta}$, $s\in \R$, and if $u$ does not multiply cover any $Z_{s\delta}$, then $u$ intersects some $Z_{s\delta}$ along a $1$-manifold, a contradiction. We conclude that $u$ is a multiple cover of some $Z_\delta$.

Now Fredholm index one cylinders $Z_\delta$ --- i.e., those that correspond to $\delta$ connecting two Morse critical points of adjacent index  --- are regular by the automatic transversality results of Wendl~\cite{We2,We3}.  Moreover, $u$ cannot multiply cover $Z_\delta$ with multiplicity $>1$ by \cite[Proposition~7.15]{HT1}, since otherwise $I(u)>1$. This implies that $u$ is equal to some $Z_\delta$, thereby completing the proof of the lemma.
\end{proof}

\subsection{A commutative diagram}

We assume that $\hh$ has no elliptic periodic points of period $\leq 2g+2$ in $int(S)$.

Consider the following (not commutative) diagram of chain complexes:
\begin{equation} \label{equation: diagram}
\begin{diagram}
\widehat{CF} (S,\mathbf{a},\hh(\mathbf{a}))  & \rTo^{\Theta} &
\widehat{CF} (S',\mathbf{a}', \hh'(\mathbf{a}'))\\
\dTo(0,4)^{\Phi_{(S,\hh)}} &  &
\dTo_{\Phi_{(S',\hh')}} \\
PFC_{2g} (S) & \rTo^{i\circ j} & PFC_{2g+2} (S')
\end{diagram}
\end{equation}
Here we are writing $PFC_j(S)$ for $PFC_j(N_S)$ and $PFC_j(S')$ for $PFC_j(N_{S'})$. The chain maps $\Theta$, $j$, and $i$ which appear Diagram~(\ref{equation: diagram}) are defined as follows:

\s\n {\em The map $\Theta$.} The chain map
$$\Theta:\widehat{CF} (S,\mathbf{a},\hh(\mathbf{a})) \to \widehat{CF} (S',\mathbf{a}', \hh'(\mathbf{a}'))$$
is defined as follows: Let $\mathbf{a}=\{a_1,\dots,a_{2g}\}$ be a basis of $S$.  We extend $\mathbf{a}$ to
$\overline{\mathbf{a}}=\{\overline{a}_1,\dots,\overline{a}_{2g}\}$ so that $\overline{a}_i$ is a properly embedded arc with boundary on $\bdry S'$ and then complete $\overline{\mathbf{a}}$ to a basis $\mathbf{a}'$ of $S'$ by adding the extensions $\overline{a}_{2g+1}$ and $\overline{a}_{2g+2}$ of the basis arcs $a_{2g+1}$ and
$a_{2g+2}$ of $T$, subject to the following conditions:
\begin{itemize}
\item $\overline{a}_i$ and $\hh'(\overline{a}_i)$ have two extra pairs of canceling intersections in $S'-(S\cup T)$ for $i=1,\dots,2g+2$;
\item $\overline{a}_i-a_i$ and $\hh'(\overline{a}_j-a_j)$ are disjoint for $1\leq i\not=j\leq 2g+2$.
\end{itemize}
Let $x_i\sim x_i'$, $i=2g+1,2g+2$, be the intersections of $\hh'(\overline{a}_i)$ and $\overline{a}_i$ on $\bdry S'=\bdry_3 P$, which comprise the contact class. We then define
$$\Theta(\mathbf{y})= \mathbf{y}\cup\{x_{2g+1},x_{2g+2}\}.$$
It is easy to see that $\Theta$ is a chain map. (Compare with the gluing map from \cite{HKM2}.)

\s\n {\em The map $j$.}
Next define the map
$$j: PFC_{2g} (S) \rightarrow PFC_{2g+2} (S),$$
$$j(\bs\gamma ) =e_1^2\bs\gamma,$$
where we are using multiplicative notation for orbit sets. The map $j$ is the composition $\mathfrak{I}_{2g+1}\circ \mathfrak{I}_{2g}$, where ${\frak I}_i$ is the map
$$\mathfrak{I}_i: PFC_{i}(S)\to PFC_{i+1}(S),$$
$$\bs\gamma\mapsto e_1\bs\gamma.$$ The elliptic orbit $e_1$ can only appear at the negative asymptotic end of a holomorphic curve, provided the curve is not a trivial cylinder. Hence $j$ is a chain map in view of Lemma~\ref{lemma: cloistering}.

\s\n {\em The map $i$.}
The map $i$ is the inclusion map on the chain level:
$$i: PFC_{2g+2} (S) \rightarrow PFC_{2g+2} (S'),$$
$$\bs\gamma\mapsto \bs\gamma.$$
The map $i$ is chain map by Lemma~\ref{lemma: cloistering}. The analogous map for the inclusion $T\subset S'$ is also a chain map.

\s Although Diagram~(\ref{equation: diagram}) is not quite commutative on the chain level, we have the following:

\begin{lemma}
Diagram~(\ref{equation: diagram}) induces the following commutative diagram on the level of homology:
\begin{equation} \label{equation: diagram on level of homology}
\begin{diagram}
\widehat{HF} (S,\mathbf{a},\hh(\mathbf{a}))  & \rTo^{\Theta_*} &
\widehat{HF} (S',\mathbf{a}', \hh'(\mathbf{a}'))\\
\dTo(0,4)^{(\Phi_{(S,\hh)})_*} &  &
\dTo_{(\Phi_{(S',\hh')})_*} \\
PFH_{2g} (S) & \rTo^{(i\circ j)_*} & PFH_{2g+2} (S')
\end{diagram}
\end{equation}
\end{lemma}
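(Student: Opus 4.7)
The plan is to construct a chain homotopy
$$K:\widehat{CF}(S,\mathbf{a},h(\mathbf{a}))\to PFC_{2g+2}(S'),\qquad \bdry_{ECH}\circ K+K\circ \widehat{\bdry}=(i\circ j)\circ\Phi_{(S,h)}+\Phi_{(S',h')}\circ\Theta,$$
from which commutativity of Diagram~(\ref{equation: diagram on level of homology}) is immediate. I will imitate the cobordism-homotopy machinery of Sections~\ref{section: homotopy of cobordisms I}--\ref{section: homotopy of cobordisms II} by introducing a $1$-parameter family of almost complex structures $\{\overline{J}_\tau\}$ on the $(S',h')$-cobordism $\overline{W}_+$, stretched so that the $S'$-fiber pinches along the neck torus $\bdry_1 P\times S^1\subset N_{S'}$ separating $S$ from $T\cup P$. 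At $\tau=+\infty$ the cobordism is the one defining $\Phi_{(S',h')}$, while at $\tau=-\infty$ it degenerates into the stacking of the $(S,h)$-cobordism on top of an auxiliary cobordism $\overline{W}_+^{T\cup P}$ built over $N_T\cup N_P$ and completed along $\bdry_3 P$. The map $K$ is then defined by counting rigid elements of the parametrized moduli space $\coprod_\tau\mathcal{M}_\tau^{I=1,n^*=m}(\cdot,\cdot;\overline{\mathfrak m})$.

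The boundary analysis of $\coprod_\tau\mathcal{M}_\tau^{I=2,n^*=m}(\cdot,\cdot;\overline{\mathfrak m})$ follows the scheme of Lemmas~\ref{cherries4}, \ref{cherries5}, and \ref{cherries3}: mid-$\tau$ breakings produce the $\bdry_{ECH}K+K\widehat{\bdry}$ term; the $\tau\to+\infty$ limit reproduces $\Phi_{(S',h')}\circ\Theta$, since the extra inputs $\{x_{2g+1},x_{2g+2}\}$ introduced by $\Theta$ simply sit on the Lagrangians $L^+_{\overline{a}_{2g+1}},L^+_{\overline{a}_{2g+2}}$; and the $\tau\to-\infty$ limit, by Lemma~\ref{lemma: cloistering}, splits each limit curve into an $S$-component and a $(T\cup P)$-component with matched asymptotics along the neck torus.

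The substantive step is to identify the contribution of the $(T\cup P)$-component. I claim that among all $I=0$ configurations with positive input $\{x_{2g+1},x_{2g+2}\}$ on $L^+_{\overline{a}_{2g+1}}\cup L^+_{\overline{a}_{2g+2}}$, the unique rigid one has negative output the orbit set $e_1^2$; glued via Lemma~\ref{lemma: cloistering} to the $S$-factor, this yields exactly $(i\circ j)\circ\Phi_{(S,h)}$. This is a Gromov--Witten-type count in the spirit of Theorems~\ref{thm: count of G sub 1} and \ref{thm: calc of G sub 3}: Lemma~\ref{lemma: list of ECH index one curves in P} classifies the index-one cylinders in $\R\times N_P$ as Morse gradient flows of $f$, which together with the explicit Morse picture on $T\cup P$ (Figure~\ref{fig: sprime}) isolates a single index-zero rigid curve connecting $\{x_{2g+1},x_{2g+2}\}$ to $e_1^2$ through the gradient trajectories $h_i\to e_1$ in $N_T$. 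Regularity is automatic by Theorem~\ref{thm: automatic transversality HLS} since the Maslov indices with respect to the stabilizing Lagrangians are positive.

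The main obstacle is precisely this $(T\cup P)$-count: one must rule out index-zero configurations whose negative orbit set contains the hyperbolic orbits $h_i$ or $h_{iP}$, or in which one of $x_{2g+1},x_{2g+2}$ pairs with $e_P,e_2,$ or $e_3$ rather than $e_1$. This is an ECH-index bookkeeping argument in the style of Sections~\ref{subsection: chain homotopy part 1}--\ref{subsection: additional degenerations III}, exploiting the explicit placement of $\overline{a}_{2g+1},\overline{a}_{2g+2}$ built into the definition of $\Theta$ together with the positivity of the Dehn-twist monodromy $h_T=\tau_{\eta_0}\circ\tau_{\eta_1}$. Once this stabilization count is established, standard gluing along the neck torus assembles the required chain-homotopy relation.
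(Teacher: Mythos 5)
There is a genuine gap, and it sits exactly at what you call the ``substantive step.'' Your claimed count --- that the unique rigid $(T\cup P)$-configuration with positive input $\{x_{2g+1},x_{2g+2}\}$ has negative output $e_1^2$ --- is false. The points $x_{2g+1},x_{2g+2}$ lie on $\bdry S'=\bdry_3 P$ and are components of the contact class, so the only curves asymptotic to them at the positive end are restrictions of trivial cylinders; the resulting Morse--Bott building sends each $x_i$ to a generic point $p_i$ of the family $\mathcal{N}_3$ and then follows the gradient trajectory of $f$ from $p_i$ to $e_3$ (the elliptic orbit on $\bdry_3 P$), \emph{not} to $e_1$ (which sits on $\bdry_1 P$ and receives gradient trajectories only from $h_1$ and $h_{1P}$; see Figure~\ref{fig: sprime} and Lemma~\ref{lemma: list of ECH index one curves in P}). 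Hence on the chain level, using Lemma~\ref{lemma: cloistering}, one gets $\Phi_{(S',h')}(\Theta(\mathbf{y}))=e_3^2\cdot\Phi_{(S,h)}(\mathbf{y})$, whereas $(i\circ j)\circ\Phi_{(S,h)}(\mathbf{y})=e_1^2\cdot\Phi_{(S,h)}(\mathbf{y})$. Diagram~(\ref{equation: diagram}) really does fail to commute on the chain level, and no rigid-curve count will make the two compositions literally agree; consequently the chain homotopy $K$ you propose, whose $\tau\to-\infty$ boundary produces $e_3^2$-decorated outputs, cannot satisfy the relation you write down.

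The missing ingredient is the homological identification $e_3\gamma\sim e_1\gamma$: by Lemma~\ref{lemma: list of ECH index one curves in P} there is exactly one $I=1$ cylinder from $h_{1P}$ to $e_1$ and one from $h_{1P}$ to $e_3$, so $\bdry(h_{1P}\gamma)=e_1\gamma+e_3\gamma$ (plus terms involving $\bdry\gamma$), whence $e_3^2\gamma$ is homologous to $e_1^2\gamma$ and commutativity holds on homology. Once this is in place, the heavy parametrized-moduli-space and neck-stretching apparatus you set up is unnecessary: the two compositions are computed directly on chains (the $S$-factor splits off by Lemma~\ref{lemma: cloistering}, the $\{x_{2g+1},x_{2g+2}\}$-factor is handled by the contact-class/trivial-cylinder argument and automatic transversality of the Morse--Bott building), and the discrepancy $e_3^2$ versus $e_1^2$ is killed by the explicit boundary above.
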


\begin{proof}
We consider the $W_+$-curves for $(S',\hh')$ that limit to the tuple $\Theta({\bf y})=\mathbf{y}\cup\{x_{2g+1},x_{2g+2}\}$ at the positive end. Since $x_{2g+1}$ and $x_{2g+2}$ are components of the contact class, there are no holomorphic curves, besides restrictions of trivial cylinders, that limit to $x_{2g+1}$ or $x_{2g+2}$ at the positive end. The argument is the same as that of Lemma~I.\ref{P1-lemma: trivial cylinder when x i involved}. The trivial cylinder over $x_i$, $i=2g+1,2g+2$, ``maps'' the component $x_i$ to a generic point $p_i$ of the Morse-Bott family $\mathcal{N}_3$ and is concatenated with a cylinder corresponding to a gradient trajectory from $p_i$ to $e_3$ to give a Morse-Bott building.  Moreover, by automatic transversality~\cite[Theorem~4.5.36]{We2} and the discussion from Lemma~I.\ref{P1-lemma: regularity of curve at infinity}, the above Morse-Bott building is Morse-Bott regular.  Hence the pair $\{x_{2g+1},x_{2g+2}\}$ is ``mapped'' to $e_3^2$. By Lemma~\ref{lemma: cloistering} we have:
$$\Phi_{(S',\hh')} (\mathbf{y}\cup\{x_{2g+1} ,x_{2g+2}\})= e_3^2\cdot\Phi_{(S,\hh)}(\mathbf{y}).$$
On the other hand, $e_3^2\bs\gamma$ is homologous to $e_1^2\bs\gamma$ in $PFH_{2g+2}(S')$, since there is one cylinder each from $h_{1P}$ to $e_1$ and $e_3$. This proves the commutativity of the diagram.
\end{proof}

\begin{cor} \label{cor: i circ j is isomorphism}
The composition $i_* \circ j_*$ is an isomorphism.
\end{cor}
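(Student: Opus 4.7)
The plan is to read off the corollary from the commutative diagram (\ref{equation: diagram on level of homology}) by showing that the two vertical arrows and the top horizontal arrow are all isomorphisms. Once this is done, commutativity forces $(i\circ j)_*$ to be an isomorphism as well.

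First, I would invoke Theorem~\ref{thm: isomorphism} to conclude that both $(\Phi_{(S,h)})_*$ and $(\Phi_{(S',h')})_*$ are isomorphisms. This requires checking the hypothesis that the relevant monodromies have no elliptic periodic points of suitably small period in the interiors of the pages. For $(S,h)$ we have assumed this by hypothesis (no elliptic periodic points of period $\leq 2g$ in $int(S)$). For $(S',h')$, the new monodromy differs from $h$ only on $T\cup P$, so one checks that $h'|_T$ and $h'|_P$ contribute no new elliptic periodic points of period $\leq 2g+2$ in the interiors of $T$ and $P$: on $P$ we made $f_0$ $C^k$-close to $1$ for $k\gg 0$, so the only short interior orbit is the elliptic $e_P$, and on $T$ the choice of $h_T\simeq \tau_{\eta_0}\circ\tau_{\eta_1}$ is pseudo-Anosov (since it acts hyperbolically on $H_1(T)$), whose periodic points in $int(T)$ are all hyperbolic. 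Combined with the hypothesis on $h$ itself, this verifies the hypothesis of Theorem~\ref{thm: isomorphism} for $(S',h')$.

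Next I would argue that $\Theta_*$ is an isomorphism. The map $\Theta$ is the standard stabilization map in bordered/Heegaard Floer homology: $\mathbf{a}'$ is obtained from $\overline{\mathbf{a}}$ by adjoining two new arcs $\overline{a}_{2g+1}$, $\overline{a}_{2g+2}$, and each new arc $\overline{a}_i$ meets $h'(\overline{a}_i)$ in exactly one further pair of canceling intersections $x_i\sim x_i'$ in $T\cup P$ beyond the ones already in $S$. Tensoring with the top generator $\{x_{2g+1},x_{2g+2}\}$ of $\widehat{HF}$ of the stabilization piece is a well-known quasi-isomorphism in Heegaard Floer theory (see for example the stabilization argument of \cite{HKM2}); alternatively, one reduces to the fact that the resulting Heegaard diagram for $(S',\mathbf{a}',h'(\mathbf{a}'))$ is obtained from that for $(S,\mathbf{a},h(\mathbf{a}))$ by two index-$1$/index-$2$ Heegaard stabilizations, which do not change $\widehat{HF}$ up to canonical isomorphism.

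With all three maps $\Theta_*$, $(\Phi_{(S,h)})_*$, $(\Phi_{(S',h')})_*$ established as isomorphisms, the commutativity of diagram~(\ref{equation: diagram on level of homology}) gives
\begin{equation*}
(i\circ j)_* = (\Phi_{(S',h')})_* \circ \Theta_* \circ (\Phi_{(S,h)})_*^{-1},
\end{equation*}
so $(i\circ j)_*$ is an isomorphism, as a composition of three isomorphisms. The main thing to justify carefully is the quasi-isomorphism statement for $\Theta$ (and the verification that the hypothesis of Theorem~\ref{thm: isomorphism} genuinely persists under the stabilization $(S,h)\rightsquigarrow (S',h')$); everything else is formal diagram chasing.
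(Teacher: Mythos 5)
Your overall strategy is exactly the paper's: in diagram~(\ref{equation: diagram on level of homology}) the two vertical maps are isomorphisms by Theorem~\ref{thm: isomorphism}, $\Theta_*$ is an isomorphism because the $T$-side contributes $\widehat{HF}(S^3)\simeq\F$ generated by $\{x_{2g+1},x_{2g+2}\}$ (your Heegaard-stabilization phrasing is an acceptable variant of this), and then $(i\circ j)_*=(\Phi_{(S',h')})_*\circ\Theta_*\circ(\Phi_{(S,h)})_*^{-1}$ is forced to be an isomorphism. So the diagram chase and the treatment of $\Theta_*$ are fine and match the paper, which simply asserts that one ``may assume'' both monodromies have no (elliptic) periodic points of period $\leq 2g+2$ in the interiors of the pages.

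Where your write-up goes wrong is precisely in the extra detail you add to justify that hypothesis for $(S',h')$. First, $\tau_{\eta_0}\circ\tau_{\eta_1}$ is \emph{not} pseudo-Anosov: its action on $H_1(T)$ is by an $SL(2,\Z)$ matrix of trace $\pm 1$, so it is an elliptic (finite-order) mapping class --- it is the periodic monodromy of the trefoil --- and the claim that all its interior periodic points are hyperbolic has no basis. The paper instead arranges by hand (condition (2) in the construction of $h_T$) that the short interior orbits of $N_T$ are nondegenerate, which is weaker than hyperbolic. Second, your discussion of $P$ is self-contradictory: you assert that $h'|_P$ contributes ``no new elliptic periodic points'' in $int(P)$ and in the same sentence identify the elliptic orbit $e_P$ sitting at the interior maximum of $f_0$ (and $e_1,e_2$ on the Morse-Bott tori over $\bdry_1P,\bdry_2P$ are likewise elliptic orbits in $int(S')$). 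So the literal hypothesis of Theorem~\ref{thm: isomorphism} is \emph{not} satisfied by the explicit $(S',h')$ constructed in Section~\ref{section: stabilization}, and your attempted verification does not repair this; the paper finesses the point rather than proving it, and any honest completion of your argument would have to either perturb these orbits away or explain why the technical hypothesis (which exists to simplify gluing) is harmless for the specific elliptic orbits $e_P,e_i$ introduced by the stabilization.
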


\begin{proof}
This follows from the commutativity of Diagram~(\ref{equation: diagram on level of homology}). Since $(S,\hh)$ and $(S',\hh')$ are both open book decompositions for $M$ and we may assume that $\hh$ and $\hh'$ both have no periodic points of period $\leq 2g+2$ in $int(S)$ or $int(S')$, the cobordism maps $(\Phi_{(S,\hh)})_*$ and $(\Phi_{(S',\hh')})_*$ are both isomorphisms. The map $\Theta_*$ is also an isomorphism since $\widehat{HF}(S',\mathbf{a}', \hh'(\mathbf{a}'))$ can be computed as the tensor product of the $S$ and $T$ sides and the $T$ side gives $\widehat{HF}(S^3)$, which is generated by $\{x_{2g+1},x_{2g+2}\}$. Since three sides of the diagram are isomorphisms, it follows that $i_* \circ j_*$ is an isomorphism.
\end{proof}

\subsection{An isomorphism}

To simplify notation let us write $V_i=PFH_i(S)$ and $W_j=PFH_j(T)$. The goal of this subsection is to prove the following key proposition.

\begin{prop} \label{prop: isomorphism of j}
The induced map
$$j_*= (\mathfrak{I}_{2g+1})_*\circ (\mathfrak{I}_{2g})_* :V_{2g} \rightarrow V_{2g+2} $$
is an isomorphism.
\end{prop}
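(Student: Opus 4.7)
The plan is to exploit the commutative diagram of Corollary~\ref{cor: i circ j is isomorphism}, which shows that $(i\circ j)_* = i_*\circ j_*$ is an isomorphism, together with a Künneth-type decomposition of $PFH_{2g+2}(S')$ furnished by Lemmas~\ref{lemma: cloistering} and \ref{lemma: list of ECH index one curves in P}. Injectivity of $(i\circ j)_*$ gives injectivity of $j_*$ immediately. For surjectivity, it suffices to show that $i_*: V_{2g+2}\to PFH_{2g+2}(S')$ is injective: given $v\in V_{2g+2}$, surjectivity of $i_*\circ j_*$ produces $u\in V_{2g}$ with $i_*(v) = i_*j_*(u)$, and injectivity of $i_*$ then forces $v = j_*(u)$.

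To prove $i_*$ injective, I would decompose $PFC_{2g+2}(S')$ as a triple tensor product of chain complexes indexed by the three regions $N_S$, $N_P$, $N_T$. By Lemma~\ref{lemma: cloistering}, every irreducible component of a finite energy holomorphic curve in $\R\times N_{S'}$ lies in exactly one of these regions, so each generator of $PFC_{2g+2}(S')$ factors uniquely as a product $\gamma_S\cdot\gamma_P\cdot\gamma_T$, graded by triples $(a,b,c)$ with $a+b+c=2g+2$ recording the intersection numbers of each factor with its fiber; the PFH differential respects this grading and satisfies a Leibniz rule across the three factors. A Künneth argument then yields
$$PFH_{2g+2}(S')\cong \bigoplus_{a+b+c=2g+2} PFH_a(S)\otimes \mathcal{P}_b\otimes PFH_c(T),$$
where $\mathcal{P}_b$ denotes the homology of the $N_P$-factor in intersection level $b$, with differential encoded by the gradient-trajectory cylinders listed in Lemma~\ref{lemma: list of ECH index one curves in P}. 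Since $\mathcal{P}_0 = PFH_0(T) = \mathbb{F}_2$ (both generated by the empty orbit set), the chain-level map $i$ is literally the inclusion of the summand with $(a,b,c) = (2g+2,0,0)$; hence $i_*$ is the inclusion of a direct summand, which is injective.

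The main obstacle will be rigorously establishing the tensor product decomposition of the chain complex and the accompanying Künneth formula in this Morse-Bott PFH setting. One must verify both that differentials contributed by branched covers of trivial cylinders respect the regional partition (using the Hutchings-Taubes analysis of \cite{HT1,HT2}) and that there are no cross-region differentials from the gluing tori $\bdry_i P\subset N_P$, which amounts to a Morse-Bott perturbation argument on $N_P$ in the spirit of the calculations behind Lemma~\ref{lemma: list of ECH index one curves in P}. Once these verifications are in place, combining the injectivity of $i_*$ with the isomorphism $(i\circ j)_* = i_*\circ j_*$ from Corollary~\ref{cor: i circ j is isomorphism} yields surjectivity of $j_*$, completing the proof.
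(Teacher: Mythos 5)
Your injectivity argument agrees with the paper's, and your reduction of surjectivity to the injectivity of $i_*$ is logically sound ($i_*$ is indeed injective a posteriori). The gap is in your proof that $i_*$ is injective: the K\"unneth decomposition $PFH_{2g+2}(S')\cong\bigoplus_{a+b+c=2g+2}PFH_a(S)\otimes\mathcal{P}_b\otimes PFH_c(T)$ is false, because the differential on $PFC_{2g+2}(S')$ does \emph{not} respect the tripartite splitting. The cross-region differentials you propose to rule out by a Morse--Bott perturbation argument are exactly the gradient-flow cylinders already listed in Lemma~\ref{lemma: list of ECH index one curves in P}: the cylinders from $h_{1P}$ to $e_1$, from $h_{2P}$ to $e_2$, and from $e_P$ to $h_1$ and to $h_2$ run from interior critical points of $f$ on $P$ to critical points on the interface families $\mathcal{N}_1$ and $\mathcal{N}_2$ at $\bdry S$ and $\bdry T$, whose orbits $e_1,h_1$ and $e_2,h_2$ are bookkept as generators of $PFC(S)$ and $PFC(T)$. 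Thus, for instance, $\bdry(h_{1P}\otimes\Gamma_1\otimes\Gamma_2)$ contains the term $1\otimes e_1\Gamma_1\otimes\Gamma_2$, which simultaneously lowers the $P$-multiplicity and raises the $S$-multiplicity. The complex is a twisted product, not a tensor product of three complexes, and $V_{2g+2}\otimes W_0$ is not a direct summand of the homology: by Lemma~\ref{lemma: spectral sequence calculation} it survives only inside the quotient $\bigl(\bigoplus_{i+j=2g+2}V_i\otimes W_j\bigr)/\!\sim$, where $\sim$ identifies $e_1\Gamma_1\otimes\Gamma_2$ with $\Gamma_1\otimes e_2\Gamma_2$ --- a relation generated by precisely those cross terms.

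Because of this relation, deciding whether $i_*(v)=0$ for $v\in V_{2g+2}$ amounts to deciding whether $v\otimes 1$ lies in the image of the connecting map $\delta(h_{1P}\otimes\Gamma_1\otimes\Gamma_2)=e_1\Gamma_1\otimes\Gamma_2+\Gamma_1\otimes e_2\Gamma_2$, which is no easier than the original surjectivity question. The paper therefore proceeds differently: it computes $PFH_{2g+2}(S')$ by filtering successively by the multiplicities of $h_{1P}$, $h_{2P}$, and $e_P$, arrives at the quotient description above, and then proves surjectivity of $j_*$ by expanding the relation $v_{2g+2}\otimes 1\sim v_{2g}\otimes e_2^2$ term by term, using $W_0=\F\{1\}$, $W_2=\F\{e_2^2\}$, and $W_1=\F\{e_2\}\oplus W_1'$ to conclude $v_{2g+2}=e_1^2v_{2g}''$. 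To repair your argument you would need to carry out essentially this analysis; the shortcut through a K\"unneth theorem is not available.
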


\subsubsection{Description of the differential}
\label{subsubsection: part 0}

Given two orbit sets $\bs\gamma'=\prod\gamma_i^{m_i'}$ and $\bs\gamma=\prod \gamma_i^{m_i}$, we set $\bs\gamma/\bs\gamma'= \prod\gamma_i^{m_i-m_i'}$ if $m_i'\leq m_i$ for all $i$; otherwise we set $\bs\gamma/\bs\gamma'=0$.

The chain group $PFC_{2g+2}(S')$ can be written as:
$$PFC_{2g+2}(S')=\bigoplus_{m+i+j=2g+2} \F[h_{1P}, h_{2P}, h_3, e_P, e_3]_m\otimes PFC_i(S)\otimes PFC_j(T).$$
$\F[h_{1P}, h_{2P}, h_3, e_P, e_3]$ is a polynomial ring where $h_{1P}$, $h_{2P}$, $h_3$ (resp.\ $e_P$, $e_3$) are considered as Grassmann variables of odd degree (resp.\ even degree) and the subscript $m$ indicates the subspace spanned by monomials 
with total exponent $m$.

Let us write a generator of $PFC_{2g+2}(S')$ as $\bs\gamma\otimes \bs\Gamma_1\otimes\bs\Gamma_2$, where $\bs\Gamma_1\in PFC_i(S)$, $\bs\Gamma_2\in PFC_j(T)$, and $\bs\gamma$ is constructed from orbits passing through $S'-S-T$.  Using Lemma~\ref{lemma: cloistering} and the description of ECH index one curves in $\R\times N_P$ from Lemma~\ref{lemma: list of ECH index one curves in P}, we write the differential $\bdry$ of $PFC_{2g+2}(S')$ as follows:
\begin{align*}
\bdry(\bs\gamma\otimes \bs\Gamma_1\otimes\bs\Gamma_2) = & \bs\gamma\otimes (\bdry_S\bs\Gamma_1)\otimes \bs\Gamma_2 +
\bs\gamma\otimes \bs\Gamma_1\otimes (\bdry_T\bs\Gamma_2) \\
& + (\bs\gamma/e_P) (h_3 \otimes\bs\Gamma_1\otimes \bs\Gamma_2 + 1\otimes h_1\bs\Gamma_1\otimes \bs\Gamma_2+ 1\otimes\bs\Gamma_1\otimes h_2\bs\Gamma_2)\\
& + (\bs\gamma/h_{1P}) (e_3\otimes \bs\Gamma_1\otimes \bs\Gamma_2+ 1\otimes e_1\bs\Gamma_1\otimes\bs\Gamma_2)\\
& + (\bs\gamma/h_{2P}) (e_3\otimes \bs\Gamma_1\otimes \bs\Gamma_2+ 1\otimes \bs\Gamma_1\otimes e_2\bs\Gamma_2).
\end{align*}
Here $\bdry_S$ and $\bdry_T$ are the differentials on $PFC(S)$ and $PFC(T)$.

\subsubsection{Spectral sequence calculation}
\label{subsubsection: part 1}

In this subsection we use spectral sequences to interpret $PFH_{2g+2}(S')$ in terms of $V_i$ and $W_j$. More precisely, we prove the following:

\begin{lemma}
\label{lemma: spectral sequence calculation}
There is a natural inclusion of $\left(\bigoplus_{i+j=2g+2} V_i\otimes W_j\right)/\sim$ into $PFH_{2g+2}(S')$,
where the equivalence relation $\sim$ is given by $e_1\bs\Gamma_1\otimes \bs\Gamma_2\sim \bs\Gamma_1\otimes e_2\bs\Gamma_2$. Here $\bs\Gamma_1\in V_i$ and $\bs\Gamma_2\in W_j$ and the equivalence $\sim$ is generated by terms in adjacent $V_{i+1}\otimes W_j$ and $V_i\otimes W_{j+1}$.
\end{lemma}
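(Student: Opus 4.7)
The plan is to equip $PFC_{2g+2}(S')$ with an increasing filtration by the polynomial degree $m$ in $R=\F[h_{1P}, h_{2P}, h_3, e_P, e_3]$, compute the associated spectral sequence through the $E_2$ page, and identify $E_\infty^{0,\bullet}$ with the image of the canonical map $H(F_0)\to PFH_{2g+2}(S')$, where $F_0$ denotes the subcomplex with $m=0$. Since $\bdry$ restricts to $\bdry_S\otimes \op{id} + \op{id}\otimes\bdry_T$ on $F_0$, one has $H(F_0)=\bigoplus_{i+j=2g+2} V_i\otimes W_j$, so the candidate inclusion is tautologically defined on the chain level, and the content of the lemma is that its kernel is precisely $\sim$.

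First I would split the differential as $\bdry=\bdry_S+\bdry_T+D_1+D_2$, where $D_1$ collects the ``internal'' terms $(\gamma/e_P)h_3$, $(\gamma/h_{1P})e_3$ and $(\gamma/h_{2P})e_3$ (which preserve $m$) and $D_2$ collects the ``transfer'' terms involving $h_1,h_2,e_1,e_2$ (which decrease $m$ by one). Filtering by $F_p=\{m\leq p\}$, the $E_0$ differential becomes $d_0=\bdry_S+\bdry_T+D_1$; since $D_1$ acts only on the $R$-factor while $\bdry_S$ and $\bdry_T$ preserve the orbit-set gradings, the Künneth formula over $\F$ gives
\[
E_1^{p,\bullet} \;=\; H_p(R,D_1)\;\otimes\;\bigoplus_{i+j=2g+2-p} V_i\otimes W_j.
\]
The key computation is then $H(R,D_1)$. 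Using the factorization $R=A\otimes C$ with $A=\F[e_P]\otimes\Lambda(h_3)$ and $C=\F[e_3]\otimes\Lambda(h_{1P},h_{2P})$, a direct inspection of monomials gives $H(A,D_A)=\F\cdot[1]$, while $H(C,D_C)$ is two-dimensional with basis $[1]$ in degree $0$ and $[h_{1P}+h_{2P}]$ in degree $1$. Hence $H(R,D_1)$ is two-dimensional and $E_1$ is concentrated in filtration degrees $p=0,1$.

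Next I would compute $d_1$, induced by $D_2$. On $E_1^{0,\bullet}$ it vanishes because $\gamma=1$ kills all factors $(\gamma/e_P)$ and $(\gamma/h_{iP})$. Applied to the representative $(h_{1P}+h_{2P})\otimes\Gamma_1\otimes\Gamma_2$ of $E_1^{1,\bullet}$, however, the formula for $D_2$ yields exactly
\[
d_1\bigl((h_{1P}+h_{2P})\otimes\Gamma_1\otimes\Gamma_2\bigr) \;=\; 1\otimes e_1\Gamma_1\otimes\Gamma_2 + 1\otimes\Gamma_1\otimes e_2\Gamma_2,
\]
which is the defining relation $\sim$. Thus $E_2^{0,\bullet}=\bigl(\bigoplus_{i+j=2g+2} V_i\otimes W_j\bigr)/\sim$. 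All higher differentials vanish on the bottom row since $d_r$ decreases the filtration degree by $r$ and $E_r^{-r,\bullet}=0$ for $r\geq 2$, so $E_\infty^{0,\bullet}=E_2^{0,\bullet}$; convergence of the spectral sequence identifies this with the image of $H(F_0)\to PFH_{2g+2}(S')$. Composing the surjection $H(F_0)\twoheadrightarrow H(F_0)/\sim$ with this inclusion yields the desired natural inclusion.

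The main obstacle will be the verification that $D_1^2=0$ and the precise computation of $H(R,D_1)$, because the rule $(\gamma/e_P)h_3$ is not a Leibniz derivation on powers of $e_P$. The factorization $R=A\otimes C$ defuses this: writing $D_1=D_A\otimes 1+1\otimes D_C$ one sees $D_1^2=D_A^2\otimes 1+1\otimes D_C^2$ over $\F_2$, and both $D_A^2=0$ and $D_C^2=0$ reduce to elementary checks on monomials using $h_3^2=h_{iP}^2=0$. Once this is in place, the bookkeeping of the spectral sequence runs as above.
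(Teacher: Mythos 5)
Your proof is correct, and it reaches the same $E^2$-page as the paper but by a genuinely different filtration scheme. The paper runs three nested short filtrations --- by the multiplicity of $h_{1P}$, then of $h_{2P}$, then of $e_P$ --- and cancels the variables one at a time ($e_P$ against $h_3$, then $h_{2P}$ and $e_3$ against $e_2$, leaving $h_{1P}$ to furnish the connecting homomorphism $\delta(h_{1P}\otimes\Gamma_1\otimes\Gamma_2)=e_1\Gamma_1\otimes\Gamma_2+\Gamma_1\otimes e_2\Gamma_2$), so that the degree-zero part of $E^2(\mathcal{F})$ is $\mathfrak{L}_0/\op{Im}(\delta)$. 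You instead filter once, by the total multiplicity $m$ of the orbits through $P$, and package all the filtration-preserving cancellations into the single Koszul-type differential $D_1$ on $R$; the factorization $R=A\otimes C$ and the resulting two-dimensional $H(R,D_1)=\F\{[1]\}\oplus\F\{[h_{1P}+h_{2P}]\}$ reproduce in one step what the paper obtains over three pages, and your $d_1$ is exactly the paper's $\delta$. Your organization has two advantages: the two-column shape of $E_1$ makes degeneration at $E_2$ and the identification $E_\infty^{0}=\op{Im}\bigl(H(F_0)\to PFH_{2g+2}(S')\bigr)$ immediate, so the conclusion is honestly an \emph{inclusion} (the paper's closing identification of $E^2(\mathcal{F})$ with $\mathfrak{L}_0/\op{Im}(\delta)$ silently discards $\ker\delta$ in filtration degree one --- harmless for the lemma as stated, but your bookkeeping is cleaner on this point); and the relation $\sim$ appears transparently as the image of a single $d_1$. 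What the paper's version buys is that each cancellation stays attached to a specific pair of orbits, matching the gradient-flow picture on the pair of pants. One small remark: your concern about $D_1^2=0$ is moot, since the induced differential on the associated graded of a filtered complex automatically squares to zero; the explicit check via $R=A\otimes C$ does no harm, and the tensor decomposition is needed anyway for the K\"unneth step computing $H(R,D_1)$.
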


\begin{proof}
Let $\mathcal{F}$ be a filtration on $(PFC_{2g+2}(S'),\bdry)$ which, on the generators, counts the multiplicity of $h_{1P}$.  This means that $\mathcal{F}$ takes values in $\{0,1\}$.  We write $(E^r(\mathcal{F}),\bdry_r)$ for the $E^r$-term and the $E^r$-differential of the spectral sequence for $\mathcal{F}$. Each page $E^r(\mathcal{F})$ has a grading coming from $\mathcal{F}$ and $E^r_k(\mathcal{F})$ is the degree $k$ component of $E^r(\mathcal{F})$ with respect to this grading.  We remark that the spectral sequence associated to $\mathcal{F}$ is nothing but a long exact sequence in homology, and its use is motivated by our wish to give a parallel treatment of the cases where we filter by the multiplicity of a hyperbolic orbit or by the multiplicity of an elliptic orbit.

Next let $\mathcal{G}$ be a filtration on $(E^0(\mathcal{F}),\bdry_0)$ which counts the multiplicity of $h_{2P}$.  Again, $\mathcal{G}$ takes values in $\{0,1\}$.  We write $(E^r(\mathcal{G}),\bdry_{0r})$ for the $E^r$-term and the $E^r$-differential of the spectral sequence for $\mathcal{G}$.  Finally, let $\mathcal{H}$ be the filtration on $(E^0(\mathcal{G}),\bdry_{00})$ which counts the multiplicity of $e_P$, and let $(E^r(\mathcal{H}),\bdry_{00r})$ be the $E^r$-term and the $E^r$-differential of the spectral sequence for $\mathcal{H}$.

We first consider $(E^0(\mathcal{H}),\bdry_{000})$, where:
$$\bdry_{000}(\bs\gamma\otimes \bs\Gamma_1\otimes \bs\Gamma_2)= \bs\gamma\otimes (\bdry_S\bs\Gamma_1)\otimes \bs\Gamma_2+ \bs\gamma\otimes \bs\Gamma_1\otimes (\bdry_T\bs\Gamma_2).$$
By the K\"unneth formula, we have:
$$E^1(\mathcal{H})= \bigoplus_{m+i+j=2g+2}  \F[h_{1P}, h_{2P}, h_3, e_P, e_3]_m \otimes V_i\otimes W_j .$$

Next consider $(E^1(\mathcal{H}),\bdry_{001})$, where:
\begin{align*}
\bdry_{001}(e_P^{n}\bs\gamma\otimes\bs\Gamma_1\otimes \bs\Gamma_2) = & e_P^{n-1}h_3 \bs\gamma\otimes \bs\Gamma_1\otimes \bs\Gamma_2 +  e_P^{n-1} \bs\gamma\otimes (h_1\bs\Gamma_1)\otimes\bs\Gamma_2\\
& + e_P^{n-1}\bs\gamma\otimes \bs\Gamma_1\otimes (h_2\bs\Gamma_2),
\end{align*}
Here $\bs\Gamma_1\in V_i$, $\bs\Gamma_2\in W_j$, and $\bs\gamma$ has no $e_P$ term. Note that any $e_P^{n-1}h_3 \bs\gamma\otimes \bs\Gamma_1\otimes \bs\Gamma_2$ is homologous (with respect to the differential $\bdry_{001}$) to a linear combination of $e_P^{n-1}\bs\gamma'\otimes \bs\Gamma_1'\otimes\bs\Gamma_2'$, where $\bs\gamma'$ does not have any $e_P$ and $h_3$ terms, $\bs\Gamma_1'$ is in some $V_i$, and $\bs\Gamma_2'$ is in some $W_j$. Hence every element of $E^1(\mathcal{H})/\op{Im}(\bdry_{001})$ can be represented by $w=\sum_{i\geq 0} e_P^iw_i$, where $w_i$ has no $e_P$ and $h_3$ terms, and we can write
$$\bdry_{001}(w)= \sum_{i>0} e_P^{i-1} h_3 w_i +w',$$
where $w'$ has no terms which contain $h_3$. If $\bdry_{001}(w)=0$, then all the $w_i$, $i>0$, must be zero.  We therefore obtain:
$$E^2(\mathcal{H})= \bigoplus_{m+i+j=2g+2}  \F[h_{1P}, h_{2P}, e_3]_m \otimes V_i\otimes W_j .$$
Since $E^2(\mathcal{H})$ is supported in degree $0$, the spectral sequence for $\mathcal{H}$ converges at the $E^2$-term and we have $E^1(\mathcal{G})\simeq E^2(\mathcal{H})$. Moreover, $E^2(\mathcal{H})=E^2_0(\mathcal{H})$ is naturally isomorphic to $E^1(\mathcal{G})$ since $0$ is the lowest filtration level.

Now consider $(E^1(\mathcal{G}),\bdry_{01})$, where
$$\bdry_{01}(\bs\gamma\otimes \bs\Gamma_1\otimes \bs\Gamma_2)=e_3(\bs\gamma/h_{2P}) \otimes \bs\Gamma_1\otimes \bs\Gamma_2+(\bs\gamma/h_{2P}) \otimes \bs\Gamma_1\otimes e_2\bs\Gamma_2.$$
The calculation of $E^2(\mathcal{G})$ is similar to the calculation of $E^2(\mathcal{H})$ in the previous paragraph.  Any $e_3^{n+1} \bs\gamma\otimes \bs\Gamma_1\otimes \bs\Gamma_2$ is homologous to a linear combination of $e_3^{n}\bs\gamma'\otimes\bs\Gamma_1'\otimes \bs\Gamma_2'$, where $\bs\gamma$, $\bs\gamma'$ do not have any $h_{2P}$ and $e_3$ terms. Hence every element of $E^1(\mathcal{G})/\op{Im}(\bdry_{01})$ can be represented by $w=w'+\sum_{i=0}^k h_{2P} e_3^iw_i$, where $w'$ and $w_i$ have no $h_{2P}$ or $e_3$ terms, and we can write
$$\bdry_{01} w= \sum_{i=0}^k e_3^{i+1}w_i +\sum_{i=1}^k e_3^i e_2 w_i.$$
If $\bdry_{01} w=0$, then all the $w_i$ must be zero.  Only the $w'$ term remains, and we have:
$$ E^2(\mathcal{G})= \bigoplus_{m+i+j=2g+2} \F[h_{1P}]_m \otimes V_i\otimes W_j ,$$
which we can write as a direct sum $\mathfrak{L}_0\oplus \mathfrak{L}_1$, where:
$$\mathfrak{L}_0= \bigoplus_{i+j=2g+2} V_i\otimes W_j,$$
$$\mathfrak{L}_1=\bigoplus_{i+j=2g+1} \F\{h_{1P}\}\otimes V_i \otimes W_j.$$
Since $E^2(\mathcal{G})$ is supported in the lowest degree $0$, the spectral sequence for $\mathcal{G}$ converges at the $E^2$-term and $E^2(\mathcal{G})=E^2_0(\mathcal{G})$ is naturally isomorphic to $E^1(\mathcal{F})$.

Finally, $E^1(\mathcal{F})\simeq \mathfrak{L}_0\oplus \mathfrak{L}_1$ has differential $\bdry_1$ given by:
\begin{equation}
\bdry_1(\bs\Gamma_1\otimes \bs\Gamma_2)=0,
\end{equation}
\begin{equation} \label{eqn: connecting}
\bdry_1(h_{1P}\otimes\bs\Gamma_1\otimes\bs\Gamma_2)= e_1\bs\Gamma_1\otimes \bs\Gamma_2+ \bs\Gamma_1\otimes e_2\bs\Gamma_2,
\end{equation}
since $\bdry (h_{1P}\otimes\bs\Gamma_1\otimes\bs\Gamma_2)= e_1\bs\Gamma_1\otimes \bs\Gamma_2+ e_3\otimes\bs\Gamma_1\otimes\bs\Gamma_2$ and $\bdry (h_{2P}\otimes \bs\Gamma_1\otimes \bs\Gamma_2)= \bdry_0 (h_{2P}\otimes\bs\Gamma_1\otimes \bs\Gamma_2) = e_3\otimes\bs\Gamma_1\otimes \bs\Gamma_2 + \bs\Gamma_1\otimes e_2\bs\Gamma_2$. Viewing the spectral sequence as a long exact sequence with connecting homomorphism $\mathfrak{L}_1\stackrel\delta\to
\mathfrak{L}_0$, where $\delta$ is given by Equation~(\ref{eqn: connecting}), we see that $E^2(\mathcal{F})$ is isomorphic to $\mathfrak{L}_0/\op{Im}(\delta) = \mathfrak{L}_0/\sim$, where the equivalence relation is given by $e_1\bs\Gamma_1\otimes\bs\Gamma_2\sim \bs\Gamma_1\otimes e_2\bs\Gamma_2$. As argued previously, the spectral sequence converges at the $E^2$-term and $E^2(\mathcal{F})$ is naturally isomorphic to $PFH_{2g+2}(S')$.
\end{proof}

\subsubsection{Isomorphism of $j_*$}
\label{subsubsection: part 2}

We now complete the proof of Proposition~\ref{prop: isomorphism of j}.

\begin{proof}[Proof of Proposition~\ref{prop: isomorphism of j}]
Since $i_*\circ j_*: V_{2g}\to PFH_{2g+2}(S')$ is an isomorphism by Corollary~\ref{cor: i circ j is isomorphism}, $j_*: V_{2g}\to V_{2g+2}$ is injective. It remains to show that $j_*$ is surjective.

First note that $W_0=\F\{1\}$. We also have $W_2=\F\{e_2^2\}$, since $\widehat{HF}(S^3)\simeq \F$, generated by the contact class $c(\xi_{std})$ of the standard tight contact structure $\xi_{std}$ on $S^3$, and $e_2^2$ is the image of $c(\xi_{std})$ under the isomorphism
$$(\Phi_{(T,\hh_T)})_*:\widehat{HF}(S^3)=\widehat{HF}(T,\hh_T)\stackrel\sim\to PFH_2(T).$$
Now, since the isomorphism $W_0\stackrel\sim\to W_2$, $1\mapsto e_2^2$, factors as $W_0\to W_1\to W_2$, $1\mapsto e_2\mapsto e_2^2$, it follows that $W_1=\F\{e_2\}\oplus W_1'$ for some $\F$-vector space $W_1'$.

By Lemma~\ref{lemma: spectral sequence calculation},
$$PFH_{2g+2}(S')\simeq \left(\bigoplus_{i+j=2g+2} V_i\otimes W_j\right)/\sim.$$
Moreover, $V_{2g}\otimes W_2= V_{2g}\otimes \F\{e_2^2\}$ is the image of $V_{2g}$ under the map $i_*\circ j_*$. Since $i_*\circ j_*$ is an isomorphism, every element of $V_{2g+2}\otimes W_0= V_{2g+2}\otimes \F\{1\}$ is equivalent to some element of $V_{2g}\otimes W_2$, i.e., if $v_{2g+2}\in V_{2g+2}$, then $v_{2g+2}\otimes 1 \sim v_{2g}\otimes e_2^2$ for some $v_{2g}\in V_{2g}$. More explicitly,
\begin{align*}
v_{2g+2}\otimes 1 + v_{2g}\otimes e_2^2 =& (e_1 v''_{2g+1}\otimes 1 +v''_{2g+1}\otimes e_2) + (e_1 v''_{2g}\otimes e_2 + v''_{2g}\otimes e_2^2)\\
& \quad + \sum_i (e_1 v_{2g,i}''\otimes w'_{1,i}+v_{2g,i}''\otimes e_2 w'_{1,i}) + \dots,
\end{align*}
where $v''_{2g+1}\in V_{2g+1}$; $v''_{2g},v_{2g,i}''\in V_{2g}$; $\{w'_{1,i}\}$ is a basis for $W_1'$; and $\dots$ is a linear combination of terms which are not in $V_{2g+2}\otimes W_0$ and $V_{2g+1}\otimes W_1$. A term-by-term comparison gives $v_{2g+2}=e_1v''_{2g+1}$ and $v''_{2g+1}=e_1 v''_{2g}$. Hence $v_{2g+2}=e_1^2v''_{2g}$ and $v_{2g+2}\in \op{Im}(j_*)$. This completes the proof of Proposition~\ref{prop: isomorphism of j}.
\end{proof}

\subsection{Proof of Theorem~\ref{thm: stabilization}} \label{modifications}

We use the notation from Section~I.\ref{P1-direct limits}.

We first consider the ECH cobordism map
$${\frak K}''_j: ECC_j(N,f_j\alpha)\to ECC_{j}(N,f_{j+2}\alpha),$$ defined via Seiberg-Witten Floer homology as in \cite[Theorem~2.4]{HT3}.

More precisely, let $\delta_k>0$ be constants satisfying $\displaystyle\lim_{k\to\infty}\delta_k=0$.  Let $V_{j+2}^{\delta_k}$ be a $\delta_k$-tubular neighborhood of the elliptic orbits of $R_{f_j\alpha}$ with $\mathcal{F}= j+1$ or $j+2$, where $\delta_k$ is measured with respect to a fixed Riemannian metric on $N$, and let $f_{j+2,k}:N\to (0,+\infty)$ be a function such that the following hold:
\begin{enumerate}
\item $f_{j+2,k}=f_j$ on $N-V_{j+2}^{\delta_k}$;
\item $R_{f_{j+2,k}\alpha}$ has no elliptic orbits with $\mathcal{F}\leq j+2$; and
\item $\displaystyle\lim_{k\to\infty}f_{j+2,k}=f_j$ in $C^1$.
\end{enumerate}

Let $J_j$ be an $f_j\alpha$-adapted almost complex structure on $\R\times N$ and let $J_{j+2,k}$ be $f_{j+2,k}\alpha$-adapted almost complex structures on $\R\times N$ which agree with $J_j$ on $\R\times (N-V_{j+2}^{\delta_k})$ and satisfy $J_{j+2,k}\to J_j$ in $C^0$ as $k\to \infty$.  Also let $\widetilde J_{j,k}$ be an almost complex structure on $\R\times N$ which adapted to an exact symplectic cobordism $\omega_{j,k}$ from $f_j\alpha$ to $f_{j+2,k}\alpha$, agrees with $J_j$ on $\R\times (N-V_{j+2}^{\delta_k})$, and satisfies $\widetilde J_{j,k}\to J_j$ as $k\to \infty$.

Observe that the generators of $ECC_j(N,f_j\alpha)$ and $ECC_{j}(N,f_{j+2,k}\alpha)$ agree for $k\gg 0$.  Let $\bs\gamma$ and $\bs\gamma'$ denote orbit sets for either $f_j\alpha$ or $f_{j+2,k}\alpha$.

\begin{lemma} \label{limit of curves}
Let $u_k$ either be a sequence of $J_{j+2,k}$-holomorphic curves from $\bs\gamma$ to $\bs\gamma'$ or a sequence of $\widetilde J_{j,k}$-holomorphic curves from $\bs\gamma$ to $\bs\gamma'$.  Then, after passing to a subsequence, $u_k$ converges to a $J_j$-holomorphic building $u$ from $\bs\gamma$ to $\bs\gamma'$.
\end{lemma}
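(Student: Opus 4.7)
The plan is to apply the SFT compactness theorem of Bourgeois-Eliashberg-Hofer-Wysocki-Zehnder to the sequence $u_k$, using the convergence $(f_{j+2,k}\alpha, J_{j+2,k})\to (f_j\alpha,J_j)$ as $k\to\infty$ (and the analogous statement for the cobordism case). First I would establish a uniform energy bound: since the asymptotic orbit sets $\gamma$ and $\gamma'$ are fixed and consist of orbits of $R_{f_j\alpha}$ that persist as orbits of $R_{f_{j+2,k}\alpha}$ (because the perturbation is supported in $V_{j+2}^{\delta_k}$, which by construction is disjoint from $\gamma\cup\gamma'$), the symplectic actions $\int_\gamma f_{j+2,k}\alpha$ and $\int_\gamma f_j\alpha$ differ by an amount tending to zero. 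Thus the Hofer energies of the $u_k$ are uniformly bounded, and in the cobordism case, so are the areas measured against $\omega_{j,k}$.

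Next I would invoke SFT compactness to extract a subsequential limit which is a holomorphic building in $\R\times N$ (respectively in the symplectization/cobordism setup). Because the data $J_{j+2,k}$ and $\widetilde J_{j,k}$ agree with $J_j$ outside $\R\times V_{j+2}^{\delta_k}$ and converge to $J_j$ globally in $C^0$, the limit building is $J_j$-holomorphic: on any compact set disjoint from $\R\times(\cap_k V_{j+2}^{\delta_k})=\R\times\{\text{elliptic orbits with }\mathcal{F}=j+1\text{ or }j+2\}$, the equation $\bar\partial_{J_{j+2,k}}u_k=0$ becomes the equation $\bar\partial_{J_j}u=0$ in the limit.

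The main obstacle is to control possible concentration of $u_k$ in the shrinking neighborhoods $V_{j+2}^{\delta_k}$ of the elliptic orbits $e$ that are being destroyed by the perturbation. A priori, pieces of $u_k$ could escape into these neighborhoods and, under rescaling, produce additional sublevels in the limit that are asymptotic to $e$ — which is legitimate as a Reeb orbit of $R_{f_j\alpha}$ but was not one of $R_{f_{j+2,k}\alpha}$. The delicate point is verifying that this scenario does produce a bona fide $J_j$-holomorphic building from $\gamma$ to $\gamma'$: one needs to show that each such ``bubbled'' piece, after rescaling in $V_{j+2}^{\delta_k}$, converges to a nontrivial finite-energy $J_j$-holomorphic curve in $\R\times N$ with positive/negative asymptotic at the orbit $e$ (or some multiple of it), and that these pieces glue consistently with the rest of the building via their matching asymptotics. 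This is essentially the ``neck-stretching'' argument around the perturbed orbits, and follows from standard SFT compactness provided the shrinking rate $\delta_k$ is compatible with the $C^1$-convergence rate of $f_{j+2,k}\to f_j$.

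Finally I would verify that the top asymptotic of the limit building equals $\gamma$ and the bottom asymptotic equals $\gamma'$. This follows from the preservation of the asymptotic Reeb orbits under SFT convergence, together with the fact that $\gamma,\gamma'$ are fixed orbit sets of $R_{f_j\alpha}$ lying outside $V_{j+2}^{\delta_k}$ for all large $k$, so no collision with the perturbation region occurs at the ends. This yields the desired $J_j$-holomorphic building $u$ from $\gamma$ to $\gamma'$.
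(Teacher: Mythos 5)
There is a genuine gap at the very first step: you ``invoke SFT compactness,'' but the standard compactness theorem of \cite{BEHWZ} requires the contact forms and adapted almost complex structures to converge in $C^\infty$, whereas here $f_{j+2,k}\to f_j$ only in $C^1$ and consequently $J_{j+2,k}\to J_j$ only in $C^0$ (the adapted condition forces $J_{j+2,k}(\bdry_s)=R_{f_{j+2,k}\alpha}$, and the Reeb vector fields converge no better than $C^0$). This weak regularity is exactly why the lemma is not an immediate corollary of SFT compactness and needs a proof. You do flag the concentration of $u_k$ in the shrinking neighborhoods $V_{j+2}^{\delta_k}$ as ``the delicate point,'' but you then resolve it by appealing again to ``standard SFT compactness,'' which is circular: that is precisely the step that is unavailable. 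The paper's first route is to quote Gromov compactness for $C^0$-convergent almost complex structures due to Ivashkovich--Shevchishin \cite{IS}; if you want to go the compactness-theorem route, that is the reference you need, not \cite{BEHWZ}.

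The paper's alternative argument avoids taking any limit inside the perturbation region, and you are missing its two key ingredients. First, one restricts $u_k$ to the preimage $G_k$ of $\R\times(N-V_{j+2}^{\delta_k})$, where $J_{j+2,k}=J_j$ \emph{exactly}, and bounds $|\chi(G_k)|$ (equivalently, the number of disk components of $F_k-G_k$) by the intersection number of $[u_k]$ with $\R\times\gamma''$, where $\gamma''$ is the fixed union of core orbits of the $V_{j+2}^{\delta_k}$; this topological bound, controlled by the relative homology class, is what makes ordinary Gromov compactness applicable to $u_k|_{G_k}$. Second, positivity of intersections with $\R\times\gamma''$ forces $\bdry G=\varnothing$ in the limit, so nothing essential concentrates near the destroyed elliptic orbits and the interior punctures of $G$ not limiting to $\gamma$ or $\gamma'$ are removable. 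Your proposal instead allows extra bubbled levels asymptotic to the destroyed orbits and asserts they glue consistently; even if such a building would still formally run from $\gamma$ to $\gamma'$, producing those pieces requires exactly the compactness-in-$V_{j+2}^{\delta_k}$ that has not been established. Either cite \cite{IS} or supply the restriction-plus-positivity-of-intersections argument.
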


\begin{proof}[Sketch of proof.]
This follows from Gromov compactness for $C^0$-convergence of almost complex structures, due to Ivashkovich-Shevchishin~\cite{IS}.

Alternatively, we can argue as follows, using the usual Gromov compactness for $C^\infty$-convergence of almost complex structures: We treat the case where $u_k$ is a $J_{j+2,k}$-holomorphic curve. By the compactness argument from Section~I.\ref{P1-subsection: compactness PFH case}, we may assume that $[u_k]\in H_2(N,\bs\gamma,\bs\gamma')$ and the topological type of the domains $F_k$ of $u_k$ are fixed. We then restrict $u_k$ to the preimage $G_k$ of $\R\times (N-V_{j+2}^{\delta_k})$. We may assume that $G_k$ is obtained from a compact Riemann surface with boundary by removing interior punctures. (This is possible by taking $\delta_k$ to be generic.)

We claim that $|\chi(G_k)|$ is bounded above. This is equivalent to an upper bound on the number of disk components of $F_k-G_k$. Let $\bs\gamma''$ be the union of core orbits of $V_{j+2}^{\delta_k}$, which is independent of $k$. The number of disk components is bounded above by the intersection number with $\R\times\bs\gamma''$, which in turn is controlled by the homology class $[u_k]\in H_2(N,\bs\gamma,\bs\gamma')$.

Since $J_{j+2,k}=J_j$ on $\R\times (N-V_{j+2}^{\delta_k})$, we can take the limit of $u_k|_{G_k}$ to obtain $u|_G$. For simplicity assume that $u$ has only one level. Then $\bdry G$ is mapped to $\R\times \bs\gamma''$, which contradicts the positivity of intersections unless $\bdry G=\varnothing$.  Hence $\bdry G=\varnothing$ and all the interior punctures of $G$ which do not to limit to $\bs\gamma$ or $\bs\gamma'$ are removable.
\end{proof}

\begin{lemma}
For $k\gg 0$, the ECH cobordism map $${\frak K}''_j:ECC_j(N,f_j\alpha)\to ECC_{j}(N,f_{j+2,k}\alpha)$$ satisfies ${\frak K}''_j(\bs\gamma)=\bs\gamma$ and induces an isomorphism on the level of homology.
\end{lemma}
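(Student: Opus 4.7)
The plan is to show that ${\frak J}''_j$ equals the identity at the chain level (i.e., ${\frak J}''_j(\gamma)=\gamma$ for each ECH generator $\gamma$ of $ECC_j(N,f_j\alpha)$), from which the second statement will follow essentially for free. First I would use Taubes's ECH/Seiberg--Witten isomorphism to reinterpret ${\frak J}''_j$ (following the scheme of Hutchings-Taubes~\cite{HT3}) as a count of ECH-index-zero broken $\widetilde J_{j,k}$-holomorphic currents in the exact cobordism $(\R\times N,\omega_{j,k})$ from $f_j\alpha$ to $f_{j+2,k}\alpha$. Fix generators $\gamma$ and $\gamma'$, and suppose that for a sequence $k\to\infty$ there exist such currents $u_k$ from $\gamma$ to $\gamma'$.

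By the preceding ``limit of curves'' Lemma, after passing to a subsequence $u_k$ converges to a $J_j$-holomorphic building $u_\infty$ from $\gamma$ to $\gamma'$ in $\R\times N$. Since ECH index is additive and lower semicontinuous under the limit, $I(u_\infty)=0$. For a generic $f_j\alpha$-adapted $J_j$ the only ECH-index-zero buildings in a symplectization are disjoint unions of trivial cylinders (which automatically have $I=0$), so $\gamma'=\gamma$. A Hutchings--Taubes gluing/uniqueness argument then shows that for $k\gg 0$ the signed count of broken $\widetilde J_{j,k}$-currents contributing to the $\gamma$ coefficient of ${\frak J}''_j(\gamma)$ is exactly $1$, giving ${\frak J}''_j(\gamma)=\gamma$.

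Now, since the generating sets of $ECC_j(N,f_j\alpha)$ and $ECC_j(N,f_{j+2,k}\alpha)$ coincide for $k\gg 0$ (as noted just before the lemma), and ${\frak J}''_j$ is a chain map that acts as the identity on these common generators, the ECH differentials of the two chain complexes must in fact agree on them. (Alternatively, the same compactness-plus-genericity argument applied to index-one currents shows directly that the two differentials coincide.) Hence ${\frak J}''_j$ is literally the identity chain map, and in particular induces an isomorphism on homology.

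The main obstacle is the analytic/combinatorial control of the limit building $u_\infty$: in principle it could contain branched multiple covers of trivial cylinders (ECH ``connector'' components), which also carry ECH index zero but are not embedded. We must either rule these out using the ECH partition conditions at the elliptic orbits in $\gamma$ (which is standard given that the elliptic orbits in the support of $\gamma$ have $\mathcal F\le j$ and are therefore unaffected by the perturbation supported in $V_{j+2}^{\delta_k}$) or show that such connectors contribute trivially to the gluing count, so that the mod-$2$ coefficient of $\gamma$ in ${\frak J}''_j(\gamma)$ is indeed~$1$. This is exactly the kind of counting handled by the Hutchings--Taubes gluing machinery and we expect it to apply here without essential modification.
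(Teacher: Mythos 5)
Your proposal is correct and follows essentially the same route as the paper: reinterpret ${\frak J}''_j$ via the Holomorphic Curves Property of \cite[Theorem~2.4]{HT3}, use the preceding compactness lemma to show that for $k\gg 0$ every contributing $I=0$ building is supported on trivial cylinders (hence $\gamma'=\gamma$), and conclude that the identity chain map forces the two differentials to agree. The one place you overcomplicate matters is the ``main obstacle'' about connectors: no gluing or partition-condition argument is needed, since the cited Holomorphic Curves Property itself guarantees that the weight $w(v)$ equals $1$ whenever the building $v$ is supported on trivial cylinders, multiply covered or not.
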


\begin{proof}
We use the Holomorphic Curves Property in \cite[Theorem~2.4]{HT3}, which states that $\langle {\frak K}''_j(\bs\gamma),\bs\gamma'\rangle$ is a sum of weights $w(v_i)$ over all $I=0$  $\widetilde J_{j,k}$-holomorphic buildings $v_1,\dots,v_l$ from $\bs\gamma$ to $\bs\gamma'$. By Lemma~\ref{limit of curves}, for $k\gg 0$, the only $I=0$ $\widetilde J_{j,k}$-buildings are curves supported on trivial cylinders.  Moreover, the Holomorphic Curves Property also guarantees that $w(v)=1$ if $v$ is supported on trivial cylinders.  This implies that ${\frak K}''_j(\bs\gamma)=\bs\gamma$.

Finally, since the chain map ${\frak K}''_j$ is the identity, the differentials on $ECC_j(N,f_j\alpha)$ and $ECC_{j}(N,f_{j+2,k}\alpha)$ must agree.  This implies the second assertion of the lemma.
\end{proof}

We now let $f_{j+2}=f_{j+2,k}$ for $k\gg 0$.  In view of Theorem~I.\ref{P1-thm: direct limit}, we can write
$$\displaystyle\widehat{ECH}(M)\simeq \lim_{j\to\infty} ECH_{2j}(N,f_{2j}\alpha),$$
where the map $ECH_{2j}(N,f_{2j}\alpha)\to ECH_{2j+2}(N,f_{2j+2}\alpha)$ is given by the top row of:
\begin{equation}
\begin{diagram}
ECH_{2j}(N,f_{2j}\alpha) & \rTo^{(\frak K''_{2j})_*} & ECH_{2j}(N,f_{2j+2}\alpha) & \rTo^{j'_*} & ECH_{2j+2}(N,f_{2j+2}\alpha)  \\
 & & \dTo^{\simeq} & & \uTo^{\simeq} \\
 & & PFH_{2j}(N,f_{2j+2}\alpha) & \rTo^{j_*} & PFH_{2j+2}(N,f_{2j+2}\alpha) \\
\end{diagram}
\end{equation}
Here $j'_*$ and $j_*$ are maps on homology induced by the inclusions $\bs\gamma \mapsto e^2\bs\gamma$, and the square commutes.  Finally, Proposition~\ref{prop: isomorphism of j} 
implies that $j'_*$ is an isomorphism. This proves Theorem~\ref{thm: stabilization}.

\vskip0.5cm

\n {\em Acknowledgements.} We are indebted to Michael Hutchings for many helpful conversations and for our previous collaboration which was a catalyst for the present work. We also thank Denis Auroux, Tobias Ekholm, Dusa McDuff, Ivan Smith and Jean-Yves Welschinger for illuminating exchanges.  Part of this work was done while KH and PG visited MSRI during the academic year 2009--2010. We are extremely grateful to MSRI and the organizers of the ``Symplectic and Contact Geometry and Topology'' and the ``Homology Theories of Knots and Links'' programs for their hospitality; this work probably would never have seen the light of day without the large amount of free time which was made possible by the visit to MSRI.  KH also thanks the Simons Center for Geometry and Physics for their hospitality during his visit in May 2011.

\newpage
\printnomenclature[7em]

\end{document}